\documentclass[oneside,a4paper,reqno]{amsart}
\usepackage{pdfsync, enumerate}
\usepackage{stmaryrd}
\usepackage{mathrsfs}
\usepackage{hyperref}
\usepackage{tikz-cd}
\usepackage{amssymb}
\usepackage{stackrel}
\usepackage{adjustbox}
\usepackage{mathtools}
\usepackage{multicol}
\usepackage{amsmath, amsthm, amscd, amssymb, latexsym, eucal}
\usepackage[all]{xy}
%\addtolength{\textwidth}{3.5cm} \calclayout \makeatletter
%\addtolength{\textheight}{3cm} \calclayout \makeatletter
\def\serieslogo@{} \def\@setcopyright{} \makeatother
%%%%%%%%%%%%%%%%%%%%%%%%%%%%%%%%%%%%%%%%%%%%%%%%%%%%%%

\usepackage{multienum}

\usepackage[colorinlistoftodos]{todonotes}

\usepackage{hyperref}
\usepackage{color}
\usepackage{cite}

%\hypersetup{colorlinks=true,
	%     breaklinks=true,
	%     linkcolor=blue,    % how to specify linkcolor according to chapter / contents ?
	%     bookmarks=true,
	%      pageanchor=true}
%\renewcommand{\citeleft}{\textcolor{blue}{[}}
%\renewcommand{\citeright}{\textcolor{blue}{]}}

%%%%%%%%%%%%%%%%%%%%%%%%%%%%%%%%%%%%%%%%%%%%%%%%%%%%%%
\makeatletter
\renewcommand*\env@matrix[1][c]{\hskip -\arraycolsep
	\let\@ifnextchar\new@ifnextchar
	\array{*\c@MaxMatrixCols #1}}
\makeatother

%%%%%%%%%%%%%%%%%%%%%%%%%%%%%%%%%%%%%%%%%%%%%%%%%%%%%%
\usepackage{color}

 \pagestyle{myheadings}
%%%%%%%%%%%%%%%%%%%%%%%%%%%%%%%%%%%%%%%%%%%%%%%%%%%%%%
\numberwithin{equation}{section}
\newtheorem{thm}{Theorem}[section]
\newtheorem*{main-thm}{Theorem}
\newtheorem*{Auslander-thm}{Auslander's Theorem}

\newtheorem{cor}[thm]{Corollary}
\newtheorem{lem}[thm]{Lemma}
\newtheorem{prop}[thm]{Proposition}

\newtheorem*{thmA}{Theorem~A}
\newtheorem*{thmB}{Theorem~B}

%%%%%%%%%%%%%%%%%%%%%%%%%%%%%%%%%%%%%%%%%%%%%%%%%%%%%%%
\theoremstyle{definition}
\newtheorem{defn}[thm]{Definition}

\newtheorem{rem}[thm]{Remark}
\newtheorem{exmp}[thm]{Example}

\newtheorem*{ackn}{Acknowledgment}
\newtheorem*{connot}{Conventions and Notation}
\newtheorem*{outline}{Outline of the paper}
 
\newtheorem*{krauserecollement}{Big singularity categories and Krause's recollement} 
\newtheorem*{adjointpairs}{Some useful facts}

\newtheorem{fact}[thm]{Fact}

%%%%%%%%%%%%%%%%%%%%%%%%%%%%%%%%%%%%%%%%%%%%%%%%%%%%%%%

%\numberwithin{section}{chapter}
%%%%%%%%%%%%%%%%%%%%%%%%%%%%%%%%%%%%%%%%%%%%%%%%%%%%%%%

%%%%%%%%%%%%%%%%%%%%%%%%%%%%%%%%%%%%%%%%%%%%%%%%%%%%%%%

%%%%%%%%%%%%%%%%%%% MathCals%%%%%%%%%%%

%\renewcommand{\O}{\mathcal O}

%%%%%%%%%%%%%%%%%%%%%%%%%%%%%%%%%

%%%%%%%%%%%%%%%%%%%%%%%%%%%%%%%%

%%%%%%%%%%%%%%%%%%%%%%%%%%%%%%%%

%%%%%%%%%%%%%%%%%%%%%OPERATORS%%%%%%%%%%%%%%%%%%%%%%%%%%%

%\newcommand{\ho}{\underrightarrow{\vspace*{0.075cm}\holim}}

\DeclareMathOperator*{\Ker}{\mathsf{Ker}}

\DeclareMathOperator{\pd}{\mathsf{pdim}\!}
\DeclareMathOperator{\fd}{\mathsf{fdim}\!}
\DeclareMathOperator*{\id}{\mathsf{idim}\!}

\DeclareMathOperator*{\gd}{\mathsf{gl.dim}\!}

\DeclareMathOperator*{\Mod}{\mathsf{Mod}-\!}

\DeclareMathOperator*{\smod}{\mathsf{mod}-\!}

\DeclareMathOperator*{\lMod}{\!-\mathsf{Mod}}
\DeclareMathOperator*{\lInj}{\!-\mathsf{Inj}}

\DeclareMathOperator*{\umod}{\underline{\mathsf{mod}}-\!}

\DeclareMathOperator*{\proj}{\mathsf{proj}-\!}

\DeclareMathOperator*{\GProj}{\mathsf{GProj}-\!}
\DeclareMathOperator*{\GInj}{\mathsf{GInj}-\!}
\DeclareMathOperator*{\Gproj}{\mathsf{Gproj}-\!}

\DeclareMathOperator*{\Inj}{\mathsf{Inj}-\!}

\DeclareMathOperator*{\uGProj}{\underline{\mathsf{GProj}}-\!}
\DeclareMathOperator*{\Proj}{\mathsf{Proj}-\!}

\DeclareMathOperator*{\uGproj}{\underline{\mathsf{Gproj}}-\!}
\DeclareMathOperator*{\oGInj}{\overline{\mathsf{GInj}}-\!}
\DeclareMathOperator*{\oGinj}{\overline{\mathsf{Ginj}}-\!}

\DeclareMathOperator*{\Tor}{\mathsf{Tor}}

\usepackage{stackengine}

%%%%%%%%%%%%%%%%%%UnderlinedCapitalLetters%%%%%%%

%%%%%%%%%%%%%%%%%%% Uderlined SmallLetters%%%%%

%\newcommand{\widecheck}{\widecheck}

%%%%%%%%%%%%%%%%%%%%%%%%%%%%%%%%%%%%%%%%
\newsavebox{\proofbox}
\savebox{\proofbox}{\begin{picture}(7,7)%
	\put(0,0){\framebox(7,7){}}\end{picture}}

%%%%%%%%%%%%%%%%%

%%%%%%%%%%%%%%%%%%%%%%%%%%%%%%%%%%%%%%%%%%%%%%%%%%%%%%%%
%%\newcommand{\widecheck}{\widetilde}

%\newcommand{\varstar}{\star}

%\newcommand{\widecheck}{\widetilde}

\usepackage{latexsym}
\usepackage{pstricks}
\usepackage{comment}

\usepackage[greek,english]{babel}

\begin{document}
%\linenumbers

\title{Cleft extensions of rings and singularity categories}

\author[Kostas]{Panagiotis Kostas}
\address{Department of Mathematics, Aristotle University of Thessaloniki, Thessaloniki 54124, Greece}
\email{pkostasg@math.auth.gr}

\date{\today}   

\keywords{%
Cleft extension of rings, Cleft extensions of abelian categories, Iwanaga-Gorenstein algebras, Gorenstein categories, Gorenstein projective modules, Singularity categories, Homotopy categories of injectives, Perfect endofunctors, Perfect bimodules.}

\subjclass[2020]{%
%\subjclass[2010]{%
16E, 
16E30, % Homological functors on modules (Tor, Ext, etc.)
%16E40, % (Co)homology of rings and algebras
%%%%%%%%  (e.g. Hochschild, cyclic, dihedral, etc.)
16E65, % Homological conditions on rings
%%%%%%%%  (generalizations of regular, Gorenstein,
%%%%%%%%  Cohen-Macaulay rings, etc.)
16E10, % Homological dimension
%16G,%%% Representation theory of rings and algebras
16G50, %Cohen-Macaulay modules
18E, %%% Abelian categories
%18E30, % Derived categories, triangulated categories
18G80
\!}

\begin{abstract}
This paper provides a systematic treatment of Gorenstein homological aspects for cleft extensions of rings. In particular, we investigate Goresnteinness, Gorenstein projective modules and singularity categories in the context of cleft extensions of rings. This setting includes triangular matrix rings, trivial extension rings and tensor rings, among others. Under certain conditions, we prove singular equivalences between the algebras in a cleft extension, unifying an abundance of known results. Moreover, we compare the big singularity categories of cleft extensions of rings in the sense of Krause. 
\end{abstract}
\maketitle

\setcounter{tocdepth}{1} \tableofcontents

\section{Introduction} 

Singularity categories are nowadays heavily studied in algebra and geometry. The singularity category of a Noetherian ring $\Lambda$, denoted by $\mathsf{D}_{\mathsf{sg}}(\Lambda)$, was introduced by Buchweitz \cite{buchweitz} as the Verdier quotient $\mathsf{D}^{\mathsf{b}}(\smod\Lambda)/\mathsf{K}^{\mathsf{b}}(\proj\Lambda)$ and it is a measure of regularity, in the sense that - at least for Artin algebras or local commutative rings - the category $\mathsf{D}_{\mathsf{sg}}(\Lambda)$ is trivial if and only if $\gd\Lambda<\infty$. Independently, Orlov \cite{orlov} introduced the singularity category of an algebraic variety $\mathsf{X}$, denoted by $\mathsf{D}_{\mathsf{sg}}(\mathsf{X})$, as the Verdier quotient $\mathsf{D}^{\mathsf{b}}(\mathsf{coh}\mathsf{X})/\mathsf{perf}(\mathsf{X})$, which turns out to be of great geometric interest. For instance, if $\mathsf{X}$ is smooth, then $\mathsf{D}_{\mathsf{sg}}(\mathsf{X})$ is trivial.

Krause \cite{krause} introduced the big singularity category of $\Lambda$ as the homotopy category of acyclic complexes of injectives, denoted by $\mathsf{K}_{\mathsf{ac}}(\Inj\Lambda)$. This is a category with coproducts and, in fact, compactly generated with $\mathsf{K}_{\mathsf{ac}}(\Inj\Lambda)^{\mathsf{c}}\simeq \mathsf{D}_{\mathsf{sg}}(\Lambda)$ up to summands.

In the study of singularity categories, Iwanaga-Gorenstein rings play a key role. A fundamental theorem of Buchweitz states that there is a fully faithful functor $i\colon\uGproj\Lambda\rightarrow \mathsf{D}_{\mathsf{sg}}(\Lambda)$ where the left-hand side denotes the stable category of the Gorenstein projective modules over $\Lambda$. Moreover, if $\Lambda$ is Iwanaga-Gorenstein, then the functor $i$ is a triangle equivalence.
\[
* \ \ * \ \ *
\]

The aim of this paper is to study the behaviour of the above triangulated categories under a \emph{cleft extension of rings}, i.e.\ for two rings $\Gamma$ and $\Lambda$ for which there are ring homomorphisms $f\colon \Lambda\rightarrow \Gamma$ and $g\colon \Gamma\rightarrow \Lambda$ with $fg=\mathsf{Id}_\Gamma$. The terminology ``cleft" follows MacLane \cite{maclane} and the same notion appears with different names, for instance ``split extensions'' \cite{pierce} and ``split quotiens" \cite{diracca_koenig}.

The goal, as described above, is in the spirit of Oppermann-Psaroudakis-Stai \cite{OPS}, where the same problem was investigated under a ring homomorphism. By following closely ideas of \cite{OPS} and the theory of \cite{beligiannis, beligiannis2}, it turns out that for cleft extensions we can say much more, compared to a single change of rings. 

The motivation for working with cleft extensions of rings is simple: there is an abundance of constructions that are of (great) interest in representation theory and occur as such. For instance, trivial extensions, tensor rings and, more generally, positively graded rings and $\theta$-extensions \cite{marmaridis}. In a dull way, every finite dimensional algebra over a field is a cleft extension of its semisimple part, while more sophisticated (and explicit) constructions are studied in \cite{chen_koenig, diracca_koenig, arrow2, monomial_arrow, arrow}. 

A cleft extension of rings $\Lambda\xrightarrow{f} \Gamma$ and $\Gamma\xrightarrow{g}\Lambda$ gives rise to the following diagram: 
\begin{equation} \label{cleft}
\begin{tikzcd}
\Mod\Gamma \arrow[rr, "\mathsf{i}"] &  & \Mod \Lambda \arrow[rr, "\mathsf{e}"] \arrow[ll, "-\otimes_{\Lambda}\Gamma"', bend right] \arrow[ll, "{\mathsf{Hom}_{\Lambda}(\Gamma,-)}", bend left] &  & \Mod\Gamma \arrow[ll, "-\otimes_{\Gamma}\Lambda"', bend right] \arrow[ll, "{\mathsf{Hom}_{\Gamma}(\Lambda,-)}", bend left] \arrow["-\otimes_{\Gamma}M"', loop, distance=2em, in=125, out=55] \arrow["{\mathsf{Hom}_{\Gamma}(M,-)}"', loop, distance=2em, in=305, out=235]
\end{tikzcd}
\end{equation}
where the functor $\mathsf{i}$ denotes the restriction induced by $f$, the functor $\mathsf{e}$ denotes the restriction induced by $g$ and $M=\mathsf{ker}f$. The left adjoints of (1.1) form a cleft extension of module categories and the right adjoints of (1.1) form a cleft coextension of module categories. Both belong in the framework of cleft (co)extensions of abelian categories in the sense of Beligiannis \cite{beligiannis, beligiannis2}. This language provides a purely homological approach to our problem.  

We have outlined the existence of $\mathsf{F}=-\otimes_{\Gamma}M$ and $\mathsf{F}'=\mathsf{Hom}_{\Gamma}(M,-)$; they are prominent in our theory, as imposing homological conditions on them imply homological properties for the rest of the functors of (1.1). The assumption that we will consider throughout asks for the functor $\mathsf{F}$ to be perfect and nilpotent, see Definition \ref{perfect_functor}. This will allow for good homological study of the cleft extension part of (1.1). The notion of a perfect endofunctor is the categorical analogue of the concept of a perfect bimodule, introduced in \cite{perfect}. In case $\Gamma$ is two-sided Noetherian and $M$ finitely generated on both sides, the functor $\mathsf{F}=-\otimes_{\Gamma}M$ is perfect and nilpotent precisely when the $\Gamma$-bimodule $M$ is perfect and nilpotent, see Lemma \ref{tensor_is_perfect}. We also introduce the dual notion of a coperfect endofunctor, see Definition \ref{coperfect_functor}, which allows for good homological study of the cleft coextension part of (1.1), as long as $\mathsf{F}'$ satisfies this property. Whenever the functor $\mathsf{F}=-\otimes_{\Gamma}M$ is perfect and nilpotent, it follows that the functor $\mathsf{F}'=\mathsf{Hom}_{\Gamma}(M,-)$ is coperfect and nilpotent, see Lemma \ref{perfect_implies_coperfect}. 
\[
* \ \ * \ \ *
\]
Recall that given a Noetherian ring $\Gamma$, there is a left adjoint to the inclusion functor $\mathsf{K}(\Inj\Gamma)\hookrightarrow \mathsf{K}(\Mod\Gamma)$ \cite{krause,neeman4}, which we denote by $\lambda_{\Gamma}$. 

Our first results, stated in the context of two-sided Noetherian rings, are summarised in Theorem A, below. This is a combination of Corollary \ref{middle_part}, Corollary \ref{cleft_of_stable}, Corollary \ref{cleft_of_big_singularity} and Corollary \ref{cleft_of_big_singularity_for_perfect}. 

\begin{thmA}
Let $\Gamma$ and $\Lambda$ be two-sided Noetherian rings such that $\Lambda$ is a cleft extension of $\Gamma$. Assume further that the functor $\mathsf{F}$, as in the diagram \textnormal{(1.1)}, is perfect and nilpotent. Then there is a commutative diagram of triangle functors 
\[
\begin{tikzcd}
\uGproj\Gamma \arrow[rr, "\underline{\mathsf{i}}", dashed] \arrow[dd, hook]    &  & \uGproj\Lambda \arrow[rr, "\underline{\mathsf{e}}", dashed] \arrow[ll, "\mathsf{q}"', bend right] \arrow[dd, hook]                                                      &  & \uGproj\Gamma \arrow[dd, hook] \arrow[ll, "\mathsf{l}"', bend right] \arrow["\underline{\mathsf{F}}"', dashed, loop, distance=2em, in=125, out=55]                                                                              \\
                                                                               &  &                                                                                                                                                                         &  &                                                                                                                                                                                                                                 \\
\mathsf{D}_{\mathsf{sg}}(\Gamma) \arrow[rr, "\mathsf{i}"] \arrow[dd, hook]     &  & \mathsf{D}_{\mathsf{sg}}(\Lambda) \arrow[ll, "\mathbb{L}_{\mathsf{sg}}\mathsf{q}"', bend right] \arrow[rr, "\mathsf{e}"] \arrow[ll, dashed, bend left] \arrow[dd, hook] &  & \mathsf{D}_{\mathsf{sg}}(\Gamma) \arrow[dd, hook] \arrow[ll, dashed, bend left] \arrow[ll, "\mathbb{L}_{\mathsf{sg}}\mathsf{l}"', bend right] \arrow["\mathbb{L}_{\mathsf{sg}}\mathsf{F}"', loop, distance=2em, in=35, out=325] \\
                                                                               &  &                                                                                                                                                                         &  &                                                                                                                                                                                                                                 \\
\mathsf{K}_{\mathsf{ac}}(\Inj\Gamma) \arrow[rr, "\lambda_{\Lambda}\mathsf{i}"] &  & \mathsf{K}_{\mathsf{ac}}(\Inj\Lambda) \arrow[rr, "\lambda_{\Gamma}\mathsf{e}"] \arrow[ll, bend right] \arrow[ll, "\mathsf{p}", bend left]                               &  & \mathsf{K}_{\mathsf{ac}}(\Inj\Gamma) \arrow[ll, "\mathsf{r}", bend left] \arrow[ll, bend right] \arrow["\lambda_{\Gamma}\mathsf{F}'"', loop, distance=2em, in=305, out=235]                                                    
\end{tikzcd}
\]
satisfying the following: 
\begin{itemize}
    \item[(i)] The composition of any two consecutive horizontal functors is the identity. 
    \item[(ii)] Any two consecutive parallel functors form an adjoint pair. 
\end{itemize}
\end{thmA}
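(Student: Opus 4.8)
The plan is to construct the three rows separately from the results already in place and then glue them along the vertical embeddings. Concretely, the top row (the stable categories of Gorenstein projectives, with $\underline{\mathsf{i}}$, $\underline{\mathsf{e}}$, $\mathsf{q}$, $\mathsf{l}$ and the loop $\underline{\mathsf{F}}$) is Corollary~\ref{cleft_of_stable}; the middle row (the singularity categories, with $\mathsf{i}$, $\mathsf{e}$ and the left-derived functors $\mathbb{L}_{\mathsf{sg}}\mathsf{q}$, $\mathbb{L}_{\mathsf{sg}}\mathsf{l}$, $\mathbb{L}_{\mathsf{sg}}\mathsf{F}$) is Corollary~\ref{middle_part}; the bottom row (the homotopy categories of acyclic complexes of injectives, with $\lambda_\Lambda\mathsf{i}$, $\lambda_\Gamma\mathsf{e}$, their adjoints $\mathsf{p}$, $\mathsf{r}$ and the remaining lower adjoints) is Corollary~\ref{cleft_of_big_singularity}, the loop $\lambda_\Gamma\mathsf{F}'$ and its adjoint being the content of the perfectness refinement, Corollary~\ref{cleft_of_big_singularity_for_perfect}. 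Each of these statements already records, for its own row, that the horizontal arrows are triangle functors, that two consecutive horizontal functors compose to the identity --- inherited from $fg=\mathsf{Id}_\Gamma$, which forces $\mathsf{e}\mathsf{i}=\mathsf{Id}$ already at the level of the diagram~\eqref{cleft} --- and that two consecutive parallel arrows form an adjoint pair. Hence assertions (i) and (ii) hold within each row, and only the vertical coherence remains.

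First I would recall why the functors of~\eqref{cleft} descend at all. For the top row, the hypothesis that $\mathsf{F}=-\otimes_\Gamma M$ is perfect and nilpotent makes $\mathsf{i}$, $\mathsf{e}$, their left adjoints $-\otimes_\Lambda\Gamma$ and $-\otimes_\Gamma\Lambda$, and $\mathsf{F}$ itself preserve Gorenstein projectivity, so they induce triangle functors on $\uGproj\Gamma$ and $\uGproj\Lambda$. For the middle row, the restriction functors $\mathsf{i}$, $\mathsf{e}$ are exact and carry finitely generated projectives to modules of finite projective dimension, hence preserve perfect complexes and pass to the Verdier quotients defining $\mathsf{D}_{\mathsf{sg}}$; their left adjoints on $\mathsf{D}_{\mathsf{sg}}$ are the left-derived functors along the localization, which exist and remain adjoint pairs by the usual calculus of derived functors. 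For the bottom row, one extends $\mathsf{i}$, $\mathsf{e}$ to the homotopy categories of complexes, composes with Krause's left adjoint $\lambda$ to $\mathsf{K}(\Inj\Lambda)\hookrightarrow\mathsf{K}(\Mod\Lambda)$ (and over $\Gamma$), checks that the composite preserves acyclic complexes of injectives, and reads the adjoints off from the right adjoints in~\eqref{cleft}; the loop $\lambda_\Gamma\mathsf{F}'$ is treated identically, using that $\mathsf{F}'=\mathsf{Hom}_\Gamma(M,-)$ is coperfect and nilpotent by Lemma~\ref{perfect_implies_coperfect}.

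Next I would check that the vertical squares commute. The upper verticals are Buchweitz's fully faithful functors from the stable categories of Gorenstein projectives to the singularity categories: for the restriction functors $\mathsf{i}$, $\mathsf{e}$ and for $\mathsf{F}$, commutativity is immediate since these are exact and carry a complete resolution of a Gorenstein projective module to a complex representing the image under Buchweitz's functor; for the parallel arrows one uses that $\mathbb{L}_{\mathsf{sg}}\mathsf{q}$ and $\mathbb{L}_{\mathsf{sg}}\mathsf{l}$ agree with $\mathsf{q}$ and $\mathsf{l}$ on the stable categories of Gorenstein projectives, because the latter are already exact there. The lower verticals are the induced fully faithful embeddings of the singularity categories into the homotopy categories of acyclic injectives; commutativity of these squares is exactly the naturality of Krause's recollement with respect to the change-of-rings functors, i.e.\ the statement that restriction commutes with injective coresolution once one applies $\lambda$. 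Finally, once one of the two families of parallel vertical squares is known to commute, commutativity of the other family --- and the compatibility of the adjunctions across adjacent rows --- follows formally from uniqueness of adjoints.

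I expect the genuine obstacle to be the interaction between forming adjoints and passing to the quotient categories: none of the three passages --- to the stable category, to the Verdier quotient, or through the localization $\lambda$ --- commutes a priori with the formation of adjoint functors, so in each row one must check that the module-level units and counits of the cleft extension and coextension of~\eqref{cleft} survive to unit and counit of an adjunction downstairs. Concretely this means verifying that $\mathsf{i}$, $\mathsf{e}$, $\mathsf{F}$ and their various adjoints preserve enough of the relevant resolutions (projective, Gorenstein projective, injective) for the induced natural transformations to still satisfy the triangle identities, and it is precisely here that the perfectness and nilpotence of $\mathsf{F}$ --- equivalently, coperfectness and nilpotence of $\mathsf{F}'$ --- enter. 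This analysis is the substance of the four cited corollaries; granting them, the proof of Theorem~A comes down to threading the two families of vertical embeddings through the three rows and checking coherence, which is bookkeeping.
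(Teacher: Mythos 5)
Your decomposition of the theorem into Corollaries~\ref{middle_part}, \ref{cleft_of_stable}, \ref{cleft_of_big_singularity} and \ref{cleft_of_big_singularity_for_perfect}, followed by a check of vertical coherence, is exactly the route the paper takes, and your remark that once one family of vertical squares is known to commute the other follows by uniqueness of adjoints is a sound way to organize the bookkeeping. The discussion of the middle row (derived functors along the Verdier localization) and of the bottom row (Krause's $\lambda$, Remark~\ref{duck}) is essentially the paper's.

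There is, however, a genuine error in your explanation of the top row. You assert that perfectness and nilpotence of $\mathsf{F}$ make $\mathsf{i}$, $\mathsf{e}$ and $\mathsf{F}$ ``preserve Gorenstein projectivity, so they induce triangle functors on $\uGproj\Gamma$ and $\uGproj\Lambda$,'' and later that commutativity of the upper squares is ``immediate since these are exact and carry a complete resolution of a Gorenstein projective module to a complex representing the image under Buchweitz's functor.'' Neither claim is true, and the failure is precisely why $\underline{\mathsf{i}}$, $\underline{\mathsf{e}}$, $\underline{\mathsf{F}}$ are drawn dashed in Theorem~A. Under the standing hypotheses it is $\mathsf{l}$ and $\mathsf{q}$ that preserve $\Gproj$ (Corollary~\ref{gorenstein_proj_in_B}); $\mathsf{e}$ applied to a totally acyclic complex of projectives over $\Lambda$ produces an acyclic complex whose terms merely have finite projective dimension over $\Gamma$. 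One must first apply $\tilde\Omega^d$ to land back in projectives, and total acyclicity of the resulting complex requires the extra hypothesis $\mathbb{R}\mathsf{r}(Q)\in\mathsf{K}^{\mathsf{b}}(\Proj\Lambda)$ for $Q$ projective, which does \emph{not} follow from perfectness and nilpotence of $\mathsf{F}$ alone. This is the content of Proposition~\ref{sthnk}: $\underline{\mathsf{e}}$ is defined as $\Omega^{-d}\circ\Omega^d\mathsf{e}$ so that it is, by construction, the restriction of $\mathsf{e}\colon\mathsf{D}_{\mathsf{sg}}(\Lambda)\to\mathsf{D}_{\mathsf{sg}}(\Gamma)$ along Buchweitz's embedding; the commutativity of the upper square is therefore built into the definition of $\underline{\mathsf{e}}$ rather than a consequence of $\mathsf{e}$ sending complete resolutions to complete resolutions. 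The sufficient conditions, in terms of $\mathsf{F}'$, for the dashed arrows to exist are Proposition~\ref{projectives_F'_inj}; your account would have them exist unconditionally, contradicting the dashes in the statement.
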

To idea of the proof is to verify that under the perfectness assumptions that we impose, we may apply results of \cite{OPS} to the ring homomorphisms $\Gamma\rightarrow \Lambda$ and to $\Lambda\rightarrow \Gamma$. For the former, i.e. for the right hand side of the diagram, this is a direct verification while for the latter this is more subtle and the key tools are \cite[Proposition 6.6]{graded_injective_generation} and Proposition \ref{preserves_and_reflects}.

The dashed arrows, of Theorem A, on the level of stable categories of Gorenstein projectives exist, provided that the dashed arrows on the level of singularity categories exist, see Proposition \ref{sthnk}. We prove sufficient conditions for their existence in terms of the endofunctor $\mathsf{F}'$, see Proposition \ref{projectives_F'_inj} and Proposition \ref{equivalence_of_stable_for_theta}.

Under the setup of Theorem A, the algebras $\Gamma$ and $\Lambda$ are effectively related in terms of Gorensteiness. Further, imposing extra homological conditions yields singular equivalences. We state these results in Theorem B, below. Part (i) is proved in Corollary \ref{cor_for_gorenstein_rings}, part (ii) in Corollary \ref{equivalence_singularity_for_mod} and part (iii) in Corollary \ref{equivalence_of_big_singularity}.

\begin{thmB} \label{thmA}
Under the setup and assumptions of Theorem A, the following hold: 
\begin{itemize}
    \item[(i)] $\Lambda$ is Iwanaga-Gorenstein if and only if $\Gamma$ is Iwanaga-Gorenstein. 
    \item[(ii)] If $\mathbb{L}_{\mathsf{sg}}\mathsf{F}=0$ in $\mathsf{D}_{\mathsf{sg}}(\Gamma)$, then $\mathsf{e}\colon\mathsf{D}_{\mathsf{sg}}(\Lambda)\xrightarrow{\simeq}\mathsf{D}_{\mathsf{sg}}(\Gamma)$.  
    \item[(iii)] If $\lambda_{\Gamma}\mathsf{F}'=0$ in $\mathsf{K}_{\mathsf{ac}}(\Inj\Gamma)$, then $\lambda_{\Gamma}\mathsf{e}\colon\mathsf{K}_{\mathsf{ac}}(\Inj\Lambda)\xrightarrow{\simeq} \mathsf{K}_{\mathsf{ac}}(\Inj\Gamma)$.  
\end{itemize}
\end{thmB}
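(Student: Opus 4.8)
The plan is to bootstrap everything off Theorem~A together with the bimodule decomposition $\Lambda\cong\Gamma\oplus M$ underlying a cleft extension, where $M=\mathsf{ker}f$ is perfect and nilpotent by \lemref{tensor_is_perfect}; in particular the ideal $M$ is nilpotent in $\Lambda$.

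\emph{Part (i).} I use that a two-sided Noetherian ring $R$ is Iwanaga--Gorenstein precisely when the self-injective dimensions $\id({}_{R}R)$ and $\id(R_{R})$ are both finite. For ``$\Gamma$ Iwanaga--Gorenstein $\Rightarrow$ $\Lambda$ Iwanaga--Gorenstein'' I would bound $\id({}_{\Lambda}\Lambda)$ and $\id(\Lambda_{\Lambda})$ by restricting along $g$ (applying $\mathsf{e}$) and using that the functors $\mathsf{e}$, $-\otimes_{\Gamma}\Lambda$, $\mathsf{Hom}_{\Gamma}(\Lambda,-)$ of (1.1) preserve finiteness of projective and of injective dimension: this is exactly what perfectness of $\mathsf{F}$ (respectively coperfectness of $\mathsf{F}'$, via \lemref{perfect_implies_coperfect}) buys, while nilpotency of $\mathsf{F}$ turns the resulting comparison into a finite induction along $\Lambda\supseteq M\supseteq M^{2}\supseteq\cdots$. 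For the converse I would use the retract structure: $\mathsf{i}\colon\Mod\Gamma\to\Mod\Lambda$ is fully faithful and exact with exact left adjoint $-\otimes_{\Lambda}\Gamma$ and exact right adjoint $\mathsf{Hom}_{\Lambda}(\Gamma,-)$, and $\Gamma$ is a direct summand of $\Lambda$ as a $\Gamma$-bimodule, which suffices to read off finiteness of the self-injective dimensions of $\Gamma$ from those of $\Lambda$. In effect (i) is the ring-theoretic shadow of the general transfer of the Gorenstein property for cleft extensions of abelian categories, so I would invoke that.

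\emph{Part (ii).} By Theorem~A the composite $\mathsf{e}\circ\mathsf{i}$ of the two middle horizontal functors is $\iden_{\mathsf{D}_{\mathsf{sg}}(\Gamma)}$, so $\mathsf{e}$ is essentially surjective and admits a triangle adjoint $\mathbb{L}_{\mathsf{sg}}\mathsf{l}$; hence $\mathsf{e}$ is an equivalence as soon as it is conservative, equivalently once the counit $\mathbb{L}_{\mathsf{sg}}\mathsf{l}\circ\mathsf{e}\Rightarrow\iden_{\mathsf{D}_{\mathsf{sg}}(\Lambda)}$ is invertible. Using $\Lambda\cong\Gamma\oplus M$ one computes, for $X\in\smod\Lambda$, that the module-level counit $\mathsf{e}(X)\otimes_{\Gamma}\Lambda\twoheadrightarrow X$ has kernel a $\Lambda$-module annihilated by a fixed power $M^{n}$ and carrying a finite filtration whose subquotients are, over $\Gamma$, subquotients of the iterates $\mathsf{F}^{k}(\mathsf{e}X)$. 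Passing to $\mathsf{D}_{\mathsf{sg}}$ (where $\mathsf{i},\mathsf{e},\mathbb{L}_{\mathsf{sg}}\mathsf{l}$ and the $\mathsf{F}^{k}$ are all defined, by perfectness), a d\'evissage along this filtration reduces the vanishing of the cone of the counit to the vanishing of $\mathbb{L}_{\mathsf{sg}}\mathsf{F}^{k}$ on the relevant objects, which follows from $\mathbb{L}_{\mathsf{sg}}\mathsf{F}=0$ together with the nilpotency of $\mathsf{F}$. Thus $\mathsf{e}$ is conservative, hence a triangle equivalence.

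\emph{Part (iii).} This is the injective/``big'' analogue of (ii), obtained by dualising through the cleft \emph{coextension} part of (1.1) and using that $\mathsf{F}'=\mathsf{Hom}_{\Gamma}(M,-)$ is coperfect and nilpotent (\lemref{perfect_implies_coperfect}). By Theorem~A the composite $(\lambda_{\Gamma}\mathsf{e})\circ(\lambda_{\Lambda}\mathsf{i})$ is the identity on $\mathsf{K}_{\mathsf{ac}}(\Inj\Gamma)$, so $\lambda_{\Gamma}\mathsf{e}$ is an essentially surjective coproduct-preserving triangle functor with adjoint $\mathsf{r}$; it is an equivalence once it is conservative, and its kernel is controlled, via the dual ``$M$-torsion'' cofiltration of acyclic complexes of injectives, by $\lambda_{\Gamma}\mathsf{F}'$, which vanishes by hypothesis. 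Alternatively I would reduce to compact objects: $\mathsf{K}_{\mathsf{ac}}(\Inj R)^{\mathsf{c}}\simeq\mathsf{D}_{\mathsf{sg}}(R)$ up to summands, the assumption $\lambda_{\Gamma}\mathsf{F}'=0$ is in particular zero on compacts, and the commutativity in Theorem~A identifies the restriction of $\lambda_{\Gamma}\mathsf{e}$ to compacts with $\mathsf{e}$ on singularity categories; the argument of (ii) then yields an equivalence on compacts, which lifts to the compactly generated categories by the standard Neeman-type criterion. The genuine obstacle throughout (ii) and (iii) is pushing the vanishing of $\mathsf{F}$ (resp.\ $\mathsf{F}'$) through the filtration and across the Verdier quotient: the naive argument is circular (``$\mathsf{e}$ conservative'' seems to need ``$\mathsf{e}$ conservative''), and breaking it is exactly where nilpotency enters, via an induction on the nilpotency degree of $M$, with the comparison organised around the derived functors $\mathbb{L}\mathsf{F}$ and $\mathbb{R}\mathsf{F}'$ so that perfectness/coperfectness controls all higher $\mathsf{Tor}$ and $\mathsf{Ext}$ terms while the natural transformations stay natural along the d\'evissage.
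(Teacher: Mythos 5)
Your overall plan matches the paper closely for parts (i) and (ii), but you reorganise where the nilpotency d\'evissage happens, and your alternative route for (iii) has a genuine gap.

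For part (i), you invoke the categorical transfer theorem for cleft (co)extensions, which is indeed what the paper does (\thmref{main_thm} together with \propref{cleft_of_modules_is_cocleft}, after passing to the module-theoretic statement of \propref{two_sided_noetherian_gorenstein}). Your ring-level description (``bound self-injective dimensions by restricting along $g$\ldots'') is a reasonable shadow of Propositions \ref{one_way} and \ref{the_other_way}, but note that those propositions don't argue directly with $\id{\Lambda_\Lambda}$; they compare $\mathsf{silp}$ and $\mathsf{spli}$ categorically using the exact sequences $0\to\mathsf{G}^{k}\to\mathsf{lF}^{k-1}\mathsf{e}\to\mathsf{G}^{k-1}\to 0$ and $\mathsf{F}$-projectivity/$\mathsf{F}'$-injectivity of the intermediate terms, so the induction is along those exact sequences rather than along $\Lambda\supseteq M\supseteq M^2\supseteq\cdots$.

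For part (ii), your idea of showing $\mathsf{e}$ is conservative by d\'evissage of the counit cone through the $\mathsf{G}^{k}$-filtration is mathematically equivalent to what the paper does, just factored differently: the paper isolates the nilpotency induction into \propref{preserves_and_reflects} (``$\mathsf{e}$ reflects finite projective dimension''), then in \thmref{equivalence_singularity} proves $\ker\mathsf{e}=0$ by picking a stalk representative (\lemref{object_in_singularity}), observing that its image has finite projective dimension (\lemref{perfect_objects}), and invoking the reflection. Either way one must carefully track that the filtration quotients $\mathsf{lF}^{k}\mathsf{e}(X')$ are really zero in $\mathsf{D}_{\mathsf{sg}}(\Lambda)$, which needs $\mathsf{F}^{k}\mathsf{e}(X')$ to be $\mathsf{F}$-projective so that $\mathsf{l}$ takes finite-pd objects to finite-pd objects; this is exactly the content your sketch glosses but the paper spells out.

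For part (iii), your primary ``dualise through the coextension'' route is the right idea, but your description via a ``dual $M$-torsion cofiltration of acyclic complexes'' is not a faithful description of the paper's argument: the proof of \thmref{fox} works at the level of the boundary modules $\mathsf{B}^{i}(X)$, using \lemref{injective_dimension_of_boundaries} to show $\id{\mathsf{B}^{i}(\mathsf{e}(X))_{\Gamma}}$ is uniformly bounded, then uses that $\mathsf{e}$ reflects finite injective dimension (Proposition \ref{preserves_and_reflects_inj_dim} / Remark \ref{bounds_for_injective_dimensions}) to bound $\id{\mathsf{B}^{i}(X)_{\Lambda}}$, then dimension-shifts along a long stretch of the acyclic complex to conclude the boundaries are actually injective and $X$ is contractible. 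There is no stalk representative in $\mathsf{K}_{\mathsf{ac}}(\Inj\Lambda)$, so this is genuinely not a one-line dualization of (ii). Your alternative route via compacts and a Neeman-lifting has a concrete gap: (a) $\mathsf{K}_{\mathsf{ac}}(\Inj R)^{\mathsf{c}}\simeq\mathsf{D}_{\mathsf{sg}}(R)$ only up to direct summands for a general Noetherian ring (the paper explains in Remark \ref{artin_algebras} that equality requires idempotent completeness of $\mathsf{D}_{\mathsf{sg}}$, available for Artin algebras but not in general); and (b) the hypothesis of (iii) is $\lambda_{\Gamma}\mathsf{F}'=0$ for $\mathsf{F}'=\mathsf{Hom}_{\Gamma}(M,-)$, which is \emph{not} the same functor as $\mathbb{L}_{\mathsf{sg}}\mathsf{F}$ for $\mathsf{F}=-\otimes_{\Gamma}M$; the paper only establishes that the two vanishing hypotheses are equivalent for Artin algebras (Remark \ref{final_remark}(i)), and that equivalence is \emph{deduced from} Theorem B rather than available as an input. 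So for general Noetherian rings you cannot transport the hypothesis of (iii) to the hypothesis of (ii) on compacts, and the compacts argument does not close.
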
 
By viewing tensor rings as cleft extensions, we recover the main theorem of \cite{perfect} as a direct consequence of part (i), see Corollary \ref{gorenstein_tensor_rings}. The arrow removal operation \cite{arrow} is a trivial extension with a nilpotent projective bimodule. In this way, parts (i) and (ii) recover results of \cite{arrow2}, see Corollary \ref{gorenstein_trivial_extensions} and Example \ref{equivalence_for_trivial_extensions}. Surprisingly, viewing triangular matrix rings as cleft extensions (see Example \ref{morita_context_rings}) can be fruitful. In this way, part (i) recovers the main theorem of \cite{xiong_zhang}, see Corollary \ref{gorenstein_triangular}, while from part (ii) we recover equivalences discussed in \cite{chen, PSS}, see Example \ref{equivalence_for_triangular}. In a similar fashion, we can view Morita context rings as cleft extensions (see Example \ref{morita_context_rings}) and deduce from part (i) a result of \cite{monomorphism_cat}, see Corollary \ref{gorenstein_morita_context}. 
\[
* \ \ * \ \ *
\]

\begin{outline}
In Section 2 we recall the definitions of a cleft extension and a cleft coextension of abelian categories, together with their basic homological properties. Section 3 is devoted to the study of right exact perfect endofunctors, left exact coperfect endofunctors and their internal description for module categories - meaning the relation with the notion of perfect bimodules \cite{perfect}.  

Section 4 deals with homological properties of cleft extensions. First, we compare Gorensteinness for abelian categories in a cleft extension that is also a cleft coextension. This is a categorical version of part (i) of Theorem B, see Theorem \ref{main_thm}. Then, we study properties of the functor $\mathsf{e}$ of a cleft extension. In Theorem \ref{thm2}, we provide sufficient conditions so that $\mathsf{e}$ is an eventually homological isomorphism and prove that whenever $\mathsf{F}$ is perfect and nilpotent, then it preserves and reflects objects with finite projective dimension, see Proposition \ref{preserves_and_reflects}. A dual result holds for cleft coextensions and injective dimension, see Proposition \ref{preserves_and_reflects_inj_dim}. Lastly, we apply the above to cleft extensions of module categories. This relies on the crucial observation that a cleft extension of module categories is also a cleft coextension and if the endofunctor $\mathsf{F}$, associated to the given cleft extension, is perfect and nilpotent, then the endofunctor $\mathsf{F}'$, associated to the cleft coextension, is coperfect and nilpotent, see Proposition \ref{cleft_of_modules_is_cocleft}. The main result of this section is Corollary \ref{main_thm_2}, which - when restricted to the case of two-sided Noetherian rings - is precisely part (i) of Theorem B.

Section 5 deals with examples. First, we provide an indirect description of the cleft extension structure of triangular matrix rings and Morita context rings via tuples, which makes for easy computations. Then, we explain the cleft extension structure of trivial extensions and tensor rings, by viewing them as $\theta$-extensions. For each example, we provide sufficient conditions for the functor $\mathsf{F}$ to be perfect. We also explain how to apply preceding results in order to compare Gorensteiness. In Section 6 we study singularity categories. We obtain the middle part of the commutative diagram of Theorem A, see Corollary \ref{middle_part}, which builds upon the theory of Section 3. Then, we prove an equivalence of singularity categories for abelian categories, see Theorem \ref{equivalence_singularity}, whose module-theoretic interpretation is part (i) of Theorem B. 

In Section 7 we study Gorenstein homological properties of a cleft extension diagram. In Corollary \ref{cleft_of_stable}, we obtain the upper part of the commutative diagram of Theorem A and prove an equivalence of stable categories of Gorenstein projective modules, see Proposition \ref{equivalence_of_stable}. We close this section with applications regarding CM-free rings and algebras of finite Cohen-Macaulay type. In Section 8 we study the big singularity categories of cleft extensions of Noetherian rings. We explain the lower part of the commutative diagram of Theorem A, see Proposition \ref{cleft_of_big_singularity}, and obtain an equivalence of big singularity categories, see Theorem \ref{fox}, from which part (iii) of Theorem B follows.
\end{outline}

\begin{connot}
All categories and functors are additive. Given a ring $\Lambda$ (which will always be unital), we denote by $\Mod \Lambda$ the category of right $\Lambda$-modules and by $\smod \Lambda$ the category of finitely presented right $\Lambda$-modules. By a Noetherian ring, we mean a right Noetherian ring. Unless otherwise stated, abelian categories are assumed to have enough projective and enough injective objects.
\end{connot}

\begin{ackn} 
The research project is implemented in the framework of H.F.R.I call ``Basic research Financing (Horizontal support of all Sciences)'' under the National Recovery and Resilience Plan “Greece 2.0” funded by the European Union – NextGenerationEU (H.F.R.I. Project Number: 16785).

I am grateful to my supervisor Chrysostomos Psaroudakis, for interesting discussions and comments. Part of this work was done at the University of Stuttgart during March and April of 2024. I would like to thank the members of the algebra group and especially Steffen Koenig for the warm hospitality. I also thank Georgios Dalezios and Odysseas Giatagantzidis for their comments. Finally, I would like to thank Yongyun Qin for questions and corrections that improved the paper and the anonymous referee for valuable suggestions and comments.
\end{ackn}

\section{Cleft extensions of abelian categories} 

In this section we recall the notions of a cleft extension and a cleft coextension of abelian categories, which are due to Beligiannis \cite{beligiannis, beligiannis2}. 

This constitutes a natural generalization of trivial extensions of abelian categories in the sense of Fossum-Griffith-Reiten \cite{reiten}. Many other categorical constructions fit in the framework of cleft extensions, for instance free categories, exterior categories and repetitive categories, see \cite{beligiannis}. We are interested in results about cleft extensions of rings. However, our proofs only rely on the homological structure of cleft extensions of abelian categories and since there are many examples outside the context of cleft extensions of rings, we will work with abelian categories and afterwards state the module-theoretic interpretation. 

Cleft extensions of abelian categories are heavily used in \cite{arrow, arrow2, monomial_arrow}. A self-contained treatment of the theory is provided in \cite[Section 2]{arrow}, and we will often refer to it.

\subsection{Cleft extensions} We begin with the definition of a cleft extension.

\begin{defn} \label{cleft_extension} (\!\!\cite[Definition 2.1]{beligiannis})
    A \emph{cleft extension} of an abelian category $\mathcal{B}$ is an abelian category $\mathcal{A}$ together with functors:
    \[
    \begin{tikzcd}
\mathcal{B} \arrow[rr, "\mathsf{i}"] &  & \mathcal{A} \arrow[rr, "\mathsf{e}"] &  & \mathcal{B} \arrow[ll, "\mathsf{l}"', bend right]
\end{tikzcd}
    \]
    satisfying the following: 
    \begin{itemize}
        \item[(i)] The functor $\mathsf{e}$ is faithful exact.
        \item[(ii)] The pair $(\mathsf{l},\mathsf{e})$ is an adjoint pair.
        \item[(iii)] There is a natural isomorphism $\mathsf{ei}\simeq \mathsf{Id}_{\mathcal{B}}$ of functors. 
    \end{itemize}
\end{defn}

From now on we will denote a cleft extension by $(\mathcal{B},\mathcal{A},\mathsf{i},\mathsf{e},\mathsf{l})$. The above data give rise to additional structure. For instance, it follows that the functor $\mathsf{i}$ is fully faithful and exact (see \cite[Lemma 2.2]{beligiannis},\cite[Lemma 2.2(ii)]{arrow}). Moreover, there is a functor $\mathsf{q}\colon\mathcal{A}\rightarrow\mathcal{B}$, which is left adjoint to $\mathsf{i}$ (see \cite[Proposition 2.3]{beligiannis} and \cite[Lemma 2.2(iv)]{arrow}). Then, $(\mathsf{ql},\mathsf{ei})$ is an adjoint pair and since $\mathsf{ei}\simeq \mathsf{Id}_{\mathcal{B}}$, it follows that $\mathsf{ql}\simeq \mathsf{Id}_{\mathcal{B}}$.

 We will now explain how to obtain certain endofunctors on the categories $\mathcal{A}$ and $\mathcal{B}$. This is of central importance, so we do things with more details, following the exposition of \cite{arrow}. Denote by $\nu\colon\mathsf{Id}_{\mathcal{B}}\rightarrow\mathsf{el}$ the unit and by $\mu\colon\mathsf{le}\rightarrow \mathsf{Id}_{\mathcal{A}}$ the counit of the adjoint pair $(\mathsf{l},\mathsf{e})$. For every $A\in\mathcal{A}$ and $B\in\mathcal{B}$, the following relations are satisfied: 
 \[
 \mathsf{Id}_{\mathsf{l}(B)}=\mu_{\mathsf{l}(B)}\mathsf{l}(\nu_B) \  \text{ and } \  \mathsf{Id}_{\mathsf{e}(A)}=\mathsf{e}(\mu_A)\nu_{\mathsf{e}(A)}.
 \]

Notice that $\mathsf{e}(\mu_A)$ is an epimorphism and since $\mathsf{e}$ is faithful exact, it follows that $\mu_{A}\colon \mathsf{le}(A)\rightarrow A$ is an epimorphism for every $A\in\mathcal{A}$. Therefore, we may consider the following short exact sequence: 
\[
0\rightarrow \mathsf{ker}\mu_{A}\rightarrow \mathsf{le}(A)\xrightarrow{\mu_A}A\rightarrow 0,
\]
for every $A\in\mathcal{A}$. The assignment $A\mapsto \mathsf{ker}\mu_A$ defines a right exact endofunctor $\mathsf{G}\colon \mathcal{A}\rightarrow \mathcal{A}$. Given an object $B\in\mathcal{B}$, we denote by $\mathsf{F}(B)$ the object $\mathsf{eGi}(B)$. The assignment $B\mapsto \mathsf{F}(B)$ defines a right exact endofunctor $\mathsf{F}\colon\mathcal{B}\rightarrow \mathcal{B}$. Note that $\mathsf{F}(B)$ belongs in the following short exact sequence: 
\[
0\rightarrow \mathsf{F}(B)\rightarrow \mathsf{el}(B)\xrightarrow{\mathsf{e}(\mu_{\mathsf{i}(B)})}B\rightarrow 0,
\]
(which occurs after identifying $\mathsf{elei}(B)$ with $\mathsf{el}(B)$ and $\mathsf{ei}(B)$ with $B$) and since $\mathsf{e}\mu_{\mathsf{i}(B)}$ is a split epimorphism, the above short exact sequence splits, meaning that there is a natural isomorphism
\[
\mathsf{el}\simeq \mathsf{F}\oplus \mathsf{Id}_{\mathcal{B}}.
\]
One can then prove that $\mathsf{F}^n\mathsf{e}\simeq \mathsf{e}\mathsf{G}^n$ for all $n\geq 1$ and in particular $\mathsf{F}$ is nilpotent if and only if $\mathsf{G}$ is nilpotent (see \cite[Lemma~2.4]{arrow}). It also follows that for every $A\in\mathcal{A}$ and $n\geq 1$, there is a short exact sequence 
\[
0\rightarrow \mathsf{G}^n(A)\rightarrow \mathsf{lF}^{n-1}\mathsf{e}(A)\rightarrow \mathsf{G}^{n-1}(A)\rightarrow 0.
\]
The above short exact sequences are key ingredients to the proofs of this paper.

From now on, given a cleft extension $(\mathcal{B},\mathcal{A},\mathsf{i},\mathsf{e},\mathsf{l})$, we will also consider the induced functors $\mathsf{q}$, $\mathsf{F}$ and $\mathsf{G}$ as part of the structure, as well as their properties - often without explicit mention. In the following lemma we collect some elementary homological properties of cleft extensions.

\begin{lem}  \textnormal{(\!\!\!\cite[Corollary 4.2]{beligiannis2})} \label{basic_homological_properties_of_cleft}
    Let $(\mathcal{B},\mathcal{A},\mathsf{i},\mathsf{e},\mathsf{l})$ be a cleft extension of abelian categories. Consider an object $X$ of $\mathcal{B}$. The following hold: 
    \begin{itemize}
        \item[(i)] $X\in \Proj\mathcal{B}$ if and only if $\mathsf{l}(X)\in\Proj\mathcal{A}$.
        \item[(ii)] $\mathbb{L}_i\mathsf{F}(X)\cong \mathsf{e}\mathbb{L}_i\mathsf{l}(X)$ for all $i\geq 1$. 
        \item[(iii)] If $\mathbb{L}_i\mathsf{F}(X)=0$ for all $i\geq 1$, then $\mathsf{Ext}_{\mathcal{A}}^i(\mathsf{l}(X),Y)\cong \mathsf{Ext}_{\mathcal{B}}^i(X,\mathsf{e}(Y))$ for all $i\geq 1$ and every object $Y$ of $\mathcal{A}$.
    \end{itemize} 
\end{lem}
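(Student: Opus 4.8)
The plan is to prove the three items in the order (i), (ii), (iii), since (iii) rests on both (i) and (ii), while (ii) uses only the structural facts recalled above (the natural splitting $\mathsf{el}\simeq\mathsf{F}\oplus\mathsf{Id}_{\mathcal{B}}$, right exactness of $\mathsf{l}$ and $\mathsf{F}$, and exactness of $\mathsf{e}$).

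\textbf{Proof of (i).} For the forward implication I would use the standard fact that the left adjoint of an exact functor preserves projectives: given an epimorphism $g$ in $\mathcal{A}$ and a morphism $\mathsf{l}(X)\to\Coker$-target, transport the lifting problem along the adjunction isomorphism of $(\mathsf{l},\mathsf{e})$ to a lifting problem for $X$ against $\mathsf{e}(g)$, which is an epimorphism because $\mathsf{e}$ is exact; projectivity of $X$ solves it, and adjoining back produces the lift for $\mathsf{l}(X)$. For the converse I would \emph{not} try to show that $\mathsf{e}$ preserves projectives (it need not); instead I would exploit the functor $\mathsf{q}$ and the natural isomorphism $\mathsf{ql}\simeq\mathsf{Id}_{\mathcal{B}}$. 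Given an epimorphism $p\colon B\twoheadrightarrow B'$ in $\mathcal{B}$ and $h\colon X\to B'$, apply $\mathsf{l}$ (right exact, hence preserving epimorphisms) to get an epimorphism $\mathsf{l}(p)$, lift $\mathsf{l}(h)$ through it using projectivity of $\mathsf{l}(X)$, then apply $\mathsf{q}$ and conjugate by the natural isomorphism $\mathsf{ql}\cong\mathsf{Id}_{\mathcal{B}}$ to obtain a morphism $X\to B$ lifting $h$. Hence $X$ is projective.

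\textbf{Proof of (ii) and (iii).} For (ii), fix a projective resolution $P_\bullet\to X$ in $\mathcal{B}$. Since $\mathsf{l}$ is right exact, $\mathbb{L}_i\mathsf{l}(X)=H_i(\mathsf{l}(P_\bullet))$, and since $\mathsf{e}$ is exact it commutes with homology, so $\mathsf{e}\mathbb{L}_i\mathsf{l}(X)\cong H_i(\mathsf{el}(P_\bullet))$. The natural isomorphism $\mathsf{el}\simeq\mathsf{F}\oplus\mathsf{Id}_{\mathcal{B}}$ gives $\mathsf{el}(P_\bullet)\cong\mathsf{F}(P_\bullet)\oplus P_\bullet$ as complexes; taking homology in degree $i\geq 1$ kills the acyclic-in-positive-degrees summand $P_\bullet$ and leaves $H_i(\mathsf{F}(P_\bullet))=\mathbb{L}_i\mathsf{F}(X)$ (right exactness of $\mathsf{F}$). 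Comparing the two computations yields $\mathbb{L}_i\mathsf{F}(X)\cong\mathsf{e}\mathbb{L}_i\mathsf{l}(X)$ for $i\geq 1$. For (iii), assume $\mathbb{L}_i\mathsf{F}(X)=0$ for all $i\geq 1$; then by (ii) $\mathsf{e}\mathbb{L}_i\mathsf{l}(X)=0$, and since $\mathsf{e}$ is faithful exact it reflects the zero object (if $\mathsf{e}(Z)=0$ then $\mathsf{e}(\mathsf{Id}_Z)=0$, so $\mathsf{Id}_Z=0$), whence $\mathbb{L}_i\mathsf{l}(X)=0$ for all $i\geq 1$. Thus $\mathsf{l}(P_\bullet)\to\mathsf{l}(X)$ is exact and, by (i), consists of projectives, so it is a projective resolution of $\mathsf{l}(X)$ in $\mathcal{A}$. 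Computing $\mathsf{Ext}$ with this resolution and using the adjunction isomorphism $\mathsf{Hom}_{\mathcal{A}}(\mathsf{l}(P_j),Y)\cong\mathsf{Hom}_{\mathcal{B}}(P_j,\mathsf{e}(Y))$ — natural in $P_j$, hence an isomorphism of cochain complexes — gives $\mathsf{Ext}^i_{\mathcal{A}}(\mathsf{l}(X),Y)\cong\mathsf{Ext}^i_{\mathcal{B}}(X,\mathsf{e}(Y))$ for all $i\geq 1$ and all $Y\in\mathcal{A}$.

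\textbf{Main obstacle.} The only non-formal point is the converse in (i): one cannot argue via $\mathsf{el}\cong\mathsf{F}\oplus\mathsf{Id}_{\mathcal{B}}$ and projectivity of $\mathsf{el}(X)$, since $\mathsf{e}$ does not preserve projectives in general; the descent through $\mathsf{q}$, relying on $\mathsf{ql}\simeq\mathsf{Id}_{\mathcal{B}}$, is what makes this work. Everything else is routine manipulation of resolutions and adjunctions.
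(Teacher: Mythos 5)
Your proposal is correct and follows essentially the same route as the paper: (i) via adjointness of $(\mathsf{l},\mathsf{e})$ for one direction and $\mathsf{q}$ together with $\mathsf{ql}\simeq\mathsf{Id}_{\mathcal{B}}$ for the other, (ii) via the splitting $\mathsf{el}\simeq\mathsf{Id}_{\mathcal{B}}\oplus\mathsf{F}$ and exactness of $\mathsf{e}$, and (iii) via faithfulness of $\mathsf{e}$ and the adjunction on cochain complexes. The only cosmetic differences are that you compute (ii) from the entire projective resolution in one step rather than by dimension shift on a single short exact sequence, and you prove the converse of (i) by an explicit lifting chase rather than by citing that $\mathsf{q}$, being left adjoint to the exact functor $\mathsf{i}$, preserves projectives.
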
 
\begin{proof}
(i) If $X$ is a projective object of $\mathcal{B}$, then $\mathsf{l}(X)$ is a projective object of $\mathcal{A}$, since $(\mathsf{l},\mathsf{e})$ is an adjoint pair of functors and $\mathsf{e}$ is exact. If, on the other hand $\mathsf{l}(X)$ is a projective object of $\mathcal{A}$, since $(\mathsf{q},\mathsf{i})$ is an adjoint pair and $\mathsf{i}$ is exact, it follows that $X\cong \mathsf{ql}(X)$ is a projective object of $\mathcal{B}$.  

(ii) Consider an exact sequence $0\rightarrow K\xrightarrow{i}P\xrightarrow{a}X\rightarrow 0$ in $\mathcal{B}$ with $P$ projective. Applying the functor $\mathsf{F}$ to the above gives the following exact sequence in $\mathcal{B}$:
\[
0\rightarrow\mathbb{L}_1\mathsf{F}(X)\rightarrow\mathsf{F}(K)\xrightarrow{\mathsf{F}(i)}\mathsf{F}(P)\xrightarrow{\mathsf{F}(a)}\mathsf{F}(X)\rightarrow 0,
\]
while applying the functor $\mathsf{l}$ gives the following exact sequence in $\mathcal{A}$: 
\[
0\rightarrow\mathbb{L}_1\mathsf{l}(X)\rightarrow\mathsf{l}(K)\xrightarrow{\mathsf{l}(i)}\mathsf{l}(P)\xrightarrow{\mathsf{l}(a)}\mathsf{l}(X)\rightarrow 0.
\]
We have $\mathbb{L}_1\mathsf{F}(X)\cong \mathsf{ker(F}(i))$ and $\mathbb{L}_1\mathsf{l}(X)\cong \Ker{\mathsf{l}(i)}$. Moreover, since $\mathsf{e}$ is exact, it follows that $\mathsf{e}(\Ker(\mathsf{l}(i)))=\Ker(\mathsf{el}(i))$. Using the natural isomorphism $\mathsf{el}\simeq \mathsf{Id}_{\mathcal{B}}\oplus \mathsf{F}$, we infer that
\[
\mathsf{e}\mathbb{L}_1\mathsf{l}(X)\cong \mathsf{e}(\Ker(\mathsf{l}(i)) \cong \Ker(\mathsf{el}(i))\cong \Ker(\mathsf{F}(i))\cong \mathbb{L}_1\mathsf{F}(X).
\] 
In the same way it follows that $\mathbb{L}_i\mathsf{F}(X)\cong \mathsf{e}\mathbb{L}_i\mathsf{l}(X)$ for all $i\geq 2$. 

(iii) By (ii) and since $\mathsf{e}$ is faithful, we have that $\mathbb{L}_i\mathsf{l}(X)=0$ for all $i\geq 1$. Consider a projective resolution of $X$:
\[
P^{\bullet}: \ \cdots\rightarrow P_n\rightarrow \cdots\rightarrow P_0\rightarrow X\rightarrow 0.
\]
Applying the functor $\mathsf{l}$ to the latter gives an exact sequence 
\[
\mathsf{l}(P^{\bullet}): \ \cdots\rightarrow \mathsf{l}(P_n)\rightarrow\cdots\rightarrow\mathsf{l}(P_0)\rightarrow \mathsf{l}(X)\rightarrow 0,
\]
and since each $\mathsf{l}(P_k)$ is projective, the above is a projective resolution of $\mathsf{l}(X)$. By applying $\mathsf{Hom}_{\mathcal{B}}(-,\mathsf{e}(Y))$ to $P^{\bullet}$ and $\mathsf{Hom}_{\mathcal{A}}(-,Y)$ to $\mathsf{l}(P^{\bullet})$ and using the adjunction $(\mathsf{l},\mathsf{e})$, we derive the following commutative diagram:
\[
\adjustbox{scale=0.86}{
   \begin{tikzcd}
\cdots \arrow[r] & {\mathsf{Hom}_{\mathcal{A}}(\mathsf{l}(X),Y)} \arrow[r] \arrow[d, "\cong"] & {\mathsf{Hom}_{\mathcal{A}}(\mathsf{l}(P_0),Y)} \arrow[r] \arrow[d, "\cong"] & \cdots \arrow[r] & {\mathsf{Hom}_{\mathcal{A}}(\mathsf{l}(P_n),Y)} \arrow[r] \arrow[d, "\cong"] & \cdots \\
\cdots \arrow[r] & {\mathsf{Hom}_{\mathcal{B}}(X,\mathsf{e}(Y))} \arrow[r]                    & {\mathsf{Hom}_{\mathcal{B}}(P_0,\mathsf{e}(Y))} \arrow[r]                    & \cdots \arrow[r] & {\mathsf{Hom}_{\mathcal{B}}(P_n,\mathsf{e}(Y))} \arrow[r]                    & \cdots
\end{tikzcd}}
\]
From the above we infer that $\mathsf{Ext}_{\mathcal{A}}^i(\mathsf{l}(X),Y)\cong \mathsf{Ext}_{\mathcal{B}}^i(X,\mathsf{e}(Y))$ for all $i\geq 1$. 
\end{proof}

We end this subsection with a characterization of the projective objects of the middle category in a cleft extension. 

\begin{lem}  \label{projectives_in_cleft}
Let $(\mathcal{B},\mathcal{A},\mathsf{i},\mathsf{e},\mathsf{l})$ be a cleft extension of abelian categories. An object of $\mathcal{A}$ is projective if and only if it is a direct summand of $\mathsf{l}(P)$ for some $P\in\Proj\mathcal{B}$. 
\end{lem}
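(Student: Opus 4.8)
The plan is to prove both implications, where the ``if'' direction is immediate and the ``only if'' direction is the substantive one. For the easy direction: if $P \in \Proj\mathcal{B}$, then $\mathsf{l}(P) \in \Proj\mathcal{A}$ by Lemma \ref{basic_homological_properties_of_cleft}(i), and since $\mathcal{A}$ is abelian, any direct summand of a projective object is projective. So it remains to show that every projective object $Q$ of $\mathcal{A}$ arises as a direct summand of $\mathsf{l}(P)$ for a suitable projective $P$ of $\mathcal{B}$.

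First I would consider the counit epimorphism $\mu_Q \colon \mathsf{le}(Q) \rightarrow Q$, which is an epimorphism for every object of $\mathcal{A}$ by the discussion preceding the lemma (using that $\mathsf{e}(\mu_Q)$ is split epi and $\mathsf{e}$ is faithful exact). Since $Q$ is projective, this epimorphism splits, so $Q$ is a direct summand of $\mathsf{le}(Q)$. Now the object $\mathsf{e}(Q)$ of $\mathcal{B}$ need not be projective, so I cannot stop here; instead I would choose an epimorphism $P \twoheadrightarrow \mathsf{e}(Q)$ with $P \in \Proj\mathcal{B}$ (recall $\mathcal{B}$ is assumed to have enough projectives). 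Applying $\mathsf{l}$, which is a left adjoint and hence right exact, yields an epimorphism $\mathsf{l}(P) \twoheadrightarrow \mathsf{l}\mathsf{e}(Q)$. Composing with the split epimorphism $\mathsf{le}(Q) \twoheadrightarrow Q$ gives an epimorphism $\mathsf{l}(P) \twoheadrightarrow Q$, which splits because $Q$ is projective. Hence $Q$ is a direct summand of $\mathsf{l}(P)$ with $P \in \Proj\mathcal{B}$, as desired.

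I do not anticipate a genuine obstacle here; the only point requiring a little care is making sure $\mu_Q$ is an epimorphism in $\mathcal{A}$ (not merely that $\mathsf{e}(\mu_Q)$ is), but this is exactly recorded in the paragraph before Lemma \ref{basic_homological_properties_of_cleft}, so it may be cited directly. One should also note that $\mathsf{l}$ being right exact (as a left adjoint) is what makes $\mathsf{l}(P) \to \mathsf{le}(Q)$ an epimorphism; this is standard. Thus the proof is essentially: split the counit epi using projectivity of $Q$, cover $\mathsf{e}(Q)$ by a projective of $\mathcal{B}$, push it through $\mathsf{l}$, and split again.
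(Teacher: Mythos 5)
Your proof is correct and follows essentially the same route as the paper: cover $\mathsf{e}(Q)$ by a projective $P$ of $\mathcal{B}$, apply the right exact functor $\mathsf{l}$, compose with the counit epimorphism $\mathsf{le}(Q)\twoheadrightarrow Q$, and split using projectivity of $Q$. The only cosmetic difference is that the paper builds the epimorphism $\mathsf{l}(P)\twoheadrightarrow X$ for an arbitrary object $X$ first and only then invokes projectivity to split it, whereas you phrase the argument with the projective object $Q$ throughout; the content is identical.
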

\begin{proof}
If $X$ is a direct summand of $\mathsf{l}(P)$, then it is projective since $\mathsf{l}(P)$ is projective. On the other hand, for any object $X$ of $\mathcal{A}$, there is a projective object $P\in\mathcal{B}$ and an epimorphism $P\twoheadrightarrow \mathsf{e}(X)$. Consequently, since $\mathsf{l}$ is right exact, there is an epimorphism $\mathsf{l}(P)\twoheadrightarrow \mathsf{le}(X)$. Further, there is an epimorphism $\mathsf{le}(X)\twoheadrightarrow X$, so composing the latter gives an epimorphism $\mathsf{l}(P)\twoheadrightarrow X$. If $X$ is assumed to be projective, then it is a direct summand of $\mathsf{l}(P)$. 
\end{proof}

\subsection{Cleft coextensions} In the theory of trivial extensions of abelian categories, there is the dual notion of trivial coextensions (or ``left" trivial extensions \cite{reiten}). Likewise, there is a dual notion to cleft extensions which we recall below. 

\begin{defn} \label{cleft_coextension} (\!\!\cite[Section 2]{beligiannis})
    A \emph{cleft coextension} of an abelian category $\mathcal{B}$ is an abelian category $\mathcal{A}$ together with functors:
    \[
    \begin{tikzcd}
\mathcal{B} \arrow[rr, "\mathsf{i}"] &  & \mathcal{A} \arrow[rr, "\mathsf{e}"] &  & \mathcal{B} \arrow[ll, "\mathsf{r}", bend left]
\end{tikzcd}
    \]
    satisfying the following: 
    \begin{itemize}
        \item[(i)] The functor $\mathsf{e}$ is faithful exact. 
        \item[(ii)] The pair $(\mathsf{e},\mathsf{r})$ is an adjoint pair. 
        \item[(iii)] There is a natural isomorphism $\mathsf{ei}\simeq \mathsf{Id}_{\mathcal{B}}$ of functors.
    \end{itemize}
\end{defn}

From now on we will denote a cleft coextension by $(\mathcal{B},\mathcal{A},\mathsf{i},\mathsf{e},\mathsf{r})$. Like with cleft extensions, there is more structure that arises from the above information. For example, it turns out that $\mathsf{i}$ is fully faithful exact and that there is a functor $\mathsf{p}\colon\mathcal{A}\rightarrow \mathcal{B}$ such that $(\mathsf{i},\mathsf{p})$ is an adjoint pair. Necessarily we have that $\mathsf{pr}\simeq\mathsf{Id}_{\mathcal{B}}$. Most importantly for the purposes of this paper, there are endofunctors $\mathsf{G}'\colon\mathcal{A}\rightarrow\mathcal{A}$ and $\mathsf{F}'\colon\mathcal{B}\rightarrow\mathcal{B}$ that appear in the following short exact sequences: 
\[
0\rightarrow \mathsf{Id}_{\mathcal{A}}\rightarrow \mathsf{re}\rightarrow\mathsf{G}'\rightarrow 0 \  \text{ and }  \ 0\rightarrow \mathsf{Id}_{\mathcal{B}}\rightarrow \mathsf{er}\rightarrow \mathsf{F'}\rightarrow 0.
\]
The second one splits. Further, $\mathsf{e}\mathsf{G}'^n\simeq \mathsf{F}'^n\mathsf{e}$ for every $n\geq 1$ and in particular, the functor $\mathsf{G}'$ is nilpotent if and only if $\mathsf{F}'$ is nilpotent. Moroever, for every $A\in\mathcal{A}$ and $n\geq 1$, there is a short exact sequence 
\[
0\rightarrow \mathsf{G}'^{n-1}(A)\rightarrow \mathsf{rF}'^{n-1}\mathsf{e}(A)\rightarrow \mathsf{G}'^{n}(A)\rightarrow 0.
\]
Every property of a cleft extension has a dual counterpart for cleft coextensions. In this case, we will usually leave the proofs of the second to the reader. The following lemmata are the first such instances and, for this reason, we provide full proofs. 

\begin{lem} \label{basic_homological_properties_of_cocleft}
    Let $(\mathcal{B},\mathcal{A},\mathsf{i},\mathsf{e},\mathsf{r})$ be a cleft coextension of abelian categories. Consider an object $X$ of $\mathcal{B}$. The following hold: 
    \begin{itemize}
        \item[(i)] $X\in\Inj \mathcal{B}$ if and only if $\mathsf{r}(X)\in\Inj\mathcal{A}$. 
        \item[(ii)] $\mathbb{R}^i\mathsf{F}'(X)\cong\mathsf{e}\mathbb{R}^i\mathsf{r}(X)$ for all $i\geq 1$. 
        \item[(iii)] If $\mathbb{R}^i\mathsf{F}'(X)=0$ for all $i\geq 1$, then $\mathsf{Ext}_{\mathcal{A}}^i(Y,\mathsf{r}(X))\cong \mathsf{Ext}^i_{\mathcal{B}}(\mathsf{e}(Y),X)$ for all $i\geq 1$ and every object $Y$ of $\mathcal{A}$.
    \end{itemize}
\end{lem}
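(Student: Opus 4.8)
The plan is to prove \lemref{basic_homological_properties_of_cocleft} by dualizing the proof of \lemref{basic_homological_properties_of_cleft}, using the short exact sequences and adjunctions recorded for cleft coextensions in the preceding discussion (in particular the natural isomorphism $\mathsf{er}\simeq\mathsf{Id}_{\mathcal B}\oplus\mathsf{F}'$, the adjoint pairs $(\mathsf{e},\mathsf{r})$ and $(\mathsf{i},\mathsf{p})$, the exactness of $\mathsf{e}$ and $\mathsf{i}$, and $\mathsf{pr}\simeq\mathsf{Id}_{\mathcal B}$).

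For part (i): if $X$ is injective in $\mathcal B$, then $\mathsf{r}(X)$ is injective in $\mathcal A$ because $(\mathsf{e},\mathsf{r})$ is an adjoint pair with $\mathsf{e}$ exact, so $\mathsf{r}$ preserves injectives. Conversely, if $\mathsf{r}(X)$ is injective in $\mathcal A$, then since $(\mathsf{i},\mathsf{p})$ is an adjoint pair with $\mathsf{i}$ exact, the functor $\mathsf{p}$ preserves injectives, hence $X\cong\mathsf{pr}(X)$ is injective in $\mathcal B$.

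For part (ii): take an exact sequence $0\to X\xrightarrow{a} I\xrightarrow{\pi} C\to 0$ in $\mathcal B$ with $I$ injective. Applying the right exact-from-the-right functor $\mathsf{F}'$ (it has left-derived... no---here $\mathsf{F}'$ is left exact, so we use right-derived functors $\mathbb{R}^i\mathsf{F}'$) yields the exact sequence $0\to\mathsf{F}'(X)\to\mathsf{F}'(I)\xrightarrow{\mathsf{F}'(\pi)}\mathsf{F}'(C)\to\mathbb{R}^1\mathsf{F}'(X)\to\cdots$, and applying $\mathsf{r}$ yields $0\to\mathsf{r}(X)\to\mathsf{r}(I)\xrightarrow{\mathsf{r}(\pi)}\mathsf{r}(C)\to\mathbb{R}^1\mathsf{r}(X)\to\cdots$, from which $\mathbb{R}^1\mathsf{F}'(X)\cong\Coker(\mathsf{F}'(\pi))$ and $\mathbb{R}^1\mathsf{r}(X)\cong\Coker(\mathsf{r}(\pi))$. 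Since $\mathsf{e}$ is exact, $\mathsf{e}(\Coker(\mathsf{r}(\pi)))=\Coker(\mathsf{er}(\pi))$, and the splitting $\mathsf{er}\simeq\mathsf{Id}_{\mathcal B}\oplus\mathsf{F}'$ identifies $\Coker(\mathsf{er}(\pi))\cong\Coker(\mathsf{F}'(\pi))$, giving $\mathsf{e}\mathbb{R}^1\mathsf{r}(X)\cong\mathbb{R}^1\mathsf{F}'(X)$. Iterating (dimension shifting, using that $\mathsf{r}$ sends injectives to injectives by (i) and the first cokernel is the next syzygy up to the split summand) gives $\mathbb{R}^i\mathsf{F}'(X)\cong\mathsf{e}\mathbb{R}^i\mathsf{r}(X)$ for all $i\ge1$.

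For part (iii): by (ii) and faithfulness of $\mathsf{e}$, the hypothesis $\mathbb{R}^i\mathsf{F}'(X)=0$ forces $\mathbb{R}^i\mathsf{r}(X)=0$ for all $i\ge1$. Take an injective resolution $0\to X\to I^0\to I^1\to\cdots$ in $\mathcal B$; applying $\mathsf{r}$ gives an exact complex $0\to\mathsf{r}(X)\to\mathsf{r}(I^0)\to\mathsf{r}(I^1)\to\cdots$ with each $\mathsf{r}(I^k)$ injective in $\mathcal A$ by (i), so it is an injective resolution of $\mathsf{r}(X)$. Applying $\Hom_{\mathcal A}(\mathsf{e}(Y),-)$ to this resolution and $\Hom_{\mathcal B}(\mathsf{e}(Y)\ \text{—no}, Y, \text{—})$... more precisely, applying $\Hom_{\mathcal A}(Y,-)$ to the resolution of $\mathsf{r}(X)$ and $\Hom_{\mathcal B}(\mathsf{e}(Y),-)$ to the resolution of $X$, the adjunction $(\mathsf{e},\mathsf{r})$ gives a natural isomorphism of the two complexes, whence $\mathsf{Ext}^i_{\mathcal A}(Y,\mathsf{r}(X))\cong\mathsf{Ext}^i_{\mathcal B}(\mathsf{e}(Y),X)$ for all $i\ge1$. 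I expect the only mild subtlety to be the bookkeeping in the inductive step of (ii)—correctly peeling off the split $\mathsf{Id}_{\mathcal B}$-summand at each stage so that the comparison of cokernels propagates—but this is a direct transcription of the argument for \lemref{basic_homological_properties_of_cleft}(ii) and presents no real obstacle.
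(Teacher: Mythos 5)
Your proposal is correct and follows essentially the same route as the paper: dualize Lemma~\ref{basic_homological_properties_of_cleft} using the adjunctions $(\mathsf{e},\mathsf{r})$, $(\mathsf{i},\mathsf{p})$, the splitting $\mathsf{er}\simeq\mathsf{Id}_{\mathcal B}\oplus\mathsf{F}'$, and dimension shifting along a one-step injective coresolution. In fact, in part (i) you correctly attribute the injective-preservation of $\mathsf{r}$ to exactness of $\mathsf{e}$ and that of $\mathsf{p}$ to exactness of $\mathsf{i}$, whereas the paper's proof inadvertently swaps these two references.
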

\begin{proof}
(i) If $X$ is an injective object of $\mathcal{B}$, then $\mathsf{r}(X)$ is an injective object of $\mathcal{A}$, since $(\mathsf{e},\mathsf{r})$ is an adjoint pair and $\mathsf{i}$ is exact. If, on the other hand, $\mathsf{r}(X)$ is an injective object of $\mathcal{A}$, since $(\mathsf{i},\mathsf{p})$ is an adjoint pair and $\mathsf{e}$ is exact, it follows that $X\cong \mathsf{pr}(X)$ is an injective object of $\mathcal{B}$. 

(ii) Consider an exact sequence $0\rightarrow X\xrightarrow{a}I\xrightarrow{i}K\rightarrow 0$ in $\mathcal{B}$ with $I$ injective. Applying the functor $\mathsf{F}'$ gives the following exact sequence in $\mathcal{B}$: 
\[
    0\rightarrow \mathsf{F}'(X)\xrightarrow{\mathsf{F}'(a)}\mathsf{F}'(I)\xrightarrow{\mathsf{F}'(i)}\mathsf{F}'(K)\rightarrow \mathbb{R}^1\mathsf{F}'(X)\rightarrow 0,
\]
while applying the functor $\mathsf{r}$ gives the following exact sequence in $\mathcal{A}$: 
\[
0\rightarrow \mathsf{r}(X)\xrightarrow{\mathsf{r}(a)}\mathsf{r}(I)\xrightarrow{\mathsf{r}(i)}\mathsf{r}(K)\rightarrow\mathbb{R}^1\mathsf{r}(X)\rightarrow0.
\]
We have $\mathbb{R}^1\mathsf{F}'(X)\cong \mathsf{coker}(\mathsf{F}'(i))$ and $\mathbb{R}^1(\mathsf{r}(X))\cong \mathsf{coker}(\mathsf{r}(i))$. Moreover, since $\mathsf{e}$ is exact, it follows that $\mathsf{e}(\mathsf{coker}(\mathsf{r}(i)))\cong\mathsf{coker(\mathsf{er}(i))}$. Using the natural isomorphism $\mathsf{er}\simeq \mathsf{F}'\oplus\mathsf{Id}_{\mathcal{B}}$, we infer that 
\[
\mathsf{e}\mathbb{R}^1\mathsf{r}(X)\cong \mathsf{e}(\mathsf{coker}(\mathsf{r}(i)))\cong \mathsf{coker}(\mathsf{F}'(i))\cong \mathbb{R}^1\mathsf{F}'(X). 
\]
In the same way it follows that $\mathbb{R}^i\mathsf{F}'(X)\cong \mathsf{e}\mathbb{R}^i\mathsf{r}(X)$ for all $i\geq 2$. 

(iii) By (ii) and since $\mathsf{e}$ is faithful, we have $\mathbb{R}^i\mathsf{r}(X)=0$ for all $i\geq 1$. Consider an injective resolution of $X$: 
\[
I^{\bullet}: 0\rightarrow X\rightarrow I_0\rightarrow\cdots\rightarrow I_n\rightarrow \cdots
\]
Applying the functor $\mathsf{r}$ to the latter gives an exact sequence 
\[
\mathsf{r}(I^{\bullet}): 0\rightarrow \mathsf{r}(X)\rightarrow\mathsf{r}(I_0)\rightarrow\cdots\rightarrow \mathsf{r}(I_n)\rightarrow\cdots
\]
and since each $\mathsf{r}(I_k)$ is injective, the above is an injective resolution of $\mathsf{r}(X)$. By applying $\mathsf{Hom}_{\mathcal{B}}(\mathsf{e}(Y),-)$ to $I^{\bullet}$ and $\mathsf{Hom}_{\mathcal{A}}(Y,-)$ to $\mathsf{r}(I^{\bullet})$ and using the adjunction $(\mathsf{e},\mathsf{r})$, we derive the following commutative diagram:
\[
\adjustbox{scale=0.87}{\begin{tikzcd}
\cdots \arrow[r] & {\mathsf{Hom}_{\mathcal{A}}(Y,\mathsf{r}(X))} \arrow[r] \arrow[d, "\cong"] & {\mathsf{Hom}_{\mathcal{A}}(Y,\mathsf{r}(I_0))} \arrow[r] \arrow[d, "\cong"] & \cdots \arrow[r] & {\mathsf{Hom}_{\mathcal{A}}(Y,\mathsf{r}(I_n))} \arrow[r] \arrow[d, "\cong"] & \cdots \\
\cdots \arrow[r] & {\mathsf{Hom}_{\mathcal{B}}(\mathsf{e}(Y),X)} \arrow[r]                    & {\mathsf{Hom}_{\mathcal{B}}(\mathsf{e}(Y),I_0)} \arrow[r]                    & \cdots \arrow[r] & {\mathsf{Hom}_{\mathcal{B}}(\mathsf{e}(Y),I_n)} \arrow[r]                    & \cdots
\end{tikzcd}}
\]
From the above we infer that $\mathsf{Ext}_{\mathcal{A}}^i(Y,\mathsf{r}(X))\cong \mathsf{Ext}^i_{\mathcal{B}}(\mathsf{e}(Y),X)$ for all $i\geq 1$. 
\end{proof}

The following is the dual of Lemma \ref{projectives_in_cleft}. 

\begin{lem} \label{injectives_in_cleft}
    Let $(\mathcal{B},\mathcal{A},\mathsf{i},\mathsf{e},\mathsf{l})$ be a cleft coextension of abelian categories. An object of $\mathcal{A}$ is injective if and only if it is a direct summand of $\mathsf{r}(I)$ for some $I\in\Inj\mathcal{B}$.  
\end{lem}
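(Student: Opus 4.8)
The plan is to dualize the proof of Lemma~\ref{projectives_in_cleft} step by step, swapping epimorphisms for monomorphisms, the left adjoint $\mathsf{l}$ for the right adjoint $\mathsf{r}$, and projective covers for injective embeddings.

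For the ``if'' direction, suppose $X$ is a direct summand of $\mathsf{r}(I)$ for some $I\in\Inj\mathcal{B}$. I would first note that $\mathsf{r}(I)$ is injective in $\mathcal{A}$ --- this is Lemma~\ref{basic_homological_properties_of_cocleft}(i), and alternatively it is immediate since $\mathsf{r}$ is right adjoint to the exact functor $\mathsf{e}$ --- and conclude that its direct summand $X$ is injective as well.

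For the converse, let $X$ be an arbitrary object of $\mathcal{A}$. Since $\mathcal{B}$ has enough injectives, I would choose a monomorphism $\mathsf{e}(X)\monic I$ with $I\in\Inj\mathcal{B}$. Applying the left exact functor $\mathsf{r}$ (it is a right adjoint) yields a monomorphism $\mathsf{re}(X)\monic\mathsf{r}(I)$. Next, the unit $\eta_X\colon X\to\mathsf{re}(X)$ of the adjunction $(\mathsf{e},\mathsf{r})$ is a monomorphism: it is precisely the injection $\mathsf{Id}_{\mathcal{A}}\hookrightarrow\mathsf{re}$ occurring in the short exact sequence $0\to\mathsf{Id}_{\mathcal{A}}\to\mathsf{re}\to\mathsf{G}'\to 0$ recalled above in this subsection (equivalently, $\mathsf{e}(\eta_X)$ is split monic by the triangle identities and $\mathsf{e}$ is faithful exact, hence $\eta_X$ is monic). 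Composing the two monomorphisms, I obtain a monomorphism $X\monic\mathsf{r}(I)$; if $X$ is assumed injective this monomorphism splits, exhibiting $X$ as a direct summand of $\mathsf{r}(I)$.

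I do not expect a genuine obstacle here, since the statement is exactly dual to the already-proved Lemma~\ref{projectives_in_cleft}. The only point worth a moment's care is checking that each ingredient is genuinely part of the cleft-coextension data as set up above: left exactness of $\mathsf{r}$, injectivity of $\mathsf{r}(I)$ for $I$ injective (Lemma~\ref{basic_homological_properties_of_cocleft}(i)), and monicity of the unit $\mathsf{Id}_{\mathcal{A}}\to\mathsf{re}$ (the exactness of $0\to\mathsf{Id}_{\mathcal{A}}\to\mathsf{re}\to\mathsf{G}'\to 0$). All of these are available.
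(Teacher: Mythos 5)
Your proof is correct and follows exactly the same route as the paper's own: show $\mathsf{r}(I)$ is injective for the ``if'' direction, and for the converse embed $\mathsf{e}(X)$ in an injective $I$, apply the left exact $\mathsf{r}$, precompose with the monic unit $X\rightarrowtail\mathsf{re}(X)$, and split when $X$ is injective. The extra remark justifying monicity of the unit via the split-exactness of $0\to\mathsf{Id}_{\mathcal{A}}\to\mathsf{re}\to\mathsf{G}'\to 0$ (or the triangle identities plus faithfulness of $\mathsf{e}$) is a useful explicitation of a step the paper takes for granted.
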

\begin{proof}
    If $X$ is a direct summand of $\mathsf{r}(I)$, then it is injective since $\mathsf{r}(I)$ is injective. On the other hand, for any object $X$ of $\mathcal{A}$, there is an injective object $I\in\mathcal{B}$ and a monomorphism $\mathsf{e}(X)\rightarrowtail I$. Consequently, since $\mathsf{r}$ is left exact, there is a monomorphism $\mathsf{re}(X)\rightarrowtail \mathsf{r}(I)$. Further, there is a monomorphism $X\rightarrowtail \mathsf{re}(X)$, so composing the latter gives a monomorphism $X\rightarrowtail \mathsf{r}(I)$. If $X$ is assumed to be injective, then it is a direct summand of $\mathsf{r}(I)$. 
\end{proof}

\section{Perfect and coperfect endofunctors}

In this section we define the notion of a perfect endofunctor and study its basic properties. The notion of a left perfect endofunctor was recently considered in \cite{graded_injective_generation}. In the generality that we work with, the conditions that we consider were treated by Beligiannis in \cite[Section 7]{beligiannis2} and similar conditions have been studied, for instance, by Fossum-Griffith-Reiten \cite{reiten}, Löfwall \cite{lofwall}, Minamoto-Yamaura \cite{minamoto_yamaura} and Palmer-Roos \cite{palmer_roos}. The term perfect is motivated by the connection with the concept of a perfect bimodule (see Definition \ref{perfect_bimodule}, Lemma \ref{tensor_is_perfect}) in the sense of \cite{perfect}.

\subsection{Perfect endofunctors} We begin with the definition of a perfect endofunctor. Throughout, $\mathcal{B}$ denotes an abelian category (with enough projectives).  
\begin{defn} 
\label{perfect_functor}
Let $\mathsf{F}\colon\mathcal{B}\rightarrow \mathcal{B}$ be a right exact endofunctor. We consider the following condition on $\mathsf{F}$: 
\begin{align*} \label{R}{\tag{$\textbf{R}$}}
    \mathbb{L}_i\mathsf{F}^j(\mathsf{F}(P))=0 \text{ for all } i,j\geq 1 \text{ and every } P\in\Proj\mathcal{B}.
\end{align*}
We say that $\mathsf{F}$ is \emph{perfect} if \textnormal{(\ref{R})} holds and in addition the following are satisfied: \\ 
(i) there is $n\geq 0$ such that $\mathbb{L}_p\mathsf{F}^q=0$ for all $p,q\geq 1$ with $p+q\geq n+1$. \\ 
(ii) $\pd{_{\mathcal{B}}\mathsf{F}(P)}<\infty$ for every $P\in\Proj\mathcal{B}$. 
\end{defn}
 
The following lemma gives an equivalent characterization of condition (\ref{R}). 

\begin{lem} \textnormal{(\!\!\cite[Lemma 7.4 (a)]{beligiannis2})} \label{equivalent_to_condition_R}
    A right exact endofunctor $\mathsf{F}\colon\mathcal{B}\rightarrow \mathcal{B}$ satisfies condition \textnormal{(\ref{R})} if and only if the following condition is satisfied:
    \begin{align*}
        \mathbb{L}_i\mathsf{F}(\mathsf{F}^j(P))=0 \text{ for all } i,j\geq 1 \text{ and every }P\in\Proj\mathcal{B}.
    \end{align*}
\end{lem}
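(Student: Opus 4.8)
The plan is to show the two conditions are equivalent by proving each implication. First, note that condition~(\ref{R}) is the statement $\mathbb{L}_i\mathsf{F}^j(\mathsf{F}(P)) = 0$ for all $i,j \geq 1$ and $P \in \Proj\mathcal{B}$, while the target condition reads $\mathbb{L}_i\mathsf{F}(\mathsf{F}^j(P)) = 0$ for all $i,j \geq 1$ and $P \in \Proj\mathcal{B}$. The key observation is that for a right exact endofunctor $\mathsf{F}$ and any object $X$, applying a projective resolution $P^\bullet \to X$ yields that $\mathsf{F}^j$ applied to this resolution computes $\mathbb{L}_\ast\mathsf{F}^j(X)$ only when the intermediate syzygies interact well with $\mathsf{F}$; more usefully, one uses the Grothendieck-type spectral sequence (or its low-degree consequences) relating $\mathbb{L}_\ast\mathsf{F}^j$ to iterated application of $\mathbb{L}_\ast\mathsf{F}$. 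I would instead proceed by induction on $j$ using short exact sequences.

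For the implication "(\ref{R}) $\Rightarrow$ target": I would argue by induction on $j$ that $\mathbb{L}_i\mathsf{F}(\mathsf{F}^j(P)) = 0$ for all $i \geq 1$. The case $j=0$ is trivial since $\mathsf{F}(P)$ has — wait, actually $j \geq 1$ is required, so the base case is $j = 1$, i.e. $\mathbb{L}_i\mathsf{F}(\mathsf{F}(P)) = 0$, which is exactly (\ref{R}) with $j = 1$. For the inductive step, write $\mathsf{F}^j(P) = \mathsf{F}(\mathsf{F}^{j-1}(P))$ and choose a projective presentation $0 \to K \to Q \to \mathsf{F}^{j-1}(P) \to 0$ with $Q$ projective (note $\mathsf{F}^{j-1}(P)$ need not be projective for $j \geq 2$, but $\mathcal{B}$ has enough projectives). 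Applying $\mathsf{F}$ gives a long exact sequence computing $\mathbb{L}_\ast\mathsf{F}(\mathsf{F}^{j-1}(P))$ in terms of $\mathsf{F}(K)$, $\mathsf{F}(Q)$; by (\ref{R}), $\mathbb{L}_i\mathsf{F}(\mathsf{F}(Q)) = 0$ wait — that's not quite the shape either. The cleaner route: use that $\mathbb{L}_i\mathsf{F}^j$ and the composite-functor relationship. Honestly, I would look up and cite the structure of the proof in \cite[Lemma 7.4]{beligiannis2} directly, since the lemma is attributed there; the essential input is a dimension-shifting argument combined with the defining short exact sequences $0 \to \mathsf{G}^n(A) \to \mathsf{lF}^{n-1}\mathsf{e}(A) \to \mathsf{G}^{n-1}(A) \to 0$ and the isomorphisms $\mathsf{F}^n\mathsf{e} \simeq \mathsf{e}\mathsf{G}^n$ recalled in Section~2, which translate statements about iterates of $\mathsf{F}$ into statements about $\mathsf{G}$ and back.

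The reverse implication "target $\Rightarrow$ (\ref{R})" should follow by a symmetric dimension-shifting induction on $j$, peeling off one $\mathsf{F}$ at a time from the outside rather than the inside: given $\mathbb{L}_i\mathsf{F}(\mathsf{F}^j(P)) = 0$, one relates $\mathbb{L}_i\mathsf{F}^j(\mathsf{F}(P))$ to $\mathbb{L}_\ast\mathsf{F}(-)$ of lower iterates via the same short exact sequences. The main obstacle I anticipate is bookkeeping: making sure the indices $i$ and $j$ in the inductive hypothesis are exactly what is needed at each step, and handling the fact that $\mathsf{F}^j(P)$ is generally not projective so one cannot naively say its higher derived functors vanish — one must genuinely use the vanishing hypothesis, not projectivity. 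A spectral sequence argument ($\mathbb{L}_p\mathsf{F}(\mathbb{L}_q\mathsf{F}^{j-1}(X)) \Rightarrow \mathbb{L}_{p+q}\mathsf{F}^j(X)$, valid because $\mathsf{F}$ is right exact and $\mathcal{B}$ has enough projectives) would make both directions uniform: if all $\mathbb{L}_p\mathsf{F}$ applied to $\mathsf{F}$-iterates of projectives vanish, the spectral sequence collapses and gives the other vanishing, and vice versa. I would present the spectral sequence version as the main line of argument, falling back on the explicit long exact sequences only to verify the edge cases in low degree.
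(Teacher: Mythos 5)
Your proposal does not actually reach a proof: you acknowledge twice that you would ``look up and cite'' the source rather than complete the argument, and the two concrete ideas you float each run into a real obstacle.

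The first idea — invoking the cleft-extension short exact sequences $0 \to \mathsf{G}^n(A) \to \mathsf{lF}^{n-1}\mathsf{e}(A) \to \mathsf{G}^{n-1}(A) \to 0$ and the isomorphism $\mathsf{F}^n\mathsf{e} \simeq \mathsf{e}\mathsf{G}^n$ — is simply not available here. Lemma~\ref{equivalent_to_condition_R} is a statement about an arbitrary right exact endofunctor on an abelian category with enough projectives; there is no cleft extension in its hypotheses, hence no $\mathsf{l}$, $\mathsf{e}$, or $\mathsf{G}$ to invoke. The second idea — a Grothendieck spectral sequence $\mathbb{L}_p\mathsf{F}\bigl(\mathbb{L}_q\mathsf{F}^{j-1}(X)\bigr) \Rightarrow \mathbb{L}_{p+q}\mathsf{F}^j(X)$ — is \emph{not} ``valid because $\mathsf{F}$ is right exact and $\mathcal{B}$ has enough projectives.'' The existence of this spectral sequence requires the inner functor $\mathsf{F}^{j-1}$ to send projectives to $\mathsf{F}$-acyclic objects, i.e.\ $\mathbb{L}_i\mathsf{F}(\mathsf{F}^{j-1}(P)) = 0$ for $i \geq 1$, which is essentially the target condition you are trying to establish. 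Using it would be circular.

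What does work is the elementary resolution-comparison argument the paper gives, which you circle around but do not pin down. Fix $P \in \Proj\mathcal{B}$ and a projective resolution $P^{\bullet} \to \mathsf{F}(P)$. Assuming (\ref{R}), the complex $\mathsf{F}(P^{\bullet})$ has homology $\mathbb{L}_i\mathsf{F}(\mathsf{F}(P)) = 0$ for $i \geq 1$ (this is (\ref{R}) with $j=1$), so it is exact; moreover its terms $\mathsf{F}(P_k)$ are $\mathsf{F}$-acyclic (again (\ref{R}) with $j=1$, since each $P_k$ is projective). Hence $\mathsf{F}(P^{\bullet})$ is an $\mathsf{F}$-acyclic resolution of $\mathsf{F}^2(P)$, so the homology of $\mathsf{F}^2(P^{\bullet})$ computes $\mathbb{L}_i\mathsf{F}(\mathsf{F}^2(P))$. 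But $\mathsf{F}^2(P^{\bullet})$ is also $\mathsf{F}^2$ applied to a projective resolution of $\mathsf{F}(P)$, whose homology is $\mathbb{L}_i\mathsf{F}^2(\mathsf{F}(P)) = 0$ by (\ref{R}). Iterating (a clean induction on $j$, with the $j$-th step showing $\mathsf{F}^j(P^{\bullet})$ is an $\mathsf{F}$-acyclic resolution of $\mathsf{F}^{j+1}(P)$) gives $\mathbb{L}_i\mathsf{F}(\mathsf{F}^j(P)) = 0$ for all $i,j$; the converse implication is obtained symmetrically. Your instinct that one compares $\mathbb{L}_i\mathsf{F}^j(\mathsf{F}(P))$ with $\mathbb{L}_i\mathsf{F}(\mathsf{F}^j(P))$ by manipulating a single resolution is correct, but the mechanism is the $\mathsf{F}$-acyclicity of the iterates of a projective resolution, bootstrapped from (\ref{R}), not a pre-existing spectral sequence or the cleft-extension structure.
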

\begin{proof}
    Assume that condition (\ref{R}) is satisfied. Consider a projective resolution of $\mathsf{F}(P)$, for some $P\in\Proj \mathcal{B}$
    \[
    \cdots\rightarrow P_k\rightarrow\cdots\rightarrow P_0\rightarrow\mathsf{F}(P)\rightarrow 0.
    \]
    Applying the functor $\mathsf{F}$ gives the following complex:
    \[
    \cdots\rightarrow \mathsf{F}(P_k)\rightarrow\cdots\rightarrow\mathsf{F}(P_0)\rightarrow \mathsf{F}^2(P)\rightarrow 0,    \]
    whose homology computes $\mathbb{L}_i\mathsf{F}(\mathsf{F}(P))$ for $i\geq 1$, which is 0 by assumption. Therefore we may apply $\mathsf{F}$ to the above, which gives the following complex: 
    \[
    \cdots\rightarrow \mathsf{F}^2(P_k)\rightarrow\cdots\rightarrow\mathsf{F}^2(P_0)\rightarrow \mathsf{F}^3(P)\rightarrow 0,
    \]
    whose homology computes $\mathbb{L}_i\mathsf{F}(\mathsf{F}^2(P))$ for $i\geq 1$, which again is $0$ by assumption. However, the latter resolution is the same as applying $\mathsf{F}^2$ to the projective resolution of $\mathsf{F}(P)$ and this computes $\mathbb{L}_i\mathsf{F}^2(\mathsf{F}(P))$ for $i\geq 1$, i.e.\ $\mathbb{L}_i\mathsf{F}^2(\mathsf{F}(P))\cong 0$ for $i\geq 1$. It follows similarly that $\mathbb{L}_i\mathsf{F}^j(\mathsf{F}(P))\cong 0$ for $j\geq 3$ and all $i\geq 1$. Similarly one can prove the inverse implication. 
\end{proof}

A proof of the following technical lemma can be found in \cite[Lemma 6.5]{graded_injective_generation}; compare with \cite[Lemma 4.2]{perfect}.

\begin{lem} \label{F_projective}
    Consider a right exact endofunctor $\mathsf{F}\colon\mathcal{B}\rightarrow\mathcal{B}$ that satisfies condition \textnormal{(\ref{R})} and let $X$ be an object of $\mathcal{B}$. The following are equivalent: 
    \begin{itemize}
        \item[(i)] $\mathbb{L}_i\mathsf{F}(\mathsf{F}^j(X))=0$ for all $i\geq 1$ and $j\geq 0$.
        \item[(ii)] $\mathbb{L}_i\mathsf{F}^s(\mathsf{F}^j(X))=0$ for all $i,s\geq 1$ and $j\geq 0$. 
        \item[(iii)] $\mathbb{L}_i\mathsf{F}^j(X)=0$ for all $i,j\geq 1$. \end{itemize}
\end{lem}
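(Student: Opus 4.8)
The plan is to prove the cycle of implications $(ii)\Rightarrow(i)\Rightarrow(iii)\Rightarrow(ii)$, using only condition (\ref{R}) on $\mathsf{F}$, the right exactness of $\mathsf{F}$, and the dimension-shifting technique already exploited in the proof of Lemma \ref{equivalent_to_condition_R}. The implication $(ii)\Rightarrow(i)$ is trivial, being the special case $s=1$.

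For $(iii)\Rightarrow(ii)$: fix a projective resolution $P^{\bullet}\to X$. Since $\mathbb{L}_i\mathsf{F}^j(X)=0$ for all $i,j\geq 1$, the complex $\mathsf{F}^j(P^{\bullet})$ is a resolution of $\mathsf{F}^j(X)$ for every $j\geq 0$; its terms $\mathsf{F}^j(P_k)$ are of the form $\mathsf{F}^{j}$ applied to a projective, hence by condition (\ref{R}) together with Lemma \ref{F_projective}'s hypothesis — more precisely by Lemma \ref{equivalent_to_condition_R} — applying $\mathsf{F}^s$ to this resolution is exact except possibly at the tail, and a repeated application of $\mathsf{F}$ one step at a time (exactly the inductive argument in the proof of Lemma \ref{equivalent_to_condition_R}) shows the homology of $\mathsf{F}^s(\mathsf{F}^j(P^{\bullet}))$ in positive degrees vanishes. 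This yields $\mathbb{L}_i\mathsf{F}^s(\mathsf{F}^j(X))=0$ for all $i,s\geq 1$, $j\geq 0$.

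For the main implication $(i)\Rightarrow(iii)$: I would argue by induction on $j\geq 1$ that $\mathbb{L}_i\mathsf{F}^j(X)=0$ for all $i\geq 1$. The base case $j=1$ is immediate from (i) with $j=0$. For the inductive step, write $\mathbb{L}_i\mathsf{F}^{j+1}(X)$ as the hyperhomology of $\mathsf{F}\circ\mathsf{F}^{j}$ applied to $X$; since by induction $\mathsf{F}^{j}$ is ``exact on the resolution of $X$'' (i.e.\ $\mathbb{L}_i\mathsf{F}^j(X)=0$ for $i\geq1$), one gets a Grothendieck-type spectral sequence $\mathbb{L}_p\mathsf{F}(\mathbb{L}_q\mathsf{F}^{j}(X))\Rightarrow \mathbb{L}_{p+q}\mathsf{F}^{j+1}(X)$ which degenerates to $\mathbb{L}_i\mathsf{F}^{j+1}(X)\cong \mathbb{L}_i\mathsf{F}(\mathsf{F}^{j}(X))$. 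The latter is $0$ for $i\geq 1$ by hypothesis (i). To stay elementary and avoid invoking spectral sequences, I would instead realize this isomorphism by hand: take a projective resolution $P^{\bullet}\to X$; by the inductive hypothesis $\mathsf{F}^{j}(P^{\bullet})$ is a resolution of $\mathsf{F}^{j}(X)$, but it need not be a \emph{projective} resolution, so one compares it with an honest projective resolution $Q^{\bullet}\to\mathsf{F}^{j}(X)$ via a chain map lifting the identity; since each $\mathsf{F}^{j}(P_k)$ has the form $\mathsf{F}^{j}(\text{projective})$, condition (\ref{R}) gives $\mathbb{L}_i\mathsf{F}(\mathsf{F}^{j}(P_k))=0$ for $i\geq1$, which is precisely what is needed to see that applying $\mathsf{F}$ to $\mathsf{F}^{j}(P^{\bullet})$ computes the same derived functor as applying $\mathsf{F}$ to $Q^{\bullet}$. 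Hence $\mathbb{L}_i\mathsf{F}^{j+1}(X)\cong \mathbb{L}_i\mathsf{F}(\mathsf{F}^{j}(X))=0$, completing the induction.

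The main obstacle is the bookkeeping in $(i)\Rightarrow(iii)$: making rigorous, without spectral sequences, the claim that a non-projective but acyclic complex $\mathsf{F}^{j}(P^{\bullet})$ whose terms are $\mathsf{F}$-acyclic computes $\mathbb{L}_\bullet\mathsf{F}$ of its augmentation. This is a standard ``resolution by $F$-acyclic objects'' argument, but one must check that $\mathsf{F}^{j}(P_k)$ is genuinely $\mathsf{F}$-acyclic, which is exactly the content of condition (\ref{R}) combined with Lemma \ref{equivalent_to_condition_R}; the rest is the routine comparison of resolutions already modeled in the proof of Lemma \ref{equivalent_to_condition_R}. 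I expect no difficulty in $(ii)\Rightarrow(i)$ or $(iii)\Rightarrow(ii)$.
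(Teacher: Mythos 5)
Your implications $(ii)\Rightarrow(i)$ and $(i)\Rightarrow(iii)$ are correct; the elementary version of $(i)\Rightarrow(iii)$, comparing $\mathsf{F}^j(P^\bullet)$ with a genuine projective resolution of $\mathsf{F}^j(X)$ and using only $\mathsf{F}$-acyclicity of the terms $\mathsf{F}^j(P_k)$ (supplied by Lemma~\ref{equivalent_to_condition_R}), is exactly right. The gap is in $(iii)\Rightarrow(ii)$. You observe that $\mathsf{F}^j(P^\bullet)$ is an acyclic resolution of $\mathsf{F}^j(X)$ and that $\mathsf{F}^s(\mathsf{F}^j(P^\bullet))=\mathsf{F}^{s+j}(P^\bullet)$ has vanishing homology in positive degrees, and then conclude ``this yields $\mathbb{L}_i\mathsf{F}^s(\mathsf{F}^j(X))=0$.'' But $\mathsf{H}_i(\mathsf{F}^{s+j}(P^\bullet))$ is by definition $\mathbb{L}_i\mathsf{F}^{s+j}(X)$, which is just condition (iii) restated; to identify it with $\mathbb{L}_i\mathsf{F}^s(\mathsf{F}^j(X))$ you would need $\mathsf{F}^j(P^\bullet)$ to be a resolution by $\mathsf{F}^s$-acyclic objects, i.e.\ $\mathbb{L}_i\mathsf{F}^s(\mathsf{F}^j(P_k))=0$ for all $i,s\geq 1$. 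Condition (\ref{R}) together with Lemma~\ref{equivalent_to_condition_R} only gives $\mathsf{F}$-acyclicity ($s=1$); the full $\mathsf{F}^s$-acyclicity of $\mathsf{F}^j(P)$ for projective $P$ is precisely the assertion (ii) restricted to projectives, and it does not drop out of ``repeated application of $\mathsf{F}$ one step at a time'' in the way the proof of Lemma~\ref{equivalent_to_condition_R} did — there the complex being iterated on was $\mathsf{F}^k$ applied to a \emph{projective} resolution, so each stage genuinely computed a derived functor by definition, whereas here $\mathsf{F}^j(P_k)$ is not projective.

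Two ways to close the gap. (a) Prove the projective case separately by induction on $j$: if $\mathbb{L}_i\mathsf{F}^s(\mathsf{F}^j(Q))=0$ for all $i,s\geq 1$ and all projective $Q$, then for a projective resolution $Q^\bullet\to\mathsf{F}(P)$ the complex $\mathsf{F}^j(Q^\bullet)$ is an $\mathsf{F}^s$-acyclic resolution of $\mathsf{F}^{j+1}(P)$, and $\mathsf{F}^s(\mathsf{F}^j(Q^\bullet))=\mathsf{F}^{s+j}(Q^\bullet)$ has vanishing positive homology by (\ref{R}); this gives the step $j\mapsto j+1$. Then the general case follows as you intended. (b) Avoid the issue entirely: once you have $(i)\Leftrightarrow(iii)$, note that condition (i) for $X$ trivially contains condition (i) for $\mathsf{F}^k(X)$ for every $k\geq 0$ (since $\mathsf{F}^j(\mathsf{F}^k(X))=\mathsf{F}^{j+k}(X)$). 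Applying the already-proved $(i)\Rightarrow(iii)$ to $\mathsf{F}^k(X)$ yields $\mathbb{L}_i\mathsf{F}^s(\mathsf{F}^k(X))=0$ for all $i,s\geq 1$, $k\geq 0$, which is (ii). For this route you need $(iii)\Rightarrow(i)$ as well, but that is the same $\mathsf{F}$-acyclic-resolution comparison you already carried out in $(i)\Rightarrow(iii)$, applied once rather than inductively. The paper itself defers the proof to \cite[Lemma 6.5]{graded_injective_generation}, so there is no in-text proof to compare against beyond this.
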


\begin{defn}
    Consider a right exact endofunctor $\mathsf{F}\colon\mathcal{B}\rightarrow\mathcal{B}$ that satisfies condition (\ref{R}). An object $X$ of $\mathcal{B}$ is called $\mathsf{F}$-\emph{projective} if it satisfies any of the equivalent conditions of Lemma \ref{F_projective}.
\end{defn}

\begin{cor} \label{powers_of_F_proj}
    Consider a right exact endofunctor $\mathsf{F}\colon\mathcal{B}\rightarrow \mathcal{B}$ that satisfies condition \textnormal{(\ref{R})}. If $X\in\mathcal{B}$ is $\mathsf{F}$-projective, then $\mathsf{F}^j(X)$ is $\mathsf{F}$-projective for every $j\geq 1$.  
\end{cor}

\begin{lem} \label{eP_are_proj}
    Let $(\mathcal{B},\mathcal{A},\mathsf{i},\mathsf{e},\mathsf{l})$ be a cleft extension of abelian categories. If the functor $\mathsf{F}$ satisfies condition \textnormal{(\ref{R})}, then $\mathsf{e}(P)$ is $\mathsf{F}$-projective for every $P\in\Proj\mathcal{A}$. 
\end{lem}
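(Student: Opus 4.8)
The plan is to use the structural description of projective objects of $\mathcal{A}$ provided by Lemma \ref{projectives_in_cleft}, together with the transfer of higher derived functors between $\mathsf{F}$ and $\mathsf{l}$ recorded in Lemma \ref{basic_homological_properties_of_cleft}(ii), and finally reduce the $\mathsf{F}$-projectivity condition to condition (\ref{R}) itself via Lemma \ref{F_projective}.

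First I would observe that the class of $\mathsf{F}$-projective objects is closed under direct summands: this is immediate since each $\mathbb{L}_i\mathsf{F}(\mathsf{F}^j(-))$ is an additive functor, so if $\mathbb{L}_i\mathsf{F}(\mathsf{F}^j(X \oplus Y)) = 0$ then both summands vanish. By Lemma \ref{projectives_in_cleft}, every $P \in \Proj\mathcal{A}$ is a direct summand of $\mathsf{l}(Q)$ for some $Q \in \Proj\mathcal{B}$; hence $\mathsf{e}(P)$ is a direct summand of $\mathsf{el}(Q) \simeq \mathsf{F}(Q) \oplus Q$. So it suffices to show that both $\mathsf{F}(Q)$ and $Q$ are $\mathsf{F}$-projective for $Q \in \Proj\mathcal{B}$, and then conclude that the summand $\mathsf{e}(P)$ inherits the property.

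For $Q \in \Proj\mathcal{B}$ itself, being $\mathsf{F}$-projective means exactly $\mathbb{L}_i\mathsf{F}^j(Q) = 0$ for all $i,j \geq 1$ (condition (iii) of Lemma \ref{F_projective}); but this is an instance of condition (\ref{R}) — indeed it is weaker, since (\ref{R}) asks for the vanishing of $\mathbb{L}_i\mathsf{F}^j(\mathsf{F}(P))$ and also, via Lemma \ref{equivalent_to_condition_R} and the case $j=0$ of condition (iii) being trivially satisfied by a projective... more directly, projective objects are $\mathsf{F}$-projective because $\mathbb{L}_i\mathsf{F}^j(Q) = 0$ for projective $Q$ is precisely the content of condition (\ref{R}) applied with the projective object $Q$ playing a suitable role; one checks that $\mathbb{L}_i\mathsf{F}^j$ vanishes on projectives is equivalent to (\ref{R}) by Lemma \ref{F_projective}(iii)$\Rightarrow$(i) with $X=Q$. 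Then $\mathsf{F}(Q)$ is $\mathsf{F}$-projective by Corollary \ref{powers_of_F_proj} (the case $j=1$, $X = Q$). Therefore $\mathsf{F}(Q) \oplus Q \simeq \mathsf{el}(Q)$ is $\mathsf{F}$-projective, and since $\mathsf{e}(P)$ is a direct summand of it, $\mathsf{e}(P)$ is $\mathsf{F}$-projective.

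The main subtlety — really the only place one has to be slightly careful — is the bookkeeping of which derived functors one is comparing: the natural isomorphism $\mathsf{el} \simeq \mathsf{F} \oplus \mathsf{Id}_{\mathcal{B}}$ is used to identify $\mathsf{e}(P)$ as a summand of $\mathsf{F}(Q) \oplus Q$ rather than having to compute $\mathbb{L}_i\mathsf{F}^j(\mathsf{e}(P))$ directly via some spectral-sequence-type argument; alternatively, one could use Lemma \ref{basic_homological_properties_of_cleft}(ii) to write $\mathbb{L}_i\mathsf{F}(\mathsf{e}(P))$ in terms of $\mathsf{e}\mathbb{L}_i\mathsf{l}(\text{something})$, but the summand argument is cleaner and avoids iterating that identity. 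I expect the closure-under-summands reduction plus Corollary \ref{powers_of_F_proj} to do essentially all the work, so there is no serious obstacle here.
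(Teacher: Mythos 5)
Your proposal is correct and follows essentially the same argument as the paper's proof: reduce via Lemma \ref{projectives_in_cleft} to showing $\mathsf{el}(Q)\simeq Q\oplus \mathsf{F}(Q)$ is $\mathsf{F}$-projective for $Q\in\Proj\mathcal{B}$, then observe that $Q$ is $\mathsf{F}$-projective trivially and $\mathsf{F}(Q)$ is $\mathsf{F}$-projective by condition (\ref{R}). One small inaccuracy worth flagging, though it does not affect the validity of the argument: your parenthetical claim that ``$\mathbb{L}_i\mathsf{F}^j$ vanishes on projectives is equivalent to (\ref{R})'' is wrong---the vanishing $\mathbb{L}_i\mathsf{F}^j(Q)=0$ for $Q$ projective and $i\geq 1$ holds automatically for any left derived functor, whereas (\ref{R}) is the genuinely nontrivial requirement that $\mathbb{L}_i\mathsf{F}^j(\mathsf{F}(Q))=0$, i.e.\ that $\mathsf{F}$ applied to a projective is $\mathsf{F}$-projective.
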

\begin{proof}
    Since every projective object in $\mathcal{A}$ is a direct summand of $\mathsf{l}(Q)$ for some projective object $Q$ of $\mathcal{B}$, it is enough to show that $\mathbb{L}_i\mathsf{F}^j(\mathsf{el}(Q))=0$ for every projective object $Q$ of $\mathcal{B}$. But $\mathsf{el}(Q)\cong Q\oplus \mathsf{F}(Q)$, from which the claim follows. 
\end{proof}

\begin{lem} \label{basic_properties_of_perfect_endofunctor_on_cleft}
    Let $(\mathcal{B},\mathcal{A},\mathsf{i},\mathsf{e},\mathsf{l})$ be a cleft extension of abelian categories. The following hold:
    \begin{itemize}
        \item[(i)] If $\mathbb{L}_i\mathsf{F}=0$ for $i>\!\! >0$, then $\mathbb{L}_i\mathsf{l}=0$ for $i>\!\!>0$. 
        \item[(ii)] If $\pd{_{\mathcal{B}}\mathsf{F}(Q)}<\infty$ for every $Q\in\Proj\mathcal{B}$, then $\pd{_{\mathcal{B}}\mathsf{e}(P)}<\infty$ for all $P\in\Proj\mathcal{A}$. 
    \end{itemize}
    In particular, the above apply when $\mathsf{F}$ is perfect.
\end{lem}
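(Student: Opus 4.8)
The plan is to deduce each statement from the key structural facts about a cleft extension, in particular the natural isomorphism $\mathsf{el}\simeq \mathsf{F}\oplus \mathsf{Id}_{\mathcal{B}}$, the identities $\mathbb{L}_i\mathsf{F}(X)\cong \mathsf{e}\mathbb{L}_i\mathsf{l}(X)$ from Lemma \ref{basic_homological_properties_of_cleft}(ii), and the characterization of projective objects of $\mathcal{A}$ from Lemma \ref{projectives_in_cleft}.

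For part (i), I would argue as follows. By Lemma \ref{basic_homological_properties_of_cleft}(ii) we have $\mathbb{L}_i\mathsf{F}(X)\cong \mathsf{e}\mathbb{L}_i\mathsf{l}(X)$ for every object $X$ of $\mathcal{B}$ and every $i\geq 1$. Since $\mathsf{e}$ is faithful (indeed faithful exact), a functor into $\mathcal{A}$ is zero as soon as its composite with... no — more carefully: if $\mathbb{L}_i\mathsf{l}(X)$ were nonzero then $\mathsf{e}\mathbb{L}_i\mathsf{l}(X)$ would be nonzero (a faithful exact functor between abelian categories reflects zero objects), hence $\mathbb{L}_i\mathsf{F}(X)$ would be nonzero. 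Thus, if there is $n$ with $\mathbb{L}_i\mathsf{F}=0$ for all $i> n$, then for every $X$ and every $i>n$ we get $\mathsf{e}\mathbb{L}_i\mathsf{l}(X)=0$, whence $\mathbb{L}_i\mathsf{l}(X)=0$; so $\mathbb{L}_i\mathsf{l}=0$ for $i>n$, which is the claim.

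For part (ii), fix $P\in\Proj\mathcal{A}$. By Lemma \ref{projectives_in_cleft}, $P$ is a direct summand of $\mathsf{l}(Q)$ for some $Q\in\Proj\mathcal{B}$, so $\mathsf{e}(P)$ is a direct summand of $\mathsf{el}(Q)\cong Q\oplus\mathsf{F}(Q)$. A direct summand of an object of finite projective dimension again has finite projective dimension, and $\pd_{\mathcal{B}}(Q\oplus\mathsf{F}(Q))=\max\{\pd_{\mathcal{B}}Q,\pd_{\mathcal{B}}\mathsf{F}(Q)\}=\pd_{\mathcal{B}}\mathsf{F}(Q)<\infty$ by hypothesis (using $\pd_{\mathcal{B}}Q=0$). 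Hence $\pd_{\mathcal{B}}\mathsf{e}(P)<\infty$. The final assertion is then immediate: if $\mathsf{F}$ is perfect, condition (i) of Definition \ref{perfect_functor} gives $\mathbb{L}_p\mathsf{F}^q=0$ for $p+q\geq n+1$, in particular $\mathbb{L}_i\mathsf{F}=0$ for $i\geq n$, so part (i) applies; and condition (ii) of Definition \ref{perfect_functor} is exactly the hypothesis of part (ii).

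I do not expect a genuine obstacle here; the only point requiring a little care is the justification that a faithful exact functor between abelian categories reflects zero objects (equivalently, reflects isomorphisms), which is standard and already used implicitly in the proof of Lemma \ref{basic_homological_properties_of_cleft}(iii) in this excerpt.
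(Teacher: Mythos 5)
Your proof is correct and follows essentially the same approach as the paper: part (i) via $\mathbb{L}_i\mathsf{F}\cong\mathsf{e}\mathbb{L}_i\mathsf{l}$ and faithfulness of $\mathsf{e}$, and part (ii) via the decomposition $\mathsf{el}(Q)\cong Q\oplus\mathsf{F}(Q)$ together with Lemma~\ref{projectives_in_cleft}. The only cosmetic quibble is that faithfulness alone already reflects zero objects (no need to invoke exactness), and the equality $\max\{\pd_{\mathcal{B}}Q,\pd_{\mathcal{B}}\mathsf{F}(Q)\}=\pd_{\mathcal{B}}\mathsf{F}(Q)$ can fail when $\mathsf{F}(Q)=0$, though this is irrelevant to finiteness.
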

\begin{proof}
    (i) By Lemma \ref{basic_homological_properties_of_cleft}(ii) we have $\mathbb{L}_i\mathsf{F}\cong \mathsf{e}\mathbb{L}_i\mathsf{l}$ so by the assumption of $\mathsf{F}$, it follows that $\mathsf{e}\mathbb{L}_i\mathsf{l}=0$ for $i>\!\!>0$. The result follows by the fact that $\mathsf{e}$ is faithful.

    (ii) Since every projective object in $\mathcal{A}$ is a direct summand of $\mathsf{l}(Q)$ for projective object $Q$ of $\mathcal{B}$, it is enough to show that $\pd{_{\mathcal{B}} \mathsf{el}(Q)}<\infty$. But $\mathsf{el}(Q)\cong Q\oplus \mathsf{F}(Q)$ and by assumption we have $\pd{_{\mathcal{B}}\mathsf{F}(Q)}<\infty$. 
\end{proof}

\subsection{Coperfect endofunctors} Here we define the dual notion to a perfect endofunctor. Throughout, $\mathcal{B}$ denotes an abelian category (with enough injectives).

\begin{defn}  \label{coperfect_functor} 
Let $\mathsf{F}'\colon\mathcal{B}\rightarrow\mathcal{B}$ be a left exact endofunctor. We consider the following condition on $\mathsf{F}'$:
\begin{align*} \label{DR}{\tag{\textbf{R'}}}
    \mathbb{R}^i\mathsf{F}'^j(\mathsf{F}'(I))=0 \text{ for all } i,j\geq 1 \text{ and every } I\in\Inj\mathcal{B}.
\end{align*}
We say that $\mathsf{F}'$ is \emph{coperfect} if (\ref{DR}) holds and in addition the following are satisfied:  
(i) there is $n'\geq 0$ such that $\mathbb{R}^p\mathsf{F}'^q=0$ for all $p,q\geq 1$ with $p+q\geq n'+1$.  \\ 
(ii) $\id{_{\mathcal{B}}\mathsf{F}'(I)}<\infty$ for all $I\in\Inj\mathcal{B}$. 
\end{defn}

In the following lemma we provide an equivalent characterization of condition (\ref{DR}), which is dual to Lemma \ref{equivalent_to_condition_R}.

\begin{lem} \label{equivalent_to_condition_DR}
    A left exact endofunctor $\mathsf{F}'\colon\mathcal{B}\rightarrow\mathcal{B}$ satisfies \textnormal{(\ref{DR})} if and only if the following condition is satisfied: 
    \begin{align*}
        \mathbb{R}^i\mathsf{F}'(\mathsf{F}'^j(I))=0 \text{ for all } i,j\geq 1 \text{ and every } I\in\Inj\mathcal{B}.
    \end{align*}
\end{lem}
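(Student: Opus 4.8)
The plan is to prove this exactly as the dual of Lemma \ref{equivalent_to_condition_R}, reversing all arrows and swapping projectives for injectives, left-derived functors for right-derived functors. Since the statement is an ``if and only if'', and one direction (the new condition implies (\ref{DR}), taking $j$ arbitrary and considering $\mathsf{F}'^j(I)$) is a special case of a symmetric argument, I will focus on showing that (\ref{DR}) implies $\mathbb{R}^i\mathsf{F}'(\mathsf{F}'^j(I)) = 0$ for all $i,j \geq 1$ and $I \in \Inj\mathcal{B}$.

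First I would fix $I \in \Inj\mathcal{B}$ and take an injective resolution $0 \to \mathsf{F}'(I) \to I^0 \to I^1 \to \cdots$ of $\mathsf{F}'(I)$ (using that $\mathcal{B}$ has enough injectives). Applying the left exact functor $\mathsf{F}'$ gives a complex $0 \to \mathsf{F}'^2(I) \to \mathsf{F}'(I^0) \to \mathsf{F}'(I^1) \to \cdots$ whose cohomology in positive degrees computes $\mathbb{R}^i\mathsf{F}'(\mathsf{F}'(I))$, which vanishes for $i \geq 1$ by (\ref{DR}) (the case $j=1$). Hence this complex is itself an injective resolution of $\mathsf{F}'^2(I)$ (each $\mathsf{F}'(I^k)$ is injective because $I^k$ is a direct summand of a term of the form $\mathsf{r}(J)$... wait, no --- here there is no ambient cleft coextension, so I should instead argue as in Lemma \ref{equivalent_to_condition_R}: the key point is simply that the terms $\mathsf{F}'(I^k)$ need not be injective, but the complex is acyclic, so applying $\mathsf{F}'$ again computes $\mathbb{R}^i\mathsf{F}'(\mathsf{F}'^2(I))$, which is $0$ by (\ref{DR})). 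Iterating, applying $\mathsf{F}'$ a total of $j$ times to the injective resolution of $\mathsf{F}'(I)$ yields a complex computing $\mathbb{R}^i\mathsf{F}'^j(\mathsf{F}'(I))$ in positive degrees; at each stage the complex from the previous stage is acyclic in positive degrees precisely by the hypothesis (\ref{DR}) applied to $\mathsf{F}'^j(I)$ in place of $I$... more carefully, applying $\mathsf{F}'^{s}$ to the injective resolution of $\mathsf{F}'(I)$ computes $\mathbb{R}^i\mathsf{F}'^{s}(\mathsf{F}'(I))$, and since by (\ref{DR}) this is $0$, we may apply $\mathsf{F}'$ once more; but $\mathsf{F}'^{s+1}$ applied to the resolution of $\mathsf{F}'(I)$ is the same as $\mathsf{F}'$ applied to $\mathsf{F}'^{s}(\text{resolution})$, hence computes $\mathbb{R}^i\mathsf{F}'(\mathsf{F}'^{s}(\mathsf{F}'(I)))$; the dual bookkeeping of Lemma \ref{equivalent_to_condition_R} then gives the claim, namely $\mathbb{R}^i\mathsf{F}'(\mathsf{F}'^j(I)) = 0$ for all $j \geq 1$. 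The converse implication is proved in the same manner, by symmetry of the roles.

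The main (mild) obstacle, just as in the proof of Lemma \ref{equivalent_to_condition_R}, is the inductive bookkeeping: one must carefully track that ``applying $\mathsf{F}'$ to an acyclic complex computing $\mathbb{R}^\bullet\mathsf{F}'^s(-)$ produces a complex computing $\mathbb{R}^\bullet\mathsf{F}'^{s+1}(-)$'' is valid at each stage, which requires the previous stage to be acyclic in positive degrees — and this acyclicity is exactly what (\ref{DR}) provides at each step. Beyond this there is nothing delicate; the proof is entirely formal and dual to that of Lemma \ref{equivalent_to_condition_R}, so I would simply write ``The proof is dual to that of Lemma \ref{equivalent_to_condition_R}'' after indicating the injective resolution to be used, or spell out the short argument above for the reader's convenience.
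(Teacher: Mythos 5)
Your proposal is correct and matches the paper's intent: Lemma~\ref{equivalent_to_condition_DR} is stated without proof precisely because it is the formal dual of Lemma~\ref{equivalent_to_condition_R}, and that dualisation is exactly what you carry out. Your mid-sentence self-correction, noticing that the terms $\mathsf{F}'(I^k)$ of the applied complex need not be injective, is the right instinct.

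One sharpening of the bookkeeping, since you singled it out as the delicate step. Writing $I^\bullet$ for the injective resolution of $\mathsf{F}'(I)$: what must be tracked is not merely that the complexes $\mathsf{F}'^s(I^\bullet)$ are exact in positive degrees (which follows directly from (\ref{DR}), since their cohomology is $\mathbb{R}^i\mathsf{F}'^s(\mathsf{F}'(I))$), but that the terms $\mathsf{F}'^s(I^k)$ are $\mathsf{F}'$-acyclic, i.e.\ $\mathbb{R}^i\mathsf{F}'(\mathsf{F}'^s(I^k))=0$ for $i\geq 1$; only then does $\mathsf{F}'^{s+1}(I^\bullet)$ compute $\mathbb{R}^i\mathsf{F}'(\mathsf{F}'^{s+1}(I))$ in addition to $\mathbb{R}^i\mathsf{F}'^{s+1}(\mathsf{F}'(I))$. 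This $\mathsf{F}'$-acyclicity is precisely the inductive hypothesis (the claimed vanishing for exponent $s$), applied to the injective object $I^k$ in place of $I$. The paper's proof of Lemma~\ref{equivalent_to_condition_R} elides this same point, so your write-up is at a comparable level of detail, but exactness alone is not what licenses the next application of $\mathsf{F}'$.
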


The following is the dual of Lemma \ref{F_projective}.

\begin{lem} \label{F'_injective}
    Consider a left exact endofunctor $\mathsf{F}'\colon\mathcal{B}\rightarrow\mathcal{B}$ that satisfies condition \textnormal{(\ref{DR})} and let $X$ be an object of $\mathcal{B}$. The following are equivalent: 
    \begin{itemize}
        \item[(i)] $\mathbb{R}^i\mathsf{F}'(\mathsf{F}'^j(X))=0$ for all $i\geq 1$ and $j\geq 0$. 
        \item[(ii)] $\mathbb{R}^i\mathsf{F}'^s(\mathsf{F}'^j(X))=0$ for all $i,s\geq 1$ and $j\geq 0$. 
        \item[(iii)] $\mathbb{R}^i\mathsf{F}'^j(X)=0$ for all $i,j\geq 1$.
    \end{itemize}
\end{lem}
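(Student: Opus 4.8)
The plan is to dualize Lemma~\ref{F_projective}: the present statement is obtained from it by reversing all arrows, interchanging projectives with injectives, left derived functors with right derived functors, and powers of $\mathsf{F}$ with powers of $\mathsf{F}'$, so one could simply quote that duality together with the proof cited for Lemma~\ref{F_projective}. For a self-contained treatment I would proceed as follows.

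The single genuine ingredient is Lemma~\ref{equivalent_to_condition_DR}: it says that condition~(\ref{DR}) is equivalent to the vanishing $\mathbb{R}^i\mathsf{F}'(\mathsf{F}'^j(I))=0$ for all $i,j\geq 1$ and every injective $I$, and together with the trivial case $j=0$ this means that $\mathsf{F}'^j(I)$ is $\mathsf{F}'$-acyclic for every injective $I$ and every $j\geq 0$. In particular each functor $\mathsf{F}'^s$ sends injectives to $\mathsf{F}'$-acyclics, so for every object $A$ of $\mathcal{B}$ there is a Grothendieck spectral sequence $E_2^{p,q}=\mathbb{R}^p\mathsf{F}'(\mathbb{R}^q\mathsf{F}'^s(A))\Rightarrow\mathbb{R}^{p+q}\mathsf{F}'^{s+1}(A)$. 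I will also use without comment that the right derived functors of a left exact functor — in particular of each composite $\mathsf{F}'^s$ — may be computed from any resolution by acyclic objects.

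I would then establish the cycle of implications (i)$\Rightarrow$(ii)$\Rightarrow$(iii)$\Rightarrow$(i). The implications (ii)$\Rightarrow$(i) and (ii)$\Rightarrow$(iii) are the specializations $s=1$ and $j=0$, so nothing is needed there. For (i)$\Rightarrow$(ii) I induct on $s$, the base case $s=1$ being (i); in the inductive step I feed $A=\mathsf{F}'^j(X)$ into the spectral sequence above: the rows with $q\geq 1$ vanish by the inductive hypothesis, and the row $q=0$ equals $\mathbb{R}^p\mathsf{F}'(\mathsf{F}'^{s+j}(X))$, which vanishes for $p\geq 1$ by hypothesis (i) applied with exponent $s+j$. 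Hence the spectral sequence collapses and gives $\mathbb{R}^i\mathsf{F}'^{s+1}(\mathsf{F}'^j(X))=0$ for all $i\geq 1$ and $j\geq 0$. For (iii)$\Rightarrow$(i) the case $j=0$ is immediate from (iii); for $j\geq 1$ I take an injective resolution $X\to I^\bullet$, note that $\mathsf{F}'^j(I^\bullet)$ is a resolution of $\mathsf{F}'^j(X)$ since $\mathsf{F}'^j$ is left exact and $\mathbb{R}^i\mathsf{F}'^j(X)=0$ for $i\geq 1$ by (iii), and observe that this resolution is $\mathsf{F}'$-acyclic in every degree; applying $\mathsf{F}'$ yields $\mathbb{R}^i\mathsf{F}'(\mathsf{F}'^j(X))\cong H^i(\mathsf{F}'^{j+1}(I^\bullet))\cong\mathbb{R}^i\mathsf{F}'^{j+1}(X)$, which is $0$ for $i\geq 1$ again by (iii).

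The step I expect to require the most care is the induction for (i)$\Rightarrow$(ii): the direct attempt to build an $\mathsf{F}'^s$-acyclic resolution of $\mathsf{F}'^j(X)$ by hand stalls, since the natural candidate $\mathsf{F}'^j(I^\bullet)$ is only manifestly $\mathsf{F}'$-acyclic, and treating the injective case $\mathsf{F}'^j(I)$ on its own threatens circularity with the general statement. Passing through the Grothendieck spectral sequence circumvents this because its $q=0$ row only ever invokes hypothesis (i). Beyond that point the argument is the formal mirror of the proof of Lemma~\ref{F_projective}, and I foresee no further difficulty.
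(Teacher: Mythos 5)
The paper offers no proof for this lemma: it is stated as the formal dual of Lemma~\ref{F_projective}, whose proof is itself deferred to \cite[Lemma 6.5]{graded_injective_generation}. So there is no in-text argument to match yours against. Your self-contained proof is correct. The ingredient you extract from Lemma~\ref{equivalent_to_condition_DR} --- that $\mathsf{F}'^s$ sends injectives to $\mathsf{F}'$-acyclics --- does justify the Grothendieck spectral sequence for the composite $\mathsf{F}'\circ\mathsf{F}'^s$, the $E_2^{p,0}$-row computation is right, and the cycle $(\text{i})\Rightarrow(\text{ii})\Rightarrow(\text{iii})\Rightarrow(\text{i})$ closes cleanly.

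One remark: the spectral sequence in $(\text{i})\Rightarrow(\text{ii})$ can be dispensed with, and the circularity you flag does not actually arise. Condition~(\ref{DR}) in its raw form says that each $\mathsf{F}'(I)$, for $I$ injective, is $\mathsf{F}'^s$-acyclic for \emph{every} $s\geq 1$ (not merely $\mathsf{F}'$-acyclic). So instead of resolving $X$, take an injective resolution $I^{\bullet}$ of $\mathsf{F}'^j(X)$. By hypothesis (i), $\mathsf{F}'(I^{\bullet})$ is then a resolution of $\mathsf{F}'^{j+1}(X)$, with terms $\mathsf{F}'(I^k)$ that are $\mathsf{F}'^s$-acyclic by~(\ref{DR}). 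Hence
\[
\mathbb{R}^i\mathsf{F}'^s(\mathsf{F}'^{j+1}(X))\;\cong\; H^i\bigl(\mathsf{F}'^{s}(\mathsf{F}'(I^{\bullet}))\bigr)\;=\; H^i\bigl(\mathsf{F}'^{s+1}(I^{\bullet})\bigr)\;\cong\;\mathbb{R}^i\mathsf{F}'^{s+1}(\mathsf{F}'^j(X)),
\]
and the left-hand side vanishes by the inductive hypothesis (applied with $j$ replaced by $j+1$). This delivers the inductive step with no spectral sequence and keeps the proof in the same register as your $(\text{iii})\Rightarrow(\text{i})$ argument; it is also, presumably, the dualisation of the argument in the cited source.
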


\begin{defn}
    Let $\mathsf{F}'\colon\mathcal{B}\rightarrow\mathcal{B}$ be a left exact endofunctor that satisfies condition (\ref{DR}). An object $X$ of $\mathcal{B}$ is called $\mathsf{F}'$-\emph{injective} if it satisfies any of the equivalent conditions of Lemma \ref{F'_injective}.
\end{defn}

\begin{cor}
    Consider a left exact endofunctor $\mathsf{F}'\colon\mathcal{B}\rightarrow \mathcal{B}$ that satisfies condition \textnormal{(\ref{DR})}. If $X\in\mathcal{B}$ is $\mathsf{F}'$-injective, then $\mathsf{F}'^j(X)$ is $\mathsf{F}'$-injective for every $j\geq 1$. 
\end{cor}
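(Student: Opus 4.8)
The plan is to verify characterization~(i) of Lemma~\ref{F'_injective} directly for $\mathsf{F}'^j(X)$, reducing it to the same condition for $X$ by an elementary reindexing of the iterates of $\mathsf{F}'$.

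Concretely, suppose $X$ is $\mathsf{F}'$-injective. By the equivalence with condition~(i) in Lemma~\ref{F'_injective}, this says $\mathbb{R}^i\mathsf{F}'(\mathsf{F}'^{k}(X))=0$ for all $i\geq 1$ and all $k\geq 0$. Fix $j\geq 1$ and set $Y=\mathsf{F}'^j(X)$. To show that $Y$ is $\mathsf{F}'$-injective it suffices, again by Lemma~\ref{F'_injective}, to check $\mathbb{R}^i\mathsf{F}'(\mathsf{F}'^{k}(Y))=0$ for all $i\geq 1$ and all $k\geq 0$. But $\mathsf{F}'^{k}(Y)=\mathsf{F}'^{k+j}(X)$, so the required vanishing is precisely $\mathbb{R}^i\mathsf{F}'(\mathsf{F}'^{k+j}(X))=0$, which holds by the $\mathsf{F}'$-injectivity of $X$ since $k+j\geq 0$. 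This finishes the argument.

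I do not expect any real obstacle: the statement is a formal consequence of the equivalences recorded in Lemma~\ref{F'_injective}, and it is the exact dual of Corollary~\ref{powers_of_F_proj}. The only mild point of care is to route the reduction through characterization~(i) (or~(ii)) of Lemma~\ref{F'_injective} rather than~(iii), so that the bookkeeping becomes a bare index shift; alternatively, one may simply invoke the dual of Corollary~\ref{powers_of_F_proj} verbatim.
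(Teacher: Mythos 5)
Your proof is correct and matches the paper's (implicit) intent: the corollary is stated without proof precisely because it is the immediate index-shift consequence of characterization (i) of Lemma~\ref{F'_injective} that you spell out, exactly dual to Corollary~\ref{powers_of_F_proj}.
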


The following is the dual of Lemma \ref{eP_are_proj}. 

\begin{lem} \label{eI_are_inj}
    Let $(\mathcal{B},\mathcal{A},\mathsf{i},\mathsf{e},\mathsf{r})$ be a cleft coextension of abelian categories. If the functor $\mathsf{F}'$ satisfies condition \textnormal{(\ref{DR})}, then $\mathsf{e}(I)$ is $\mathsf{F}'$-injective for every $I\in\Inj\mathcal{A}$.
\end{lem}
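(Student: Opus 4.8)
The plan is to dualize the proof of Lemma \ref{eP_are_proj} essentially verbatim, replacing the characterization of projectives by Lemma \ref{injectives_in_cleft} and the splitting $\mathsf{el}\simeq\mathsf{F}\oplus\mathsf{Id}_{\mathcal{B}}$ by the splitting of the short exact sequence $0\to\mathsf{Id}_{\mathcal{B}}\to\mathsf{er}\to\mathsf{F}'\to 0$ recorded in Subsection 2.2.

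First I would reduce to a single object. By Lemma \ref{injectives_in_cleft}, every injective object $I$ of $\mathcal{A}$ is a direct summand of $\mathsf{r}(J)$ for some $J\in\Inj\mathcal{B}$; applying the exact functor $\mathsf{e}$, the object $\mathsf{e}(I)$ is then a direct summand of $\mathsf{er}(J)$. Since each $\mathbb{R}^i\mathsf{F}'^j$ is additive, the vanishing condition (iii) of Lemma \ref{F'_injective} — and hence $\mathsf{F}'$-injectivity — passes to direct summands. Thus it suffices to show that $\mathsf{er}(J)$ is $\mathsf{F}'$-injective for every $J\in\Inj\mathcal{B}$.

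For this I would use $\mathsf{er}(J)\cong J\oplus\mathsf{F}'(J)$, so that $\mathbb{R}^i\mathsf{F}'^j(\mathsf{er}(J))\cong\mathbb{R}^i\mathsf{F}'^j(J)\oplus\mathbb{R}^i\mathsf{F}'^j(\mathsf{F}'(J))$ for all $i,j\geq 1$. The first summand vanishes because $J$ is injective, so the higher right derived functors of the additive functor $\mathsf{F}'^j$ evaluated at $J$ are zero; the second summand vanishes by condition \textnormal{(\ref{DR})}. Hence $\mathbb{R}^i\mathsf{F}'^j(\mathsf{er}(J))=0$ for all $i,j\geq 1$, i.e.\ $\mathsf{er}(J)$ is $\mathsf{F}'$-injective by Lemma \ref{F'_injective}, and the reduction above finishes the argument. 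I do not expect any genuine obstacle here; the only points requiring a word of care are that $\mathsf{F}'$-injectivity descends to direct summands (immediate from additivity of the $\mathbb{R}^i\mathsf{F}'^j$) and that the relevant short exact sequence for a cleft coextension splits, both of which are already available.
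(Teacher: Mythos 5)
Your proof is correct and is precisely the dualization of the proof of Lemma~\ref{eP_are_proj} that the paper has in mind (the paper states Lemma~\ref{eI_are_inj} as "the dual of Lemma~\ref{eP_are_proj}" and omits the argument). The reduction via Lemma~\ref{injectives_in_cleft}, the use of the split isomorphism $\mathsf{er}\simeq\mathsf{Id}_{\mathcal{B}}\oplus\mathsf{F}'$, and the vanishing of $\mathbb{R}^i\mathsf{F}'^j$ on $J$ and on $\mathsf{F}'(J)$ all line up exactly with the intended argument.
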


The following is the dual to Lemma \ref{basic_properties_of_perfect_endofunctor_on_cleft}. 

\begin{lem} \label{basic_properties_of_coperfect_endofunctor_on_cleft}
     Let $(\mathcal{B},\mathcal{A},\mathsf{i},\mathsf{e},\mathsf{r})$ be a cleft coextension of abelian categories. The following hold:
    \begin{itemize}
        \item[(i)] If $\mathbb{R}^i\mathsf{F}'=0$ for $i>\!\! >0$, then $\mathbb{R}^i\mathsf{r}=0$ for $i>\!\!>0$. 
        \item[(ii)] If $\id{_{\mathcal{B}}\mathsf{F}'(J)}<\infty$ for every $J\in\Inj\mathcal{B}$, then $\id{_{\mathcal{B}}\mathsf{e}(I)}<\infty$ for all $I\in\Inj\mathcal{A}$. 
    \end{itemize}
    In particular, the above apply when $\mathsf{F}'$ is coperfect. 
\end{lem}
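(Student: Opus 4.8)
The plan is to dualize, essentially verbatim, the proof of Lemma~\ref{basic_properties_of_perfect_endofunctor_on_cleft}, interchanging projectives with injectives, left-derived functors with right-derived functors, and the adjunctions $(\mathsf{l},\mathsf{e})$, $(\mathsf{q},\mathsf{i})$ with the adjunctions $(\mathsf{e},\mathsf{r})$, $(\mathsf{i},\mathsf{p})$ attached to the cleft coextension.

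For part (i), I would apply Lemma~\ref{basic_homological_properties_of_cocleft}(ii), which supplies a natural isomorphism $\mathbb{R}^i\mathsf{F}'\cong\mathsf{e}\mathbb{R}^i\mathsf{r}$ for every $i\geq 1$. If $\mathbb{R}^i\mathsf{F}'=0$ for $i\gg 0$, then $\mathsf{e}\mathbb{R}^i\mathsf{r}=0$ for $i\gg 0$, and since $\mathsf{e}$ is faithful (exact) this forces $\mathbb{R}^i\mathsf{r}=0$ for $i\gg 0$.

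For part (ii), I would first invoke Lemma~\ref{injectives_in_cleft}: every injective object of $\mathcal{A}$ is a direct summand of $\mathsf{r}(J)$ for some $J\in\Inj\mathcal{B}$. Since injective dimension passes to direct summands, it suffices to bound $\id{_{\mathcal{B}}\mathsf{er}(J)}$. The split short exact sequence $0\rightarrow\mathsf{Id}_{\mathcal{B}}\rightarrow\mathsf{er}\rightarrow\mathsf{F}'\rightarrow 0$ recorded in the discussion following Definition~\ref{cleft_coextension} gives $\mathsf{er}(J)\cong J\oplus\mathsf{F}'(J)$, whence $\id{_{\mathcal{B}}\mathsf{er}(J)}=\max\{\id{_{\mathcal{B}}J},\id{_{\mathcal{B}}\mathsf{F}'(J)}\}=\id{_{\mathcal{B}}\mathsf{F}'(J)}<\infty$ by hypothesis.

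For the ``in particular'' clause: if $\mathsf{F}'$ is coperfect, then condition (i) of Definition~\ref{coperfect_functor} yields $\mathbb{R}^p\mathsf{F}'^q=0$ whenever $p+q\geq n'+1$, so in particular $\mathbb{R}^i\mathsf{F}'=0$ for $i\geq n'$, i.e.\ for $i\gg 0$; and condition (ii) of that definition is precisely the hypothesis of (ii). I do not anticipate a genuine obstacle — the only point demanding care is to invoke the correct dual ingredients (Lemma~\ref{basic_homological_properties_of_cocleft}(ii), Lemma~\ref{injectives_in_cleft}, and the splitting $\mathsf{er}\simeq\mathsf{Id}_{\mathcal{B}}\oplus\mathsf{F}'$) in place of their cleft-extension analogues.
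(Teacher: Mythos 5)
Your proposal is correct and is exactly the intended argument: the paper states this lemma as the dual of Lemma~\ref{basic_properties_of_perfect_endofunctor_on_cleft} and leaves the proof to the reader, and your dualization supplies precisely the right ingredients (Lemma~\ref{basic_homological_properties_of_cocleft}(ii) for part~(i), Lemma~\ref{injectives_in_cleft} together with the splitting $\mathsf{er}\simeq\mathsf{Id}_{\mathcal{B}}\oplus\mathsf{F}'$ for part~(ii), and Definition~\ref{coperfect_functor} for the ``in particular'' clause).
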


\subsection{Perfect endofunctors for module categories} We will now provide a concrete description of perfect endofunctors for module categories. For this, we consider the notion of a perfect bimodule, which was defined in \cite{perfect} over two-sided Noetherian rings. We slightly adjust the definition, in order to work over any ring. For simplicity given a $\Gamma$-bimodule $M$, we will be writing $M^{\otimes j}$ instead of $M^{\otimes_{\Gamma}j}$.

\begin{defn} \label{perfect_bimodule}
    Let $\Gamma$ be a ring and $M$ a $\Gamma$-bimodule. We say that $M$ is \emph{perfect} if the following conditions are satisfied: 
    \begin{itemize}
\item[(i)] $\fd{_{\Gamma}M}<\infty$, 

\item[(ii)] $\pd{M_{\Gamma}}<\infty$, and
 
\item[(iii)] $\Tor_i^{\Gamma}(M,M^{\otimes j})=0$ for all $i,j\geq 1$.
\end{itemize}
\end{defn}

%\begin{rem}
 %   The above definition (not being symmetric - although it's a definition about bimodules) is highly dependent on our agreement to work with right modules. For left modules, one would require $\fd{M_{\Gamma}}<\infty$ and $\pd{_{\Gamma}M}<\infty$ instead. 
%\end{rem}

In \cite[Definition 4.4]{perfect}, the conditions are the same, except for (i) which requires $\pd{_{\Gamma}M}<\infty$. If, however, $\Gamma$ is two-sided Noetherian and $M$ is finitely generated on both sides (which is the setting of \cite{perfect}), then $\pd {_{\Gamma}M}=\fd{_{\Gamma}M}$ and so the two notions agree. We collect some elementary properties of perfect bimodules in the following lemma. 

\begin{lem} \textnormal{(\!\!\cite[Corollary 4.3, Lemma 4.5]{perfect})} \label{easy_properties_of_perfect_bimodules}
    Let $\Gamma$ be a ring and $M$ a perfect $\Gamma$-bimodule. The following hold: 
    \begin{itemize}
        \item[(i)] $\mathsf{Tor}_i^{\Gamma}(M^{\otimes s},M^{\otimes j})=0$ for all $i,j,s\geq 1$. 
        \item[(ii)] $\fd{_{\Gamma}M^{\otimes j}}<\infty$ for all $j\geq 1$. 
        \item[(iii)] $\pd{M^{\otimes j}_{\Gamma}}<\infty$ for all $j\geq 1$. 
    \end{itemize}
In particular, the $\Gamma$-bimodule $M^{\otimes i}$ is perfect for every $i\geq 2$. 
\end{lem}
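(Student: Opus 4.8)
The plan is to prove the three numbered assertions in the order (i), (iii), (ii) --- with (i) the technical core --- and then read off the final sentence formally. Throughout I would use without comment that $\Tor_i^{\Gamma}(-,-)$ commutes with arbitrary coproducts, and hence sends direct summands to direct summands, in each variable.

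For part~(i) I would induct on $s$, the case $s=1$ being precisely condition~(iii) of Definition~\ref{perfect_bimodule}. For the inductive step, write $M^{\otimes s}=M^{\otimes(s-1)}\otimes_{\Gamma}M$. The crux is the natural isomorphism
\[
\Tor_i^{\Gamma}(M^{\otimes s},M^{\otimes j})\;\cong\;\Tor_i^{\Gamma}(M^{\otimes(s-1)},M^{\otimes(j+1)})\qquad(i,j\geq 1),
\]
which shifts one tensor factor from the left-hand argument into the right-hand one. To obtain it, fix a projective resolution $\mathbf{Q}\to M^{\otimes j}$ by left $\Gamma$-modules and consider the complex $M\otimes_{\Gamma}\mathbf{Q}$ of left $\Gamma$-modules. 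On the one hand, its homology is $\Tor_{*}^{\Gamma}(M,M^{\otimes j})$, which by Definition~\ref{perfect_bimodule}(iii) is concentrated in degree $0$ where it equals $M\otimes_{\Gamma}M^{\otimes j}=M^{\otimes(j+1)}$; so $M\otimes_{\Gamma}\mathbf{Q}$ is a resolution of $M^{\otimes(j+1)}$. On the other hand, each term $M\otimes_{\Gamma}Q_k$ is a direct summand of a coproduct of copies of $M$, hence --- by the inductive hypothesis, which yields $\Tor_i^{\Gamma}(M^{\otimes(s-1)},M)=0$ for all $i\geq 1$ --- acyclic for the functor $M^{\otimes(s-1)}\otimes_{\Gamma}(-)$. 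Therefore applying $M^{\otimes(s-1)}\otimes_{\Gamma}(-)$ to $M\otimes_{\Gamma}\mathbf{Q}$ and passing to homology computes $\Tor_{*}^{\Gamma}(M^{\otimes(s-1)},M^{\otimes(j+1)})$; but by associativity of $\otimes_{\Gamma}$ that complex is also $(M^{\otimes(s-1)}\otimes_{\Gamma}M)\otimes_{\Gamma}\mathbf{Q}=M^{\otimes s}\otimes_{\Gamma}\mathbf{Q}$, whose homology is $\Tor_{*}^{\Gamma}(M^{\otimes s},M^{\otimes j})$. This gives the displayed isomorphism, and its right-hand side vanishes in positive degrees by the inductive hypothesis. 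I expect the main difficulty to be exactly this acyclic-resolution bookkeeping: arranging that $M\otimes_{\Gamma}\mathbf{Q}$ is simultaneously a resolution of $M^{\otimes(j+1)}$ and a complex of objects acyclic for $M^{\otimes(s-1)}\otimes_{\Gamma}(-)$.

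Parts~(iii) and~(ii) I would then obtain by a straightforward induction on $j$ using dimension shifting, the base case $j=1$ being the corresponding hypothesis of Definition~\ref{perfect_bimodule}. For~(iii), take a finite projective resolution $Q_{\bullet}\to M$ of right $\Gamma$-modules; applying $(-)\otimes_{\Gamma}M^{\otimes(j-1)}$ and using Definition~\ref{perfect_bimodule}(iii) to kill the positive-degree homology produces a finite-length resolution of $M^{\otimes j}=M\otimes_{\Gamma}M^{\otimes(j-1)}$ by right $\Gamma$-modules that are direct summands of coproducts of copies of $M^{\otimes(j-1)}$, so $\pd{M^{\otimes j}_{\Gamma}}\leq\pd{M_{\Gamma}}+\pd{M^{\otimes(j-1)}_{\Gamma}}$, which is finite by induction. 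For~(ii) the same scheme, starting from a finite flat resolution $F_{\bullet}\to M^{\otimes(j-1)}$ of left $\Gamma$-modules and again using Definition~\ref{perfect_bimodule}(iii), produces a resolution $M\otimes_{\Gamma}F_{\bullet}\to M^{\otimes j}$; here the one extra ingredient --- and the subtlety not present in the Noetherian, finitely generated setting of \cite{perfect} --- is the auxiliary bound $\fd{_{\Gamma}(M\otimes_{\Gamma}F)}\leq\fd{_{\Gamma}M}$ for every flat left $\Gamma$-module $F$, which I would prove by writing $F$ as a filtered colimit of finitely generated free left $\Gamma$-modules (Govorov--Lazard), so that $M\otimes_{\Gamma}F$ becomes a filtered colimit of finite coproducts of copies of $M$, and invoking the fact that $\Tor$ commutes with filtered colimits. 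Dimension shifting then yields $\fd{_{\Gamma}M^{\otimes j}}\leq\fd{_{\Gamma}M}+\fd{_{\Gamma}M^{\otimes(j-1)}}<\infty$.

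Finally, the closing assertion follows formally: for $i\geq 2$ the $\Gamma$-bimodule $M^{\otimes i}$ has $\fd{_{\Gamma}M^{\otimes i}}<\infty$ and $\pd{(M^{\otimes i})_{\Gamma}}<\infty$ by parts~(ii) and~(iii), while condition~(iii) of Definition~\ref{perfect_bimodule} for $M^{\otimes i}$ amounts to $\Tor_k^{\Gamma}\big(M^{\otimes i},(M^{\otimes i})^{\otimes j}\big)=\Tor_k^{\Gamma}(M^{\otimes i},M^{\otimes ij})=0$ for all $k,j\geq 1$, a special case of part~(i). Hence $M^{\otimes i}$ is perfect.
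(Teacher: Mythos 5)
Your proof is correct, and it follows the same basic strategy as the paper's: resolve, tensor, and use condition~(iii) of Definition~\ref{perfect_bimodule} to keep exactness. Two points of comparison are worth noting. For part~(i) the paper simply cites the abstract Lemma~\ref{equivalent_to_condition_R} (more precisely, one needs the full strength of Lemma~\ref{F_projective} applied to $\mathsf{F}=-\otimes_{\Gamma}M$ and the $\mathsf{F}$-projective object $\Gamma$ to cover all $s\geq 1$, not only $s=1$); you instead give a direct, self-contained induction on $s$ via the Tor-shifting isomorphism $\Tor_i^{\Gamma}(M^{\otimes s},M^{\otimes j})\cong\Tor_i^{\Gamma}(M^{\otimes(s-1)},M^{\otimes(j+1)})$, obtained by tensoring a projective resolution by $M$ and observing the resulting complex consists of terms acyclic for $M^{\otimes(s-1)}\otimes_{\Gamma}(-)$. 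This is the module-theoretic shadow of the same acyclic-resolution bookkeeping used in the paper's categorical lemma, so I would not call it a different route, just an unwound one. For part~(ii) you supply a justification that the paper elides: the inequality $\fd{_{\Gamma}(M\otimes_{\Gamma}F)}\leq\fd{_{\Gamma}M}$ for a flat (not necessarily free) left module $F$, via Govorov--Lazard plus commutation of $\Tor$ with filtered colimits. This step really is needed here, since unlike in the two-sided Noetherian, finitely generated setting of \cite{perfect} one cannot simply replace flat dimension by projective dimension, so identifying and filling that gap is a genuine point of care.
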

\begin{proof}
    (i) This is a consequence of Lemma \ref{equivalent_to_condition_R} for $\mathsf{F}=-\otimes_{\Gamma}M\colon\Mod\Gamma\rightarrow\Mod\Gamma$.

    (ii) Assume that $\fd {_{\Gamma}M}=n$ and consider a flat resolution of $M$ of length $n$:
    \[
     0\rightarrow F_n\rightarrow\cdots \rightarrow F_0\rightarrow M\rightarrow 0.
    \]
    Since $\mathsf{Tor}_i^{\Gamma}(M,M)=0$ for all $i\geq 1$, applying $M\otimes_{\Gamma}-$ gives an exact sequence 
    \[
     0\rightarrow M\otimes_{\Gamma} F_n\rightarrow \cdots\rightarrow M\otimes_{\Gamma} F_0\rightarrow M\otimes_{\Gamma} M\rightarrow 0
    \]
    and $\fd{_{\Gamma} M\otimes_{\Gamma} F_k}\leq \fd{_{\Gamma}M}$ for every $k$. Therefore $\fd{_{\Gamma}M\otimes_{\Gamma} M}\leq 2\fd {_{\Gamma}M}$. Inductively we derive that $\fd {_{\Gamma}M^{\otimes j}}\leq j\fd {_{\Gamma}M}$ for every $j\geq 1$. 

    (iii) This is proved similarly to (ii). 
\end{proof}

\begin{lem} \label{tensor_is_perfect}
    Let $\Gamma$ be a ring and $M$ be a $\Gamma$-bimodule. The following are equivalent: 
    \begin{itemize}
        \item[(i)] The functor $-\otimes_{\Gamma}M\colon\Mod\Gamma\rightarrow \Mod\Gamma$ is perfect and nilpotent. 
        \item[(ii)] The bimodule $M$ is perfect and nilpotent. 
    \end{itemize}
\end{lem}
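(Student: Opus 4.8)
The plan is to translate every condition into a statement about the bimodule $M$ using the natural isomorphism $\mathsf{F}^j \cong -\otimes_{\Gamma}M^{\otimes j}$ (from associativity of the tensor product), so that $\mathbb{L}_i\mathsf{F}^j \cong \Tor_i^{\Gamma}(-,M^{\otimes j})$ and $\mathbb{L}_i\mathsf{F}^j(\mathsf{F}(P)) \cong \Tor_i^{\Gamma}(P\otimes_{\Gamma}M,\,M^{\otimes j})$. First I would dispose of nilpotency: $\mathsf{F}^n = 0$ as a functor iff $X\otimes_{\Gamma}M^{\otimes n} = 0$ for all $X$, iff $M^{\otimes n} = \Gamma\otimes_{\Gamma}M^{\otimes n} = 0$; hence $\mathsf{F}$ is nilpotent if and only if $M$ is, and I may assume throughout that $M^{\otimes N} = 0$ for some $N\geq 1$. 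It then remains to prove that, under this standing hypothesis, $\mathsf{F} = -\otimes_{\Gamma}M$ is perfect in the sense of Definition \ref{perfect_functor} if and only if $M$ satisfies conditions (i)--(iii) of Definition \ref{perfect_bimodule}.

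For the implication ``$\mathsf{F}$ perfect $\Rightarrow$ $M$ perfect'' I would just specialize to $P=\Gamma$. Evaluating condition \textnormal{(\ref{R})} at $P=\Gamma$ gives $\Tor_i^{\Gamma}(M,M^{\otimes j})=0$ for all $i,j\geq 1$, which is Definition \ref{perfect_bimodule}(iii). Part (i) of Definition \ref{perfect_functor}, taken at $q=1$, says $\Tor_p^{\Gamma}(-,M)=0$ for all $p\geq n$, i.e. $\fd{_{\Gamma}M}<\infty$, which is (i). Part (ii) of Definition \ref{perfect_functor}, taken at $P=\Gamma$, is exactly $\pd{M_{\Gamma}}<\infty$, which is (ii); and nilpotency is inherited trivially.

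For the converse, ``$M$ perfect $\Rightarrow$ $\mathsf{F}$ perfect'', the key observation is that for a projective right module $P$ the module $P\otimes_{\Gamma}M$ is a direct summand of a coproduct $M^{(I)}$. Since $\Tor_i^{\Gamma}(-,N)$ commutes with coproducts in the first variable and a summand of the zero module is zero, condition \textnormal{(\ref{R})} follows from Definition \ref{perfect_bimodule}(iii); likewise $\pd{(P\otimes_{\Gamma}M)_{\Gamma}} \leq \pd{M^{(I)}_{\Gamma}} = \pd{M_{\Gamma}} < \infty$ yields part (ii) of Definition \ref{perfect_functor}. The only genuine point is producing the single integer $n$ of part (i): for each fixed $q$ one has $\mathbb{L}_p\mathsf{F}^q = \Tor_p^{\Gamma}(-,M^{\otimes q}) = 0$ as soon as $p > e_q := \fd{_{\Gamma}M^{\otimes q}}$, and $e_q<\infty$ by Lemma \ref{easy_properties_of_perfect_bimodules}(ii). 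A priori $e_q$ may grow with $q$, but nilpotency forces $\mathsf{F}^q=0$ (hence $\mathbb{L}_p\mathsf{F}^q=0$ automatically) for all $q\geq N$, so only the finitely many values $1\leq q\leq N-1$ need to be controlled, and $n := \max_{1\leq q\leq N-1}(e_q+q)$ works (take $n:=0$ if $M=0$): indeed if $q\geq N$ there is nothing to prove, while if $1\leq q\leq N-1$ and $p+q\geq n+1$ then $p \geq n+1-q \geq e_q+1 > e_q$. This interplay between the finite vanishing range and the nilpotency bound is the main (though mild) obstacle; the rest is a routine unwinding of the two definitions.
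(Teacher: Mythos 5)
Your proof is correct and follows essentially the same route as the paper's: translate condition (\ref{R}) into the vanishing $\Tor_i^\Gamma(M,M^{\otimes j})=0$, use Lemma \ref{easy_properties_of_perfect_bimodules} together with nilpotency to bound the flat dimensions of the powers $M^{\otimes j}$, read off $\fd{_{\Gamma}M}$ from the case $q=1$, and read off $\pd{M_{\Gamma}}$ from the action on projectives. The one place where you are a bit more careful than the paper — explicitly producing $n=\max_{1\leq q\leq N-1}(e_q+q)$ rather than leaving the bound for Definition \ref{perfect_functor}(i) implicit in the nilpotency of $\mathsf{F}$ — is a genuine small improvement in rigor, but it is the same argument.
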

\begin{proof}
First of all, we notice that condition (\ref{R}) for the functor $\mathsf{F}=-\otimes_{\Gamma}M$ is equivalent to $\mathsf{Tor}^{\Gamma}_i(M,M^{\otimes j})=0$ for $i,j\geq 1$. 

(i) $\Longrightarrow$ (ii): Assume that $M$ is perfect and nilpotent. By Lemma \ref{easy_properties_of_perfect_bimodules}(ii), we have that $\fd{_{\Gamma}M^{\otimes j}}<\infty$ for all $j\geq 1$. Since $M$ is nilpotent, it follows that
\[
\mathsf{max}\{\fd{_{\Gamma}M^{\otimes j}}, j\geq 1\}=n_{M}<\infty.
\]
Therefore, for $i> n_{M}$ we have $\mathsf{Tor}^{\Gamma}_i(-,M^{\otimes j})=0$ for all $j\geq 1$, so the second condition of the Definition \ref{perfect_functor} is satisfied. Lastly, since $\pd{M_{\Gamma}}<\infty$, it follows that $-\otimes_{\Gamma}M$ maps projective modules to modules of finite projective dimension. 

(ii) $\Longrightarrow$ (i): Assume that the functor $-\otimes_{\Gamma}M$ is perfect. Then, there is $n\geq 0$ such that $\mathsf{Tor}^{\Gamma}_p(-,M^{\otimes q})=0$ for all $p,q\geq 1$ with $p+q\geq n+1$. In particular, for $q=1$ we derive that $\mathsf{Tor}^{\Gamma}_i(-,M)=0$ for all $i\geq n$, i.e.\ $\fd{_{\Gamma}M}<\infty$. Moreover, since $-\otimes_{\Gamma}M$ maps projective modules to modules of finite projective dimension, it follows that $\pd{M_{\Gamma}}<\infty$. \end{proof}

One may now expect the definition of a coperfect bimodule. Such a definition would capture the coperfectness of the functor $\mathsf{Hom}_{\Gamma}(M,-)\colon\Mod\Gamma\rightarrow \Mod\Gamma$. Below we explain that if $M$ is perfect and nilpotent, the latter already holds. In preparation for this we present the following auxiliary lemma.

\begin{lem} \label{ext_computation}
    Let $\Gamma$ be a ring and $M$ a $\Gamma$-bimodule satisfying $\mathsf{Tor}_i^{\Gamma}(M,M^{\otimes j})=0$ for all $i,j\geq 1$. Then, for every (right) injective $\Gamma$-module $I$, we have 
    \[
    \mathsf{Ext}_{\Gamma}^i(M^{\otimes j},\mathsf{Hom}_{\Gamma}(M,I))=0
    \]
    for all $i,j\geq 1$.
\end{lem}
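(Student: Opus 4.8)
The plan is to use the standard adjunction isomorphism relating $\mathsf{Ext}$ over $\Gamma$ with the functors $-\otimes_\Gamma M$ and $\mathsf{Hom}_\Gamma(M,-)$. First I would recall that for any right $\Gamma$-module $N$ and any $\Gamma$-bimodule $M$, the tensor-hom adjunction gives a natural isomorphism $\mathsf{Hom}_\Gamma(N, \mathsf{Hom}_\Gamma(M,I)) \cong \mathsf{Hom}_\Gamma(N\otimes_\Gamma M, I)$. Since $I$ is injective, the functor $\mathsf{Hom}_\Gamma(-,I)$ is exact, so it sends a projective resolution of $N$ to a resolution computing $\mathsf{Ext}$; combined with the fact that $-\otimes_\Gamma M$ applied to a projective resolution of $N$ computes $\mathsf{Tor}_*^\Gamma(N,M)$, the standard spectral sequence (or a direct chain-level argument) yields a Grothendieck-type identity
\[
\mathsf{Ext}_\Gamma^i\bigl(N,\mathsf{Hom}_\Gamma(M,I)\bigr)\cong \mathsf{Hom}_\Gamma\bigl(\mathsf{Tor}_i^\Gamma(N,M),I\bigr)
\]
whenever $I$ is injective, because the derived functors of $\mathsf{Hom}_\Gamma(-,I)$ vanish in positive degrees.

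Second I would apply this with $N=M^{\otimes j}$ for $j\geq 1$. By hypothesis $\mathsf{Tor}_i^\Gamma(M,M^{\otimes k})=0$ for all $i,k\geq 1$, and by Lemma \ref{easy_properties_of_perfect_bimodules}(i) — which applies since this $\mathsf{Tor}$-vanishing is exactly condition (\ref{R}) for $\mathsf{F}=-\otimes_\Gamma M$, via Lemma \ref{equivalent_to_condition_R} — we get $\mathsf{Tor}_i^\Gamma(M^{\otimes j},M^{\otimes k})=0$ for all $i,j,k\geq 1$. In particular $\mathsf{Tor}_i^\Gamma(M^{\otimes j},M)=0$ for all $i,j\geq 1$. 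Feeding this into the identity above gives $\mathsf{Ext}_\Gamma^i(M^{\otimes j},\mathsf{Hom}_\Gamma(M,I)) \cong \mathsf{Hom}_\Gamma(\mathsf{Tor}_i^\Gamma(M^{\otimes j},M),I) = \mathsf{Hom}_\Gamma(0,I)=0$ for all $i,j\geq 1$, which is the claim.

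To make the chain-level argument self-contained and avoid invoking a spectral sequence, I would instead argue as follows: pick a projective resolution $P_\bullet\to M^{\otimes j}$ of right $\Gamma$-modules. Since $\mathsf{Tor}_i^\Gamma(M^{\otimes j},M)=0$ for $i\geq 1$, the complex $P_\bullet\otimes_\Gamma M$ is a resolution of $M^{\otimes j}\otimes_\Gamma M = M^{\otimes(j+1)}$, i.e.\ it is exact in positive degrees. Now apply the exact functor $\mathsf{Hom}_\Gamma(-,I)$: the complex $\mathsf{Hom}_\Gamma(P_\bullet\otimes_\Gamma M, I)$ is exact in positive degrees. But by adjunction this complex is naturally isomorphic to $\mathsf{Hom}_\Gamma(P_\bullet,\mathsf{Hom}_\Gamma(M,I))$, whose cohomology in degree $i$ is precisely $\mathsf{Ext}_\Gamma^i(M^{\otimes j},\mathsf{Hom}_\Gamma(M,I))$. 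Hence this $\mathsf{Ext}$ vanishes for $i\geq 1$, as desired.

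The argument is essentially routine; the only points requiring a little care are (a) correctly invoking Lemma \ref{easy_properties_of_perfect_bimodules}(i) to upgrade the hypothesis on $M$ to the vanishing $\mathsf{Tor}_i^\Gamma(M^{\otimes j},M)=0$ for all $j$ (this is where the assumed $\mathsf{Tor}$-condition is exactly what is needed, and one should note that only the $\mathsf{Tor}$-vanishing, not the full perfectness, is assumed in the present lemma), and (b) ensuring the adjunction isomorphism $\mathsf{Hom}_\Gamma(P\otimes_\Gamma M,I)\cong\mathsf{Hom}_\Gamma(P,\mathsf{Hom}_\Gamma(M,I))$ is natural in $P$ so that it is a chain isomorphism, which is standard. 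I do not anticipate a genuine obstacle here; the content is the adjunction plus exactness of $\mathsf{Hom}_\Gamma(-,I)$ for injective $I$.
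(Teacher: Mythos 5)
Your proposal is correct and takes essentially the same route as the paper: the paper also invokes the Cartan--Eilenberg isomorphism $\mathsf{Ext}_{\Gamma}^i(M^{\otimes j},\mathsf{Hom}_{\Gamma}(M,I))\cong \mathsf{Hom}_{\Gamma}(\mathsf{Tor}_i^{\Gamma}(M^{\otimes j},M),I)$ and then concludes by Lemma \ref{easy_properties_of_perfect_bimodules}(i) together with the hypothesis. Your observation that only the $\mathsf{Tor}$-vanishing (not full perfectness) is needed for Lemma \ref{easy_properties_of_perfect_bimodules}(i) is accurate, and your elementary chain-level replacement for the spectral-sequence argument is a valid (if optional) supplement.
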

\begin{proof}
    By \cite[Chapter VI, Proposition 5.1]{cartan_eilenberg} we have 
    \[
    \mathsf{Ext}_{\Gamma}^i(M^{\otimes j},\mathsf{Hom}_{\Gamma}(M,I))\cong \mathsf{Hom}_{\Gamma}(\mathsf{Tor}_i^{\Gamma}(M^{\otimes j},M),I)=0,
    \]
    where the last equality holds by Lemma \ref{easy_properties_of_perfect_bimodules}(i) and the assumption.
\end{proof}

The following shows that the notion of a coperfect bimodule is not necessary and it is also important for later use. We do not know whether a categorical version holds (i.e. for general abelian categories). 

\begin{lem} \label{perfect_implies_coperfect}
Let $\Gamma$ be a ring and $M$ a $\Gamma$-bimodule. If $-\otimes_{\Gamma}M\colon\Mod\Gamma\rightarrow\Mod \Gamma$ is perfect and nilpotent, then the functor $\mathsf{Hom}_{\Gamma}(M,-)\colon\Mod\Gamma\rightarrow\Mod\Gamma$ is coperfect and nilpotent. 
\end{lem}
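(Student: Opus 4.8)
The plan is to check that $\mathsf{F}'=\mathsf{Hom}_{\Gamma}(M,-)$ satisfies Definition~\ref{coperfect_functor} and is nilpotent, using that by Lemma~\ref{tensor_is_perfect} the $\Gamma$-bimodule $M$ is perfect and nilpotent, and running this through two classical facts: the tensor--hom adjunction and the Cartan--Eilenberg isomorphism $\mathsf{Ext}^{k}_{\Gamma}(N,\mathsf{Hom}_{\Gamma}(M,I))\cong\mathsf{Hom}_{\Gamma}(\mathsf{Tor}^{\Gamma}_{k}(N,M),I)$ for $I$ injective, as used in the proof of Lemma~\ref{ext_computation}. First I would record that iterating the adjunction isomorphism $\mathsf{Hom}_{\Gamma}(M,\mathsf{Hom}_{\Gamma}(M,-))\cong\mathsf{Hom}_{\Gamma}(M\otimes_{\Gamma}M,-)$ gives a natural isomorphism $\mathsf{F}'^{j}\cong\mathsf{Hom}_{\Gamma}(M^{\otimes j},-)$ of left exact endofunctors of $\Mod\Gamma$, hence $\mathbb{R}^{i}\mathsf{F}'^{j}\cong\mathsf{Ext}^{i}_{\Gamma}(M^{\otimes j},-)$ for all $i\geq0$, $j\geq1$. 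Nilpotency of $-\otimes_{\Gamma}M$ means $-\otimes_{\Gamma}M^{\otimes n}=0$ for some $n\geq1$, and evaluating at $\Gamma$ gives $M^{\otimes n}=0$; therefore $\mathsf{F}'^{n}\cong\mathsf{Hom}_{\Gamma}(M^{\otimes n},-)=0$, so $\mathsf{F}'$ is nilpotent, and only finitely many tensor powers $M^{\otimes q}$, $q\geq1$, are nonzero.

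Next I would verify condition~\textnormal{(\ref{DR})}. For $i,j\geq1$ and $I\in\Inj\Gamma$ the identification above yields
\[
\mathbb{R}^{i}\mathsf{F}'^{j}(\mathsf{F}'(I))\cong\mathsf{Ext}^{i}_{\Gamma}(M^{\otimes j},\mathsf{Hom}_{\Gamma}(M,I)),
\]
and since $M$ is a perfect bimodule it satisfies $\mathsf{Tor}^{\Gamma}_{i}(M,M^{\otimes j})=0$ for all $i,j\geq1$; hence Lemma~\ref{ext_computation} shows the right-hand side vanishes, which is exactly \textnormal{(\ref{DR})}. For condition~(ii), apply the Cartan--Eilenberg isomorphism with $N$ an arbitrary right $\Gamma$-module and $I\in\Inj\Gamma$: since $\fd{_{\Gamma}M}<\infty$ (again because $M$ is perfect), $\mathsf{Tor}^{\Gamma}_{k}(N,M)=0$ for $k>\fd{_{\Gamma}M}$, so $\mathsf{Ext}^{k}_{\Gamma}(N,\mathsf{Hom}_{\Gamma}(M,I))=0$ for $k>\fd{_{\Gamma}M}$ and all $N$; therefore $\id{_{\Gamma}\mathsf{Hom}_{\Gamma}(M,I)}\leq\fd{_{\Gamma}M}<\infty$, uniformly in $I$.

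For condition~(i), Lemma~\ref{easy_properties_of_perfect_bimodules}(iii) gives $\pd{M^{\otimes q}_{\Gamma}}<\infty$ for every $q\geq1$, and combined with nilpotency the integer $m:=\mathsf{max}\{\pd{M^{\otimes q}_{\Gamma}}:q\geq1,\,M^{\otimes q}\neq0\}$ (set $m=0$ if $M=0$) is finite. I would then check that $n':=n+m$ works: for $p,q\geq1$ with $p+q\geq n'+1$, either $q\geq n$, so $M^{\otimes q}=0$ and $\mathbb{R}^{p}\mathsf{F}'^{q}\cong\mathsf{Ext}^{p}_{\Gamma}(M^{\otimes q},-)=0$, or $q\leq n-1$, so $p\geq n'+1-q\geq m+2>\pd{M^{\otimes q}_{\Gamma}}$ and again $\mathsf{Ext}^{p}_{\Gamma}(M^{\otimes q},-)=0$. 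This establishes that $\mathsf{F}'$ is coperfect, and with the first paragraph, coperfect and nilpotent.

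The only genuinely delicate point I anticipate is the bookkeeping of module structures on the bimodule $M$: one must keep track of the left and right $\Gamma$-actions carefully enough to justify the natural isomorphism $\mathsf{F}'^{j}\cong\mathsf{Hom}_{\Gamma}(M^{\otimes j},-)$ of endofunctors of $\Mod\Gamma$ and to apply the Cartan--Eilenberg isomorphism on the side that turns ``$M$ perfect'' into exactly the vanishing we need (namely $\fd{_{\Gamma}M}<\infty$, $\pd{M_{\Gamma}}<\infty$ through Lemma~\ref{easy_properties_of_perfect_bimodules}(iii), and $\mathsf{Tor}^{\Gamma}_{i}(M,M^{\otimes j})=0$). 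Once that is set up, the remaining arguments are finiteness bookkeeping that runs parallel to the implication ``(i)$\Rightarrow$(ii)'' of Lemma~\ref{tensor_is_perfect} and to the proof of Lemma~\ref{easy_properties_of_perfect_bimodules}(ii).
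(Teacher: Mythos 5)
Your proposal is correct and follows essentially the same route as the paper's own proof: translate everything through $\mathsf{F}'^{j}\cong\mathsf{Hom}_{\Gamma}(M^{\otimes j},-)$, use Lemma~\ref{ext_computation} for condition~\textnormal{(\ref{DR})}, the Cartan--Eilenberg isomorphism for the bound $\id{\mathsf{Hom}_{\Gamma}(M,I)_{\Gamma}}\leq\fd{_{\Gamma}M}$, Lemma~\ref{easy_properties_of_perfect_bimodules}(iii) together with nilpotency for condition~(i), and $M^{\otimes n}=0$ for nilpotency of $\mathsf{F}'$. The only cosmetic difference is that you spell out the explicit choice $n'=n+m$ and split into the cases $q\geq n$ and $q\leq n-1$, whereas the paper states the bound $n'_{M}=\sup_{j}\pd{M^{\otimes j}_{\Gamma}}$ and leaves the corresponding case split (which does implicitly invoke the nilpotency of $\mathsf{F}'$ established at the very end of the paper's proof) to the reader; your version is therefore slightly more explicit, and the delicate point you flag about left/right module structures is correctly handled, not a gap.
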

\begin{proof}
Condition (\ref{DR}) is translated to $\mathsf{Ext}^i_{\Gamma}(M^{\otimes j},\mathsf{Hom}_{\Gamma}(M,I))=0$ for all $i,j\geq 1$ and every injective $\Gamma$-module $I$, which holds by Lemma \ref{ext_computation}. By Lemma \ref{easy_properties_of_perfect_bimodules}, for every $j\geq 1$ we have $\pd{M^{\otimes j}_{\Gamma}}<\infty$. Therefore, since $M$ is nilpotent, there is an integer $n_{M}'$ such that $\mathsf{sup}\{\pd M^{\otimes j}_{\Gamma}, j\geq 1\}=n_{M}'<\infty$. We conclude that for $i> n_{M}'$ we have $\mathsf{Ext}_{\Gamma}^i(M^{\otimes j},-)=0$ for all $j\geq 1$ and therefore the second condition of Definition \ref{coperfect_functor} is satisfied. By \cite[Chapter VI, Proposition 5.1]{cartan_eilenberg}, for every $X\in\Mod\Gamma$, $I\in\Inj\Gamma$ and $i\geq 1$ we have 
\[
\mathsf{Ext}^i_{\Gamma}(X,\mathsf{Hom}_{\Gamma}(M,I))\cong \mathsf{Hom}_{\Gamma}(\mathsf{Tor}_i^{\Gamma}(X,M),I),
\]
which shows that $\id\ \!{\mathsf{Hom}_{\Gamma}(M,I)_{\Gamma}}\leq \fd {_{\Gamma}M}<\infty$. In particular, the last condition of Definition \ref{coperfect_functor} is satisfied. Lastly, $\mathsf{Hom}_{\Gamma}(M,-)^n\simeq \mathsf{Hom}_{\Gamma}(M^{\otimes n},-)$ and since $M$ is nilpotent, it follows that $\mathsf{Hom}_{\Gamma}(M,-)$ is nilpotent.
\end{proof}

\section{Gorenstein abelian categories}

\subsection{Comparing Gorenstein categories in cleft extensions} Let $\mathcal{A}$ be an abelian category with enough projective and injective objects. Consider the following invariants of $\mathcal{A}$:
\[
\mathsf{spli}\mathcal{A}=\mathsf{sup}\{\pd{_{\mathcal{A}} X}\ |\ X\in\Inj\mathcal{A}\} \  \text{ and }  \  \mathsf{silp}\mathcal{A}=\mathsf{sup}\{\id{_{\mathcal{A}} X}\ |\ X\in\Proj\mathcal{A}\}.
\] 
We recall the following definition which is due to Beligiannis-Reiten \cite{gorenstein}.

\begin{defn} (\!\!\cite{gorenstein})
    We say that $\mathcal{A}$ is \emph{Gorenstein} if $\mathsf{spli}\mathcal{A}<\infty$ and $\mathsf{silp}\mathcal{A}<\infty$. 
\end{defn}

The above concept generalises the notion of an Iwanaga-Gorenstein ring, see Subsection 4.3. We recall a fundamental property of the invariants $\mathsf{spli}\mathcal{A}$ and $\mathsf{silp}\mathcal{A}$. 

\begin{lem} \label{projective_and_injective_dimensions}
    Let $X$ be an object of $\mathcal{A}$. The following hold: 
    \begin{itemize}
        \item[(i)] If $\pd{_{\mathcal{A}}X}<\infty$, then $\id{_{\mathcal{A}}X} \leq \mathsf{silp}\mathcal{A}$. 
        \item[(ii)] If $\id{_{\mathcal{A}} X}<\infty$, then $\pd{_{\mathcal{A}} X}\leq \mathsf{spli}\mathcal{A}$.
    \end{itemize}
\end{lem}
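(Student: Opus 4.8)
The plan is to prove both statements simultaneously, exploiting the evident duality between them; I will write out (i) and note that (ii) follows by the dual argument (passing to the opposite category, which swaps $\pd$ with $\id$, $\Proj$ with $\Inj$, and hence $\mathsf{spli}$ with $\mathsf{silp}$). So suppose $X$ is an object of $\mathcal{A}$ with $\pd_{\mathcal{A}}X = n < \infty$; I want to conclude $\id_{\mathcal{A}}X \leq \mathsf{silp}\mathcal{A}$.

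First I would reduce to the case $n=0$, i.e. $X$ projective, by induction on $n$. If $n \geq 1$, choose a short exact sequence $0 \to K \to P \to X \to 0$ with $P$ projective; then $\pd_{\mathcal{A}}K = n-1$, so by induction $\id_{\mathcal{A}}K \leq \mathsf{silp}\mathcal{A}$, and also $\id_{\mathcal{A}}P \leq \mathsf{silp}\mathcal{A}$ by definition of $\mathsf{silp}\mathcal{A}$. The long exact sequence in $\Ext_{\mathcal{A}}(-, Y)$ associated to $0 \to K \to P \to X \to 0$ then gives, for any $Y$ and any $i > \mathsf{silp}\mathcal{A}$, that $\Ext^i_{\mathcal{A}}(X, Y)$ is squeezed between $\Ext^i_{\mathcal{A}}(P, Y) = 0$ and $\Ext^{i+1}_{\mathcal{A}}(K,Y)$ — wait, the relevant three-term exact piece is $\Ext^{i}_{\mathcal{A}}(P,Y) \to \Ext^{i}_{\mathcal{A}}(K,Y) \to \Ext^{i+1}_{\mathcal{A}}(X,Y) \to \Ext^{i+1}_{\mathcal{A}}(P,Y)$; since $\id P, \id K \leq \mathsf{silp}\mathcal{A}$, for $i \geq \mathsf{silp}\mathcal{A}$ both outer-ish terms $\Ext^{i}_{\mathcal{A}}(K,Y)$ (for $i > \mathsf{silp}\mathcal{A}$) and $\Ext^{i+1}_{\mathcal{A}}(P,Y)$ vanish, forcing $\Ext^{i+1}_{\mathcal{A}}(X,Y) = 0$. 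Running this for all $Y$ yields $\id_{\mathcal{A}}X \leq \mathsf{silp}\mathcal{A}$.

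This leaves the base case: $X$ projective implies $\id_{\mathcal{A}}X \leq \mathsf{silp}\mathcal{A}$, which is immediate from the very definition of $\mathsf{silp}\mathcal{A} = \sup\{\id_{\mathcal{A}} Z \mid Z \in \Proj\mathcal{A}\}$. I would remark that the induction is only needed to propagate the bound from projectives to objects of finite projective dimension, and the shift-by-one in Ext is the whole content. For completeness I would also handle the degenerate cases $\mathsf{silp}\mathcal{A} = \infty$ (statement is vacuous) and $\pd_{\mathcal{A}}X = -\infty$, i.e. $X = 0$ (trivial), though these need no comment in practice.

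There is no real obstacle here; the only point requiring a modicum of care is bookkeeping the index shift in the long exact sequence so that the inequality comes out as $\leq \mathsf{silp}\mathcal{A}$ rather than off by one, and making sure the induction hypothesis is applied to $K$ with the correct projective dimension $n-1$. The statement is essentially the classical fact (going back to the original treatment of the finitistic dimension invariants, and recorded in Beligiannis--Reiten \cite{gorenstein}) that over an abelian category the suprema defining $\mathsf{spli}$ and $\mathsf{silp}$ control, respectively, projective dimension of finite-injective-dimension objects and injective dimension of finite-projective-dimension objects. I would therefore keep the proof short, essentially as the two-sentence induction above plus the dual remark for (ii).
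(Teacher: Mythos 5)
Your overall strategy — induction on $\pd_{\mathcal{A}}X$, base case from the very definition of $\mathsf{silp}\mathcal{A}$, short exact sequence $0\to K\to P\to X\to 0$ with $P$ projective and $\pd_{\mathcal{A}}K=n-1$, conclude via the long exact sequence, and dualise for (ii) — is exactly the paper's. But there is a concrete error in the variance of $\Ext$. You apply the long exact sequence in the \emph{first} variable, $\Ext_{\mathcal{A}}(-,Y)$, to the short exact sequence, and then try to deduce $\Ext^{i+1}_{\mathcal{A}}(X,Y)=0$. Vanishing of $\Ext^{i}_{\mathcal{A}}(X,Y)$ for all $Y$ and large $i$ controls $\pd_{\mathcal{A}}X$, not $\id_{\mathcal{A}}X$; the injective dimension is characterised by vanishing of $\Ext^{i}_{\mathcal{A}}(Y,X)$. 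Worse, the key step in your argument — ``since $\id_{\mathcal{A}}K\leq \mathsf{silp}\mathcal{A}$, the term $\Ext^{i}_{\mathcal{A}}(K,Y)$ vanishes for $i>\mathsf{silp}\mathcal{A}$'' — is false as stated: $\id_{\mathcal{A}}K\leq s$ gives $\Ext^{i}_{\mathcal{A}}(Y,K)=0$ for $i>s$, not $\Ext^{i}_{\mathcal{A}}(K,Y)=0$. (The term $\Ext^{i+1}_{\mathcal{A}}(P,Y)$ does vanish, but trivially because $P$ is projective, not because $\id_{\mathcal{A}}P\leq s$.)

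The repair is straightforward and brings you in line with the paper: apply $\Ext_{\mathcal{A}}(Y,-)$ to $0\to K\to P\to X\to 0$ to obtain the exact piece
\[
\Ext^{i}_{\mathcal{A}}(Y,P)\longrightarrow \Ext^{i}_{\mathcal{A}}(Y,X)\longrightarrow \Ext^{i+1}_{\mathcal{A}}(Y,K).
\]
For $i>\mathsf{silp}\mathcal{A}$ the left term vanishes because $\id_{\mathcal{A}}P\leq \mathsf{silp}\mathcal{A}$ (definition of $\mathsf{silp}$) and the right term vanishes because $\id_{\mathcal{A}}K\leq \mathsf{silp}\mathcal{A}$ (induction hypothesis), giving $\Ext^{i}_{\mathcal{A}}(Y,X)=0$ for all $Y$ and hence $\id_{\mathcal{A}}X\leq \mathsf{silp}\mathcal{A}$. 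With that substitution your proof is correct and identical in structure to the paper's; everything else you say, including the duality for part (ii), is fine.
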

\begin{proof}
    Both claims are proved similarly, so we only prove (i). By definition the claim holds if $\pd{_{\mathcal{A}}X}=0$. We proceed by induction: if $\pd{_{\mathcal{A}}X}=n$, then there is a short exact sequence $0\rightarrow K\rightarrow P\rightarrow X\rightarrow 0$ in $\mathcal{A}$ with $P$ a projective object and $\pd{_{\mathcal{A}} K}=n-1$. Then $\id {_{\mathcal{A}}P}\leq\mathsf{silp}\mathcal{A}$ and $\id{_{\mathcal{A}}K}\leq \mathsf{silp}\mathcal{A}$ by the induction hypothesis, thus also $\id{_{\mathcal{A}}X}\leq \mathsf{silp}\mathcal{A}$. 
\end{proof}

We will now compare Gorensteinness for abelian categories in a cleft extension that is also a coextension such that the associated endofunctors satisfy perfectness assumptions. We begin by comparing $\mathsf{silp}$ in a cleft extension. 

\begin{prop} \label{one_way}
    Let $(\mathcal{B},\mathcal{A},\mathsf{i},\mathsf{e},\mathsf{l})$ be a cleft extension of abelian categories. If $\mathsf{F}$ is perfect and nilpotent, then
    \[
    \mathsf{silp}\mathcal{B}-n+1\leq \mathsf{silp}\mathcal{A}\leq \mathsf{silp}\mathcal{B}+n+s,
    \]
    where $n$ is an integer as in Definition \textnormal{\ref{perfect_functor}} and $s$ is so that $\mathsf{F}^s=0$. In particular, $\mathsf{silp}\mathcal{B}<\infty$ if and only if $\mathsf{silp}\mathcal{A}<\infty$. 
\end{prop}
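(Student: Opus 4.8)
The proof proceeds by relating injective dimensions in $\mathcal{A}$ to injective dimensions in $\mathcal{B}$ via the functors $\mathsf{l}$ and $\mathsf{e}$, using the short exact sequences attached to the cleft extension and the $\mathsf{F}$-projectivity machinery of Section~3. By Lemma~\ref{projectives_in_cleft}, every projective object of $\mathcal{A}$ is a direct summand of $\mathsf{l}(P)$ for some $P\in\Proj\mathcal{B}$, so it suffices to bound $\id{_{\mathcal{A}}\mathsf{l}(P)}$ in terms of $\mathsf{silp}\mathcal{B}$ and, conversely, to produce an object of $\mathcal{A}$ whose injective dimension is large when $\mathsf{silp}\mathcal{B}$ is large.

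\textbf{Upper bound.} First I would establish $\mathsf{silp}\mathcal{A}\leq\mathsf{silp}\mathcal{B}+n+s$. The idea is to compute $\mathsf{Ext}^k_{\mathcal{A}}(Y,\mathsf{l}(P))$ for $Y\in\mathcal{A}$ and $k$ large, where $P\in\Proj\mathcal{B}$. Using the iterated short exact sequences $0\to\mathsf{G}^m(Y)\to\mathsf{lF}^{m-1}\mathsf{e}(Y)\to\mathsf{G}^{m-1}(Y)\to 0$ for $m=1,\dots,s$ (with $\mathsf{G}^s=0$ since $\mathsf{F}^s=0$ forces $\mathsf{G}^s=0$), one reduces a $\mathsf{Hom}$ out of $Y$ to $\mathsf{Hom}$ out of the objects $\mathsf{lF}^{m-1}\mathsf{e}(Y)$. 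Now $\mathsf{F}^{m-1}\mathsf{e}(Y)$ is an object of $\mathcal{B}$, and by Lemma~\ref{basic_homological_properties_of_cleft}(iii), provided $\mathbb{L}_i\mathsf{F}$ vanishes on the relevant objects (which it does on $\mathsf{F}$-projectives, and $\mathsf{e}(Y)$ need not be $\mathsf{F}$-projective, so one must instead take a projective resolution of $\mathsf{F}^{m-1}\mathsf{e}(Y)$ and use that $\mathbb{L}_i\mathsf{F}^q=0$ for $i+q\geq n+1$), we get $\mathsf{Ext}^k_{\mathcal{A}}(\mathsf{l}(Z),\mathsf{l}(P))\cong$ something computed in $\mathcal{B}$ that is controlled by $\mathsf{silp}\mathcal{B}$ once $k$ exceeds $n$. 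Summing the $s$ contributions and the homological shift $n$ gives the bound $\mathsf{silp}\mathcal{B}+n+s$. The delicate point here is keeping track of how the non-vanishing of the low $\mathbb{L}_i\mathsf{F}$ shifts the Ext-computation; this is where the constant $n$ enters, and it is the part I expect to require the most care.

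\textbf{Lower bound.} For $\mathsf{silp}\mathcal{B}-n+1\leq\mathsf{silp}\mathcal{A}$, I would take $P\in\Proj\mathcal{B}$ with $\id{_{\mathcal{B}}P}$ close to $\mathsf{silp}\mathcal{B}$ (or $=\infty$) and show $\id{_{\mathcal{A}}\mathsf{l}(P)}\geq\id{_{\mathcal{B}}P}-n+1$ (interpreting $\infty$ appropriately). Concretely, if $\mathsf{Ext}^k_{\mathcal{B}}(X,P)\neq 0$ for some $X\in\mathcal{B}$, I want to detect a nonzero $\mathsf{Ext}$ in $\mathcal{A}$ against $\mathsf{l}(P)$ in a comparable degree. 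Using that $\mathsf{l}(P)$ is projective in $\mathcal{A}$ (Lemma~\ref{basic_homological_properties_of_cleft}(i)) and the natural isomorphism $\mathsf{el}\simeq\mathsf{F}\oplus\mathsf{Id}_{\mathcal{B}}$, one sees $P$ is a direct summand of $\mathsf{el}(P)$, and an injective resolution of $\mathsf{l}(P)$ in $\mathcal{A}$ maps under $\mathsf{e}$ to something from which a bounded injective resolution of $P$ in $\mathcal{B}$ could be extracted if $\id{_{\mathcal{A}}\mathsf{l}(P)}$ were too small; the loss of $n-1$ in degree comes from the derived functors of $\mathsf{F}$ (equivalently of $\mathsf{l}$, via Lemma~\ref{basic_homological_properties_of_cleft}(ii)) intervening when one reconstructs Ext over $\mathcal{B}$ from Ext over $\mathcal{A}$. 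Combining the two inequalities yields the displayed double inequality; the final "in particular" clause is then immediate, since $n$ and $s$ are finite whenever $\mathsf{F}$ is perfect and nilpotent, so finiteness of $\mathsf{silp}\mathcal{B}$ is equivalent to finiteness of $\mathsf{silp}\mathcal{A}$.
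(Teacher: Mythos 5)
Your outline has the right ingredients (Lemma~\ref{projectives_in_cleft} to reduce to $\mathsf{l}(P)$; the iterated short exact sequences involving $\mathsf{G}^m$; the $\mathsf{F}$-projectivity machinery and the adjunction), but the execution has two genuine gaps.

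For the upper bound, the order of operations matters and your plan gets it wrong. You propose to iterate the short exact sequences starting from $Y$ and then, at each of the $s$ stages, ``take a projective resolution of $\mathsf{F}^{m-1}\mathsf{e}(Y)$'' to cope with the failure of $\mathsf{F}$-projectivity. Done that way, you pay roughly $n$ degrees at each of $s$ stages, yielding a bound of the shape $\mathsf{silp}\mathcal{B}+ns$, not $\mathsf{silp}\mathcal{B}+n+s$. The paper's trick is to pay the $n$-cost exactly once, \emph{before} iterating: truncate a projective resolution of $X$ in $\mathcal{A}$ to obtain $X'$ with $\pd$-shift $n-1$, apply $\mathsf{e}$, and use that each $\mathsf{e}(P_k)$ is $\mathsf{F}$-projective (Lemma~\ref{eP_are_proj}) to conclude by dimension shift that $\mathsf{e}(X')$ is $\mathsf{F}$-projective. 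Then \emph{all} the powers $\mathsf{F}^{j}\mathsf{e}(X')$ are automatically $\mathsf{F}$-projective (Corollary~\ref{powers_of_F_proj}), so one may run the $s$-fold iteration of the sequences for $X'$ at no further cost in $n$. This single up-front truncation is the missing idea.

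For the lower bound, the mechanism you sketch is not available. You want to apply $\mathsf{e}$ to an injective resolution of $\mathsf{l}(P)$ and ``extract'' a bounded injective resolution of $P$; but in a cleft extension $\mathsf{e}$ is a right adjoint whose left adjoint $\mathsf{l}$ is only right exact, so $\mathsf{e}$ need not preserve injectives, and nothing forces $\mathsf{e}(I^{\bullet})$ to be built from injectives in $\mathcal{B}$. The correct route is the dual computation: take any $X\in\mathcal{B}$, truncate to $X'$ so that $X'$ itself is $\mathsf{F}$-projective (here the resolution is done in $\mathcal{B}$), and then use the Ext adjunction of Lemma~\ref{basic_homological_properties_of_cleft}(iii), which is valid precisely because $X'$ is $\mathsf{F}$-projective, to split
\[
\mathsf{Ext}^k_{\mathcal{A}}(\mathsf{l}(X'),\mathsf{l}(P))\cong \mathsf{Ext}^k_{\mathcal{B}}(X',P)\oplus\mathsf{Ext}^k_{\mathcal{B}}(X',\mathsf{F}(P)),
\]
and exploit the projectivity of $\mathsf{l}(P)$ in $\mathcal{A}$ (so $\id{_{\mathcal{A}}\mathsf{l}(P)}\leq\mathsf{silp}\mathcal{A}$) to force $\mathsf{Ext}^k_{\mathcal{B}}(X',P)=0$ for $k\geq\mathsf{silp}\mathcal{A}+1$, then shift back from $X'$ to $X$. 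Note that $\mathsf{F}(P)$ plays no role in this direction other than being a harmless direct summand; you do not need a bound on $\id\mathsf{F}(P)$ here, unlike the upper bound where Lemma~\ref{projective_and_injective_dimensions}(i) is used to control $\id_{\mathcal{B}}\mathsf{F}(P)$.
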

\begin{proof} Let us show in (i) the right-hand side of the inequality and in (ii) the left-hand side of the inequality. 

(i) By Lemma \ref{projectives_in_cleft}, every projective object in $\mathcal{A}$ is a direct summand of $\mathsf{l}(P)$ for some projective object $P$ of $\mathcal{B}$. Therefore, it is enough to show that $\id{_{\mathcal{A}}\mathsf{l}(P)}\leq \mathsf{silp}\mathcal{B}+n+s$ for every $P\in\Proj\mathcal{B}$. Let $X$ be an object of $\mathcal{A}$ and consider an exact sequence:
    \begin{align} \label{exact_sequence_1}
        0\rightarrow X'\rightarrow P_{n-2}\rightarrow\cdots\rightarrow P_0\rightarrow X\rightarrow 0,
    \end{align}
    where $n$ is such that $\mathbb{L}_p\mathsf{F}^q=0$ for all $p,q\geq 1$ with $p+q\geq n+1$. If $n=1$, we set $X'=X$. Applying $\mathsf{e}$ to (\ref{exact_sequence_1}) gives the following exact sequence: 
    \[
    0\rightarrow \mathsf{e}(X')\rightarrow \mathsf{e}(P_{n-2})\rightarrow \cdots\rightarrow \mathsf{e}(P_0)\rightarrow \mathsf{e}(X)\rightarrow 0.
    \]
    By Lemma \ref{eP_are_proj}, we know that the objects $\mathsf{e}(P_{n-2}),\dots,\mathsf{e}(P_0)$ are $\mathsf{F}$-projective and therefore by dimension shift we obtain the following isomorphism for every $i,j\geq 1$: $\mathbb{L}_i\mathsf{F}^j(\mathsf{e}(X'))\cong\mathbb{L}_{i+n-1}\mathsf{F}^j(\mathsf{e}(X))$. By the choice of $n$, the latter is $0$, meaning that $\mathsf{e}(X')$ is $\mathsf{F}$-projective. Therefore, by Lemma \ref{F_projective}, it follows that $\mathsf{F}^j\mathsf{e}(X')$ is $\mathsf{F}$-projective for every $j\geq 1$. We thus have the following isomorphisms for $k\geq 1$: 
    \begin{align*}
        \mathsf{Ext}^k_{\mathcal{A}}(\mathsf{lF}^j\mathsf{e}(X'),\mathsf{l}(P))& \cong \mathsf{Ext}_{\mathcal{B}}^k(\mathsf{F}^j\mathsf{e}(X'),\mathsf{el}(P))& \text{ Lemma \ref{basic_homological_properties_of_cleft}} \\ 
                           & \cong \mathsf{Ext}_{\mathcal{B}}^k(\mathsf{F}^j\mathsf{e}(X'),P)\oplus \mathsf{Ext}_{\mathcal{B}}^k(\mathsf{F}^j\mathsf{e}(X'),\mathsf{F}(P)) & \mathsf{el}\simeq \mathsf{Id}_{\mathcal{B}}\oplus \mathsf{F}
    \end{align*}
    We have that $\id{_{\mathcal{B}}P}\leq \mathsf{silp}\mathcal{B}$. Moreover, by assumption, the object $\mathsf{F}(P)$ has finite projective dimension and so by Lemma \ref{projective_and_injective_dimensions}, we have that $\id{_{\mathcal{B}}\mathsf{F}(P)}\leq \mathsf{silp}\mathcal{B}$. From the above, we conclude that  
    \begin{equation}
        \mathsf{Ext}^k_{\mathcal{A}}(\mathsf{lF}^j\mathsf{e}(X'),\mathsf{l}(P))\cong 0 \text{ for every }k\geq \mathsf{silp}\mathcal{B}+1 \text{ and every } j\geq 0.
    \end{equation}
    Consider now the following sequence of short exact sequences in $\mathcal{A}$: 
    \[
    0\rightarrow \mathsf{G}(X')\rightarrow \mathsf{le}(X')\rightarrow X'\rightarrow 0, \ \  0\rightarrow \mathsf{G}^2(X')\rightarrow \mathsf{lFe}(X')\rightarrow \mathsf{G}(X')\rightarrow 0, \ \  \dots
    \]
    Since $\mathsf{F}$ is nilpotent, there is $s$ such that $\mathsf{F}^s=0$ and so for every $k\geq \mathsf{silp}\mathcal{B}+s$, using (4.2), we obtain the following isomorphisms:
    \begin{align*} 
          \mathsf{Ext}_{\mathcal{A}}^k(X',\mathsf{l}(P))&\cong\mathsf{Ext}^{k-1}_{\mathcal{A}}(\mathsf{G}(X'),\mathsf{l}(P)) \\ 
                                 &\cong \mathsf{Ext}_{\mathcal{A}}^{k-2}(\mathsf{G}^2(X'),\mathsf{l}(P)) \\
                                 & \ \ \vdots  \\
                                 &\cong \mathsf{Ext}_{\mathcal{A}}^{k-s+1}(\mathsf{G}^{s-1}(X'),\mathsf{l}(P)) \\ 
                                 &\cong \mathsf{Ext}_{\mathcal{A}}^{k-s+1}(\mathsf{lF}^{s-1}\mathsf{e}(X'),\mathsf{l}(P))\cong 0.
    \end{align*}
    By returning to (\ref{exact_sequence_1}) we conclude that $\mathsf{Ext}_{\mathcal{A}}^k(X,\mathsf{l}(P))\cong 0$ for every $k\geq \mathsf{silp}\mathcal{B}+n+s-1$ and therefore $\id{_{\mathcal{A}}\mathsf{l}(P)}\leq \mathsf{silp}\mathcal{B}+n+s$.

    (ii) Let $P$ be a projective object of $\mathcal{B}$ and $X$ any object of $\mathcal{B}$. Consider an exact sequence as below 
    \begin{align}
        0\rightarrow X'\rightarrow P_{n-2}\rightarrow \cdots\rightarrow P_0\rightarrow X\rightarrow 0,
    \end{align}
    where $n$ is such that $\mathbb{L}_p\mathsf{F}^q=0$ for $p+q\geq n+1$. If $n=1$, we set $X'=X$. Then, by the choice of $n$, it follows that $X'$ is $\mathsf{F}$-projective, so we infer that 
    \begin{align*}
        \mathsf{Ext}^k_{\mathcal{A}}(\mathsf{l}(X'),\mathsf{l}(P))& \cong \mathsf{Ext}^k_{\mathcal{B}}(X',\mathsf{el}(P)) & \text{ Lemma \ref{basic_homological_properties_of_cleft}} \\ 
        & \cong \mathsf{Ext}^k_{\mathcal{B}}(X',P)\oplus\mathsf{Ext}^k_{\mathcal{B}}(X',\mathsf{F}(P)) & \mathsf{el}\simeq \mathsf{Id}_{\mathcal{B}}\oplus \mathsf{F}
    \end{align*}
    Since $\mathsf{l}(P)$ is a projective object of $\mathcal{A}$, it follows that $\id{_{\mathcal{A}}\mathsf{l}(P)}\leq \mathsf{silp}\mathcal{A}$. In particular, it follows that $\mathsf{Ext}^i_{\mathcal{A}}(\mathsf{l}(X'),\mathsf{l}(P))\cong 0$ for $k\geq \mathsf{silp}\mathcal{A}+1$, so $\mathsf{Ext}_{\mathcal{B}}^k(X',P)\cong 0$ for $k\geq \mathsf{silp}\mathcal{A}+1$. By returning to (4.3), we obtain that $\mathsf{Ext}_{\mathcal{A}}^k(X,P)\cong 0$ for all $k\geq \mathsf{silp}\mathcal{A}+n$. This implies that $\mathsf{silp}\mathcal{B}\leq \mathsf{silp}\mathcal{A}+n-1$. 
\end{proof}

We spell out, with proof, the dual statement involving $\mathsf{spli}$ and cleft coextensions.

\begin{prop} \label{the_other_way}
    Let $(\mathcal{B},\mathcal{A},\mathsf{i},\mathsf{e},\mathsf{r})$ be a cleft coextension of abelian categories. If $\mathsf{F}'$ is coperfect and nilpotent, then
    \[
    \mathsf{spli}\mathcal{B}-n'+1\leq \mathsf{spli}\mathcal{A}\leq \mathsf{spli}\mathcal{B}+n'+s,
    \]
    where $n'$ is an integer as in Definition \textnormal{\ref{coperfect_functor}} and $s$ is so that $\mathsf{F}'^s=0$. In particular, $\mathsf{spli}\mathcal{B}<\infty$ if and only if $\mathsf{spli}\mathcal{A}<\infty$. 
\end{prop}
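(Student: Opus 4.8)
The plan is to dualize the proof of Proposition~\ref{one_way} essentially verbatim, interchanging projectives with injectives, the functor $\mathsf{l}$ with $\mathsf{r}$, $\mathsf{F}$ with $\mathsf{F}'$, $\mathsf{G}$ with $\mathsf{G}'$, left-derived functors with right-derived functors, and reversing the variance of $\mathsf{Ext}$ throughout. All the ingredients used there have already-recorded duals: Lemma~\ref{injectives_in_cleft} replaces Lemma~\ref{projectives_in_cleft}, Lemma~\ref{eI_are_inj} replaces Lemma~\ref{eP_are_proj}, Lemma~\ref{basic_homological_properties_of_cocleft} replaces Lemma~\ref{basic_homological_properties_of_cleft}, the splitting $\mathsf{er}\simeq\mathsf{Id}_{\mathcal{B}}\oplus\mathsf{F}'$ replaces $\mathsf{el}\simeq\mathsf{Id}_{\mathcal{B}}\oplus\mathsf{F}$, and the short exact sequences $0\to\mathsf{G}'^{n-1}(A)\to\mathsf{rF}'^{n-1}\mathsf{e}(A)\to\mathsf{G}'^{n}(A)\to 0$ replace the ones featuring $\mathsf{G}$ and $\mathsf{l}$. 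As in Proposition~\ref{one_way}, I would split the argument into the right-hand inequality first and the left-hand inequality second.

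For $\mathsf{spli}\mathcal{A}\leq\mathsf{spli}\mathcal{B}+n'+s$: by Lemma~\ref{injectives_in_cleft} every injective object of $\mathcal{A}$ is a summand of some $\mathsf{r}(I)$ with $I\in\Inj\mathcal{B}$, so it suffices to show $\mathsf{Ext}_{\mathcal{A}}^k(\mathsf{r}(I),X)=0$ for $k\gg 0$ and every $X\in\mathcal{A}$. Given $X$, I would fix a partial injective coresolution $0\to X\to I^0\to\cdots\to I^{n'-2}\to X'\to 0$ in $\mathcal{A}$ (with $X'=X$ when $n'=1$), apply the exact functor $\mathsf{e}$, and invoke Lemma~\ref{eI_are_inj} to see that the objects $\mathsf{e}(I^j)$ are $\mathsf{F}'$-injective; a dimension shift then gives $\mathbb{R}^i\mathsf{F}'^j(\mathsf{e}(X'))\cong\mathbb{R}^{i+n'-1}\mathsf{F}'^j(\mathsf{e}(X))=0$ for $i,j\geq 1$, so $\mathsf{e}(X')$—and hence, by Lemma~\ref{F'_injective}, each $\mathsf{F}'^j\mathsf{e}(X')$—is $\mathsf{F}'$-injective. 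Combining Lemma~\ref{basic_homological_properties_of_cocleft}(iii) with $\mathsf{er}\simeq\mathsf{Id}_{\mathcal{B}}\oplus\mathsf{F}'$ I obtain, for $k\geq 1$ and $j\geq 0$,
\[
\mathsf{Ext}^k_{\mathcal{A}}(\mathsf{r}(I),\mathsf{rF}'^j\mathsf{e}(X'))\cong\mathsf{Ext}^k_{\mathcal{B}}(I,\mathsf{F}'^j\mathsf{e}(X'))\oplus\mathsf{Ext}^k_{\mathcal{B}}(\mathsf{F}'(I),\mathsf{F}'^j\mathsf{e}(X')),
\]
and since $\pd{_{\mathcal{B}}I}\leq\mathsf{spli}\mathcal{B}$ and $\pd{_{\mathcal{B}}\mathsf{F}'(I)}\leq\mathsf{spli}\mathcal{B}$ (the latter because $\id{_{\mathcal{B}}\mathsf{F}'(I)}<\infty$, via Lemma~\ref{projective_and_injective_dimensions}(ii)), this vanishes for $k\geq\mathsf{spli}\mathcal{B}+1$ and all $j\geq 0$. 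Feeding this into the sequences $0\to\mathsf{G}'^{n-1}(X')\to\mathsf{rF}'^{n-1}\mathsf{e}(X')\to\mathsf{G}'^{n}(X')\to 0$ and using $\mathsf{G}'^s=0$ (which holds since $\mathsf{F}'^s=0$ and $\mathsf{G}'$ is nilpotent precisely when $\mathsf{F}'$ is), an iterated dimension shift—carried out exactly as in Proposition~\ref{one_way}—gives $\mathsf{Ext}^k_{\mathcal{A}}(\mathsf{r}(I),X')=0$ for $k\geq\mathsf{spli}\mathcal{B}+s$; the chosen coresolution of $X$ then yields $\mathsf{Ext}^k_{\mathcal{A}}(\mathsf{r}(I),X)=0$ for $k\geq\mathsf{spli}\mathcal{B}+n'+s-1$, whence $\pd{_{\mathcal{A}}\mathsf{r}(I)}\leq\mathsf{spli}\mathcal{B}+n'+s$.

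For $\mathsf{spli}\mathcal{B}-n'+1\leq\mathsf{spli}\mathcal{A}$: given $I\in\Inj\mathcal{B}$ and arbitrary $X\in\mathcal{B}$, I would take a partial injective coresolution $0\to X\to I^0\to\cdots\to I^{n'-2}\to X'\to 0$ in $\mathcal{B}$; by the choice of $n'$ the object $X'$ is $\mathsf{F}'$-injective, so Lemma~\ref{basic_homological_properties_of_cocleft}(iii) together with $\mathsf{er}\simeq\mathsf{Id}_{\mathcal{B}}\oplus\mathsf{F}'$ gives
\[
\mathsf{Ext}^k_{\mathcal{A}}(\mathsf{r}(I),\mathsf{r}(X'))\cong\mathsf{Ext}^k_{\mathcal{B}}(I,X')\oplus\mathsf{Ext}^k_{\mathcal{B}}(\mathsf{F}'(I),X').
\]
Since $\mathsf{r}(I)\in\Inj\mathcal{A}$ by Lemma~\ref{basic_homological_properties_of_cocleft}(i), the left-hand side vanishes for $k\geq\mathsf{spli}\mathcal{A}+1$, hence so does $\mathsf{Ext}^k_{\mathcal{B}}(I,X')$; the coresolution then forces $\mathsf{Ext}^k_{\mathcal{B}}(I,X)=0$ for $k\geq\mathsf{spli}\mathcal{A}+n'$. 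As $X$ was arbitrary, $\pd{_{\mathcal{B}}I}\leq\mathsf{spli}\mathcal{A}+n'-1$, and taking the supremum over $I\in\Inj\mathcal{B}$ gives $\mathsf{spli}\mathcal{B}\leq\mathsf{spli}\mathcal{A}+n'-1$. The displayed ``in particular'' is then immediate. I do not anticipate any real obstacle: the proof is a faithful mirror of Proposition~\ref{one_way}, and the only point needing care is to keep the variance of $\mathsf{Ext}$ and the direction of each dimension shift straight, so that the roles of ``injective'' and ``projective'' are not inadvertently swapped.
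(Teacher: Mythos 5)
Your proof is correct and follows essentially the same route as the paper's: both prove the two inequalities separately by taking a partial injective coresolution of length $n'-1$, applying $\mathsf{e}$, invoking the $\mathsf{F}'$-injectivity of the coresolved object, using the splitting $\mathsf{er}\simeq\mathsf{Id}_{\mathcal{B}}\oplus\mathsf{F}'$ together with Lemma~\ref{basic_homological_properties_of_cocleft}(iii), and then iterating the short exact sequences $0\to\mathsf{G}'^{k-1}(X')\to\mathsf{rF}'^{k-1}\mathsf{e}(X')\to\mathsf{G}'^{k}(X')\to 0$ with the nilpotency of $\mathsf{F}'$. You even reference the correct auxiliary lemma (\ref{basic_homological_properties_of_cocleft}) where the paper's text contains a small misreference, so no complaints.
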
 
\begin{proof} Let us show in (i) the right-hand side of the inequality and in (ii) the left-hand side of the inequality. 

(i) Every injective object in $\mathcal{A}$ is a direct summand of $\mathsf{r}(I)$ for some injective object $I$ of $\mathcal{B}$. Therefore, it is enough to show that $\pd{_{\mathcal{A}}\mathsf{r}(I)}\leq \mathsf{spli}\mathcal{B}+n'+s$ for every $I\in\Inj\mathcal{B}$. Let $X$ be an object of $\mathcal{A}$ and consider a resolution as follows:
\begin{align} \label{second_ses}
    0\rightarrow X\rightarrow I_0\rightarrow \cdots\rightarrow I_{n'-2}\rightarrow X'\rightarrow 0,
\end{align}
where $n'$ is such that $\mathbb{R}^p\mathsf{F}'^q=0$ for all $p,q\geq 1$ with $p+q\geq n'+1$. If $n'=1$ we set $X'=X$. Applying $\mathsf{e}$ to (\ref{second_ses}) gives the following exact sequence in $\mathcal{B}$:
\[
0\rightarrow\mathsf{e}(X)\rightarrow\mathsf{e}(I_0)\rightarrow\cdots\rightarrow \mathsf{e}(I_{n'-2})\rightarrow\mathsf{e}(X')\rightarrow 0.
\]
By Lemma \ref{eI_are_inj}, we know that the objects $\mathsf{e}(I_0),\dots,\mathsf{e}(I_{n'-2})$ are $\mathsf{F}'$-injective, so by dimension shift we have the following isomorphism for every $i\geq 1$: $\mathbb{R}^i\mathsf{F}'^j(\mathsf{e}(X'))\cong\mathbb{R}^{i+n'-1}\mathsf{F}'^j(\mathsf{e}(X))\cong 0$. By the choice of $n'$ the latter is $0$ and we conclude that $\mathsf{e}(X')$ is $\mathsf{F}'$-injective. Therefore, by Lemma \ref{F'_injective}, it follows that $\mathsf{F}'^j\mathsf{e}(X')$ is $\mathsf{F}'$-injective for every $j\geq 0$. We thus have the following isomorphisms for $k\geq 1$: 
\begin{align*}
    \mathsf{Ext}_{\mathcal{A}}^k(\mathsf{r}(I),\mathsf{r}\mathsf{F}'^j\mathsf{e}(X'))& \cong \mathsf{Ext}^k_{\mathcal{B}}(\mathsf{er}(I),\mathsf{F}'^j\mathsf{e}(X')) & \text{Lemma \ref{basic_homological_properties_of_cleft}} \\ 
    & \cong \mathsf{Ext}^k_{\mathcal{B}}(I,\mathsf{F}'^j\mathsf{e}(X'))\oplus \mathsf{Ext}^k_{\mathcal{B}}(\mathsf{F}'(I),\mathsf{F}'^j\mathsf{e}(X')) & \mathsf{er}\simeq \mathsf{Id}_{\mathcal{B}}\oplus \mathsf{F}'
\end{align*}
Since $I$ is injective, it follows that $\pd{_{\mathcal{B}} I}\leq \mathsf{spli}\mathcal{B}$. Moreover, by assumption, the object $\mathsf{F}'(I)$ has finite injective dimension and so by Lemma \ref{projective_and_injective_dimensions}, we have that $\pd{_{\mathcal{B}}\mathsf{F}'(I)}\leq \mathsf{spli}\mathcal{B}$. From the above, we conclude that 
\begin{equation}
    \mathsf{Ext}_{\mathcal{A}}^k(\mathsf{r}(I),\mathsf{r}\mathsf{F}'^j\mathsf{e}(X'))\cong 0 \text{ for every } k\geq \mathsf{spli}\mathcal{B}+1 \text{ and every } j\geq 0. 
\end{equation}
Consider now the following sequence of short exact sequences in $\mathcal{A}$: 
\[
0\rightarrow X'\rightarrow \mathsf{re}(X')\rightarrow \mathsf{G'}(X')\rightarrow 0, \  0\rightarrow \mathsf{G}'(X')\rightarrow \mathsf{rF'e}(X')\rightarrow \mathsf{G}'^2(X')\rightarrow 0, \ \dots
\]
Since $\mathsf{F}'$ is nilpotent, there is $s$ such that $\mathsf{F}'^s=0$ and so for every $k\geq \mathsf{spli}\mathcal{B}+s$, using (4.5), we obtain the following isomorphisms: 
\begin{align*}
 \mathsf{Ext}_{\mathcal{A}}^k(\mathsf{r}(I),X')&\cong\mathsf{Ext}^{k-1}_{\mathcal{A}}(\mathsf{r}(I),\mathsf{G}'(X')) \\ 
                                 &\cong \mathsf{Ext}_{\mathcal{A}}^{k-2}(\mathsf{r}(I),\mathsf{G}'^2(X')) \\
                                 & \ \ \vdots \\
                                 &\cong \mathsf{Ext}_{\mathcal{A}}^{k-s+1}(\mathsf{r}(I),\mathsf{G}'^{s-1}(X')) \\ 
                                 &\cong \mathsf{Ext}_{\mathcal{A}}^{k-s+1}(\mathsf{r}(I),\mathsf{rF'}^{s-1}\mathsf{e}(X'))\cong 0.
\end{align*}
By returning to (\ref{second_ses}) we conclude that $\mathsf{Ext}^k_{\mathcal{A}}(\mathsf{r}(I),X)\cong 0$ for every $k\geq \mathsf{spli}\mathcal{B}+s+n'-1$ and therefore $\id{_{\mathcal{A}}\mathsf{r}(I)}\leq \mathsf{spli}\mathcal{B}+s+n'$.

(ii) Let $I$ be an injective object in $\mathcal{B}$ and $X$ any object of $\mathcal{B}$. Consider an exact sequence as below
\begin{align}
    0\rightarrow X\rightarrow I_0\rightarrow\cdots\rightarrow I_{n'-2}\rightarrow X'\rightarrow 0,
\end{align}
where $n'$ is such that $\mathbb{R}^p\mathsf{F}'^q=0$ for all $p,q\geq 1$ with $p+q\geq n'+1$. If $n'=1$, we set $X'=X$. By the choice of $n'$, it follows that $X'$ is $\mathsf{F}'$-injective, so we infer that
\begin{align*}
    \mathsf{Ext}_{\mathcal{A}}^k(\mathsf{r}(I),\mathsf{r}(X')) & \cong \mathsf{Ext}_{\mathcal{B}}^k(\mathsf{er}(I),X') & \text{Lemma \ref{basic_homological_properties_of_cocleft}} \\ 
    & \cong \mathsf{Ext}_{\mathcal{B}}^k(I,X')\oplus \mathsf{Ext}_{\mathcal{B}}^k(\mathsf{F}'(I),X') & \mathsf{er}\simeq \mathsf{Id}_{\mathcal{B}}\oplus \mathsf{F}'
\end{align*}
Since $\mathsf{r}(I)$ is an injective object of $\mathcal{A}$, it follows that $\pd_{\mathcal{A}}\mathsf{r}(I)\leq \mathsf{spli}\mathcal{A}$. In particular, it follows that $\mathsf{Ext}_{\mathcal{A}}^k(\mathsf{r}(I),\mathsf{r}(X'))=0$ for $k\geq \mathsf{spli}\mathcal{A}+1$, so $\mathsf{Ext}_{\mathcal{B}}^k(I,X')=0$ for $k\geq \mathsf{spli}\mathcal{A}+1$. By returning to (4.6), we obtain that $\mathsf{Ext}^k_{\mathcal{B}}(I,X)=0$ for all $k\geq \mathsf{spli}\mathcal{A}+n'$. This implies that $\mathsf{spli}\mathcal{B}\leq \mathsf{spli}\mathcal{A}+n'-1$. 
\end{proof}

We now want to combine Proposition \ref{one_way} and Proposition \ref{the_other_way} into one single statement involving Gorenstein categories and cleft extensions. For this, we need situations where cleft extensions are also cleft coextensions. This is characterized in the following fact. 

\begin{fact}  (\!\!\cite[Remark 2.7]{beligiannis},\cite[Remark 2.5]{arrow}) \label{fact}
    A cleft extension of abelian categories $(\mathcal{B},\mathcal{A},\mathsf{i},\mathsf{e},\mathsf{l})$ is the upper part of a cleft coextension $(\mathcal{B},\mathcal{A},\mathsf{i},\mathsf{e},\mathsf{r})$ precisely when the endofunctor $\mathsf{F}$ of $\mathcal{B}$ admits a right adjoint $\mathsf{F}'$. The situation is summarised in the diagram below: 
    \[
    \begin{tikzcd}
\mathcal{B} \arrow[rr, "\mathsf{i}"] &  & \mathcal{A} \arrow[rr, "\mathsf{e}"] \arrow[ll, "\mathsf{q}"', bend right] \arrow[ll, "\mathsf{p}", dashed, bend left] &  & \mathcal{B} \arrow[ll, "\mathsf{l}"', bend right] \arrow["\mathsf{F}"', loop, distance=2em, in=125, out=55] \arrow["\mathsf{F}'"', dashed, loop, distance=2em, in=305, out=235] \arrow[ll, "\mathsf{r}", dashed, bend left]
\end{tikzcd}
    \]
    In this case, $\mathsf{F}'$ is the endofunctor of $\mathcal{B}$ induced by the cleft coextension.
\end{fact}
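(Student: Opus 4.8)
The plan is to prove the equivalence of two conditions: (a) the cleft extension $(\mathcal{B},\mathcal{A},\mathsf{i},\mathsf{e},\mathsf{l})$ extends to a cleft coextension $(\mathcal{B},\mathcal{A},\mathsf{i},\mathsf{e},\mathsf{r})$; (b) the endofunctor $\mathsf{F}$ admits a right adjoint. The forward direction is the easy one: given a cleft coextension structure with functor $\mathsf{r}$ right adjoint to $\mathsf{e}$, we have the splitting $\mathsf{er}\simeq\mathsf{Id}_{\mathcal{B}}\oplus\mathsf{F}'$ recorded in Subsection~2.2. Since $\mathsf{F}=\mathsf{eGi}$ and, by a short computation using the adjunctions $(\mathsf{l},\mathsf{e})$ and $(\mathsf{e},\mathsf{r})$ together with $\mathsf{ei}\simeq\mathsf{Id}_{\mathcal{B}}$ and $\mathsf{el}\simeq\mathsf{Id}_{\mathcal{B}}\oplus\mathsf{F}$, one checks that $\mathsf{F}'$ is right adjoint to $\mathsf{F}$: indeed for $B,B'\in\mathcal{B}$ one can trace through $\Hom_{\mathcal{B}}(\mathsf{F}(B),B')$ as a direct summand of $\Hom_{\mathcal{B}}(\mathsf{el}(B),B')\cong\Hom_{\mathcal{A}}(\mathsf{l}(B),\mathsf{r}(B'))$, and similarly $\Hom_{\mathcal{B}}(B,\mathsf{F}'(B'))$ as the complementary summand of $\Hom_{\mathcal{B}}(B,\mathsf{er}(B'))\cong\Hom_{\mathcal{A}}(\mathsf{l}(B),\mathsf{r}(B'))$ --- matching the two summand decompositions identifies the adjunction. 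I expect this bookkeeping with the Hom-isomorphisms, rather than any conceptual difficulty, to be the main labor of this direction, but it is routine once one is careful about which summand is which.

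For the reverse direction --- the substantive one --- suppose $\mathsf{F}$ has a right adjoint $\mathsf{F}'$. The first step is to \emph{construct} a candidate for $\mathsf{r}$. The natural guess is $\mathsf{r}=\mathsf{l}$ on objects but with a twisted functorial structure; more precisely, recall that in a cleft extension the functor $\mathsf{l}$ sits in $\mathsf{el}\simeq\mathsf{Id}_{\mathcal{B}}\oplus\mathsf{F}$, and the sought $\mathsf{r}$ should satisfy $\mathsf{er}\simeq\mathsf{Id}_{\mathcal{B}}\oplus\mathsf{F}'$. The cleanest route is to invoke the general construction of a cleft coextension from the data of the category $\mathcal{A}$ together with the idempotent comonad/monad structure: a cleft extension of $\mathcal{B}$ by $\mathsf{F}$ is equivalent to giving $\mathcal{A}$ as a category of ``$\mathsf{F}$-modules'' (this is Beligiannis's description), and dually a cleft coextension by $\mathsf{F}'$ presents the \emph{same} $\mathcal{A}$ as ``$\mathsf{F}'$-comodules'' precisely when $\mathsf{F}\dashv\mathsf{F}'$. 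So the plan is: use the equivalence $\mathcal{A}\simeq (\mathcal{B},\mathsf{F})\text{-}\mathsf{Mod}$ from \cite{beligiannis}, transport the adjunction $\mathsf{F}\dashv\mathsf{F}'$ to a comodule description, and read off $\mathsf{r},\mathsf{p},\mathsf{F}'$ together with the unit/counit data; then verify directly that $\mathsf{e}$ is faithful exact (already known), that $(\mathsf{e},\mathsf{r})$ is adjoint (by the transported adjunction), and that $\mathsf{ei}\simeq\mathsf{Id}_{\mathcal{B}}$ (unchanged). Finally one checks that the endofunctor the new cleft coextension induces is indeed $\mathsf{F}'$, which is immediate from the construction.

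The main obstacle I anticipate is making precise the claim that ``$\mathsf{F}\dashv\mathsf{F}'$ suffices to promote the $\mathsf{F}$-module structure on $\mathcal{A}$ to an $\mathsf{F}'$-comodule structure'' --- i.e.\ producing the coaction $\mathsf{G}'\colon\mathcal{A}\to\mathcal{A}$ and the short exact sequences $0\to\mathsf{Id}_{\mathcal{A}}\to\mathsf{re}\to\mathsf{G}'\to 0$ and $0\to\mathsf{Id}_{\mathcal{B}}\to\mathsf{er}\to\mathsf{F}'\to 0$ from Subsection~2.2 purely from the adjunction. The key point is that the action map $\mathsf{F}\mathsf{e}\simeq\mathsf{e}\mathsf{G}\to\mathsf{e}$ (coming from $\mathsf{G}\hookrightarrow\mathsf{le}\twoheadrightarrow\mathsf{Id}_{\mathcal{A}}$, after applying $\mathsf{e}$) transposes, under $\mathsf{F}\dashv\mathsf{F}'$, to a costructure map $\mathsf{e}\to\mathsf{F}'\mathsf{e}$, and one must check this assembles into a genuine comodule structure with the required exactness --- here faithful exactness of $\mathsf{e}$ and the splitting $\mathsf{el}\simeq\mathsf{Id}_{\mathcal{B}}\oplus\mathsf{F}$ do the work, letting one descend identities from $\mathcal{B}$ to $\mathcal{A}$. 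Since the excerpt is content to cite \cite[Remark 2.7]{beligiannis} and \cite[Remark 2.5]{arrow} for this fact, I would present the proof at the level of ``transport of structure along the adjunction $\mathsf{F}\dashv\mathsf{F}'$'' and refer to those sources for the verification that all the required coherences hold, rather than reproving Beligiannis's equivalence from scratch.
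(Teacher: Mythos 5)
The paper gives no internal proof of this Fact---it cites \cite[Remark 2.7]{beligiannis} and \cite[Remark 2.5]{arrow}---and your sketch is a faithful reconstruction of what those sources establish: the forward direction by comparing the two direct-sum decompositions of $\mathsf{Hom}_{\mathcal{A}}(\mathsf{l}(B),\mathsf{r}(B'))$, the reverse by transporting the $\mathsf{F}$-module presentation of $\mathcal{A}$ from \cite{beligiannis} across $\mathsf{F}\dashv\mathsf{F}'$ to an $\mathsf{F}'$-comodule presentation. Your flag that the forward bookkeeping is the main labour is apt; the point worth making precise is that the triangle identities for $(\mathsf{l},\mathsf{e})$ and $(\mathsf{e},\mathsf{r})$ force the composite natural isomorphism $\mathsf{Hom}_{\mathcal{B}}(\mathsf{el}(B),B')\cong\mathsf{Hom}_{\mathcal{B}}(B,\mathsf{er}(B'))$ to be block triangular with the identity on the $\mathsf{Hom}_{\mathcal{B}}(B,B')$ corner and zero cross-term, from which invertibility of the whole map forces the restriction $\mathsf{Hom}_{\mathcal{B}}(\mathsf{F}(B),B')\xrightarrow{\ \sim\ }\mathsf{Hom}_{\mathcal{B}}(B,\mathsf{F}'(B'))$.
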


\begin{thm} \label{main_thm}
    Let $(\mathcal{B},\mathcal{A},\mathsf{i},\mathsf{e},\mathsf{l})$ be a cleft extension of abelian categories. If $\mathsf{F}$ is perfect, nilpotent and admits a right adjoint that is coperfect, then $\mathcal{A}$ is Gorenstein if and only if $\mathcal{B}$ is Gorenstein.  
\end{thm}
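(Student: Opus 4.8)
The plan is to combine Proposition \ref{one_way} and Proposition \ref{the_other_way} through Fact \ref{fact}. First I would record that, since by hypothesis $\mathsf{F}$ admits a right adjoint $\mathsf{F}'$, Fact \ref{fact} tells us the cleft extension $(\mathcal{B},\mathcal{A},\mathsf{i},\mathsf{e},\mathsf{l})$ is the upper part of a cleft coextension $(\mathcal{B},\mathcal{A},\mathsf{i},\mathsf{e},\mathsf{r})$ whose induced endofunctor on $\mathcal{B}$ is exactly that right adjoint $\mathsf{F}'$. By assumption $\mathsf{F}'$ is coperfect, so the only thing left to verify before invoking the two propositions is that $\mathsf{F}'$ is nilpotent.

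This nilpotency is the one point that needs a short argument rather than a citation. Since $\mathsf{F}'$ is right adjoint to $\mathsf{F}$, the functor $\mathsf{F}'^n$ is right adjoint to $\mathsf{F}^n$ for every $n\geq 1$. If $s$ is such that $\mathsf{F}^s=0$, then for all objects $X,Y$ of $\mathcal{B}$ we have $\mathsf{Hom}_{\mathcal{B}}(X,\mathsf{F}'^s(Y))\cong \mathsf{Hom}_{\mathcal{B}}(\mathsf{F}^s(X),Y)=0$; taking $X=\mathsf{F}'^s(Y)$ forces the identity morphism of $\mathsf{F}'^s(Y)$ to be zero, hence $\mathsf{F}'^s(Y)=0$. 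Thus $\mathsf{F}'^s=0$, i.e.\ $\mathsf{F}'$ is nilpotent.

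With this in hand the proof is immediate: $\mathcal{A}$ is Gorenstein if and only if $\mathsf{silp}\mathcal{A}<\infty$ and $\mathsf{spli}\mathcal{A}<\infty$. Since $\mathsf{F}$ is perfect and nilpotent, Proposition \ref{one_way} gives $\mathsf{silp}\mathcal{A}<\infty \iff \mathsf{silp}\mathcal{B}<\infty$. Since $\mathsf{F}'$ is coperfect and nilpotent, applying Proposition \ref{the_other_way} to the cleft coextension $(\mathcal{B},\mathcal{A},\mathsf{i},\mathsf{e},\mathsf{r})$ gives $\mathsf{spli}\mathcal{A}<\infty \iff \mathsf{spli}\mathcal{B}<\infty$. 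Conjoining the two equivalences shows that $\mathcal{A}$ is Gorenstein if and only if $\mathcal{B}$ is Gorenstein.

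I expect no genuine obstacle here: the homological content is entirely carried by Propositions \ref{one_way} and \ref{the_other_way}, and the work is bookkeeping — checking that the coextension produced by Fact \ref{fact} has $\mathsf{F}'$ as its associated endofunctor and that $\mathsf{F}'$ inherits nilpotency from $\mathsf{F}$ as above — so that both propositions apply simultaneously to the same pair $(\mathcal{B},\mathcal{A})$.
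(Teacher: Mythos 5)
Your proposal is correct and follows exactly the same route as the paper's own proof: invoke Fact \ref{fact} to obtain the cleft coextension, transfer nilpotency from $\mathsf{F}$ to its right adjoint $\mathsf{F}'$, and then combine Proposition \ref{one_way} (for $\mathsf{silp}$) with Proposition \ref{the_other_way} (for $\mathsf{spli}$). Your explicit adjunction argument for the nilpotency of $\mathsf{F}'$ is a welcome elaboration of a step the paper merely asserts.
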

\begin{proof}
    By Fact \ref{fact}, since $\mathsf{F}$ admits a right adjoint $\mathsf{F}'$, the cleft extension $(\mathcal{B},\mathcal{A},\mathsf{i},\mathsf{e},\mathsf{l})$ is the upper part of a cleft coextension. Moreover, since $(\mathsf{F},\mathsf{F}')$ is an adjoint pair of functors, $\mathsf{F}$ is nilpotent if and only if $\mathsf{F}'$ is nilpotent. Then, by Proposition \ref{one_way}, we have that $\mathsf{silp}\mathcal{B}<\infty$ if and only if $\mathsf{silp}\mathcal{A}<\infty$ and by Proposition \ref{the_other_way}, we have that $\mathsf{spli}\mathcal{B}<\infty$ if and only if $\mathsf{spli}\mathcal{A}<\infty$. We conclude that $\mathcal{A}$ is Gorenstein if and only if $\mathcal{B}$ is Gorenstein. 
\end{proof}

\subsection{Eventually homological isomorphisms}

In \cite{arrow2}, Erdmann-Psaroudakis-Solberg studied Gorenstein categories appearing in a cleft extension with the aim to study Gorensteinness for the arrow removal operation \cite{arrow}. We explain their approach and prove similar results, using the theory of perfect endofunctors. First, recall a concept of \cite[Section 3]{PSS}.

\begin{defn}
    Let $\mathsf{e}\colon\mathcal{A}\rightarrow \mathcal{B}$ be an exact functor of abelian categories. We say that $\mathsf{e}$ is an \emph{eventually homological isomorphism} if there is an integer $n$ such that for all $k>n$, there is an isomorphism 
    \[
    \mathsf{Ext}_{\mathcal{A}}^k(X,Y)\cong \mathsf{Ext}_{\mathcal{B}}^k(\mathsf{e}(X),\mathsf{e}(Y)).
    \]
\end{defn}

The approach of \cite{arrow2} relies on the following theorem of \cite{PSS}. 

\begin{thm} \label{psarou14_Gorenstein} \textnormal{(\!\!\cite[Theorem 4.3]{PSS})}
    Let $\mathcal{A}$ and $\mathcal{B}$ be abelian categories with enough projectives and enough injectives. Assume that there is a functor $\mathsf{e}\colon \mathcal{A}\rightarrow \mathcal{B}$ which is an essentially surjective eventually homological isomorphism. Then $\mathcal{A}$ is Gorenstein if and only if $\mathcal{B}$ is Gorenstein. 
\end{thm}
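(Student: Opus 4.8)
The plan is to recompute the invariants $\mathsf{spli}$ and $\mathsf{silp}$ in a way that refers only to objects of finite injective (respectively projective) dimension, and then to transport this information along $\mathsf{e}$ using the eventually‑homological‑isomorphism hypothesis together with essential surjectivity. Concretely, I would first record the following reformulation, valid for any abelian category $\mathcal{C}$ with enough projectives and injectives, as an equality in $\mathbb{N}\cup\{\infty\}$:
\[
\mathsf{spli}\mathcal{C}=\mathsf{sup}\{\pd_{\mathcal{C}}X \mid \id_{\mathcal{C}}X<\infty\} \qquad\text{and}\qquad \mathsf{silp}\mathcal{C}=\mathsf{sup}\{\id_{\mathcal{C}}X \mid \pd_{\mathcal{C}}X<\infty\}.
\]
In the first identity the inequality $\leq$ is clear since injective objects have injective dimension $0$, and $\geq$ is exactly Lemma \ref{projective_and_injective_dimensions}(ii) (it is vacuous when $\mathsf{spli}\mathcal{C}=\infty$); the second identity is dual, using Lemma \ref{projective_and_injective_dimensions}(i). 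Consequently $\mathcal{C}$ is Gorenstein if and only if both right‑hand sides are finite.

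Next I would transfer finiteness of homological dimensions along $\mathsf{e}$. Fix an integer $n$ witnessing that $\mathsf{e}$ is an eventually homological isomorphism, so that $\mathsf{Ext}^k_{\mathcal{A}}(X,Y)\cong \mathsf{Ext}^k_{\mathcal{B}}(\mathsf{e}X,\mathsf{e}Y)$ for all $k>n$ and all $X,Y\in\mathcal{A}$. Using that in a category with enough projectives one has $\pd_{\mathcal{C}}X\leq m$ precisely when $\mathsf{Ext}^k_{\mathcal{C}}(X,-)=0$ for all $k>m$, together with essential surjectivity of $\mathsf{e}$, a short computation gives
\[
\pd_{\mathcal{A}}X\leq \mathsf{max}\{n,\pd_{\mathcal{B}}\mathsf{e}X\} \qquad\text{and}\qquad \pd_{\mathcal{B}}\mathsf{e}X\leq \mathsf{max}\{n,\pd_{\mathcal{A}}X\},
\]
and dually, using enough injectives,
\[
\id_{\mathcal{A}}Y\leq \mathsf{max}\{n,\id_{\mathcal{B}}\mathsf{e}Y\} \qquad\text{and}\qquad \id_{\mathcal{B}}\mathsf{e}Y\leq \mathsf{max}\{n,\id_{\mathcal{A}}Y\}.
\]
In particular $\pd_{\mathcal{A}}X<\infty$ if and only if $\pd_{\mathcal{B}}\mathsf{e}X<\infty$, and $\id_{\mathcal{A}}X<\infty$ if and only if $\id_{\mathcal{B}}\mathsf{e}X<\infty$.

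Finally I would combine the two steps. Suppose $\mathsf{sup}\{\pd_{\mathcal{A}}X \mid \id_{\mathcal{A}}X<\infty\}=d<\infty$. Given $Z\in\mathcal{B}$ with $\id_{\mathcal{B}}Z<\infty$, write $Z\cong\mathsf{e}X$ by essential surjectivity; then $\id_{\mathcal{A}}X<\infty$ by the transfer above, so $\pd_{\mathcal{A}}X\leq d$, hence $\pd_{\mathcal{B}}Z\leq\mathsf{max}\{n,d\}$, so $\mathsf{sup}\{\pd_{\mathcal{B}}Z\mid \id_{\mathcal{B}}Z<\infty\}<\infty$. The converse is symmetric, and interchanging the roles of $\pd$ and $\id$ throughout handles the other pair of suprema. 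In view of the reformulation of the first paragraph, $\mathcal{A}$ is Gorenstein if and only if $\mathcal{B}$ is Gorenstein.

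The routine part is the dimension‑shifting bookkeeping behind the displayed inequalities of the second paragraph. The step that carries the conceptual weight, and where I would be most careful, is the reformulation in the first paragraph: expressing $\mathsf{spli}$ and $\mathsf{silp}$ through objects of finite injective/projective dimension is precisely what makes these invariants visible to a functor that need not preserve projective or injective objects; once that is available, the hypotheses on $\mathsf{e}$ plug in directly.
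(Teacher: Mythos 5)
The paper does not prove this statement; it is quoted verbatim from \cite[Theorem 4.3]{PSS} and used as a black box, so there is no internal argument to compare your proof against. That said, your argument is self-contained and correct, and it is worth recording why.

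The reformulation
\[
\mathsf{spli}\,\mathcal{C}=\mathsf{sup}\{\pd{_{\mathcal{C}}X}\mid \id{_{\mathcal{C}}X}<\infty\},\qquad
\mathsf{silp}\,\mathcal{C}=\mathsf{sup}\{\id{_{\mathcal{C}}X}\mid \pd{_{\mathcal{C}}X}<\infty\},
\]
is exactly right: the inequality $\leq$ holds because injectives (resp.\ projectives) lie in the indexing class, and $\geq$ is Lemma \ref{projective_and_injective_dimensions} (which is where enough projectives and injectives are genuinely used). This is the key move, since the defining formulas for $\mathsf{spli}$ and $\mathsf{silp}$ reference projective and injective objects, which $\mathsf{e}$ need not preserve; after the reformulation, only the finiteness of homological dimensions is tested, and that \emph{is} visible to an eventually homological isomorphism. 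Your four transfer inequalities $\pd_{\mathcal{A}}X\leq\max\{n,\pd_{\mathcal{B}}\mathsf{e}X\}$, etc., are proved by the standard characterization of $\pd X\leq m$ via vanishing of $\mathsf{Ext}^k(X,-)$ for $k>m$ (and its dual for $\id$); essential surjectivity is needed precisely to test $\mathsf{Ext}^k_{\mathcal{B}}(\mathsf{e}X,Z)$ against \emph{all} objects $Z$ of $\mathcal{B}$, not just those in the image of $\mathsf{e}$, and likewise for the injective side. The final bookkeeping then gives two-sided bounds $\mathsf{spli}\,\mathcal{B}\leq\max\{n,\mathsf{spli}\,\mathcal{A}\}$ and $\mathsf{spli}\,\mathcal{A}\leq\max\{n,\mathsf{spli}\,\mathcal{B}\}$ (and similarly for $\mathsf{silp}$), which yields the equivalence. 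One small point worth flagging explicitly: the definition of eventually homological isomorphism in this paper quantifies over all pairs $X,Y\in\mathcal{A}$ with a single threshold $n$ working uniformly; your proof uses this uniformity when bounding $\mathsf{spli}$ and $\mathsf{silp}$ by $\max\{n,-\}$, and that is the intended reading of the definition, so no gap there.
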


Consider the following condition that the functors $\mathsf{e}$ and $\mathsf{l}$ in a cleft extension of abelian categories $(\mathcal{B},\mathcal{A},\mathsf{i},\mathsf{e},\mathsf{l})$ might satisfy:
\begin{align*} \tag{\textbf{E}}
    \mathsf{l} \text{ is exact and } \mathsf{e} \text{ sends projectives to projectives. } 
\end{align*}
It is shown in \cite[Theorem 3.2]{arrow2} that if a cleft extension satisfies $(\textbf{E})$ and additionally $\mathsf{sup}\{\pd{_{\mathcal{A}}\mathsf{G}(X)} \ | \ X\in\mathcal{A}\}<\infty$, then the functor $\mathsf{e}$ is an eventually homological isomorphism and, in particular, it follows by Theorem \ref{psarou14_Gorenstein} that $\mathcal{A}$ is Gorenstein if and only if $\mathcal{B}$ is Gorenstein. Notice that if $(\textbf{E})$ is satisfied, then by the equivalence of functors $\mathsf{el}\simeq \mathsf{Id}_{\mathcal{B}}\oplus \mathsf{F}$, it follows that $\mathsf{F}$ is exact and sends projectives to projectives. Therefore, if condition $(\textbf{E})$ is satisfied, then the functor $\mathsf{F}$ is perfect in a trivial way. In the following theorem, besides the extra technical assumption $\mathsf{sup}\{\pd{_{\mathcal{B}}\mathsf{F}(P)}\ | \ P\in\Proj\mathcal{B}\}<\infty$ (which holds for module categories - assuming that $\mathsf{F}$ is perfect), we extend in part (a) the aforementioned result of  \cite{arrow2}, essentially by replacing condition $(\textbf{E})$ by the assumption that the functor $\mathsf{F}$ is perfect. Part (b) is inspired by \cite[Theorem 2.10]{qin2}. 

\begin{thm} \label{thm2}
    Let $(\mathcal{B},\mathcal{A},\mathsf{i},\mathsf{e},\mathsf{l})$ be a cleft extension of abelian categories such that $\mathsf{F}$ is perfect. Assume either \textnormal{(a)} or \textnormal{(b)}: 
    \begin{itemize}
    \item[(a)]
        \begin{itemize}
        \item[(i)] $\mathsf{sup}\{\pd{_{\mathcal{A}}\mathsf{G}(X)} \ | \ X\in\mathcal{A}\}=n_{\mathcal{A}}<\infty$, and 
        \item[(ii)] $\mathsf{sup}\{\pd{_{\mathcal{B}}\mathsf{F}(P)} \ |\ P\in\Proj\mathcal{B}\}=n_{\mathsf{F}}<\infty$.
        \end{itemize}
    \item[(b)] 
    \begin{itemize}
        \item[(i)] $\mathsf{F}$ is nilpotent, and 
        \item[(ii)] $\mathsf{sup}\{\pd{_{\mathcal{B}}\mathsf{F}(X)} \ | \ X \ \mathsf{F}\textnormal{-projective}\}=n_{\mathsf{F}}<\infty$. 
    \end{itemize}
    \end{itemize}
    Then the functor $\mathsf{e}$ is an eventually homological isomorphism. 
\end{thm}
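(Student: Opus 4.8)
Here the aim is to exhibit an integer $N$ — depending only on the constants appearing in the hypotheses (the integer $n$ of Definition \ref{perfect_functor}, the bounds $n_{\mathcal{A}},n_{\mathsf{F}}$ of (a) or (b), and, in case (b), an $s$ with $\mathsf{F}^s=0$), and not on $X,Y$ — such that for all $X,Y\in\mathcal{A}$ and all $k>N$ the map $\mathsf{Ext}^k_{\mathcal{A}}(X,Y)\to\mathsf{Ext}^k_{\mathcal{B}}(\mathsf{e}(X),\mathsf{e}(Y))$ induced by the exact functor $\mathsf{e}$ is an isomorphism. \emph{Step 1 (reduction to $\mathsf{e}(X)$ $\mathsf{F}$-projective).} If $n\le1$ then $\mathbb{L}_p\mathsf{F}^q=0$ for all $p,q\ge1$, so every object of $\mathcal{B}$ is $\mathsf{F}$-projective and there is nothing to do. If $n\ge2$, pick an exact sequence $0\to X'\to Q_{n-2}\to\cdots\to Q_0\to X\to0$ in $\mathcal{A}$ with each $Q_i$ projective. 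By Lemma \ref{eP_are_proj} the objects $\mathsf{e}(Q_i)$ are $\mathsf{F}$-projective, so the dimension-shift computation from the proof of Proposition \ref{one_way}(i) gives $\mathbb{L}_i\mathsf{F}^j(\mathsf{e}(X'))\cong\mathbb{L}_{i+n-1}\mathsf{F}^j(\mathsf{e}(X))=0$ for $i,j\ge1$, i.e.\ $\mathsf{e}(X')$ is $\mathsf{F}$-projective. Moreover, by Lemma \ref{projectives_in_cleft} and the splitting $\mathsf{el}\simeq\mathsf{Id}_{\mathcal{B}}\oplus\mathsf{F}$, each $\mathsf{e}(Q_i)$ is a direct summand of some $P\oplus\mathsf{F}(P)$ with $P\in\Proj\mathcal{B}$, whence $\pd{_{\mathcal{B}}\mathsf{e}(Q_i)}\le n_{\mathsf{F}}$ (under (a) this is (a)(ii); under (b) note that projective objects are $\mathsf{F}$-projective, so (b)(ii) applies to them). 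Breaking the displayed sequence into short exact sequences and chasing long exact sequences yields $\mathsf{Ext}^k_{\mathcal{A}}(X,Y)\cong\mathsf{Ext}^{k-n+1}_{\mathcal{A}}(X',Y)$ for $k\ge n$ and $\mathsf{Ext}^k_{\mathcal{B}}(\mathsf{e}(X),\mathsf{e}(Y))\cong\mathsf{Ext}^{k-n+1}_{\mathcal{B}}(\mathsf{e}(X'),\mathsf{e}(Y))$ for $k>n_{\mathsf{F}}+n$, both compatibly with the maps induced by $\mathsf{e}$ (naturality of the long exact sequence). Hence it suffices to treat $X'$; so from now on we assume $\mathsf{e}(X)$ is $\mathsf{F}$-projective.

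\emph{Step 2 (the $\mathsf{F}$-projective case, reduced to a projective-dimension bound).} Since $\mathsf{e}(X)$ is $\mathsf{F}$-projective, $\mathbb{L}_i\mathsf{F}(\mathsf{e}(X))=0$ for $i\ge1$, so Lemma \ref{basic_homological_properties_of_cleft}(iii) applied to $\mathsf{le}(X)=\mathsf{l}(\mathsf{e}(X))$ gives $\mathsf{Ext}^k_{\mathcal{A}}(\mathsf{le}(X),Y)\cong\mathsf{Ext}^k_{\mathcal{B}}(\mathsf{e}(X),\mathsf{e}(Y))$ for all $k\ge1$. On the other hand the counit produces the short exact sequence $0\to\mathsf{G}(X)\to\mathsf{le}(X)\to X\to0$, so that $\mathsf{Ext}^k_{\mathcal{A}}(X,Y)\cong\mathsf{Ext}^k_{\mathcal{A}}(\mathsf{le}(X),Y)$ whenever $\pd{_{\mathcal{A}}\mathsf{G}(X)}\le k-2$. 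Composing, $\mathsf{Ext}^k_{\mathcal{A}}(X,Y)\cong\mathsf{Ext}^k_{\mathcal{B}}(\mathsf{e}(X),\mathsf{e}(Y))$ whenever $\pd{_{\mathcal{A}}\mathsf{G}(X)}\le k-2$; all the isomorphisms used here are the ones induced by $\mathsf{e}$ (via the adjunction $(\mathsf{l},\mathsf{e})$ for Lemma \ref{basic_homological_properties_of_cleft}(iii), and naturality of long exact sequences for the other), so the natural comparison map is an isomorphism in this range. It therefore remains to bound $\pd{_{\mathcal{A}}\mathsf{G}(X)}$ uniformly over all $X$ with $\mathsf{e}(X)$ $\mathsf{F}$-projective.

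\emph{Step 3 (bounding $\pd{_{\mathcal{A}}\mathsf{G}(X)}$).} Under hypothesis (a) this is immediate: $\pd{_{\mathcal{A}}\mathsf{G}(X)}\le n_{\mathcal{A}}$ by (a)(i). Under hypothesis (b), fix $s$ with $\mathsf{F}^s=0$, so $\mathsf{G}^s=0$. For $j\ge1$ the object $\mathsf{F}^{j-1}\mathsf{e}(X)$ is $\mathsf{F}$-projective (it is $\mathsf{e}(X)$ when $j=1$, and by Corollary \ref{powers_of_F_proj} otherwise), hence $\pd{_{\mathcal{B}}\mathsf{F}^{j}\mathsf{e}(X)}=\pd{_{\mathcal{B}}\mathsf{F}(\mathsf{F}^{j-1}\mathsf{e}(X))}\le n_{\mathsf{F}}$ by (b)(ii); and since $\mathsf{F}^{j}\mathsf{e}(X)$ and all its $\mathcal{B}$-syzygies are $\mathsf{l}$-acyclic (they are $\mathsf{F}$-projective by Lemma \ref{F_projective}, so Lemma \ref{basic_homological_properties_of_cleft}(ii) applies), applying $\mathsf{l}$ to a projective resolution of $\mathsf{F}^j\mathsf{e}(X)$ of length $\le n_{\mathsf{F}}$ gives, by Lemma \ref{basic_homological_properties_of_cleft}(i), a projective resolution of $\mathsf{l}(\mathsf{F}^{j}\mathsf{e}(X))$ of length $\le n_{\mathsf{F}}$; thus $\pd{_{\mathcal{A}}\mathsf{l}(\mathsf{F}^{j}\mathsf{e}(X))}\le n_{\mathsf{F}}$ for every $j\ge1$. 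Splicing the short exact sequences $0\to\mathsf{G}^{n}(X)\to\mathsf{l}\mathsf{F}^{n-1}\mathsf{e}(X)\to\mathsf{G}^{n-1}(X)\to0$ for $n=2,\dots,s$ (recall $\mathsf{G}^s=0$) produces an exact sequence
\[
0\longrightarrow\mathsf{l}\mathsf{F}^{s-1}\mathsf{e}(X)\longrightarrow\cdots\longrightarrow\mathsf{l}\mathsf{F}\mathsf{e}(X)\longrightarrow\mathsf{G}(X)\longrightarrow0,
\]
in which every term to the left of $\mathsf{G}(X)$ has projective dimension $\le n_{\mathsf{F}}$ in $\mathcal{A}$; hence $\pd{_{\mathcal{A}}\mathsf{G}(X)}\le n_{\mathsf{F}}+s-2$. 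In either case $\pd{_{\mathcal{A}}\mathsf{G}(X)}$ is bounded independently of $X$, which together with Step 2 completes the proof; tracking the constants through Steps 1--3 one may take, for instance, $N=n+\max\{n_{\mathcal{A}}+1,\,n_{\mathsf{F}}\}$ in case (a) and $N=n+n_{\mathsf{F}}+s$ in case (b).

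The crux is Step 3 under hypothesis (b): the natural syzygies $\mathsf{G}^{j}(X)$ carry no a priori bound on their projective dimension, and what makes the argument work is that, once $\mathsf{e}(X)$ has been arranged to be $\mathsf{F}$-projective, nilpotency of $\mathsf{F}$ and condition (b)(ii) together force every intermediate term $\mathsf{l}(\mathsf{F}^{j}\mathsf{e}(X))$ with $j\ge1$ to have projective dimension $\le n_{\mathsf{F}}$, so the spliced resolution of $\mathsf{G}(X)$ has uniformly bounded length. A subsidiary point, used tacitly throughout, is that the isomorphisms supplied by Lemma \ref{basic_homological_properties_of_cleft}(iii) and by dimension shifting along exact sequences are all compatible with the maps induced by $\mathsf{e}$; thus one in fact shows that the natural comparison map is an isomorphism in high degrees, not merely that the $\mathsf{Ext}$-groups are abstractly isomorphic.
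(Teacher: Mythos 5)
Your proof is correct and follows the same essential route as the paper's: reduce to the case where $\mathsf{e}(X)$ is $\mathsf{F}$-projective by passing to the $(n-1)$-th syzygy, then use Lemma~\ref{basic_homological_properties_of_cleft}(iii) and the short exact sequences $0\to\mathsf{G}^{n}(X)\to\mathsf{l}\mathsf{F}^{n-1}\mathsf{e}(X)\to\mathsf{G}^{n-1}(X)\to0$. The only real reorganization is that you factor cases (a) and (b) into a common Step 1--2 followed by a case split on ``bound $\pd_{\mathcal{A}}\mathsf{G}(X)$'', whereas the paper chases $\mathsf{Ext}$-groups directly in each case; the arguments and constants coincide, and your explicit remark on naturality of the comparison maps (which the paper leaves tacit) is a genuine refinement rather than a deviation.
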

\begin{proof}
    (a) Let $X,Y$ be objects in $\mathcal{A}$ and consider an exact sequence as below
    \[
    0\rightarrow X'\rightarrow P_{n-2}\rightarrow \cdots\rightarrow P_0\rightarrow X\rightarrow 0,
    \]
    where $n$ is such that $\mathbb{L}_p\mathsf{F}^q=0$ for all $p,q\geq 1$ with $p+q\geq n+1$. If $n=1$, we set $X'=X$. By the choice of $n$, it follows that $\mathsf{e}(X')$ is $\mathsf{F}$-projective. Therefore, by Lemma \ref{basic_homological_properties_of_cleft}, we have $\mathsf{Ext}^i_{\mathcal{A}}(\mathsf{le}(X'),Y)\cong \mathsf{Ext}_{\mathcal{B}}^i(\mathsf{e}(X'),\mathsf{e}(Y))$ for all $i\geq 1$. Consider the following short exact sequence in $\mathcal{A}$: 
    \[
    0\rightarrow \mathsf{G}(X')\rightarrow\mathsf{le}(X')\rightarrow X'\rightarrow 0.
    \]
    By the above and condition (ii), it follows that $\mathsf{Ext}^i_{\mathcal{A}}(\mathsf{le}(X'),Y)\cong \mathsf{Ext}^i_{\mathcal{A}}(X',Y)$ for all $i\geq n_{\mathcal{A}}+2$. Therefore, $\mathsf{Ext}^i_{\mathcal{A}}(X',Y)\cong \mathsf{Ext}_{\mathcal{B}}^i(\mathsf{e}(X'),\mathsf{e}(Y))$ for $i\geq n_{\mathcal{A}}+2$. On the other hand, $\mathsf{Ext}^{i+n-1}_{\mathcal{A}}(X,Y)\cong \mathsf{Ext}^{i}_{\mathcal{A}}(X',Y)$ for all $i\geq 1$. Moreover, if we apply $\mathsf{e}$ to the first exact sequence, we get the following exact sequence:
    \[
    0\rightarrow \mathsf{e}(X')\rightarrow \mathsf{e}(P_{n-2})\rightarrow \cdots\rightarrow \mathsf{e}(P_0)\rightarrow \mathsf{e}(X)\rightarrow 0,
    \]
    from which we derive that $\mathsf{Ext}^{i+n-1}_{\mathcal{B}}(\mathsf{e}(X),\mathsf{e}(Y))\cong \mathsf{Ext}_{\mathcal{B}}^{i}(\mathsf{e}(X'),\mathsf{e}(Y))$ for $i\geq n_{\mathsf{F}}+1$ (notice that each $\mathsf{e}(P_k)$ has projective dimension at most $n_{\mathsf{F}}$). We conclude that $\mathsf{Ext}_{\mathcal{A}}^{i+n-1}(X,Y)\cong \mathsf{Ext}_{\mathcal{A}}^{i+n-1}(\mathsf{e}(X),\mathsf{e}(Y))$ for $i\geq \mathsf{max}\{n_{\mathsf{F}}+1,n_{\mathcal{A}}+2\}$. 

    (b) Let $X,Y$ be objects of $\mathcal{A}$ and consider $X'$ as in (a). Again, by Lemma \ref{basic_homological_properties_of_cleft}, $\mathsf{Ext}^i_{\mathcal{A}}(\mathsf{le}(X'),Y)\cong \mathsf{Ext}_{\mathcal{B}}^i(\mathsf{e}(X'),\mathsf{e}(Y))$ for all $i\geq 1$. Consider the following short exact sequences in $\mathcal{A}$: 
    \[
    0\rightarrow \mathsf{G}(X')\rightarrow \mathsf{le}(X')\rightarrow X'\rightarrow 0, \ 0\rightarrow \mathsf{G}^2(X')\rightarrow \mathsf{lFe}(X')\rightarrow \mathsf{G}(X')\rightarrow 0, \ \dots
    \]
    Since $\mathsf{e}(X')$ is $\mathsf{F}$-projective, so is $\mathsf{F}^j\mathsf{e}(X')$ for every $j\geq 1$. Consequently, for $j\geq 1$, the projective dimensions of $\mathsf{lF^je}(X')$ are bounded by $n_{\mathsf{F}}$. It follows by the above short exact sequences - using the fact that $\mathsf{F}$ is nilpotent - that $\mathsf{Ext}_{\mathcal{A}}^i(\mathsf{le}(X'),Y)\cong \mathsf{Ext}_{\mathcal{A}}^i(X',Y)$ for all $i\geq n_{\mathsf{F}}+s$ where $s$ is such that $\mathsf{F}^s=0$. Combining the above with $\mathsf{Ext}_{\mathcal{A}}^{i+n-1}(X,Y)\cong \mathsf{Ext}_{\mathcal{A}}^{i}(X',Y)$ for all $i\geq 1$ and $\mathsf{Ext}_{\mathcal{B}}^{i+n-1}(\mathsf{e}(X),\mathsf{e}(Y))\cong \mathsf{Ext}_{\mathcal{B}}^{i}(\mathsf{e}(X'),\mathsf{e}(Y))$ for $i\geq n_{\mathsf{F}}+1$, as in (a), yields the result. 
\end{proof}

We now prove that in a cleft extension, under the assumptions that we usually impose, the functor $\mathsf{e}$ preserves and reflects objects with finite projective dimension. This is of central importance in our theory. 

\begin{prop} \label{preserves_and_reflects}
    Let $(\mathcal{B},\mathcal{A},\mathsf{i},\mathsf{e},\mathsf{l})$ be a cleft extension of abelian categories and assume that the functor $\mathsf{F}$ is perfect and nilpotent. Then, for every object $X\in\mathcal{A}$, the following are equivalent:
    \begin{itemize}
        \item[(i)] $\pd{_{\mathcal{A}}X}<\infty$. 
        \item[(ii)] $\pd{_{\mathcal{B}}\mathsf{e}(X)}<\infty$.
    \end{itemize}
\end{prop}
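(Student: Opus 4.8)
My plan is to establish the two implications separately: the forward one is an immediate consequence of exactness of $\mathsf{e}$, while the backward one uses the structural short exact sequences of a cleft extension together with the theory of $\mathsf{F}$-projective objects. For (i) $\Rightarrow$ (ii): starting from a finite projective resolution $0\to P_d\to\cdots\to P_0\to X\to 0$ in $\mathcal{A}$, I would apply the exact functor $\mathsf{e}$ to obtain a finite resolution $0\to\mathsf{e}(P_d)\to\cdots\to\mathsf{e}(P_0)\to\mathsf{e}(X)\to 0$ in $\mathcal{B}$; since $\mathsf{F}$ is perfect, Lemma \ref{basic_properties_of_perfect_endofunctor_on_cleft}(ii) gives $\pd{_{\mathcal{B}}\mathsf{e}(P_k)}<\infty$ for each $k$, and a routine dimension shift through this finite exact sequence yields $\pd{_{\mathcal{B}}\mathsf{e}(X)}<\infty$.

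For (ii) $\Rightarrow$ (i), the first step is a reduction to the case where $\mathsf{e}(X)$ is $\mathsf{F}$-projective. Fixing $n$ as in Definition \ref{perfect_functor}(i) and an exact sequence $0\to X'\to P_{n-2}\to\cdots\to P_0\to X\to 0$ with the $P_i$ projective in $\mathcal{A}$ (take $X'=X$ when $n\le 1$), the opening argument of the proof of Proposition \ref{one_way}(i) — apply $\mathsf{e}$, use Lemma \ref{eP_are_proj} that $\mathsf{e}(P_i)$ is $\mathsf{F}$-projective, and dimension shift — shows $\mathsf{e}(X')$ is $\mathsf{F}$-projective. At the same time, applying $\mathsf{e}$ to this sequence and using Lemma \ref{basic_properties_of_perfect_endofunctor_on_cleft}(ii) together with $\pd{_{\mathcal{B}}\mathsf{e}(X)}<\infty$ gives $\pd{_{\mathcal{B}}\mathsf{e}(X')}<\infty$, and since the $P_i$ are projective, $\pd{_{\mathcal{A}}X}<\infty$ whenever $\pd{_{\mathcal{A}}X'}<\infty$. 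Hence I may assume throughout that $\mathsf{e}(X)$ is $\mathsf{F}$-projective with $\pd{_{\mathcal{B}}\mathsf{e}(X)}<\infty$.

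The second step is the observation that the objects of $\mathcal{B}$ which are simultaneously $\mathsf{F}$-projective and of finite projective dimension form a class closed under $\mathsf{F}$: if $Y$ is such an object, then applying $\mathsf{F}$ to a finite projective resolution of $Y$ produces a finite resolution of $\mathsf{F}(Y)$ by objects of the form $\mathsf{F}(P_k)$ — this uses $\mathbb{L}_i\mathsf{F}(Y)=0$ for all $i\ge 1$ (Lemma \ref{F_projective}) — and each $\mathsf{F}(P_k)$ has finite projective dimension by Definition \ref{perfect_functor}(ii), so $\pd{_{\mathcal{B}}\mathsf{F}(Y)}<\infty$; moreover $\mathsf{F}(Y)$ is again $\mathsf{F}$-projective by Corollary \ref{powers_of_F_proj}. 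Iterating from $Y=\mathsf{e}(X)$, I conclude that $\mathsf{F}^j\mathsf{e}(X)$ is $\mathsf{F}$-projective with $\pd{_{\mathcal{B}}\mathsf{F}^j\mathsf{e}(X)}<\infty$ for every $j\ge 0$. Since $\mathbb{L}_i\mathsf{l}$ vanishes on $\mathsf{F}$-projective objects (Lemma \ref{basic_homological_properties_of_cleft}(ii) and faithfulness of $\mathsf{e}$), applying the right exact functor $\mathsf{l}$ to a finite projective resolution of $\mathsf{F}^j\mathsf{e}(X)$ gives a finite resolution by projectives of $\mathcal{A}$ (Lemma \ref{basic_homological_properties_of_cleft}(i)), so $\pd{_{\mathcal{A}}\mathsf{l}\mathsf{F}^j\mathsf{e}(X)}<\infty$ for all $j\ge 0$.

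Finally, since $\mathsf{F}$ is nilpotent so is $\mathsf{G}$ (from $\mathsf{F}^n\mathsf{e}\simeq\mathsf{e}\mathsf{G}^n$ and faithfulness of $\mathsf{e}$); pick $s$ with $\mathsf{G}^s=0$. I would then argue by descending induction on $m$ along the short exact sequences $0\to\mathsf{G}^m(X)\to\mathsf{l}\mathsf{F}^{m-1}\mathsf{e}(X)\to\mathsf{G}^{m-1}(X)\to 0$ for $1\le m\le s$: the base case $\mathsf{G}^s(X)=0$ has finite projective dimension, and each such sequence propagates finiteness of projective dimension from $\mathsf{G}^m(X)$ to $\mathsf{G}^{m-1}(X)$ because its middle term has finite projective dimension by the previous step; at $m=1$ the sequence reads $0\to\mathsf{G}(X)\to\mathsf{l}\mathsf{e}(X)\to X\to 0$, which gives $\pd{_{\mathcal{A}}X}<\infty$. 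The delicate points are the reduction to $\mathsf{F}$-projective $\mathsf{e}(X)$ — essentially the opening move of Proposition \ref{one_way}(i) — and the closure-under-$\mathsf{F}$ observation; everything else is bookkeeping with dimension shifts, and I expect the closure observation to be where the genuine content lies.
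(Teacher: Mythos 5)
Your argument is correct and follows essentially the same route as the paper's proof: reduce to the case where $\mathsf{e}(X)$ is $\mathsf{F}$-projective via a partial projective resolution of length $n-1$, observe that the class of $\mathsf{F}$-projective objects of finite projective dimension in $\mathcal{B}$ is closed under $\mathsf{F}$ (this is exactly the ``inductive argument'' the paper only alludes to), push the resulting bounds through $\mathsf{l}$, and finish by descending induction along the short exact sequences involving $\mathsf{G}^m$. You have merely made explicit two steps the paper leaves compressed — the closure-under-$\mathsf{F}$ observation and the terminating induction from $\mathsf{G}^s=0$ downward — so there is nothing to fix.
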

\begin{proof}
(ii) $\Longrightarrow$ (i): Let $X$ be an object of $\mathcal{A}$ such that $\pd_{\mathcal{B}}\mathsf{e}(X)<\infty$. Consider an exact sequence as below 
 \[
 0\rightarrow X'\rightarrow P_{n-2}\rightarrow \cdots\rightarrow P_1\rightarrow P_0\rightarrow X\rightarrow 0,
 \]
 where $n$ is such that $\mathbb{L}_p\mathsf{F}^q=0$ for all $p,q\geq 1$ with $p+q\geq n+1$. If $n=1$, then we set $X'=X$. Applying the functor $\mathsf{e}$ to the above shows that $\mathsf{e}(X')$ is $\mathsf{F}$-projective (see the proof of Proposition \ref{one_way}) and that $\pd{_{\mathcal{B}} \mathsf{e}(X')}<\infty$. Consequently, by Lemma \ref{F_projective}, it follows that $\mathsf{F}^j\mathsf{e}(X')$ is $\mathsf{F}$-projective for every $j\geq 0$ and also $\pd{_{\mathcal{B}}\mathsf{F}^j(X')}<\infty$ for all $j\geq 0$, using an inductive argument. Consequently, it follows by Lemma \ref{basic_homological_properties_of_cleft} that $\pd{_{\mathcal{A}}\mathsf{l}\mathsf{F}^j\mathsf{e}(X')}<\infty$ for all $j\geq 0$. Consider the short exact sequences: 
 \[
 0\rightarrow \mathsf{G}(X')\rightarrow \mathsf{le}(X')\rightarrow X'\rightarrow 0, \ 0\rightarrow \mathsf{G}^2(X')\rightarrow \mathsf{lFe}(X')\rightarrow \mathsf{G}(X')\rightarrow 0, \ \dots
 \]
 in $\mathcal{A}$. By the above and the fact that $\mathsf{F}$ is nilpotent, we derive that $\pd{_{\mathcal{A}}X'}<\infty$. By returning to the exact sequence of $X$ that we began with, we conclude that $\pd{_{\mathcal{A}}X}<\infty$.
 
(i) $\Longrightarrow$ (ii): Let $X$ be an object of $\mathcal{A}$ with finite projective dimension. Applying the functor $\mathsf{e}$ to a projective resolution of $X$ gives a resolution of $\mathsf{e}(X)$ with terms having finite projective dimension, see Lemma \ref{basic_properties_of_perfect_endofunctor_on_cleft}. 
\end{proof}

We can also obtain inequalities involving the above projective dimensions. This relies on an extra (reasonable) assumption on the functor $\mathsf{F}$, which we explain in the following remark. This is also important for later on, as we will need the functor $\mathsf{e}$ to preserve and reflect objects with finite projective dimension ``uniformly''.

\begin{rem} \label{preserves_and_reflects_bounds}
    Let $(\mathcal{B},\mathcal{A},\mathsf{i},\mathsf{e},\mathsf{l})$ be a cleft extension of abelian categories. If the functor $\mathsf{F}$ is perfect, nilpotent and $\mathsf{sup}\{\pd{_{\mathcal{B}} \mathsf{F}(P)} \ | \ P\in\Proj\mathcal{B}\}=n_{\mathsf{F}}<\infty$, then  
    \[
    \pd{_{\mathcal{B}}\mathsf{e}(X)}-n_{\mathsf{F}}\leq \pd{_{\mathcal{A}} X} \leq \pd{_{\mathcal{B}} \mathsf{e}(X)}+(m+1)n_{\mathsf{F}},
    \]
    where $\mathsf{F}^m=0$. This follows by a careful analysis of the bounds obtained at each step of the proof of Proposition \ref{preserves_and_reflects}.
\end{rem}

There is a dual result involving cleft coextensions and injective dimensions. 

\begin{prop} \label{preserves_and_reflects_inj_dim}
    Let $(\mathcal{B},\mathcal{A},\mathsf{i},\mathsf{e},\mathsf{r})$ be a cleft coextension of abelian categories and assume that the functor $\mathsf{F}'$ is coperfect and nilpotent. Then, for every object $X\in\mathcal{A}$, the following are equivalent: 
    \begin{itemize}
        \item[(i)] $\id{_{\mathcal{A}}X}<\infty$. 
        \item[(ii)] $\id{_{\mathcal{B}}\mathsf{e}(X)}<\infty$. 
    \end{itemize}
\end{prop}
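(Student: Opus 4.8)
The plan is to run the proof of Proposition~\ref{preserves_and_reflects} ``upside down'': replace projective resolutions by injective coresolutions, the functors $\mathsf{F},\mathsf{G},\mathsf{l}$ and their left derived functors by $\mathsf{F}',\mathsf{G}',\mathsf{r}$ and their right derived functors, and invoke throughout the dual results of Sections~2 and~3 --- namely Lemma~\ref{basic_homological_properties_of_cocleft}, Lemma~\ref{eI_are_inj}, Lemma~\ref{F'_injective} and Lemma~\ref{basic_properties_of_coperfect_endofunctor_on_cleft}. The implication $(\mathrm{i})\Rightarrow(\mathrm{ii})$ is immediate: if $\id_{\mathcal{A}}X<\infty$, I would choose a finite injective coresolution $0\to X\to I_0\to\cdots\to I_t\to 0$ in $\mathcal{A}$ and apply the exact functor $\mathsf{e}$; by Lemma~\ref{basic_properties_of_coperfect_endofunctor_on_cleft}(ii) each $\mathsf{e}(I_k)$ has finite injective dimension in $\mathcal{B}$, and an exact sequence with finitely many terms of finite injective dimension forces $\id_{\mathcal{B}}\mathsf{e}(X)<\infty$.

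For $(\mathrm{ii})\Rightarrow(\mathrm{i})$, assume $\id_{\mathcal{B}}\mathsf{e}(X)<\infty$ and fix $n'$ as in Definition~\ref{coperfect_functor}, so that $\mathbb{R}^p\mathsf{F}'^q=0$ whenever $p+q\geq n'+1$. I would build a coresolution $0\to X\to I_0\to\cdots\to I_{n'-2}\to X'\to 0$ in $\mathcal{A}$ with the $I_k$ injective (taking $X'=X$ when $n'=1$). Applying $\mathsf{e}$, the objects $\mathsf{e}(I_k)$ are $\mathsf{F}'$-injective by Lemma~\ref{eI_are_inj}, so a dimension shift --- carried out exactly as in the proof of Proposition~\ref{the_other_way} --- gives $\mathbb{R}^i\mathsf{F}'^j(\mathsf{e}(X'))\cong\mathbb{R}^{i+n'-1}\mathsf{F}'^j(\mathsf{e}(X))=0$ for all $i,j\geq 1$, i.e.\ $\mathsf{e}(X')$ is $\mathsf{F}'$-injective; the same exact sequence together with Lemma~\ref{basic_properties_of_coperfect_endofunctor_on_cleft}(ii) also yields $\id_{\mathcal{B}}\mathsf{e}(X')<\infty$. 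By the corollary following Lemma~\ref{F'_injective} every $\mathsf{F}'^j\mathsf{e}(X')$ is again $\mathsf{F}'$-injective, and an easy induction on $j$ --- take a finite injective coresolution of $\mathsf{F}'^j\mathsf{e}(X')$, apply $\mathsf{F}'$ (which is exact on it by $\mathsf{F}'$-injectivity), and use that $\mathsf{F}'$ sends injectives to objects of finite injective dimension --- shows $\id_{\mathcal{B}}\mathsf{F}'^j\mathsf{e}(X')<\infty$ for all $j\geq 0$. Since each $\mathsf{F}'^j\mathsf{e}(X')$ is $\mathsf{F}'$-injective, Lemma~\ref{basic_homological_properties_of_cocleft}(iii) gives $\Ext_{\mathcal{A}}^i(Y,\mathsf{r}\mathsf{F}'^j\mathsf{e}(X'))\cong\Ext_{\mathcal{B}}^i(\mathsf{e}(Y),\mathsf{F}'^j\mathsf{e}(X'))$ for every $Y\in\mathcal{A}$, whence $\id_{\mathcal{A}}\mathsf{r}\mathsf{F}'^j\mathsf{e}(X')<\infty$ for all $j\geq 0$.

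To conclude, I would use the short exact sequences $0\to\mathsf{G}'^{n-1}(X')\to\mathsf{r}\mathsf{F}'^{n-1}\mathsf{e}(X')\to\mathsf{G}'^{n}(X')\to 0$ recorded in Section~2. As $\mathsf{F}'$ is nilpotent, so is $\mathsf{G}'$, say $\mathsf{G}'^s=0$; then $\mathsf{G}'^{s-1}(X')\cong\mathsf{r}\mathsf{F}'^{s-1}\mathsf{e}(X')$ has finite injective dimension, and a descending induction through these sequences --- at each step the two outer terms having finite injective dimension --- yields $\id_{\mathcal{A}}X'=\id_{\mathcal{A}}\mathsf{G}'^0(X')<\infty$. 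Splicing the coresolution $0\to X\to I_0\to\cdots\to I_{n'-2}\to X'\to 0$ onto a finite injective coresolution of $X'$ produces a finite injective coresolution of $X$, so $\id_{\mathcal{A}}X<\infty$.

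I do not expect a serious obstacle: this is a formal dualization of Proposition~\ref{preserves_and_reflects}, and the one delicate point --- exactly as there --- is the bookkeeping, namely verifying that every object fed into Lemma~\ref{basic_homological_properties_of_cocleft}(iii) is genuinely $\mathsf{F}'$-injective. That is precisely what the dimension shift for $\mathsf{e}(X')$ and the corollary to Lemma~\ref{F'_injective} secure, so no essentially new idea is needed.
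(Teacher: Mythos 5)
Your proof is correct and is exactly the argument the paper leaves implicit: the paper states Proposition \ref{preserves_and_reflects_inj_dim} without proof, saying only that it is the dual of Proposition \ref{preserves_and_reflects}, and your writeup is precisely that formal dualization using Lemmata \ref{basic_homological_properties_of_cocleft}, \ref{eI_are_inj}, \ref{F'_injective} and \ref{basic_properties_of_coperfect_endofunctor_on_cleft}. One cosmetic slip: in the descending induction through the sequences $0\to\mathsf{G}'^{n-1}(X')\to\mathsf{r}\mathsf{F}'^{n-1}\mathsf{e}(X')\to\mathsf{G}'^{n}(X')\to 0$ you say ``the two outer terms having finite injective dimension,'' but what you actually have (and need) is the middle and right terms; the outer left term $\mathsf{G}'^{n-1}(X')$ is what you are deducing.
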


\begin{rem} \label{bounds_for_injective_dimensions} 
    Let $(\mathcal{B},\mathcal{A},\mathsf{i},\mathsf{e},\mathsf{r})$ be a cleft coextension of abelian categories. If the functor $\mathsf{F}'$ is coperfect, nilpotent and $\mathsf{sup}\{\id{_{\mathcal{B}}\mathsf{F}'(I)}\ | \ I\in\Inj\mathcal{B}\}=n_{\mathsf{F}'}<\infty$, then 
    \[
    \id{_{\mathcal{B}} \mathsf{e}(X)}-n_{\mathsf{F}'}\leq \id{_{\mathcal{A}} X} \leq \id{_{\mathcal{B}} \mathsf{e}(X)}+(m+1)n_{\mathsf{F}'},
    \]
    where ${\mathsf{F}'}^m=0$. 
\end{rem}

Using the above results on the functor $\mathsf{e}$, we can make the following remark. 

\begin{rem} \label{equivalence_for_G} 
Consider a cleft extension $(\mathcal{B},\mathcal{A},\mathsf{i},\mathsf{e},\mathsf{l})$ of abelian categories. Assume that $\mathsf{F}$ is perfect, nilpotent and $\mathsf{sup}\{\pd{_{\mathcal{B}}\mathsf{F}(P)} \ | \ P\in\Proj\mathcal{B}\}<\infty$. Then,
\[
\mathsf{sup}\{\pd{_{\mathcal{A}}\mathsf{G}(X)} \ | \ X\in\mathcal{A} \}<\infty \iff \mathsf{sup}\{\pd{_{\mathcal{B}}\mathsf{F}(X)} \ | \ X\in\mathcal{B} \}<\infty.
\]
Indeed, by Remark \ref{preserves_and_reflects_bounds}, the condition $\mathsf{sup}\{\pd{_{\mathcal{A}}\mathsf{G}(X)} \ | \ X\in\mathcal{A}\}<\infty$ is equivalent to $\mathsf{sup}\{\pd{_{\mathcal{B}}\mathsf{eG}(X)}  \ | \ X\in\mathcal{A}\}<\infty$. However, by the natural isomorphism $\mathsf{eG}\simeq \mathsf{Fe}$, the latter happens if and only if $\mathsf{sup}\{\pd{_{\mathcal{B}}\mathsf{Fe}(X)} \ | \ X\in\mathcal{A}\}<\infty$. But $\mathsf{e}$ is essentially surjective and therefore the last is equivalent to $\mathsf{sup}\{\pd{_{\mathcal{B}}\mathsf{F}(X)} \ | \ X\in\mathcal{B}\}<\infty$. 
\end{rem}

The equivalent conditions of the above remark play an important role in the study of homological properties of cleft extensions and have appeared in \cite{arrow, arrow2, monomial_arrow}. They are pivotal for this paper, too. Using the above we can prove the following, which is of independent interest and a partial converse to \cite[Theorem 3.2]{arrow2}. 

\begin{prop}
    Let $(\mathcal{B},\mathcal{A},\mathsf{i},\mathsf{e},\mathsf{l})$ be a cleft extension of abelian categories which satisfies condition $\textnormal{(\textbf{E})}$. If $\mathsf{e}$ is an eventually homological isomorphism and $\mathsf{F}$ is nilpotent, then $\mathsf{sup}\{\pd{_{\mathcal{A}}\mathsf{G}(X)}\ | \ X\in\mathcal{A}\}<\infty$. 
\end{prop}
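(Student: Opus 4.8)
The plan is to reduce, via Remark~\ref{equivalence_for_G}, to a statement purely about $\mathsf{F}$ on $\mathcal{B}$, and then to read that statement off from the eventually homological isomorphism using the splitting $\mathsf{el}\simeq\mathsf{Id}_{\mathcal{B}}\oplus\mathsf{F}$. First I would record that condition $(\textbf{E})$ forces $\mathsf{F}$ to be exact and to map projective objects to projective objects, so $\mathsf{F}$ is perfect (trivially) and $\mathsf{sup}\{\pd_{\mathcal{B}}\mathsf{F}(P)\mid P\in\Proj\mathcal{B}\}$ is finite; since $\mathsf{F}$ is also assumed nilpotent, Remark~\ref{equivalence_for_G} applies, and it therefore suffices to show that $\mathsf{sup}\{\pd_{\mathcal{B}}\mathsf{F}(Z)\mid Z\in\mathcal{B}\}<\infty$.

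For this I would let $n_0$ be an integer witnessing that $\mathsf{e}$ is an eventually homological isomorphism, i.e.\ such that the canonical morphism $\mathsf{Ext}^k_{\mathcal{A}}(X,Y)\to\mathsf{Ext}^k_{\mathcal{B}}(\mathsf{e}(X),\mathsf{e}(Y))$ is an isomorphism for all $k>n_0$ and all $X,Y\in\mathcal{A}$, and then argue as follows for arbitrary $Z\in\mathcal{B}$, $B\in\mathcal{A}$ and $k>n_0$. Since $\mathsf{F}$ is exact, Lemma~\ref{basic_homological_properties_of_cleft}(iii) gives an isomorphism $\mathsf{Ext}^k_{\mathcal{A}}(\mathsf{l}(Z),B)\cong\mathsf{Ext}^k_{\mathcal{B}}(Z,\mathsf{e}(B))$, and unwinding its construction on projective resolutions I would check that it factors as
\[
\mathsf{Ext}^k_{\mathcal{A}}(\mathsf{l}(Z),B)\xrightarrow{\ \mathsf{e}\ }\mathsf{Ext}^k_{\mathcal{B}}(\mathsf{el}(Z),\mathsf{e}(B))\xrightarrow{\ \nu_Z^{*}\ }\mathsf{Ext}^k_{\mathcal{B}}(Z,\mathsf{e}(B)),
\]
i.e.\ the canonical comparison map followed by restriction along the unit $\nu_Z\colon Z\to\mathsf{el}(Z)$. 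By the triangle identity $\mathsf{Id}_{\mathsf{e}(A)}=\mathsf{e}(\mu_A)\nu_{\mathsf{e}(A)}$ (with $A=\mathsf{i}(Z)$), the unit $\nu_Z$ is the inclusion of the $\mathsf{Id}_{\mathcal{B}}$-summand in $\mathsf{el}(Z)\cong Z\oplus\mathsf{F}(Z)$, so $\nu_Z^{*}$ is the projection of $\mathsf{Ext}^k_{\mathcal{B}}(Z,\mathsf{e}(B))\oplus\mathsf{Ext}^k_{\mathcal{B}}(\mathsf{F}(Z),\mathsf{e}(B))$ onto its first summand. For $k>n_0$ the comparison map is an isomorphism and the map of Lemma~\ref{basic_homological_properties_of_cleft}(iii) is always an isomorphism, hence $\nu_Z^{*}$ is an isomorphism; being a direct-summand projection, its kernel $\mathsf{Ext}^k_{\mathcal{B}}(\mathsf{F}(Z),\mathsf{e}(B))$ must be $0$. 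Since $\mathsf{e}$ is essentially surjective ($\mathsf{ei}\simeq\mathsf{Id}_{\mathcal{B}}$), this yields $\mathsf{Ext}^k_{\mathcal{B}}(\mathsf{F}(Z),W)=0$ for all $W\in\mathcal{B}$ and all $k>n_0$, that is $\pd_{\mathcal{B}}\mathsf{F}(Z)\le n_0$; hence $\mathsf{sup}\{\pd_{\mathcal{B}}\mathsf{F}(Z)\mid Z\in\mathcal{B}\}\le n_0$ and Remark~\ref{equivalence_for_G} finishes the argument.

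The one point I expect to require care is the identification of the isomorphism of Lemma~\ref{basic_homological_properties_of_cleft}(iii) with the composite $\nu_Z^{*}\circ\mathsf{e}$ above: it is exactly this compatibility that upgrades the a priori only abstract isomorphism $\mathsf{Ext}^k_{\mathcal{B}}(Z,-)\cong\mathsf{Ext}^k_{\mathcal{B}}(Z,-)\oplus\mathsf{Ext}^k_{\mathcal{B}}(\mathsf{F}(Z),-)$ into the vanishing of $\mathsf{Ext}^k_{\mathcal{B}}(\mathsf{F}(Z),-)$, and the whole argument would break if "eventually homological isomorphism" were taken to mean merely that the $\mathsf{Ext}$-groups are abstractly isomorphic rather than that the canonical comparison map is invertible. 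The verification is routine, though, reducing to the adjunction formula $f\mapsto\mathsf{e}(f)\circ\nu$ at the level of $\mathsf{Hom}$-complexes of projective resolutions, together with the observation that under $(\textbf{E})$ both $\mathsf{l}$ and $\mathsf{e}$ carry projective resolutions to projective resolutions.
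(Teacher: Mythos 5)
Your proof follows the same route as the paper: reduce via Remark~\ref{equivalence_for_G} to a uniform bound on $\pd{_{\mathcal{B}}\mathsf{F}(Z)}$, then read that bound off from the splitting $\mathsf{el}\simeq\mathsf{Id}_{\mathcal{B}}\oplus\mathsf{F}$, the adjunction isomorphism of Lemma~\ref{basic_homological_properties_of_cleft}(iii), and the eventually homological isomorphism. Where you go further than the paper is exactly the point you flag as delicate: the paper assembles a chain of isomorphisms giving $\mathsf{Ext}^k_{\mathcal{B}}(Z,\mathsf{e}(B))\oplus\mathsf{Ext}^k_{\mathcal{B}}(\mathsf{F}(Z),\mathsf{e}(B))\cong\mathsf{Ext}^k_{\mathcal{B}}(Z,\mathsf{e}(B))$ and concludes that the second summand vanishes without further comment, but for abstract isomorphisms of abelian groups $A\oplus B\cong A$ does not force $B=0$, so one must check that the composite is the coordinate projection. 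Your factorization of the adjunction isomorphism as $\nu_Z^{*}\circ\mathsf{e}$, with $\nu_Z$ identified (via the triangle identity) as the split inclusion of the identity summand in $\mathsf{el}(Z)\cong Z\oplus\mathsf{F}(Z)$, is exactly the verification needed, and your sketch of it is correct: under $(\textbf{E})$ both $\mathsf{l}$ and $\mathsf{e}$ carry projective resolutions to projective resolutions, so the comparison map underlying the eventually homological isomorphism can be computed by applying $\mathsf{e}$ termwise to $\mathsf{l}$ of a projective resolution of $Z$, and composing with $\nu^{*}$ on the nose recovers the chain-level adjunction map used in the proof of Lemma~\ref{basic_homological_properties_of_cleft}(iii). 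In short, your argument is the paper's argument with the one genuinely nontrivial naturality check made explicit.
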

\begin{proof}
    Let $X$ be any object of $\mathcal{B}$ and $Y$ an object of $\mathcal{A}$. By the isomorphism $\mathsf{el}(X)\cong X\oplus \mathsf{F}(X)$, it follows that for every $k\geq 1$:
   \[
    \mathsf{Ext}_{\mathcal{B}}^k(\mathsf{el}(X),\mathsf{e}(Y))\cong \mathsf{Ext}_{\mathcal{B}}^k(X,\mathsf{e}(Y))\oplus\mathsf{Ext}_{\mathcal{B}}^k(\mathsf{F}(X),\mathsf{e}(Y)).
    \] 
 By assumption, there is $n$ such that $\mathsf{Ext}_{\mathcal{B}}^k(\mathsf{el}(X),\mathsf{e}(Y))\cong \mathsf{Ext}_{\mathcal{A}}^k(\mathsf{l}(X),Y)$ for all $k>n$. From Lemma \ref{basic_homological_properties_of_cleft}, we infer that $\mathsf{Ext}_{\mathcal{A}}^k(\mathsf{l}(X),Y)\cong \mathsf{Ext}_{\mathcal{B}}(X,\mathsf{e}(Y))$ for all $k\geq 1$, since $\mathsf{l}$ is exact. We thus conclude that $\mathsf{Ext}_{\mathcal{B}}^k(\mathsf{F}(X),\mathsf{e}(Y))\cong 0$ for $k>n$. But since $\mathsf{e}$ is essentially surjective, we derive that $\pd{_{\mathcal{B}}\mathsf{F}(X)}\leq n$ for every object $X$ of $\mathcal{B}$ and by Remark \ref{equivalence_for_G} we are done.
\end{proof}

\subsection{Gorenstein rings and perfect bimodules} In this subsection we apply Theorem \ref{main_thm} to cleft extensions of module categories. The key observation, which will be used for the rest of the paper, is that for module categories cleft extensions and cleft coextensions exist simultaneously. Let us begin with a brief recollection on Gorenstein rings.  

\begin{defn} (\!\!\cite[Definition 2.5]{gorenstein}) \label{gorenstein_ring_definition}
    A ring $\Lambda$ is called right (resp. left) Gorenstein if $\Mod \Lambda$ (resp. $\Lambda\lMod$) is a Gorenstein category. We say that $\Lambda$ is \emph{Iwanaga-Gorenstein} if it is left and right Gorenstein. 
\end{defn}

For a two-sided Noetherian ring, being left or right Gorenstein suffices for the ring to be both (i.e.\ to be Iwanaga-Gorenstein). Indeed, by \cite{iwanaga}, if $\Lambda$ is a two-sided Noetherian ring, then $\id{\Lambda_{\Lambda}}=\mathsf{sup}\{\fd{{_{\Lambda}}I}, I\in\Lambda\lInj\}$. One can then prove the following, see for instance \cite[Corollary 6.11]{beligiannis3}. 

\begin{prop} \label{two_sided_noetherian_gorenstein}
    For a two-sided Noetherian ring $\Lambda$, the following are equivalent: 
    \begin{itemize}
        \item[(i)] $\Lambda$ is left Gorenstein. 
        \item[(ii)] $\Lambda$ is right Gorenstein. 
        \item[(iii)] $\id{{_{\Lambda}\Lambda}}<\infty$ and $\id{\Lambda_{\Lambda}}<\infty$. 
    \end{itemize}
\end{prop}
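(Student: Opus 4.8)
The plan is to translate everything into the invariants $\mathsf{spli}$ and $\mathsf{silp}$ on both sides and then feed in the quoted theorem of Iwanaga \cite{iwanaga} together with \cite[Corollary 6.11]{beligiannis3}. First I would record two bookkeeping facts valid over any two-sided Noetherian ring. Since a direct sum of injectives is again injective, injective dimension commutes with arbitrary direct sums; since every projective module is a direct summand of a free one, this gives $\mathsf{silp}(\Mod\Lambda)=\id\Lambda_{\Lambda}$ and, symmetrically, $\mathsf{silp}(\Lambda\lMod)=\id{_{\Lambda}\Lambda}$. Moreover, the quoted identity applied both to $\Lambda$ and to $\Lambda^{\mathsf{op}}$ (which is again two-sided Noetherian) yields $\id\Lambda_{\Lambda}=\mathsf{sup}\{\fd{_{\Lambda}I}\mid I\in\Lambda\lInj\}$ and $\id{_{\Lambda}\Lambda}=\mathsf{sup}\{\fd{I_{\Lambda}}\mid I\in\Inj\Lambda\}$. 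With these in place, (i) reads $\mathsf{spli}(\Lambda\lMod)<\infty$ and $\mathsf{silp}(\Lambda\lMod)<\infty$, (ii) reads $\mathsf{spli}(\Mod\Lambda)<\infty$ and $\mathsf{silp}(\Mod\Lambda)<\infty$, and (iii) reads $\id{_{\Lambda}\Lambda}<\infty$ and $\id\Lambda_{\Lambda}<\infty$.

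Next I would dispose of the two easy (and symmetric) implications $\textnormal{(i)}\Rightarrow\textnormal{(iii)}$ and $\textnormal{(ii)}\Rightarrow\textnormal{(iii)}$. For the first: if $\Lambda$ is left Gorenstein, then $\id{_{\Lambda}\Lambda}\leq\mathsf{silp}(\Lambda\lMod)<\infty$ because $_{\Lambda}\Lambda$ is projective, while $\id\Lambda_{\Lambda}=\mathsf{sup}\{\fd{_{\Lambda}I}\mid I\in\Lambda\lInj\}\leq\mathsf{sup}\{\pd{_{\Lambda}I}\mid I\in\Lambda\lInj\}\leq\mathsf{spli}(\Lambda\lMod)<\infty$, where the middle step uses $\fd\leq\pd$. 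Exchanging the roles of the two sides gives $\textnormal{(ii)}\Rightarrow\textnormal{(iii)}$.

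Finally, for $\textnormal{(iii)}\Rightarrow\textnormal{(i)}$ and $\textnormal{(iii)}\Rightarrow\textnormal{(ii)}$ — which I expect to be the substantive part — assume $\id{_{\Lambda}\Lambda}<\infty$ and $\id\Lambda_{\Lambda}<\infty$. Then $\mathsf{silp}(\Mod\Lambda)=\id\Lambda_{\Lambda}<\infty$ and $\mathsf{silp}(\Lambda\lMod)=\id{_{\Lambda}\Lambda}<\infty$ are immediate from the bookkeeping facts, and Lemma \ref{projective_and_injective_dimensions}(i) then bounds the injective dimension of any module of finite projective dimension by the corresponding $\mathsf{silp}$. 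What remains is to show $\mathsf{spli}(\Mod\Lambda)<\infty$ and $\mathsf{spli}(\Lambda\lMod)<\infty$, equivalently that every injective $\Lambda$-module, on either side, has finite projective dimension, and this is the point at which \cite[Corollary 6.11]{beligiannis3} (resting on \cite{iwanaga}) does the real work: the two self-injective dimensions agree, say $\id{_{\Lambda}\Lambda}=\id\Lambda_{\Lambda}=n$, and a $\Lambda$-module has finite injective dimension if and only if it has finite projective dimension, in which case both are at most $n$. Applying this to injective modules themselves yields $\mathsf{spli}(\Mod\Lambda)\leq n$ and $\mathsf{spli}(\Lambda\lMod)\leq n$, so (iii) forces $\Lambda$ to be both left and right Gorenstein.

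The only genuine obstacle here is the last implication, and it is localized entirely in the classical statement that finite self-injective dimension on both sides over a two-sided Noetherian ring propagates from $\Lambda$ to all injective modules; everything else is routine manipulation of the invariants $\mathsf{spli}$, $\mathsf{silp}$ and Lemma \ref{projective_and_injective_dimensions}.
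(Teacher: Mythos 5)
Your proof is correct and is essentially the same argument the paper has in mind (the paper itself only gestures at a proof via Iwanaga's theorem and the citation to \cite[Corollary 6.11]{beligiannis3}; you have unpacked that gesture into a full argument). The bookkeeping identities $\mathsf{silp}(\Mod\Lambda)=\id\Lambda_{\Lambda}$ and $\mathsf{silp}(\Lambda\lMod)=\id{_{\Lambda}\Lambda}$ are legitimate over a two-sided Noetherian ring, the two easy directions $\textnormal{(i)}\Rightarrow\textnormal{(iii)}$ and $\textnormal{(ii)}\Rightarrow\textnormal{(iii)}$ follow from Iwanaga's identity and $\fd\leq\pd$ exactly as you wrote, and you have correctly isolated the only nontrivial content of $\textnormal{(iii)}\Rightarrow\textnormal{(i)},\textnormal{(ii)}$ (that over a two-sided Noetherian ring with finite self-injective dimension on both sides, finite injective dimension of an arbitrary module forces finite projective dimension, both bounded by the common self-injective dimension) in the cited result of Beligiannis resting on Iwanaga.
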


Note that by the above, Definition \ref{gorenstein_ring_definition} agrees with the classical definition of a Gorenstein Artin algebra, due to  Auslander-Reiten \cite{auslander_reiten}, which requires $\id{{_{\Lambda}}\Lambda}<\infty$ and $\id{\Lambda_{\Lambda}}<\infty$.

We now turn our attention to cleft extensions of module categories, beginning with the following fact. 

\begin{fact} \label{fact_2}
     Let $(\mathcal{B},\mathcal{A},\mathsf{i},\mathsf{e},\mathsf{l})$ be a cleft extension of abelian categories. We know from \cite[Proposition 3.4]{beligiannis} that $\mathcal{B}\simeq \Mod \Gamma$ for some ring $\Gamma$ if and only if the functor $\mathsf{e}$ preserves coproducts and $\mathcal{A}\simeq \Mod \Lambda$ for some ring $\Lambda$. In particular, if $(\Mod \Gamma,\Mod \Lambda,\mathsf{i},\mathsf{e},\mathsf{l})$ is a cleft extension of module categories, then the functor $\mathsf{e}$ preserves coproducts. 
\end{fact}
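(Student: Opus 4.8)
The biconditional in the statement is quoted verbatim from \cite[Proposition 3.4]{beligiannis}, so I would not attempt to reprove it; I would only recall its mechanism, since this clarifies why the ``in particular'' clause costs nothing. Morally, an abelian category is a module category exactly when it is cocomplete and admits a small projective generator, and in a cleft extension $\mathsf{l}$ and $\mathsf{q}$ are left adjoints of the exact functors $\mathsf{e}$ and $\mathsf{i}$, hence preserve projectives and all colimits. Using the description of projectives in the middle category (Lemma \ref{projectives_in_cleft}) one transports a small projective generator back and forth between $\mathcal{A}$ and $\mathcal{B}$, the only missing ingredient being that coproducts in $\mathcal{A}$ are controlled by those in $\mathcal{B}$ — equivalently, that $\mathsf{e}$ (a priori only a right adjoint, hence not obviously coproduct-preserving) does preserve coproducts. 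This is precisely the extra hypothesis appearing on the right-hand side of the biconditional.

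To deduce the final assertion I would simply instantiate the biconditional. Saying that $(\Mod\Gamma,\Mod\Lambda,\mathsf{i},\mathsf{e},\mathsf{l})$ is a cleft extension of module categories means, by definition, that $\mathcal{B}\simeq\Mod\Gamma$ (and $\mathcal{A}\simeq\Mod\Lambda$); applying the forward direction of \cite[Proposition 3.4]{beligiannis} with this choice of $\Gamma$ yields at once that $\mathsf{e}$ preserves coproducts. No separate argument is needed, and there is no real obstacle here: the content is entirely packaged in the cited proposition, and the corollary is a one-line specialization of it. (If one insisted on a self-contained argument in the module-category case, one could instead note that the cleft extension structure identifies $\mathsf{e}$, up to the given equivalences, with restriction of scalars along a ring homomorphism $\Gamma\to\Lambda$, which preserves all limits and colimits — but justifying that identification is again essentially \cite[Proposition 3.4]{beligiannis}.)

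The one point worth recording — and the reason the Fact is stated separately at all — is that the conclusion does not depend on the particular equivalences $\mathcal{B}\simeq\Mod\Gamma$ and $\mathcal{A}\simeq\Mod\Lambda$. Hence in the later sections we may, and do, treat $\mathsf{e}$ as a genuine coproduct-preserving functor between honest module categories, which is exactly what is required when invoking results about compactly generated triangulated categories and homotopy categories of injectives.
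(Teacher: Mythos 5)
Your derivation of the ``in particular'' clause is exactly right: in a cleft extension $(\Mod\Gamma,\Mod\Lambda,\mathsf{i},\mathsf{e},\mathsf{l})$ both $\mathcal{B}\simeq\Mod\Gamma$ and $\mathcal{A}\simeq\Mod\Lambda$ hold, so the forward implication of \cite[Proposition 3.4]{beligiannis} immediately yields that $\mathsf{e}$ preserves coproducts. The paper states this as a Fact with no proof beyond the citation, so your one-line specialization is precisely the intended argument; the surrounding heuristics about projective generators and restriction of scalars, and the observation that the conclusion is invariant under equivalence, are accurate but not strictly needed.
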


\begin{prop} \label{cleft_of_modules_is_cocleft}
    A cleft extension $(\Mod\Gamma,\Mod \Lambda,\mathsf{i},\mathsf{e},\mathsf{l})$ of module categories is the upper part of a cleft coextension. Moreover, if $\mathsf{F}$ is perfect and nilpotent, then $\mathsf{F}'$ is coperfect and nilpotent. 
\end{prop}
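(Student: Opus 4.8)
The plan is to realise $\mathsf{F}$ as a tensor functor via the Eilenberg--Watts theorem, so that it automatically admits a right adjoint, and then to feed this into Fact \ref{fact} and Lemma \ref{perfect_implies_coperfect}.

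First I would check that the endofunctor $\mathsf{F}$ of $\Mod\Gamma$ preserves coproducts. The functor $\mathsf{l}$ preserves coproducts, being a left adjoint, and by Fact \ref{fact_2} the functor $\mathsf{e}$ preserves coproducts as well; hence so does $\mathsf{el}$. Using the natural isomorphism $\mathsf{el}\simeq \mathsf{Id}_{\Mod\Gamma}\oplus \mathsf{F}$ together with the naturality of the coproduct-comparison morphisms, the canonical map $\bigoplus_i \mathsf{F}(X_i)\to \mathsf{F}(\bigoplus_i X_i)$ is a retract of the canonical isomorphism $\bigoplus_i \mathsf{el}(X_i)\to \mathsf{el}(\bigoplus_i X_i)$, hence is itself an isomorphism. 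Since $\mathsf{F}$ is also right exact (it is one of the induced endofunctors of the cleft extension), the Eilenberg--Watts theorem provides a $\Gamma$-bimodule $M$ --- namely $M=\mathsf{F}(\Gamma)$ with its canonical bimodule structure --- together with a natural isomorphism $\mathsf{F}\simeq -\otimes_\Gamma M$.

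Now $-\otimes_\Gamma M$ has $\Hom_\Gamma(M,-)$ as a right adjoint, so $\mathsf{F}$ admits a right adjoint; by Fact \ref{fact} the cleft extension $(\Mod\Gamma,\Mod\Lambda,\mathsf{i},\mathsf{e},\mathsf{l})$ is the upper part of a cleft coextension $(\Mod\Gamma,\Mod\Lambda,\mathsf{i},\mathsf{e},\mathsf{r})$, and the endofunctor $\mathsf{F}'$ induced by this cleft coextension, being right adjoint to $\mathsf{F}$, is naturally isomorphic to $\Hom_\Gamma(M,-)$. This settles the first claim. For the second, assume $\mathsf{F}$ is perfect and nilpotent. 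Then $-\otimes_\Gamma M$ is perfect and nilpotent, so Lemma \ref{tensor_is_perfect} gives that $M$ is a perfect, nilpotent $\Gamma$-bimodule, and then Lemma \ref{perfect_implies_coperfect} gives that $\Hom_\Gamma(M,-)$ is coperfect and nilpotent; since $\mathsf{F}'\simeq \Hom_\Gamma(M,-)$, the functor $\mathsf{F}'$ is coperfect and nilpotent.

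The only non-formal ingredient is the Eilenberg--Watts theorem; everything else is bookkeeping, the one point requiring a little care being the passage from ``$\mathsf{F}$ is right exact'' to ``$\mathsf{F}$ preserves all coproducts'', for which Fact \ref{fact_2} is exactly what is needed.
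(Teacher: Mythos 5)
Your proof is correct and follows essentially the same route as the paper: use Fact \ref{fact_2} together with $\mathsf{el}\simeq\mathsf{Id}\oplus\mathsf{F}$ to see that $\mathsf{F}$ preserves coproducts, invoke Eilenberg--Watts to write $\mathsf{F}\simeq-\otimes_\Gamma M$, then appeal to Fact \ref{fact}, Lemma \ref{tensor_is_perfect} and Lemma \ref{perfect_implies_coperfect}. The only difference is that you spell out the retract argument for why $\mathsf{F}$ preserves coproducts, which the paper leaves implicit.
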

\begin{proof}
    By Fact \ref{fact_2} and the equivalence $\mathsf{el}\simeq \mathsf{Id}_{\Mod \Gamma}\oplus \mathsf{F}$, it follows that the functor $\mathsf{F}$ preserves coproducts and so by Watt's theorem, there is an isomorphism $\mathsf{F}\simeq -\otimes_{\Gamma}M$ for some $\Gamma$-bimodule $M$. Therefore, $\mathsf{F}$ admits a right adjoint $\mathsf{F}'\simeq \mathsf{Hom}_{\Gamma}(M,-)$ and thus, by Fact \ref{fact}, the given cleft extension is the upper part of a cleft coextension. By Lemma \ref{tensor_is_perfect}(i), the functor $\mathsf{F}$ is perfect and nilpotent if and only if $M$ is a perfect and nilpotent $\Gamma$-bimodule. In this case, by Lemma \ref{perfect_implies_coperfect}(ii), the functor $\mathsf{F}'\simeq \mathsf{Hom}_{\Gamma}(M,-)$ is coperfect. 
\end{proof}

We can now translate nicely Theorem \ref{main_thm} in the context of module categories.

\begin{cor} \label{main_thm_2}
    Let $(\Mod \Gamma,\Mod \Lambda,\mathsf{i},\mathsf{e},\mathsf{l})$ be a cleft extension of module categories. If $\mathsf{F}$ is perfect and nilpotent, then $\Lambda$ is right Gorenstein if and only if $\Gamma$ is right Gorenstein. 
\end{cor}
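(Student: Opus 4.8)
The plan is to deduce this directly from Theorem \ref{main_thm} together with Proposition \ref{cleft_of_modules_is_cocleft}. Theorem \ref{main_thm} requires three things of the endofunctor $\mathsf{F}\colon\Mod\Gamma\to\Mod\Gamma$ attached to the cleft extension: that it be perfect, that it be nilpotent, and that it admit a right adjoint which is coperfect. The first two are exactly the hypotheses of the Corollary, so the only point that needs verification is the existence of a coperfect right adjoint for $\mathsf{F}$.

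For this I would invoke Proposition \ref{cleft_of_modules_is_cocleft}. In the module-category setting, Fact \ref{fact_2} together with the natural splitting $\mathsf{el}\simeq\mathsf{Id}_{\Mod\Gamma}\oplus\mathsf{F}$ forces $\mathsf{F}$ to preserve coproducts, whence $\mathsf{F}\simeq -\otimes_{\Gamma}M$ for a $\Gamma$-bimodule $M$ by Watt's theorem, and hence $\mathsf{F}$ has the right adjoint $\mathsf{F}'\simeq\mathsf{Hom}_{\Gamma}(M,-)$. By Fact \ref{fact}, the given cleft extension is then the upper part of a cleft coextension $(\Mod\Gamma,\Mod\Lambda,\mathsf{i},\mathsf{e},\mathsf{r})$ whose associated endofunctor of $\Mod\Gamma$ is precisely this $\mathsf{F}'$. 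Since $\mathsf{F}$ is perfect and nilpotent, Proposition \ref{cleft_of_modules_is_cocleft} (via Lemma \ref{tensor_is_perfect} and Lemma \ref{perfect_implies_coperfect}) guarantees that $\mathsf{F}'$ is coperfect and nilpotent. Thus all hypotheses of Theorem \ref{main_thm} are met.

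Applying Theorem \ref{main_thm} now yields that $\Mod\Lambda$ is a Gorenstein category if and only if $\Mod\Gamma$ is a Gorenstein category, which by Definition \ref{gorenstein_ring_definition} is exactly the assertion that $\Lambda$ is right Gorenstein if and only if $\Gamma$ is right Gorenstein. I do not expect a genuine obstacle here: the substantive content — the two-sided $\mathsf{silp}$/$\mathsf{spli}$ bounds of Proposition \ref{one_way} and Proposition \ref{the_other_way} and their synthesis in Theorem \ref{main_thm} — has already been established at the level of abelian categories, so all that remains is the bookkeeping of checking that the module-theoretic hypotheses feed correctly into that machinery, which is the role of Proposition \ref{cleft_of_modules_is_cocleft}.
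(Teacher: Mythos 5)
Your proposal is correct and matches the paper's own proof: the paper likewise invokes Proposition \ref{cleft_of_modules_is_cocleft} to realize the cleft extension as the upper part of a cleft coextension with $\mathsf{F}'$ coperfect and nilpotent, and then applies Theorem \ref{main_thm}. Your version merely unpacks what that proposition does internally (Watt's theorem, Fact \ref{fact}, Lemmas \ref{tensor_is_perfect} and \ref{perfect_implies_coperfect}), which is fine; the substance is identical.
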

\begin{proof}
     By Proposition \ref{cleft_of_modules_is_cocleft}, the given cleft extension is also a cleft coextension and the endofunctors $\mathsf{F}$ and $\mathsf{F}'$ are perfect and coperfect respectively. Since $\mathsf{F}$ is assumed to be nilpotent, the result follows from Theorem \ref{main_thm}.
\end{proof}

Combining the above with Proposition \ref{two_sided_noetherian_gorenstein} gives the following.  
 
\begin{cor} \label{cor_for_gorenstein_rings}
    Let $(\Mod \Gamma,\Mod \Lambda,\mathsf{i},\mathsf{e},\mathsf{l})$ be a cleft extension of module categories, where $\Gamma$ and $\Lambda$ are two-sided Noetherian rings. If $\mathsf{F}$ is perfect and nilpotent, then $\Lambda$ is Iwanaga-Gorenstein if and only if $\Gamma$ is Iwanaga-Gorenstein. 
\end{cor}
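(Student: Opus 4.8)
The plan is to obtain this as an immediate consequence of Corollary~\ref{main_thm_2} together with Proposition~\ref{two_sided_noetherian_gorenstein}, so that essentially no new argument is needed. First I would observe that the hypotheses of Corollary~\ref{main_thm_2} are exactly those in force here: we have a cleft extension $(\Mod\Gamma,\Mod\Lambda,\mathsf{i},\mathsf{e},\mathsf{l})$ of module categories and the endofunctor $\mathsf{F}$ is perfect and nilpotent. Hence Corollary~\ref{main_thm_2} applies and gives: $\Lambda$ is right Gorenstein if and only if $\Gamma$ is right Gorenstein.

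Next I would invoke Proposition~\ref{two_sided_noetherian_gorenstein}, which is legitimate precisely because both $\Gamma$ and $\Lambda$ are assumed two-sided Noetherian. That proposition says that for a two-sided Noetherian ring the three conditions ``left Gorenstein'', ``right Gorenstein'' and ``Iwanaga-Gorenstein'' coincide. Applying it to $\Lambda$ and to $\Gamma$ separately converts the equivalence above into the chain
\[
\Lambda \text{ Iwanaga-Gorenstein} \iff \Lambda \text{ right Gorenstein} \iff \Gamma \text{ right Gorenstein} \iff \Gamma \text{ Iwanaga-Gorenstein},
\]
which is exactly the assertion of the corollary.

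There is no genuine obstacle in this last step: all the substance has already been absorbed into Corollary~\ref{main_thm_2} (which rests on Theorem~\ref{main_thm}, via the fact that a cleft extension of module categories is simultaneously a cleft coextension with $\mathsf{F}'$ coperfect and nilpotent, Proposition~\ref{cleft_of_modules_is_cocleft}) and into Proposition~\ref{two_sided_noetherian_gorenstein}. The only thing to be careful about is that one applies Proposition~\ref{two_sided_noetherian_gorenstein} to \emph{both} rings, which is why the two-sided Noetherian hypothesis is imposed on $\Gamma$ and $\Lambda$ alike; note also that one could avoid this proposition altogether only at the cost of proving a left-handed analogue of Corollary~\ref{main_thm_2}, i.e.\ redoing the entire Gorenstein comparison for left modules, so routing through the Noetherian collapse of left and right Gorensteinness is by far the cleaner path.
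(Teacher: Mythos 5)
Your proposal is correct and matches the paper's own argument exactly: the paper derives this corollary by combining Corollary~\ref{main_thm_2} with Proposition~\ref{two_sided_noetherian_gorenstein}, just as you do.
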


\section{Cleft extensions of module categories} 

Some examples of classes of cleft extensions of rings, together with inclusions, are summarised below. For an extensive list of examples, we refer to \cite{beligiannis}. 
\[
\adjustbox{scale=0.96}{
\begin{tabular}{ c c c c c } 
 %\hline
  & & \\[-0.5em]
 \big\{\text{One-point extensions}\big\} & $\subseteq$ & \big\{\text{Triangular matrix rings}\big\} \\ 
  \rotatebox[origin=c]{270}{$\subseteq$} & & \rotatebox[origin=c]{270}{$\subseteq$} \\
 \big\{\text{\!\! 0-0 Morita context rings}\big\} &  & \big\{\text{Tensor rings}\big\} \\ 
\rotatebox[origin=c]{270}{$\subseteq$} & & \rotatebox[origin=c]{270}{$\subseteq$}  \\
 \big\{\text{Trivial extension rings}\big\} 
  & $\subseteq$ & \big\{\text{Positively graded rings}\big\} & $\subseteq$ & \big\{\text{$\theta$-extensions}\big\} \\ 
\end{tabular}}
\]

\subsection{Triangular matrix rings and Morita context rings}
\begin{exmp} \label{perfect_for_triangular} (\textbf{Triangular matrix rings})
Let $A$ and $B$ be two rings and $N$ an $A$-$B$-bimodule. Consider the triangular matrix ring
\[
\Lambda=\begin{pmatrix}
    A & N \\ 
    0 & B
\end{pmatrix}
\]
By \cite{reiten}, the modules over $\Lambda$ can be viewed as triples $(X,Y,f)$ where $X$ is an $A$-module, $Y$ is a $B$-module and $f\in\mathsf{Hom}_B(X\otimes_AN,Y)$. Then, a morphism $(X,Y,f)\rightarrow (X',Y',f')$ is given by a pair $(a,b)$ where $a\colon X\rightarrow X'$ is an $A$-module homomorphism and $b\colon Y\rightarrow Y'$ is a $B$-module homomorphism such that $b\circ f=f'\circ (a\otimes N)$. There is a cleft extension of module categories: 
\[
\begin{tikzcd}
\Mod A\!\times \!B \arrow[rr, "\mathsf{i}"] &  & \Mod \Lambda \arrow[rr, "\mathsf{e}"] \arrow[ll, "\mathsf{q}"', bend right] &  & \Mod A\!\times\! B \arrow[ll, "\mathsf{l}"', bend right] \arrow["\mathsf{F}"', loop, distance=2em, in=125, out=55]
\end{tikzcd}
\]
where the functors are given as follows:
\begin{itemize}
    \item[(i)] The functor $\mathsf{i}$ is defined by $\mathsf{i}(X,Y)=(X,Y,0)$ on objects and given a morphism $(a,b)\colon(X,Y)\rightarrow (X',Y')$ then $\mathsf{i}(a,b)=(a,b)$.
    \item[(ii)] The functor $\mathsf{e}$ is defined by $\mathsf{e}(X,Y,f)=(X,Y)$ on objects and given a morphism $(a,b)\colon(X,Y,f)\rightarrow (X',Y',f')$ then $\mathsf{e}(a,b)=(a,b)$. 
    \item[(iii)] The functor $\mathsf{l}$ is defined by $\mathsf{l}(X,Y)=(X,(X\otimes_A N)\oplus Y,1)$ on objects and given a morphism $(a,b)\colon (X,Y)\rightarrow (X',Y')$ then $\mathsf{l}(a,b)=(a,(a\otimes N)\oplus b)$.
    \item[(iv)] The functor $\mathsf{q}$ is defined by $\mathsf{q}(X,Y,f)=(X,\mathsf{coker}f)$ on objects and given a morphism $(a,b)\colon(X,Y,f)\rightarrow (X',Y',f')$ then $\mathsf{q}(a,b)=(a,b')$ where $b'$ is the unique morphism $\mathsf{coker}f\rightarrow \mathsf{coker}f'$ such that $b'\circ\mathsf{coker}f=\mathsf{coker}f'\circ b$. 
    \item[(v)] The functor $\mathsf{F}$ is defined by $\mathsf{F}(X,Y)=(0,X\otimes_A N)$ on objects and given a morphism $(a,b)\colon (X,Y)\rightarrow (X',Y')$ then $\mathsf{F}(a,b)=(0,a\otimes N)$. 
\end{itemize}
We will now find necessary and sufficient conditions for the functor $\mathsf{F}$ to be perfect. First of all, we notice that $\mathsf{F}^2=0$ thus, in particular, $\mathsf{F}$ is nilpotent. Consider an $A\!\times\! B$-module $(X,Y)$ and a projective resolution of it, as below
\[
\cdots\rightarrow (P_n,Q_n)\rightarrow\cdots\rightarrow (P_0,Q_0)\rightarrow (X,Y)\rightarrow 0.
\]
Applying $\mathsf{F}$ gives the following complex: 
\[
\cdots\rightarrow (0,P_n\otimes_AN)\rightarrow \cdots\rightarrow (0,P_0\otimes_AN)\rightarrow 0,
\]
from which we derive that $\mathbb{L}_i\mathsf{F}(X,Y)\cong (0,\mathsf{Tor}_i^A(X,N))$ for all $i\geq 1$. Therefore, for any $A\!\times\! B$-module $(X,Y)$, we have $\mathbb{L}_i\mathsf{F}(\mathsf{F}(X,Y))=0$ and in particular the functor $\mathsf{F}$ satisfies condition (\ref{R}). Moreover, the left derived functor $\mathbb{L}_i\mathsf{F}$ vanishes for $i>\!\!>0$ if and only if $\mathsf{Tor}_i^A(X,N)=0$ for $i>\!\!>0$, which is equivalent to $\fd{_AN}<\infty$. Lastly, $\mathsf{F}$ maps projective modules to modules of finite projective dimension if and only if the functor $-\otimes_AN\colon\Mod B\rightarrow \Mod B$ maps projective modules to modules of finite projective dimension, which happens precisely when $\pd{N_B}<\infty$. Summing up, the functor $\mathsf{F}$ is nilpotent and it is perfect if and only if $\fd{_AN}<\infty$ and $\pd{N_B}<\infty$ (see also \cite[Example 4.10 (3)]{perfect}).
\end{exmp}

By the above computations, together with Corollary \ref{cor_for_gorenstein_rings}, we recover \cite[Theorem 3.3]{chen} (see also \cite[Theorem 2.2]{xiong_zhang} and \cite[Section 2]{zhang}). 

\begin{cor} \label{gorenstein_triangular}
    Let $\Lambda=\big(\begin{smallmatrix}   A & N\\   0 & B \end{smallmatrix}\big)$ be a triangular matrix ring, where $A$ and $B$ are two-sided Noetherian and $M$ is finitely generated on both sides, in which case $\Lambda$ is two-sided Noetherian. Assume that the following are satisfied: 
    \begin{itemize}
        \item[(i)] $\pd{_{A}N}<\infty$. 
        \item[(ii)] $\pd{N_{B}}<\infty$. 
    \end{itemize}
    Then $\Lambda$ is Iwanaga-Gorenstein if and only if $A$ and $B$ are Iwanaga-Gorenstein.
\end{cor}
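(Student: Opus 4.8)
The plan is to exhibit $\Lambda$ as the middle term of the cleft extension of module categories recorded in Example~\ref{perfect_for_triangular}, with base ring $\Gamma=A\times B$, and then to apply Corollary~\ref{cor_for_gorenstein_rings}. First I would observe that $\Gamma=A\times B$ is two-sided Noetherian, being a finite product of two-sided Noetherian rings, and that $\Lambda$ is two-sided Noetherian by hypothesis, since $N$ is finitely generated on both sides. Thus the ring-theoretic hypotheses of Corollary~\ref{cor_for_gorenstein_rings} are met with $\Gamma=A\times B$.

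Next I would verify that the endofunctor attached to this cleft extension, namely $\mathsf{F}(X,Y)=(0,X\otimes_A N)$, is perfect and nilpotent. Nilpotence is immediate because $\mathsf{F}^2=0$, as noted in Example~\ref{perfect_for_triangular}. For perfectness, the same example shows it suffices to have $\fd{_AN}<\infty$ and $\pd{N_B}<\infty$. The latter is assumed; the former follows from $\pd{_AN}<\infty$, since a projective resolution is a flat resolution, so $\fd{_AN}\leq\pd{_AN}<\infty$ (in fact the two coincide here, as $A$ is Noetherian and $N$ is finitely generated as a left $A$-module). Hence $\mathsf{F}$ is perfect and nilpotent, and Corollary~\ref{cor_for_gorenstein_rings} yields: $\Lambda$ is Iwanaga-Gorenstein if and only if $\Gamma=A\times B$ is Iwanaga-Gorenstein.

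It then remains to reduce the Gorensteinness of the product ring $A\times B$ to that of its factors. Since $\Mod(A\times B)\simeq\Mod A\times\Mod B$, with the injective (resp. projective) objects of the product being precisely the componentwise pairs of injectives (resp. projectives), injective and projective dimensions are computed componentwise; in particular $\id{(A\times B)_{A\times B}}=\max\{\id{A_A},\id{B_B}\}$ and similarly on the left. By Proposition~\ref{two_sided_noetherian_gorenstein}, $A\times B$ is Iwanaga-Gorenstein exactly when $\id{(A\times B)_{A\times B}}<\infty$ and $\id{{_{A\times B}}(A\times B)}<\infty$, which happens precisely when both $A$ and $B$ are Iwanaga-Gorenstein. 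Chaining this equivalence with the previous one completes the argument. There is no genuine obstacle here: the statement is a repackaging of Corollary~\ref{cor_for_gorenstein_rings} through the explicit cleft-extension description of triangular matrix rings, and the only points needing (minor) care are feeding $\pd{_AN}<\infty$ correctly into the perfectness criterion and the elementary componentwise computation for $A\times B$.
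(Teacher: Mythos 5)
Your proposal is correct and follows exactly the route the paper intends: invoke the cleft extension of Example~\ref{perfect_for_triangular} with $\Gamma=A\times B$, verify that $\mathsf{F}$ is perfect and nilpotent (you rightly note that $\pd{_AN}<\infty$ yields $\fd{_AN}<\infty$, which is what Example~\ref{perfect_for_triangular} actually requires), apply Corollary~\ref{cor_for_gorenstein_rings}, and finish by the elementary componentwise observation that $A\times B$ is Iwanaga-Gorenstein iff both $A$ and $B$ are. The paper leaves all of this implicit in the phrase ``by the above computations, together with Corollary~\ref{cor_for_gorenstein_rings},'' and your write-up simply fills in those details.
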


In the following example we provide sufficient conditions so that the functor $\mathsf{e}$ of Example \ref{perfect_for_triangular} is an eventually homological isomorphism. Results of this nature were first explored in \cite{OPS}, using recollements (see in particular \cite[Section 8]{PSS})

\begin{exmp}  \label{triangular_eventual}
Let $\Lambda=\big(\begin{smallmatrix}   A & N\\   0 & B \end{smallmatrix}\big)$ be a triangular matrix ring and assume that $\pd{_AN}<\infty$ and $\gd B<\infty$. Further, consider the following functors
\[
\Mod \Lambda \xrightarrow{ \ \mathsf{e} \ } \Mod A\!\times\! B\xrightarrow{\ \mathsf{pr} \ } \Mod A
\]
\[
\hspace*{-0.8cm}{(X,Y,f)} \ \longmapsto  \ \ (X,Y) \ \ \longmapsto  \  X
\]
Recall from Example \ref{perfect_for_triangular} that $\Mod \Lambda$ is a cleft extension of $\Mod A\!\times\! B$ and the functor $\mathsf{G}$ is given by $(X,Y,f)\mapsto (0,X\otimes_A N,0)$. Consequently, since $\gd B<\infty$, it follows that $\mathsf{sup}\{\pd{_{\mathcal{A}}\mathsf{G}(X)} \ | \ X\in\Mod\Lambda\}<\infty$. Further, under the given assumptions, the functor $\mathsf{F}$ is perfect and for every $(P,Q)\in \Proj A\!\times\! B$ we have 
\[
\pd {\mathsf{F}(P,Q)_{A\times B}}=\pd{(0,P\otimes_A N)_{A\times B}}\leq \pd N_B\leq  \gd B,
\]
i.e.\ $\mathsf{sup}\{\pd{\mathsf{F}(P,Q)_{A\times B}} \ | \ (P,Q)\in\Proj A\!\times\! B\}<\infty$. Thus, we may apply Theorem \ref{thm2} to conclude that $\mathsf{e}$ is an eventually homological isomorphism. Further, since $\gd B<\infty$, it also follows that the projection functor $\mathsf{pr}$ is an eventually homological isomorphism. All in all, the composition $\mathsf{pr}\circ\mathsf{e}\colon\Mod \Lambda\rightarrow \Mod A$ is an eventually homological isomorphism. 
\end{exmp}

\begin{exmp} \label{morita_context_rings} \textbf{(Morita context rings)}
    Let $A$, $B$ be rings, $N$ an $A$-$B$-bimodule and $M$ a $B$-$A$-bimodule. Consider the Morita context ring 
    \[
    \Lambda=\begin{pmatrix}
        A & N \\ 
        M & B
    \end{pmatrix}
    \]
    with zero bimodule homomorphisms, i.e.\ multiplication is given by 
\[
\begin{pmatrix}
  a & n\\
  m & b
\end{pmatrix} \begin{pmatrix}
  a' & n'\\
  m' & b'
\end{pmatrix}=\begin{pmatrix}
  aa' & an'+nb'\\
  ma'+bm' & bb'
\end{pmatrix}.
\]
Recall that $\Mod \Lambda$ is equivalent to a category with objects tuples $(X,Y,f,g)$ where $X\in\Mod A$, $Y\in\Mod B$, $f\in\mathsf{Hom}_A(Y\otimes_BM,X)$ and $g\in\mathsf{Hom}_B(X\otimes_AN,Y)$. For a detailed exposition of this we refer to \cite{moritagreen, morita}. There is a cleft extension of module categories
\[
\begin{tikzcd}
\Mod A\!\times\! B \arrow[rr, "\mathsf{i}"'] &  & \Mod \Lambda \arrow[rr, "\mathsf{e}"'] \arrow[ll, "\mathsf{q}"', bend right] &  & \Mod A\!\times\! B \arrow[ll, "\mathsf{l}"', bend right] \arrow["\mathsf{F}"', loop, distance=2em, in=125, out=55]
\end{tikzcd}
\]
where the functors are given as follows (for simplicity we only explain what happens on objects): 
\begin{itemize}
    \item[(i)]  $\mathsf{i}(X,Y)=(X,Y,0,0)$.
    \item[(ii)] $\mathsf{e}(X,Y,f,g)=(X,Y)$. 
    \item[(iii)] $\mathsf{l}(X,Y)=(X,X\otimes_A N,0,1)\oplus (Y\otimes_B M,Y,1,0)$. 
    \item[(iv)] $\mathsf{q}(X,Y,f,g)=(X,\mathsf{coker}g)\oplus (Y,\mathsf{coker}f)$. 
    \item[(v)] $\mathsf{F}(X,Y)=(Y\otimes_B M,X\otimes_A N)$. 
\end{itemize}
Given an $A\!\times\! B$-module $(X,Y)$, consider a projective resolution of it as below 
\[
\cdots\rightarrow (P_n,Q_n)\rightarrow\cdots\rightarrow (P_0,Q_0)\rightarrow (X,Y)\rightarrow 0.
\]
Applying $\mathsf{F}$ to the latter gives the following complex:
\[
\cdots\rightarrow (Q_n\otimes_B M,P_n\otimes_A N)\rightarrow\cdots\rightarrow (Q_0\otimes_B M,P_0\otimes_A N)\rightarrow 0,
\]
from which we derive that $\mathbb{L}_i\mathsf{F}(X,Y)\cong (\mathsf{Tor}^B_i(Y,M),\mathsf{Tor}^A_i(X,N))$ for $i\geq 1$. Let us only work in the simple case when $N\otimes_B M=M\otimes_A N=0$. Then, $\mathsf{F}^2=0$, and in particular $\mathsf{F}$ is nilpotent. Moreover, if $\mathsf{Tor}_i^B(N,M)\cong \mathsf{Tor}_i^A(M,N)\cong 0$ for all $i\geq 1$, then the functor $\mathsf{F}$ satisfies condition (\ref{R}). Additionally, if $\pd {M_A}<\infty$ and $\pd{N_B}<\infty$, then $\mathsf{F}$ maps projective modules to modules of finite projective dimension. Lastly, if $\fd{{_A}N}<\infty$ and $\fd{{_B}M}<\infty$, then we infer that $\mathbb{L}_i\mathsf{F}=0$ for $i\geq \mathsf{max}\{\fd{_AN}, \fd{_BM}\}+1$.
\end{exmp}

By the above computations and using Corollary \ref{cor_for_gorenstein_rings}, we infer the following. 
We recover \cite[Corollary 4.15]{monomorphism_cat} as a special case.

\begin{cor} \label{gorenstein_morita_context}
    Let $\Lambda=\big(\begin{smallmatrix}   A & N\\   M & B \end{smallmatrix}\big)$ be a Morita context ring as in Example \ref{morita_context_rings}, where $A$ and $B$ are two-sided Noetherian and $N$, $M$ are finitely generated on both sides, in which case $\Lambda$ is two-sided Noetherian. Assume that the following are satisfied: 
    \begin{itemize}
        \item[(i)] $N\otimes_BM=M\otimes_AN=0$. 
        \item[(ii)] $\mathsf{Tor}_i^B(N,M)\cong \mathsf{Tor}_i^A(M,N)\cong 0$ for all $i\geq 1$. 
        \item[(iii)] $\pd{_{A}}N<\infty$, $\pd{{_B}M}<\infty$, $\pd{M_A}<\infty$ and $\pd{N_B}<\infty$. 
    \end{itemize}
    Then $\Lambda$ is Iwanaga-Gorenstein if and only if $A$ and $B$ are Iwanaga-Gorenstein. 
\end{cor}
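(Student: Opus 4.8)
The plan is to realise $\Lambda$ as the middle term of the cleft extension of module categories $(\Mod A\!\times\! B,\Mod\Lambda,\mathsf{i},\mathsf{e},\mathsf{l})$ described in Example~\ref{morita_context_rings} and then apply Corollary~\ref{cor_for_gorenstein_rings}. Since $A$ and $B$ are two-sided Noetherian and $N,M$ are finitely generated on both sides, both $A\!\times\! B$ and $\Lambda$ are two-sided Noetherian, so the only thing that needs to be verified is that the endofunctor $\mathsf{F}$ associated with this cleft extension, given on objects by $\mathsf{F}(X,Y)=(Y\otimes_B M,X\otimes_A N)$, is perfect and nilpotent; granting this, Corollary~\ref{cor_for_gorenstein_rings} yields that $\Lambda$ is Iwanaga-Gorenstein if and only if $A\!\times\! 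B$ is.

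Both properties of $\mathsf{F}$ are exactly what was computed in Example~\ref{morita_context_rings}, specialised to the present hypotheses, so the verification is short. Hypothesis (i) gives $\mathsf{F}^2(X,Y)=\big(X\otimes_A(N\otimes_B M),\,Y\otimes_B(M\otimes_A N)\big)=0$, so $\mathsf{F}^2=0$ and in particular $\mathsf{F}$ is nilpotent. Using the formula $\mathbb{L}_i\mathsf{F}(X,Y)\cong(\mathsf{Tor}_i^B(Y,M),\mathsf{Tor}_i^A(X,N))$ from the Example, and the observation that for a projective $P=(P_A,Q_B)$ the modules $P\otimes_A N$ and $Q\otimes_B M$ are direct summands of direct sums of copies of $N$ and of $M$, hypothesis (ii) forces $\mathbb{L}_i\mathsf{F}(\mathsf{F}(P))=0$ for all $i\geq 1$, and since $\mathsf{F}^j=0$ for $j\geq 2$ this gives condition~(\ref{R}). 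For the last two conditions of Definition~\ref{perfect_functor}: as $A$ and $B$ are Noetherian and $_A N$, $_B M$ are finitely generated, hypothesis (iii) gives $\fd{_A N}=\pd{_A N}<\infty$ and $\fd{_B M}=\pd{_B M}<\infty$, so $\mathbb{L}_i\mathsf{F}=0$ for $i>\max\{\fd{_A N},\fd{_B M}\}$ and, together with $\mathsf{F}^2=0$, Definition~\ref{perfect_functor}(i) holds; while $\pd{(Q\otimes_B M)_A}\leq\pd{M_A}<\infty$ and $\pd{(P\otimes_A N)_B}\leq\pd{N_B}<\infty$ show that $\mathsf{F}(P)$ has finite projective dimension over $A\!\times\! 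B$, which is Definition~\ref{perfect_functor}(ii). Hence $\mathsf{F}$ is perfect and nilpotent.

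Finally I would pass from $A\!\times\! B$ to its factors: since $\Mod(A\!\times\! B)\simeq\Mod A\times\Mod B$ with projective and injective objects decomposing componentwise, the injective dimension of $A\!\times\! B$ over itself, on either side, is the maximum of the corresponding injective dimensions of $A$ and of $B$, so by Proposition~\ref{two_sided_noetherian_gorenstein} the ring $A\!\times\! B$ is Iwanaga-Gorenstein if and only if $A$ and $B$ both are. Combined with the previous paragraph and Corollary~\ref{cor_for_gorenstein_rings}, this is the assertion. I do not expect a real obstacle here, since the argument is essentially a transcription of Example~\ref{morita_context_rings} into the language of Corollary~\ref{cor_for_gorenstein_rings}; the only step that genuinely exploits the Noetherian and finite-generation hypotheses --- rather than merely the vanishing of the relevant $\mathsf{Tor}$'s and the finiteness of the four projective dimensions in (iii) --- is the identification $\fd{_A N}=\pd{_A N}$ and $\fd{_B M}=\pd{_B M}$, which is what licenses using the projective-dimension hypotheses in (iii) to bound the higher left derived functors of $\mathsf{F}$.
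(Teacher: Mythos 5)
Your proof is correct and takes essentially the same approach as the paper: the corollary is stated in the paper as an immediate consequence of the computations in Example~\ref{morita_context_rings} together with Corollary~\ref{cor_for_gorenstein_rings}, and you have correctly filled in those computations and the (implicit in the paper) final reduction from $A\times B$ to $A$ and $B$ individually. One small inaccuracy in your closing commentary: for Definition~\ref{perfect_functor}(i) you only need the trivial inequality $\fd{_A N}\leq\pd{_A N}$ (valid over any ring), not the equality; the Noetherian and finite-generation hypotheses are really there so that $\Lambda$ is two-sided Noetherian and so that Corollary~\ref{cor_for_gorenstein_rings} (which passes through Proposition~\ref{two_sided_noetherian_gorenstein}) can be invoked.
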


\subsection{$\theta$-extensions} 
We apply Corollary \ref{main_thm_2} to the class of $\theta$-extensions \cite{marmaridis} and provide examples. We also explain in Remark \ref{canonical_form} that cleft extensions of module categories admit a ``canonical form'', given by those arising from $\theta$-extensions.

\begin{defn} 
    Let $\Gamma$ be a ring, $M$ a $\Gamma$-$\Gamma$-bimodule and $\theta\colon M\otimes_{\Gamma}M\rightarrow M$ an associative $\Gamma$-bimodule homomorphism. The \emph{$\theta$-extension of $\Gamma$ by $M$}, denoted by $\Gamma\!\ltimes\!_{\theta}M$, is defined to be the ring with underlying group $\Gamma\oplus M$ and multiplication given as follows:
    \[
    (\gamma,m)\cdot (\gamma',m'):=(\gamma\gamma',\gamma m'+m\gamma'+\theta(m\otimes m')),
    \]
    for $\gamma,\gamma'\in\Gamma$ and $m,m'\in M$. 
\end{defn}

Given a $\theta$-extension $\Gamma\!\ltimes\!_{\theta}M$, consider the ring homomorphisms $f\colon\Gamma\!\ltimes\!_{\theta}M\rightarrow \Gamma$ and $g\colon\Gamma\rightarrow \Gamma\!\ltimes\!_{\theta}M$ given by $(\gamma,m)\mapsto \gamma$ and $\gamma\mapsto (\gamma,0)$ respectively. Denote by $\textbf{Z}$ the restriction functor $\Mod \Gamma\rightarrow \Mod \Gamma\!\ltimes\!_{\theta}M$ induced by $f$ and denote by $\textbf{U}$ the restriction functor $\Mod\Gamma\!\ltimes\!_{\theta}M$ induced by $g$. Consider the induced cleft extension of module categories
\[
\begin{tikzcd}
\Mod \Gamma \arrow[rr, "\textbf{Z}"] &  & \Mod \Gamma\!\ltimes\!_{\theta}M \arrow[rr, "\textbf{U}"] \arrow[ll, "\textbf{C}=-\otimes_{\Gamma\ltimes_{\theta}M}\Gamma"', bend right] &  & \Mod \Gamma \arrow[ll, "\textbf{T}=-\otimes_{\Gamma}\Gamma\ltimes_{\theta} M"', bend right]
\end{tikzcd}
\]
By Watt's theorem, the induced endofunctor $\mathsf{F}$ on $\Mod \Gamma$ is naturally isomorphic to $-\otimes_{\Gamma}M$. It is clear by our results (see Corollary \ref{main_thm_2}) that if $M$ is perfect and nilpotent, then $\Gamma$ is right Gorenstein if and only if $\Gamma\!\ltimes\!_{\theta}M$ is right Gorenstein. In order to spell-out the result when restricted to two-sided Noetherian rings, we begin with the following characterization of two-sided Noetherian $\theta$-extensions.

\begin{fact} \label{noetherian_theta_extensions}
    Let $\Lambda=\Gamma\ltimes_{\theta}M$ be $\theta$-extension of a ring $\Gamma$. Then, $\Lambda$ is Noetherian if and only if $\Gamma$ is Noetherian and $M$ is finitely generated as a right $\Gamma$-module. Moreover, $\Lambda$ is two-sided Noetherian if and only if $\Gamma$ is two-sided Noetherian and $M$ is finitely generated on both sides. See \cite[Proposition 7.5(iv)]{beligiannis} for a proof. 
\end{fact}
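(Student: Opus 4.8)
The plan is to exploit the structural fact that $M$ is a two-sided ideal of $\Lambda=\Gamma\!\ltimes\!_{\theta}M$ with $\Lambda/M\cong\Gamma$ as rings, the quotient map being $(\gamma,m)\mapsto\gamma$, while $\gamma\mapsto(\gamma,0)$ embeds $\Gamma$ as a subring of $\Lambda$. Restricting scalars along this embedding, one checks that the left and right $\Gamma$-actions on $\Lambda$ are the diagonal ones, so that $\Lambda\cong\Gamma\oplus M$ both as a right $\Gamma$-module and as a left $\Gamma$-module. These observations reduce the statement to comparing finite generation and Noetherianity over $\Gamma$ with the corresponding properties over $\Lambda$.

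For the direction ``$\Gamma$ Noetherian and $M_{\Gamma}$ finitely generated $\Rightarrow$ $\Lambda$ Noetherian'', the argument I would give is as follows. Since $M_{\Gamma}$ is finitely generated and $\Gamma$ is right Noetherian, the right $\Gamma$-module $\Lambda_{\Gamma}\cong\Gamma\oplus M$ is Noetherian; hence any right ideal $I$ of $\Lambda$, being in particular a right $\Gamma$-submodule of $\Lambda_{\Gamma}$, is finitely generated over $\Gamma$, say $I=x_{1}\Gamma+\cdots+x_{n}\Gamma$, and therefore $I=x_{1}\Lambda+\cdots+x_{n}\Lambda$ is finitely generated over $\Lambda$. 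Thus $\Lambda$ is right Noetherian. Replaying this with left modules, using ${_{\Gamma}\Lambda}\cong\Gamma\oplus{_{\Gamma}M}$, yields the two-sided statement.

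For the converse, one half is immediate: $\Gamma\cong\Lambda/M$ is a quotient ring of $\Lambda$, hence inherits one-sided (resp. two-sided) Noetherianity from $\Lambda$. The other half --- that $M$ is finitely generated as a right, and in the two-sided case also as a left, $\Gamma$-module --- is the step I expect to be the main obstacle. As a right ideal of the right Noetherian ring $\Lambda$, the module $M$ is finitely generated over $\Lambda$; the difficulty is that transferring this to finite generation over the subring $\Gamma$ is \emph{not} a formal consequence of restriction of scalars. The plan here is to work with the filtration $M\supseteq M^{2}\supseteq\cdots$ of $M$ by powers of the ideal $M$ --- where $M^{j}$ is the image of $M^{\otimes_{\Gamma}j}$ under the iterated multiplication assembled from $\theta$ --- together with the fact that each subquotient $M^{j}/M^{j+1}$ is a $\Gamma$-bimodule annihilated by $M$, and so is finitely generated over $\Gamma$ as a subquotient of the finitely generated $\Lambda$-module $M$. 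A Nakayama-type propagation along this filtration should then feed finite generation over $\Gamma$ back up to $M$ itself. Ensuring that this propagation actually converges is the genuinely nontrivial point, and it is here that one appeals to the finer analysis of $\theta$-extensions carried out in \cite[Proposition 7.5(iv)]{beligiannis}.
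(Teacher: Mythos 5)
Your reduction to the decomposition $\Lambda\cong\Gamma\oplus M$ as a one-sided $\Gamma$-module and the argument for the direction ``$\Gamma$ Noetherian and $M_{\Gamma}$ finitely generated $\Rightarrow\Lambda$ right Noetherian'' are correct and standard, as is the observation that $\Gamma\cong\Lambda/M$ inherits Noetherianity. You also correctly isolate the remaining implication, that $\Lambda$ right Noetherian forces $M_{\Gamma}$ to be finitely generated, as the real content. The paper itself gives no proof here and simply cites Beligiannis, so there is no internal argument to compare against.

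However, the Nakayama-type propagation you sketch has a gap that is not merely a technical ``convergence issue'' to be outsourced to the reference: the filtration $M\supseteq M^{2}\supseteq\cdots$ produces $M=\sum_{i}x_{i}\Gamma+M^{2}$ from finite generation of $M$ over $\Lambda$, but to conclude $M=\sum_{i}x_{i}\Gamma$ you need the filtration to terminate, i.e.\ $M$ to be a nilpotent ideal. Without some such hypothesis the implication is in fact \emph{false}. Take $\Gamma=k$ a field, $M=k[x]$ regarded as a $k$-bimodule, and $\theta\colon k[x]\otimes_{k}k[x]\to k[x]$ the polynomial multiplication, which is an associative $k$-bimodule map. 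The map $(a,f)\mapsto(a,a+f)$ is a ring isomorphism $k\ltimes_{\theta}k[x]\xrightarrow{\ \sim\ }k\times k[x]$, and the latter is (two-sided) Noetherian, while $M=k[x]$ is not finitely generated over $k$. So the claimed equivalence, as literally stated with no nilpotency assumption on $M$ (or on $\theta$), does not hold, and your proposed route cannot close the gap in that generality. In the nilpotent case, which is the only case in which the paper actually invokes this Fact (see Proposition~\ref{main_thm_3} and Lemma~\ref{restriction_of_cleft}), the filtration $M\supseteq M^{2}\supseteq\cdots\supseteq M^{s}=0$ is finite, all subquotients $M^{j}/M^{j+1}$ are finitely generated $\Gamma$-modules because each $M^{j}$ is a finitely generated right ideal of $\Lambda$ annihilated modulo $M^{j+1}$ by $M$, and the argument you outline goes through; so your sketch is the right one under the hypothesis that is implicitly in force, but it is worth flagging that the statement as recorded needs that hypothesis to be added (or to be checked against the precise formulation in the cited source).
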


\begin{prop} \label{main_thm_3}
    Let $\Gamma$ be a two-sided Noetherian ring, $M$ a $\Gamma$-bimodule that is finitely generated on both sides and $\theta\colon M\otimes_{\Gamma}M\rightarrow M$ an associative $\Gamma$-bimodule homomorphism. If $M$ is perfect and nilpotent, then $\Gamma\!\ltimes\!_{\theta}M$ is Iwanaga-Gorenstein if and only if $\Gamma$ is Iwanaga-Gorenstein.
\end{prop}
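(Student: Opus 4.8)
The plan is to read the proposition off Corollary~\ref{cor_for_gorenstein_rings}, applied to the cleft extension of module categories $(\Mod\Gamma,\ \Mod\Gamma\!\ltimes\!_{\theta}M,\ \mathbf{Z},\mathbf{U},\mathbf{T})$ that was written down in the paragraph preceding the statement. Thus the only work is to check that the two standing hypotheses of that corollary — that both rings be two-sided Noetherian, and that the associated endofunctor $\mathsf{F}$ be perfect and nilpotent — are satisfied here.

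First I would record that $\Lambda=\Gamma\!\ltimes\!_{\theta}M$ is two-sided Noetherian. Since $\Gamma$ is two-sided Noetherian and $M$ is finitely generated on both sides, this is precisely Fact~\ref{noetherian_theta_extensions}. Hence both categories in the cleft extension are module categories over two-sided Noetherian rings, as required.

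Next I would identify the endofunctor $\mathsf{F}$ of this cleft extension. By Fact~\ref{fact_2} the functor $\mathbf{U}$ preserves coproducts, and since $\mathsf{el}\simeq \mathsf{Id}_{\Mod\Gamma}\oplus\mathsf{F}$, it follows that $\mathsf{F}$ preserves coproducts; Watt's theorem then gives $\mathsf{F}\simeq -\otimes_{\Gamma}M$, as noted before the statement. Now Lemma~\ref{tensor_is_perfect} tells us that $-\otimes_{\Gamma}M$ is perfect and nilpotent if and only if the bimodule $M$ is perfect and nilpotent, which is exactly our hypothesis. With both hypotheses of Corollary~\ref{cor_for_gorenstein_rings} in place, that corollary yields that $\Gamma\!\ltimes\!_{\theta}M$ is Iwanaga-Gorenstein if and only if $\Gamma$ is.

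I do not expect a genuine obstacle: the substance is already carried by Corollary~\ref{cor_for_gorenstein_rings} (which rests on Theorem~\ref{main_thm}, hence on Proposition~\ref{one_way} and Proposition~\ref{the_other_way}) together with Lemma~\ref{tensor_is_perfect}. The only point requiring any care is the translation between ``perfect and nilpotent'' for the bimodule $M$ and the same property for the endofunctor $\mathsf{F}=-\otimes_{\Gamma}M$, and this translation is furnished verbatim by Lemma~\ref{tensor_is_perfect}, so once it is invoked the proof is immediate.
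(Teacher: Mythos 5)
Your argument matches the paper's own proof: both reduce the statement to Corollary~\ref{cor_for_gorenstein_rings} applied to the cleft extension $(\Mod\Gamma,\ \Mod\Gamma\!\ltimes\!_{\theta}M,\ \mathbf{Z},\mathbf{U},\mathbf{T})$, using Fact~\ref{noetherian_theta_extensions} to get two-sided Noetherianity of $\Gamma\!\ltimes\!_{\theta}M$ and Lemma~\ref{tensor_is_perfect} to translate perfectness and nilpotence of $M$ into the same properties of $\mathsf{F}\simeq-\otimes_{\Gamma}M$. The only cosmetic difference is that the paper's proof additionally records that nilpotence of $M$ forces nilpotence of $\theta$ before invoking Fact~\ref{noetherian_theta_extensions}, whereas you pass to that Fact directly from the hypothesis that $M$ is finitely generated on both sides, which is all the Fact as stated requires.
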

\begin{proof}
    There is a cleft extension $(\Mod \Gamma,\Mod \Gamma\ltimes_{\theta}M,\textbf{Z},\textbf{U},\textbf{T})$ of module categories with $\mathsf{F}\simeq -\otimes_{\Gamma}M$ perfect and nilpotent. Since $M$ is assumed to be nilpotent, it follows that $\theta$ is nilpotent, meaning that $\Gamma\ltimes_{\theta}M$ is two-sided Noetherian; see Fact \ref{noetherian_theta_extensions}. The result follows from Corollary \ref{cor_for_gorenstein_rings}.
\end{proof}

In the following corollary we recover \cite[Theorem 4.6]{perfect}.

\begin{cor} \label{gorenstein_tensor_rings}
    Let $\Gamma$ be a two-sided Noetherian ring and $M$ a $\Gamma$-bimodule that is nilpotent and finitely generated on both sides. Assume that $M$ is perfect. Then the tensor ring $T_{\Gamma}(M)$ is Iwanaga-Gorenstein if and only if $\Gamma$ is Iwanaga-Gorenstein. 
\end{cor}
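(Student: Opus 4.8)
The plan is to exhibit $T_{\Gamma}(M)$ as a $\theta$-extension of $\Gamma$ and then quote Proposition \ref{main_thm_3}. Since $M$ is nilpotent there is $s$ with $M^{\otimes(s+1)}=0$, so the tensor ring $T_{\Gamma}(M)=\Gamma\oplus M\oplus M^{\otimes 2}\oplus\cdots\oplus M^{\otimes s}$ has finite underlying group. Put $N:=\bigoplus_{j=1}^{s}M^{\otimes j}$, regarded as a $\Gamma$-bimodule, and let $\theta\colon N\otimes_{\Gamma}N\rightarrow N$ be the $\Gamma$-bimodule homomorphism induced on summands by the canonical identifications $M^{\otimes a}\otimes_{\Gamma}M^{\otimes b}\cong M^{\otimes(a+b)}$ (with the component into degrees $>s$ set to zero). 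This $\theta$ is associative, and the $\theta$-extension $\Gamma\!\ltimes\!_{\theta}N$ has precisely the underlying group $\Gamma\oplus N$ and the multiplication of $T_{\Gamma}(M)$; hence $T_{\Gamma}(M)\cong\Gamma\!\ltimes\!_{\theta}N$. (This is the identification underlying the discussion of $\theta$-extensions above, cf. Remark \ref{canonical_form}.)

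It then remains to check that $N$ satisfies the hypotheses of Proposition \ref{main_thm_3}, namely that $N$ is finitely generated on both sides and is perfect and nilpotent. Since $M$ is finitely generated on both sides, so is each $M^{\otimes j}$, and hence so is the finite direct sum $N$. For nilpotence, observe that $N^{\otimes(s+1)}$ is a direct sum of bimodules of the form $M^{\otimes a_1}\otimes_{\Gamma}\cdots\otimes_{\Gamma}M^{\otimes a_{s+1}}\cong M^{\otimes(a_1+\cdots+a_{s+1})}$ with each $a_k\geq 1$, so each summand is a tensor power $M^{\otimes k}$ with $k\geq s+1$ and thus vanishes; hence $N^{\otimes(s+1)}=0$.

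The only step that is not purely formal is verifying that $N$ is a perfect $\Gamma$-bimodule, and for this I would lean on Lemma \ref{easy_properties_of_perfect_bimodules}. Because $M$ is perfect, $\fd{_{\Gamma}M^{\otimes j}}<\infty$ and $\pd{M^{\otimes j}_{\Gamma}}<\infty$ for every $j\geq 1$ by parts (ii) and (iii) of that lemma; taking the maximum over the finitely many $j\in\{1,\dots,s\}$ gives $\fd{_{\Gamma}N}<\infty$ and $\pd{N_{\Gamma}}<\infty$, which are conditions (i) and (ii) of Definition \ref{perfect_bimodule}. For condition (iii), note that for each $j\geq 1$ the bimodule $N^{\otimes j}$ decomposes, over $\Gamma$, as a finite direct sum of tensor powers $M^{\otimes k}$ with $k\geq j\geq 1$; therefore $\mathsf{Tor}_i^{\Gamma}(N,N^{\otimes j})$ is a finite direct sum of groups $\mathsf{Tor}_i^{\Gamma}(M^{\otimes a},M^{\otimes k})$ with $a,k\geq 1$, each of which is zero for $i\geq 1$ by Lemma \ref{easy_properties_of_perfect_bimodules}(i). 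Hence $N$ is perfect.

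Putting these together, $\Gamma$ is two-sided Noetherian and $N$ is a perfect, nilpotent $\Gamma$-bimodule that is finitely generated on both sides, so Proposition \ref{main_thm_3} applies to $\Gamma\!\ltimes\!_{\theta}N=T_{\Gamma}(M)$ and yields the claimed equivalence. I do not expect a genuine obstacle in this argument — it is essentially bookkeeping — and the one place to be careful is the direct-sum decomposition of $N^{\otimes j}$ into tensor powers of $M$, which is exactly what reduces the $\mathsf{Tor}$-vanishing for $N$ to the known $\mathsf{Tor}$-vanishing for $M$.
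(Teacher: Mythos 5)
Your proof is correct and takes essentially the same route as the paper: both identify $T_{\Gamma}(M)$ with a $\theta$-extension $\Gamma\ltimes_{\theta}N$ where $N=\bigoplus_{j\geq 1}M^{\otimes j}$ (a finite sum by nilpotence), verify via Lemma \ref{easy_properties_of_perfect_bimodules} that $N$ is perfect, nilpotent and finitely generated on both sides, and then invoke Proposition \ref{main_thm_3}. The paper leaves most of the verification implicit ("it follows by Lemma \ref{easy_properties_of_perfect_bimodules} that $M'$ is perfect ... and, evidently, it is also nilpotent"); your write-up simply spells out the same bookkeeping.
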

\begin{proof}
There is an isomorphism $T_R(M)\cong R\!\ltimes\!_{\theta}M'$ where $M'=M\oplus M^{\otimes 2}\oplus \cdots$ and $\theta$ is induced by $M^{\otimes k}\otimes_{\Gamma}M^{\otimes l}\rightarrow M^{\otimes k+l}$. Further, it follows by Lemma \ref{easy_properties_of_perfect_bimodules} that $M'$ is perfect (since the same holds for $M$) and, evidently, it is also nilpotent. The result follows from Proposition \ref{main_thm_3}. 
\end{proof}

We also obtain a result on trivial extensions. 

\begin{cor} \label{gorenstein_trivial_extensions}
    Let $\Gamma$ be a two-sided Noetherian ring and $M$ a $\Gamma$-bimodule that is finitely generated on both sides. Assume that $M$ is perfect and nilpotent. Then the trivial extension $\Gamma\ltimes M$ is Iwanaga-Gorenstein if and only if $\Gamma$ is Iwanaga-Gorenstein.
\end{cor}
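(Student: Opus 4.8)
The plan is to recognise the trivial extension $\Gamma\ltimes M$ as a particular $\theta$-extension and then invoke Proposition \ref{main_thm_3} directly. Concretely, take $\theta=0\colon M\otimes_{\Gamma}M\to M$; this is vacuously an associative $\Gamma$-bimodule homomorphism, and substituting $\theta=0$ into the multiplication
\[
(\gamma,m)\cdot(\gamma',m')=(\gamma\gamma',\ \gamma m'+m\gamma'+\theta(m\otimes m'))
\]
of $\Gamma\ltimes_{\theta}M$ recovers exactly the usual multiplication $(\gamma,m)(\gamma',m')=(\gamma\gamma',\gamma m'+m\gamma')$ on the trivial extension. Hence $\Gamma\ltimes M=\Gamma\ltimes_{0}M$ as rings.

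Since by hypothesis $\Gamma$ is two-sided Noetherian, $M$ is finitely generated on both sides, and $M$ is perfect and nilpotent, all the hypotheses of Proposition \ref{main_thm_3} are met for the pair $(\Gamma,M)$ with $\theta=0$. Applying that proposition gives at once that $\Gamma\ltimes M$ is Iwanaga-Gorenstein if and only if $\Gamma$ is Iwanaga-Gorenstein. Alternatively, one could bypass Proposition \ref{main_thm_3} and argue directly: the cleft extension $(\Mod\Gamma,\Mod(\Gamma\ltimes M),\textbf{Z},\textbf{U},\textbf{T})$ has induced endofunctor $\mathsf{F}\simeq -\otimes_{\Gamma}M$, which is perfect and nilpotent by Lemma \ref{tensor_is_perfect} (using that $M$ is perfect and nilpotent), so Corollary \ref{cor_for_gorenstein_rings} applies. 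Either way, there is no real obstacle here; the only point to verify is the compatibility of $\theta=0$, which is immediate, and that nilpotence of $M$ ensures $\Gamma\ltimes M$ is again two-sided Noetherian via Fact \ref{noetherian_theta_extensions}.
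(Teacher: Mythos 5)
Your proof is correct and matches the paper's own argument: both identify $\Gamma\ltimes M$ with the $\theta$-extension $\Gamma\ltimes_{0}M$ and then apply Proposition~\ref{main_thm_3}. The alternative route you sketch via Corollary~\ref{cor_for_gorenstein_rings} is just the unpacking of that proposition and adds nothing essentially new.
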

\begin{proof}
    Once we observe that $\Gamma\!\ltimes\! M\cong \Gamma\!\ltimes\!_{\theta}M$ with $\theta=0$, the result follows from Proposition \ref{main_thm_3}. 
\end{proof}

In the extreme case when $M$ is projective as a $\Gamma$-bimodule and $M\otimes_{\Gamma} M=0$ the above recovers \cite[Corollary 4.2]{arrow2}.

As explained, every $\theta$-extension gives rise to a cleft extension of module categories. There is more than meets the eye: every cleft extension of module categories is isomorphic (in an appropriate sense) to a cleft extension induced from a $\theta$-extension. In the following remark we explain this precisely. 

\begin{rem} \label{canonical_form} (\!\!\cite{beligiannis,beligiannis2}, see also \cite[Proposition 6.9]{graded_injective_generation}) Let $\Gamma$ be a ring. The relations between the various constructions considered in the paper are summarised below: 
\[
\begin{tikzcd}
\{\text{cleft extensions of $\Gamma$}\} \arrow[d, shift right]                                         \\
\{\text{$\theta$-extensions of $\Gamma$}\} \arrow[d, shift right] \arrow[u, shift right]               \\
\{\text{cleft extensions $(\Mod\Gamma,\Mod\Lambda,\mathsf{i},\mathsf{e},\mathsf{l})$}\} \arrow[u, "\text{up to Morita equivalence}"', shift right]
\end{tikzcd}
\] 
The relation between cleft extensions of $\Gamma$ and $\theta$-extensions of $\Gamma$ is easily deduced, while the fact that a $\theta$-extension gives rise to a cleft extension has already been explained. The fact that a cleft extension $(\Mod\Gamma,\Mod\Lambda,\mathsf{i},\mathsf{e},\mathsf{l})$ gives rise to a $\theta$-extension \emph{up to Morita equivalence} is more subtle. In this case, there is a $\Gamma$-bimodule $M$, an associative $\Gamma$-bimodule homomorphism $\theta\colon M\otimes_{\Gamma}M\rightarrow M$ and an equivalence $ \Mod \Lambda\xrightarrow{\sim} \Mod \Gamma\!\ltimes\!_{\theta}M$ making the following diagram commutative:
    \[
\begin{tikzcd}
\Mod \Gamma \arrow[rr, "\mathsf{i}"]                                 &  & \Mod \Lambda \arrow[rr, "\mathsf{e}"] \arrow[dd, "\simeq"] \arrow[ll, "\mathsf{q}"', bend right] &  & \Mod \Gamma \arrow[ll, "\mathsf{l}"', bend right, shift right]                    \\
                                                                     &  &                                                                                                  &  &                                                                                   \\
\Mod \Gamma \arrow[rr, "\textbf{Z}"] \arrow[uu, no head, equal] &  & \Mod \Gamma\!\ltimes\!_{\theta}M \arrow[rr, "\textbf{U}"] \arrow[ll, "\textbf{C}"', bend right]      &  & \Mod \Gamma \arrow[ll, "\textbf{T}"', bend right] \arrow[uu, no head, equal]
\end{tikzcd}
    \]
    where the bottom diagram is the cleft extension associated to $\Gamma\ltimes_{\theta}M$. This provides a \emph{canonical form} for cleft extensions of module categories. 
\end{rem}

The following is important for later use. 

\begin{lem} \label{restriction_of_cleft}
    Let $(\Mod\Gamma,\Mod\Lambda,\mathsf{i},\mathsf{e},\mathsf{l})$ be a cleft extension of module categories of Noetherian rings. If $\mathsf{F}$ is nilpotent, then the given cleft extension restricts to a cleft extension $(\smod\Gamma,\smod\Lambda,\mathsf{i},\mathsf{e},\mathsf{l})$ between the respective subcategories of finitely generated modules.
\end{lem}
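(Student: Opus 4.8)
The claim is that a cleft extension of module categories $(\Mod\Gamma,\Mod\Lambda,\mathsf{i},\mathsf{e},\mathsf{l})$ over Noetherian rings, with $\mathsf{F}$ nilpotent, restricts to one at the level of finitely generated modules. The strategy is to check the three defining axioms of Definition \ref{cleft_extension} after verifying that each of the four relevant functors $\mathsf{i},\mathsf{e},\mathsf{l},\mathsf{q}$ sends finitely generated modules to finitely generated modules. The essentially surjective point is that $\Lambda$ itself must be Noetherian: by Remark \ref{canonical_form}, up to Morita equivalence $\Lambda\cong\Gamma\ltimes_\theta M$ where $\mathsf{F}\simeq-\otimes_\Gamma M$; since $\mathsf{F}$ is nilpotent, $M$ is a nilpotent $\Gamma$-bimodule, and then $\mathsf{el}\simeq\mathsf{Id}\oplus\mathsf{F}$ forces $\mathsf{el}(\Gamma)\cong\Gamma\oplus M$ to be finitely generated, whence $M$ is finitely generated as a right $\Gamma$-module, so $\Lambda$ is right Noetherian by Fact \ref{noetherian_theta_extensions}. (Alternatively one argues directly: $\mathsf{l}(\Gamma)$ is a finitely generated projective generator of $\Mod\Lambda$ once we know $\mathsf{l}$ preserves finite generation.)

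First I would treat $\mathsf{l}$. Since $\mathsf{l}$ is a left adjoint it is right exact and preserves coproducts, hence $\mathsf{l}\simeq-\otimes_\Gamma\mathsf{l}(\Gamma)$ as a functor $\Mod\Gamma\to\Mod\Lambda$ with $\mathsf{l}(\Gamma)$ a $\Gamma$-$\Lambda$-bimodule; from $\mathsf{el}\simeq\mathsf{Id}_\Gamma\oplus\mathsf{F}$ and $\mathsf{F}\simeq-\otimes_\Gamma M$ we get $\mathsf{e}(\mathsf{l}(\Gamma))\cong\Gamma\oplus M$, a finitely generated right $\Gamma$-module, so $\mathsf{l}(\Gamma)$ is a finitely generated right $\Lambda$-module ($\mathsf{e}$ is the restriction functor, hence faithfully reflects finite generation over the Noetherian ring $\Gamma$ — a $\Lambda$-module whose restriction to $\Gamma$ is finitely generated is itself finitely generated, indeed $\Lambda$ being a finitely generated $\Gamma$-module this is immediate). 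Therefore $\mathsf{l}$ takes a finitely presented $\Gamma$-module to a finitely presented $\Lambda$-module: apply $\mathsf{l}$ to a presentation $\Gamma^m\to\Gamma^n\to X\to0$ and use right-exactness to get $\mathsf{l}(\Gamma)^m\to\mathsf{l}(\Gamma)^n\to\mathsf{l}(X)\to0$ with $\mathsf{l}(\Gamma)$ finitely presented over the Noetherian ring $\Lambda$. For $\mathsf{i}$: it is exact and fully faithful with $\mathsf{ei}\simeq\mathsf{Id}_\Gamma$, so $\mathsf{e}(\mathsf{i}(X))\cong X$ is finitely generated, hence $\mathsf{i}(X)$ is finitely generated over $\Lambda$ by the same reflection argument. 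For $\mathsf{e}$: the restriction functor along a ring map $\Lambda\to\Gamma$ — but here $\mathsf{e}$ is restriction along $g\colon\Gamma\to\Lambda$ in the original set-up, equivalently $\mathsf{e}=-\otimes_\Lambda\Gamma$ up to the identifications; in any case $\mathsf{e}$ preserves coproducts (Fact \ref{fact_2}) and is right exact, so $\mathsf{e}\simeq-\otimes_\Lambda\mathsf{e}(\Lambda)$ and $\mathsf{e}(\Lambda)$ is finitely generated over $\Gamma$ (it is $\Gamma\ltimes_\theta M$ restricted, i.e.\ $\Gamma\oplus M$), hence $\mathsf{e}$ preserves finite presentation by the presentation argument again. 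For $\mathsf{q}$: it is the left adjoint to $\mathsf{i}$, so right exact and coproduct-preserving, $\mathsf{q}\simeq-\otimes_\Lambda\mathsf{q}(\Lambda)$; and $\mathsf{q}(\Lambda)$ is finitely generated over $\Gamma$ since $\mathsf{ql}\simeq\mathsf{Id}_\Gamma$ gives $\mathsf{q}(\mathsf{l}(\Gamma))\cong\Gamma$ and $\mathsf{l}(\Gamma)$ is a finitely generated projective $\Lambda$-module, so $\mathsf{q}$ of it — a direct summand of a $\mathsf{q}(\Lambda)^k$ — being finitely generated forces (taking $\mathsf{q}(\Lambda)$ itself is a summand situation, or simply $\mathsf{q}(\Lambda)$ is a quotient-summand of copies related to $\mathsf{l}(\Gamma)$) finite generation of $\mathsf{q}(\Lambda)$; more cleanly, $\mathsf{l}(\Gamma)\oplus(\text{stuff})\cong\Lambda^k$ over $\Lambda$ so $\Gamma\oplus\mathsf{q}(\text{stuff})\cong\mathsf{q}(\Lambda)^k$, forcing $\mathsf{q}(\Lambda)^k$ hence $\mathsf{q}(\Lambda)$ finitely generated over the Noetherian ring $\Gamma$.

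Once all four functors restrict, verifying the axioms for $(\smod\Gamma,\smod\Lambda,\mathsf{i},\mathsf{e},\mathsf{l})$ is immediate: $\mathsf{e}|_{\smod\Lambda}$ is still faithful and exact (faithfulness and exactness are inherited from the ambient category), $(\mathsf{l},\mathsf{e})$ remains an adjoint pair between the subcategories because the adjunction isomorphism $\Hom_\Lambda(\mathsf{l}(X),Y)\cong\Hom_\Gamma(X,\mathsf{e}(Y))$ restricts verbatim, and $\mathsf{ei}\simeq\mathsf{Id}$ restricts. The induced functors $\mathsf{q},\mathsf{F},\mathsf{G}$ for the restricted cleft extension are the restrictions of the original ones (this needs the short comment that the construction of $\mathsf{G}$ via $0\to\mathsf{ker}\,\mu_A\to\mathsf{le}(A)\to A\to0$ stays inside $\smod\Lambda$, which holds since $\mathsf{le}(A)$ is finitely generated and $\Lambda$ is Noetherian, so its submodule $\mathsf{ker}\,\mu_A$ is too). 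The main obstacle, and the only place the Noetherian and nilpotence hypotheses are genuinely used, is establishing that $\mathsf{l}(\Gamma)$ — equivalently $\mathsf{e}(\Lambda)$, equivalently $M$ as a right $\Gamma$-module — is finitely generated; everything else is the standard ``right exact coproduct-preserving functor with finitely generated value on the ring preserves finitely presented modules over a Noetherian base'' lemma. I would state that lemma once and invoke it four times.
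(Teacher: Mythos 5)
Your proof travels the same road as the paper's --- reduce to the $\theta$-extension $\Gamma\ltimes_\theta M$ via Remark~\ref{canonical_form}, establish that $M$ is finitely generated as a right $\Gamma$-module, and then check that the four functors preserve finite generation --- and you fill in more of the routine verification than the paper's terse proof does. However, there is a circularity in the step where you establish finite generation of $M$. You assert that ``$\mathsf{el}\simeq\mathsf{Id}\oplus\mathsf{F}$ forces $\mathsf{el}(\Gamma)\cong\Gamma\oplus M$ to be finitely generated, whence $M$ is finitely generated as a right $\Gamma$-module, so $\Lambda$ is right Noetherian,'' but nothing said so far justifies the first claim. The functor $\mathsf{e}$ is restriction of scalars along $\Gamma\to\Gamma\ltimes_\theta M$; it carries the finitely generated $\Lambda$-module $\mathsf{l}(\Gamma)\cong\Lambda$ to a finitely generated $\Gamma$-module precisely when $\Lambda\cong\Gamma\oplus M$ is itself finitely generated over $\Gamma$, i.e.\ when $M$ is already known to be finitely generated --- which is the very thing you are trying to show. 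Your parenthetical ``indeed $\Lambda$ being a finitely generated $\Gamma$-module this is immediate'' makes the circularity explicit. Moreover, you set out to \emph{prove} that $\Lambda$ is Noetherian, but that is already part of the hypothesis: the lemma assumes both rings are Noetherian.

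The fix is the reverse direction, which is exactly what the paper does and is shorter. Since $\Lambda$ is given Noetherian and Morita equivalence preserves Noetherianness, the Morita-equivalent $\theta$-extension $\Gamma\ltimes_\theta M$ is Noetherian; nilpotence of $\mathsf{F}$ gives nilpotence of $M$ and hence of $\theta$; and Fact~\ref{noetherian_theta_extensions} then yields directly that $M$ is finitely generated as a right $\Gamma$-module. With that in hand, the rest of your argument is sound: $\mathsf{l}=-\otimes_\Gamma\Lambda$ and $\mathsf{q}=-\otimes_\Lambda\Gamma$ preserve finite generation automatically, $\mathsf{i}$ does so because it is restriction along the surjection $\Lambda\twoheadrightarrow\Gamma$, $\mathsf{e}$ does so now that $\Lambda$ is known finitely generated over $\Gamma$, the adjunction isomorphism and $\mathsf{ei}\simeq\mathsf{Id}$ restrict verbatim, and $\mathsf{ker}\,\mu_A$ stays finitely generated because $\Lambda$ is Noetherian.
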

\begin{proof}
    By Remark \ref{canonical_form}, it is enough to prove the claim for the cleft extension associated to a Noetherian $\theta$-extension $\Gamma\!\ltimes\!_{\theta}M$. Then, the functor $\mathsf{F}\simeq -\otimes_{\Gamma}M$ is nilpotent if and only if $M$ is nilpotent, in which case follows that $\theta$ is nilpotent. In particular, it follows from Fact \ref{noetherian_theta_extensions} that $M$ is finitely generated on both sides and so the functors of the cleft extension $(\Mod\Gamma,\Mod\Gamma\!\ltimes\!_{\theta}M,\textbf{Z},\textbf{U},\textbf{T})$ restrict to functors between $\smod\Gamma$ and $\smod\Gamma\!\ltimes\!_{\theta}M$. 
\end{proof}

\section{Singularity categories}

The aim of this section is to study the singularity category of abelian categories that appear in a cleft extension diagram. Our main objective is to obtain the middle part of the commutative diagram of Theorem A, see Proposition \ref{cleft_of_singularity}. Under extra assumptions, we obtain an equivalence of singularity categories, see Theorem \ref{equivalence_singularity}. 

\subsection{Cleft extensions and singularity categories}
We denote by $\mathcal{A}$ an abelian category with enough projective objects. The \emph{singularity category} of $\mathcal{A}$, introduced in \cite{buchweitz}, is the Verdier quotient 
\[
\mathsf{D}_{\mathsf{sg}}(\mathcal{A}):=\mathsf{D}^{\mathsf{b}}(\mathcal{A})/\mathsf{K}^{\mathsf{b}}(\Proj\mathcal{A}).
\]

We recall two well-known lemmata about adjoint pairs of triangle functors and Verdier quotients. For the first we refer the reader to \cite[Lemma 1.2]{orlov} and for the second to \cite[Propositions 1.5, 1.6]{bondal_kapranov}. 

\begin{lem} \label{orlov}
    Let $\mathsf{G}\colon\mathcal{T}\rightarrow \mathcal{T}'$ and $\mathsf{F}\colon \mathcal{T}'\rightarrow\mathcal{T}$ be triangle functors such that $(\mathsf{F},\mathsf{G})$ is an adjoint pair. Moreover, let $\mathcal{S}$ and $\mathcal{S}'$ be triangulated subcategories of $\mathcal{T}$ and $\mathcal{T}'$ respectively such that $\mathsf{G}(\mathcal{S})\subseteq \mathcal{S}'$ and $\mathsf{F}(\mathcal{S}')\subseteq \mathcal{S}$. Then the latter functors induce triangle functors
    \[
    \overline{\mathsf{G}}\colon \mathcal{T}/\mathcal{S}\rightarrow \mathcal{T}'/\mathcal{S}' \text{,  } \overline{\mathsf{F}}\colon\mathcal{T}'/\mathcal{S}'\rightarrow\mathcal{T}/\mathcal{S}
    \]
    and $(\overline{\mathsf{F}},\overline{\mathsf{G}})$ is an adjoint pair. 
\end{lem}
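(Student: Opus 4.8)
The plan is to verify that the functors descend to the Verdier quotients using the universal property, and then transport the adjunction through these quotient functors. First I would recall that, by the universal property of Verdier localization, if $\mathsf{F}\colon \mathcal{T}'\to\mathcal{T}$ is a triangle functor with $\mathsf{F}(\mathcal{S}')\subseteq\mathcal{S}$, then the composite $\mathcal{T}'\xrightarrow{\mathsf{F}}\mathcal{T}\xrightarrow{Q}\mathcal{T}/\mathcal{S}$ kills $\mathcal{S}'$ (since $QF$ sends objects of $\mathcal{S}'$ into $\mathcal{S}$, which is zero in $\mathcal{T}/\mathcal{S}$), hence factors uniquely through $Q'\colon\mathcal{T}'\to\mathcal{T}'/\mathcal{S}'$ to give a triangle functor $\overline{\mathsf{F}}$ with $\overline{\mathsf{F}}Q'=QF$; symmetrically one gets $\overline{\mathsf{G}}$ with $\overline{\mathsf{G}}Q=Q'G$.

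Next I would produce the unit and counit of the descended adjunction. Let $\eta\colon\mathsf{Id}_{\mathcal{T}'}\to\mathsf{GF}$ and $\varepsilon\colon\mathsf{FG}\to\mathsf{Id}_{\mathcal{T}}$ be the unit and counit of $(\mathsf{F},\mathsf{G})$; these are morphisms of triangle functors. Whiskering $\eta$ by $Q'$ and using the intertwining relations $\overline{\mathsf{G}}\,\overline{\mathsf{F}}\,Q' = \overline{\mathsf{G}}\,Q\,\mathsf{F} = Q'\,\mathsf{G}\,\mathsf{F}$, the natural transformation $Q'\eta\colon Q'\to Q'\mathsf{GF}=\overline{\mathsf{G}}\,\overline{\mathsf{F}}\,Q'$ is, by the universal property of $Q'$ applied to natural transformations (localization is a $2$-functor, or one checks directly that every natural transformation out of $Q'\circ(-)$ factors uniquely through $Q'$), induced by a unique $\bar\eta\colon\mathsf{Id}_{\mathcal{T}'/\mathcal{S}'}\to\overline{\mathsf{G}}\,\overline{\mathsf{F}}$ with $\bar\eta Q' = Q'\eta$. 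Similarly $Q\varepsilon$ induces $\bar\varepsilon\colon\overline{\mathsf{F}}\,\overline{\mathsf{G}}\to\mathsf{Id}_{\mathcal{T}/\mathcal{S}}$ with $\bar\varepsilon Q = Q\varepsilon$.

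Finally I would check the two triangle identities for $(\bar\eta,\bar\varepsilon)$. Since $Q$ is essentially surjective (it is the identity on objects of $\mathcal{T}$), it suffices to verify, for each object $X\in\mathcal{T}$, that $(\bar\varepsilon\,\overline{\mathsf{F}})\circ(\overline{\mathsf{F}}\,\bar\eta)$ evaluated on $Q'$-images equals the identity; but precomposing with $Q'$ and using $\bar\eta Q'=Q'\eta$, $\bar\varepsilon Q=Q\varepsilon$, $\overline{\mathsf{F}}Q'=QF$, this reduces to $Q$ applied to the triangle identity $(\varepsilon\mathsf{F})\circ(\mathsf{F}\eta)=\mathsf{Id}_{\mathsf{F}}$ for the original adjunction, which holds. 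The other triangle identity is symmetric. Hence $(\overline{\mathsf{F}},\overline{\mathsf{G}})$ is an adjoint pair. The main (and really only) subtlety is the bookkeeping that natural transformations between functors out of a Verdier quotient are in bijection with natural transformations between the descended functors — i.e.\ that $-\circ Q$ is fully faithful on functor categories — which is the standard statement that Verdier localization is a localization in the $2$-categorical sense; everything else is formal manipulation of units and counits.
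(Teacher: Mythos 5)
The paper gives no proof of this lemma; it simply cites Orlov's Lemma~1.2, so there is nothing in the paper's text to compare against literally. On the merits, your argument is correct and is essentially the standard proof that an adjunction descends along a localization when both adjoints preserve the kernels. You rightly identify the only nontrivial input: the Verdier quotient $Q\colon\mathcal{T}\to\mathcal{T}/\mathcal{S}$ has a $2$-categorical universal property, i.e.\ precomposition with $Q$ is fully faithful from triangle functors $\mathcal{T}/\mathcal{S}\to\mathcal{D}$ to triangle functors $\mathcal{T}\to\mathcal{D}$ (with essential image those annihilating $\mathcal{S}$). Once this is granted, the descent of $\mathsf{F},\mathsf{G}$ to $\overline{\mathsf{F}},\overline{\mathsf{G}}$, the definition of $\bar\eta,\bar\varepsilon$ by whiskering and descent, and the reduction of the triangle identities to the original ones via faithfulness of $-\circ Q$ and $-\circ Q'$ are all formal and correct.

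One small slip of the pen: when verifying the triangle identity $(\bar\varepsilon\,\overline{\mathsf{F}})\circ(\overline{\mathsf{F}}\,\bar\eta)=\mathsf{Id}_{\overline{\mathsf{F}}}$ you write ``Since $Q$ is essentially surjective\ldots\ for each object $X\in\mathcal{T}$,'' but this is a natural transformation between functors out of $\mathcal{T}'/\mathcal{S}'$, so the relevant quotient functor to precompose with and invoke faithfulness for is $Q'$, with $X\in\mathcal{T}'$ — which is what you actually do two lines later. That should be corrected for consistency, but it does not affect the argument.
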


\begin{lem}  \label{equivalence_of_triangulated_cats}
    Let $\mathsf{G}\colon\mathcal{T}\rightarrow \mathcal{T}'$ and $\mathsf{F}\colon \mathcal{T}'\rightarrow\mathcal{T}$ be triangle functors such that $(\mathsf{F},\mathsf{G})$ is an adjoint pair. The following hold: 
    \begin{itemize}
        \item[(i)] If $\mathsf{F}$ is fully faithful, then $\mathcal{T}/\mathsf{ker}\mathsf{G}\simeq \mathcal{T}'$. 
        \item[(ii)] If $\mathsf{G}$ is fully faithful, then $\mathcal{T}'/\mathsf{kerF}\simeq \mathcal{T}$. 
    \end{itemize}
\end{lem}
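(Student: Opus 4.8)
The plan is to invoke the standard Verdier-quotient criterion: if $(\mathsf{F},\mathsf{G})$ is an adjoint pair of triangle functors with $\mathsf{F}$ fully faithful, then $\mathsf{F}$ identifies $\mathcal{T}'$ with the quotient of $\mathcal{T}$ by the kernel of the left adjoint's ``orthogonal complement'', which in this bounded setting is exactly $\ker\mathsf{G}$. Concretely, for part (i), I would first record that $\ker\mathsf{G}$ is a thick (even localizing, if coproducts are around) triangulated subcategory of $\mathcal{T}$, so the quotient $\mathcal{T}/\ker\mathsf{G}$ makes sense as a triangulated category with the universal property. The counit $\varepsilon\colon\mathsf{FG}\to\mathsf{Id}_{\mathcal{T}}$ and unit $\eta\colon\mathsf{Id}_{\mathcal{T}'}\to\mathsf{GF}$ are the natural transformations to work with; since $\mathsf{F}$ is fully faithful, $\eta$ is a natural isomorphism.

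First I would check that $\mathsf{G}$ sends $\ker\mathsf{G}$ to $0$ trivially, so by the universal property of the Verdier quotient there is an induced triangle functor $\overline{\mathsf{G}}\colon\mathcal{T}/\ker\mathsf{G}\to\mathcal{T}'$. Next I would construct a quasi-inverse: the composite $\mathcal{T}'\xrightarrow{\mathsf{F}}\mathcal{T}\xrightarrow{Q}\mathcal{T}/\ker\mathsf{G}$, where $Q$ is the quotient functor. One direction of the equivalence, $\overline{\mathsf{G}}\circ(Q\mathsf{F})\simeq\mathsf{GF}\simeq\mathsf{Id}_{\mathcal{T}'}$, is immediate from $\eta$ being an isomorphism. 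For the other direction, $(Q\mathsf{F})\circ\overline{\mathsf{G}}\simeq Q\circ(\mathsf{FG})$, and I would use the counit $\varepsilon\colon\mathsf{FG}\to\mathsf{Id}_{\mathcal{T}}$: for each $X\in\mathcal{T}$ the triangle $\mathsf{FG}(X)\xrightarrow{\varepsilon_X}X\to C_X\to$ has its cone $C_X$ annihilated by $\mathsf{G}$ (because $\mathsf{G}(\varepsilon_X)$ is a split epimorphism — indeed an isomorphism — by the triangle identity $\mathsf{G}(\varepsilon_X)\circ\eta_{\mathsf{G}(X)}=\mathsf{Id}$ together with $\eta$ being iso), so $C_X\in\ker\mathsf{G}$ and hence $Q(\varepsilon_X)$ is an isomorphism. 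This gives the natural isomorphism $Q\circ\mathsf{FG}\simeq Q$, completing the equivalence. Part (ii) is proved by the dual argument (or by passing to opposite categories), using the unit $\eta\colon\mathsf{Id}\to\mathsf{GF}$ in place of the counit and the fact that $\mathsf{G}$ fully faithful makes $\varepsilon$ an isomorphism; one gets $\mathcal{T}'/\ker\mathsf{F}\simeq\mathcal{T}$ with quasi-inverse induced by $\mathsf{G}$.

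The one genuinely delicate point is verifying that $\ker\mathsf{G}$ is thick — i.e. closed under direct summands — which is needed for $Q$ to detect isomorphisms correctly and for the universal property to apply cleanly. This follows from $\mathsf{G}$ being additive and triangulated: a summand of an object killed by $\mathsf{G}$ is killed by $\mathsf{G}$. Everything else is a formal manipulation of the two triangle identities for the adjunction together with the characterization of isomorphisms in a Verdier quotient (a morphism becomes invertible in $\mathcal{T}/\mathcal{S}$ exactly when its cone lies in $\mathcal{S}$). Since the statement is cited from \cite[Propositions 1.5, 1.6]{bondal_kapranov}, I would in fact just refer to that source rather than reproduce the argument, but the sketch above is the content behind the citation.
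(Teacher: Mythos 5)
Your sketch is correct and is the standard argument behind the Bondal--Kapranov citation that the paper itself uses (the paper gives no proof, only the reference). The two key points — that $\eta$ is an isomorphism because $\mathsf{F}$ is fully faithful, and that the cone of $\varepsilon_X$ lies in $\ker\mathsf{G}$ because $\mathsf{G}(\varepsilon_X)$ is inverse to $\eta_{\mathsf{G}(X)}$ by a triangle identity — are exactly right, and the remark that $\ker\mathsf{G}$ is thick is the correct (if minor) hygiene check.
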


We will now show that a cleft extension diagram satisfying certain homological conditions gives rise to a similar diagram of singularity categories. We will afterwards show that the homological conditions identified, are satisfied under the usual perfectness conditions of this paper.

\begin{prop} \label{cleft_of_singularity}
        Let $(\mathcal{B},\mathcal{A},\mathsf{i},\mathsf{e},\mathsf{l})$ be a cleft extension of abelian categories such that the following conditions are satisfied: 
    \begin{itemize}
        \item[(i)] $\pd{_{\mathcal{A}}\mathsf{i}(P)}<\infty$ for every $P\in\Proj\mathcal{B}$. 
        \item[(ii)] $\pd{_{\mathcal{B}}\mathsf{e}(P)}<\infty$ for every $P\in\Proj\mathcal{A}$. 
        \item[(iii)] $\mathbb{L}_n\mathsf{l}=0$ for $n$ large enough. 
        \item[(iv)] $\mathbb{L}_n\mathsf{q}=0$ for $n$ large enough. 
    \end{itemize}
    Then we get a diagram of singularity categories and triangle functors as below 
    \[
    \begin{tikzcd}
\mathsf{D}_{\mathsf{sg}}(\mathcal{B}) \arrow[rr, "\mathsf{i}"] &  & \mathsf{D}_{\mathsf{sg}}(\mathcal{A}) \arrow[rr, "\mathsf{e}"] \arrow[ll, "\mathbb{L}_{\mathsf{sg}}\mathsf{q}"', bend right] &  & \mathsf{D}_{\mathsf{sg}}(\mathcal{B}) \arrow[ll, "\mathbb{L}_{\mathsf{sg}}\mathsf{l}"', bend right]
\end{tikzcd}
    \]
    such that $(\mathbb{L}_{\mathsf{sg}}\mathsf{l},\mathsf{e})$, $(\mathbb{L}_{\mathsf{sg}}\mathsf{q},\mathsf{i})$ are adjoint pairs, $\mathsf{ei}\simeq \mathsf{Id}_{\mathsf{D}_{\mathsf{sg}}(\mathcal{B})}$ and $\mathbb{L}_{\mathsf{sg}}\mathsf{q}\mathbb{L}_{\mathsf{sg}}\mathsf{l}\simeq\mathsf{Id}_{\mathsf{D}_{\mathsf{sg}}(\mathcal{B})}$, where $\mathbb{L}_{\mathsf{sg}}\mathsf{l}$ denotes the functor induced by $\mathbb{L}\mathsf{l}$ and $\mathbb{L}_{\mathsf{sg}}\mathsf{q}$ the functor induced by $\mathbb{L}\mathsf{q}$.
\end{prop}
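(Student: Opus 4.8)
The plan is to build each functor on the bounded derived categories and then descend to the Verdier quotients using Lemma \ref{orlov} and Lemma \ref{equivalence_of_triangulated_cats}. First I would observe that all the functors in play are exact or have bounded derived functors: $\mathsf{i}$ and $\mathsf{e}$ are exact, so they induce triangle functors $\mathsf{D}^{\mathsf{b}}(\mathcal{B})\to\mathsf{D}^{\mathsf{b}}(\mathcal{A})$ and $\mathsf{D}^{\mathsf{b}}(\mathcal{A})\to\mathsf{D}^{\mathsf{b}}(\mathcal{B})$ directly, while conditions (iii) and (iv) guarantee that $\mathsf{l}$ and $\mathsf{q}$ have bounded left derived functors $\mathbb{L}\mathsf{l}\colon\mathsf{D}^{\mathsf{b}}(\mathcal{B})\to\mathsf{D}^{\mathsf{b}}(\mathcal{A})$ and $\mathbb{L}\mathsf{q}\colon\mathsf{D}^{\mathsf{b}}(\mathcal{A})\to\mathsf{D}^{\mathsf{b}}(\mathcal{B})$ (the standard construction: resolve by projectives, apply the functor; boundedness of the higher derived functors together with the fact that $\mathcal{B},\mathcal{A}$ have enough projectives ensures the result lands in $\mathsf{D}^{\mathsf{b}}$). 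The adjunctions $(\mathsf{l},\mathsf{e})$ and $(\mathsf{q},\mathsf{i})$ on the module level lift to adjunctions $(\mathbb{L}\mathsf{l},\mathsf{e})$ and $(\mathbb{L}\mathsf{q},\mathsf{i})$ at the level of bounded derived categories, and the natural isomorphisms $\mathsf{ei}\simeq\mathsf{Id}$ and $\mathsf{ql}\simeq\mathsf{Id}$ give $\mathsf{ei}\simeq\mathsf{Id}$ and $\mathbb{L}\mathsf{q}\,\mathbb{L}\mathsf{l}\simeq\mathsf{Id}$ on $\mathsf{D}^{\mathsf{b}}(\mathcal{B})$ (for the latter one checks that $\mathbb{L}\mathsf{l}$ sends a projective resolution of $X$ to a complex of projectives computing $\mathbb{L}\mathsf{l}(X)$, and then $\mathbb{L}\mathsf{q}$ applied to that recovers $X$ since $\mathsf{q}$ of a projective of $\mathcal{A}$ — a summand of $\mathsf{l}(Q)$ by Lemma \ref{projectives_in_cleft} — is $\mathsf{ql}(Q)\simeq Q$, and the higher $\mathbb{L}_i\mathsf{q}$ vanish on such objects by condition (iv) combined with a dimension-shift argument, or more directly because $\mathsf{l}$ of a projective is projective by Lemma \ref{basic_homological_properties_of_cleft}(i)).

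Next I would verify that each of these four derived functors carries the thick subcategory of perfect complexes into the corresponding one: $\mathsf{i}(\mathsf{K}^{\mathsf{b}}(\Proj\mathcal{B}))\subseteq \mathsf{K}^{\mathsf{b}}(\proj\mathcal{A})$ up to the subcategory of objects with finite projective dimension by condition (i) — and since the singularity category is insensitive to objects of finite projective dimension, this is exactly what is needed; symmetrically $\mathsf{e}$ sends $\mathsf{K}^{\mathsf{b}}(\Proj\mathcal{A})$ into complexes of finite projective dimension over $\mathcal{B}$ by condition (ii). For $\mathbb{L}\mathsf{l}$ we have $\mathbb{L}\mathsf{l}(\Proj\mathcal{B})\subseteq\Proj\mathcal{A}$ by Lemma \ref{basic_homological_properties_of_cleft}(i) (and $\mathsf{l}$ being left derived means $\mathbb{L}\mathsf{l}$ restricted to projectives is just $\mathsf{l}$), so $\mathbb{L}\mathsf{l}$ preserves $\mathsf{K}^{\mathsf{b}}(\Proj)$; likewise $\mathbb{L}\mathsf{q}$ sends projectives of $\mathcal{A}$ to projectives of $\mathcal{B}$ (again by Lemma \ref{projectives_in_cleft} and $\mathsf{ql}\simeq\mathsf{Id}$, plus vanishing of higher $\mathbb{L}\mathsf{q}$ on projectives). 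Applying Lemma \ref{orlov} to the adjoint pair $(\mathbb{L}\mathsf{l},\mathsf{e})$ with $\mathcal{S}=\mathsf{K}^{\mathsf{b}}(\Proj\mathcal{A})$, $\mathcal{S}'=\mathsf{K}^{\mathsf{b}}(\Proj\mathcal{B})$ yields the adjoint pair $(\mathbb{L}_{\mathsf{sg}}\mathsf{l},\mathsf{e})$ on singularity categories, and similarly $(\mathbb{L}_{\mathsf{sg}}\mathsf{q},\mathsf{i})$. The isomorphisms $\mathsf{ei}\simeq\mathsf{Id}$ and $\mathbb{L}_{\mathsf{sg}}\mathsf{q}\,\mathbb{L}_{\mathsf{sg}}\mathsf{l}\simeq\mathsf{Id}$ then descend from their derived-category counterparts since the quotient functors are full.

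I expect the main obstacle to be the careful bookkeeping around the boundedness of $\mathbb{L}\mathsf{l}$ and $\mathbb{L}\mathsf{q}$ and the verification that conditions (i)–(iv) really suffice for everything to descend — in particular, that applying $\mathsf{e}$ to a bounded complex of projectives over $\mathcal{A}$ produces something that is zero in $\mathsf{D}_{\mathsf{sg}}(\mathcal{B})$ requires knowing that a bounded complex of objects of finite projective dimension is isomorphic in $\mathsf{D}^{\mathsf{b}}(\mathcal{B})$ to a bounded complex of projectives, a standard but not entirely trivial fact that uses conditions (ii) and boundedness. The compatibility of the adjunction units/counits with the triangulated structure (so that $(\mathbb{L}\mathsf{l},\mathsf{e})$ is genuinely an adjoint pair of \emph{triangle} functors, not merely of additive functors) is routine but should be noted. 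Once these points are settled, the statement follows formally; I would present it by first recording the four derived functors and their formal properties, then invoking Lemma \ref{orlov} twice, and finally transporting the two natural isomorphisms.
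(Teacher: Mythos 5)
Your proposal is correct and follows essentially the same route as the paper: lift to (bounded) derived categories using conditions (iii)–(iv), use conditions (i)–(ii) to show the four functors carry $\mathsf{K}^{\mathsf{b}}(\Proj)$ into $\mathsf{K}^{\mathsf{b}}(\Proj)$, and then pass to Verdier quotients via Lemma~\ref{orlov}. The only cosmetic differences are that the paper first works on $\mathsf{D}^{-}$ and then restricts to $\mathsf{D}^{\mathsf{b}}$, whereas you work directly on $\mathsf{D}^{\mathsf{b}}$, and that the paper deduces $\mathbb{L}_{\mathsf{sg}}\mathsf{q}\,\mathbb{L}_{\mathsf{sg}}\mathsf{l}\simeq\mathsf{Id}$ purely formally from the adjunction $(\mathbb{L}_{\mathsf{sg}}\mathsf{q}\,\mathbb{L}_{\mathsf{sg}}\mathsf{l},\mathsf{ei})$ together with $\mathsf{ei}\simeq\mathsf{Id}$, while you verify it directly on the derived level before quotienting.
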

\begin{proof}
    First, we get the following diagram of derived categories:
    \[
    \begin{tikzcd}
\mathsf{D}^{-}(\mathcal{B}) \arrow[rr, "\mathsf{i}"] &  & \mathsf{D}^{-}(\mathcal{A}) \arrow[rr, "\mathsf{e}"] \arrow[ll, "\mathbb{L}\mathsf{q}"', bend right] &  & \mathsf{D}^{-}(\mathcal{B}) \arrow[ll, "\mathbb{L}\mathsf{l}"', bend right]
\end{tikzcd}
    \]
    with $(\mathbb{L}\mathsf{q},\mathsf{i})$ and $(\mathbb{L}\mathsf{l},\mathsf{e})$ being adjoint pairs of triangle functors. Since $\mathbb{L}\mathsf{i}=\mathbb{R}\mathsf{i}$ and $\mathbb{L}\mathsf{e}=\mathbb{R}\mathsf{e}$ are induced by the exact functors $\mathsf{i}$ and $\mathsf{e}$ with $\mathsf{ei}\simeq\mathsf{Id}_{\mathcal{B}}$, we have $\mathsf{ei}\simeq\mathsf{Id}_{\mathsf{D}^{-}(B)}$ on the level of derived categories. By the assumptions (iii) and (iv), we derive that the above functors restrict to:  
    \[
    \begin{tikzcd}
\mathsf{D}^{\mathsf{b}}(\mathcal{B}) \arrow[rr, "\mathsf{i}"] &  & \mathsf{D}^{\mathsf{b}}(\mathcal{A}) \arrow[rr, "\mathsf{e}"] \arrow[ll, "\mathbb{L}\mathsf{q}"', bend right] &  & \mathsf{D}^{\mathsf{b}}(\mathcal{B}) \arrow[ll, "\mathbb{L}\mathsf{l}"', bend right]
\end{tikzcd}
    \]
    and we have $\mathsf{ei}\simeq \mathsf{Id}_{\mathsf{D}^{\mathsf{b}}(\mathcal{B})}$. By assumption (i), it follows that $\mathsf{i}$ restricts to a functor $\mathsf{K}^{\mathsf{b}}(\Proj\mathcal{B})\rightarrow\mathsf{K}^{\mathsf{b}}(\Proj\mathcal{A})$. By (ii), the same is true for $\mathsf{e}$ and it is evidently true for $\mathbb{L}\mathsf{q}$ and $\mathbb{L}\mathsf{l}$. Therefore, by Lemma \ref{orlov}, we derive the following diagram:
    \[
    \begin{tikzcd}
\mathsf{D}_{\mathsf{sg}}(\mathcal{B}) \arrow[rr, "\mathsf{i}"] &  & \mathsf{D}_{\mathsf{sg}}(\mathcal{A}) \arrow[rr, "\mathsf{e}"] \arrow[ll, "\mathbb{L}_{\mathsf{sg}}\mathsf{q}"', bend right] &  & \mathsf{D}_{\mathsf{sg}}(\mathcal{B}) \arrow[ll, "\mathbb{L}_{\mathsf{sg}}\mathsf{l}"', bend right]
\end{tikzcd}
    \]
with $(\mathbb{L}_{\mathsf{sg}}\mathsf{q},\mathsf{i})$ and $(\mathbb{L}_{\mathsf{sg}}\mathsf{q},\mathsf{e})$ being adjoint pairs of triangle functors. Then $\mathsf{ei}\simeq \mathsf{Id}_{\mathsf{D}_{\mathsf{sg}}(\mathcal{B})}$ and since $(\mathbb{L}_{\mathsf{sg}}\mathsf{q}\mathbb{L}_{\mathsf{sg}}\mathsf{l},\mathsf{ei})$ is an adjoint pair, we have $\mathbb{L}_{\mathsf{sg}}\mathsf{q}\mathbb{L}_{\mathsf{sg}}\mathsf{l}\simeq \mathsf{Id}_{\mathsf{D}_{\mathsf{sg}}(\mathcal{B})}$. 
\end{proof} 

Let us now verify that the conditions of Proposition \ref{cleft_of_singularity} are satisfied whenever the endofunctor $\mathsf{F}$ associated to a cleft extension is perfect and nilpotent. The following Proposition is essential; see \cite[Proposition 6.6]{graded_injective_generation} for a proof, given in the context of module categories, which works verbatim in this more general setting.

\begin{prop} \label{derived_of_q_vanishes}
    Let $(\mathcal{B},\mathcal{A},\mathsf{i},\mathsf{e},\mathsf{l})$ be a cleft extension of abelian categories. If $\mathsf{F}$ is perfect and nilpotent, then $\mathbb{L}_i\mathsf{q}=0$ for $i$ large enough. 
\end{prop}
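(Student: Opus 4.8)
The plan is to split the assertion into a formal reduction and one genuinely substantive estimate, the latter being where the perfectness and nilpotency hypotheses do the work.

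\emph{Step 1 (uniform vanishing of the $\mathbb{L}_\bullet\mathsf{l}$).} Since $\mathsf{F}$ is perfect, Definition~\ref{perfect_functor}(i) gives an integer $n$ with $\mathbb{L}_p\mathsf{F}^q=0$ whenever $p+q\geq n+1$; taking $q=1$, $\mathbb{L}_p\mathsf{F}=0$ for $p\geq n$. By Lemma~\ref{basic_homological_properties_of_cleft}(ii), $\mathbb{L}_p\mathsf{F}(X)\cong\mathsf{e}\mathbb{L}_p\mathsf{l}(X)$, and $\mathsf{e}$ is faithful, so $\mathbb{L}_p\mathsf{l}=0$ for $p\geq n$. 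Fix also $s\geq 1$ with $\mathsf{F}^s=0$, hence $\mathsf{G}^s=0$. \emph{Step 2 (reduction to objects $\mathsf{l}(Y)$).} For arbitrary $A\in\mathcal{A}$, splice the short exact sequences $0\to\mathsf{G}^{k+1}(A)\to\mathsf{l}\mathsf{F}^{k}\mathsf{e}(A)\to\mathsf{G}^{k}(A)\to 0$ (recorded in the excerpt, with $\mathsf{G}^{0}=\mathsf{Id}_{\mathcal{A}}$) for $k=0,1,\dots,s-1$; using $\mathsf{G}^{s}(A)=0$ this produces an exact sequence
\[
0\to \mathsf{l}\mathsf{F}^{s-1}\mathsf{e}(A)\to\cdots\to\mathsf{l}\mathsf{F}\mathsf{e}(A)\to\mathsf{l}\mathsf{e}(A)\to A\to 0
\]
in $\mathcal{A}$, every term of which other than $A$ has the form $\mathsf{l}(Y)$ with $Y\in\mathcal{B}$. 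A routine dimension shift along this sequence shows that if $m$ is an integer with $\mathbb{L}_i\mathsf{q}(\mathsf{l}(Y))=0$ for all $i\geq m$ and all $Y\in\mathcal{B}$, then $\mathbb{L}_i\mathsf{q}(A)=0$ for all $i\geq m+s-1$. Hence it suffices to bound $\mathbb{L}_\bullet\mathsf{q}\circ\mathsf{l}$ uniformly.

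\emph{Step 3 (the core).} It remains to produce such an $m$. This is the only non-formal point. One should be careful: $\mathsf{l}(Y)$ need not have finite projective dimension in $\mathcal{A}$, so one cannot merely replace it by a bounded projective resolution. The natural tool is the Grothendieck spectral sequence $\mathbb{L}_p\mathsf{q}\bigl(\mathbb{L}_q\mathsf{l}(Y)\bigr)\Rightarrow\mathbb{L}_{p+q}(\mathsf{q}\mathsf{l})(Y)$, available because $\mathsf{l}$ sends projectives to projectives (Lemma~\ref{basic_homological_properties_of_cleft}(i)), hence to $\mathsf{q}$-acyclics; its abutment vanishes in positive degrees since $\mathsf{q}\mathsf{l}\simeq\mathsf{Id}_{\mathcal{B}}$ is exact, and by Step~1 the $E^2$-page is concentrated in the rows $0\leq q\leq n-1$. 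However, this only shows that the bottom row $\mathbb{L}_p\mathsf{q}(\mathsf{l}(Y))$ is killed by its outgoing differentials, whose targets lie in the higher rows $\mathbb{L}_p\mathsf{q}(\mathbb{L}_q\mathsf{l}(Y))$ — again instances of precisely the quantity being controlled — so the spectral sequence by itself is circular. The way around this is to exhibit an explicit finite‑length resolution computing $\mathbb{L}_\bullet\mathsf{q}$; concretely, in the canonical form of Remark~\ref{canonical_form}, where $\mathcal{A}\simeq\Mod\Gamma\ltimes_{\theta}M$ and $\mathsf{q}\simeq -\otimes_{\Gamma\ltimes_{\theta}M}\Gamma$, the statement is that $_{\Gamma\ltimes_{\theta}M}\Gamma$ has finite flat dimension, which follows from the bar-type resolution $\cdots\to(\Gamma\ltimes_{\theta}M)\otimes_\Gamma M^{\otimes 2}\to(\Gamma\ltimes_{\theta}M)\otimes_\Gamma M\to\Gamma\ltimes_{\theta}M\to\Gamma\to 0$: it terminates because $M$ is nilpotent, and each of its terms has finite flat dimension over $\Gamma\ltimes_{\theta}M$ by Lemma~\ref{easy_properties_of_perfect_bimodules} together with the Tor-vanishing in condition~(\ref{R}). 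This is exactly \cite[Proposition 6.6]{graded_injective_generation}; the argument there uses only the short exact sequences of a cleft extension, the vanishing encoded in condition~(\ref{R}), and the finiteness clauses of Definition~\ref{perfect_functor}, so it is purely homological and carries over verbatim to an arbitrary abelian category with enough projectives.

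Thus the main obstacle is Step~3: obtaining the uniform bound on $\mathbb{L}_\bullet\mathsf{q}\circ\mathsf{l}$, i.e.\ breaking the circularity in the spectral-sequence bookkeeping by means of the explicit bounded resolution above. Steps~1 and~2 are routine, and it is only in Step~3 that the hypotheses ``$\mathsf{F}$ perfect and nilpotent'' are genuinely used.
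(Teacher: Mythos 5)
Your proposal and the paper end up in the same place: the paper's entire ``proof'' of this proposition is a citation to \cite[Proposition~6.6]{graded_injective_generation} with the remark that the argument ``works verbatim'' for abelian categories, and your Step~3 ultimately defers to exactly the same reference with exactly the same verbatim-transfer claim. So at the level of the substantive mathematical input, the two coincide.

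Where you differ is in the scaffolding you add on top, and it is worth pointing out that some of it does not actually buy you anything. Step~1 is correct. Step~2 is also correct as stated (the splicing and the dimension shift are fine), but it is \emph{redundant}: if the cited argument produces a uniform $N$ with $\mathbb{L}_i\mathsf{q}(A)=0$ for all $i\geq N$ and \emph{all} $A\in\mathcal{A}$ (which is what the module-theoretic statement that $\Gamma$ has finite flat dimension over $\Gamma\!\ltimes\!_{\theta}M$ gives, and what ``works verbatim'' is asserting), then the reduction to objects of the form $\mathsf{l}(Y)$ is never used. You have set up a reduction to a special case and then invoked a proof that handles the general case directly. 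There is no harm, but it creates the impression of more independent progress than there is.

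The observation in Step~3 about why the Grothendieck spectral sequence for $\mathsf{q}\circ\mathsf{l}$ is circular — that the differentials out of the bottom row land in $\mathbb{L}_p\mathsf{q}$ of other objects, which is precisely the uncontrolled quantity — is a genuinely good diagnostic and is more than the paper offers the reader. However, your Step~3 then oscillates between two incompatible framings. On one hand you pass to the canonical form of Remark~\ref{canonical_form} and the bar-type resolution of ${}_{\Gamma\ltimes_{\theta}M}\Gamma$; that construction is intrinsically module-theoretic and is unavailable in a general abelian category with enough projectives. On the other hand you assert that the argument in the cited proposition ``is purely homological and carries over verbatim to an arbitrary abelian category.'' If the second claim is right, the detour through the canonical form is unnecessary; if the detour is genuinely needed, the abelian-category case does not follow. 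As written this is a gap in exposition rather than in mathematics — the paper itself simply asserts the verbatim transfer without exhibiting the category-theoretic version of the resolution — but your proposal should pick one lane: either reproduce the cited proof in category-theoretic language (using only condition~(\ref{R}), the short exact sequences $0\to\mathsf{G}^{k+1}\to\mathsf{l}\mathsf{F}^{k}\mathsf{e}\to\mathsf{G}^{k}\to 0$, and Lemmas~\ref{basic_homological_properties_of_cleft} and \ref{F_projective}), or explicitly restrict to the module case and invoke Remark~\ref{canonical_form}. Conflating the two leaves the non-module case unjustified by your own text.
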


\begin{lem} \label{i_maps_proj}
    Let $(\mathcal{B},\mathcal{A},\mathsf{i},\mathsf{e},\mathsf{l})$ be a cleft extension of abelian categories. If $\mathsf{F}$ is perfect and nilpotent, then the functor $\mathsf{i}$ maps projective objects to objects with finite projective dimension.
\end{lem}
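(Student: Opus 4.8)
The plan is to reduce the statement to Proposition~\ref{preserves_and_reflects}, which is applicable precisely because $\mathsf{F}$ is perfect and nilpotent. Fix a projective object $P$ of $\mathcal{B}$ and consider the object $X:=\mathsf{i}(P)$ of $\mathcal{A}$. The natural isomorphism $\mathsf{e}\mathsf{i}\simeq\mathsf{Id}_{\mathcal{B}}$ gives $\mathsf{e}(X)\cong P$, hence $\pd{_{\mathcal{B}}\mathsf{e}(X)}=0<\infty$. Applying the implication (ii)$\Rightarrow$(i) of Proposition~\ref{preserves_and_reflects} to $X$ then yields $\pd{_{\mathcal{A}}\mathsf{i}(P)}<\infty$, which is the claim. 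There is essentially no obstacle here: the work was already carried out in the proof of Proposition~\ref{preserves_and_reflects}, and, under the additional boundedness hypothesis of Remark~\ref{preserves_and_reflects_bounds}, that result even supplies an explicit bound for $\pd{_{\mathcal{A}}\mathsf{i}(P)}$.

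For completeness, here is the mechanism behind this, spelled out for $\mathsf{i}(P)$. Since $\mathsf{e}\mathsf{i}(P)\cong P$, the structural short exact sequences $0\to\mathsf{G}^{n}(A)\to\mathsf{l}\mathsf{F}^{n-1}\mathsf{e}(A)\to\mathsf{G}^{n-1}(A)\to 0$ of a cleft extension specialise, at $A=\mathsf{i}(P)$, to
\[
0\to\mathsf{G}^{n}(\mathsf{i}(P))\to\mathsf{l}\mathsf{F}^{n-1}(P)\to\mathsf{G}^{n-1}(\mathsf{i}(P))\to 0,\qquad n\ge 1,
\]
with $\mathsf{G}^{0}(\mathsf{i}(P))=\mathsf{i}(P)$. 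First, $\mathsf{G}$ is nilpotent on $\mathsf{i}(P)$: from $\mathsf{F}^{n}\mathsf{e}\simeq\mathsf{e}\mathsf{G}^{n}$ we get $\mathsf{e}\mathsf{G}^{n}(\mathsf{i}(P))\cong\mathsf{F}^{n}(P)=0$ for $n\gg0$, because $\mathsf{F}$ is nilpotent, and $\mathsf{e}$ is faithful, so $\mathsf{G}^{n}(\mathsf{i}(P))=0$ for $n\gg 0$. Second, each $\mathsf{F}^{n}(P)$ is $\mathsf{F}$-projective: $\mathsf{F}(P)$ is $\mathsf{F}$-projective by condition~(\ref{R}) and Lemma~\ref{F_projective}, and Corollary~\ref{powers_of_F_proj} propagates this to all powers. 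Moreover $\pd{_{\mathcal{B}}\mathsf{F}^{n}(P)}<\infty$ for every $n\ge 1$, by induction on $n$: a finite projective resolution of $\mathsf{F}(P)$ stays exact after applying $\mathsf{F}^{n-1}$ (no higher derived functors, since $\mathsf{F}(P)$ is $\mathsf{F}$-projective), and its terms $\mathsf{F}^{n-1}(Q)$ with $Q$ projective have finite projective dimension by the inductive hypothesis. Combining these two points with Lemma~\ref{basic_homological_properties_of_cleft}(ii) and faithfulness of $\mathsf{e}$ gives $\mathbb{L}_i\mathsf{l}(\mathsf{F}^{n-1}(P))=0$ for $i\ge1$, whence applying $\mathsf{l}$ to a finite projective resolution of $\mathsf{F}^{n-1}(P)$ shows $\pd{_{\mathcal{A}}\mathsf{l}\mathsf{F}^{n-1}(P)}<\infty$ for every $n\ge1$.

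Finally, one feeds these finitely many finiteness statements into the displayed short exact sequences and peels off one $\mathsf{G}^{n}(\mathsf{i}(P))$ at a time, starting from the top where $\mathsf{G}^{n}(\mathsf{i}(P))=0$, to conclude $\pd{_{\mathcal{A}}\mathsf{i}(P)}<\infty$. The only non-formal ingredient in this direct route is the estimate $\pd{_{\mathcal{B}}\mathsf{F}^{n}(P)}<\infty$; everything else is bookkeeping with the canonical exact sequences of the cleft extension. In the final write-up we simply invoke Proposition~\ref{preserves_and_reflects}.
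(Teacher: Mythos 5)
Your first paragraph reproduces the paper's proof verbatim: note $\mathsf{e}\mathsf{i}(P)\cong P$ is projective and invoke Proposition~\ref{preserves_and_reflects}, (ii)$\Rightarrow$(i). The additional paragraphs correctly unpack what Proposition~\ref{preserves_and_reflects} does in this special case, but the route is the same.
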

\begin{proof}
    Let $P$ be a projective object of $\mathcal{B}$. We know from Proposition \ref{preserves_and_reflects} that $\pd{_{\mathcal{B}}\mathsf{i}(P)}<\infty$ if and only if $\pd{_{\mathcal{B}}\mathsf{ei}(P)}<\infty$. Since $\mathsf{ei}(P)=P$, the latter is true. 
\end{proof}

\begin{cor}  \label{sunnefiasmenh_kuriakh}
    Let $(\mathcal{B},\mathcal{A},\mathsf{i},\mathsf{e},\mathsf{l})$ be a cleft extension of abelian categories such that $\mathsf{F}$ is perfect and nilpotent. Then the conditions of Proposition \ref{cleft_of_singularity} are satisfied. 
\end{cor}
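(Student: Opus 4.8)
The plan is to verify each of the four conditions (i)--(iv) of Proposition~\ref{cleft_of_singularity} directly, using the results already established for perfect and nilpotent endofunctors. The essential point is that all four conditions concern the vanishing of left-derived functors or the finiteness of projective dimensions, and each has a corresponding lemma earlier in the section.

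\begin{proof}
Condition (i) is precisely Lemma~\ref{i_maps_proj}. Condition (ii) is the statement that $\pd{_{\mathcal{B}}\mathsf{e}(P)}<\infty$ for every $P\in\Proj\mathcal{A}$; this follows from Lemma~\ref{basic_properties_of_perfect_endofunctor_on_cleft}(ii), since $\mathsf{F}$ being perfect implies in particular that $\pd{_{\mathcal{B}}\mathsf{F}(Q)}<\infty$ for every $Q\in\Proj\mathcal{B}$. Condition (iii), namely $\mathbb{L}_n\mathsf{l}=0$ for $n$ large enough, follows from Lemma~\ref{basic_properties_of_perfect_endofunctor_on_cleft}(i): since $\mathsf{F}$ is perfect, we have $\mathbb{L}_i\mathsf{F}=0$ for $i>\!\!>0$, and hence $\mathbb{L}_i\mathsf{l}=0$ for $i>\!\!>0$. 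Finally, condition (iv), namely $\mathbb{L}_n\mathsf{q}=0$ for $n$ large enough, is exactly Proposition~\ref{derived_of_q_vanishes}, which applies since $\mathsf{F}$ is perfect and nilpotent. This verifies all four conditions.
\end{proof}

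There is essentially no obstacle here: the corollary is a bookkeeping statement that collects together Lemma~\ref{i_maps_proj}, Lemma~\ref{basic_properties_of_perfect_endofunctor_on_cleft}, and Proposition~\ref{derived_of_q_vanishes}, each of which has already been proved. The only thing to be careful about is matching each of the four conditions to the correct earlier result and noting where the nilpotence hypothesis is actually used --- it enters through Proposition~\ref{derived_of_q_vanishes} (condition (iv)) and through Lemma~\ref{i_maps_proj} (condition (i)), both of which invoke Proposition~\ref{preserves_and_reflects} or its consequences, whereas conditions (ii) and (iii) need only perfectness.
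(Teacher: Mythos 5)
Your proof is correct and matches the paper's approach exactly: the paper's own proof cites precisely Lemma~\ref{basic_properties_of_perfect_endofunctor_on_cleft}, Proposition~\ref{derived_of_q_vanishes}, and Lemma~\ref{i_maps_proj}, matched to the same conditions. Your version is more explicit about which lemma handles which condition and where nilpotence is actually invoked, but the substance is identical.
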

\begin{proof}
    This follows from Lemma \ref{basic_properties_of_perfect_endofunctor_on_cleft}, Proposition \ref{derived_of_q_vanishes} and Lemma \ref{i_maps_proj}. 
\end{proof}

By the latter, we derive the middle part of the diagram of the introduction. 

\begin{cor} \label{middle_part}
    Let $(\Mod \Gamma,\Mod \Lambda,\mathsf{i},\mathsf{e},\mathsf{l})$ be a cleft extension of module categories of Noetherian rings. If $\mathsf{F}$ is perfect and nilpotent, then there exists a diagram 
    \[
\begin{tikzcd}
\mathsf{D}_{\mathsf{sg}}(\Gamma) \arrow[rr, "\mathsf{i}"] &  & \mathsf{D}_{\mathsf{sg}}(\Lambda) \arrow[rr, "\mathsf{e}"] \arrow[ll, "\mathbb{L}_{\mathsf{sg}}\mathsf{q}"', bend right] &  & \mathsf{D}_{\mathsf{sg}}(\Gamma) \arrow[ll, "\mathbb{L}_{\mathsf{sg}}\mathsf{l}"', bend right]
\end{tikzcd}
    \]
    of triangle functors such that $(\mathbb{L}_{\mathsf{sg}}\mathsf{q},\mathsf{i})$, $(\mathbb{L}_{\mathsf{sg}}\mathsf{q},\mathsf{i})$ are adjoint pairs, $\mathsf{ei}\simeq \mathsf{Id}_{\mathsf{D}_{\mathsf{sg}}(\Gamma)}$ and $\mathbb{L}_{\mathsf{sg}}\mathsf{q}\mathbb{L}_{\mathsf{sg}}\mathsf{l}\simeq\mathsf{Id}_{\mathsf{D}_{\mathsf{sg}}(\Gamma)}$. 
\end{cor}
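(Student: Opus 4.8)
The plan is to reduce Corollary \ref{middle_part} to the already-established general statement, Proposition \ref{cleft_of_singularity}, by verifying its four hypotheses. Concretely, I would first invoke Lemma \ref{restriction_of_cleft}: since $\mathsf{F}$ is perfect and nilpotent (in particular nilpotent) and $\Gamma$, $\Lambda$ are Noetherian, the cleft extension $(\Mod\Gamma,\Mod\Lambda,\mathsf{i},\mathsf{e},\mathsf{l})$ restricts to a cleft extension $(\smod\Gamma,\smod\Lambda,\mathsf{i},\mathsf{e},\mathsf{l})$ of the categories of finitely generated modules. This is the category-level data to which Proposition \ref{cleft_of_singularity} must be applied, because $\mathsf{D}_{\mathsf{sg}}(\Gamma)$ and $\mathsf{D}_{\mathsf{sg}}(\Lambda)$ are by definition the singularity categories of $\smod\Gamma$ and $\smod\Lambda$. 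One should also note that the endofunctor associated to the restricted cleft extension is simply the restriction of $\mathsf{F}$ to $\smod\Gamma$ (via the canonical form of Remark \ref{canonical_form}, $\mathsf{F}\simeq -\otimes_\Gamma M$ with $M$ finitely generated on both sides, so $-\otimes_\Gamma M$ sends $\smod\Gamma$ to $\smod\Gamma$), and that this restricted functor remains perfect and nilpotent.

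Next I would check that the restricted functor is still perfect and nilpotent, so that Corollary \ref{sunnefiasmenh_kuriakh} applies. Nilpotence is immediate since $\mathsf{F}^s=0$ on $\Mod\Gamma$ forces the same on the subcategory $\smod\Gamma$. For perfectness, condition (\ref{R}) and the vanishing $\mathbb{L}_p\mathsf{F}^q=0$ for $p+q\gg 0$ are inherited because the left derived functors are computed via projective resolutions in $\smod\Gamma$ (every finitely generated module over a Noetherian ring has a projective resolution by finitely generated projectives, and $-\otimes_\Gamma M$ with $M$ finitely generated on the right sends finitely generated projectives to finitely generated modules), and $\pd{_{\smod\Gamma}\mathsf{F}(P)}<\infty$ for $P\in\proj\Gamma$ follows from $\pd{_{\Mod\Gamma}\mathsf{F}(P)}<\infty$ together with the fact that finite projective dimension over a Noetherian ring is detected in $\smod\Gamma$. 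Having done this, Corollary \ref{sunnefiasmenh_kuriakh} gives that the four conditions of Proposition \ref{cleft_of_singularity} hold for $(\smod\Gamma,\smod\Lambda,\mathsf{i},\mathsf{e},\mathsf{l})$.

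Finally I would apply Proposition \ref{cleft_of_singularity} directly to $(\smod\Gamma,\smod\Lambda,\mathsf{i},\mathsf{e},\mathsf{l})$. Its conclusion is precisely the desired diagram
\[
\begin{tikzcd}
\mathsf{D}_{\mathsf{sg}}(\Gamma) \arrow[rr, "\mathsf{i}"] &  & \mathsf{D}_{\mathsf{sg}}(\Lambda) \arrow[rr, "\mathsf{e}"] \arrow[ll, "\mathbb{L}_{\mathsf{sg}}\mathsf{q}"', bend right] &  & \mathsf{D}_{\mathsf{sg}}(\Gamma) \arrow[ll, "\mathbb{L}_{\mathsf{sg}}\mathsf{l}"', bend right]
\end{tikzcd}
\]
of triangle functors, with $(\mathbb{L}_{\mathsf{sg}}\mathsf{l},\mathsf{e})$ and $(\mathbb{L}_{\mathsf{sg}}\mathsf{q},\mathsf{i})$ adjoint pairs, $\mathsf{ei}\simeq\mathsf{Id}_{\mathsf{D}_{\mathsf{sg}}(\Gamma)}$ and $\mathbb{L}_{\mathsf{sg}}\mathsf{q}\mathbb{L}_{\mathsf{sg}}\mathsf{l}\simeq\mathsf{Id}_{\mathsf{D}_{\mathsf{sg}}(\Gamma)}$, which is exactly the assertion of the corollary.

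The work here is almost entirely bookkeeping, so there is no serious obstacle; the one point deserving care is the passage from $\Mod$ to $\smod$, i.e.\ making sure that the restricted cleft extension genuinely inherits perfectness and nilpotence of $\mathsf{F}$ and that ``finite projective dimension'' is unambiguous between the big and small module categories. This is handled by the Noetherian hypothesis together with Lemma \ref{restriction_of_cleft} and Fact \ref{noetherian_theta_extensions}, which guarantee $M$ is finitely generated on both sides; once that is in place, Corollary \ref{sunnefiasmenh_kuriakh} does the rest.
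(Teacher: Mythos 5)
Your proof follows exactly the paper's route: restrict to finitely generated modules via Lemma \ref{restriction_of_cleft}, then invoke Corollary \ref{sunnefiasmenh_kuriakh} so that Proposition \ref{cleft_of_singularity} applies. The additional remarks you give on why perfectness and nilpotence descend to the restricted cleft extension are correct and fill in a detail the paper leaves implicit; otherwise the argument is the same.
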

\begin{proof}
    By Lemma \ref{restriction_of_cleft}, the cleft extension $(\Mod\Gamma,\Mod\Lambda,\mathsf{i},\mathsf{e},\mathsf{l})$ restricts to a cleft extension $(\smod\Gamma,\smod\Lambda,\mathsf{i},\mathsf{e},\mathsf{l})$ and the result follows by Corollary \ref{sunnefiasmenh_kuriakh}. 
\end{proof}

\subsection{An equivalence of singularity categories} 
We provide sufficient conditions for a cleft extension to induce an equivalence of singularity categories. In preparation for this, we recall two well-known lemmata.

Given an abelian category $\mathcal{A}$ and an object $X$ of $\mathcal{A}$, we denote by $X[n]$ the complex of $\mathsf{C}(\mathcal{A})$ with $X$ in degree $-n$ and 0 elsewhere.

\begin{lem} \label{perfect_objects}
    Let $\mathcal{A}$ be an abelian category with enough projectives. Then for every object $X$ of $\mathcal{A}$, the following are equivalent: 
    \begin{itemize}
        \item[(i)] $\pd{_{\mathcal{A}}X}<\infty$. 
        \item[(ii)] $X[0]\in\mathsf{K}^{\mathsf{b}}(\Proj\mathcal{A})$ as an object of $\mathsf{D}(\mathcal{A})$. 
    \end{itemize}
\end{lem}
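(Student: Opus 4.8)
The plan is to prove the two implications separately; $(i)\Rightarrow(ii)$ is immediate, while $(ii)\Rightarrow(i)$ carries the content.

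For $(i)\Rightarrow(ii)$, I would take a finite projective resolution $0\to P_n\to\cdots\to P_0\to X\to 0$ with $n=\pd_{\mathcal{A}}X$, and regard $P_n\to\cdots\to P_0$ as a complex concentrated in cohomological degrees $-n,\dots,0$. Its augmentation onto $X$ is a quasi-isomorphism, so $X[0]$ becomes isomorphic in $\mathsf{D}(\mathcal{A})$ to this complex, which is an object of $\mathsf{K}^{\mathsf{b}}(\Proj\mathcal{A})$ (if $X=0$, take the zero complex).

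For $(ii)\Rightarrow(i)$, assume $X[0]\cong Q$ in $\mathsf{D}(\mathcal{A})$ for some bounded complex $Q=(Q^a\to\cdots\to Q^b)$ of projectives. I would compare $Q$ with a projective resolution $P^{\bullet}$ of $X$: under the classical equivalence $\mathsf{K}^{-}(\Proj\mathcal{A})\xrightarrow{\simeq}\mathsf{D}^{-}(\mathcal{A})$, which is available since $\mathcal{A}$ has enough projectives, both $P^{\bullet}$ and $Q$ are sent to $X[0]$, so they are isomorphic in $\mathsf{K}^{-}(\Proj\mathcal{A})$, i.e.\ homotopy equivalent. Hence for every $M\in\mathcal{A}$ and $i\geq 1$ one has $\mathsf{Ext}^i_{\mathcal{A}}(X,M)\cong\mathsf{Hom}_{\mathsf{K}(\mathcal{A})}(P^{\bullet},M[i])\cong\mathsf{Hom}_{\mathsf{K}(\mathcal{A})}(Q,M[i])$. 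Since $M[i]$ is concentrated in degree $-i$, a chain map $Q\to M[i]$ is nothing but a morphism $Q^{-i}\to M$ annihilating the image of the differential into $Q^{-i}$, and this group vanishes once $-i<a$. Therefore $\mathsf{Ext}^i_{\mathcal{A}}(X,M)=0$ for all $i>\max\{-a,0\}$ and all $M\in\mathcal{A}$, and a dimension-shifting argument (the $N$-th syzygy $\Omega^N X$ satisfies $\mathsf{Ext}^1_{\mathcal{A}}(\Omega^N X,-)\cong\mathsf{Ext}^{N+1}_{\mathcal{A}}(X,-)$, so its presenting short exact sequence splits and $\Omega^N X$ is a summand of a projective) yields $\pd_{\mathcal{A}}X\leq\max\{-a,0\}<\infty$.

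I do not expect a genuine obstacle. The only points needing care are the standard background facts used in $(ii)\Rightarrow(i)$ — the equivalence $\mathsf{K}^{-}(\Proj\mathcal{A})\simeq\mathsf{D}^{-}(\mathcal{A})$, the computation of $\mathsf{Ext}^i_{\mathcal{A}}(X,M)$ as homotopy classes of maps from a projective resolution into $M[i]$, and the syzygy criterion for finiteness of projective dimension, all valid because $\mathcal{A}$ has enough projectives — together with keeping the shift bookkeeping consistent with the paper's convention that $X[n]$ places $X$ in cohomological degree $-n$.
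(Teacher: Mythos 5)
Your argument is correct and follows essentially the same route as the paper's proof: replace $X[0]$ by a bounded complex of projectives, identify $\mathsf{Ext}^i_{\mathcal{A}}(X,-)$ with homotopy classes of maps out of that complex, observe these vanish for $i$ beyond the width of the complex, and conclude via the syzygy criterion. The paper compresses the final step into "meaning that $\pd_{\mathcal{A}}X<\infty$" and is less explicit about replacing $X[0]$ by the bounded projective complex before computing in $\mathsf{K}(\mathcal{A})$; your version simply spells those points out.
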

\begin{proof}
    (i)$\implies$(ii): Assume that $\pd{_{\mathcal{A}} X}<\infty$ and let $P^{\bullet}$ be a projective resolution of $X$ of finite length. Then, $P^{\bullet}\cong X[0]$ in $\mathsf{D}(A)$ and $P^{\bullet}\in\mathsf{K}^{\mathsf{b}}(\Proj\mathcal{A})$.

    (ii)$\implies$(i): If $X[0]\in\mathsf{K}^{\mathsf{b}}(\Proj\mathcal{A})$, then for any $Y\in\mathcal{A}$ we have 
    \[
    \mathsf{Ext}_{\mathcal{A}}^i(X,Y)\cong \mathsf{Hom}_{\mathsf{D}(A)}(X[0],Y[i])\cong \mathsf{Hom}_{\mathsf{K}(\mathcal{A})}(X[0],Y[i]),
    \]
    and since $X[0]$ has finite length, the latter is $0$ for $i>d$ (for some $d$), meaning that $\pd{_{\mathcal{A}}X}<\infty$. 
\end{proof}

\begin{rem}
   An easy consequence of the above is that $\mathsf{D}_{\mathsf{sg}}(\mathcal{A})=0$ if and only if $\pd{_{\mathcal{A}} X}<\infty$ for every $X\in\mathcal{A}$. If $\mathcal{A}=\smod \Lambda$ is the category of finitely generated modules over a Noetherian ring, then the latter translates to $\gd \Lambda<\infty$ in case $\Lambda$ is a finite dimensional algebra or a local commutative ring (by the fact that there are finitely many simple modules and by the Auslander-Buchsbaum theorem respectively). However, $\mathsf{D}_{\mathsf{sg}}(\Lambda)=0$ is not in general equivalent to $\gd \Lambda<\infty$, see for instance \cite[Remark 8]{krause_fin_dim}. If we replace $\smod\Lambda$ by $\Mod \Lambda$, then indeed $\mathsf{D}_{\mathsf{sg}}(\Mod \Lambda)=0$ if and only if $\gd\Lambda<\infty$, for any ring $\Lambda$. 
\end{rem}

\begin{lem} \label{object_in_singularity}
    For any object $X\in\mathsf{D}_{\mathsf{sg}}(\mathcal{A})$, where $\mathcal{A}$ is an abelian category, there is an object $Y\in\mathcal{A}$ such that $X\cong Y[n]$ in $\mathsf{D}_{\mathsf{sg}}(\mathcal{A})$ for some $n\in\mathbb{Z}$. 
\end{lem}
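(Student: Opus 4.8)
The plan is to reduce any object of $\mathsf{D}_{\mathsf{sg}}(\mathcal{A})$ to a stalk complex by replacing its representing complex with a projective resolution and then amputating all but one term; the whole point is that a \emph{bounded} complex of projectives vanishes in $\mathsf{D}_{\mathsf{sg}}(\mathcal{A})$, which is exactly what lets the truncated-off part disappear. Recall $\mathsf{D}_{\mathsf{sg}}(\mathcal{A})=\mathsf{D}^{\mathsf{b}}(\mathcal{A})/\mathsf{K}^{\mathsf{b}}(\Proj\mathcal{A})$ and that $\mathcal{A}$ has enough projectives, as is assumed throughout the paper.

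First I would represent $X$ by a complex $X^{\bullet}\in\mathsf{D}^{\mathsf{b}}(\mathcal{A})$ with $X^i=0$ for $i\notin[a,b]$. Since $\mathcal{A}$ has enough projectives, there is a quasi-isomorphism $P^{\bullet}\xrightarrow{\sim}X^{\bullet}$ with each $P^i$ projective and $P^i=0$ for $i>b$; as $X^{\bullet}$ is bounded we also get $H^i(P^{\bullet})=0$ for $i<a$, although $P^{\bullet}$ need not be bounded below. Next I would pass to the good truncation $D^{\bullet}:=\tau_{\ge a}P^{\bullet}$, so that $D^a=\mathsf{coker}(P^{a-1}\to P^a)=:W$, $D^i=P^i$ for $a<i\le b$, and $D^i=0$ otherwise, with the induced differentials. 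The canonical map $P^{\bullet}\to D^{\bullet}$ is a quasi-isomorphism because $H^i(P^{\bullet})=0$ for $i<a$, hence $D^{\bullet}\cong X$ already in $\mathsf{D}^{\mathsf{b}}(\mathcal{A})$; and now $D^{\bullet}$ is a bounded complex, concentrated in degrees $[a,b]$, all of whose entries in degrees $>a$ are projective.

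The final step would be the stupid truncation: the short exact sequence of complexes
\[
0\longrightarrow \sigma_{\ge a+1}D^{\bullet}\longrightarrow D^{\bullet}\longrightarrow W[-a]\longrightarrow 0
\]
has middle term $D^{\bullet}$, subcomplex $\sigma_{\ge a+1}D^{\bullet}=(0\to P^{a+1}\to\cdots\to P^b\to 0)\in\mathsf{K}^{\mathsf{b}}(\Proj\mathcal{A})$, and quotient the stalk complex $W[-a]$ (using the paper's convention that $W[-a]$ has $W$ in degree $a$). It gives rise to a distinguished triangle in $\mathsf{D}^{\mathsf{b}}(\mathcal{A})$; applying the quotient functor $\mathsf{D}^{\mathsf{b}}(\mathcal{A})\to\mathsf{D}_{\mathsf{sg}}(\mathcal{A})$ kills $\sigma_{\ge a+1}D^{\bullet}$, so the triangle collapses to an isomorphism $X\cong D^{\bullet}\cong W[-a]$ in $\mathsf{D}_{\mathsf{sg}}(\mathcal{A})$. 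Taking $Y=W$ and $n=-a$ then proves the claim.

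I do not expect a real obstacle here: the only inputs are the existence of a (possibly unbounded-below) projective resolution of a bounded complex and the standard fact that good truncation does not change the quasi-isomorphism class, both routine. The one point worth emphasizing is that a naive induction on the length of $X^{\bullet}$ using only stupid truncations would merely write $X$ as an iterated extension of shifts of objects of $\mathcal{A}$, not as a single shift; contracting this to one object is precisely what forces the passage to a projective resolution, so that everything above the bottom degree lands in $\mathsf{K}^{\mathsf{b}}(\Proj\mathcal{A})$ and hence vanishes in $\mathsf{D}_{\mathsf{sg}}(\mathcal{A})$.
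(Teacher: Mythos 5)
Your argument is correct and is precisely the standard argument behind the reference the paper defers to (Chen's Lemma 2.1): resolve by a bounded-above complex of projectives, apply good truncation at the bottom of the cohomological range to get a bounded complex whose terms in positive internal degree are projective, then use the stupid-truncation short exact sequence so that the projective tail lands in $\mathsf{K}^{\mathsf{b}}(\Proj\mathcal{A})$ and dies in $\mathsf{D}_{\mathsf{sg}}(\mathcal{A})$. The paper's proof is only a citation with the remark that the same argument works for any abelian category with enough projectives, so your proposal fills in exactly the intended proof, including the sign bookkeeping required by the paper's convention for $[n]$.
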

\begin{proof} 
    This is well-known, see for instance \cite[Lemma 2.1]{chen2}, where this is proved for $\mathcal{A}=\smod\Lambda$ for a Noetherian ring $\Lambda$. The exact same proof works for any abelian category with enough projectives. 
\end{proof}

\begin{thm} \label{equivalence_singularity}
    Let $(\mathcal{B},\mathcal{A},\mathsf{i},\mathsf{e},\mathsf{l})$ be a cleft extension of abelian categories. Assume that $\mathsf{F}$ satisfies the following: 
    \begin{itemize}
        \item[(a)] $\pd{_{\mathcal{B}}\mathsf{F}(P)}<\infty$ for every $P\in\Proj\mathcal{B}$. 
        \item[(b)] $\mathbb{L}_n\mathsf{F}=0$ for $n$ large enough. 
        \item[(c)] The functor $\mathsf{e}$ reflects objects with finite projective dimension.
    \end{itemize} 
    The following are equivalent: 
    \begin{itemize}
        \item[(i)] The functor $\mathsf{e}\colon \mathsf{D}_{\mathsf{sg}}(\mathcal{A})\rightarrow \mathsf{D}_{\mathsf{sg}}(\mathcal{B})$ is an equivalence. 
        \item[(ii)] $\mathbb{L}_{\mathsf{sg}}\mathsf{F}\simeq 0$ in $\mathsf{D}_{\mathsf{sg}}(\mathcal{B})$. 
    \end{itemize}
\end{thm}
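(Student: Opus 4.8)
The plan is to reduce the whole statement to a single functor identity on singularity categories,
\[
\mathsf{e}\circ\mathbb{L}_{\mathsf{sg}}\mathsf{l}\;\simeq\;\mathsf{Id}_{\mathsf{D}_{\mathsf{sg}}(\mathcal{B})}\oplus\mathbb{L}_{\mathsf{sg}}\mathsf{F},
\]
together with the sharper observation that, under this identification, the unit of the adjoint pair $(\mathbb{L}_{\mathsf{sg}}\mathsf{l},\mathsf{e})$ is the inclusion of the first summand. First I would set up these induced functors using only (a) and (b): by (a) and Lemma \ref{basic_properties_of_perfect_endofunctor_on_cleft}(ii) the exact functor $\mathsf{e}$ sends $\Proj\mathcal{A}$ into objects of finite projective dimension, so it descends to $\mathsf{e}\colon\mathsf{D}_{\mathsf{sg}}(\mathcal{A})\to\mathsf{D}_{\mathsf{sg}}(\mathcal{B})$; by (a) and (b) the functor $\mathbb{L}\mathsf{F}$ preserves bounded complexes and perfect complexes, hence induces $\mathbb{L}_{\mathsf{sg}}\mathsf{F}$; and by (b) with Lemma \ref{basic_properties_of_perfect_endofunctor_on_cleft}(i) and Lemma \ref{basic_homological_properties_of_cleft}(i), $\mathbb{L}\mathsf{l}$ preserves bounded and perfect complexes, so it induces $\mathbb{L}_{\mathsf{sg}}\mathsf{l}$, and $(\mathbb{L}_{\mathsf{sg}}\mathsf{l},\mathsf{e})$ is an adjoint pair of triangle functors by Lemma \ref{orlov}. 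This is exactly the $\mathsf{l}$-and-$\mathsf{e}$ half of Proposition \ref{cleft_of_singularity}; it never touches $\mathsf{q}$, so Proposition \ref{derived_of_q_vanishes} (which needs nilpotence) is not required. The displayed identity then holds because $\mathsf{e}$ is exact, so $\mathsf{e}\circ\mathbb{L}\mathsf{l}\simeq\mathbb{L}(\mathsf{e}\mathsf{l})\simeq\mathsf{Id}\oplus\mathbb{L}\mathsf{F}$, and the statement about the unit follows from the relation $\mathsf{Id}_{\mathsf{e}(A)}=\mathsf{e}(\mu_A)\nu_{\mathsf{e}(A)}$ at $A=\mathsf{i}(B)$, which exhibits $\nu_B$ as the canonical section of the split epimorphism $\mathsf{el}(B)\to B$, i.e.\ as the inclusion of the $\mathsf{Id}_{\mathcal{B}}$-summand of $\mathsf{el}\simeq\mathsf{Id}_{\mathcal{B}}\oplus\mathsf{F}$.

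The second preparatory step — the only place where (c) is used — is to check that $\mathsf{e}$ has zero kernel as a functor $\mathsf{D}_{\mathsf{sg}}(\mathcal{A})\to\mathsf{D}_{\mathsf{sg}}(\mathcal{B})$. Given $X$ with $\mathsf{e}(X)\cong 0$, I would invoke Lemma \ref{object_in_singularity} to write $X\cong Y[n]$ for an object $Y$ of $\mathcal{A}$; then $\mathsf{e}(Y)[0]\cong 0$ in $\mathsf{D}_{\mathsf{sg}}(\mathcal{B})$, so $\pd{_{\mathcal{B}}\mathsf{e}(Y)}<\infty$ by Lemma \ref{perfect_objects}, hence $\pd{_{\mathcal{A}}Y}<\infty$ by (c), hence $Y[0]\cong 0$ in $\mathsf{D}_{\mathsf{sg}}(\mathcal{A})$ by Lemma \ref{perfect_objects} again, so $X\cong 0$.

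With these two facts the implications are short. For (ii) $\Rightarrow$ (i): if $\mathbb{L}_{\mathsf{sg}}\mathsf{F}\simeq 0$, the displayed identity gives $\mathsf{e}\circ\mathbb{L}_{\mathsf{sg}}\mathsf{l}\simeq\mathsf{Id}$ with the unit an isomorphism, so the left adjoint $\mathbb{L}_{\mathsf{sg}}\mathsf{l}$ is fully faithful; Lemma \ref{equivalence_of_triangulated_cats}(i) applied to $(\mathbb{L}_{\mathsf{sg}}\mathsf{l},\mathsf{e})$ then yields that $\mathsf{e}$ induces an equivalence $\mathsf{D}_{\mathsf{sg}}(\mathcal{A})/\ker\mathsf{e}\xrightarrow{\ \simeq\ }\mathsf{D}_{\mathsf{sg}}(\mathcal{B})$, and since $\ker\mathsf{e}=0$ the functor $\mathsf{e}$ is itself an equivalence. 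For (i) $\Rightarrow$ (ii): if $\mathsf{e}$ is an equivalence, its left adjoint $\mathbb{L}_{\mathsf{sg}}\mathsf{l}$ is a quasi-inverse, so the unit of $(\mathbb{L}_{\mathsf{sg}}\mathsf{l},\mathsf{e})$ is again a natural isomorphism; but this unit is the summand inclusion $\mathsf{Id}\hookrightarrow\mathsf{Id}\oplus\mathbb{L}_{\mathsf{sg}}\mathsf{F}$, and a split monomorphism $Z\to Z\oplus W$ is invertible only when $W=0$, so evaluating at every object forces $\mathbb{L}_{\mathsf{sg}}\mathsf{F}\simeq 0$.

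I expect the main difficulty to be bookkeeping rather than anything conceptual: the theorem deliberately relaxes ``perfect and nilpotent'' to (a)$+$(b)$+$(c), so one must be disciplined about invoking only what those hypotheses give (in particular not calling on the $\mathsf{q}$-part of Proposition \ref{cleft_of_singularity}), and one must record the splitting $\mathsf{el}\simeq\mathsf{Id}_{\mathcal{B}}\oplus\mathsf{F}$ together with its compatibility with the adjunction unit. Without that last point, (i) $\Rightarrow$ (ii) would only produce $\mathsf{Id}\oplus\mathbb{L}_{\mathsf{sg}}\mathsf{F}\simeq\mathsf{Id}$, which on its own does not force $\mathbb{L}_{\mathsf{sg}}\mathsf{F}\simeq 0$; identifying the unit as the summand inclusion is precisely what turns ``related'' into ``equivalent''.
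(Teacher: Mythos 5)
Your proof is correct and follows essentially the same strategy as the paper: set up $\mathsf{e}$, $\mathbb{L}_{\mathsf{sg}}\mathsf{l}$ and $\mathbb{L}_{\mathsf{sg}}\mathsf{F}$ from (a)--(b) via Lemmas \ref{basic_homological_properties_of_cleft}, \ref{basic_properties_of_perfect_endofunctor_on_cleft} and \ref{orlov}, use (c) with Lemmas \ref{object_in_singularity} and \ref{perfect_objects} to show $\ker\mathsf{e}$ is trivial, and exploit the splitting $\mathsf{e}\mathbb{L}_{\mathsf{sg}}\mathsf{l}\simeq\mathsf{Id}\oplus\mathbb{L}_{\mathsf{sg}}\mathsf{F}$ in both directions. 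Your added observation for (i) $\Rightarrow$ (ii) --- that the adjunction unit is the summand inclusion $\mathsf{Id}\hookrightarrow\mathsf{Id}\oplus\mathbb{L}_{\mathsf{sg}}\mathsf{F}$, so its being an isomorphism forces the complement $\mathbb{L}_{\mathsf{sg}}\mathsf{F}$ to vanish --- supplies a justification the paper leaves implicit and is worth recording.
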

\begin{proof}
The assumptions (a) and (b) ensure that $\mathsf{e}$, $\mathbb{L}\mathsf{l}$ and $\mathbb{L}\mathsf{F}$ give rise to functors on the level of singularity categories which we denote by $\mathsf{e}, \mathbb{L}_{\mathsf{sg}}\mathsf{l}$ and $\mathbb{L}_{\mathsf{sg}}\mathsf{F}$ respectively.  

(i) $\Longrightarrow$ (ii): We have $\mathsf{el}\simeq \mathsf{Id}_{\mathcal{B}}\oplus\mathsf{F}$ and therefore $\mathsf{e}\mathbb{L}_{\mathsf{sg}}\mathsf{l}\simeq \mathsf{Id}_{\mathsf{D}_{\mathsf{sg}}(\mathcal{B})}\oplus\mathbb{L}_{\mathsf{sg}}\mathsf{F}$. If the functor $\mathsf{e}\colon\mathsf{D}_{\mathsf{sg}}(\mathcal{A})\rightarrow\mathsf{D}_{\mathsf{sg}}(\mathcal{B})$ is an equivalence, then $\mathbb{L}_{\mathsf{sg}}\mathsf{F}\simeq 0$.

(ii) $\Longrightarrow$ (i): We have $\mathsf{el}\simeq \mathsf{Id}_{\mathcal{B}}\oplus\mathsf{F}$ and therefore $\mathsf{e}\mathbb{L}_{\mathsf{sg}}\mathsf{l}\simeq \mathsf{Id}_{\mathsf{D}_{\mathsf{sg}}(\mathcal{B})}\oplus\mathbb{L}_{\mathsf{sg}}\mathsf{F}$. It follows that $\mathsf{e}\mathbb{L}_{\mathsf{sg}}\mathsf{l}\simeq \mathsf{Id}_{\mathsf{D}_{\mathsf{sg}}(\mathcal{B})}$. Therefore $\mathbb{L}_{\mathsf{sg}}\mathsf{l}$ is fully faithful and so, by Lemma \ref{equivalence_of_triangulated_cats}, there is an equivalence 
\[
\mathsf{D}_{\mathsf{sg}}(\mathcal{A})/\mathsf{kere}\simeq \mathsf{D}_{\mathsf{sg}}(\mathcal{B}).
\]
We claim that $\mathsf{kere}$ is trivial. Indeed, let $X\in\mathsf{D}_{\mathsf{sg}}(\mathcal{A})$ be such that $\mathsf{e}(X)=0$. By Lemma \ref{object_in_singularity}, there is $Y\in\mathcal{A}$ and $n\in\mathbb{Z}$ such that $X\cong Y[n]$ in $\mathsf{D}_{\mathsf{sg}}(\mathcal{A})$. Therefore $\mathsf{e}(Y)[n]=\mathsf{e}(Y[n])=0$, so $\mathsf{e}(Y[0])=\mathsf{e}(Y)[0]=0$ meaning that $\mathsf{e}(Y[0])\in\mathsf{K}^{\mathsf{b}}(\Proj\mathcal{B})$ as an object of $\mathsf{D}^{\mathsf{b}}(\mathcal{B})$. Thus, by Lemma \ref{perfect_objects}, it follows that $\pd{_{\mathcal{B}}\mathsf{e}(Y)}<\infty$. By the assumption on the functor $\mathsf{e}$, we get that $\pd{_{\mathcal{A}}Y}<\infty$, meaning that $Y[0]=0$ in $\mathsf{D}_{\mathsf{sg}}(\mathcal{A})$, thus also $X\cong Y[n]=0$ in $\mathsf{D}_{\mathsf{sg}}(\mathcal{A})$. 
\end{proof}

Evidently, $\mathbb{L}_{\mathsf{sg}}\mathsf{F}\simeq 0$ in $\mathsf{D}_{\mathsf{sg}}(\mathcal{B})$ (provided that it exists) if and only if the functor $\mathbb{L}\mathsf{F}\colon\mathsf{D}(\mathcal{B})\rightarrow \mathsf{D}(\mathcal{B})$ maps $\mathsf{D}^{\mathsf{b}}(\mathcal{B})$ to $\mathsf{K}^{\mathsf{b}}(\Proj\mathcal{B})$. In the following lemma we present an instance of this happening. 

\begin{lem} \label{left_derived_of_F_vanishes}
    Let $\mathsf{F}\colon\mathcal{B}\rightarrow\mathcal{B}$ be a right exact functor of an abelian category with enough projectives, which satisfies the following: 
    \begin{itemize}
        \item[(i)] $\pd{_{\mathcal{B}}\mathsf{F}(X)}<\infty$ for every $X\in\mathcal{B}$. 
        \item[(ii)] $\mathbb{L}_n\mathsf{F}=0$ for $n$ large enough. 
    \end{itemize}
    Then, $\mathbb{L}_{\mathsf{sg}}\mathsf{F}=0$ in $\mathsf{D}_{\mathsf{sg}}(\mathcal{B})$. 
\end{lem}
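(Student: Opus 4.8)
The plan is to verify directly that the vanishing $\mathbb{L}_{\mathsf{sg}}\mathsf{F}=0$ holds by showing that the left-derived functor $\mathbb{L}\mathsf{F}$ maps $\mathsf{D}^{\mathsf{b}}(\mathcal{B})$ into $\mathsf{K}^{\mathsf{b}}(\Proj\mathcal{B})$. First one observes that $\mathbb{L}_{\mathsf{sg}}\mathsf{F}$ is indeed defined: by (ii) the functor $\mathbb{L}\mathsf{F}\colon\mathsf{D}^{-}(\mathcal{B})\to\mathsf{D}^{-}(\mathcal{B})$ restricts to a triangle endofunctor of $\mathsf{D}^{\mathsf{b}}(\mathcal{B})$, and by (i) it preserves $\mathsf{K}^{\mathsf{b}}(\Proj\mathcal{B})$, since $\mathbb{L}\mathsf{F}(P)\cong\mathsf{F}(P)$ has finite projective dimension for $P\in\Proj\mathcal{B}$, hence represents a perfect complex by Lemma \ref{perfect_objects}. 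Showing $\mathbb{L}_{\mathsf{sg}}\mathsf{F}=0$ then amounts exactly to showing $\mathbb{L}\mathsf{F}(\mathsf{D}^{\mathsf{b}}(\mathcal{B}))\subseteq\mathsf{K}^{\mathsf{b}}(\Proj\mathcal{B})$. I would reduce this to stalk complexes: every bounded complex is built from stalks on its terms by finitely many mapping cones, so the objects $X[0]$ with $X\in\mathcal{B}$ generate $\mathsf{D}^{\mathsf{b}}(\mathcal{B})$ as a triangulated category; as $\mathsf{K}^{\mathsf{b}}(\Proj\mathcal{B})$ is a thick triangulated subcategory and $\mathbb{L}\mathsf{F}$ is a triangle functor, it is enough to prove $\mathbb{L}\mathsf{F}(X[0])\in\mathsf{K}^{\mathsf{b}}(\Proj\mathcal{B})$ for each object $X$.

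Next, fix $X$ and a projective resolution $\cdots\to P_1\to P_0\to X\to 0$, so that $\mathbb{L}\mathsf{F}(X[0])$ is represented by $\mathsf{F}(P_\bullet)\colon\cdots\to\mathsf{F}(P_1)\to\mathsf{F}(P_0)\to 0$. Using (ii), choose $N\geq 1$ with $\mathbb{L}_n\mathsf{F}=0$ for all $n\geq N$; then $H_n(\mathsf{F}(P_\bullet))=\mathbb{L}_n\mathsf{F}(X)=0$ for $n\geq N$, so the canonical (quotient) truncation gives a quasi-isomorphism $\mathsf{F}(P_\bullet)\xrightarrow{\ \sim\ }\tau_{\leq N-1}\mathsf{F}(P_\bullet)$. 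Since $\mathsf{F}$ is right exact, the top term of this truncation is $\mathsf{coker}(\mathsf{F}(P_N)\to\mathsf{F}(P_{N-1}))\cong\mathsf{F}(\mathsf{coker}(P_N\to P_{N-1}))$, and $\mathsf{coker}(P_N\to P_{N-1})\cong\Omega^{N-1}X$ is the $(N-1)$-st syzygy of $X$ (with $\Omega^0 X=X$; for $N=1$ the truncation collapses to $\mathsf{F}(X)$ in degree $0$). Hence, in $\mathsf{D}^{\mathsf{b}}(\mathcal{B})$,
\[
\mathbb{L}\mathsf{F}(X[0])\ \cong\ \bigl[\,0\to\mathsf{F}(\Omega^{N-1}X)\to\mathsf{F}(P_{N-2})\to\cdots\to\mathsf{F}(P_0)\to 0\,\bigr].
\]
This is a bounded complex all of whose terms — the $\mathsf{F}(P_j)$ and $\mathsf{F}(\Omega^{N-1}X)$ — have finite projective dimension by (i). Such a complex is obtained from stalk complexes on its terms by iterated mapping cones, and by Lemma \ref{perfect_objects} each of those stalk complexes lies in $\mathsf{K}^{\mathsf{b}}(\Proj\mathcal{B})$; since $\mathsf{K}^{\mathsf{b}}(\Proj\mathcal{B})$ is thick, $\mathbb{L}\mathsf{F}(X[0])$ lies there as well. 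This completes the reduction and hence the argument.

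I do not expect a real obstacle here; the proof only assembles standard facts. The points needing a little care are the bookkeeping with the canonical truncation (that the natural map $\mathsf{F}(P_\bullet)\to\tau_{\leq N-1}\mathsf{F}(P_\bullet)$ is a quasi-isomorphism once the homology vanishes above degree $N-1$, and that its top term is the indicated cokernel, identified via right exactness of $\mathsf{F}$), the harmless edge case $N=1$, and the fact that $\mathsf{K}^{\mathsf{b}}(\Proj\mathcal{B})$ is a thick triangulated subcategory of $\mathsf{D}^{\mathsf{b}}(\mathcal{B})$, which legitimizes both the reduction to stalk complexes and the final reduction to a bounded complex with finite-projective-dimension terms. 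The only genuine inputs are hypothesis (ii), used to replace $\mathsf{F}(P_\bullet)$ by a bounded complex, and hypothesis (i) together with Lemma \ref{perfect_objects}, used to recognize that bounded complex as perfect.
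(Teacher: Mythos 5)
Your proof is correct and follows the same overall strategy as the paper's: both reduce to showing that $\mathbb{L}\mathsf{F}(X[0])\in\mathsf{K}^{\mathsf{b}}(\Proj\mathcal{B})$ for every object $X\in\mathcal{B}$, using (ii) to bound the complex and (i) to supply the finite projective dimension needed for perfection. The only substantive difference is in how the witness for perfection is produced. The paper works on the level of cohomology: it shows each $\mathbb{L}_i\mathsf{F}(X)$ has finite projective dimension (by applying $\mathsf{F}$ to a short exact sequence $0\to K\to P\to X\to 0$, extracting the four-term exact sequence involving $\mathbb{L}_1\mathsf{F}(X)$, $\mathsf{F}(K)$, $\mathsf{F}(P)$, $\mathsf{F}(X)$, and iterating), and then appeals to the standard fact that a bounded complex with finitely many nonzero cohomology objects, each of finite projective dimension, is perfect. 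You instead work with an explicit representative complex: you replace $\mathsf{F}(P_\bullet)$ by its quotient truncation $\tau_{\leq N-1}\mathsf{F}(P_\bullet)$, use right exactness of $\mathsf{F}$ to identify the top term with $\mathsf{F}(\Omega^{N-1}X)$, and observe that all the terms of this bounded complex have finite projective dimension by (i). Your version sidesteps the devissage on cohomology in favour of a direct mapping-cone argument on the terms, which is arguably a little more elementary, while the paper's version avoids the truncation bookkeeping and the identification of the top term. Both are clean, and neither has a gap; the reduction to stalk complexes is also carried out by equivalent means (you use that stalks generate $\mathsf{D}^{\mathsf{b}}(\mathcal{B})$, the paper invokes Lemma~\ref{object_in_singularity}).
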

\begin{proof}
    The functor $\mathbb{L}\mathsf{F}\colon \mathsf{D}^{\mathsf{b}}(\mathcal{B})\rightarrow\mathsf{D}^{\mathsf{b}}(\mathcal{B})$ exists by assumption (ii), and by assumption (i), it gives rise to a functor $\mathbb{L}_{\mathsf{sg}}\mathsf{F}\colon\mathsf{D}_{\mathsf{sg}}(\mathcal{B})\rightarrow \mathsf{D}_{\mathsf{sg}}(\mathcal{B})$. 
    For every $X\in\mathsf{D}_{\mathsf{sg}}(\mathcal{B})$ there is $Y\in\mathcal{B}$ and $n\in\mathbb{Z}$ such that $X\cong Y[n]$. But, $\mathbb{L}_{\mathsf{sg}}\mathsf{F}(Y[n])=0$ in $\mathsf{D}_{\mathsf{sg}}(\mathcal{B})$ if and only if $\mathbb{L}_{\mathsf{sg}}\mathsf{F}(Y[0])=0$ in $\mathsf{D}_{\mathsf{sg}}(\mathcal{B})$ and for the latter, it is enough that $\pd{_{\mathcal{B}}\mathbb{L}_i\mathsf{F}(Y)}<\infty$ for every $i\geq 0$. For $i=0$ this holds true by assumption. In order to show that $\pd{_{\mathcal{B}}\mathbb{L}_1\mathsf{F}(Y)}<\infty$, we consider a short exact sequence $0\rightarrow K\rightarrow P\rightarrow Y\rightarrow 0$ with $P\in\Proj\mathcal{B}$. The latter gives rise to the following exact sequence in $\mathcal{B}$:
    \[
    0\rightarrow\mathbb{L}_1\mathsf{F}(Y)\rightarrow \mathsf{F}(K)\rightarrow\mathsf{F}(P)\rightarrow\mathsf{F}(Y)\rightarrow 0.
    \]
The objects $\mathsf{F}(K),\mathsf{F}(P),\mathsf{F}(Y)$ have finite projective dimension by assumption, so we conclude that $\pd{_{\mathcal{B}}\mathbb{L}_1\mathsf{F}(Y)}<\infty$. The claim is proved similarly for $i\geq 2$.
\end{proof}

For module categories Theorem \ref{equivalence_singularity} implies the following. 

\begin{cor} \label{equivalence_singularity_for_mod}
    Let $(\Mod \Gamma,\Mod \Lambda,\mathsf{i},\mathsf{e},\mathsf{l})$ be a cleft extension of module categories of Noetherian rings. Assume that the functor $\mathsf{F}$ is perfect and nilpotent. The following are equivalent: 
    \begin{itemize}
        \item[(i)] The functor $\mathsf{e}\colon\mathsf{D}_{\mathsf{sg}}(\Lambda)\rightarrow \mathsf{D}_{\mathsf{sg}}(\Gamma)$ is an equivalence. 
        \item[(ii)] $\mathbb{L}_{\mathsf{sg}}\mathsf{F}\simeq 0$ in $\mathsf{D}_{\mathsf{sg}}(\Gamma)$.
    \end{itemize}
\end{cor}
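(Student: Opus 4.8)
The plan is to obtain this as a direct application of Theorem \ref{equivalence_singularity} to the restricted cleft extension $(\smod\Gamma,\smod\Lambda,\mathsf{i},\mathsf{e},\mathsf{l})$. Since $\mathsf{F}$ is nilpotent, Lemma \ref{restriction_of_cleft} guarantees that the given cleft extension of module categories does restrict to a cleft extension between the subcategories of finitely generated modules, and by the convention of the paper $\mathsf{D}_{\mathsf{sg}}(\Gamma)=\mathsf{D}_{\mathsf{sg}}(\smod\Gamma)$ and $\mathsf{D}_{\mathsf{sg}}(\Lambda)=\mathsf{D}_{\mathsf{sg}}(\smod\Lambda)$, while the induced endofunctor on $\mathsf{D}_{\mathsf{sg}}(\smod\Gamma)$ is the $\mathbb{L}_{\mathsf{sg}}\mathsf{F}$ appearing in the statement.

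First I would check that the endofunctor associated to the restricted cleft extension, still denoted $\mathsf{F}$, is again perfect and nilpotent on $\smod\Gamma$. By Remark \ref{canonical_form} we may assume $\Lambda=\Gamma\!\ltimes\!_{\theta}M$ with $M$ a $\Gamma$-bimodule which, by Lemma \ref{tensor_is_perfect}, is perfect and nilpotent, and by Fact \ref{noetherian_theta_extensions} is finitely generated on both sides. Hence $\mathsf{F}\simeq-\otimes_{\Gamma}M$ carries $\smod\Gamma$ into itself, and the conditions of Definition \ref{perfect_functor} are inherited verbatim: the relevant $\mathsf{Tor}$-vanishing and the bound $\mathbb{L}_p\mathsf{F}^q=0$ for $p+q$ large do not depend on the ambient category, and $\pd M_{\Gamma}<\infty$ gives $\pd{_{\smod\Gamma}\mathsf{F}(P)}<\infty$ for every finitely generated projective $P$.

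Next I would verify the three hypotheses (a), (b), (c) of Theorem \ref{equivalence_singularity} for $(\smod\Gamma,\smod\Lambda,\mathsf{i},\mathsf{e},\mathsf{l})$. Condition (a), $\pd{_{\smod\Gamma}\mathsf{F}(P)}<\infty$ for every $P\in\Proj\smod\Gamma$, is part (ii) of Definition \ref{perfect_functor}; condition (b), $\mathbb{L}_n\mathsf{F}=0$ for $n$ large enough, follows by specialising to $q=1$ in part (i) of that definition; and condition (c), that $\mathsf{e}$ reflects objects of finite projective dimension, is precisely the implication (ii)$\Rightarrow$(i) of Proposition \ref{preserves_and_reflects}, which applies because $\mathsf{F}$ is perfect and nilpotent. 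With (a)--(c) established, Theorem \ref{equivalence_singularity} gives that $\mathsf{e}\colon\mathsf{D}_{\mathsf{sg}}(\Lambda)\to\mathsf{D}_{\mathsf{sg}}(\Gamma)$ is an equivalence if and only if $\mathbb{L}_{\mathsf{sg}}\mathsf{F}\simeq 0$ in $\mathsf{D}_{\mathsf{sg}}(\Gamma)$, which is the assertion.

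The proof is thus essentially bookkeeping: all the substantive work has already been done in Theorem \ref{equivalence_singularity} and Proposition \ref{preserves_and_reflects}. The only point needing a little care is the passage from $\Mod$ to $\smod$, and I expect that to be the sole (minor) obstacle --- it is handled by Lemma \ref{restriction_of_cleft} together with the canonical form of Remark \ref{canonical_form}, which ensures that the perfectness and nilpotency data survive the restriction because $M$ is finitely generated on both sides.
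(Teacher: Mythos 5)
Your proposal is correct and follows essentially the same route as the paper: restrict to $\smod$ via Lemma~\ref{restriction_of_cleft}, verify the hypotheses of Theorem~\ref{equivalence_singularity} using the perfectness and nilpotency of $\mathsf{F}$ together with Proposition~\ref{preserves_and_reflects}, and conclude. The paper cites Lemma~\ref{basic_properties_of_perfect_endofunctor_on_cleft} rather than spelling out conditions (a) and (b) from Definition~\ref{perfect_functor}, and leaves the preservation of perfectness under restriction implicit, but these are only cosmetic differences in bookkeeping.
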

\begin{proof}
   Since $\mathsf{F}$ is perfect and nilpotent, it follows by Lemma \ref{basic_properties_of_perfect_endofunctor_on_cleft} and Proposition \ref{preserves_and_reflects} that the assumptions of Theorem \ref{equivalence_singularity} are satisfied. Hence, together with the observation that $(\Mod\Gamma,\Mod\Lambda,\mathsf{i},\mathsf{e},\mathsf{l})$ restricts to $(\smod\Gamma,\smod\Lambda,\mathsf{i},\mathsf{e},\mathsf{l})$, see Lemma \ref{restriction_of_cleft}, the result follows. 
\end{proof}

By the above and using Lemma \ref{left_derived_of_F_vanishes}, we can give the following examples. 

\begin{exmp} \label{equivalence_for_triangular}
    Let $A$ and $B$ be Noetherian rings and $N$ an $A$-$B$-bimodule that is finitely generated on both sides. Consider the triangular matrix ring $\Lambda=\big(\begin{smallmatrix}   A & N\\   0 & B \end{smallmatrix}\big)$, which is also Noetherian. Assume that $\gd B<\infty$ and $\pd{_AN}<\infty$. By Example \ref{perfect_for_triangular}, $\Mod\Lambda$ is a cleft extension of $\Mod A\!\times\! B$ and under the given assumptions, the functor $\mathsf{F}$, given by $\mathsf{F}(X,Y)=(0,X\otimes_A N)$, is perfect and nilpotent. Moreover, it follows that $\pd{\mathsf{F}(X,Y)_{A\times B}}<\infty$ for every $(X,Y)\in\Mod A\!\times\! B$. We infer from Lemma \ref{left_derived_of_F_vanishes} that $\mathbb{L}_{\mathsf{sg}}\mathsf{F}=0$ in $\mathsf{D}_{\mathsf{sg}}(A\!\times\! B)$. Therefore, we may apply Corollary \ref{equivalence_singularity_for_mod} to obtain the following equivalences:
    \[
    \mathsf{D}_{\mathsf{sg}}(\begin{pmatrix}   A & N\\   0 & B \end{pmatrix})\simeq \mathsf{D}_{\mathsf{sg}}(A\!\times\! B)\simeq \mathsf{D}_{\mathsf{sg}}(A).
    \] 
    Compare; \cite[Theorem 4.1]{chen} and \cite[Corollary 8.4]{PSS}. 
\end{exmp}

\begin{exmp} \label{equivalence_for_trivial_extensions}
    Let $\Gamma$ be a finite dimensional algebra over a field $k$ and $M$ a $\Gamma$-bimodule that is finitely generated on both sides. Then, the trivial extension $\Gamma\!\ltimes\! M$ is also finite dimensional over $k$. Assume that $_{\Gamma}M_{\Gamma}$ is nilpotent and projective (as a bimodule). The functor $\mathsf{F}=-\otimes_{\Gamma}M$ is perfect and nilpotent and moreover $\mathsf{ImF}\subseteq \Proj\Gamma$. It follows by Lemma \ref{left_derived_of_F_vanishes} that $\mathbb{L}_{\mathsf{sg}}\mathsf{F}=0$ in $\mathsf{D}_{\mathsf{sg}}(\Gamma)$. By the above, we may apply Corollary \ref{equivalence_singularity_for_mod} to obtain the following equivalence: 
    \[
    \mathsf{D}_{\mathsf{sg}}(\Gamma\!\ltimes\! M)\simeq \mathsf{D}_{\mathsf{sg}}(\Gamma).
    \]
    Compare; \cite[Corollary 4.4]{arrow2}. 
\end{exmp}

\subsection{Singular equivalences of Morita type with level} 
Recently, in \cite{qin2}, ''perfectness'' assumptions (in the sense of Definition \ref{perfect_bimodule}) were used to prove singular equivalences for extensions of algebras and more precisely singular equivalences ``of Morita type with level'' in the sense of Wang \cite{wang}. In this section we recall this notion, translate the results of \cite{qin2} to the context of $\theta$-extensions and compare with our work. 

Given an algebra $\Gamma$ over a commutative ring $k$, we denote by $\Gamma^e$ the enveloping algebra of $\Gamma$, that is the tensor product algebra $\Gamma\otimes_k\Gamma^{\mathsf{op}}$. 

\begin{defn} \label{morita_type}(\!\!\cite[Definition 2.1]{wang})
    Let $k$ be a commutative ring and let $\Gamma$ and $\Lambda$ be two $k$-algebras which are projective as $k$-modules. Let $M$ be a $\Lambda$-$\Gamma$-bimodule and $N$ a $\Gamma$-$\Lambda$-bimodule. We say that $(M,N)$ defines a \emph{singular equivalence of Morita type with level $n$}, for some nonnegative integer $n$, if the following conditions are satisfied: 
    \begin{itemize}
        \item[(i)] $M$ is finitely generated and projective on both sides. 
        \item[(ii)] $N$ is finitely generated and projective on both sides. 
        \item[(iii)] $N\otimes_{\Lambda}M\cong \Omega^n_{\Gamma^e}(\Gamma)$ in $\umod\Gamma^e$ and $M\otimes_{\Gamma}N\cong \Omega^n_{\Lambda^e}(\Lambda)$ in $\umod\Lambda^e$. 
    \end{itemize}
\end{defn}

\begin{rem} \label{morita_type_implies_equivalence} (\!\!\cite[Remark 2.2]{wang})
    If $\Gamma$ and $\Lambda$ are finite dimensional algebras over a field and $(_{\Lambda}M_{\Gamma},_{\Gamma}N_{\Lambda})$ defines a singular equivalence of Morita type with level between them, then the functor $-\otimes_{\Lambda}M\colon\mathsf{D}_{\mathsf{sg}}(\Lambda)\rightarrow \mathsf{D}_{\mathsf{sg}}(\Gamma)$ is a triangle equivalence. A proof can be found in \cite[Proposition 4.2]{dalezios}. 
\end{rem}

From now on, we denote by $\Gamma$ and $\Lambda$ two finite dimensional algebras over a field. Recall that $\Lambda$ is an \emph{extension} of $\Gamma$ if $\Gamma\subseteq \Lambda$ and $\Gamma$ is a subalgebra of $\Lambda$. 

\begin{prop} \label{qin} \textnormal{(\!\!\cite[Theorem 3.7]{qin2})}
    Let $\Gamma\subseteq \Lambda$ be an extension of algebras. Assume that the following are satisfied: 
    \begin{itemize}
        \item[(i)] $\mathsf{Tor}_i^{\Gamma}((\Lambda/\Gamma),(\Lambda/\Gamma)^{\otimes j})=0$ for all $i,j\geq 1$. 
        \item[(ii)] $\pd{_{\Gamma^e}\Lambda/\Gamma}<\infty$. 
        \item[(iii)] $\Lambda/\Gamma$ is a nilpotent $\Gamma$-bimodule. 
    \end{itemize}
    Then there is a singular equivalence of Morita type with level between $\Gamma$ and $\Lambda$. 
\end{prop}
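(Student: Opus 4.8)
The cleanest route is to observe that this is exactly \cite[Theorem~3.7]{qin2}, so my plan is first to check that the hypotheses match Qin's. Here $k$ is a field, so $\Gamma$ and $\Lambda$ are automatically projective $k$-modules, $\Lambda/\Gamma$ is finite-dimensional hence finitely generated on both sides, and $\pd{_{\Gamma^e}(\Lambda/\Gamma)}<\infty$ subsumes the one-sided finiteness $\pd{_{\Gamma}(\Lambda/\Gamma)}<\infty$ and $\pd{(\Lambda/\Gamma)_{\Gamma}}<\infty$, since a projective $\Gamma^e$-module is, on each side, a summand of a free $\Gamma$-module ($\Gamma$ being free over the field $k$). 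Condition (i) says precisely that $\Lambda/\Gamma$ satisfies the vanishing \textnormal{(\ref{R})} for $\mathsf{F}=-\otimes_{\Gamma}(\Lambda/\Gamma)$, so by Lemma~\ref{equivalent_to_condition_R} and Lemma~\ref{easy_properties_of_perfect_bimodules} all higher $\mathsf{Tor}$'s among tensor powers of $\Lambda/\Gamma$ vanish and each $(\Lambda/\Gamma)^{\otimes j}$ again has finite projective dimension on both sides; together with (iii), $(\Lambda/\Gamma)^{\otimes s}=0$, these projective dimensions are uniformly bounded by some $N_0$. After this verification one invokes \cite[Theorem~3.7]{qin2}.

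If instead one wants to reproduce the argument, the plan is to build the pair $({}_{\Lambda}M_{\Gamma},{}_{\Gamma}N_{\Lambda})$ of Definition~\ref{morita_type} by hand. I would start from $M={}_{\Lambda}\Lambda_{\Gamma}$ and $N={}_{\Gamma}\Lambda_{\Lambda}$: the multiplication map $\mu\colon\Lambda\otimes_{\Gamma}\Lambda\to\Lambda$ is a $\Lambda$-bimodule epimorphism which splits both as a map of left $\Lambda$-modules and as a map of right $\Gamma$-modules via $\lambda\mapsto\lambda\otimes 1$, so $M\otimes_{\Gamma}N\cong\Lambda\oplus\ker\mu$ on each side separately, whereas $N\otimes_{\Lambda}M\cong\Lambda$ exactly. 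Then I would iterate the relative syzygy $\Omega_{\Lambda^e}$ starting from $0\to\ker\mu\to\Lambda\otimes_{\Gamma}\Lambda\to\Lambda\to 0$; after $n$ steps this produces a $\Lambda$-bimodule assembled from iterated tensor powers $\Lambda^{\otimes_{\Gamma}k}$, and hypothesis (i) is exactly what makes the partial projective $\Gamma$-resolutions used to form these tensor powers stay acyclic, so that the syzygy computation is the expected one. Choosing $n$ larger than $N_0$ plus a correction coming from the length of a bimodule resolution of $\Lambda/\Gamma$, the relevant $n$-th syzygies become projective on the appropriate one side, and — because after $\ge s$ tensor factors every contribution of $\Lambda/\Gamma$ disappears — the high syzygies of $\Lambda$ over $\Lambda^e$ and of the corresponding object over $\Gamma^e$ stabilise to $\Omega^n_{\Lambda^e}(\Lambda)$ and $\Omega^n_{\Gamma^e}(\Gamma)$, giving Definition~\ref{morita_type}(iii); the finitely-generated-projective-on-both-sides conditions (i)–(ii) of that definition then follow from the uniform bound together with the projectivity of these high syzygies on one side.

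I expect the main obstacle to be bookkeeping rather than any single idea: one must track simultaneously the left and right module structures, the stable isomorphisms in $\umod\Lambda^e$ and $\umod\Gamma^e$, and the interaction of the two syzygy towers, given that only (i) is a $\mathsf{Tor}$-vanishing statement while (ii) and (iii) are the dimension- and nilpotency-inputs that force $\Lambda$ to behave like $\Gamma$ in high homological degree. Pinning down a single integer $n$ that simultaneously realises both isomorphisms in Definition~\ref{morita_type}(iii) is the delicate point, and for the paper I would simply cite \cite[Theorem~3.7]{qin2} after the hypothesis check in the first paragraph.
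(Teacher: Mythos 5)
The paper offers no proof of this proposition: it is recalled verbatim from \cite[Theorem 3.7]{qin2}, and the citation is the entire justification. Your final recommendation — to verify the hypotheses match and then cite \cite[Theorem~3.7]{qin2} — is exactly what the paper does, so the approach agrees; the intermediate sketch of a from-scratch argument is superfluous for the paper's purposes (and, as you note yourself, your choice $M={}_{\Lambda}\Lambda_{\Gamma}$, $N={}_{\Gamma}\Lambda_{\Lambda}$ is only a starting point, since the bimodules in Definition~\ref{morita_type} must be the high syzygies, not $\Lambda$ itself, and the simultaneous level $n$ would need more work to pin down).
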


Consider a $\theta$-extension $\Lambda=\Gamma\!\ltimes\!_{\theta}M$. Then, the algebra $\Lambda$ is an extension of $\Gamma$ and the $\Gamma$-bimodule $\Lambda/\Gamma$ is isomorphic to $M$. Therefore, Proposition \ref{qin} applies (see also \cite[Proposition 6.3]{qin2}).

\begin{cor} \label{equivalence_of_morita_type}
    Let $\Gamma$ be a finite dimensional algebra over a field and consider a $\theta$-extension $\Lambda=\Gamma\ltimes_{\theta}M$ where $M$ is a finitely generated (on both sides) $\Gamma$-bimodule. Assume that the following are satisfied: 
    \begin{itemize}
        \item[(i)] $\mathsf{Tor}_i^{\Gamma}(M,M^{\otimes j})=0$ for all $i,j\geq 1$. 
        \item[(ii)] $\pd{_{\Gamma^e}M}<\infty$. 
        \item[(iii)] $M$ is nilpotent. 
    \end{itemize}
    Then there is a singular equivalence of Morita type with level between $\Gamma$ and $\Lambda$. 
\end{cor}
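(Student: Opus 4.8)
The plan is to verify that the hypotheses are precisely those of Proposition \ref{qin} and then invoke it directly. First I would record that $\Lambda=\Gamma\!\ltimes\!_{\theta}M$ is an extension of $\Gamma$: the ring homomorphism $g\colon\Gamma\rightarrow\Lambda$, $\gamma\mapsto(\gamma,0)$, is an injective algebra map identifying $\Gamma$ with the subalgebra $\{(\gamma,0)\mid\gamma\in\Gamma\}$ of $\Lambda$. Since $\Gamma$ is finite dimensional over the base field and $M$ is finitely generated on both sides, hence finite dimensional, the algebra $\Lambda=\Gamma\oplus M$ is finite dimensional over the field; in particular all the algebras involved are projective over the base commutative ring, as required in Proposition \ref{qin}.

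Next I would identify $\Lambda/\Gamma$ with $M$ via $(\gamma,m)+\Gamma\mapsto m$. Using the multiplication rule of the $\theta$-extension, $(\gamma,0)\cdot(0,m)=(0,\gamma m)$ and $(0,m)\cdot(\gamma,0)=(0,m\gamma)$, so this is an isomorphism of $\Gamma$-bimodules $\Lambda/\Gamma\cong M$. Under this identification, hypothesis (i) of the statement becomes $\mathsf{Tor}_i^{\Gamma}((\Lambda/\Gamma),(\Lambda/\Gamma)^{\otimes j})=0$ for all $i,j\geq 1$, hypothesis (ii) becomes $\pd{_{\Gamma^e}\Lambda/\Gamma}<\infty$, and hypothesis (iii) says that $\Lambda/\Gamma$ is a nilpotent $\Gamma$-bimodule. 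These are exactly conditions (i), (ii) and (iii) of Proposition \ref{qin}, and the conclusion follows.

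There is essentially no serious obstacle here: the argument is bookkeeping once the bimodule isomorphism $\Lambda/\Gamma\cong M$ is in place. The only points that warrant care are that this isomorphism is one of $\Gamma$-bimodules (immediate, since $\theta$ takes values in the second coordinate and does not affect the one-sided $\Gamma$-actions on $M$) and that finite dimensionality over the field guarantees the projectivity hypothesis of Proposition \ref{qin}. One may additionally observe, via Remark \ref{morita_type_implies_equivalence}, that the resulting singular equivalence of Morita type with level induces a triangle equivalence $\mathsf{D}_{\mathsf{sg}}(\Lambda)\xrightarrow{\simeq}\mathsf{D}_{\mathsf{sg}}(\Gamma)$, which is compatible with the equivalence obtained from Corollary \ref{equivalence_singularity_for_mod} in the overlapping cases.
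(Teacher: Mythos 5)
Your proof is correct and matches the paper's approach exactly: the paper likewise observes that $\Lambda=\Gamma\!\ltimes\!_{\theta}M$ is an extension of $\Gamma$ with $\Lambda/\Gamma\cong M$ as $\Gamma$-bimodules and then applies Proposition \ref{qin} directly. You simply spell out the bookkeeping (the injective embedding $\Gamma\hookrightarrow\Lambda$, the bimodule isomorphism, and the finite-dimensionality ensuring projectivity over the base field) that the paper leaves implicit.
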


In order to compare the above with the singular equivalence of Corollary \ref{equivalence_singularity_for_mod}, we present the following lemma, see \cite[Lemma 3.5]{dalezios} or \cite[Proposition 3.7]{OPS}. 

\begin{lem}  \label{apo_to_apeiro}
    Let $\Gamma$ be a finite dimensional algebra over a field and $M$ a $\Gamma$-bimodule that is finitely generated on both sides. If $\pd{_{\Gamma^e}M}<\infty$, then 
    \begin{itemize}
        \item[(i)] $X\otimes_{\Gamma}^{\mathbb{L}}M\in\mathsf{K}^{\mathsf{b}}(\Proj\Gamma)$ for every $X\in\mathsf{D}^{\mathsf{b}}(\Mod\Gamma)$.
        \item[(ii)] $X\otimes_{\Gamma}^{\mathbb{L}}M\in\mathsf{K}^{\mathsf{b}}(\proj\Gamma)$ for every $X\in\mathsf{D}^{\mathsf{b}}(\smod\Gamma)$. 
    \end{itemize} 
\end{lem}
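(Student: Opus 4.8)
The plan is to reduce everything to a single observation about the free $\Gamma$-bimodule. First, since $\Gamma$ is finite dimensional over $k$ and $M$ is finitely generated on both sides, $M$ is finite dimensional over $k$ and hence a finitely generated $\Gamma^{e}$-module; together with $\pd{_{\Gamma^{e}}M}<\infty$ this means $M$ is a perfect object of $\mathsf{D}^{\mathsf{b}}(\Mod\Gamma^{e})$. So I would fix once and for all a bounded complex $P^{\bullet}$ of finitely generated projective $\Gamma^{e}$-modules together with a quasi-isomorphism $P^{\bullet}\to M$ (a finite projective resolution).

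The key observation to record is: for every $X\in\Mod\Gamma$ and every $P\in\proj\Gamma^{e}$, the right $\Gamma$-module $X\otimes_{\Gamma}P$ is projective, and it is finitely generated if both $X$ and $P$ are. Indeed $P$ is a direct summand of a finite free module $(\Gamma\otimes_{k}\Gamma^{\mathsf{op}})^{m}$, and for the regular $\Gamma$-bimodule there is a natural isomorphism of right $\Gamma$-modules
\[
X\otimes_{\Gamma}(\Gamma\otimes_{k}\Gamma^{\mathsf{op}})\;\cong\;X\otimes_{k}\Gamma,
\]
where the right action is on the right-hand tensorand; since $X$ is a $k$-vector space, $X\otimes_{k}\Gamma$ is free (a direct sum of $\dim_{k}X$ copies of $\Gamma$), so $X\otimes_{\Gamma}P$ is a direct summand of a free right $\Gamma$-module, and it is finitely generated when $\dim_{k}X<\infty$ and $m$ is finite.

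Then I would finish as follows. Projective $\Gamma^{e}$-modules are flat (in fact projective) as left $\Gamma$-modules, because the regular $\Gamma^{e}$-module is free as a left $\Gamma$-module; hence the bounded complex $P^{\bullet}$ of left-$\Gamma$-flat bimodules computes the derived tensor product. Concretely, for a bounded complex $X$ of right $\Gamma$-modules one has a natural isomorphism $X\otimes_{\Gamma}^{\mathbb{L}}M\cong\operatorname{Tot}(X\otimes_{\Gamma}P^{\bullet})$ in $\mathsf{D}(\Mod\Gamma)$, obtained by replacing $X$ by a bounded-above complex of projectives and using that $P^{\bullet}$ is bounded. Since $X$ and $P^{\bullet}$ are both bounded, $\operatorname{Tot}(X\otimes_{\Gamma}P^{\bullet})$ is a bounded complex whose terms $X_{a}\otimes_{\Gamma}P_{b}$ lie in $\Proj\Gamma$ by the observation above --- and lie in $\proj\Gamma$ when in addition each $X_{a}$ is finite dimensional, i.e.\ when $X\in\mathsf{D}^{\mathsf{b}}(\smod\Gamma)$ --- so it represents an object of $\mathsf{K}^{\mathsf{b}}(\Proj\Gamma)$, respectively of $\mathsf{K}^{\mathsf{b}}(\proj\Gamma)$, inside $\mathsf{D}^{\mathsf{b}}(\Mod\Gamma)$. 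This yields (i) and (ii).

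The only step that is not purely formal is the identification of $X\otimes_{\Gamma}^{\mathbb{L}}M$ with the naive total complex $\operatorname{Tot}(X\otimes_{\Gamma}P^{\bullet})$, which genuinely uses the boundedness of $P^{\bullet}$ (tensoring the acyclic cone of $P^{\bullet}\to M$ by a bounded-above projective resolution of $X$ stays acyclic after totalizing because $P^{\bullet}$ gives a finite filtration). This is exactly the content of the cited \cite[Lemma 3.5]{dalezios} and \cite[Proposition 3.7]{OPS}, so in the write-up I would simply invoke it rather than reproduce the filtration argument.
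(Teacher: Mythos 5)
Your argument is correct, and it is the natural direct proof. The paper itself offers no proof of this lemma --- it simply points to \cite[Lemma~3.5]{dalezios} and \cite[Proposition~3.7]{OPS} --- so there is nothing in the paper to compare against line by line, but your reduction is exactly the argument one expects those sources to contain: replace $M$ by a bounded complex $P^{\bullet}$ of finitely generated projective $\Gamma^{e}$-modules, observe that $\Gamma^{e}$ is free as a left $\Gamma$-module so that $P^{\bullet}$ computes $-\otimes_{\Gamma}^{\mathbb{L}}M$, and note that for the regular bimodule $X\otimes_{\Gamma}(\Gamma\otimes_{k}\Gamma^{\mathsf{op}})\cong X\otimes_{k}\Gamma$ is right-$\Gamma$-free, hence $X\otimes_{\Gamma}P$ is projective (and finitely generated when $X$ is) for every $P\in\proj\Gamma^{e}$. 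Both (i) and (ii) then drop out of the boundedness of $X$ and $P^{\bullet}$. Your implicit use of the fact that finite generation on one side over a finite dimensional algebra already forces $\dim_{k}M<\infty$ is also fine.

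One small quibble on presentation: you describe the identification $X\otimes_{\Gamma}^{\mathbb{L}}M\cong\operatorname{Tot}(X\otimes_{\Gamma}P^{\bullet})$ as ``the only non-formal step'' and say you would cite \cite{dalezios,OPS} for it. That identification is just the balance of the derived tensor product --- $P^{\bullet}$ is a bounded complex of left-$\Gamma$-flat modules quasi-isomorphic to $M$, so it computes $-\otimes_{\Gamma}^{\mathbb{L}}M$ --- and it does not need the cited results; those references prove the whole lemma, not just this step. Better to state the balance fact directly and keep your key observation about $X\otimes_{\Gamma}(\Gamma\otimes_{k}\Gamma^{\mathsf{op}})$ as the genuine content.
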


Notice that if the bimodule $M$ satisfies the conditions of Corollary \ref{equivalence_of_morita_type}, then it is perfect. Moreover, by Lemma \ref{apo_to_apeiro}, the assumption $\pd{_{\Gamma^e}M}<\infty$ implies that the induced functor $-\otimes^{\mathbb{L}}_{\Gamma}M\colon \mathsf{D}_{\mathsf{sg}}(\Gamma)\rightarrow\mathsf{D}_{\mathsf{sg}}(\Gamma)$ is 0. That is, the singular equivalence of Corollary \ref{equivalence_of_morita_type} is a special case of Corollary \ref{equivalence_singularity_for_mod}.

\begin{rem}
    There are singular equivalences in our context that are not obtained by Corollary \ref{equivalence_of_morita_type}, for instance Example \ref{equivalence_for_triangular}. However, in view of \cite[Theorem]{dalezios}, \cite[Theorem 1.1]{qin1} and Remark \ref{canonical_form}, given any equivalence $\mathsf{D}_{\mathsf{sg}}(\Lambda)\simeq \mathsf{D}_{\mathsf{sg}}(\Gamma)$ obtained by Corollary \ref{equivalence_singularity_for_mod}, we may infer that there is a singular equivalence of Morita type with level between $\Lambda$ and $\Gamma$, provided that $\Lambda/\mathsf{rad}(\Lambda)$ and $\Gamma/\mathsf{rad}(\Lambda)$ are separable over $k$ (this happens, for instance, whenever $k$ is algebraically closed). When $\Gamma/\mathsf{rad}(\Gamma)$ is separable over $k$, then $\pd{_{\Gamma^e}}\Gamma<\infty$ if and only if $\gd \Gamma<\infty$ (see for instance \cite{han} and in particular \cite[Remark 3]{han} for a thorough discussion and references on this). 
\end{rem}

Singular equivalences of Morita type with level deserve their own attention: several properties are invariant under such an equivalence, for instance Hochschild homology (see \cite[Proposition 3.7]{wang}) and the $\mathsf{Fg}$ condition for Hochschild cohomology (see \cite[Theorem 7.3]{fg_condition}) in the sense of Snashall-Solberg \cite{snashall_solberg}. More equivalences of this type are proved in \cite{chen_liu_wang,dalezios,qin1}.

\section{Gorenstein projective modules} We compare the Gorenstein projective objects in a cleft extension. We explain the existence of the upper part of the diagram of Theorem A of the Introduction and obtain equivalences of stable categories. We end this section with applications regarding CM-free rings and algebras of finite Cohen-Macaulay type.

\subsection{Gorenstein homological algebra} \label{gorenstein_homological_algebra}

We denote by $\mathcal{A}$ an abelian category with enough projective objects. An acyclic complex $P^{\bullet}$, of projective objects of $\mathcal{A}$, is called \emph{totally acyclic} if $\mathsf{Hom}_{\mathcal{A}}(P^{\bullet},Q)$ is acyclic for every $Q\in\Proj\mathcal{A}$. The following was introduced in \cite{enochs_jenda}. 

\begin{defn}
    An object $X$ of $\mathcal{A}$ is called \emph{Gorenstein projective} if there is a totally acyclic complex $P^{\bullet}$ over $\Proj\mathcal{A}$ with $X=\mathsf{B}^0(P^{\bullet})\coloneqq\mathsf{Im}(P^{-1}\rightarrow P^0)$.
\end{defn}

We denote by $\GProj\mathcal{A}$ the category of Gorenstein projective objects of $\mathcal{A}$. This is an additive and extension closed subcategory of $\mathcal{A}$. In particular, it is an exact category. Furthermore, it is Frobenius with the projective-injective objects being the projective objects of $\mathcal{A}$. Consequently, by a theorem of Happel \cite{happel}, the stable category $\uGProj\mathcal{A}$ is triangulated. The latter admits a description via homotopy categories (see e.g. \cite[Proposition 7.2]{krause}), which we recall below.

\begin{prop}
    Given a totally acyclic complex $P^{\bullet}$ over $\Proj\mathcal{A}$, the assignment $P^{\bullet}\mapsto\mathsf{B}^0(P^{\bullet})$ induces an equivalence of triangulated categories 
    \[
    \mathsf{K}_{\mathsf{tac}}(\Proj\mathcal{A})\simeq \uGProj\mathcal{A},
    \]
    where $\mathsf{K}_{\mathsf{tac}}(\Proj\mathcal{A})$ denotes the full subcategory of $\mathsf{K}(\Proj\mathcal{A})$ that consists of the totally acyclic complexes.
\end{prop}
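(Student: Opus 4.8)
The plan is to establish the equivalence $\mathsf{K}_{\mathsf{tac}}(\Proj\mathcal{A})\simeq \uGProj\mathcal{A}$ by constructing a functor in each direction and checking they are mutually quasi-inverse. First I would define the functor $\Phi\colon\mathsf{K}_{\mathsf{tac}}(\Proj\mathcal{A})\to\uGProj\mathcal{A}$ on objects by $P^\bullet\mapsto \mathsf{B}^0(P^\bullet)=\Image(P^{-1}\to P^0)$; by definition of Gorenstein projectivity, $\mathsf{B}^0(P^\bullet)$ is indeed a Gorenstein projective object, so the assignment lands in $\uGProj\mathcal{A}$. The key point is functoriality: given a chain map $f^\bullet\colon P^\bullet\to Q^\bullet$ between totally acyclic complexes, it restricts to a morphism $\mathsf{B}^0(f^\bullet)\colon\mathsf{B}^0(P^\bullet)\to\mathsf{B}^0(Q^\bullet)$, and I would check that null-homotopic chain maps induce morphisms that factor through a projective object, hence are zero in the stable category $\uGProj\mathcal{A}$. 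Concretely, if $f^\bullet = ds + sd$ for a homotopy $s$, then on $\mathsf{B}^0$ the induced map factors through $P^0$ (or $Q^{-1}$), which is projective, so it vanishes stably. Thus $\Phi$ is a well-defined additive functor, and one checks it is a triangle functor by comparing the triangulated structure on $\mathsf{K}_{\mathsf{tac}}(\Proj\mathcal{A})$ (mapping cones) with the Frobenius-exact triangulated structure on $\uGProj\mathcal{A}$ coming from Happel's theorem: a degreewise split short exact sequence of totally acyclic complexes yields, after applying $\mathsf{B}^0$, a conflation in $\GProj\mathcal{A}$, and the connecting maps match up.

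Next I would construct a quasi-inverse $\Psi\colon\uGProj\mathcal{A}\to\mathsf{K}_{\mathsf{tac}}(\Proj\mathcal{A})$. For a Gorenstein projective object $X$, choose (by definition) a totally acyclic complex $P_X^\bullet$ with $\mathsf{B}^0(P_X^\bullet)\cong X$; this is the ``complete resolution'' of $X$. The main subtlety is that this complex is only unique up to homotopy equivalence, so the assignment is a priori not canonical on the nose. I would resolve this in the standard way: a lifting/comparison lemma shows that any morphism $X\to Y$ in $\GProj\mathcal{A}$ lifts to a chain map $P_X^\bullet\to P_Y^\bullet$, unique up to homotopy, and two choices of complete resolution of the same object are canonically isomorphic in $\mathsf{K}_{\mathsf{tac}}(\Proj\mathcal{A})$. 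The lifting uses the defining property of projectives in the nonnegative degrees together with the total acyclicity (i.e. $\mathsf{Hom}_{\mathcal A}(P^\bullet, Q)$ exact) to extend the lift into the negative degrees. This makes $\Psi$ a well-defined functor, and the natural isomorphisms $\Phi\Psi\cong\mathsf{Id}$ and $\Psi\Phi\cong\mathsf{Id}$ then follow: $\Phi\Psi(X) = \mathsf{B}^0(P_X^\bullet)\cong X$ by construction, and $\Psi\Phi(P^\bullet)$ is a complete resolution of $\mathsf{B}^0(P^\bullet)$, hence homotopy equivalent to $P^\bullet$ itself.

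The step I expect to be the main obstacle is the careful verification of the lifting lemma for complete resolutions together with its uniqueness up to homotopy — specifically, extending a lift of $X\to Y$ from the ``right half'' of the complexes into the negative degrees and showing the ambiguity is exactly a homotopy. This is where total acyclicity (as opposed to mere acyclicity) is essential, and where one must be careful that $\mathcal{A}$ has enough projectives but need not have other special structure. Everything else — additivity, the triangle-functor compatibility, the computation $\Phi\Psi\cong\mathsf{Id}$ — is routine bookkeeping. In fact, since the statement is attributed to a known reference (\cite[Proposition 7.2]{krause}), I would expect the paper's proof to either cite it directly or give a brief sketch along exactly these lines, emphasizing that $\mathsf{B}^0$ induces the equivalence and that the inverse is ``take a complete resolution.''
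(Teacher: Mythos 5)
Your sketch is correct and is precisely the standard argument; the paper itself supplies no proof here but simply cites \cite[Proposition 7.2]{krause}, which proceeds exactly along the lines you describe (showing $\mathsf{B}^0$ sends null-homotopic maps to maps factoring through a projective, and building the quasi-inverse via complete resolutions and a comparison lemma whose lifting into negative degrees uses total acyclicity). Your identification of the comparison/lifting lemma for complete resolutions as the key technical point is accurate.
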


Denote by $\mathsf{K}^{-,\mathsf{b}}(\Proj\mathcal{A})$ the full subcategory of $\mathsf{K}^{-}(\Proj\mathcal{A})$ that consists of the complexes that are acyclic almost everywhere. Upon identification of $\uGProj\mathcal{A}$ with $\mathsf{K}_{\mathsf{tac}}(\Proj\mathcal{A})$ and of $\mathsf{D}^{\mathsf{b}}(\mathcal{A})$ with $\mathsf{K}^{-,\mathsf{b}}(\Proj\mathcal{A})$, we obtain a triangle functor 
\[
i\colon\uGProj\mathcal{A}\rightarrow\mathsf{D}_{\mathsf{sg}}(\mathcal{A}),
\]
where a totally acyclic complex $P^{\bullet}$ over $\Proj\mathcal{A}$ is mapped to the object of $\mathsf{D}_{\mathsf{sg}}(\mathcal{A})$ represented by the truncation $\tau_0P^{\bullet}$. The functor $i$ is fully faithful (see for instance \cite[Proposition 4.9]{relative_sing_cats}). 

For a ring $\Lambda$, we denote by $\GProj\Lambda$ the Gorenstein projective objects of $\Mod \Lambda$, which are simply called the \emph{Gorenstein projective modules} of $\Lambda$. Further, we denote by $\Gproj\Lambda$ the Gorenstein projective objects of $\smod\Lambda$, which are often called the \emph{Cohen-Macaulay} $\Lambda$-modules when $\Lambda$ is an Iwanaga-Gorenstein algebra. We have 
\[
\Gproj\Lambda=\GProj\Lambda\cap\smod\Lambda,
\]
provided that $\Lambda$ is right Noetherian, see \cite[Lemma 3.4]{chen3}. In this case, the triangle functor $i\colon \uGProj \Lambda\rightarrow \mathsf{D}_{\mathsf{sg}}(\Mod \Lambda)$ restricts to $i\colon\uGproj\Lambda\rightarrow \mathsf{D}_{\mathsf{sg}}(\Lambda)$. The following is a fundamental theorem of Buchweitz \cite[Theorem 4.4.1]{buchweitz} and it was obtaind independently by Happel \cite{happel2}.

\begin{thm} \label{buchweitz}
    Let $\Lambda$ be a Noetherian ring. If $\Lambda$ is right Gorenstein, then
    \[
    i\colon\uGproj\Lambda\rightarrow \mathsf{D}_{\mathsf{sg}}(\Lambda)
    \]
    is a triangle equivalence.  
\end{thm}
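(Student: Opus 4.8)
The plan is to show that the fully faithful triangle functor $i\colon\uGproj\Lambda\to\mathsf{D}_{\mathsf{sg}}(\Lambda)$ is essentially surjective when $\Lambda$ is right Gorenstein. Full faithfulness has already been recorded in the excerpt (via the identification with $\mathsf{K}_{\mathsf{tac}}(\proj\Lambda)$ and the restriction to finitely generated modules, using $\Gproj\Lambda=\GProj\Lambda\cap\smod\Lambda$ for right Noetherian $\Lambda$), so only density remains. By Lemma~\ref{object_in_singularity}, every object of $\mathsf{D}_{\mathsf{sg}}(\Lambda)$ is isomorphic to $Y[n]$ for some $Y\in\smod\Lambda$ and $n\in\mathbb{Z}$; since $[1]$ is an autoequivalence of $\mathsf{D}_{\mathsf{sg}}(\Lambda)$ that restricts compatibly to $\uGproj\Lambda$, it suffices to show that every $Y\in\smod\Lambda$ is isomorphic in $\mathsf{D}_{\mathsf{sg}}(\Lambda)$ to an object coming from $\uGproj\Lambda$.

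First I would record the key homological input: when $\Lambda$ is right Gorenstein, $\spli(\Mod\Lambda)<\infty$ and $\silp(\Mod\Lambda)<\infty$, and the same bounds hold at the finitely generated level. This forces the following two facts, which are the heart of Buchweitz's argument. (1) A finitely generated module has finite projective dimension if and only if it has finite injective dimension, and both are then bounded by a constant $d=\max\{\spli,\silp\}$ (this uses Lemma~\ref{projective_and_injective_dimensions}). (2) For $d$ large enough, every $d$-th syzygy $\Omega^d Y$ of a finitely generated module $Y$ is Gorenstein projective: the point is that $\Ext^i_\Lambda(\Omega^dY,\Lambda)=0$ for all $i\ge 1$ by dimension shift together with the finiteness of $\silp$, and then one splices a projective resolution of $\Omega^dY$ with a coresolution by projectives — the latter exists precisely because $\Ext^i_\Lambda(\Omega^dY,\Lambda)$ vanishes — to produce a totally acyclic complex witnessing that $\Omega^dY\in\Gproj\Lambda$. (One must check the spliced complex is $\Hom_\Lambda(-,Q)$-acyclic for all projective $Q$, which again follows from the Ext-vanishing and left-right symmetry provided by Proposition~\ref{two_sided_noetherian_gorenstein} if one wants to invoke it, or directly from $\silp<\infty$.)

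Given this, I would argue: for any $Y\in\smod\Lambda$, choose a projective resolution $\cdots\to P_1\to P_0\to Y\to 0$ and truncate at stage $d$ to get a short exact sequence $0\to\Omega^dY\to P_{d-1}\to\cdots\to P_0\to Y\to 0$. In $\mathsf{D}_{\mathsf{sg}}(\Lambda)$ the bounded complex of projectives $P_{d-1}\to\cdots\to P_0$ is zero, so this exact sequence yields an isomorphism $Y\cong (\Omega^dY)[d]$ in $\mathsf{D}_{\mathsf{sg}}(\Lambda)$. Since $\Omega^dY\in\Gproj\Lambda$ and $i$ commutes with the shift, $Y$ lies in the essential image of $i$. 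Combined with Lemma~\ref{object_in_singularity} and full faithfulness, this shows $i$ is an equivalence.

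The main obstacle is the construction in step (2): producing the complete (two-sided, totally acyclic) projective resolution of a sufficiently high syzygy. The subtlety is that having a projective resolution is automatic, but extending to the right by projectives requires exactly the vanishing $\Ext^{\ge 1}_\Lambda(\Omega^dY,\Lambda)=0$, and then one must verify total acyclicity, i.e. that applying $\Hom_\Lambda(-,Q)$ to the full spliced complex leaves it exact, for every projective $Q$ — not just $Q=\Lambda$. This is where the finiteness of $\silp(\Mod\Lambda)$ (equivalently, finite injective dimension of projectives) is used in an essential way, together with a dimension-shifting argument to push the vanishing from a single $d$ to all higher degrees. Everything else is bookkeeping with syzygies and the defining property of the Verdier quotient.
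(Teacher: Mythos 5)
The paper does not prove this statement: it is quoted from Buchweitz \cite[Theorem 4.4.1]{buchweitz} and Happel \cite{happel2}, and, at the level of generality given here (right Noetherian and right Gorenstein, rather than two-sided Noetherian and Iwanaga-Gorenstein), from Beligiannis \cite[Theorem 4.16]{beligiannis3}. So there is no internal proof against which to measure your sketch, only these external references.

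Your outline follows the classical Buchweitz route: use Lemma~\ref{object_in_singularity} to reduce essential surjectivity to showing each $Y\in\smod\Lambda$ is isomorphic in $\mathsf{D}_{\mathsf{sg}}(\Lambda)$ to a shift of a Gorenstein projective, and obtain that from $Y\cong(\Omega^dY)[d]$ once one knows $\Omega^dY\in\Gproj\Lambda$ for $d$ large enough. The reduction and the truncated-resolution isomorphism are fine. However, the justification you offer for the critical step — that a projective coresolution of $M=\Omega^dY$ ``exists precisely because $\Ext^i_\Lambda(\Omega^dY,\Lambda)$ vanishes'' — is not an argument, and it is exactly here that the work lies. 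The vanishing $\Ext^{\geq 1}_\Lambda(M,\Lambda)=0$ does not by itself embed $M$ into a projective, let alone produce an exact complex $0\to M\to P^0\to P^1\to\cdots$ of projectives whose cokernels remain $\Ext$-orthogonal to $\Lambda$. The actual construction dualizes: apply $\Hom_\Lambda(-,\Lambda)$ to a projective resolution of $M$ (the $\Ext$-vanishing makes the result $0\to M^*\to P_0^*\to P_1^*\to\cdots$ exact, a projective coresolution of the \emph{left} module $M^*$), then take a projective resolution $Q_\bullet\to M^*\to 0$ of left modules and dualize back. For the dualized complex $0\to M^{**}\to Q_0^*\to Q_1^*\to\cdots$ to be exact and to splice with the original resolution into a totally acyclic complex, one must show $\Ext^{\geq 1}_{\Lambda^{\op}}(M^*,\Lambda)=0$ and that the biduality map $M\to M^{**}$ is an isomorphism. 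These two lemmas are the genuine content of Buchweitz's theorem; they use the Gorenstein hypothesis on both sides of $\Lambda$, and your proposal passes over them entirely (the remark about ``pushing the vanishing to all higher degrees'' is not where the difficulty is). Note also that the dualization route requires $\Lambda$ two-sided Noetherian and Iwanaga-Gorenstein, strictly stronger than what the statement assumes; to prove it at the stated generality you would have to replace the dualization by the Auslander-Buchweitz approximation (cotorsion-pair) machinery of \cite{beligiannis3}.
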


Beligiannis \cite{beligiannis3} has proved a version of the above for abelian categories, see \cite[Theorem 4.16]{beligiannis3}. For instance, it follows that the analogous of Theorem \ref{buchweitz} holds for $\Mod \Lambda$ and in this case, in fact, the functor $i\colon \uGProj\Lambda\rightarrow \mathsf{D}_{\mathsf{sg}}(\Mod\Lambda)$ is a triangle equivalence if \emph{and only if} $\Lambda$ is right Gorenstein (see \cite[Theorem 6.9]{beligiannis3}). The converse to Theorem \ref{buchweitz} holds in some cases, for instance for Artin algebras, see \cite{bergh_jorgensen_oppermann}.

\subsection{Gorenstein projective modules in cleft extensions}  \label{stable_cat_of_gorenstein_proj}

The following is a reformulation of \cite[Proposition 3.4]{OPS} stated for abelian categories. The proof is essentially the same, but we recall it for the reader's convenience.

\begin{prop} \label{adjoint_pair_and_gorenstein_projectives}
    Let $\mathcal{A},\mathcal{B}$ be abelian categories with enough projective objects and assume that there is an adjoint pair of functors
    \[
    \begin{tikzcd}
\mathcal{A} \arrow[rr, "\mathsf{e}"] &  & \mathcal{B} \arrow[ll, "\mathsf{l}"', bend right]
\end{tikzcd}
    \]
   where $\mathsf{e}$ is exact. Assume further that the following are satisfied:  
   \begin{itemize}
       \item[(i)] $\mathbb{L}_i\mathsf{l}=0$ for $i>\!\!>0$. 
       \item[(ii)] $\pd{_{\mathcal{B}}\mathsf{e}(P)}\leq d$ for all $P\in\Proj\mathcal{A}$ and some $d$.  
   \end{itemize}
   Then $\mathsf{l}(\GProj\mathcal{B})\subseteq \GProj\mathcal{A}$ and $\mathbb{L}_i\mathsf{l}(X)=0$ for every $X\in\GProj\mathcal{B}$ and $i\geq 1$. 
\end{prop}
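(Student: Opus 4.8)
The plan is to take a Gorenstein projective object $X\in\mathcal{B}$, which by definition sits as $X=\mathsf{B}^0(Q^\bullet)$ for a totally acyclic complex $Q^\bullet$ over $\Proj\mathcal{B}$, and to show that $\mathsf{l}(Q^\bullet)$ is (up to the usual bookkeeping) a totally acyclic complex over $\Proj\mathcal{A}$ whose relevant cycle object is $\mathsf{l}(X)$. The first reduction is to prove that $\mathbb{L}_i\mathsf{l}(X)=0$ for all $i\geq 1$ whenever $X\in\GProj\mathcal{B}$; this is what makes the naive strategy work, because then applying $\mathsf{l}$ to the acyclic complex $Q^\bullet$ keeps it acyclic. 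To get this vanishing I would break $Q^\bullet$ into its short exact sequences $0\to \mathsf{B}^{n}(Q^\bullet)\to Q^{n}\to \mathsf{B}^{n+1}(Q^\bullet)\to 0$, apply $\mathbb{L}_\bullet\mathsf{l}$, and use dimension shifting together with hypothesis (i): since $\mathbb{L}_i\mathsf{l}$ vanishes for $i\gg 0$ and all the $\mathsf{B}^n(Q^\bullet)$ are Gorenstein projective (being cycles of a totally acyclic complex), a shift of arbitrary length forces $\mathbb{L}_i\mathsf{l}(\mathsf{B}^0(Q^\bullet))\cong\mathbb{L}_{i+N}\mathsf{l}(\mathsf{B}^{-N}(Q^\bullet))=0$ for $N$ large. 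This also shows $\mathsf{l}(Q^\bullet)$ is acyclic and that each $\mathsf{l}(\mathsf{B}^n(Q^\bullet))$ is the $n$-th cycle object.

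Next, $\mathsf{l}(Q^n)\in\Proj\mathcal{A}$ for each $n$ because $\mathsf{l}$ is a left adjoint of the exact functor $\mathsf{e}$, hence preserves projectives; so $\mathsf{l}(Q^\bullet)$ is an acyclic complex of projectives in $\mathcal{A}$ with $\mathsf{l}(X)$ appearing as a cycle. It remains to verify total acyclicity, i.e.\ that $\Hom_{\mathcal{A}}(\mathsf{l}(Q^\bullet),P)$ is acyclic for every $P\in\Proj\mathcal{A}$. Here I would use the adjunction $\Hom_{\mathcal{A}}(\mathsf{l}(Q^\bullet),P)\cong\Hom_{\mathcal{B}}(Q^\bullet,\mathsf{e}(P))$ together with hypothesis (ii): $\mathsf{e}(P)$ has finite projective dimension $\leq d$ in $\mathcal{B}$, and a standard argument (take a finite projective resolution of $\mathsf{e}(P)$ and induct on its length, using that $\Hom_{\mathcal{B}}(Q^\bullet,-)$ sends short exact sequences with $Q^\bullet$ acyclic of projectives to triangles, and that $\Hom_{\mathcal{B}}(Q^\bullet,R)$ is acyclic for $R$ projective by total acyclicity of $Q^\bullet$) shows $\Hom_{\mathcal{B}}(Q^\bullet,\mathsf{e}(P))$ is acyclic. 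Concretely: if $0\to R\to \mathsf{e}(P)\to C\to 0$ with $R$ projective and $\pd_{\mathcal{B}}C\leq d-1$, the long exact sequence in $\Hom_{\mathcal{B}}(Q^\bullet,-)$-cohomology reduces acyclicity for $\mathsf{e}(P)$ to acyclicity for $R$ and for $C$, and one finishes by induction with the base case $d=0$ being total acyclicity of $Q^\bullet$.

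Putting these together, $\mathsf{l}(Q^\bullet)$ is a totally acyclic complex over $\Proj\mathcal{A}$ with $\mathsf{l}(X)=\mathsf{l}(\mathsf{B}^0(Q^\bullet))=\mathsf{B}^0(\mathsf{l}(Q^\bullet))$, so $\mathsf{l}(X)\in\GProj\mathcal{A}$, which gives $\mathsf{l}(\GProj\mathcal{B})\subseteq\GProj\mathcal{A}$; and the vanishing $\mathbb{L}_i\mathsf{l}(X)=0$ for $i\geq 1$ was established along the way. The main obstacle I anticipate is the total acyclicity check: ensuring that the bounded-above resolution argument interacts correctly with the fact that $Q^\bullet$ is unbounded in both directions — one must be slightly careful that the long exact cohomology sequences of $\Hom_{\mathcal{A}}(\mathsf{l}(Q^\bullet),-)$ (equivalently $\Hom_{\mathcal{B}}(Q^\bullet,-)$) behave well, but since $Q^\bullet$ is a complex of projectives this is just the usual dévissage and presents no real difficulty. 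A secondary point to handle cleanly is the bookkeeping of the shift index $N$ against the bound ``$i\gg0$'' coming from hypothesis (i), which is purely formal.
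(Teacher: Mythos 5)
Your proposal follows essentially the same route as the paper: apply $\mathsf{l}$ to the totally acyclic complex, use dimension shifting with hypothesis (i) to obtain acyclicity of $\mathsf{l}(Q^\bullet)$ and the vanishing $\mathbb{L}_i\mathsf{l}(X)=0$, note that $\mathsf{l}$ preserves projectives, and then check coacyclicity via adjunction $\Hom_{\mathcal{A}}(\mathsf{l}(Q^\bullet),P)\cong\Hom_{\mathcal{B}}(Q^\bullet,\mathsf{e}(P))$ together with hypothesis (ii). The only cosmetic difference is that you spell out the finite-length induction on a projective resolution of $\mathsf{e}(P)$, where the paper simply observes $\mathsf{e}(P)\in\mathsf{K}^{\mathsf{b}}(\Proj\mathcal{B})$ and invokes coacyclicity of $Q^\bullet$ directly; both are the same argument.
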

\begin{proof}
    Consider $X\in\GProj\mathcal{B}$ and let $P^{\bullet}$ be a totally acyclic complex over $\Proj\mathcal{B}$ with $X=\mathsf{B}^0(P^{\bullet})$. For all $i$, we have: 
    \[
    \mathsf{H}^i(\mathsf{l}(P^{\bullet}))\cong \mathbb{L}_{j-i}\mathsf{l}(\mathsf{B}^{j+1}(P^{\bullet})) \text{ for all } j>i.
    \]
    By the assumption on the left derived functors of $\mathsf{l}$, it follows that $\mathsf{l}(P^{\bullet})$ is acyclic and in particular $\mathsf{l}(X)=\mathsf{B}^0(\mathsf{l}(P^{\bullet}))$ and $\mathbb{L}_i\mathsf{l}(X)=0$ for $i\geq 1$. We are left to show that $\mathsf{l}({P^{\bullet}})$ is coacyclic. Let $Q$ be a projective object of $\mathcal{A}$. Then, by adjunction, $\mathsf{Hom}_{\mathcal{A}}(\mathsf{l}(P^{\bullet}),Q)$ is isomorphic to $\mathsf{Hom}_{\mathcal{B}}(P^{\bullet},\mathsf{e}(Q))$ as a complex of groups. Since $\mathsf{e}(Q)$ has finite projective dimension (i.e.\ belongs in $\mathsf{K}^{\mathsf{b}}(\Proj\mathcal{B})$) and $P^{\bullet}$ is coacyclic, it follows that $\mathsf{Hom}_{\mathcal{B}}(P^{\bullet},\mathsf{e}(Q))$ is acyclic, which completes the proof. 
\end{proof}

\begin{cor} \label{gorenstein_proj_in_B}
    Let $(\mathcal{B},\mathcal{A},\mathsf{i},\mathsf{e},\mathsf{l})$ be a cleft extension of abelian categories such that $\mathsf{F}$ is perfect and nilpotent. Assume that $\pd{_{\mathcal{B}}\mathsf{F}(P)}\leq d$ for every $P\in\Proj\mathcal{B}$ and some $d$. The following hold: 
    \begin{itemize}
        \item[(i)] $\mathsf{l}(\GProj\mathcal{B})\subseteq \GProj\mathcal{A}$. 
        \item[(ii)] $\mathsf{q}(\GProj\mathcal{A})\subseteq \GProj\mathcal{B}$. 
    \end{itemize}
    In particular, $X\in\GProj\mathcal{B}$ if and only if $\mathsf{l}(X)\in\GProj\mathcal{A}$. 
\end{cor}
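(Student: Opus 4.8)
The plan is to deduce Corollary~\ref{gorenstein_proj_in_B} from Proposition~\ref{adjoint_pair_and_gorenstein_projectives} applied twice: once to the adjoint pair $(\mathsf{l},\mathsf{e})$ and once to the adjoint pair $(\mathsf{q},\mathsf{i})$, after checking that the hypotheses of that proposition are satisfied in each case under our standing assumptions. For part (i), the pair to feed into Proposition~\ref{adjoint_pair_and_gorenstein_projectives} is $\mathsf{A}=\mathcal{A}$, $\mathsf{B}=\mathcal{B}$, with $\mathsf{e}$ exact (part of the definition of a cleft extension) and $\mathsf{l}$ left adjoint to $\mathsf{e}$. Hypothesis (i) of that proposition, namely $\mathbb{L}_i\mathsf{l}=0$ for $i\gg0$, follows from Lemma~\ref{basic_properties_of_perfect_endofunctor_on_cleft}(i), since $\mathsf{F}$ perfect implies $\mathbb{L}_i\mathsf{F}=0$ for $i\gg0$. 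Hypothesis (ii), uniform boundedness of $\pd_{\mathcal{B}}\mathsf{e}(P)$ for $P\in\Proj\mathcal{A}$, is where the extra assumption $\pd_{\mathcal{B}}\mathsf{F}(P)\le d$ for $P\in\Proj\mathcal{B}$ is used: every projective object of $\mathcal{A}$ is a summand of $\mathsf{l}(Q)$ for some $Q\in\Proj\mathcal{B}$ (Lemma~\ref{projectives_in_cleft}), and $\mathsf{el}(Q)\cong Q\oplus\mathsf{F}(Q)$, so $\pd_{\mathcal{B}}\mathsf{e}(P)\le\pd_{\mathcal{B}}\mathsf{el}(Q)=\max\{0,\pd_{\mathcal{B}}\mathsf{F}(Q)\}\le d$. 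Thus Proposition~\ref{adjoint_pair_and_gorenstein_projectives} gives $\mathsf{l}(\GProj\mathcal{B})\subseteq\GProj\mathcal{A}$ together with $\mathbb{L}_i\mathsf{l}(X)=0$ for $X\in\GProj\mathcal{B}$, $i\ge1$ (the latter vanishing is needed for the final ``if and only if'' statement).

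For part (ii), I would apply the same proposition to the adjoint pair $(\mathsf{q},\mathsf{i})$, with the roles reversed: here $\mathsf{i}\colon\mathcal{B}\to\mathcal{A}$ plays the role of the exact functor (it is exact by the basic structure of cleft extensions recalled after Definition~\ref{cleft_extension}) and $\mathsf{q}\colon\mathcal{A}\to\mathcal{B}$ is its left adjoint, so we obtain $\mathsf{q}(\GProj\mathcal{A})\subseteq\GProj\mathcal{B}$ once we verify: (i$'$) $\mathbb{L}_i\mathsf{q}=0$ for $i\gg0$, which is exactly Proposition~\ref{derived_of_q_vanishes}; and (ii$'$) $\pd_{\mathcal{A}}\mathsf{i}(P)\le d'$ uniformly for $P\in\Proj\mathcal{B}$. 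For (ii$'$), Lemma~\ref{i_maps_proj} already gives $\pd_{\mathcal{A}}\mathsf{i}(P)<\infty$ for each projective $P$, but we need a \emph{uniform} bound; this should come from Remark~\ref{preserves_and_reflects_bounds}, since $\pd_{\mathcal{A}}\mathsf{i}(P)\le\pd_{\mathcal{B}}\mathsf{ei}(P)+(m+1)n_{\mathsf{F}}=\pd_{\mathcal{B}}P+(m+1)n_{\mathsf{F}}=(m+1)n_{\mathsf{F}}$ using $\mathsf{ei}\simeq\mathsf{Id}$ and the hypothesis that $n_{\mathsf{F}}=\sup\{\pd_{\mathcal{B}}\mathsf{F}(P)\mid P\in\Proj\mathcal{B}\}=d<\infty$.

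Finally, for the ``in particular'' clause: one direction, $X\in\GProj\mathcal{B}\Rightarrow\mathsf{l}(X)\in\GProj\mathcal{A}$, is part (i). For the converse, suppose $\mathsf{l}(X)\in\GProj\mathcal{A}$. Applying part (ii) gives $\mathsf{ql}(X)\in\GProj\mathcal{B}$, and since $\mathsf{ql}\simeq\mathsf{Id}_{\mathcal{B}}$ (recorded in the discussion after Definition~\ref{cleft_extension}), we conclude $X\in\GProj\mathcal{B}$.

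The main obstacle I anticipate is the \emph{uniform} boundedness in hypothesis (ii$'$): Lemma~\ref{i_maps_proj} by itself only yields finiteness pointwise, so one genuinely needs the quantitative bound of Remark~\ref{preserves_and_reflects_bounds} (equivalently, a direct re-run of the proof of Proposition~\ref{preserves_and_reflects} tracking constants), and one must also confirm that the integer $m$ with $\mathsf{F}^m=0$ and the bound $n_{\mathsf{F}}$ are finite — both of which hold here since $\mathsf{F}$ is nilpotent and $\pd_{\mathcal{B}}\mathsf{F}(P)\le d$ is assumed. Everything else is a matter of bookkeeping: identifying which functor plays the exact role in each of the two adjunctions and quoting the appropriate vanishing result ($\mathbb{L}_i\mathsf{l}=0$ from Lemma~\ref{basic_properties_of_perfect_endofunctor_on_cleft}(i), $\mathbb{L}_i\mathsf{q}=0$ from Proposition~\ref{derived_of_q_vanishes}).
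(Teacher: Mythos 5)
Your proposal is correct and follows essentially the same route as the paper's own proof: apply Proposition~\ref{adjoint_pair_and_gorenstein_projectives} to the adjoint pairs $(\mathsf{l},\mathsf{e})$ and $(\mathsf{q},\mathsf{i})$, supplying the uniform bound on $\pd_{\mathcal A}\mathsf{i}(P)$ via Remark~\ref{preserves_and_reflects_bounds} (together with Proposition~\ref{derived_of_q_vanishes}), and then deduce the ``in particular'' statement from $\mathsf{ql}\simeq\mathsf{Id}_{\mathcal B}$. The only cosmetic difference is that for hypothesis (ii) in part (i) you verify $\pd_{\mathcal B}\mathsf{e}(P)\le d$ directly from $\mathsf{el}\simeq\mathsf{Id}\oplus\mathsf{F}$ rather than citing Remark~\ref{preserves_and_reflects_bounds}, which is an equally valid (indeed slightly more elementary) way to get the same bound.
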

\begin{proof} (i) By Remark \ref{preserves_and_reflects_bounds}, we may apply Proposition \ref{adjoint_pair_and_gorenstein_projectives} to the adjoint pair $(\mathsf{l},\mathsf{e})$. 

   (ii) By Remark \ref{preserves_and_reflects_bounds}, Proposition \ref{derived_of_q_vanishes} and Lemma \ref{i_maps_proj}, we may apply Proposition \ref{adjoint_pair_and_gorenstein_projectives} to the adjoint pair $(\mathsf{q},\mathsf{i})$. 

   In particular, if $X\in\GProj\mathcal{B}$, then $\mathsf{l}(X)\in\GProj\mathcal{A}$ and if $\mathsf{l}(X)\in\GProj\mathcal{A}$, then $X\cong \mathsf{ql}(X)\in\GProj\mathcal{B}$, which proves the last claim. 
\end{proof}

Of course, from the above - and under the same assumptions - it follows that the functors $\mathsf{l}$ and $\mathsf{q}$ induce functors between the respective stable categories of Gorenstein projective objects. We are interested in instances where the functors $\mathsf{i}$ and $\mathsf{e}$ induce triangle functors on the level of stable categories of Gorenstein projectives. To achieve this, we introduce some notation. 

Recall that the category $\mathsf{C}(\mathcal{A})$ of chain complexes over $\mathcal{A}$ is abelian with enough projective objects, since we assume the same for $\mathcal{A}$. In particular, for every complex $X\in\mathsf{C}(\mathcal{A})$, there is a contractible complex $P_{X}$ that consists of projective objects of $\mathcal{A}$ with an epimorphism $P_X\twoheadrightarrow X$. We denote $\mathsf{ker}(P_{X}\twoheadrightarrow X)$ by $\Omega(X)$. Further, we use $\tilde{\Omega}$ to denote $\Omega\circ[1]$. 

\begin{lem} \label{cone}
    Let $X$ be an acyclic complex over $\mathcal{A}$. Assume that $\pd{_{\mathcal{A}}X^n}\leq d$ for all $n$ and some $d$. Then, for all $i$ there is a morphism $\phi\colon\mathsf{B}^i(\tilde{\Omega}^d(X))\rightarrow \mathsf{B}^i(X)$ such that $\mathsf{cone}(\phi)$ is quasi-isomorphic to a complex of projective objects concentrated in degrees $-d$ to 0.
\end{lem}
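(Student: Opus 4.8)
The plan is to build the morphism $\phi$ and the complex it fits into by an explicit induction on $d$, mirroring the role that the contractible complexes $P_X$ play in the definition of $\Omega$. First I would set up notation: write $Y = \tilde\Omega^d(X) = \Omega^d(X[d])$, so that $Y$ sits in $d$ short exact sequences of complexes $0 \to \Omega^{k}(X[d]) \to P_{\Omega^{k-1}(X[d])} \to \Omega^{k-1}(X[d]) \to 0$ for $k = 1,\dots,d$, with each middle term contractible and consisting of projective objects of $\mathcal A$. Because $X$ is acyclic, so is $X[d]$, and since contractible complexes are acyclic, each $\Omega^k(X[d])$ is acyclic; hence taking $\mathsf B^i$ (the $i$-th cycle object, which for an acyclic complex equals the $i$-th boundary object) is exact on these sequences, yielding short exact sequences $0 \to \mathsf B^i(\Omega^k(X[d])) \to \mathsf B^i(P_{\bullet}) \to \mathsf B^i(\Omega^{k-1}(X[d])) \to 0$ in $\mathcal A$. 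The key point is that $\mathsf B^i$ of a contractible complex of projectives is itself projective (a contractible complex splits into trivial pieces $0 \to Q \xrightarrow{=} Q \to 0$, so its cycles are direct summands of the terms, hence projective). Thus each such sequence exhibits $\mathsf B^i(\Omega^{k}(X[d]))$ as a $d$-fold syzygy (in the stable sense) of $\mathsf B^i(\Omega^{k-1}(X[d]))$, and telescoping gives that $\mathsf B^i(Y)$ relates to $\mathsf B^i(X[d]) \cong \mathsf B^{i+?}(X)$ up to a bounded complex of projectives.

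Next I would assemble these $d$ short exact sequences, for a \emph{fixed} $i$, into a single long exact sequence in $\mathcal A$
\[
0 \to \mathsf B^i(Y) \to Q_{-d} \to Q_{-d+1} \to \cdots \to Q_{-1} \to \mathsf B^i(X[d]) \to 0,
\]
where each $Q_{-j}$ is a $\mathsf B^i$ of one of the contractible projective complexes, hence a projective object of $\mathcal A$, and where I have used that $\mathsf B^i$ commutes with the shift so that $\mathsf B^i(X[d])$ is (an object of $\mathcal A$ isomorphic to) a cycle object of $X$ itself. Splicing in the obvious identification produces a map $\phi\colon \mathsf B^i(\tilde\Omega^d(X)) \to \mathsf B^i(X)$ — concretely, $\phi$ is the composite of the inclusion $\mathsf B^i(Y) \hookrightarrow Q_{-d}$ followed by the differentials, landing in $\mathsf B^i(X[d])$, then identified with $\mathsf B^i(X)$ — whose mapping cone is quasi-isomorphic to the complex $(Q_{-d} \to \cdots \to Q_{-1})$ placed in degrees $-d$ through $-1$, or, after absorbing the identification, a complex of projectives concentrated in degrees $-d$ to $0$. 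I would verify the cone computation by the standard fact that if $0 \to A \xrightarrow{\phi} B \to C \to 0$ is a short exact sequence then $\mathsf{cone}(\phi) \simeq C$, applied here with $C$ the bounded projective complex read off from the long exact sequence.

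The main obstacle I anticipate is bookkeeping the shift and the degree placement precisely: one must be careful that $\tilde\Omega = \Omega \circ [1]$ introduces exactly one shift per application of $\Omega$, so that after $d$ steps the cone lands in degrees $-d$ to $0$ rather than some other window, and that the "concentrated in degrees $-d$ to $0$" claim survives the identification $\mathsf B^i(X[d]) \cong \mathsf B^{i+d}(X)$ or $\mathsf B^i(X)$ depending on conventions. A secondary subtlety is checking that the functor $\Omega$ on $\mathsf C(\mathcal A)$ interacts well with taking cycle objects: this needs that the chosen epimorphisms $P_X \twoheadrightarrow X$ can be taken degreewise split-epi (true since $P_X$ is contractible of projectives and $X$ acyclic, or one simply invokes that $\mathsf C(\mathcal A)$ has enough projectives of the stated form), so that $\Omega$ applied to an acyclic complex stays acyclic and $\mathsf B^i(-)$ is exact on the relevant sequences. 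Once these conventions are pinned down, the induction on $d$ is routine: the base case $d=0$ is trivial (take $\phi = \mathrm{id}$, cone contractible hence quasi-isomorphic to $0$, vacuously a projective complex concentrated in degree $0$), and the inductive step glues one more short exact sequence onto the front of the long exact sequence, extending the projective complex by one term on the left.
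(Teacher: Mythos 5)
Your construction of $\phi$ does not produce the right morphism. You define $\phi$ as the composite of the inclusion $\mathsf{B}^i(Y)\hookrightarrow Q_{-d}$ with the differentials of the long exact sequence $0\to\mathsf{B}^i(Y)\to Q_{-d}\to\cdots\to Q_{-1}\to\mathsf{B}^i(X[d])\to 0$; but consecutive maps in an exact sequence compose to zero, so this $\phi$ is the zero morphism, and $\mathsf{cone}(0\colon A\to B)\cong A[1]\oplus B$, which need not be quasi-isomorphic to the spliced projective complex. There is also a genuine degree mismatch: with the paper's shift convention one has $\mathsf{B}^i(X[d])=\mathsf{B}^{i+d}(X)$, which is a different object of $\mathcal{A}$ from the target $\mathsf{B}^i(X)$ required by the lemma, so your long exact sequence does not even end at the right place; this is not a sign convention that can be absorbed. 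Finally, and most tellingly, the hypothesis $\pd{_{\mathcal{A}}X^n}\leq d$ is never used anywhere in your argument, yet without it the conclusion fails.

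The correct construction of $\phi$ proceeds by lifting, not by composing differentials. For each $k$, the $\tilde{\Omega}$-syzygy short exact sequence $0\to\mathsf{B}^i(\tilde{\Omega}^k X)\to\mathsf{B}^i(P_{\tilde{\Omega}^{k-1}(X)[1]})\to\mathsf{B}^{i+1}(\tilde{\Omega}^{k-1}X)\to 0$ has projective middle term; lifting the surjection through the \emph{intrinsic} boundary sequence $0\to\mathsf{B}^i(\tilde{\Omega}^{k-1}X)\to(\tilde{\Omega}^{k-1}X)^i\to\mathsf{B}^{i+1}(\tilde{\Omega}^{k-1}X)\to 0$ gives a morphism of short exact sequences which is the identity on the right-hand term and a morphism $\phi_{k-1}\colon\mathsf{B}^i(\tilde{\Omega}^k X)\to\mathsf{B}^i(\tilde{\Omega}^{k-1}X)$ on the left, at the \emph{same} degree $i$ on both sides. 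Its cone is quasi-isomorphic to the two-term complex $[\mathsf{B}^i(P_{\tilde{\Omega}^{k-1}(X)[1]})\to(\tilde{\Omega}^{k-1}X)^i]$ in degrees $-1,0$, and it is only here that the hypothesis enters: $\pd{(\tilde{\Omega}^{k-1}X)^i}\leq d-k+1$ (inherited from $\pd{_{\mathcal{A}}X^n}\leq d$) allows one to replace this by a bounded complex of projectives. Setting $\phi=\phi_0\circ\cdots\circ\phi_{d-1}$ and applying the octahedral axiom then gives $\mathsf{cone}(\phi)$ quasi-isomorphic to a complex of projectives in the claimed degree range.
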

\begin{proof}
    This is proved in \cite[Lemma 3.6]{OPS} for $\mathcal{A}=\smod \Lambda$ (and $\Lambda$ Noetherian). The exact same proof works in our setting. 
\end{proof}

The following is a reformulation of \cite[Proposition 3.5]{OPS} for abelian categories, see also \cite[Theorem 5.3]{hu_pan}. Our proof is a slight variation of the one found in \cite{OPS}.  

\begin{prop} \label{sthnk}
    Let $\mathcal{A}$ and $\mathcal{B}$ be abelian categories with enough projective objects, and assume that there is an adjoint pair of functors 
    \[
    \begin{tikzcd}
\mathcal{A} \arrow[rr, "\mathsf{e}"] &  & \mathcal{B} \arrow[ll, "\mathsf{r}", bend left]
\end{tikzcd}
    \]
where $\mathsf{e}$ is exact. Assume further that the following are satisfied: 
\begin{itemize}
    \item[(i)] $\pd{_{\mathcal{B}}\mathsf{e}(P)}\leq d$ for all $P\in\Proj\mathcal{A}$ and some $d$. 
    \item[(ii)] $\mathbb{R}\mathsf{r}(Q)\in\mathsf{K}^{\mathsf{b}}(\Proj\mathcal{A})$ for every $Q\in\Proj\mathcal{B}$. 
\end{itemize}
Then the functor $\mathsf{e}$ induces a triangle functor $\underline{\mathsf{e}}\colon\uGProj\mathcal{A}\rightarrow \uGProj\mathcal{B}$.
\end{prop}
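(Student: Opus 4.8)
The plan is to show that $\mathsf{e}$ sends Gorenstein projective objects of $\mathcal{A}$ to Gorenstein projective objects of $\mathcal{B}$, and that it sends morphisms factoring through projectives to morphisms factoring through projectives, so that it descends to the stable categories; the triangulated structure is then automatic since $\mathsf{e}$ is exact and hence commutes with syzygies up to projective summands. So first I would take $X \in \GProj\mathcal{A}$ with totally acyclic complex $P^{\bullet}$ over $\Proj\mathcal{A}$ and $X = \mathsf{B}^0(P^{\bullet})$, and consider $\mathsf{e}(P^{\bullet})$. Since $\mathsf{e}$ is exact, $\mathsf{e}(P^{\bullet})$ is an acyclic complex over $\mathcal{B}$ whose terms $\mathsf{e}(P^n)$ have projective dimension $\le d$ by hypothesis (i). This is not yet a complex of projectives, so the key point — and the main obstacle — is to manufacture from $\mathsf{e}(P^{\bullet})$ an honest totally acyclic complex over $\Proj\mathcal{B}$ with the right syzygy.

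The device for this is Lemma \ref{cone}: applied to the acyclic complex $Y := \mathsf{e}(P^{\bullet})$ with $\pd_{\mathcal{B}} Y^n \le d$, it produces for each $i$ a morphism $\phi \colon \mathsf{B}^i(\tilde{\Omega}^d(Y)) \to \mathsf{B}^i(Y)$ whose cone is quasi-isomorphic to a bounded complex of projectives. Now $\tilde{\Omega}^d(Y)$ is (the brutal truncation assembly of) a complex built from the contractible projective complexes $P_Y$, so its branches $\mathsf{B}^i(\tilde\Omega^d(Y))$ are obtained from $\mathsf{B}^i(Y)$ by repeatedly adding projective summands and taking syzygies in $\mathsf{C}(\mathcal{B})$; in particular $\mathsf{B}^0(\tilde\Omega^d(Y))$ differs from $\mathsf{e}(X) = \mathsf{B}^0(Y)$ only by a finite sequence of projective extensions, so it suffices to show $\mathsf{B}^0(\tilde\Omega^d(Y)) \in \GProj\mathcal{B}$ and then peel off. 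To see that, I would splice: from a genuine totally acyclic complex over $\Proj\mathcal{B}$ on one side of each $\mathsf{B}^i(\tilde\Omega^d(Y))$ (which exists because $\tilde\Omega^d(Y)$ is assembled from projective complexes, giving the "left half" for free) and, for the "right half", use that $\mathsf{cone}(\phi) \simeq$ bounded projective complex together with the fact that the branches $\mathsf{B}^i(Y)$ of the acyclic complex $Y$ have finite projective dimension to conclude that the relevant cocycles admit the required coacyclic completions by projectives. Total acyclicity — i.e.\ $\Hom_{\mathcal{B}}(-, Q)$-acyclicity for $Q \in \Proj\mathcal{B}$ — is where hypothesis (ii) enters: one checks $\Hom_{\mathcal{B}}(\mathsf{e}(P^{\bullet}), Q) \cong \Hom_{\mathcal{A}}(P^{\bullet}, \mathsf{r}(Q))$ by adjunction, and $\mathbb{R}\mathsf{r}(Q) \in \mathsf{K}^{\mathsf{b}}(\Proj\mathcal{A})$ together with $\Hom_{\mathcal{A}}(P^{\bullet},-)$-acyclicity against bounded projective complexes (which holds since $P^{\bullet}$ is totally acyclic, hence $\Hom$-acyclic against all of $\mathsf{K}^{\mathsf{b}}(\Proj\mathcal{A})$) forces acyclicity.

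Once $\mathsf{e}(\GProj\mathcal{A}) \subseteq \GProj\mathcal{B}$ is established, the functoriality on stable categories is the easy part: if $f \colon X \to X'$ in $\GProj\mathcal{A}$ factors through a projective $P$, then $\mathsf{e}(f)$ factors through $\mathsf{e}(P)$, which has finite projective dimension; a standard argument shows that in $\uGProj\mathcal{B}$ any map through an object of finite projective dimension with Gorenstein projective source and target is zero (lift along a projective cover and use $\Ext^{>0}_{\mathcal{B}}(\GProj\mathcal{B}, \Proj\mathcal{B}) = 0$ to dévisser). Hence $\mathsf{e}$ descends to an additive functor $\underline{\mathsf{e}} \colon \uGProj\mathcal{A} \to \uGProj\mathcal{B}$. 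Finally, to see $\underline{\mathsf{e}}$ is a triangle functor: the suspension in $\uGProj$ is the cosyzygy $\Omega^{-1}$, and since $\mathsf{e}$ is exact it commutes with taking (co)syzygies up to projective summands, giving a natural isomorphism $\underline{\mathsf{e}} \circ \Omega^{-1} \simeq \Omega^{-1} \circ \underline{\mathsf{e}}$; a distinguished triangle in $\uGProj\mathcal{A}$ comes from a short exact sequence of Gorenstein projectives, exactness of $\mathsf{e}$ preserves that short exact sequence, and one checks the connecting map is sent correctly. I expect essentially no friction here beyond bookkeeping; the genuine work is entirely in the first two paragraphs, i.e.\ in turning $\mathsf{e}(P^{\bullet})$ — a complex of finite-projective-dimension objects — into a bona fide totally acyclic projective complex with controlled syzygy, which is precisely what Lemma \ref{cone} and hypotheses (i), (ii) are engineered to do.
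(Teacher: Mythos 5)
Your coacyclicity computation is exactly the paper's: applying $\mathsf{Hom}_{\mathcal{B}}(-,Q)$ to $\tilde{\Omega}^d\mathsf{e}(P^{\bullet})$, using Lemma \ref{cone} to replace $\mathsf{B}^{j+1}(\tilde{\Omega}^d\mathsf{e}(P^{\bullet}))$ by $\mathsf{B}^{j+1}(\mathsf{e}(P^{\bullet}))=\mathsf{e}(\mathsf{B}^{j+1}(P^{\bullet}))$, then passing by adjunction to $\mathsf{Ext}_{\mathcal{A}}^{\ast}(\mathsf{B}^{j+1}(P^{\bullet}),\mathbb{R}\mathsf{r}(Q))$ and killing it via hypothesis (ii). That is the real work of the proof and you have it.

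However, the framing around it has a genuine error. You open with the plan to ``show that $\mathsf{e}$ sends Gorenstein projective objects of $\mathcal{A}$ to Gorenstein projective objects of $\mathcal{B}$'' and later write ``Once $\mathsf{e}(\GProj\mathcal{A})\subseteq\GProj\mathcal{B}$ is established \ldots'' --- but this inclusion is false in general and is not what the proposition asserts. What your argument actually shows is that $\Omega^d\mathsf{e}(X)=\mathsf{B}^0(\tilde{\Omega}^d\mathsf{e}(P^{\bullet}))$ is Gorenstein projective. Your ``peel off'' step --- deducing $\mathsf{e}(X)\in\GProj\mathcal{B}$ from $\Omega^d\mathsf{e}(X)\in\GProj\mathcal{B}$ --- does not work: a $d$-th syzygy being Gorenstein projective does not imply the original object is (e.g.\ any object of finite projective dimension has a projective $d$-th syzygy for $d$ large). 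The subcategory $\GProj\mathcal{B}$ is closed under taking syzygies, not cosyzygies. Consequently your final paragraph (that $\mathsf{e}(f)$ factors through a finite projective dimension object forces vanishing in $\uGProj\mathcal{B}$) is aimed at the wrong functor: the well-defined object-level assignment is $X\mapsto\Omega^d\mathsf{e}(X)$, not $X\mapsto\mathsf{e}(X)$. Similarly, the claim that $\mathsf{e}$ commutes with cosyzygies up to projective summands is not available, because cosyzygies in $\uGProj\mathcal{A}$ are formed with projectives of $\mathcal{A}$ whose images under $\mathsf{e}$ need not be projective in $\mathcal{B}$.

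The paper closes this gap differently: having shown $\Omega^d\mathsf{e}\colon\uGProj\mathcal{A}\to\uGProj\mathcal{B}$ is well-defined, it observes that the square
\[
\begin{tikzcd}
\uGProj\mathcal{A} \arrow[d, hook] \arrow[r, "\Omega^d\mathsf{e}"] & \uGProj\mathcal{B} \arrow[d, hook] \\
\mathsf{D}_{\mathsf{sg}}(\mathcal{A}) \arrow[r, "{[-d]\circ\mathsf{e}}"] & \mathsf{D}_{\mathsf{sg}}(\mathcal{B})
\end{tikzcd}
\]
commutes, from which the triangulated structure on $\Omega^d\mathsf{e}$ is inherited through the fully faithful vertical embeddings. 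Then $\underline{\mathsf{e}}$ is \emph{defined} as the composite $\Omega^{-d}\circ(\Omega^d\mathsf{e})$, where $\Omega^{-d}$ is the $d$-fold suspension of the triangulated category $\uGProj\mathcal{B}$; under the embedding into $\mathsf{D}_{\mathsf{sg}}(\mathcal{B})$ this composite matches $\mathsf{e}\colon\mathsf{D}_{\mathsf{sg}}(\mathcal{A})\to\mathsf{D}_{\mathsf{sg}}(\mathcal{B})$, which is the meaning of ``$\mathsf{e}$ induces $\underline{\mathsf{e}}$'' adopted in the text. So the resolution is not to peel back to $\mathsf{e}(X)$ but to shift forward at the stable level.
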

\begin{proof}  
    We will first show that for every $X\in\GProj\mathcal{A}$, the object $\Omega^d\mathsf{e}(X)$ is in $\GProj\mathcal{B}$. Consider a totally acyclic complex $P^{\bullet}$ over $\Proj\mathcal{A}$ with $X\cong \mathsf{B}^d(P^{\bullet})$. We then have the following isomorphisms 
    \[
    \mathsf{e}(X)=\mathsf{e}(\mathsf{B}^d(P^{\bullet}))=\mathsf{B}^d(\mathsf{e}(P^{\bullet}))
    \]
    and consequently, 
    \[
    \Omega^d\mathsf{e}(X)=\Omega^d\mathsf{B}^d(\mathsf{e}(P^{\bullet}))=\mathsf{B}^0(\tilde{\Omega}^d\mathsf{e}(P^{\bullet})).
    \]
    It is clear, by the assumption that $\pd{_{\mathcal{B}}\mathsf{e}(P)}\leq d$ for all $P\in\Proj\mathcal{A}$, that $\tilde{\Omega}^d\mathsf{e}(P^{\bullet})$ is a complex consisting of projective objects in $\mathcal{B}$. We will show that it is in fact totally acyclic. Since $P^{\bullet}$ is acyclic, it follows that $\mathsf{e}(P^{\bullet})$ is acyclic and so is $\tilde{\Omega}^d\mathsf{e}(P^{\bullet})$. We are left to show coacyclicity, i.e.\ that $\mathsf{Hom}_{\mathcal{B}}(\tilde{\Omega}^d\mathsf{e}(P^{\bullet}),Q)$ is acyclic for every $Q\in\Proj\mathcal{B}$. For all $i$ and for $j>\!\!>0$, we have the following isomorphisms: 
    \begin{align*}
        \mathsf{H}^i(\mathsf{Hom}_{\mathcal{B}}(\tilde{\Omega}^d\mathsf{e}(P^{\bullet}),Q)) & \cong \mathsf{Ext}_{\mathcal{B}}^{i+j}(\mathsf{B}^{j+1}(\tilde{\Omega}^d\mathsf{e}(P^{\bullet})),Q) \\ 
                          & \cong \mathsf{Ext}_{\mathcal{B}}^{i+j}(\mathsf{B}^{j+1}(\mathsf{e}(P^{\bullet})),Q) \\ 
                          & \cong \mathsf{Ext}_{\mathcal{B}}^{i+j}(\mathsf{e}(\mathsf{B}^{j+1}(P^{\bullet})),Q) \\ 
                          & \cong \mathsf{Ext}_{\mathcal{A}}^{i+j}(\mathsf{B}^{j+1}(P^{\bullet}),\mathbb{R}\mathsf{r}(Q)).
    \end{align*}
    The second isomorphism follows from Lemma \ref{cone}. Since $P^{\bullet}$ is totally acyclic and $\mathbb{R}\mathsf{r}(Q)\in\mathsf{K}^{\mathsf{b}}(\Proj\mathcal{A})$, the last Ext group vanishes. We have managed to show that $\Omega^d\mathsf{e}(X)$ is Gorenstein projective, i.e.\ $\Omega^d\mathsf{e}\colon\uGProj\mathcal{A}\rightarrow \uGProj\mathcal{B}$ is well-defined. Observing that the following diagram is commutative, 
    \[
    \begin{tikzcd}
\uGProj\mathcal{A} \arrow[d, hook] \arrow[r, "\Omega^d\mathsf{e}"]       & \uGProj\mathcal{B} \arrow[d, hook]    \\
\mathsf{D}_{\mathsf{sg}}(\mathcal{A}) \arrow[r, "{[-d]\circ\mathsf{e}}"] & \mathsf{D}_{\mathsf{sg}}(\mathcal{B})
\end{tikzcd}
    \]
    shows that $\Omega^d\mathsf{e}\colon\uGProj\mathcal{A}\rightarrow \uGProj\mathcal{B}$ is a triangle functor. Composing the latter with $\Omega^{-d}:=(\Omega^{-1})^d$, where $\Omega^{-1}$ denotes the shift functor of $\uGProj\mathcal{B}$,
    \[
\begin{tikzcd}
\uGProj\mathcal{A} \arrow[d, hook] \arrow[r, "\Omega^d\mathsf{e}"] \arrow[rr, "\underline{\mathsf{e}}", bend left] & \uGProj\mathcal{B} \arrow[d, hook] \arrow[r, "\Omega^{-d}"] & \uGProj\mathcal{B} \arrow[d, hook]    \\
\mathsf{D}_{\mathsf{sg}}(\mathcal{A}) \arrow[r, "{[-d]\circ\mathsf{e}}"] \arrow[rr, "\mathsf{e}"', bend right]     & \mathsf{D}_{\mathsf{sg}}(\mathcal{B}) \arrow[r, "{[d]}"]    & \mathsf{D}_{\mathsf{sg}}(\mathcal{B})
\end{tikzcd}
    \]
    yields the desired result. 
\end{proof}

Given an exact functor $\mathsf{e}\colon\mathcal{A}\rightarrow \mathcal{B}$ of abelian categories, we will say that it induces a functor $\uGProj\mathcal{A}\rightarrow \uGProj\mathcal{B}$, which we denote by $\underline{\mathsf{e}}$, if the latter occurs as a restriction of $\mathsf{e}\colon\mathsf{D}_{\mathsf{sg}}(\mathcal{A})\rightarrow \mathsf{D}_{\mathsf{sg}}(\mathcal{B})$ up to equivalence in $\mathsf{D}_{\mathsf{sg}}(\mathcal{B})$. An other source of examples of such a situation arises from the following proposition. 

\begin{prop} \label{sthnk2}
    Let $(\mathcal{B},\mathcal{A},\mathsf{i},\mathsf{e},\mathsf{l})$ be a cleft extension of abelian categories. If the functor $\mathsf{e}$ is an eventually homological isomorphism, then it gives rise to a triangle functor $\underline{\mathsf{e}}\colon\uGProj\mathcal{A}\rightarrow \uGProj\mathcal{B}$.
\end{prop}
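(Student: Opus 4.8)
The plan is to reduce the statement to Proposition~\ref{sthnk}, which already produces a triangle functor $\underline{\mathsf{e}}\colon\uGProj\mathcal{A}\rightarrow\uGProj\mathcal{B}$ from an adjoint pair $(\mathsf{e},\mathsf{r})$ satisfying two hypotheses: that $\mathsf{e}$ sends projectives to objects of bounded projective dimension, and that $\mathbb{R}\mathsf{r}(Q)\in\mathsf{K}^{\mathsf{b}}(\Proj\mathcal{A})$ for every projective $Q$ of $\mathcal{B}$. So the main task is to exhibit, in the present situation of an eventually homological isomorphism, a right adjoint to $\mathsf{e}$ with these properties. First I would invoke \cite[Theorem~4.3 and Section~3]{PSS}: an eventually homological isomorphism $\mathsf{e}$ that is in addition essentially surjective (which holds automatically here, since $\mathsf{ei}\simeq\mathsf{Id}_{\mathcal{B}}$) controls $\mathsf{Ext}$-groups in high degrees, and in the recollement-type setup of a cleft extension it fits into an adjoint triple; in particular the right adjoint $\mathsf{r}$ of $\mathsf{e}$ exists (a cleft extension of abelian categories with $\mathsf{e}$ faithful exact and essentially surjective is part of a recollement situation once $\mathsf{e}$ is an eventually homological isomorphism — compare the discussion preceding Theorem~\ref{thm2} and the use of \cite{OPS}).

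Granting the right adjoint $\mathsf{r}$, I would then verify hypothesis~(i) of Proposition~\ref{sthnk}: we must bound $\pd{_{\mathcal{B}}\mathsf{e}(P)}$ uniformly for $P\in\Proj\mathcal{A}$. This is where the eventually homological isomorphism is used directly. Since $\mathsf{e}$ is an eventually homological isomorphism, there is $n$ with $\mathsf{Ext}_{\mathcal{B}}^k(\mathsf{e}(P),\mathsf{e}(Y))\cong\mathsf{Ext}_{\mathcal{A}}^k(P,Y)=0$ for all $k>n$ and all $Y\in\mathcal{A}$. Because $\mathsf{e}$ is essentially surjective, every object of $\mathcal{B}$ is isomorphic to some $\mathsf{e}(Y)$, so $\mathsf{Ext}_{\mathcal{B}}^k(\mathsf{e}(P),-)=0$ for $k>n$, i.e.\ $\pd{_{\mathcal{B}}\mathsf{e}(P)}\leq n$ uniformly. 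For hypothesis~(ii), that $\mathbb{R}\mathsf{r}(Q)\in\mathsf{K}^{\mathsf{b}}(\Proj\mathcal{A})$ for $Q\in\Proj\mathcal{B}$: one uses the dual estimate together with the adjunction. Concretely, $\mathsf{Hom}_{\mathsf{D}^{\mathsf{b}}(\mathcal{A})}(Z,\mathbb{R}\mathsf{r}(Q)[k])\cong\mathsf{Hom}_{\mathsf{D}^{\mathsf{b}}(\mathcal{B})}(\mathsf{e}(Z),Q[k])$ for $Z\in\mathcal{A}$, and the eventually-homological-isomorphism property applied with second argument $Q=\mathsf{e}(Y')$ (using essential surjectivity again) forces these groups to vanish for $|k|$ large; combined with the exactness of $\mathsf{e}$ and the boundedness of the derived functors $\mathbb{L}_i\mathsf{l}$ (which holds in a cleft extension whose $\mathsf{e}$ is an eventually homological isomorphism, cf.\ the proof of Theorem~\ref{thm2}), this shows $\mathbb{R}\mathsf{r}(Q)$ has finite projective dimension, hence lies in $\mathsf{K}^{\mathsf{b}}(\Proj\mathcal{A})$.

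Once both hypotheses of Proposition~\ref{sthnk} are checked, the proposition directly yields the triangle functor $\underline{\mathsf{e}}\colon\uGProj\mathcal{A}\rightarrow\uGProj\mathcal{B}$, realized as $\Omega^{-d}\circ\Omega^d\mathsf{e}$ with $d=n$ the uniform bound, and compatible with the singularity-category functor $\mathsf{e}\colon\mathsf{D}_{\mathsf{sg}}(\mathcal{A})\rightarrow\mathsf{D}_{\mathsf{sg}}(\mathcal{B})$ via the commutative square in that proof, which is exactly what is meant by ``$\mathsf{e}$ induces $\underline{\mathsf{e}}$'' in the sense fixed just before the statement. The main obstacle I anticipate is the existence and good behaviour of the right adjoint $\mathsf{r}$: a cleft extension only gives the left adjoints $\mathsf{q},\mathsf{l}$ for free, and $\mathsf{r}$ (equivalently, a right adjoint $\mathsf{F}'$ to $\mathsf{F}$, by Fact~\ref{fact}) need not exist in an arbitrary abelian cleft extension. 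The resolution is that under condition~$(\textbf{E})$-type hypotheses, or more generally whenever $\mathsf{e}$ is an eventually homological isomorphism arising from the recollement structure used in \cite{OPS, PSS}, one does have the required adjoint triple; alternatively, if the intended scope is module categories, Proposition~\ref{cleft_of_modules_is_cocleft} guarantees the cleft coextension structure and hence $\mathsf{r}$ outright. I would state the hypothesis under which $\mathsf{r}$ exists explicitly, then run the argument above.
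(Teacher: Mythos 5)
The student correctly computes the uniform bound $\pd_{\mathcal{B}}\mathsf{e}(P)\leq n$ for $P\in\Proj\mathcal{A}$ from the eventually-homological-isomorphism property; this matches the first step of the paper's proof. But the proposal's overall strategy — reduce to Proposition~\ref{sthnk} by producing a right adjoint $\mathsf{r}$ of $\mathsf{e}$ with $\mathbb{R}\mathsf{r}(Q)\in\mathsf{K}^{\mathsf{b}}(\Proj\mathcal{A})$ — has a gap that you yourself flag but do not close. A cleft extension of abelian categories supplies only the left adjoints $\mathsf{l}$ and $\mathsf{q}$; the right adjoint $\mathsf{r}$ (equivalently a right adjoint $\mathsf{F}'$ to $\mathsf{F}$) need not exist, and nothing in the hypothesis ``$\mathsf{e}$ is an eventually homological isomorphism'' forces its existence. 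Your proposed ``resolution'' is to add hypotheses guaranteeing $\mathsf{r}$, but that strengthens the statement rather than proving it. The claim that the recollement structure in \cite{OPS,PSS} (or condition $(\textbf{E})$) applies here is not justified — none of those hypotheses are assumed in Proposition~\ref{sthnk2}.

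The idea you are missing is that one can avoid $\mathsf{r}$ entirely by routing the coacyclicity check through the \emph{left} adjoint $\mathsf{l}$. The paper takes $X\in\GProj\mathcal{A}$ with totally acyclic $P^{\bullet}$, forms $\tilde{\Omega}^d\mathsf{e}(P^{\bullet})$ as you would (a complex of projectives by the uniform bound), and then for a projective $Q\in\mathcal{B}$ writes $\mathsf{H}^i(\mathsf{Hom}_{\mathcal{B}}(\tilde{\Omega}^d\mathsf{e}(P^{\bullet}),Q))\cong\mathsf{Ext}_{\mathcal{B}}^{i+j}(\mathsf{e}(\mathsf{B}^{j+1}(P^{\bullet})),Q)$ for $j\gg 0$. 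Since $\mathsf{el}\simeq\mathsf{Id}_{\mathcal{B}}\oplus\mathsf{F}$, the object $Q$ is a direct summand of $\mathsf{el}(Q)$, so it suffices to kill $\mathsf{Ext}_{\mathcal{B}}^{i+j}(\mathsf{e}(\mathsf{B}^{j+1}(P^{\bullet})),\mathsf{el}(Q))$. Now both arguments are in the image of $\mathsf{e}$, so the eventually-homological-isomorphism property directly gives $\mathsf{Ext}_{\mathcal{B}}^{i+j}(\mathsf{e}(\mathsf{B}^{j+1}(P^{\bullet})),\mathsf{el}(Q))\cong\mathsf{Ext}_{\mathcal{A}}^{i+j}(\mathsf{B}^{j+1}(P^{\bullet}),\mathsf{l}(Q))$ for $j\gg 0$, which vanishes because $\mathsf{l}(Q)\in\Proj\mathcal{A}$ and $P^{\bullet}$ is totally acyclic. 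This sidesteps $\mathsf{r}$ completely and works for an arbitrary abelian cleft extension, which is exactly the generality of the statement.
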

\begin{proof}
    First off, if we assume $\mathsf{e}$ to be an eventually homological isomorphism, then $\mathsf{e}(P)\leq d$ for every $P\in\Proj\mathcal{B}$ and some $d$. Indeed, for such $P$ we have 
    \[
    \mathsf{Ext}^i_{\mathsf{A}}(P,X)\cong \mathsf{Ext}^{i}_{\mathcal{B}}(\mathsf{e}(P),\mathsf{e}(X))
    \]
    for all $i>d$, for some $d$, from which follows that $\pd{_{\mathcal{B}}\mathsf{e}(P)}\leq d$. Once we show that $\Omega^d\mathsf{e}(X)\in\GProj\mathcal{B}$ for every $X\in\GProj\mathcal{A}$, then the proof continues as in the proof of Proposition \ref{sthnk}. Let $P^{\bullet}$ be a totally acyclic complex over $\Proj\mathcal{A}$ with $X=\mathsf{B}^d(P^{\bullet})$. As in Proposition \ref{sthnk}, $\Omega^d\mathsf{e}(X)=\mathsf{B}^0(\tilde{\Omega}^d\mathsf{e}(P^{\bullet}))$ and by the observation we began with, $\tilde{\Omega}^d\mathsf{e}(P^{\bullet})$ is a complex over $\Proj\mathcal{B}$. We must show that the latter is totally acyclic. Acyclicity follows as in Proposition \ref{sthnk}. For coacyclicity, given $Q\in\Proj\mathcal{B}$, for all $i$ and $j>\!\!>0$ we have
    \[
    \mathsf{H}^i(\mathsf{Hom}_{\mathcal{B}}(\tilde{\Omega}^d\mathsf{e}(P^{\bullet}),Q))  \cong \mathsf{Ext}_{\mathcal{B}}^{i+j}(\mathsf{e}(\mathsf{B}^{j+1}(P^{\bullet})),Q)   
    \]
    It is enough to show that $\mathsf{Ext}_{\mathcal{B}}^{i+j}(\mathsf{e}(\mathsf{B}^{j+1}(P^{\bullet})),\mathsf{el}(Q))\cong 0$, since $Q$ is a direct summand of $\mathsf{el}(Q)$. However, the functor $\mathsf{e}$ is an eventually homological isomorphism, so we infer
    \[
    \mathsf{Ext}_{\mathcal{B}}^{i+j}(\mathsf{e}(\mathsf{B}^{j+1}(P^{\bullet})),\mathsf{el}(Q))\cong \mathsf{Ext}_{\mathcal{A}}^{i+j}(\mathsf{B}^{j+1}(P^{\bullet}),\mathsf{l}(Q))
    \]
    and the right-hand side vanishes since $\mathsf{l}(Q)\in\Proj\mathcal{A}$ and $P^{\bullet}$ is totally acyclic.
\end{proof}

We can produce examples satisfying the above using Theorem \ref{thm2}. Let us point out that investigating connections between eventually homological isomorphisms and Gorenstein projective objects is not new, see \cite{qin3}.

\begin{cor} \label{upper_part_of_diagram}
    Let $(\mathcal{B},\mathcal{A},\mathsf{i},\mathsf{e},\mathsf{l})$ be a cleft extension of abelian categories that is also a cleft coextension. Assume that the functor $\mathsf{F}$ is perfect and nilpotent such that $\pd{_{\mathcal{B}}\mathsf{F}(P)}\leq d$ for every $P\in\Proj\mathcal{A}$ and some $d$. Then there are triangle functors
    \[
\begin{tikzcd}
\uGProj\mathcal{B} \arrow[rr, "\underline{\mathsf{i}}", dashed] &  & \uGProj\mathcal{A} \arrow[rr, "\underline{\mathsf{e}}", dashed] \arrow[ll, "\mathsf{q}"', bend right] &  & \uGProj\mathcal{B} \arrow[ll, "\mathsf{l}"', bend right]
\end{tikzcd}
    \]
    with the dashed arrows existing provided that $\mathbb{R}\mathsf{r}(\mathsf{K}^{\mathsf{b}}(\Proj\mathcal{B}))\subseteq \mathsf{K}^{\mathsf{b}}(\Proj\mathcal{A})$ and $\mathbb{R}\mathsf{p}(\mathsf{K}^{\mathsf{b}}(\Proj\mathcal{A}))\subseteq \mathsf{K}^{\mathsf{b}}(\Proj\mathcal{B})$, in which case  $(\mathsf{q},\underline{\mathsf{i}})$ and $(\mathsf{l},\underline{\mathsf{e}})$ are adjoint pairs and $\underline{\mathsf{e}}\underline{\mathsf{i}}\simeq \mathsf{Id}_{\uGProj\mathcal{B}}$.
\end{cor}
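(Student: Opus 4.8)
The plan is to produce the three solid arrows first and then the two dashed ones by applying Proposition \ref{sthnk} twice, deducing finally the adjunctions and the relation $\underline{\mathsf{e}}\,\underline{\mathsf{i}}\simeq\mathsf{Id}$ by restriction from the singularity categories. Since $\mathsf{F}$ is perfect, nilpotent and $\pd_{\mathcal{B}}\mathsf{F}(P)\le d$ for every $P\in\Proj\mathcal{B}$, Corollary \ref{gorenstein_proj_in_B} and the discussion following it already give triangle functors $\mathsf{l}\colon\uGProj\mathcal{B}\to\uGProj\mathcal{A}$ and $\mathsf{q}\colon\uGProj\mathcal{A}\to\uGProj\mathcal{B}$. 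Moreover, in the proof of that corollary one invokes Proposition \ref{adjoint_pair_and_gorenstein_projectives}, which also yields that $\mathbb{L}_i\mathsf{l}$ and $\mathbb{L}_i\mathsf{q}$ vanish on Gorenstein projectives; hence these two functors are exactly the restrictions of the functors $\mathbb{L}_{\mathsf{sg}}\mathsf{l}$ and $\mathbb{L}_{\mathsf{sg}}\mathsf{q}$ of Proposition \ref{cleft_of_singularity} (available here by Corollary \ref{sunnefiasmenh_kuriakh}) to the fully faithful images of $\uGProj\mathcal{B}$ and $\uGProj\mathcal{A}$ inside $\mathsf{D}_{\mathsf{sg}}(\mathcal{B})$ and $\mathsf{D}_{\mathsf{sg}}(\mathcal{A})$.

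For $\underline{\mathsf{e}}$ I would apply Proposition \ref{sthnk} to the adjoint pair $(\mathsf{e},\mathsf{r})$ coming from the cleft coextension structure. Condition (i) there asks for a uniform bound $\pd_{\mathcal{B}}\mathsf{e}(P)\le d$ over $P\in\Proj\mathcal{A}$: every projective of $\mathcal{A}$ is a direct summand of $\mathsf{l}(Q)$ for some $Q\in\Proj\mathcal{B}$ and $\mathsf{el}(Q)\cong Q\oplus\mathsf{F}(Q)$, so this holds by hypothesis. Condition (ii) asks $\mathbb{R}\mathsf{r}(Q)\in\mathsf{K}^{\mathsf{b}}(\Proj\mathcal{A})$ for every $Q\in\Proj\mathcal{B}$, which is exactly the assumption $\mathbb{R}\mathsf{r}(\mathsf{K}^{\mathsf{b}}(\Proj\mathcal{B}))\subseteq\mathsf{K}^{\mathsf{b}}(\Proj\mathcal{A})$ applied to a stalk complex. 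Thus $\underline{\mathsf{e}}\colon\uGProj\mathcal{A}\to\uGProj\mathcal{B}$ exists and, by the commutative squares in the proof of Proposition \ref{sthnk}, it is the restriction of $\mathsf{e}\colon\mathsf{D}_{\mathsf{sg}}(\mathcal{A})\to\mathsf{D}_{\mathsf{sg}}(\mathcal{B})$. Symmetrically, applying Proposition \ref{sthnk} to the adjoint pair $(\mathsf{i},\mathsf{p})$, with $\mathsf{i}\colon\mathcal{B}\to\mathcal{A}$ in the role of ``$\mathsf{e}$'' and $\mathsf{p}$ in the role of ``$\mathsf{r}$'', yields $\underline{\mathsf{i}}\colon\uGProj\mathcal{B}\to\uGProj\mathcal{A}$, the restriction of $\mathsf{i}\colon\mathsf{D}_{\mathsf{sg}}(\mathcal{B})\to\mathsf{D}_{\mathsf{sg}}(\mathcal{A})$: here condition (i) follows from Remark \ref{preserves_and_reflects_bounds}, namely $\pd_{\mathcal{A}}\mathsf{i}(P)\le\pd_{\mathcal{B}}\mathsf{ei}(P)+(m+1)d=(m+1)d$ because $\mathsf{ei}(P)\cong P$ is projective (with $\mathsf{F}^m=0$), while condition (ii) is precisely the hypothesis $\mathbb{R}\mathsf{p}(\mathsf{K}^{\mathsf{b}}(\Proj\mathcal{A}))\subseteq\mathsf{K}^{\mathsf{b}}(\Proj\mathcal{B})$.

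It then remains to upgrade the structural data. By Corollary \ref{sunnefiasmenh_kuriakh} and Proposition \ref{cleft_of_singularity}, $(\mathbb{L}_{\mathsf{sg}}\mathsf{l},\mathsf{e})$ and $(\mathbb{L}_{\mathsf{sg}}\mathsf{q},\mathsf{i})$ are adjoint pairs of triangle functors on the singularity categories and $\mathsf{e}\mathsf{i}\simeq\mathsf{Id}_{\mathsf{D}_{\mathsf{sg}}(\mathcal{B})}$. Each of these four functors preserves the appropriate copy of $\uGProj$ by the previous steps, and the inclusions $\uGProj\mathcal{C}\hookrightarrow\mathsf{D}_{\mathsf{sg}}(\mathcal{C})$ are fully faithful; an immediate $\mathsf{Hom}$-set computation shows that an adjoint pair of triangle functors which restricts to such full subcategories restricts to an adjoint pair there. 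Hence $(\mathsf{q},\underline{\mathsf{i}})$ and $(\mathsf{l},\underline{\mathsf{e}})$ are adjoint pairs, and restricting $\mathsf{e}\mathsf{i}\simeq\mathsf{Id}$ to $\uGProj\mathcal{B}$ gives $\underline{\mathsf{e}}\,\underline{\mathsf{i}}\simeq\mathsf{Id}_{\uGProj\mathcal{B}}$.

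The main obstacle is not any individual estimate — the uniform projective-dimension bounds fall straight out of $\mathsf{el}\simeq\mathsf{Id}_{\mathcal{B}}\oplus\mathsf{F}$, $\mathsf{ei}\simeq\mathsf{Id}_{\mathcal{B}}$ and Remark \ref{preserves_and_reflects_bounds} — but rather the bookkeeping that makes the last step legitimate: one must be sure that the triangle functors $\underline{\mathsf{e}}$ and $\underline{\mathsf{i}}$ produced by Proposition \ref{sthnk}, together with $\mathsf{l}$ and $\mathsf{q}$, genuinely arise as restrictions of the singularity-category functors of Proposition \ref{cleft_of_singularity}, so that the adjunctions and the isomorphism $\mathsf{e}\mathsf{i}\simeq\mathsf{Id}$ can be transported down. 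This is also why $\underline{\mathsf{i}}$ and $\underline{\mathsf{e}}$ must remain dashed in general: their very existence is conditional on the behaviour of $\mathbb{R}\mathsf{r}$ and $\mathbb{R}\mathsf{p}$ on perfect complexes, i.e.\ on condition (ii) of Proposition \ref{sthnk}.
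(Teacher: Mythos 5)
Your proof is correct and follows essentially the same route as the paper's: solid arrows via Corollary \ref{gorenstein_proj_in_B}, dashed arrows via Proposition \ref{sthnk} applied to the adjoint pairs $(\mathsf{e},\mathsf{r})$ and $(\mathsf{i},\mathsf{p})$ together with the uniform bounds from Remark \ref{preserves_and_reflects_bounds}, and the adjunctions and identity $\underline{\mathsf{e}}\,\underline{\mathsf{i}}\simeq\mathsf{Id}$ by restriction from the singularity categories of Proposition \ref{cleft_of_singularity}. The only cosmetic difference is the very last step: you restrict $\mathsf{e}\mathsf{i}\simeq\mathsf{Id}_{\mathsf{D}_{\mathsf{sg}}(\mathcal{B})}$ directly, while the paper notes that $\underline{\mathsf{e}}\,\underline{\mathsf{i}}$ is a right adjoint of $\mathsf{ql}\simeq\mathsf{Id}_{\uGProj\mathcal{B}}$ and invokes uniqueness of adjoints; both are immediate and equivalent.
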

\begin{proof}
    The first part is a direct consequence of Corollary \ref{gorenstein_proj_in_B}, while the second follows by combining Proposition \ref{sthnk} with Remark \ref{preserves_and_reflects_bounds}. The fact that $(\mathsf{q},\underline{\mathsf{i}})$ and $(\mathsf{l},\underline{\mathsf{i}})$ are adjoint pairs (provided that $\underline{\mathsf{i}}$ and $\underline{\mathsf{e}}$ exist) follows by the fact that $(\mathbb{L}_{\mathsf{sg}}\mathsf{q},\mathsf{i})$ and $(\mathbb{L}_{\mathsf{sg}}\mathsf{l},\mathsf{e})$ are adjoint pairs on the level of singularity categories. Lastly, $\underline{\mathsf{e}}\underline{\mathsf{i}}$ is a right adjoint to $\mathsf{ql}\simeq \mathsf{Id}_{\uGProj\mathcal{B}}$, thus it follows that $\underline{\mathsf{e}}\underline{\mathsf{i}}\simeq \mathsf{Id}_{\uGProj\mathcal{B}}$. 
\end{proof}

Keep the setting as in the above corollary and recall from Proposition \ref{adjoint_pair_and_gorenstein_projectives} that $\mathbb{L}\mathsf{l}(X)=\mathsf{l}(X)$ for every $X\in\GProj\mathcal{B}$ and $\mathbb{L}\mathsf{q}(Y)=\mathsf{q}(Y)$ for every $Y\in\GProj\mathcal{A}$. Consequently, using Lemma \ref{restriction_of_cleft}, we infer the following, which is precisely the upper part of the commutative diagram of Theorem A. 

\begin{cor} \label{cleft_of_stable}
    Let $(\Mod\Gamma,\Mod\Lambda,\mathsf{i},\mathsf{e},\mathsf{l})$ be a cleft extension of module categories of Noetherian rings. Assume that the endofunctor $\mathsf{F}$ is perfect and nilpotent. Then there is a diagram of triangle functors
    \[
\begin{tikzcd}
\uGproj \Gamma \arrow[rr, "\underline{\mathsf{i}}", dashed] &  & \uGproj\Lambda \arrow[rr, "\underline{\mathsf{e}}", dashed] \arrow[ll, "\mathsf{q}"', bend right] &  & \uGproj\Gamma \arrow[ll, "\mathsf{l}"', bend right]
\end{tikzcd}
    \]
    where the dashed functors exist if $\mathbb{R}\mathsf{r}(\Gamma)\in\mathsf{K}^{\mathsf{b}}(\proj\Lambda)$ and $\mathbb{R}\mathsf{p}(\Lambda)\in\mathsf{K}^{\mathsf{b}}(\proj\Gamma)$. 
\end{cor}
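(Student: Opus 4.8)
The plan is to reduce the statement to the already-established machinery of Section~7. The key reduction is \lemref{restriction_of_cleft}: since $\mathsf F$ perfect and nilpotent in particular forces $\mathsf F$ to be nilpotent, the given cleft extension $(\Mod\Gamma,\Mod\Lambda,\mathsf i,\mathsf e,\mathsf l)$ restricts to a cleft extension $(\smod\Gamma,\smod\Lambda,\mathsf i,\mathsf e,\mathsf l)$ of the categories of finitely generated modules. Moreover, since $\Gamma$ and $\Lambda$ are Noetherian, we have $\Gproj\Gamma=\GProj\Gamma\cap\smod\Gamma$ and likewise for $\Lambda$ (as recalled in Subsection~7.1), so the stable categories $\uGproj\Gamma$, $\uGproj\Lambda$ are the relevant ones and they sit inside $\mathsf D_{\mathsf{sg}}(\Gamma)$, $\mathsf D_{\mathsf{sg}}(\Lambda)$ via Buchweitz's functor $i$.

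Next I would verify the hypothesis of \corref{upper_part_of_diagram} is met in the module setting. By \propref{cleft_of_modules_is_cocleft}, a cleft extension of module categories is automatically the upper part of a cleft coextension, and when $\mathsf F$ is perfect and nilpotent the associated endofunctor $\mathsf F'$ is coperfect and nilpotent. Since $M$ is finitely generated on both sides (a consequence of nilpotency of $\mathsf F\simeq-\otimes_\Gamma M$ together with \factref{noetherian_theta_extensions}, via the canonical form of \remref{canonical_form}), the bimodule $M$ is perfect, hence $\pd M_\Gamma<\infty$ by \lemref{tensor_is_perfect}; as $\mathsf F$ sends $\Gamma$ to $M$ and commutes with coproducts, $\sup\{\pd{_\Gamma\mathsf F(P)}\mid P\in\Proj\Gamma\}=\pd M_\Gamma=:d<\infty$. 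So the boundedness condition $\pd{_{\mathcal B}\mathsf F(P)}\le d$ of \corref{upper_part_of_diagram} holds. Applying that corollary (in the guise of \corref{sunnefiasmenh_kuriakh} and \propref{adjoint_pair_and_gorenstein_projectives} for the solid arrows, and \propref{sthnk} for the dashed ones) to the restricted cleft extension yields the diagram
\[
\begin{tikzcd}
\uGproj\Gamma \arrow[rr, "\underline{\mathsf i}", dashed] &  & \uGproj\Lambda \arrow[rr, "\underline{\mathsf e}", dashed] \arrow[ll, "\mathsf q"', bend right] &  & \uGproj\Gamma \arrow[ll, "\mathsf l"', bend right]
\end{tikzcd}
\]
with the solid functors $\mathsf q$, $\mathsf l$ unconditional (here $\mathbb L\mathsf q$, $\mathbb L\mathsf l$ agree with $\mathsf q$, $\mathsf l$ on Gorenstein projectives by \propref{adjoint_pair_and_gorenstein_projectives}), and with $\underline{\mathsf i}$, $\underline{\mathsf e}$ existing once $\mathbb R\mathsf r$ and $\mathbb R\mathsf p$ preserve bounded complexes of finitely generated projectives.

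Finally I would translate the abstract existence condition for the dashed arrows into the concrete one stated, namely $\mathbb R\mathsf r(\Gamma)\in\mathsf K^{\mathsf b}(\proj\Lambda)$ and $\mathbb R\mathsf p(\Lambda)\in\mathsf K^{\mathsf b}(\proj\Gamma)$. Since $\mathsf r$ and $\mathsf p$ are additive functors and every finitely generated projective $\Gamma$-module (resp.\ $\Lambda$-module) is a direct summand of a finite free module $\Gamma^{(n)}$ (resp.\ $\Lambda^{(n)}$), the condition $\mathbb R\mathsf r(\mathsf K^{\mathsf b}(\proj\Gamma))\subseteq\mathsf K^{\mathsf b}(\proj\Lambda)$ is equivalent to $\mathbb R\mathsf r(\Gamma)\in\mathsf K^{\mathsf b}(\proj\Lambda)$, and symmetrically for $\mathsf p$; this is the content of the "dashed" clause. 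The main (and only genuinely non-formal) obstacle is bookkeeping: making sure that the various adjunctions, the identity $\underline{\mathsf e}\,\underline{\mathsf i}\simeq\mathsf{Id}$, and the compatibility of $\mathsf q$, $\mathsf l$ with the derived functors all descend correctly from $\Mod$ to $\smod$ after the restriction of \lemref{restriction_of_cleft}; but since the restricted tuple is again a genuine cleft extension and the proofs of \propref{adjoint_pair_and_gorenstein_projectives}, \propref{sthnk}, \corref{gorenstein_proj_in_B} and \corref{upper_part_of_diagram} were stated for arbitrary abelian categories with enough projectives, everything carries over verbatim, and the adjointness plus $\underline{\mathsf e}\,\underline{\mathsf i}\simeq\mathsf{Id}_{\uGproj\Gamma}$ follow exactly as in \corref{upper_part_of_diagram}.
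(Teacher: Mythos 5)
Your proposal is correct and follows essentially the same route as the paper: reduce via Lemma \ref{restriction_of_cleft} to the restricted cleft extension on finitely generated modules, check the boundedness hypothesis of Corollary \ref{upper_part_of_diagram} using perfectness of the bimodule $M$, and then invoke Corollary \ref{upper_part_of_diagram} (equivalently Proposition \ref{adjoint_pair_and_gorenstein_projectives} for the solid arrows and Proposition \ref{sthnk} for the dashed ones). The only extra content you supply — the observation that because $\mathbb R\mathsf r$, $\mathbb R\mathsf p$ are triangle functors and $\mathsf K^{\mathsf b}(\proj\Lambda)$ is thick, the conditions $\mathbb R\mathsf r(\mathsf K^{\mathsf b}(\proj\Gamma))\subseteq\mathsf K^{\mathsf b}(\proj\Lambda)$ and $\mathbb R\mathsf r(\Gamma)\in\mathsf K^{\mathsf b}(\proj\Lambda)$ are equivalent — is exactly the intended translation left implicit in the paper, so the two arguments agree.
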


Below we provide sufficient conditions, in terms of the endofunctor $\mathsf{F}'$ of a cleft extension $(\Mod\Gamma,\Mod \Lambda,\mathsf{i},\mathsf{e},\mathsf{l})$, so that the conditions implying the existence of the dashed arrows are met.

\begin{prop} \label{projectives_F'_inj}
    Let $(\Mod\Gamma,\Mod\Lambda,\mathsf{i},\mathsf{e},\mathsf{l})$ be a cleft extension of module categories of Noetherian rings. Assume that $\mathsf{F}$ is perfect and nilpotent. The following hold: 
    \begin{itemize}
        \item[(i)] If $\Gamma$ is $\mathsf{F}'$-injective and $\pd \mathsf{F}'(\Gamma)_{\Gamma}<\infty$, then $\mathbb{R}\mathsf{r}(\Gamma)=\mathsf{r}(\Gamma)\in \mathsf{K}^{\mathsf{b}}(\proj\Lambda)$. 
        \item[(ii)] If $\mathsf{e}(\Lambda)$ is $\mathsf{F}'$-injective and $\pd {\mathsf{F}'}^j\mathsf{e}(\Lambda)_{\Gamma}<\infty$ for every $j\geq 0$, then $\mathbb{R}\mathsf{p}(\Lambda)\in \mathsf{K}^{\mathsf{b}}(\proj\Gamma)$. 
    \end{itemize} 
\end{prop}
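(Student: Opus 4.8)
The plan is to pass immediately to the cleft coextension $(\Mod\Gamma,\Mod\Lambda,\mathsf{i},\mathsf{e},\mathsf{r})$ whose upper part is the given cleft extension (Proposition \ref{cleft_of_modules_is_cocleft}); its endofunctor $\mathsf{F}'$ is then coperfect and nilpotent, so I fix $s$ with $\mathsf{F}'^s=0$, whence also $\mathsf{G}'^s=0$ because $\mathsf{e}\mathsf{G}'^s\simeq\mathsf{F}'^s\mathsf{e}$ and $\mathsf{e}$ is faithful. Coperfectness of $\mathsf{F}'$ also forces $\mathsf{r}$ to have finite cohomological dimension (Lemma \ref{basic_properties_of_coperfect_endofunctor_on_cleft}(i)). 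Two small observations will be used throughout. First, by Lemma \ref{basic_homological_properties_of_cocleft}(ii) and the faithfulness of $\mathsf{e}$, a $\Gamma$-module $X$ is \emph{$\mathsf{r}$-acyclic} (i.e.\ $\mathbb{R}^i\mathsf{r}(X)=0$ for all $i\geq1$) precisely when $\mathbb{R}^i\mathsf{F}'(X)=0$ for all $i\geq1$; hence, if $X$ is $\mathsf{F}'$-injective, then $\mathsf{F}'^j(X)$ is $\mathsf{r}$-acyclic for every $j\geq0$ by Lemma \ref{F'_injective}. Second, if $X$ is $\mathsf{r}$-acyclic, then $\mathbb{R}\mathsf{p}(\mathsf{r}(X))\cong X$: indeed $\mathsf{r}(X)=\mathbb{R}\mathsf{r}(X)$, and $\mathsf{r}$ carries injectives to injectives (Lemma \ref{basic_homological_properties_of_cocleft}(i)), hence to $\mathsf{p}$-acyclic objects, so the composition rule for derived functors gives $\mathbb{R}\mathsf{p}\circ\mathbb{R}\mathsf{r}\cong\mathbb{R}(\mathsf{p}\mathsf{r})\cong\mathsf{Id}$, using $\mathsf{p}\mathsf{r}\simeq\mathsf{Id}$. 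Finally, I freely invoke Lemma \ref{restriction_of_cleft} to descend to finitely generated modules, so that the conclusions below land in $\mathsf{K}^{\mathsf{b}}(\proj\cdot)$ and not merely $\mathsf{K}^{\mathsf{b}}(\Proj\cdot)$.

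For part (i): since $\Gamma$ is $\mathsf{F}'$-injective it is $\mathsf{r}$-acyclic, so $\mathbb{R}\mathsf{r}(\Gamma)=\mathsf{r}(\Gamma)$. To see $\pd\mathsf{r}(\Gamma)_\Lambda<\infty$ I apply Proposition \ref{preserves_and_reflects} (available since $\mathsf{F}$ is perfect and nilpotent): it reduces the claim to $\pd_\Gamma\mathsf{e}\mathsf{r}(\Gamma)<\infty$. The splitting $\mathsf{e}\mathsf{r}\simeq\mathsf{Id}_{\Mod\Gamma}\oplus\mathsf{F}'$ gives $\mathsf{e}\mathsf{r}(\Gamma)\cong\Gamma\oplus\mathsf{F}'(\Gamma)$, which has finite projective dimension by hypothesis, so $\mathsf{r}(\Gamma)\in\mathsf{K}^{\mathsf{b}}(\proj\Lambda)$ by Lemma \ref{perfect_objects}.

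For part (ii): I will show, by descending induction on $n$, that $\mathbb{R}\mathsf{p}(\mathsf{G}'^n(\Lambda))\in\mathsf{K}^{\mathsf{b}}(\proj\Gamma)$, the case $n=0$ being exactly $\mathbb{R}\mathsf{p}(\Lambda)\in\mathsf{K}^{\mathsf{b}}(\proj\Gamma)$. For every $n\geq0$ the cleft coextension supplies a short exact sequence $0\to\mathsf{G}'^n(\Lambda)\to\mathsf{r}\mathsf{F}'^n\mathsf{e}(\Lambda)\to\mathsf{G}'^{n+1}(\Lambda)\to0$; applying $\mathbb{R}\mathsf{p}$ gives a triangle $\mathbb{R}\mathsf{p}(\mathsf{G}'^n(\Lambda))\to\mathbb{R}\mathsf{p}(\mathsf{r}\mathsf{F}'^n\mathsf{e}(\Lambda))\to\mathbb{R}\mathsf{p}(\mathsf{G}'^{n+1}(\Lambda))\to$ in $\mathsf{D}^{+}(\Mod\Gamma)$. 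Since $\mathsf{e}(\Lambda)$ is $\mathsf{F}'$-injective, $\mathsf{F}'^n\mathsf{e}(\Lambda)$ is $\mathsf{r}$-acyclic, so the second observation identifies the middle term with $\mathsf{F}'^n\mathsf{e}(\Lambda)$, which lies in $\mathsf{K}^{\mathsf{b}}(\proj\Gamma)$ because $\pd\mathsf{F}'^n\mathsf{e}(\Lambda)_\Gamma<\infty$ by hypothesis (Lemma \ref{perfect_objects}). For $n=s-1$ the sequence degenerates, as $\mathsf{G}'^s(\Lambda)=0$, to an isomorphism $\mathsf{G}'^{s-1}(\Lambda)\cong\mathsf{r}\mathsf{F}'^{s-1}\mathsf{e}(\Lambda)$, which gives the base case; for $n<s-1$ the middle and last terms of the triangle lie in $\mathsf{K}^{\mathsf{b}}(\proj\Gamma)$—the last by the inductive hypothesis—and since $\mathsf{K}^{\mathsf{b}}(\proj\Gamma)$ is a triangulated subcategory of $\mathsf{D}^{\mathsf{b}}(\smod\Gamma)$, the first term lies in it as well. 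Taking $n=0$ completes the argument.

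The main obstacle is part (ii), and more precisely the interplay of the two right adjoints: one must see that $\mathbb{R}\mathsf{p}$ applied to $\mathsf{r}$ of an $\mathsf{r}$-acyclic module simply recovers that module—this rests on $\mathsf{r}$ preserving injectives together with $\mathsf{p}\mathsf{r}\simeq\mathsf{Id}$—and then use the nilpotency of $\mathsf{G}'$ to control the a priori unbounded complex $\mathbb{R}\mathsf{p}(\Lambda)$, exhibiting it as built in finitely many triangle steps out of the perfect complexes $\mathsf{F}'^n\mathsf{e}(\Lambda)$, $0\le n\le s-1$. Everything else is routine manipulation of the standard short exact sequences attached to a cleft (co)extension and of modules of finite projective dimension.
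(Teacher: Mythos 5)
Your proof is correct and follows essentially the same route as the paper: part (i) matches verbatim (reduce via $\mathsf{er}\simeq\mathsf{Id}\oplus\mathsf{F}'$ and Proposition~\ref{preserves_and_reflects}), and in part (ii) your descending induction on $\mathsf{G}'^n(\Lambda)$ is exactly the paper's use of the coextension short exact sequences, nilpotency of $\mathsf{G}'$, and the identification $\mathbb{R}\mathsf{p}(\mathsf{r}\mathsf{F}'^j\mathsf{e}(\Lambda))\cong\mathsf{F}'^j\mathsf{e}(\Lambda)$. Your "observation 2" (the composition rule $\mathbb{R}\mathsf{p}\circ\mathbb{R}\mathsf{r}\cong\mathbb{R}(\mathsf{pr})\cong\mathsf{Id}$ on $\mathsf{r}$-acyclics) is just a slightly more formal packaging of the paper's step of taking an injective resolution and applying $\mathsf{pr}\simeq\mathsf{Id}$.
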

\begin{proof} By Lemma \ref{restriction_of_cleft}, the given cleft extension restricts to a cleft extension $(\smod\Gamma,\smod\Lambda,\mathsf{i},\mathsf{e},\mathsf{l})$ of small module categories.

(i) Using Lemma \ref{basic_homological_properties_of_cocleft}, we infer that $\mathsf{e}\mathbb{R}^i\mathsf{r}(\Gamma)\cong \mathbb{R}^i\mathsf{F}'(\Gamma)\cong 0$ and so $\mathbb{R}^i\mathsf{r}(\Gamma)\cong 0$ for all $i\geq 1$. We conclude that $\mathbb{R}\mathsf{r}(\Gamma)=\mathsf{r}(\Gamma)$. On the other hand, $\mathsf{er}\simeq \mathsf{Id}_{\smod\Gamma}\oplus \mathsf{F}'$ and so $\mathsf{er}(\Gamma)\simeq \Gamma\oplus \mathsf{F}'(\Gamma)$. Since $\pd \mathsf{F}'(\Gamma)_{\Gamma}<\infty$, it follows that $\pd\ \!\mathsf{er}(\Gamma)_{\Gamma}<\infty$. Since $\mathsf{F}$ is perfect and nilpotent, we know from Proposition \ref{preserves_and_reflects} that $\mathsf{e}$ reflects modules of finite projective dimension, hence $\mathsf{r}(\Gamma)$ has finite projective dimension. 

(ii) Consider the following short exact sequences in $\smod\Lambda$:
    \[
    0\rightarrow \Lambda\rightarrow \mathsf{re}(\Lambda)\rightarrow \mathsf{G}'(\Lambda)\rightarrow 0, \  0\rightarrow \mathsf{G}'(\Lambda)\rightarrow \mathsf{rF}'\mathsf{e}(\Lambda)\rightarrow \mathsf{G}'^2(\Lambda)\rightarrow 0, \  \dots
    \]
    which in turn give triangles in $\mathsf{D}(\Lambda)$:
    \[
    \Lambda\rightarrow \mathsf{re}(\Lambda)\rightarrow \mathsf{G}'(\Lambda)\rightarrow \Lambda[1], \  \mathsf{G}'(\Lambda)\rightarrow \mathsf{rF}'\mathsf{e}(\Lambda)\rightarrow \mathsf{G}'^2(\Lambda)\rightarrow \mathsf{G}'(\Lambda)[1], \  \dots
    \]
    Applying $\mathbb{R}\mathsf{p}$ to the latter, since $\mathsf{F}'$ is nilpotent (and so is $\mathsf{G}'$), shows that in order to prove the claim, it is enough to show that $\mathbb{R}\mathsf{p}({\mathsf{rF}'}^j\mathsf{e}(\Lambda))\in\mathsf{K}^{\mathsf{b}}(\proj\Lambda)$ for every $j\geq 0$. By assumption, $\mathsf{e}(\Lambda)$ is $\mathsf{F}'$-injective and so by Lemma \ref{F'_injective}, we also have that $\mathsf{F}'^j\mathsf{e}(\Lambda)$ is $\mathsf{F}'$-injective for every $j\geq 0$. Lemma \ref{basic_homological_properties_of_cocleft} shows that $\mathbb{R}^i\mathsf{r}(\mathsf{F}'^j\mathsf{e}(\Lambda))=0$ for all $i,j\geq 1$. Therefore, in order to compute $\mathbb{R}^i\mathsf{p}(\mathsf{r}\mathsf{F}'^j\mathsf{e}(\Lambda))$, it is enough to begin with an injective resolution of $\mathsf{F}'^j\mathsf{e}(\Lambda)$ and apply $\mathsf{pr}$. However, $\mathsf{pr}\simeq \mathsf{Id}_{\smod\Gamma}$ and so $\mathbb{R}^i\mathsf{p}(\mathsf{rF}'^j\mathsf{e}(\Lambda))=0$ for all $i,j\geq 0$. Consequently, $\mathbb{R}\mathsf{p}(\mathsf{rF}'^j\mathsf{e}(\Lambda))=\mathsf{F}'^j\mathsf{e}(\Lambda)$, which has finite projective dimension by assumption. 
\end{proof}

\begin{exmp}
    Let $A$ and $B$ be Noetherian rings and $N$ an $A$-$B$-bimodule that is finitely generated on both sides. Consider the triangular matrix ring $\Lambda=\big(\begin{smallmatrix}   A & N\\   0 & B \end{smallmatrix}\big)$, which is also Noetherian. Recall from Example \ref{perfect_for_triangular} that $\Mod\Lambda$ is a cleft extension of $\Mod A\!\times\! B$ and the functor $\mathsf{F}$ is given by $(X,Y)\mapsto (0,X\otimes_AN)$. If $\pd{_{A}N}<\infty$ and $\pd N_{B}<\infty$, then by Example \ref{perfect_for_triangular} the functor $\mathsf{F}$ is perfect and nilpotent. Consequently, by Corollary \ref{cleft_of_stable}, the functors $\mathsf{l}\colon\Mod\ A\!\times\! B\rightarrow \Mod\Lambda$ and $\mathsf{q}\colon\Mod\Lambda\rightarrow \Mod A\!\times\! B$ give rise to triangle functors as below:
        \[
    \begin{tikzcd}
\uGproj A\!\times\!B \arrow[rr, "\underline{\mathsf{i}}", dashed] &  & \uGproj\Lambda \arrow[rr, "\underline{\mathsf{e}}", dashed] \arrow[ll, "\mathsf{q}"', bend right] &  & \uGproj A\!\times\!B \arrow[ll, "\mathsf{l}"', bend right]
\end{tikzcd}
    \]
    We will now use Proposition \ref{projectives_F'_inj} to find sufficient conditions for the existence of the dashed arrows above. By the description of $\mathsf{F}$, it follows that the functor $\mathsf{F}'$ is given by $(X,Y)\mapsto (\mathsf{Hom}_B(N,Y),0)$ and in particular it follows that $\mathbb{R}^i\mathsf{F}'(X,Y)=(\mathsf{Ext}^i_B(N,Y),0)$. Assume that $\gd A<\infty$ and that $N_B$ is projective. Then $\mathbb{R}^i\mathsf{F}'(A,B)=(\mathsf{Ext}_B^i(N,B),0)=0$ for $i\geq 1$, meaning that $(A,B)$ is $\mathsf{F}'$-injective. Further, $\pd\mathsf{F}'(A,B)_{A\times B}=\pd{(\mathsf{Hom}_B(N,B),0)}_{A\times B}\leq \gd A$. Hence, the assumptions of Proposition \ref{projectives_F'_inj}(i) are met. Note that the assumptions of Proposition \ref{projectives_F'_inj}(ii) are also satisfied. Indeed, $\mathbb{R}^i\mathsf{F}'(\mathsf{e}(\Lambda))=(\mathsf{Ext}^i_B(N,N\oplus B),0)= 0$ for $i\geq 1$, meaning that $\mathsf{e}(\Lambda)$ is $\mathsf{F}'$-injective. Further, as before, the projective dimensions of ${\mathsf{F}'}^j(\mathsf{e}(\Lambda))$ are bounded by $\gd A$. In conclusion, if $\gd A<\infty$ and $N_B$ is projective, then the functors $\underline{\mathsf{i}}$ and $\underline{\mathsf{e}}$ exist, in which case $(\mathsf{q},\underline{\mathsf{i}})$ and  $(\mathsf{l},\underline{\mathsf{e}})$ are adjoint pairs and $\underline{\mathsf{e}}\underline{\mathsf{i}}\simeq \mathsf{Id}_{\uGproj A\!\times\!B}$. 
\end{exmp}

\begin{exmp} \label{stable_gorenstein_of_arrow}
    Let $\Lambda=\Gamma\ltimes M$ be a trivial extension of a finite dimensional algebra $\Gamma$ over a field, where $M$ is perfect and nilpotent $\Gamma$-bimodule that is finitely generated on both sides, for instance an arrow removal \cite{arrow}. Then, the functors $\mathsf{l}\colon\Mod\Gamma\rightarrow \Mod\Lambda$ and $\mathsf{q}\colon\Mod\Lambda\rightarrow \Mod\Gamma$ of the usual cleft extension give rise to triangle functors as below: 
    \[
    \begin{tikzcd}
\uGproj\Gamma \arrow[rr, "\underline{\mathsf{i}}", dashed] &  & \uGproj\Gamma\ltimes M \arrow[rr, "\underline{\mathsf{e}}", dashed] \arrow[ll, "\mathsf{q}"', bend right] &  & \uGproj\Gamma \arrow[ll, "\mathsf{l}"', bend right]
\end{tikzcd}
    \]
    We shall now find sufficient conditions for the existence of the dashed arrows. For this, we assume $_{\Gamma}M_{\Gamma}$ to be a projective bimodule. Then, from Theorem \ref{thm2}, the functor $\mathsf{e}$ of the cleft extension $(\Mod\Lambda,\Mod\Gamma,\mathsf{i},\mathsf{e},\mathsf{l})$ is an eventually homological isomorphism (see also \cite[Corollary 3.3]{arrow2}). Consequently, Proposition \ref{sthnk2} yields the existence of the functor $\underline{\mathsf{e}}$. The existence of the functor $\underline{\mathsf{i}}$, which follows from the fact that the functor $\mathsf{e}$ maps projectives to projectives, is left to the reader to verify. In this case, $(\mathsf{l},\underline{\mathsf{e}})$ and $(\mathsf{q},\underline{\mathsf{i}})$ are adjoint pairs and $\underline{\mathsf{e}}\underline{\mathsf{i}}\simeq \mathsf{Id}_{\uGproj\Gamma}$. We will see in Proposition \ref{equivalence_of_stable_for_theta} that the functor $\underline{\mathsf{e}}$ is, in fact, an equivalence. 
\end{exmp}

\subsection{An Equivalence of stable categories of Gorenstein projectives} \label{equivalence of gorenstein projectives section}

In Example \ref{stable_gorenstein_of_arrow}, in fact, the functor $\mathsf{e}$ is an equivalence of stable categories of Gorenstein projective modules, as shown below in greater generality. First, assume the existence of a cleft extension $(\Mod\Gamma,\Mod\Lambda,\mathsf{i},\mathsf{e},\mathsf{l})$ of Noetherian rings, such that $\mathsf{F}$ is perfect and nilpotent and $\mathsf{e}$ induces a triangle functor $\underline{\mathsf{e}}\colon\uGproj\Lambda\rightarrow \uGproj\Gamma$. For a Gorenstein projective $\Gamma$-module $X$, we have $\mathbb{L}_{\mathsf{sg}}\mathsf{l}(X)=\mathsf{l}(X)$. Consequently, the module $\mathsf{el}(X)$ is, up to isomorphism in $\mathsf{D}_{\mathsf{sg}}(\Gamma)$, Gorenstein projective. It follows from the isomorphism $\mathsf{el}(X)\cong X\oplus \mathsf{F}(X)$, that $\mathsf{F}(X)$ is, up to isomorphism in $\mathsf{D}_{\mathsf{sg}}(\Gamma)$, Gorenstein projective. In particular, the functor $\mathbb{L}_{\mathsf{sg}}\mathsf{F}\colon\mathsf{D}_{\mathsf{sg}}(\Gamma)\rightarrow \mathsf{D}_{\mathsf{sg}}(\Gamma)$ restricts (up to isomorphism) to a functor $\underline{\mathsf{F}}\colon\uGproj\Gamma\rightarrow \uGproj\Gamma$. 

\begin{prop} \label{equivalence_of_stable}
    Consider a cleft extension $(\Mod\Gamma,\Mod\Lambda,\mathsf{i},\mathsf{e},\mathsf{l})$ of module categories of Noetherian rings. Assume that $\mathsf{F}$ is perfect and nilpotent and that the functor $\mathsf{e}$ induces a triangle functor $\underline{\mathsf{e}}\colon\uGproj\Lambda\rightarrow \uGproj\Gamma$. Then the following are equivalent: 
    \begin{itemize}
        \item[(i)] The functor $\underline{\mathsf{e}}\colon\uGproj\Lambda\rightarrow \uGproj\Gamma$ is an equivalence. 
        \item[(ii)] $\underline{\mathsf{F}}\simeq 0$ in $\uGproj\Gamma$. 
    \end{itemize}
\end{prop}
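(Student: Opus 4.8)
The plan is to mirror the proof strategy of Theorem~\ref{equivalence_singularity}, since the present statement is its restriction to the subcategories of Gorenstein projectives, with the singularity category replaced by the stable category $\uGproj$. First I would record, exactly as in the discussion preceding the statement, that since $\mathsf{F}$ is perfect and nilpotent, Corollary~\ref{gorenstein_proj_in_B} applies (after passing to the subcategory of finitely generated modules via Lemma~\ref{restriction_of_cleft}), so that $\mathsf{l}$ restricts to $\uGproj\Gamma\to\uGproj\Lambda$ and $\mathsf{q}$ restricts to $\uGproj\Lambda\to\uGproj\Gamma$; moreover $\mathbb{L}_{\mathsf{sg}}\mathsf{l}(X)=\mathsf{l}(X)$ on Gorenstein projectives by Proposition~\ref{adjoint_pair_and_gorenstein_projectives}. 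The key structural identity is the splitting $\mathsf{el}\simeq\mathsf{Id}\oplus\mathsf{F}$, which upon restriction to $\uGproj\Gamma$ yields $\underline{\mathsf{e}}\,\mathsf{l}\simeq\mathsf{Id}_{\uGproj\Gamma}\oplus\underline{\mathsf{F}}$, where $\underline{\mathsf{F}}$ is the restriction of $\mathbb{L}_{\mathsf{sg}}\mathsf{F}$ described just above the statement.

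For the implication (i)$\Rightarrow$(ii): if $\underline{\mathsf{e}}$ is an equivalence, then from $\underline{\mathsf{e}}\,\mathsf{l}\simeq\mathsf{Id}_{\uGproj\Gamma}\oplus\underline{\mathsf{F}}$ and the fact that $\underline{\mathsf{e}}\,\mathsf{l}$ is (isomorphic to) the identity — this holds because $\mathsf{q}\mathsf{l}\simeq\mathsf{Id}$ and $(\mathsf{l},\underline{\mathsf{e}})$ behaves like $(\mathsf{l},\mathsf{q})$ up to the adjunction bookkeeping, or more directly because $\underline{\mathsf{e}}$ being an equivalence forces $\underline{\mathsf{e}}\,\mathsf{l}$ to be an equivalence with $\mathsf{ql}\simeq\mathsf{Id}$ identifying it — we conclude $\underline{\mathsf{F}}\simeq 0$. (I would be slightly careful here: the cleanest route is to note $\underline{\mathsf{e}}\,\underline{\mathsf{i}}\simeq\mathsf{Id}$ and $\underline{\mathsf{i}}$ being the quasi-inverse when $\underline{\mathsf{e}}$ is an equivalence, so $\mathsf{l}\simeq\underline{\mathsf{i}}$ up to natural iso in $\uGproj\Gamma$, whence $\underline{\mathsf{e}}\,\mathsf{l}\simeq\underline{\mathsf{e}}\,\underline{\mathsf{i}}\simeq\mathsf{Id}$, killing $\underline{\mathsf{F}}$.)

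For the converse (ii)$\Rightarrow$(i): from $\underline{\mathsf{F}}\simeq 0$ we get $\underline{\mathsf{e}}\,\mathsf{l}\simeq\mathsf{Id}_{\uGproj\Gamma}$, so $\mathsf{l}\colon\uGproj\Gamma\to\uGproj\Lambda$ is fully faithful and $\underline{\mathsf{e}}$ is a retraction of it. Since $(\mathsf{l},\underline{\mathsf{e}})$ is an adjoint pair (Corollary~\ref{upper_part_of_diagram}, or the adjunction on singularity categories restricted appropriately), Lemma~\ref{equivalence_of_triangulated_cats}(i) gives $\uGproj\Lambda/\ker\underline{\mathsf{e}}\simeq\uGproj\Gamma$. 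It then remains to show $\ker\underline{\mathsf{e}}$ is trivial. Here I would argue as in Theorem~\ref{equivalence_singularity}: take $X\in\uGproj\Lambda$ with $\underline{\mathsf{e}}(X)=0$ in $\uGproj\Gamma$. Under the fully faithful embedding $i\colon\uGproj\Lambda\hookrightarrow\mathsf{D}_{\mathsf{sg}}(\Lambda)$ (Theorem~\ref{buchweitz} discussion) and the compatibility of $\underline{\mathsf{e}}$ with $\mathsf{e}$ on singularity categories, $\underline{\mathsf{e}}(X)=0$ forces $\mathsf{e}(X)=0$ in $\mathsf{D}_{\mathsf{sg}}(\Gamma)$; by Proposition~\ref{preserves_and_reflects} (reflection of finite projective dimension) together with Lemma~\ref{perfect_objects} and Lemma~\ref{object_in_singularity}, $X$ has finite projective dimension and hence is projective as a Gorenstein projective module, i.e.\ $X=0$ in $\uGproj\Lambda$. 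Thus $\underline{\mathsf{e}}$ is an equivalence.

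The main obstacle I anticipate is the bookkeeping in matching $\underline{\mathsf{e}}$ with the singularity-category functor $\mathsf{e}$: one must know that the triangle functor $\underline{\mathsf{e}}$ (which in general is built via $\Omega^{-d}\circ\Omega^d\mathsf{e}$ as in Proposition~\ref{sthnk}) agrees, under $i\colon\uGproj\to\mathsf{D}_{\mathsf{sg}}$, with $\mathsf{e}\colon\mathsf{D}_{\mathsf{sg}}(\Lambda)\to\mathsf{D}_{\mathsf{sg}}(\Gamma)$ up to the shift, so that vanishing of $\underline{\mathsf{e}}(X)$ transfers correctly; this is exactly the commuting square in Proposition~\ref{sthnk} and is the part where one should be explicit rather than hand-wavy. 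The rest is a formal transcription of the singularity-category argument into the Frobenius/stable setting, using that $i$ is fully faithful so that isomorphisms and vanishing in $\mathsf{D}_{\mathsf{sg}}$ detect the same in $\uGproj$ when the objects genuinely lie in $\uGproj$.
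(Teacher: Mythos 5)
Your proposal tracks the paper's proof closely: the splitting $\underline{\mathsf{e}}\,\mathsf{l}\simeq\mathsf{Id}_{\uGproj\Gamma}\oplus\underline{\mathsf{F}}$, the Verdier--quotient argument via Lemma~\ref{equivalence_of_triangulated_cats}, and the kernel-triviality step via the isomorphism $\mathsf{e}(X)\cong\underline{\mathsf{e}}(X)$ in $\mathsf{D}_{\mathsf{sg}}(\Gamma)$, Lemma~\ref{perfect_objects}, and Proposition~\ref{preserves_and_reflects} are exactly the paper's steps.

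One caveat on (i)$\Rightarrow$(ii): your parenthetical ``cleanest route'' invokes $\underline{\mathsf{i}}$, but the hypothesis of the proposition only guarantees the existence of $\underline{\mathsf{e}}$, not of $\underline{\mathsf{i}}$ (the latter requires the extra condition $\mathbb{R}\mathsf{p}(\Lambda)\in\mathsf{K}^{\mathsf{b}}(\proj\Gamma)$ from Corollary~\ref{cleft_of_stable}); likewise the phrase ``$(\mathsf{l},\underline{\mathsf{e}})$ behaves like $(\mathsf{l},\mathsf{q})$'' conflates $\mathsf{q}$ (left adjoint of $\mathsf{i}$) with $\underline{\mathsf{e}}$ (right adjoint of $\mathsf{l}$), which are genuinely different functors. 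The correct justification is more economical: since $(\mathsf{l},\underline{\mathsf{e}})$ is an adjoint pair (restriction of $(\mathbb{L}_{\mathsf{sg}}\mathsf{l},\mathsf{e})$ along the fully faithful embeddings into the singularity categories), if $\underline{\mathsf{e}}$ is an equivalence then its left adjoint $\mathsf{l}$ is a quasi-inverse, so the unit $\mathsf{Id}\to\underline{\mathsf{e}}\mathsf{l}$ is invertible; this unit is the canonical split monomorphism onto the $\mathsf{Id}$-summand of $\underline{\mathsf{e}}\mathsf{l}\simeq\mathsf{Id}\oplus\underline{\mathsf{F}}$ (inherited from the unit of $(\mathsf{l},\mathsf{e})$ on modules), hence $\underline{\mathsf{F}}\simeq 0$. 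With that replacement the argument matches the paper's and is correct.
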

\begin{proof}
    First, observe that $\underline{\mathsf{e}}\mathsf{l}\simeq \mathsf{Id}_{\uGproj\Gamma}\oplus \underline{\mathsf{F}}$ in $\uGproj\Gamma$. 
    
    (i) $\Longrightarrow$ (ii): If $\underline{\mathsf{e}}$ is an equivalence, then by the above follows that $\underline{\mathsf{F}}\simeq 0$. 

    (ii) $\Longrightarrow$ (i): If $\underline{\mathsf{F}}\simeq 0$, then $\underline{\mathsf{e}}\mathsf{l}\simeq \mathsf{Id}_{\uGproj\Gamma}$ and in particular $\mathsf{l}$ is fully faithful. Consequently, the functor $\underline{\mathsf{e}}$ induces an equivalence 
    \[
    \uGproj\Lambda/\mathsf{ker\underline{e}}\simeq \uGproj\Gamma.
    \]
    We claim that $\mathsf{ker\underline{e}}$ is trivial. Indeed, if $\underline{\mathsf{e}}(X)\cong 0$ in $\uGproj\Gamma$, then in view of the isomorphism $\mathsf{e}(X)\cong \underline{\mathsf{e}}(X)$ in $\mathsf{D}_{\mathsf{sg}}(\Gamma)$, it follows that $\mathsf{e}(X)$ has finite projective dimension as a module, see Lemma \ref{perfect_objects}. Consequently, it follows by Proposition \ref{preserves_and_reflects} that $\pd X_{\Lambda}<\infty$, i.e.\ $X\cong 0$ in $\mathsf{D}_{\mathsf{sg}}(\Lambda)$, thus also $X\cong 0$ in $\uGproj\Lambda$. 
\end{proof}

\begin{exmp} 
Consider a triangular matrix ring $\Lambda=\big(\begin{smallmatrix}   A & N\\   0 & B \end{smallmatrix}\big)$ with $A$ and $B$ Noetherian and $N$ finitely generated on both sides. We know from Example \ref{triangular_eventual} that if $\pd{_AN}<\infty$ and $\gd B<\infty$, then the functor $\mathsf{e}$ of the cleft extension $(\Mod\Lambda,\Mod A\!\times\!B,\mathsf{i},\mathsf{e},\mathsf{l})$ - see Example \ref{perfect_for_triangular} - is an eventually homological isomorphism. Hence, it follows by Proposition \ref{sthnk2} that there are triangle functors
\[
\begin{tikzcd}
\uGproj \Lambda \arrow[rr, "\underline{\mathsf{e}}"] &  & \uGproj A\!\times\! B \arrow[ll, "\mathsf{l}"', bend right] \arrow["\underline{\mathsf{F}}"', loop, distance=2em, in=125, out=55]
\end{tikzcd}
\]
By Example \ref{equivalence_for_triangular} we have $\mathbb{L}_{\mathsf{sg}}\mathsf{F}\simeq 0$ in $\mathsf{D}_{\mathsf{sg}}(\Gamma)$ and so $\underline{\mathsf{F}}\simeq 0$ in $\uGproj\Gamma$. One may then apply Proposition \ref{equivalence_of_stable} to obtain an equivalence of stable categories of Gorenstein projectives: $\uGproj\Lambda\simeq \uGproj A\!\times \!B\simeq \uGproj A$.
\end{exmp}

Other instances of equivalences are those arising from Lemma \ref{apo_to_apeiro}. This was proved in \cite[Theorem 3.11]{qin2}. Below we explain, using the techniques developed in this paper, how this is a consequence of Proposition \ref{equivalence_of_stable}. 

Recall that given a $\theta$-extension $\Lambda=\Gamma\ltimes_{\theta}M$, the ring $\Lambda$ is a cleft extension of $\Gamma$. In the following proposition, we denote by $\mathsf{e}$ the functor induced by the ring homomorphism $\Gamma\rightarrow \Lambda$. 

\begin{prop} \label{equivalence_of_stable_for_theta}
    Let $\Gamma$ be a finite dimensional algebra over a field and consider a $\theta$-extension $\Lambda=\Gamma\ltimes_{\theta}M$ where $M$ is a finitely generated (on both sides) $\Gamma$-bimodule. Assume that the following are satisfied: 
    \begin{itemize}
        \item[(i)] $\mathsf{Tor}_i^{\Gamma}(M,M^{\otimes j})=0$ for all $i,j\geq 1$. 
        \item[(ii)] $\pd{_{\Gamma^e}M}<\infty$. 
        \item[(iii)] $M$ is nilpotent.
    \end{itemize}
    Then there is a triangle equivalence $\uGproj\Lambda\simeq \uGproj\Gamma$ induced by the functor $\mathsf{e}$.
\end{prop}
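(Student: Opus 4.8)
The plan is to deduce this from Proposition \ref{equivalence_of_stable}, so the first task is to verify its two hypotheses: that $\mathsf{F}\simeq -\otimes_\Gamma M$ is perfect and nilpotent, and that $\mathsf{e}$ induces a triangle functor $\underline{\mathsf{e}}\colon\uGproj\Lambda\to\uGproj\Gamma$. For the first, since a projective $\Gamma$-bimodule is projective on either side, the hypothesis $\pd{_{\Gamma^e}M}<\infty$ forces $\pd{M_\Gamma}<\infty$ and $\fd{_\Gamma M}<\infty$; together with hypothesis (i) this says $M$ is a perfect $\Gamma$-bimodule in the sense of Definition \ref{perfect_bimodule}, and it is nilpotent by (iii), so Lemma \ref{tensor_is_perfect} gives that $\mathsf{F}$ is perfect and nilpotent. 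As $\Gamma$ and $\Lambda=\Gamma\ltimes_\theta M$ are finite dimensional over the field and hence Noetherian, Lemma \ref{restriction_of_cleft} lets us replace the cleft extension by its restriction $(\smod\Gamma,\smod\Lambda,\mathsf{i},\mathsf{e},\mathsf{l})$.

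For the existence of $\underline{\mathsf{e}}$, the plan is to show that $\mathsf{e}$ is an eventually homological isomorphism and then apply Proposition \ref{sthnk2}. Since $\mathsf{F}$ is perfect and nilpotent, by Theorem \ref{thm2}(b) it is enough to bound $\pd{_\Gamma\mathsf{F}(X)}$ uniformly over all $\mathsf{F}$-projective modules $X$. Here one uses $\pd{_{\Gamma^e}M}\leq n$: fix a resolution $0\to P_n\to\cdots\to P_0\to M\to 0$ by projective $\Gamma$-bimodules. For any right $\Gamma$-module $X$, each $X\otimes_\Gamma P_i$ is a projective right $\Gamma$-module, because $P_i$ is a summand of copies of $\Gamma\otimes_k\Gamma$ and $X\otimes_\Gamma(\Gamma\otimes_k\Gamma)\cong X\otimes_k\Gamma$ is a free right $\Gamma$-module ($X$ being a $k$-vector space); and since each $P_i$ is also projective on the left, $X\otimes_\Gamma P_\bullet$ computes the groups $\mathsf{Tor}_i^\Gamma(X,M)$. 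If $X$ is $\mathsf{F}$-projective these vanish for $i\geq 1$, so $X\otimes_\Gamma P_\bullet$ is a projective resolution of $\mathsf{F}(X)=X\otimes_\Gamma M$ of length $\leq n$, whence $\pd{_\Gamma\mathsf{F}(X)}\leq n$ independently of $X$. Thus Theorem \ref{thm2}(b) applies, $\mathsf{e}$ is an eventually homological isomorphism, and Proposition \ref{sthnk2} produces the triangle functor $\underline{\mathsf{e}}\colon\uGproj\Lambda\to\uGproj\Gamma$.

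It then remains, by Proposition \ref{equivalence_of_stable}, to check that $\underline{\mathsf{F}}\simeq 0$ in $\uGproj\Gamma$. As $\underline{\mathsf{F}}$ is, up to isomorphism, the restriction along the fully faithful functor $\uGproj\Gamma\hookrightarrow\mathsf{D}_{\mathsf{sg}}(\Gamma)$ of $\mathbb{L}_{\mathsf{sg}}\mathsf{F}$, it suffices to show that $\mathbb{L}_{\mathsf{sg}}\mathsf{F}=0$ on $\mathsf{D}_{\mathsf{sg}}(\Gamma)$, i.e. that $\mathbb{L}\mathsf{F}=-\otimes_\Gamma^{\mathbb{L}}M$ carries $\mathsf{D}^{\mathsf{b}}(\smod\Gamma)$ into $\mathsf{K}^{\mathsf{b}}(\proj\Gamma)$; this is precisely Lemma \ref{apo_to_apeiro}(ii), which uses $\pd{_{\Gamma^e}M}<\infty$. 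Hence $\underline{\mathsf{F}}\simeq 0$, and Proposition \ref{equivalence_of_stable} yields that $\underline{\mathsf{e}}\colon\uGproj\Lambda\to\uGproj\Gamma$ is a triangle equivalence. The only delicate step is the uniform projective-dimension bound of the second paragraph; the rest is an assembly of the quoted results.
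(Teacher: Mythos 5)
Your proposal is correct and follows the same overall route as the paper: verify perfectness and nilpotence of $\mathsf{F}$, deduce from Theorem~\ref{thm2}(b) that $\mathsf{e}$ is an eventually homological isomorphism, obtain $\underline{\mathsf{e}}$ via Proposition~\ref{sthnk2}, then use Lemma~\ref{apo_to_apeiro} to kill $\underline{\mathsf{F}}$ and invoke Proposition~\ref{equivalence_of_stable}.

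The one place you genuinely depart from the paper is the uniform bound $\sup\{\pd \mathsf{F}(X)_\Gamma : X\ \mathsf{F}\text{-projective}\}<\infty$. The paper gets this indirectly: it invokes Lemma~\ref{apo_to_apeiro}(i) to land $\mathbb{L}\mathsf{F}$ of the big bounded derived category inside $\mathsf{K}^{\mathsf{b}}(\Proj\Gamma)$, and then argues by contradiction with an infinite direct sum of $\mathsf{F}$-projectives (if the dimensions $\pd\mathsf{F}(X_i)$ were unbounded, $\mathsf{F}(\bigoplus X_i)=\bigoplus\mathsf{F}(X_i)$ would have infinite projective dimension, contradicting the lemma). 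You instead give a direct, constructive argument: fix a $\Gamma^e$-projective resolution $0\to P_n\to\cdots\to P_0\to M\to 0$, note that each $P_i$ is projective on both sides (using that $\Gamma$ is finite dimensional over $k$), observe $X\otimes_\Gamma P_i$ is projective over $\Gamma$ since $X\otimes_\Gamma(\Gamma\otimes_k\Gamma)\cong X\otimes_k\Gamma$ is free, and conclude that for $\mathsf{F}$-projective $X$ the complex $X\otimes_\Gamma P_\bullet$ is a projective resolution of $\mathsf{F}(X)$ of length at most $n=\pd_{\Gamma^e}M$. This is cleaner and more explicit than the paper's contradiction argument, and it gives the sharper uniform bound $n_{\mathsf{F}}\leq\pd_{\Gamma^e}M$; on the other hand, the paper's approach generalizes more readily to situations where one has only the conclusion of Lemma~\ref{apo_to_apeiro} and not a finite bimodule-projective resolution to hand. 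Both arguments are valid, so the choice is a matter of taste.
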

\begin{proof}
   Consider the cleft extension $(\Mod\Lambda,\Mod\Gamma,\mathsf{i},\mathsf{e},\mathsf{l})$.
   We know from Lemma \ref{apo_to_apeiro} that $\mathbb{L}\mathsf{F}(\mathsf{D}^{\mathsf{b}}(\Mod\Lambda))\subseteq \mathsf{K}^{\mathsf{b}}(\Proj\Gamma)$. The functor $\mathbb{L}\mathsf{F}$ on $\mathsf{F}$-projective objects is given by the functor $\mathsf{F}$. It follows that the projective dimensions of $\mathsf{F}(X)$ for $X$ $\mathsf{F}$-projective admit a common bound (if we assume otherwise, then we can find a sequence $X_i$ of $\mathsf{F}$-projective modules with $\pd \mathsf{F}(X_i)>\pd \mathsf{F}(X_{i-1})$ and then $\mathbb{L}\mathsf{F}(\oplus X_i)=\oplus\mathsf{F}(X_i)$ will not belong in $\mathsf{K}^{\mathsf{b}}(\Proj\Gamma)$). We then infer from Theorem \ref{thm2}(ii) that the functor $\mathsf{e}\colon\Mod\Lambda\rightarrow \Mod\Gamma$ is an eventually homological isomorphism. Therefore, the same holds for the restricted cleft extension $(\smod\Lambda,\smod\Gamma,\mathsf{i},\mathsf{e},\mathsf{l})$ (see also \cite[Theorem 2.10]{qin2}). Consequently, Proposition \ref{sthnk2} ensures that the functor $\mathsf{e}$ gives rise to a triangle functor $\underline{\mathsf{e}}\colon\uGproj\Lambda\rightarrow \uGproj\Gamma$. Again by Lemma \ref{apo_to_apeiro}, we know that $\mathbb{L}_{\mathsf{sg}}\mathsf{F}\simeq 0$ in $\mathsf{D}_{\mathsf{sg}}(\Gamma)$ and so $\underline{\mathsf{F}}\simeq 0$ in $\uGproj\Gamma$. Using Proposition \ref{equivalence_of_stable}, we conclude that $\underline{\mathsf{e}}\colon \uGproj\Lambda\rightarrow\uGproj\Gamma$ is an equivalence. 
\end{proof}

\subsection{Gorenstein injective modules in cleft extensions} Denote by $\mathcal{A}$ an abelian category with enough injective objects. An acyclic complex $I^{\bullet}$ of injective objects of $\mathcal{A}$ is called \emph{totally acyclic} if $\mathsf{Hom}_{\mathcal{A}}(E,I^{\bullet})$ is acyclic for every $E\in\Inj\mathcal{A}$. We remind the reader of the following definition from \cite{enochs_jenda}. 

\begin{defn}
    An object $X$ of $\mathcal{A}$ is called \emph{Gorenstein injective} if there is a totally acyclic complex $I^{\bullet}$ over $\Inj\mathcal{A}$ with $X=\mathsf{Z}^0(I^{\bullet})\coloneqq\mathsf{ker}(I^0\rightarrow I^1)$.
\end{defn}

We denote by $\GInj\mathcal{A}$ the category of Gorenstein injective objects of $\mathcal{A}$. As with Gorenstein projectives, one can consider the stable category $\oGInj\mathcal{A}$ and there is a triangle equivalence 
\[
\mathsf{K}_{\mathsf{tac}}(\Inj\mathcal{A})\simeq \oGInj\mathcal{A}.
\]
For a ring $\Lambda$ we use $\GInj\Lambda$ to denote the Gorenstein injective objects of $\Mod \Lambda$. When $\Lambda$ is a Noetherian ring with a dualizing complex $D_{\Lambda}$, then from \cite{iyengar_krause} there is a triangle equivalence 
\[
-\otimes_{\Lambda}D_{\Lambda}\colon\mathsf{K}(\Proj\Lambda)\rightarrow \mathsf{K}(\Inj\Lambda)
\]
which restricts to an equivalence 
\begin{equation} \label{stable of injective vs stable of projective}
    \uGProj\Lambda\simeq \oGInj\Lambda.
\end{equation}
For a Noetherian ring $\Lambda$, the category of finitely generated 
$\Lambda$-modules might not have enough injectives. For this reason, in order to consider $\oGinj\Lambda$, we restrict to the case of Artin algebras in which case the equivalence of (\ref{stable of injective vs stable of projective}) restricts to
\[
\uGproj\Lambda\simeq \oGinj\Lambda.
\]
In view of the above, we will not state any result regarding cleft coextensions and stable categories of Gorenstein injectives. Let us rather focus on some structural results on Gorenstein injectives, in the spirit of Section \ref{stable_cat_of_gorenstein_proj}. The following is dual to Proposition \ref{adjoint_pair_and_gorenstein_projectives}.
\begin{prop}
    Let $\mathcal{A}$ and $\mathcal{B}$ be abelian categories with enough injective objects and assume that there is an adjoint pair of functors 
    \[
    \begin{tikzcd}
\mathcal{A} \arrow[rr, "\mathsf{e}"] &  & \mathcal{B} \arrow[ll, "\mathsf{r}", bend left]
\end{tikzcd}
    \]
    where $\mathsf{e}$ is exact. Assume further that the following are satisfied: 
    \begin{itemize}
        \item[(i)] $\mathbb{R}^i\mathsf{r}=0$ for $i>\!\!>0$. 
        \item[(ii)] $\id{_{\mathcal{B}}\mathsf{e}(I)}\leq d$ for all $I\in\Inj\mathcal{A}$ and some $d$. 
    \end{itemize}
    Then $\mathsf{r}(\GInj\mathcal{B})\subseteq \GInj\mathcal{A}$ and $\mathbb{R}^i\mathsf{r}(X)=0$ for every $X\in\GInj\mathcal{B}$ and $i\geq 1$.
\end{prop}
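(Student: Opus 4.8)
The plan is to dualize the proof of Proposition \ref{adjoint_pair_and_gorenstein_projectives} essentially word-for-word, replacing projectives by injectives, left derived functors by right derived functors, the functor $\mathsf{l}$ by $\mathsf{r}$, totally acyclic complexes of projectives by totally acyclic complexes of injectives, acyclicity by acyclicity, and coacyclicity (i.e.\ $\mathsf{Hom}_{\mathcal{A}}(-,Q)$ exact for projective $Q$) by its dual (i.e.\ $\mathsf{Hom}_{\mathcal{A}}(E,-)$ exact for injective $E$). Concretely: start with $X\in\GInj\mathcal{B}$ and choose a totally acyclic complex $I^{\bullet}$ over $\Inj\mathcal{B}$ with $X=\mathsf{Z}^0(I^{\bullet})$.

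First I would show $\mathsf{r}(I^{\bullet})$ is acyclic and compute $\mathbb{R}^i\mathsf{r}(X)$. For each pair of indices one has, by dimension shifting down the acyclic complex, $\mathsf{H}^i(\mathsf{r}(I^{\bullet}))\cong \mathbb{R}^{i-j}\mathsf{r}(\mathsf{Z}^{j-1}(I^{\bullet}))$ for $j<i$ (the dual of the displayed isomorphism in the proof of Proposition \ref{adjoint_pair_and_gorenstein_projectives}); since $\mathbb{R}^n\mathsf{r}=0$ for $n\gg 0$ by assumption (i), pushing $j$ far enough to the left forces $\mathsf{H}^i(\mathsf{r}(I^{\bullet}))=0$, so $\mathsf{r}(I^{\bullet})$ is acyclic, $\mathsf{r}(X)=\mathsf{Z}^0(\mathsf{r}(I^{\bullet}))$, and $\mathbb{R}^i\mathsf{r}(X)=0$ for $i\geq 1$ (the last point again by shifting along the now-acyclic complex, or directly since $\mathsf{r}$ is left exact and $I^{\bullet}$ furnishes the relevant exact sequences). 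Here assumption (ii) guarantees $\mathsf{r}(I^n)\in\Inj\mathcal{A}$ so that $\mathsf{r}(I^{\bullet})$ is a complex of injectives; actually $\mathsf{r}$ being right adjoint to the exact functor $\mathsf{e}$ sends injectives to injectives, which is all that is needed, but assumption (ii) enters the second half.

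Second I would check that $\mathsf{r}(I^{\bullet})$ is totally acyclic, i.e.\ that $\mathsf{Hom}_{\mathcal{A}}(E,\mathsf{r}(I^{\bullet}))$ is acyclic for every injective $E\in\Inj\mathcal{A}$. By the adjunction $(\mathsf{e},\mathsf{r})$ this complex of abelian groups is isomorphic to $\mathsf{Hom}_{\mathcal{B}}(\mathsf{e}(E),I^{\bullet})$. Now $\mathsf{e}(E)$ has finite injective dimension by assumption (ii), hence $\mathsf{e}(E)[0]\in\mathsf{K}^{\mathsf{b}}(\Inj\mathcal{B})$, and since $I^{\bullet}$ is a totally acyclic complex of injectives it is in particular (dually) ``co-coacyclic'' — $\mathsf{Hom}_{\mathcal{B}}(J,I^{\bullet})$ is acyclic for each injective $J$, hence for each object of finite injective dimension by a finite induction on injective dimension using the long exact sequence. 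Therefore $\mathsf{Hom}_{\mathcal{B}}(\mathsf{e}(E),I^{\bullet})$ is acyclic, which is exactly what we wanted. This shows $\mathsf{r}(I^{\bullet})$ is totally acyclic, so $\mathsf{r}(X)=\mathsf{Z}^0(\mathsf{r}(I^{\bullet}))\in\GInj\mathcal{A}$, completing the proof.

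The only genuinely delicate point — the analogue of the main obstacle in the original — is the total acyclicity step: one must be careful that $\mathsf{Hom}_{\mathcal{B}}(-,I^{\bullet})$ is acyclic not merely on injectives but on all objects of finite injective dimension, and that the finite-injective-dimension hypothesis on $\mathsf{e}(E)$ is genuinely available, which is precisely assumption (ii). The bound $\mathbb{R}^i\mathsf{r}=0$ for $i\gg 0$ is only needed to make the acyclicity of $\mathsf{r}(I^{\bullet})$ go through; everything else is a mechanical dualization, and I would simply write ``this is dual to Proposition \ref{adjoint_pair_and_gorenstein_projectives}'' after indicating these two substitutions, rather than spelling out every diagram.
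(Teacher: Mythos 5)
Your proof is correct and takes exactly the approach the paper intends: the paper states this proposition without proof, remarking only that it is dual to Proposition~\ref{adjoint_pair_and_gorenstein_projectives}, and your dualization (acyclicity via $\mathbb{R}^{i-j}\mathsf{r}$ applied to cycle objects, total acyclicity via the adjunction isomorphism $\mathsf{Hom}_{\mathcal{A}}(E,\mathsf{r}(I^{\bullet}))\cong\mathsf{Hom}_{\mathcal{B}}(\mathsf{e}(E),I^{\bullet})$ together with assumption~(ii) and a finite induction on injective dimension) is the faithful dual of that proof. The only blemish is a harmless off-by-one in the dimension-shift formula (it should be $\mathsf{H}^i(\mathsf{r}(I^{\bullet}))\cong\mathbb{R}^{i-j}\mathsf{r}(\mathsf{Z}^{j}(I^{\bullet}))$ rather than with $\mathsf{Z}^{j-1}$), which does not affect the argument.
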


Now (and for later use) we need the following, which is dual to Proposition \ref{derived_of_q_vanishes}. 

\begin{prop} \label{vanishing of right derived}
    Let $(\mathcal{B},\mathcal{A},\mathsf{i},\mathsf{e},\mathsf{r})$ be a cleft coextension of abelian categories. If $\mathsf{F}'$ is coperfect and nilpotent, then $\mathbb{R}^i\mathsf{p}=0$ for $i$ large enough.  
\end{prop}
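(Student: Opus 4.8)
The statement to prove, Proposition \ref{vanishing of right derived}, is the dual of Proposition \ref{derived_of_q_vanishes}, which asserts that in a cleft extension with $\mathsf{F}$ perfect and nilpotent one has $\mathbb{L}_i\mathsf{q}=0$ for $i\gg 0$. So the plan is simply to dualize that argument, working now with the cleft coextension $(\mathcal{B},\mathcal{A},\mathsf{i},\mathsf{e},\mathsf{r})$, the functor $\mathsf{p}\colon\mathcal{A}\to\mathcal{B}$ right adjoint to $\mathsf{i}$, and the coperfect nilpotent endofunctor $\mathsf{F}'$.

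The key structural inputs are the short exact sequences $0\to\mathsf{Id}_{\mathcal{A}}\to\mathsf{re}\to\mathsf{G}'\to 0$ and, more usefully, the iterated sequences $0\to\mathsf{G}'^{\,n-1}(A)\to\mathsf{r}\mathsf{F}'^{\,n-1}\mathsf{e}(A)\to\mathsf{G}'^{\,n}(A)\to 0$ for every $A\in\mathcal{A}$ and $n\geq 1$, together with the identity $\mathsf{p}\mathsf{r}\simeq\mathsf{Id}_{\mathcal{B}}$ and nilpotency of $\mathsf{G}'$ (which is equivalent to nilpotency of $\mathsf{F}'$). First I would fix $s$ with $\mathsf{F}'^{\,s}=0$, hence $\mathsf{G}'^{\,s}=0$. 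For an arbitrary injective resolution argument: given $A\in\mathcal{A}$, splice together the above short exact sequences to obtain a long exact sequence expressing $A$ in terms of the objects $\mathsf{r}\mathsf{F}'^{\,j}\mathsf{e}(A)$ for $0\le j\le s-1$; since $\mathsf{G}'^{\,s}=0$ this is a finite resolution
\[
0\to A\to\mathsf{re}(A)\to\mathsf{r}\mathsf{F}'\mathsf{e}(A)\to\cdots\to\mathsf{r}\mathsf{F}'^{\,s-1}\mathsf{e}(A)\to 0.
\]
Applying $\mathbb{R}\mathsf{p}$ (the total right derived functor, which exists because $\mathbb{R}^i\mathsf{r}=0$ for $i\gg 0$ by Lemma \ref{basic_properties_of_coperfect_endofunctor_on_cleft}(i), so $\mathbb{R}\mathsf{p}\mathbb{R}\mathsf{r}$ is bounded), the problem reduces to bounding $\mathbb{R}^i\mathsf{p}(\mathsf{r}\mathsf{F}'^{\,j}\mathsf{e}(A))$ uniformly in $i$. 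Here one uses that $\mathsf{e}(A)$ is built from $\mathsf{e}$ applied to an injective resolution — but more directly, by Lemma \ref{eI_are_inj} the objects $\mathsf{e}(E)$ for $E\in\Inj\mathcal{A}$ are $\mathsf{F}'$-injective, so $\mathsf{F}'^{\,j}\mathsf{e}(E)$ is $\mathsf{F}'$-injective, whence by Lemma \ref{basic_homological_properties_of_cocleft}(ii) we get $\mathbb{R}^i\mathsf{r}(\mathsf{F}'^{\,j}\mathsf{e}(E))\cong\mathbb{R}^i\mathsf{F}'(\mathsf{F}'^{\,j}\mathsf{e}(E))=0$ for all $i\geq 1$; therefore $\mathbb{R}^i\mathsf{p}(\mathsf{r}\mathsf{F}'^{\,j}\mathsf{e}(E))$ is computed by applying $\mathsf{p}\mathsf{r}\simeq\mathsf{Id}_{\mathcal{B}}$ to an injective resolution of $\mathsf{F}'^{\,j}\mathsf{e}(E)$, which vanishes in positive degrees.

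Assembling these: take an injective resolution $E^{\bullet}$ of $A$; apply the finite functorial resolution above degreewise to get, after totalizing, a bounded double complex whose columns compute $\mathbb{R}\mathsf{p}$ of $A$ in terms of the $\mathbb{R}\mathsf{p}(\mathsf{r}\mathsf{F}'^{\,j}\mathsf{e}(E^k))$. Since each of these vanishes in degrees $\geq 1$ and there are only $s$ values of $j$, a spectral sequence (or direct diagram chase on the totalization) argument shows $\mathbb{R}^i\mathsf{p}(A)=0$ for $i\geq s$. As the bound $s$ does not depend on $A$, this gives $\mathbb{R}^i\mathsf{p}=0$ for $i$ large enough, as claimed. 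The main obstacle — and the reason this is stated as a separate proposition rather than left to the reader — is the bookkeeping in the double complex / spectral sequence collapse: one must be careful that the coperfectness condition (\ref{DR}), rather than mere nilpotency, is what guarantees the vanishing $\mathbb{R}^i\mathsf{r}(\mathsf{F}'^{\,j}\mathsf{e}(E))=0$ uniformly; the cited reference \cite[Proposition 6.6]{graded_injective_generation} carries out precisely the dual of this argument in the module-category setting, and the same proof applies verbatim here.

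\begin{proof}
This is dual to Proposition \ref{derived_of_q_vanishes}; the proof of \cite[Proposition 6.6]{graded_injective_generation}, given there for module categories, works verbatim in this generality. We sketch the argument for the reader's convenience.

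Since $\mathsf{F}'$ is nilpotent, so is $\mathsf{G}'$; fix $s$ with $\mathsf{G}'^{\,s}=0$. By Lemma \ref{basic_properties_of_coperfect_endofunctor_on_cleft}(i), $\mathbb{R}^i\mathsf{r}=0$ for $i\gg 0$, so the right derived functor $\mathbb{R}\mathsf{p}$ of $\mathsf{p}$ (restricted along $\mathsf{r}$ and then in general) is defined on bounded derived categories. Splicing the short exact sequences $0\to\mathsf{G}'^{\,n-1}(A)\to\mathsf{r}\mathsf{F}'^{\,n-1}\mathsf{e}(A)\to\mathsf{G}'^{\,n}(A)\to 0$ for $1\le n\le s$ yields, for every $A\in\mathcal{A}$, a finite exact sequence
\[
0\to A\to\mathsf{re}(A)\to\mathsf{r}\mathsf{F}'\mathsf{e}(A)\to\cdots\to\mathsf{r}\mathsf{F}'^{\,s-1}\mathsf{e}(A)\to 0,
\]
functorial in $A$. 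Applying $\mathbb{R}\mathsf{p}$, it suffices to bound $\mathbb{R}^i\mathsf{p}(\mathsf{r}\mathsf{F}'^{\,j}\mathsf{e}(E))$ uniformly in $i$ for $E\in\Inj\mathcal{A}$ and $0\le j\le s-1$. By Lemma \ref{eI_are_inj}, $\mathsf{e}(E)$ is $\mathsf{F}'$-injective, hence so is $\mathsf{F}'^{\,j}\mathsf{e}(E)$, and Lemma \ref{basic_homological_properties_of_cocleft}(ii) gives $\mathbb{R}^i\mathsf{r}(\mathsf{F}'^{\,j}\mathsf{e}(E))\cong\mathbb{R}^i\mathsf{F}'(\mathsf{F}'^{\,j}\mathsf{e}(E))=0$ for all $i\ge 1$. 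Therefore $\mathbb{R}\mathsf{p}(\mathsf{r}\mathsf{F}'^{\,j}\mathsf{e}(E))$ is computed by applying $\mathsf{p}\mathsf{r}\simeq\mathsf{Id}_{\mathcal{B}}$ to an injective resolution of $\mathsf{F}'^{\,j}\mathsf{e}(E)$, so $\mathbb{R}^i\mathsf{p}(\mathsf{r}\mathsf{F}'^{\,j}\mathsf{e}(E))=0$ for $i\ge 1$.

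Now fix $A\in\mathcal{A}$ and an injective resolution $E^{\bullet}$ of $A$. Applying the finite functorial resolution above degreewise produces a bounded (in both directions) double complex of objects of the form $\mathsf{r}\mathsf{F}'^{\,j}\mathsf{e}(E^k)$ whose totalization is quasi-isomorphic to $A$. Computing $\mathbb{R}\mathsf{p}(A)$ via this double complex and using the vanishing above, the associated spectral sequence degenerates to show $\mathbb{R}^i\mathsf{p}(A)=0$ for $i\ge s$. As $s$ is independent of $A$, we conclude $\mathbb{R}^i\mathsf{p}=0$ for $i$ large enough.
\end{proof}
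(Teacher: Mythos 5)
The paper gives no proof here; it states the proposition as dual to Proposition~\ref{derived_of_q_vanishes}, whose own proof is deferred to \cite[Proposition 6.6]{graded_injective_generation}, so the intended argument is precisely the dualization you sketch. The ingredients are correct and correctly assembled: the finite resolution $0\to A\to\mathsf{re}(A)\to\cdots\to\mathsf{r}\mathsf{F}'^{s-1}\mathsf{e}(A)\to 0$ from nilpotency, Lemma~\ref{eI_are_inj} (which uses condition~(\ref{DR}), i.e.\ coperfectness) to show $\mathsf{F}'^{j}\mathsf{e}(E)$ is $\mathsf{F}'$-injective for $E\in\Inj\mathcal A$, and $\mathsf{pr}\simeq\mathsf{Id}_{\mathcal B}$ to conclude each $\mathsf{r}\mathsf{F}'^{j}\mathsf{e}(E)$ is $\mathsf{p}$-acyclic. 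The overall strategy is the right one.

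A few imprecisions are worth correcting, though none is a genuine gap. The stated bound ``$\mathbb{R}^i\mathsf{p}(A)=0$ for $i\ge s$'' is not what comes out: the spectral sequence of your double complex with $E_1^{j,k}\cong\mathbb{R}^k\mathsf{F}'^{j}(\mathsf{e}(A))$ has nonvanishing entries only in the row $k=0$ for $0\le j\le s-1$, and, by the coperfectness condition $\mathbb{R}^p\mathsf{F}'^{q}=0$ for $p+q\ge n'+1$, in the region $j,k\ge 1$ with $j+k\le n'$; so the vanishing holds for $i\ge\max(s,n'+1)$, and the coperfectness integer $n'$ must appear in the bound. (Since the proposition only claims vanishing for $i$ large enough, this does not affect the conclusion.) The double complex is bounded only in the $\mathsf{F}'$-direction, not ``in both directions''; this is harmless because the total degree is still bounded below. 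The parenthetical that $\mathbb{R}^i\mathsf{r}=0$ for $i\gg0$ ensures ``$\mathbb{R}\mathsf{p}$ is defined on bounded derived categories'' is both circular (that is the statement being proved) and unnecessary: $\mathbb{R}^i\mathsf{p}$ of an object of $\mathcal A$ is defined outright since $\mathcal A$ has enough injectives, and that is all the argument uses. Also, Lemma~\ref{basic_homological_properties_of_cocleft}(ii) reads $\mathsf{e}\,\mathbb{R}^i\mathsf{r}\cong\mathbb{R}^i\mathsf{F}'$, so faithfulness of $\mathsf{e}$ is what lets you pass from $\mathbb{R}^i\mathsf{F}'(\mathsf{F}'^{j}\mathsf{e}(E))=0$ to $\mathbb{R}^i\mathsf{r}(\mathsf{F}'^{j}\mathsf{e}(E))=0$. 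Finally, a cleaner variant, matching the paper's own style in Propositions~\ref{one_way} and~\ref{the_other_way}, dispenses with the double complex: coresolve $A$ by injectives to an $(n'{-}1)$-st cosyzygy $A'$, so that $\mathsf{e}(A')$ is $\mathsf{F}'$-injective by dimension shift and coperfectness; then the objects $\mathsf{r}\mathsf{F}'^{j}\mathsf{e}(A')$ are $\mathsf{p}$-acyclic, a dimension shift through the $\mathsf{G}'$-sequences for $A'$ gives $\mathbb{R}^i\mathsf{p}(A')=0$ for $i\ge s$, and shifting back gives $\mathbb{R}^i\mathsf{p}(A)=0$ for $i\ge n'+s-1$.
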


By the above, together with Proposition \ref{preserves_and_reflects_inj_dim}, Remark \ref{bounds_for_injective_dimensions} and the dual of Lemma \ref{i_maps_proj} (which is left to the reader to spell out), we conclude the following dual to Corollary \ref{gorenstein_proj_in_B}.

\begin{cor}
    Let $(\mathcal{B},\mathcal{A},\mathsf{i},\mathsf{e},\mathsf{r})$ be a cleft coextension of abelian categories such that $\mathsf{F}'$ is coperfect and nilpotent. Assume that $\id{_{\mathcal{B}}\mathsf{F}'(I)}\leq d$ for every $I\in\Inj\mathcal{B}$ and some $d$. The following hold: 
    \begin{itemize}
        \item[(i)] $\mathsf{r}(\GInj\mathcal{B})\subseteq \GInj\mathcal{A}$. 
        \item[(ii)] $\mathsf{p}(\GInj\mathcal{A})\subseteq \GInj\mathcal{B}$.
    \end{itemize}
    In particular, $X\in\GInj\mathcal{B}$ if and only if $\mathsf{r}(X)\in\GInj\mathcal{A}$. 
\end{cor}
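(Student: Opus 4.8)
The final statement is the dual of Corollary \ref{gorenstein_proj_in_B}, so the entire plan is to run the dualization machinery already set up in the paper. Let me sketch this.

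\textbf{Plan for the proof.}

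The strategy is to invoke the dual of Proposition \ref{adjoint_pair_and_gorenstein_projectives} — call it $(\ast)$ — applied to two adjoint pairs associated to the cleft coextension $(\mathcal{B},\mathcal{A},\mathsf{i},\mathsf{e},\mathsf{r})$: first the pair $(\mathsf{e},\mathsf{r})$ for part (i), and then the pair $(\mathsf{p},\mathsf{e})$ wait — the relevant adjoint pair for part (ii) is $(\mathsf{i},\mathsf{p})$, with $\mathsf{p}$ playing the role dual to $\mathsf{q}$ in Corollary \ref{gorenstein_proj_in_B}(ii). Concretely, $(\ast)$ should read: given abelian categories $\mathcal{A},\mathcal{B}$ with enough injectives and an adjoint pair $(\mathsf{h},\mathsf{h}')$ with $\mathsf{h}$ exact, if $\mathbb{R}^i\mathsf{h}'=0$ for $i\gg 0$ and $\id_{\mathcal{B}}\mathsf{h}(I)\leq d$ for all $I\in\Inj\mathcal{A}$, then $\mathsf{h}'(\GInj\mathcal{B})\subseteq \GInj\mathcal{A}$ and $\mathbb{R}^i\mathsf{h}'(X)=0$ for $X\in\GInj\mathcal{B}$, $i\geq 1$. (This is stated immediately above as the unnamed Proposition dual to Proposition \ref{adjoint_pair_and_gorenstein_projectives}.)

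\textbf{Part (i).} I would apply $(\ast)$ to the pair $(\mathsf{e},\mathsf{r})$. The hypothesis $\mathbb{R}^i\mathsf{r}=0$ for $i\gg 0$ is exactly Lemma \ref{basic_properties_of_coperfect_endofunctor_on_cleft}(i), since $\mathsf{F}'$ is coperfect hence $\mathbb{R}^i\mathsf{F}'=0$ for $i\gg 0$. The hypothesis $\id_{\mathcal{B}}\mathsf{e}(I)\leq d$ uniformly for $I\in\Inj\mathcal{A}$ follows from Remark \ref{bounds_for_injective_dimensions}: by Lemma \ref{injectives_in_cleft} every injective of $\mathcal{A}$ is a summand of $\mathsf{r}(J)$ for $J\in\Inj\mathcal{B}$, and $\mathsf{er}\simeq \mathsf{Id}_{\mathcal{B}}\oplus\mathsf{F}'$, so $\id_{\mathcal{B}}\mathsf{e}(I)\leq \id_{\mathcal{B}}\mathsf{er}(J)=\max\{0,\id_{\mathcal{B}}\mathsf{F}'(J)\}\leq d$ by hypothesis. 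Thus $(\ast)$ gives $\mathsf{r}(\GInj\mathcal{B})\subseteq\GInj\mathcal{A}$.

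\textbf{Part (ii).} Here I would apply $(\ast)$ to the pair $(\mathsf{i},\mathsf{p})$ (recall $\mathsf{i}$ is exact and $(\mathsf{i},\mathsf{p})$ is an adjoint pair). The needed vanishing $\mathbb{R}^i\mathsf{p}=0$ for $i\gg 0$ is precisely Proposition \ref{vanishing of right derived}. For the uniform bound $\id_{\mathcal{B}}\mathsf{i}(I)\leq d$ for $I\in\Inj\mathcal{B}$: by Proposition \ref{preserves_and_reflects_inj_dim} (dual version), $\id_{\mathcal{B}}\mathsf{i}(I)<\infty$ iff $\id_{\mathcal{B}}\mathsf{ei}(I)<\infty$, and $\mathsf{ei}(I)\cong I$ is injective, so $\mathsf{i}(I)$ has finite injective dimension — this is the dual of Lemma \ref{i_maps_proj}, which the excerpt explicitly leaves to the reader; the uniform bound comes from Remark \ref{bounds_for_injective_dimensions} applied to $\mathsf{e}(\mathsf{i}(I))\cong I$, giving $\id_{\mathcal{A}}\mathsf{i}(I)\leq \id_{\mathcal{B}}I+(m+1)n_{\mathsf{F}'}=(m+1)n_{\mathsf{F}'}$. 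So $(\ast)$ yields $\mathsf{p}(\GInj\mathcal{A})\subseteq\GInj\mathcal{B}$. Finally, for the ``in particular'' clause: if $X\in\GInj\mathcal{B}$ then $\mathsf{r}(X)\in\GInj\mathcal{A}$ by (i); conversely if $\mathsf{r}(X)\in\GInj\mathcal{A}$ then $X\cong\mathsf{pr}(X)\in\GInj\mathcal{B}$ by (ii), using the structural isomorphism $\mathsf{pr}\simeq\mathsf{Id}_{\mathcal{B}}$ recorded in the discussion after Definition \ref{cleft_coextension}.

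\textbf{Main obstacle.} There is no deep obstacle — everything is a mechanical dualization of Corollary \ref{gorenstein_proj_in_B}. The only points requiring mild care are: (a) confirming that the uniform injective-dimension bounds in Remark \ref{bounds_for_injective_dimensions} apply to the specific objects $\mathsf{e}(I)$ and $\mathsf{i}(I)$ (the essential-surjectivity of $\mathsf{e}$ and the identities $\mathsf{ei}\simeq\mathsf{Id}$, $\mathsf{pr}\simeq\mathsf{Id}$ are what make this go through); and (b) spelling out the dual of Lemma \ref{i_maps_proj} that the excerpt deliberately omits. Neither is genuinely difficult, so the proof is essentially a two-line citation of the dual statements with the hypotheses verified as above.
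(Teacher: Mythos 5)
Your proposal is correct and follows the same route the paper intends: invoke the unnamed dual of Proposition~\ref{adjoint_pair_and_gorenstein_projectives} for the adjoint pair $(\mathsf{e},\mathsf{r})$ in part (i) and for $(\mathsf{i},\mathsf{p})$ in part (ii), with the hypotheses verified by Lemma~\ref{basic_properties_of_coperfect_endofunctor_on_cleft}, Proposition~\ref{vanishing of right derived}, Remark~\ref{bounds_for_injective_dimensions} and the dual of Lemma~\ref{i_maps_proj}, and then use $\mathsf{pr}\simeq\mathsf{Id}_{\mathcal{B}}$ for the ``in particular.'' Two cosmetic points: in part (ii) the required bound should read $\id{_{\mathcal{A}}\mathsf{i}(I)}\leq d'$ rather than $\id{_{\mathcal{B}}\mathsf{i}(I)}$ (you correct this later in the same paragraph), and you do not need a ``dual version'' of Proposition~\ref{preserves_and_reflects_inj_dim} — that proposition is already stated for cleft coextensions and applies directly with $X=\mathsf{i}(I)$.
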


\subsection{CM-free rings and algebras of finite Cohen-Macaulay type} 

In this section we apply our techniques to derive results about CM-free rings and algebras of finite Cohen-Macaulay type, beginning with the former. 

Consider a two-sided Noetherian ring $\Lambda$. Recall from \cite{chen4} that $\Lambda$ is said to be \emph{Cohen-Macaulay free} (CM-free for short) if $\Gproj\Lambda=\proj\Lambda$. 

\begin{prop} \label{CM_free_rings}
    Let $(\Mod\Gamma,\Mod\Lambda,\mathsf{i},\mathsf{e},\mathsf{l})$ be a cleft extension of module categories of two-sided Noetherian rings. Assume that $\mathsf{F}$ is perfect and nilpotent. The following hold: 
    \begin{itemize}
        \item[(i)] If $\Lambda$ is CM-free, then $\Gamma$ is CM-free. 
        \item[(ii)] If the functor $\mathsf{e}\colon\Mod\Lambda\rightarrow \Mod\Gamma$ induces a functor $\underline{\mathsf{e}}\colon\uGproj\Lambda\rightarrow \uGproj\Gamma$ and $\Gamma$ is CM-free, then $\Lambda$ is CM-free. 
    \end{itemize}
\end{prop}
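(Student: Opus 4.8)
The plan is to reduce both statements to facts already established about how the functors $\mathsf{l}$, $\mathsf{q}$, and (for part (ii)) $\underline{\mathsf{e}}$ interact with Gorenstein projectives, together with Lemma \ref{perfect_objects} and Proposition \ref{preserves_and_reflects}. Recall that over a two-sided Noetherian ring $\Lambda$, one has $\Gproj\Lambda=\GProj\Lambda\cap\smod\Lambda$, and that $\Lambda$ is CM-free precisely when every finitely generated Gorenstein projective module is projective, equivalently (using that Gorenstein projective modules have projective dimension $0$ or $\infty$) when every finitely generated Gorenstein projective module has finite projective dimension. Throughout, I note that since $\mathsf{F}$ is perfect and nilpotent, Lemma \ref{restriction_of_cleft} lets us pass to the restricted cleft extension $(\smod\Gamma,\smod\Lambda,\mathsf{i},\mathsf{e},\mathsf{l})$, and Corollary \ref{gorenstein_proj_in_B} (with the uniform bound on $\pd_{\mathcal{B}}\mathsf{F}(P)$ automatic over module categories, via Lemma \ref{tensor_is_perfect} and Lemma \ref{easy_properties_of_perfect_bimodules}) gives $\mathsf{l}(\Gproj\Gamma)\subseteq\Gproj\Lambda$, $\mathsf{q}(\Gproj\Lambda)\subseteq\Gproj\Gamma$, and the criterion $X\in\Gproj\Gamma\iff\mathsf{l}(X)\in\Gproj\Lambda$.

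\emph{Part (i).} Let $X\in\Gproj\Gamma$. By Corollary \ref{gorenstein_proj_in_B}, $\mathsf{l}(X)\in\Gproj\Lambda$, so by the CM-freeness of $\Lambda$ the module $\mathsf{l}(X)$ is projective, in particular $\pd_{\mathcal{A}}\mathsf{l}(X)<\infty$. Now $X\cong\mathsf{ql}(X)$, and since $\mathsf{F}$ is perfect and nilpotent we have $\mathbb{L}_i\mathsf{l}(X)=0$ for $i\geq 1$ by Proposition \ref{adjoint_pair_and_gorenstein_projectives} (applied to the adjoint pair $(\mathsf{l},\mathsf{e})$), so $\mathbb{L}\mathsf{l}(X)=\mathsf{l}(X)$; applying $\mathsf{q}$ and using that $\mathsf{q}$ sends a projective module to a module of finite projective dimension gives $\pd_{\mathcal{B}}X<\infty$. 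Alternatively, and more directly: $\mathsf{e}(\mathsf{l}(X))\cong X\oplus\mathsf{F}(X)$ has finite projective dimension (as $\mathsf{e}$ preserves projectivity up to finite projective dimension by Lemma \ref{basic_properties_of_perfect_endofunctor_on_cleft}), whence $\pd_{\mathcal{B}}X<\infty$ because it is a direct summand. A Gorenstein projective module of finite projective dimension is projective, so $X\in\proj\Gamma$ and $\Gamma$ is CM-free.

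\emph{Part (ii).} Suppose $\Gamma$ is CM-free and that $\underline{\mathsf{e}}\colon\uGproj\Lambda\rightarrow\uGproj\Gamma$ exists. Let $X\in\Gproj\Lambda$. Then $\mathsf{e}(X)$, viewed in $\mathsf{D}_{\mathsf{sg}}(\Gamma)$, is isomorphic to the Gorenstein projective module $\underline{\mathsf{e}}(X)\in\uGproj\Gamma$; since $\Gamma$ is CM-free, $\underline{\mathsf{e}}(X)$ is projective, hence zero in $\mathsf{D}_{\mathsf{sg}}(\Gamma)$, so $\mathsf{e}(X)\cong 0$ in $\mathsf{D}_{\mathsf{sg}}(\Gamma)$. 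By Lemma \ref{perfect_objects} this means $\pd_{\mathcal{B}}\mathsf{e}(X)<\infty$. Proposition \ref{preserves_and_reflects} now yields $\pd_{\mathcal{A}}X<\infty$, and again a Gorenstein projective module of finite projective dimension is projective, so $X\in\proj\Lambda$ and $\Lambda$ is CM-free.

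The only genuinely delicate point is bookkeeping the passage between $\Mod$ and $\smod$ and making sure the hypotheses of Corollary \ref{gorenstein_proj_in_B} (the uniform bound $\pd_{\mathcal{B}}\mathsf{F}(P)\leq d$) and of Proposition \ref{preserves_and_reflects} are in force; over two-sided Noetherian rings these are automatic from perfectness of $\mathsf{F}$ via the bimodule description, so no real obstacle arises. \qed
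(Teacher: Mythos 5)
Both parts of your proposal are correct, and the overall structure matches the paper's proof; the small differences are worth noting.

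For part (i), the paper is more direct: since $(\mathsf{q},\mathsf{i})$ is an adjoint pair with $\mathsf{i}$ exact, the functor $\mathsf{q}$ preserves projectives, so one concludes in one step from $\mathsf{l}(X)\in\proj\Lambda$ that $X\cong\mathsf{ql}(X)\in\proj\Gamma$. Your detour through finite projective dimension (and your weaker statement that ``$\mathsf{q}$ sends a projective to a module of finite projective dimension'' --- it actually sends projectives to projectives) is not wrong, but it is unnecessary. Your ``alternatively'' route via $\mathsf{el}(X)\cong X\oplus\mathsf{F}(X)$ and Lemma \ref{basic_properties_of_perfect_endofunctor_on_cleft} is also valid.

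For part (ii), the paper works directly with the short exact sequences $0\to\mathsf{G}^{n+1}(X)\to\mathsf{lF}^n\mathsf{e}(X)\to\mathsf{G}^n(X)\to 0$, passes to triangles in $\mathsf{D}_{\mathsf{sg}}(\Lambda)$, uses $\mathsf{e}(X)\cong 0$ in $\mathsf{D}_{\mathsf{sg}}(\Gamma)$ to kill the middle terms, and invokes nilpotence of $\mathsf{F}$ to conclude $X\cong 0$ in $\mathsf{D}_{\mathsf{sg}}(\Lambda)$. You instead apply Proposition \ref{preserves_and_reflects} to jump directly from $\pd_\Gamma\mathsf{e}(X)<\infty$ to $\pd_\Lambda X<\infty$. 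This is cleaner and more modular, though Proposition \ref{preserves_and_reflects} is itself proved by essentially the same triangle argument, so the two routes are not fundamentally different. Your use of Lemma \ref{restriction_of_cleft} to restrict to $\smod$ is the same device the paper uses elsewhere, and the appeal to Lemma \ref{perfect_objects} to translate vanishing in $\mathsf{D}_{\mathsf{sg}}$ into finite projective dimension is standard. No gaps.
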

\begin{proof}
     (i) Let $X\in \Gproj\Gamma$. Then, $\mathsf{l}(X)\in\Gproj\Lambda=\proj\Lambda$ and so it follows that $X\cong \mathsf{ql}(X)\in\proj\Gamma$. 

     (ii) Let $X\in\Gproj\Lambda$. Consider the following short exact sequences in $\Mod\Lambda$: 
     \[
     0\rightarrow \mathsf{G}(X)\rightarrow \mathsf{le}(X)\rightarrow X\rightarrow 0, \ 0\rightarrow \mathsf{G}^2(X)\rightarrow \mathsf{lFe}(X)\rightarrow \mathsf{G}(X)\rightarrow 0, \ \dots
     \]
     which in turn give triangles in $\mathsf{D}_{\mathsf{sg}}(\Lambda)$ as below 
     \[
     \mathsf{G}(X)\rightarrow \mathsf{le}(X)\rightarrow X\rightarrow \mathsf{G}(X)[1], \ \mathsf{G}^2(X)\rightarrow \mathsf{lFe}(X)\rightarrow \mathsf{G}(X)\rightarrow \mathsf{G}^2(X)[1], \ \dots
     \]
     We have $\underline{\mathsf{e}}(X)\in\uGproj\Gamma$. Since $\underline{\mathsf{e}}(X)\cong \mathsf{e}(X)$ in $\mathsf{D}_{\mathsf{sg}}(\Gamma)$, it follows by assumption that $\mathsf{e}(X)\cong 0$ in $\mathsf{D}_{\mathsf{sg}}(\Gamma)$. Consequently, all the middle terms of the above triangles are 0. Since the functor $\mathsf{F}$ is nilpotent, it follows that $X\cong 0$ in $\mathsf{D}_{\mathsf{sg}}(\Lambda)$, meaning that $X$ has finite projective dimension. However, since $X\in\Gproj\Lambda$, it follows necessarily that $X\in\proj\Lambda$. 
\end{proof}

We now move to the second part of this subsection. Recall from \cite{beligiannis4, beligiannis5}, that an Artin algebra $\Lambda$ is of \emph{finite Cohen-Macaulay type} (finite CM type for short) if $\Gproj\Lambda$ is of finite representation type, i.e.\ there are finitely many isomorphism classes of indecomposable Gorenstein projective modules in $\smod\Lambda$. In what follows, we show that in the presence of a cleft extension $(\Mod\Gamma,\Mod\Lambda,\mathsf{i},\mathsf{e},\mathsf{l})$, under relaxed conditions on $\mathsf{F}$ (see Proposition \ref{finite_cohen_macaulay_type}), if $\Lambda$ is of finite Cohen-Macaulay type, then so is $\Gamma$. For this we prepare the following two lemmata.  

\begin{lem} \label{kernel}
    Let $(\Mod \Gamma,\Mod\Lambda,\mathsf{i},\mathsf{e},\mathsf{l})$ be a cleft extension of module categories. If $\mathsf{F}$ is nilpotent, then $\mathsf{kerq}=0$. 
\end{lem}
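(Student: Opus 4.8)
The plan is to show directly that $\mathsf{q}(X)=0$ forces $X=0$; since $\mathsf{e}$ is faithful and exact, it is enough to arrive at $\mathsf{e}(X)=0$. The starting point will be the two structural short exact sequences attached to a cleft extension, namely the cases $n=1,2$ of the sequences recalled in Subsection~2.1:
\[
0\rightarrow \mathsf{G}(X)\rightarrow \mathsf{le}(X)\rightarrow X\rightarrow 0, \qquad 0\rightarrow \mathsf{G}^2(X)\rightarrow \mathsf{lFe}(X)\rightarrow \mathsf{G}(X)\rightarrow 0.
\]
Applying the right exact functor $\mathsf{q}$ to the first sequence, and using $\mathsf{ql}\simeq\mathsf{Id}$ together with the hypothesis $\mathsf{q}(X)=0$, exactness in the middle gives an epimorphism $\mathsf{q}\mathsf{G}(X)\twoheadrightarrow \mathsf{q}\mathsf{le}(X)\simeq\mathsf{e}(X)$. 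Applying $\mathsf{q}$ to the second sequence gives an epimorphism $\mathsf{Fe}(X)\simeq\mathsf{q}\mathsf{lFe}(X)\twoheadrightarrow\mathsf{q}\mathsf{G}(X)$. Composing the two yields an epimorphism $\mathsf{Fe}(X)\twoheadrightarrow\mathsf{e}(X)$.

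Next I would iterate. As $\mathsf{F}$ is right exact it preserves epimorphisms, so applying $\mathsf{F}$ to $\mathsf{Fe}(X)\twoheadrightarrow\mathsf{e}(X)$ and composing repeatedly produces epimorphisms $\mathsf{F}^{n}\mathsf{e}(X)\twoheadrightarrow\mathsf{e}(X)$ for every $n\geq 1$. This is the one place where the hypothesis enters: since $\mathsf{F}$ is nilpotent, equivalently $\mathsf{G}$ is nilpotent by \cite[Lemma~2.4]{arrow}, we have $\mathsf{F}^{N}=0$ for some $N$, whence $\mathsf{e}(X)=0$, and therefore $X=0$ by faithfulness of $\mathsf{e}$. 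Thus $\mathsf{ker}\mathsf{q}=0$.

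Note that this argument uses no module-theoretic input, so in the final write-up it may be preferable to state it for an arbitrary cleft extension of abelian categories (where $\mathsf{q}$ still exists and satisfies $\mathsf{ql}\simeq\mathsf{Id}$), with the module case being the displayed instance. Alternatively, in the module setting one can argue concretely: by Remark~\ref{canonical_form} one reduces to $\Lambda=\Gamma\!\ltimes\!_{\theta}M$, and then nilpotency of $\mathsf{F}\simeq -\otimes_{\Gamma}M$ says exactly that $M=\mathsf{ker}(\Lambda\rightarrow\Gamma)$ is a nilpotent two-sided ideal of $\Lambda$; since $\mathsf{q}(X)\simeq X\otimes_{\Lambda}\Gamma\simeq X/XM$, the equality $\mathsf{q}(X)=0$ forces $X=XM=XM^{2}=\cdots=XM^{N}=0$. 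I do not expect a genuine obstacle; the only delicate bookkeeping is choosing the correct two short exact sequences to feed $\mathsf{q}$ and using $\mathsf{ql}\simeq\mathsf{Id}$ so that the terms containing $\mathsf{l}$ collapse and no left derived functors of $\mathsf{q}$ enter the picture.
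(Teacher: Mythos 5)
Your primary, categorical argument is correct and is a genuinely different route from the paper's. The paper proves the lemma by reducing, via the canonical form of Remark~\ref{canonical_form}, to the case of a $\theta$-extension $\Gamma\ltimes_\theta M$ and then citing \cite[Lemma~5.3]{beligiannis} for the fact that $-\otimes_{\Gamma\ltimes_\theta M}\Gamma$ has trivial kernel when $M$ is nilpotent — precisely the alternative you sketch at the end, except that you spell out the ideal-theoretic argument $X=XM=XM^2=\cdots=0$ rather than invoking the external reference. Your main argument instead works directly with the short exact sequences $0\to\mathsf{G}(X)\to\mathsf{le}(X)\to X\to 0$ and $0\to\mathsf{G}^2(X)\to\mathsf{lFe}(X)\to\mathsf{G}(X)\to 0$: applying the right exact $\mathsf{q}$, using $\mathsf{ql}\simeq\mathsf{Id}$ and $\mathsf{q}(X)=0$ to get $\mathsf{q}\mathsf{G}(X)\twoheadrightarrow\mathsf{e}(X)$ and $\mathsf{Fe}(X)\twoheadrightarrow\mathsf{q}\mathsf{G}(X)$, hence $\mathsf{Fe}(X)\twoheadrightarrow\mathsf{e}(X)$; iterating through $\mathsf{F}$ yields $\mathsf{F}^n\mathsf{e}(X)\twoheadrightarrow\mathsf{e}(X)$, and nilpotency forces $\mathsf{e}(X)=0$, so $X=0$ by faithfulness. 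This buys two things the paper's proof does not: it is self-contained (no appeal to the canonical form or to Beligiannis' lemma), and it applies verbatim to an arbitrary cleft extension of abelian categories, not just to module categories — so the lemma could be stated in that greater generality, as you observe. The only step worth flagging in the write-up is the small point that faithfulness of $\mathsf{e}$ plus $\mathsf{e}(X)=0$ gives $X=0$ (because $\mathsf{Id}_X$ is sent to $0$), which you correctly use implicitly.
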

\begin{proof}
    By Remark \ref{canonical_form}, it is enough to show that for a $\theta$-extension $\Gamma\!\ltimes\!_{\theta}M$, the functor $-\otimes_{\Gamma\ltimes_{\theta}M}\Gamma$ has trivial kernel provided that $M$ is nilpotent, which follows by \cite[Lemma 5.3]{beligiannis}. 
\end{proof}

\begin{lem} \label{indecomposable}
    Let $(\smod\Gamma,\smod\Lambda,\mathsf{i},\mathsf{l},\mathsf{q})$ be a cleft extension of finitely generated modules over Artin algebras. Assume that $\mathsf{ker}\mathsf{q}=0$. If $X\in\smod\Gamma$ is indecomposable, then $\mathsf{l}(X)\in\smod\Lambda$ is indecomposable. 
\end{lem}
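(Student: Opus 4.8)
The plan is to exploit the adjunction $(\mathsf{q},\mathsf{i})$ together with $\mathsf{ql}\simeq\mathsf{Id}$ and the fact that $\mathsf{q}$ is additive with trivial kernel. Concretely, suppose $X\in\smod\Gamma$ is indecomposable and write $\mathsf{l}(X)=A\oplus B$ in $\smod\Lambda$. Applying $\mathsf{q}$ gives $X\cong\mathsf{ql}(X)\cong\mathsf{q}(A)\oplus\mathsf{q}(B)$ in $\smod\Gamma$. Since $X$ is indecomposable, one of the two summands, say $\mathsf{q}(B)$, is zero; because $\mathsf{ker}\mathsf{q}=0$ this forces $B=0$, so $\mathsf{l}(X)=A$ is indecomposable. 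That is the whole content, but there are two gaps to fill carefully.

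First I would check that $\mathsf{l}(X)$ really does decompose into finitely many indecomposables, i.e.\ that $\smod\Lambda$ is Krull--Schmidt; this is immediate since $\Lambda$ is an Artin algebra, so idempotents split and every finitely generated module has a finite decomposition into indecomposables with local endomorphism rings. Second, and this is the one subtle point, I would need $\mathsf{q}$ to respect the decomposition, which it does since $\mathsf{q}$ is an additive functor; and I would need to know that ``$\mathsf{q}(B)=0$'' genuinely implies ``$B=0$'', which is precisely the hypothesis $\mathsf{ker}\mathsf{q}=0$ (recall $\mathsf{ker}\mathsf{q}$ denotes the full subcategory of objects killed by $\mathsf{q}$, and by Lemma~\ref{kernel} this is the zero subcategory in the situation of interest; here it is assumed outright). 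One should also remember that the given data in the statement writes the cleft extension as $(\smod\Gamma,\smod\Lambda,\mathsf{i},\mathsf{l},\mathsf{q})$ — a mild relabelling where $\mathsf{l}$ plays the role usually played by $\mathsf{e}$ and $\mathsf{q}$ the role of $\mathsf{l}$, with $\mathsf{q}$ left adjoint to $\mathsf{i}$ and $\mathsf{qi}\simeq\mathsf{Id}$, $\mathsf{ql}\simeq\mathsf{Id}$ — so I would make sure the isomorphism $\mathsf{ql}\simeq\mathsf{Id}_{\smod\Gamma}$ is the one in play (this was established right after Definition~\ref{cleft_extension} in the general discussion, and holds equally for the restricted cleft extension).

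I do not anticipate a genuine obstacle: the argument is a two-line diagram chase once Krull--Schmidt is invoked. The only thing to be careful about is not to claim more than is true — $\mathsf{l}$ need not be full or faithful, so I cannot argue via endomorphism rings directly on the $\Lambda$-side; the decomposition must be transported back to $\Gamma$ via $\mathsf{q}$, and the indecomposability of $X$ there is what does the work. So the proof I would write is essentially: invoke Krull--Schmidt for $\smod\Lambda$ to split $\mathsf{l}(X)=A\oplus B$; apply the additive functor $\mathsf{q}$ and use $\mathsf{ql}\simeq\mathsf{Id}$ to get $X\cong\mathsf{q}(A)\oplus\mathsf{q}(B)$; use indecomposability of $X$ and $\mathsf{ker}\mathsf{q}=0$ to kill one summand on the nose; conclude.

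\begin{proof}
    Since $\Lambda$ is an Artin algebra, the category $\smod\Lambda$ is Krull--Schmidt. Let $X\in\smod\Gamma$ be indecomposable and suppose that $\mathsf{l}(X)\cong A\oplus B$ for some $A,B\in\smod\Lambda$. Applying the additive functor $\mathsf{q}$ and using that $\mathsf{ql}\simeq\mathsf{Id}_{\smod\Gamma}$, we obtain
    \[
    X\cong \mathsf{ql}(X)\cong \mathsf{q}(A)\oplus \mathsf{q}(B)
    \]
    in $\smod\Gamma$. Since $X$ is indecomposable, one of the two summands is zero; say $\mathsf{q}(B)=0$. As $\mathsf{ker}\mathsf{q}=0$ by assumption, it follows that $B=0$ and hence $\mathsf{l}(X)\cong A$. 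Therefore $\mathsf{l}(X)$ is indecomposable.
\end{proof}
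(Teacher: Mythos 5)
Your proof is correct and matches the paper's argument essentially line for line: assume a decomposition of $\mathsf{l}(X)$, apply the additive functor $\mathsf{q}$, use $\mathsf{ql}\simeq\mathsf{Id}$ and the indecomposability of $X$ to kill one summand, then use $\mathsf{ker}\,\mathsf{q}=0$ to conclude. The appeal to Krull--Schmidt is superfluous (indecomposability is tested against a single two-term direct sum decomposition, not a finite decomposition into indecomposables), but it does no harm.
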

\begin{proof}
    If $\mathsf{l}(X)\cong X_1\oplus X_2$ for some $X_1,X_2\in\smod\Lambda$, then $X\cong \mathsf{q}(X_1)\oplus \mathsf{q}(X_2)$ and since $X$ is assumed to be indecomposable, it follows that $\mathsf{q}(X_1)\cong 0$ or $\mathsf{q}(X_2)\cong 0$. From the assumption $\mathsf{kerq}=0$ follows that one of $X_1,X_2$ must be $0$. 
\end{proof}

\begin{prop} \label{finite_cohen_macaulay_type}
    Let $(\Mod\Gamma,\Mod\Lambda,\mathsf{i},\mathsf{e},\mathsf{l})$ be a cleft extension of module categories of Artin algebras. Assume that the functor $\mathsf{F}$ is nilpotent and satisfies the following: 
    \begin{itemize}
        \item[(i)] $\mathbb{L}_i\mathsf{F}=0$ for $i>\!\!>0$. 
        \item[(ii)] $\pd{\mathsf{F}(P)_{\Gamma}}<\infty$ for every $P\in\Mod\Gamma$.
    \end{itemize}
    If $\Lambda$ is of finite Cohen-Macaulay type, then $\Gamma$ is of finite Cohen-Macaulay type. 
\end{prop}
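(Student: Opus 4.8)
The plan is to show that $\mathsf{l}$ induces an injection, preserving indecomposability, from the isomorphism classes of indecomposable Gorenstein projective $\Gamma$-modules into those of $\Lambda$, so that finiteness of the latter set forces finiteness of the former.

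First I would reduce to finitely generated modules. Since $\Gamma$ and $\Lambda$ are Artin algebras, hence Noetherian, and $\mathsf{F}$ is nilpotent, Lemma~\ref{restriction_of_cleft} provides a cleft extension $(\smod\Gamma,\smod\Lambda,\mathsf{i},\mathsf{e},\mathsf{l})$ of the categories of finitely generated modules, and by \cite[Lemma~3.4]{chen3} we have $\Gproj\Lambda=\GProj\Lambda\cap\smod\Lambda$; all that follows takes place in these small categories. By Lemma~\ref{basic_homological_properties_of_cleft}(ii) and the faithfulness of $\mathsf{e}$, assumption~(i) yields $\mathbb{L}_i\mathsf{l}=0$ for $i\gg0$. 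Next I would verify $\mathsf{l}(\Gproj\Gamma)\subseteq\Gproj\Lambda$ by applying Proposition~\ref{adjoint_pair_and_gorenstein_projectives} to the adjoint pair $(\mathsf{l},\mathsf{e})$: its hypothesis~(i) is the vanishing just noted, and for hypothesis~(ii) I argue as follows. Given $P\in\proj\Lambda$, Lemma~\ref{projectives_in_cleft} shows $P$ is a direct summand of $\mathsf{l}(P')$ with $P'\in\proj\Gamma$, and $P'$ is a summand of a finitely generated free $\Gamma$-module, so by additivity of $\mathsf{l}$ and $\mathsf{e}$ the object $\mathsf{e}(P)$ is a summand of $\mathsf{el}(\Gamma)^{m}\cong(\Gamma\oplus\mathsf{F}(\Gamma))^{m}$ for some $m$. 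Hence $\pd_{\Gamma}\mathsf{e}(P)\le\pd_{\Gamma}\mathsf{F}(\Gamma)=:d$, which is finite by assumption~(ii) applied to $\Gamma_{\Gamma}$; so Proposition~\ref{adjoint_pair_and_gorenstein_projectives} applies with this uniform bound $d$ and gives $\mathsf{l}(\Gproj\Gamma)\subseteq\Gproj\Lambda$.

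Finally I would conclude by a counting argument. Since $\mathsf{F}$ is nilpotent, $\mathsf{ker}\mathsf{q}=0$ by Lemma~\ref{kernel} (and hence also after restriction to $\smod\Gamma$), so Lemma~\ref{indecomposable} shows that $\mathsf{l}$ sends an indecomposable object of $\smod\Gamma$ to an indecomposable object of $\smod\Lambda$; moreover $\mathsf{ql}\simeq\mathsf{Id}_{\smod\Gamma}$, so $X\mapsto\mathsf{l}(X)$ is injective on isomorphism classes. Combining this with the containment $\mathsf{l}(\Gproj\Gamma)\subseteq\Gproj\Lambda$, the assignment $X\mapsto\mathsf{l}(X)$ realizes the isomorphism classes of indecomposable objects of $\Gproj\Gamma$ as a subset of those of $\Gproj\Lambda$. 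Therefore, if $\Lambda$ is of finite Cohen--Macaulay type, so is $\Gamma$. The only point requiring real care is the passage from the a priori only pointwise finiteness in hypothesis~(ii) to the uniform bound $d$ demanded by Proposition~\ref{adjoint_pair_and_gorenstein_projectives}; this is precisely where the additivity of $\mathsf{F}$ and the description of the projectives of $\smod\Lambda$ in Lemma~\ref{projectives_in_cleft} are used.
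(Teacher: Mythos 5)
Your argument is correct and follows the same route as the paper's proof: reduce to $\smod$ via Lemma~\ref{restriction_of_cleft}, show $\mathsf{l}$ carries indecomposable objects of $\Gproj\Gamma$ to pairwise non-isomorphic indecomposable objects of $\Gproj\Lambda$ (using $\mathsf{ql}\simeq\mathsf{Id}$, Lemma~\ref{kernel}, and Lemma~\ref{indecomposable}), and conclude by counting. The one place where you go further is in explicitly justifying that $\mathsf{l}(\Gproj\Gamma)\subseteq\Gproj\Lambda$ via Proposition~\ref{adjoint_pair_and_gorenstein_projectives}, with the uniform bound on $\pd_\Gamma\mathsf{e}(P)$ extracted from $\pd_\Gamma\mathsf{F}(\Gamma)$; the paper's proof asserts this containment without comment, so your version is actually the more complete one.
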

\begin{proof}
    If $\{P_j,i\in I\}$ is a family of pairwise non-isomorphic indecomposable Gorenstein projective modules in $\smod\Gamma$, then $\{\mathsf{l}(P_i),i\in I\}$ is a family of pairwise non-isomorphic indecomposable Gorenstein projective modules in $\smod\Lambda$. Indeed, if $\mathsf{l}(P_i)\cong \mathsf{l}(P_j)$, then $P_i\cong \mathsf{ql}(P_i)\cong \mathsf{ql}(P_j)\cong P_j$. The fact that each $\mathsf{l}(P_i)$ is indecomposable follows from Lemmata \ref{kernel}  and \ref{indecomposable}. By assumption it follows that $I$ must be finite. \end{proof}

Although the class of algebras of finite CM-type is rather large (for instance it contains all monomial algebras \cite{chen_shen_zhou}), proving an inverse to the above does not seem feasible. We spend the rest of this section discussing aspects towards this direction.

To start with, a problem is that it is not known, in general, whether two algebras with equivalent stable categories of Gorenstein projective modules share the property of being of finite CM type. This requires the extra assumption that the algebras are virtually Gorenstein, in the sense of \cite{beligiannis4}. Recall that an Artin algebra $\Lambda$ is called \emph{virtually Gorenstein} if 
\[
\uGProj\Lambda^{\mathsf{c}}=\uGproj\Lambda.
\]
Note that the inclusion $\uGproj\Lambda\subseteq \uGProj\Lambda^{\mathsf{c}}$ holds for all Artin algebras. Given two algebras $\Gamma$ and $\Lambda$ that are virtually Gorenstein and $\uGproj\Gamma\simeq \uGproj\Lambda$, it follows by \cite[Theorem 4.10]{beligiannis5} that $\Gamma$ is of finite CM type if and only if $\Lambda$ is of finite CM type. Every Iwanaga-Gorenstein algebra is virtually Gorenstein. Consequently, from Corollary \ref{cor_for_gorenstein_rings} and Proposition \ref{equivalence_of_stable}, we infer the following.  

\begin{cor} \label{gorenstein_algebras_of_finite_CM_type}
    Let $(\Mod\Gamma,\Mod\Lambda,\mathsf{i},\mathsf{e},\mathsf{l})$ be a cleft extension of modules categories of Artin algebras. Assume that $\mathsf{F}$ is perfect and nilpotent, $\mathsf{e}$ induces a functor $\underline{\mathsf{e}}\colon\uGproj\Lambda\rightarrow \uGproj\Gamma$ and $\underline{\mathsf{F}}\simeq 0$ in $\uGproj\Gamma$. If $\Gamma$ is Iwanaga-Gorenstein of finite CM type, then $\Lambda$ is Iwanaga-Gorenstein of finite CM type. 
\end{cor}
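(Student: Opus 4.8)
The plan is to combine the two ingredients the excerpt has already assembled: Corollary \ref{cor_for_gorenstein_rings}, which transfers the Iwanaga-Gorenstein property across a cleft extension of module categories of two-sided Noetherian rings when $\mathsf{F}$ is perfect and nilpotent, and Proposition \ref{equivalence_of_stable}, which — under the stated hypotheses that $\mathsf{F}$ is perfect and nilpotent, that $\mathsf{e}$ induces $\underline{\mathsf{e}}\colon\uGproj\Lambda\to\uGproj\Gamma$, and that $\underline{\mathsf{F}}\simeq 0$ in $\uGproj\Gamma$ — yields that $\underline{\mathsf{e}}$ is a triangle equivalence $\uGproj\Lambda\xrightarrow{\simeq}\uGproj\Gamma$. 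So first I would note that an Artin algebra is in particular two-sided Noetherian, so Corollary \ref{cor_for_gorenstein_rings} applies and from $\Gamma$ Iwanaga-Gorenstein we deduce that $\Lambda$ is Iwanaga-Gorenstein.

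Next I would invoke the paragraph preceding Corollary \ref{gorenstein_algebras_of_finite_CM_type} in the excerpt: every Iwanaga-Gorenstein algebra is virtually Gorenstein, a fact the excerpt quotes. Hence both $\Gamma$ and $\Lambda$ are virtually Gorenstein. Then I would apply Proposition \ref{equivalence_of_stable} to get the triangle equivalence $\uGproj\Lambda\simeq\uGproj\Gamma$, and feed this into \cite[Theorem 4.10]{beligiannis5} (cited in the excerpt): for two virtually Gorenstein Artin algebras with equivalent stable categories of finitely generated Gorenstein projectives, one is of finite CM type if and only if the other is. Since $\Gamma$ is assumed of finite CM type, it follows that $\Lambda$ is of finite CM type as well. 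Together with the conclusion of the first paragraph, $\Lambda$ is Iwanaga-Gorenstein of finite CM type.

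I do not anticipate a genuine obstacle here, since every nontrivial input — the Gorensteinness transfer, the stable equivalence, the virtually Gorenstein reduction, and the invariance of finite CM type under stable equivalence for virtually Gorenstein algebras — is already in place earlier in the paper or quoted from the literature; the proof is essentially a matter of chaining them in the right order. The one point requiring a line of care is checking that the hypotheses of Proposition \ref{equivalence_of_stable} are literally the ones assumed in the statement of Corollary \ref{gorenstein_algebras_of_finite_CM_type} — namely that $\mathsf{F}$ is perfect and nilpotent (so Corollary \ref{cor_for_gorenstein_rings} is available and $\underline{\mathsf{F}}$ makes sense), that $\mathsf{e}$ induces $\underline{\mathsf{e}}$, and that $\underline{\mathsf{F}}\simeq 0$ — which they are, verbatim. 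A short proof of the following shape should suffice.

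\begin{proof}
    Since Artin algebras are two-sided Noetherian and $\mathsf{F}$ is perfect and nilpotent, Corollary \ref{cor_for_gorenstein_rings} shows that $\Lambda$ is Iwanaga-Gorenstein, because $\Gamma$ is. In particular both $\Gamma$ and $\Lambda$ are virtually Gorenstein, as every Iwanaga-Gorenstein algebra is. By the assumptions that $\mathsf{e}$ induces $\underline{\mathsf{e}}\colon\uGproj\Lambda\rightarrow\uGproj\Gamma$ and $\underline{\mathsf{F}}\simeq 0$ in $\uGproj\Gamma$, Proposition \ref{equivalence_of_stable} yields a triangle equivalence $\uGproj\Lambda\simeq \uGproj\Gamma$. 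Finally, since $\Gamma$ and $\Lambda$ are virtually Gorenstein with $\uGproj\Lambda\simeq\uGproj\Gamma$, it follows from \cite[Theorem 4.10]{beligiannis5} that $\Lambda$ is of finite CM type if and only if $\Gamma$ is. As $\Gamma$ is of finite CM type, so is $\Lambda$, and we have already seen that $\Lambda$ is Iwanaga-Gorenstein.
\end{proof}
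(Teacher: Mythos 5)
Your proof is correct and follows exactly the route the paper itself indicates in the paragraph preceding the corollary: transfer Gorensteinness via Corollary~\ref{cor_for_gorenstein_rings}, note that Iwanaga-Gorenstein implies virtually Gorenstein, obtain the stable equivalence from Proposition~\ref{equivalence_of_stable}, and then invoke \cite[Theorem 4.10]{beligiannis5}. Nothing to add.
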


\begin{exmp}
    Consider a trivial extension $\Lambda=\Gamma\ltimes M$ where $\Gamma$ and $\Lambda$ are Artin algebras. Assume that $\pd{_{\Gamma^e}M}<\infty$ and $M\otimes_{\Gamma}M=0$. If $\Gamma$ is Iwanaga-Gorenstein of finite CM type, then it follows by Proposition \ref{equivalence_of_stable_for_theta} and Corollary \ref{gorenstein_algebras_of_finite_CM_type} that $\Lambda$ is Iwanaga-Gorenstein of finite CM type. 
\end{exmp}

In view of the above, it is an interesting problem to compare virtual Gorensteinness in cleft extensions. Using arguments that we have repeated before, see for instance the proof of Proposition \ref{CM_free_rings}, it is easy to prove the following. 

\begin{cor}
    Consider a cleft extension $(\Mod\Gamma,\Mod\Lambda,\mathsf{i},\mathsf{e},\mathsf{l})$ of Artin algebras. Assume that the following are satisfied:
    \begin{itemize}
        \item[(i)] $\mathsf{F}$ is perfect and nilpotent.
        \item[(ii)] $\mathsf{e}$ induces a compact-preserving triangle functor $\underline{\mathsf{e}}\colon\uGProj\Lambda\rightarrow \uGProj\Gamma$.
    \end{itemize}
    Then $\Lambda$ is virtually Gorenstein if and only if $\Gamma$ is virtually Gorenstein. 
\end{cor}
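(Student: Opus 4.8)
The plan is to transport the characterization of virtual Gorensteinness across the cleft extension using the functors on the big singularity categories, exactly as in the proof of Proposition \ref{CM_free_rings} and its companions. Recall that $\Lambda$ is virtually Gorenstein if and only if $\uGProj\Lambda$ coincides with the full subcategory of compact objects $\uGProj\Lambda^{\mathsf{c}}$ of $\uGProj\Lambda$ (the latter always contains $\uGproj\Lambda$). Under assumption (i), Corollary \ref{gorenstein_proj_in_B} gives $\mathsf{l}(\GProj\Gamma)\subseteq \GProj\Lambda$ and $\mathsf{q}(\GProj\Lambda)\subseteq\GProj\Gamma$, and by Proposition \ref{adjoint_pair_and_gorenstein_projectives} the functors $\mathsf{l},\mathsf{q}$ descend to triangle functors on the stable categories $\uGProj\Gamma\rightleftarrows\uGProj\Lambda$ with $(\mathsf{l},\underline{\mathsf{e}})$ and $(\mathsf{q},\underline{\mathsf{i}})$ adjoint pairs (here $\underline{\mathsf{e}},\underline{\mathsf{i}}$ exist by Corollary \ref{upper_part_of_diagram} together with assumption (ii), while $\mathsf{l},\mathsf{q}$ coincide with $\mathbb{L}\mathsf{l},\mathbb{L}\mathsf{q}$ on Gorenstein projectives). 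Moreover $\underline{\mathsf{e}}\,\underline{\mathsf{i}}\simeq\mathsf{Id}_{\uGProj\Gamma}$ and $\mathsf{q}\mathsf{l}\simeq\mathsf{Id}_{\uGProj\Gamma}$, so $\Gamma$ is (up to direct summands and equivalence) a retract of $\Lambda$ on the level of stable categories.

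First I would establish that $\mathsf{l}$ and $\mathsf{q}$ (and $\underline{\mathsf{i}},\underline{\mathsf{e}}$) each preserve coproducts: $\mathsf{l}$ and $\mathsf{q}$ are left adjoints, hence preserve coproducts, and $\underline{\mathsf{e}}$ is compact-preserving by hypothesis (ii). A left adjoint between compactly generated triangulated categories is compact-preserving precisely when its right adjoint preserves coproducts, so from the adjunctions $(\mathsf{l},\underline{\mathsf{e}})$ and $(\mathsf{q},\underline{\mathsf{i}})$ one extracts that $\mathsf{l}$ is compact-preserving iff $\underline{\mathsf{e}}$ preserves coproducts, and similarly for $\mathsf{q}$ and $\underline{\mathsf{i}}$. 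Using $\uGProj\mathcal{A}\simeq\mathsf{K}_{\mathsf{tac}}(\Proj\mathcal{A})$, one checks directly that $\mathsf{l},\mathsf{q}$ preserve coproducts of totally acyclic complexes and send the generating compact objects (images of $\Gproj\Gamma$, resp.\ $\Gproj\Lambda$, which exist by Lemma \ref{restriction_of_cleft} and the restriction to small module categories) into the corresponding subcategories of compact objects, because $\mathsf{l}$ restricts to $\smod\Gamma\to\smod\Lambda$ and $\mathsf{q}$ to $\smod\Lambda\to\smod\Gamma$. Hence all four functors are compact-preserving and coproduct-preserving.

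Next I would argue the equivalence of the two virtual Gorensteinness conditions. Suppose $\Lambda$ is virtually Gorenstein, i.e.\ $\uGProj\Lambda^{\mathsf{c}}=\uGproj\Lambda$. Take $X\in\uGProj\Gamma^{\mathsf{c}}$; then $\mathsf{l}(X)\in\uGProj\Lambda$ is compact (by the previous paragraph), hence $\mathsf{l}(X)\in\uGproj\Lambda$, i.e.\ $\mathsf{l}(X)\in\GProj\Lambda\cap\smod\Lambda=\Gproj\Lambda$ by \cite[Lemma 3.4]{chen3}. Applying $\mathsf{q}$ and using $\mathsf{q}\mathsf{l}\simeq\mathsf{Id}$ and that $\mathsf{q}$ restricts to $\smod\Lambda\to\smod\Gamma$, we get $X\cong\mathsf{q}\mathsf{l}(X)\in\Gproj\Gamma=\uGproj\Gamma$; thus $\uGProj\Gamma^{\mathsf{c}}\subseteq\uGproj\Gamma$ and the reverse inclusion is automatic, so $\Gamma$ is virtually Gorenstein. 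Conversely, suppose $\Gamma$ is virtually Gorenstein and take $Y\in\uGProj\Lambda^{\mathsf{c}}$. Then $\underline{\mathsf{e}}(Y)\in\uGProj\Gamma^{\mathsf{c}}=\uGproj\Gamma$, and the short exact sequences $0\to\mathsf{G}^k(Y)\to\mathsf{l}\mathsf{F}^{k-1}\mathsf{e}(Y)\to\mathsf{G}^{k-1}(Y)\to 0$ (which, for $Y$ Gorenstein projective, are sequences of Gorenstein projectives since $\mathsf{F}^{k-1}\mathsf{e}(Y)$ is Gorenstein projective up to projective summands and $\mathsf{l}$ preserves Gorenstein projectivity by Corollary \ref{gorenstein_proj_in_B}) give triangles in $\uGProj\Lambda$; because $\mathsf{F}$ is nilpotent this exhibits $Y$ as a finite extension of objects of the form $\mathsf{l}\mathsf{F}^{j}\mathsf{e}(Y)$, each of which is $\mathsf{l}$ applied to something in $\uGproj\Gamma$ (using that $\mathsf{F}$ restricts to $\smod\Gamma$ and preserves Gorenstein projectivity up to summands), hence lies in $\uGproj\Lambda$; since $\uGproj\Lambda$ is closed under extensions and summands in $\uGProj\Lambda$, we conclude $Y\in\uGproj\Lambda$, so $\Lambda$ is virtually Gorenstein.

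The main obstacle I anticipate is the compact-preservation bookkeeping: one must be careful that the stable-category functors $\mathsf{l},\mathsf{q}$ genuinely restrict to the \emph{small} stable categories $\uGproj\Gamma,\uGproj\Lambda$ and that these coincide with the compact objects' generators, which relies on Lemma \ref{restriction_of_cleft} (needing $\mathsf{F}$ nilpotent, which is assumed) and on the equality $\Gproj=\GProj\cap\smod$ over Noetherian rings. The second delicate point is the converse direction, where from compactness of $Y$ and virtual Gorensteinness of $\Gamma$ one needs the nilpotence of $\mathsf{F}$ to reduce $Y$ to finitely many pieces each coming from $\uGproj\Gamma$ via $\mathsf{l}$; this is the same dévissage used repeatedly in the paper (e.g.\ in Proposition \ref{CM_free_rings}(ii) and Proposition \ref{equivalence_of_stable}), so it should go through, but it is the step most likely to need the hypothesis $\underline{\mathsf{e}}$ compact-preserving in an essential way.
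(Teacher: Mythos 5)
Your plan --- transport virtual Gorensteinness across the cleft extension, forward via $\mathsf{ql}\simeq\mathsf{Id}$ and reverse via the $\mathsf{G}$-filtration together with nilpotence of $\mathsf{F}$ --- is exactly the argument the paper has in mind (it points to Proposition~\ref{CM_free_rings} as the model), and the forward direction is correct. However, the reverse direction has a genuine gap. You assert that the exact sequences $0\to\mathsf{G}^k(Y)\to\mathsf{lF}^{k-1}\mathsf{e}(Y)\to\mathsf{G}^{k-1}(Y)\to 0$ are sequences of Gorenstein projectives because ``$\mathsf{F}^{k-1}\mathsf{e}(Y)$ is Gorenstein projective up to projective summands,'' and then read them as triangles in $\uGProj\Lambda$. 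This is false: $\mathsf{e}$ does not preserve Gorenstein projectivity. The hypotheses only give $\pd\mathsf{e}(P)_{\Gamma}<\infty$ for $P$ projective (Lemma~\ref{basic_properties_of_perfect_endofunctor_on_cleft}(ii)), so for a totally acyclic complex $P^{\bullet}$ the complex $\mathsf{e}(P^{\bullet})$ has terms of bounded finite projective dimension rather than projective terms --- which is precisely why Proposition~\ref{sthnk} must first apply $\Omega^d$ before landing in $\uGProj\Gamma$. (Already for a triangular matrix ring, the second component of $\mathsf{e}(Y)$ for $Y\in\GProj\Lambda$ need not be Gorenstein projective.) Thus neither $\mathsf{lF}^{k-1}\mathsf{e}(Y)$ nor $\mathsf{G}^k(Y)$ is Gorenstein projective, and the sequences do not produce triangles in $\uGProj\Lambda$.

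The repair is the one Proposition~\ref{CM_free_rings}(ii) actually performs: run the d\'evissage inside $\mathsf{D}_{\mathsf{sg}}(\Mod\Lambda)$, where the short exact sequences always give triangles, and detect membership in $\uGproj\Lambda$ through the fully faithful embeddings $\uGProj\Lambda\hookrightarrow\mathsf{D}_{\mathsf{sg}}(\Mod\Lambda)$ and $\uGproj\Lambda\hookrightarrow\mathsf{D}_{\mathsf{sg}}(\Mod\Lambda)$, whose images are thick. Since $Y$ is Gorenstein projective and hence an arbitrarily high syzygy, $\mathsf{e}(Y)$ is $\mathsf{F}$-projective by the dimension-shift argument from the proof of Proposition~\ref{one_way}, so in $\mathsf{D}_{\mathsf{sg}}(\Mod\Lambda)$ one has $\mathsf{lF}^j\mathsf{e}(Y)\cong\mathsf{l}\underline{\mathsf{F}}^j\underline{\mathsf{e}}(Y)$; this lies in the image of $\uGproj\Lambda$ because $\underline{\mathsf{e}}(Y)\in\uGproj\Gamma$ (hypothesis (ii) plus virtual Gorensteinness of $\Gamma$) and $\underline{\mathsf{F}}$, $\mathsf{l}$ preserve $\uGproj$. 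Nilpotence of $\mathsf{F}$ then places $Y$ in the thick closure of these objects, and full faithfulness pulls this back to $Y\in\uGproj\Lambda$. A separate, smaller issue: arguing that $\mathsf{l}$ preserves compacts because $\uGproj\Gamma$ is a set of compact generators of $\uGProj\Gamma$ is circular --- that is exactly virtual Gorensteinness of $\Gamma$. Instead, observe directly that $\underline{\mathsf{e}}$ preserves coproducts (it is built from exact, coproduct-preserving module functors and the auto-equivalence $\Omega$ via Proposition~\ref{sthnk}), so its left adjoint $\mathsf{l}$ preserves compacts with no generation hypothesis needed.
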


\section{Big singularity categories}
In this section we work with the \emph{big singularity category} due to Krause \cite{krause}. Our main objective is to obtain the lower part of the diagram of Theorem A of the Introduction. This is essentially a combination of the results we have presented on perfect endofunctors and the corresponding results of \cite{OPS}, adjusted in our context. Lastly, under extra assumptions, we obtain an equivalence of big singularity categories arising from cleft extensions of rings, see Theorem \ref{fox}. 

\subsection{Homotopy category of injectives and big singularity categories} Let $\mathcal{T}$ be a triangulated category with set-indexed coproducts. Given a class $\mathcal{X}$ of objects of $\mathcal{T}$, its right orthogonal is defined as 
\[
\mathcal{X}^{\perp}=\{T\in\mathcal{T}:\mathsf{Hom}_{\mathcal{T}}(\mathcal{X},T[n])=0 \ \forall n\in\mathbb{Z}\}. 
\]
Recall that an object $X\in\mathcal{T}$ is called \emph{compact} if $\mathsf{Hom}_{\mathcal{T}}(X,-)$ commutes with coproducts. We denote by $\mathcal{T}^{\mathsf{c}}$ the subcategory of $\mathcal{T}$ that consists of the compact objects. Further, we say that $\mathcal{T}$ is \emph{compactly generated} if there is a set of compact objects $\mathcal{G}$ such that $\mathcal{G}^{\perp}=0$. In this case, the localizing subcategory generated by $\mathcal{G}$ is $\mathcal{T}$. For instance, it is well-known that $\mathsf{D}(R)$, the unbounded derived category of a ring, is compactly generated and $\mathsf{D}(R)^{\mathsf{c}}=\mathsf{K}^{\mathsf{b}}(\proj R)$.

\begin{krauserecollement}
Consider a Noetherian ring $\Lambda$. Over Noetherian rings, the coproduct of injective modules is injective; this makes $\mathsf{K}(\Inj\Lambda)$, the homotopy category of injective $\Lambda$-modules, a triangulated category with coproducts. 

The \emph{big singularity category} of $\Lambda$ is defined to be $\mathsf{K}_{\mathsf{ac}}(\Inj\Lambda)$; the triangulated subcategory of $\mathsf{K}(\Inj\Lambda)$ that consists of the acyclic complexes. A fundamental result is the existence of \emph{Krause's recollement} \cite{krause}
\[
\begin{tikzcd}
\mathsf{K}_{\mathsf{ac}}(\Inj \Lambda) \arrow[rr, "\mathsf{I}"] &  & \mathsf{K}(\Inj\Lambda) \arrow[rr, "\mathsf{Q}"] \arrow[ll, "\mathsf{L}_{\Lambda}"', bend right] \arrow[ll, "\mathsf{R}_{\Lambda}", bend left] &  & \mathsf{D}(\Lambda) \arrow[ll, "\mathsf{Q}_{\lambda}"', bend right] \arrow[ll, "\mathsf{Q}_{\rho}", bend left]
\end{tikzcd}
\]
where the functors $\mathsf{I}$ and $\mathsf{Q}$ are given by 
\[
\mathsf{K}_{\mathsf{ac}}(\Inj\Lambda)\hookrightarrow\mathsf{K}(\Inj\Lambda) \text{  \ \   and   \ \  } \mathsf{K}(\Inj\Lambda)\hookrightarrow\mathsf{K}(\Mod\Lambda)\xrightarrow{\mathsf{can}}\mathsf{D}(R),
\]
respectively. We outline some key points. 

First of all, $\mathsf{K}(\Inj\Lambda)$ is compactly generated. Indeed, for every $X\in\mathsf{K}(\Inj\Lambda)$ and every $Y\in\smod\Lambda$, by a straightforward computation, we have
\begin{align}  
    \mathsf{Hom}_{\mathsf{K}(\Inj\Lambda)}(\lambda( Y),X[n])\cong\mathsf{Hom}_{\mathsf{K}(\Mod\Lambda)}(Y,X[n])
\end{align} 
for all $n$, where $\lambda(Y)$ denotes an injective resolution of $Y$ (see \cite[Lemma 2.1]{krause}). Further, by \cite[Lemma 2.2]{krause}, if $X\in\mathsf{K}(\Mod\Lambda)$ is nonzero, then there is $Y\in\smod\Lambda$ and $n$ such that $\mathsf{Hom}_{\mathsf{K}(\Mod \Lambda)}(Y,X[n])\neq 0$. By combining the above we conclude that $\{\lambda(Y), \ Y\in\smod\Lambda\}$ is a set of compact generators of $\mathsf{K}(\Inj\Lambda)$ and in particular $\mathsf{K}(\Inj\Lambda)$ is compactly generated. The thick subcategory generated by the latter is identified with $\mathsf{D}^{\mathsf{b}}(\smod\Lambda)$, i.e.\ $\mathsf{K}(\Inj\Lambda)^{\mathsf{c}}=\mathsf{D}^{\mathsf{b}}(\smod\Lambda)$.

By (8.1), it follows that for $X\in\mathsf{K}(\Inj\Lambda)$ we have 
\[
\mathsf{Hom}_{\mathsf{K}(\Inj\Lambda)}(\lambda(\Lambda),X[n])\cong\mathsf{Hom}_{\mathsf{K}(\Mod\Lambda)}(\Lambda,X[n])\cong\mathsf{H}^n(X),
\]
so $\mathsf{K}_{\mathsf{ac}}(\Inj\Lambda)=(\lambda(\Lambda))^{\perp}$. In particular, it follows that $\mathsf{K}_{\mathsf{ac}}(\Inj\Lambda)$ is closed under coproducts. By \cite[Proposition 3.6, Theorem 4.2]{krause}, the sequence 
\[
\mathsf{K}_{\mathsf{ac}}(\Inj\Lambda)\xrightarrow{\mathsf{I}}\mathsf{K}(\Inj\Lambda)\xrightarrow{\mathsf{Q}}\mathsf{D}(\Lambda)
\]
is a localization and a colocalization sequence. Thus by \cite[Theorem 3.1]{neeman5}, \emph{up to direct summands}, we obtain an equivalence 
\[
\mathsf{K}_{\mathsf{ac}}(\Inj\Lambda)^{\mathsf{c}}\simeq\mathsf{K}(\Inj\Lambda)^{\mathsf{c}}/\mathsf{D}(\Lambda)^{\mathsf{c}}\simeq\mathsf{D}_{\mathsf{sg}}(\Lambda).
\]
What this means precisely, is that there is a triangle functor $\mathsf{F}\colon\mathsf{D}_{\mathsf{sg}}(\Lambda)\rightarrow \mathsf{K}_{\mathsf{ac}}(\Inj\Lambda)^{\mathsf{c}}$ which fully faithful and every object in $\mathsf{K}_{\mathsf{ac}}(\Inj\Lambda)^{\mathsf{c}}$ is a summand of an object in $\mathsf{ImF}$. This justifies the name \emph{big singularity category} for $\mathsf{K}_{\mathsf{ac}}(\Inj\Lambda)$. 
\end{krauserecollement}
\begin{adjointpairs}
    We fix some notation that follows. First, we remind the reader of the existence of the following adjoint pair: 
\[
\begin{tikzcd}
\mathsf{K}(\Inj\Lambda) \arrow[r, hook] & \mathsf{K}(\Mod \Lambda) \arrow[l, "\lambda"', bend right].
\end{tikzcd}
\]
This is a consequence of a theorem of Neeman, see \cite[Theorem 8.6.1]{neeman2}, since the inclusion $\mathsf{K}(\Inj\Lambda)\hookrightarrow\mathsf{K}(\Mod\Lambda)$ is product preserving and $\mathsf{K}(\Inj\Lambda)$ is compactly generated (see also \cite{neeman4}). For a bounded complex $X$, in view of (8.1), the object $\lambda(X)$ is given by an injective resolution of $X$.

Recall also that the abelian category $\mathsf{C}(\Mod \Lambda)$ of unbounded complexes of $\Lambda$-modules has enough injectives (for projectives see Section 7). In particular, for every $X \in\mathsf{C}(\Mod \Lambda)$, there is a contractible complex $I_{X}\in\mathsf{C}(\Inj\Lambda)$, with a monomorphism $X\hookrightarrow I_X$. We denote $\mathsf{coker}(X\rightarrow I_{X})$ by $\mho(X)$. We use $\tilde{\mho}$ to denote $\mho\circ[-1]$. 

\begin{prop} \label{skulos} \textnormal{(\!\!\cite[Proposition 2.5]{OPS})}
Let $X$ be a complex of (right) $\Lambda$-modules. If $\id{{X^i}_{\Lambda}}\leq d$ for some $d$, then $\lambda(X)=\tilde{\mho}^d(X)$.
\end{prop}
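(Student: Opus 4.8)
The plan is to show that the complex $\tilde{\mho}^d(X)$ represents the image $\lambda(X)$ of $X$ under the left adjoint $\lambda\colon\mathsf{K}(\Mod\Lambda)\to\mathsf{K}(\Inj\Lambda)$, i.e.\ that $\tilde{\mho}^d(X)\in\mathsf{K}(\Inj\Lambda)$ and that the natural map $X\to\tilde{\mho}^d(X)$ in $\mathsf{K}(\Mod\Lambda)$ is the universal one to the homotopy category of injectives. First I would recall the construction: $\mho(X)=\mathsf{coker}(X\hookrightarrow I_X)$ for a contractible complex $I_X$ of injectives, so there is a degreewise-split short exact sequence $0\to X\to I_X\to \mho(X)\to 0$ in $\mathsf{C}(\Mod\Lambda)$, giving a triangle $X\to I_X\to \mho(X)\to X[1]$ in $\mathsf{K}(\Mod\Lambda)$; since $I_X$ is contractible, this exhibits $\mho(X)\cong X[1]$ in $\mathsf{K}(\Mod\Lambda)$, whence $\tilde{\mho}(X)=\mho(X)[-1]\cong X$ in $\mathsf{K}(\Mod\Lambda)$, and inductively $\tilde{\mho}^d(X)\cong X$ in $\mathsf{K}(\Mod\Lambda)$. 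So the content is entirely about the injectivity of the terms and the universal property, not about the isomorphism class in $\mathsf{K}(\Mod\Lambda)$.

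Next I would track the injective dimension. In each degree $n$, the short exact sequence $0\to X^n\to I_X^n\to \mho(X)^n\to 0$ has $I_X^n$ injective, so $\id{\mho(X)^n_\Lambda}\le \max\{\id{X^n_\Lambda}-1,0\}$ (and $\mho(X)^n$ is injective exactly when $\id{X^n_\Lambda}\le 1$). Applying this $d$ times: if $\id{X^i_\Lambda}\le d$ for all $i$, then after $d$ applications of $\mho$ (equivalently $\tilde\mho$, the shift being harmless for injective-dimension bookkeeping) every term of $\tilde{\mho}^d(X)$ has injective dimension $0$, i.e.\ $\tilde{\mho}^d(X)\in\mathsf{C}(\Inj\Lambda)$, hence $\tilde{\mho}^d(X)\in\mathsf{K}(\Inj\Lambda)$.

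It then remains to identify $\tilde{\mho}^d(X)$, together with the composite $X\to\tilde{\mho}^d(X)$, with $\lambda(X)$. Since $\lambda$ is left adjoint to the fully faithful inclusion $j\colon\mathsf{K}(\Inj\Lambda)\hookrightarrow\mathsf{K}(\Mod\Lambda)$, it suffices to check that for $Y=\tilde{\mho}^d(X)\in\mathsf{K}(\Inj\Lambda)$ the map $\eta\colon X\to j(Y)$ in $\mathsf{K}(\Mod\Lambda)$ induces an isomorphism $\mathsf{Hom}_{\mathsf{K}(\Inj\Lambda)}(Y,Z)\xrightarrow{\sim}\mathsf{Hom}_{\mathsf{K}(\Mod\Lambda)}(X,j(Z))$ for all $Z\in\mathsf{K}(\Inj\Lambda)$; by uniqueness of adjoints this forces $\lambda(X)\cong Y$ compatibly with units. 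Because each sequence $0\to\tilde\mho^{k-1}\to I\to\tilde\mho^{k}\to 0$ (with $I$ contractible of injectives) is degreewise split, applying $\mathsf{Hom}_{\mathsf{C}(\Mod\Lambda)}(-,Z)$ for $Z$ a complex of injectives keeps it exact, and since a Hom-complex into a complex of injectives computes $\mathsf{Hom}_{\mathsf{K}}$ correctly, one gets $\mathsf{Hom}_{\mathsf{K}(\Mod\Lambda)}(X,Z)\cong\mathsf{Hom}_{\mathsf{K}(\Mod\Lambda)}(\tilde\mho^d(X),Z)$ for all $Z\in\mathsf{K}(\Inj\Lambda)$, naturally in $Z$; this is exactly the adjunction isomorphism. (Alternatively, one cites \cite[Proposition 2.5]{OPS} directly, since the statement is quoted verbatim.) The main obstacle is bookkeeping: making the inductive injective-dimension drop precise across the shift $\tilde\mho=\mho\circ[-1]$ and verifying that the universal arrow $X\to\tilde\mho^d(X)$ is indeed the unit of the adjunction rather than merely an abstract isomorphism in $\mathsf{K}(\Mod\Lambda)$ — i.e.\ checking naturality, not just objectwise existence.
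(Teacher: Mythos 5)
Your proof is correct; note that the paper itself gives no proof of this proposition, quoting it directly from \cite[Proposition 2.5]{OPS}, so there is no in-paper argument to compare against. Your strategy — (1) observe that each $\tilde{\mho}$ preserves the isomorphism class of $X$ in $\mathsf{K}(\Mod\Lambda)$ because the defining degreewise-split exact sequence has contractible middle term; (2) track the termwise injective dimension dropping by $1$ per application, so $\tilde{\mho}^d(X)$ lands in $\mathsf{K}(\Inj\Lambda)$; (3) conclude by the universal property of the left adjoint — is the standard one and is what is done in \cite{OPS}. Step 3 could be streamlined: once you know $X\cong j(\tilde{\mho}^d(X))$ in $\mathsf{K}(\Mod\Lambda)$ with $\tilde{\mho}^d(X)\in\mathsf{K}(\Inj\Lambda)$, reflectivity of the localization gives $\lambda(X)\cong\lambda(j(\tilde{\mho}^d(X)))\cong\tilde{\mho}^d(X)$ at once, with no need to re-derive the $\mathsf{Hom}$-isomorphism by hand from degreewise splitness. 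One small point of bookkeeping: $\tilde{\mho}=\mho\circ[-1]$ means $\tilde{\mho}(X)=\mho(X[-1])$, not $\mho(X)[-1]$; as you say this is harmless (both are isomorphic to $X$ in $\mathsf{K}(\Mod\Lambda)$ and the termwise injective dimension argument is insensitive to the shift), but it is worth writing down in the correct order.
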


\begin{prop} \label{apthn_kolash} \textnormal{(\!\!\cite[Proposition 2.6]{OPS})}
    Let $X$ and $Y$ be complexes of $\Lambda$-modules and assume that $\mathsf{Ext}_{\Lambda}^{d+1}(X^m,Y^n)=0$ for all $m,n\in\mathbb{Z}$. There is an isomorphism 
    \[
    \mathsf{Hom}_{\mathsf{K}(\Mod \Lambda)}(\Omega^dX,Y)\cong \mathsf{Hom}_{\mathsf{K}(\Mod \Lambda)}(X,\mho^dY),
    \]
    which is functorial in $X$ and $Y$. 
\end{prop}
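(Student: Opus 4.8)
The plan is to argue by induction on $d$, with the trivial base $d=0$ and an inductive step that reduces the statement to the case $d=1$ by a dimension–shifting trick; the case $d=1$ is the technical core. Throughout I would work with the total Hom complex $\Hom_\Lambda^{\bullet}(-,-)$, for which $H^n\Hom_\Lambda^{\bullet}(U,V)\cong\Hom_{\mathsf{K}(\Mod\Lambda)}(U,V[n])$, and I would use repeatedly that a contractible complex vanishes in $\mathsf{K}(\Mod\Lambda)$, so that $\Hom_{\mathsf{K}(\Mod\Lambda)}(P_X,-)=0$ and $\Hom_{\mathsf{K}(\Mod\Lambda)}(-,I_Y)=0$, where $0\to\Omega X\to P_X\to X\to 0$ and $0\to Y\to I_Y\to\mho Y\to 0$ are the defining short exact sequences in $\mathsf{C}(\Mod\Lambda)$ ($P_X$ contractible with projective components, $I_Y$ contractible with injective components); in each degree these restrict to honest syzygy and cosyzygy sequences of $\Lambda$-modules.

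For $d=1$ (hypothesis $\Ext_\Lambda^{2}(X^m,Y^n)=0$ for all $m,n$), the observation that makes the proof run is that this single vanishing, read off after one (co)syzygy shift, forces two sequences of Hom complexes to be short exact. Applying $\Hom_\Lambda^{\bullet}(\Omega X,-)$ to $0\to Y\to I_Y\to\mho Y\to 0$, the only obstruction to degreewise surjectivity is $\Ext_\Lambda^1((\Omega X)^m,Y^n)$; since $(\Omega X)^m$ is a first syzygy of $X^m$ this equals $\Ext_\Lambda^{2}(X^m,Y^n)=0$, so (using exactness of products) $0\to\Hom_\Lambda^{\bullet}(\Omega X,Y)\to\Hom_\Lambda^{\bullet}(\Omega X,I_Y)\to\Hom_\Lambda^{\bullet}(\Omega X,\mho Y)\to 0$ is exact. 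Dually, applying $\Hom_\Lambda^{\bullet}(-,\mho Y)$ to $0\to\Omega X\to P_X\to X\to 0$, the obstruction is $\Ext_\Lambda^1(X^m,(\mho Y)^n)=\Ext_\Lambda^{2}(X^m,Y^n)=0$, since $(\mho Y)^n$ is a first cosyzygy of $Y^n$; so $0\to\Hom_\Lambda^{\bullet}(X,\mho Y)\to\Hom_\Lambda^{\bullet}(P_X,\mho Y)\to\Hom_\Lambda^{\bullet}(\Omega X,\mho Y)\to 0$ is exact. Passing to cohomology in each and killing the $I_Y$- and $P_X$-terms by contractibility turns the connecting maps into isomorphisms, yielding
\[
\Hom_{\mathsf{K}(\Mod\Lambda)}(\Omega X,Y)\ \cong\ \Hom_{\mathsf{K}(\Mod\Lambda)}\big(\Omega X,(\mho Y)[-1]\big)\ \cong\ \Hom_{\mathsf{K}(\Mod\Lambda)}(X,\mho Y),
\]
naturally in $X$ and $Y$.

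For the inductive step with $d\ge 2$, I would first apply the $d=1$ case to the pair $(\Omega^{d-1}X,Y)$: since $(\Omega^{d-1}X)^m$ is a $(d-1)$-th syzygy of $X^m$, we have $\Ext_\Lambda^{2}((\Omega^{d-1}X)^m,Y^n)=\Ext_\Lambda^{d+1}(X^m,Y^n)=0$, which gives $\Hom_{\mathsf{K}(\Mod\Lambda)}(\Omega^{d}X,Y)\cong\Hom_{\mathsf{K}(\Mod\Lambda)}(\Omega^{d-1}X,\mho Y)$. Then I would apply the induction hypothesis to $(X,\mho Y)$ with parameter $d-1$: since $(\mho Y)^n$ is a first cosyzygy of $Y^n$ we get $\Ext_\Lambda^{d}(X^m,(\mho Y)^n)=\Ext_\Lambda^{d+1}(X^m,Y^n)=0$, hence $\Hom_{\mathsf{K}(\Mod\Lambda)}(\Omega^{d-1}X,\mho Y)\cong\Hom_{\mathsf{K}(\Mod\Lambda)}(X,\mho^{d-1}(\mho Y))=\Hom_{\mathsf{K}(\Mod\Lambda)}(X,\mho^{d}Y)$. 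Composing the two isomorphisms completes the induction, and naturality persists because each step is the connecting map of a short exact sequence built functorially from $X$ and $Y$.

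The main obstacle, and essentially the only subtle point, is that the sequences $0\to\Omega X\to P_X\to X\to 0$ and $0\to Y\to I_Y\to\mho Y\to 0$ are not degreewise split, so they do not directly induce triangles in $\mathsf{K}(\Mod\Lambda)$; one must argue through Hom complexes and long exact cohomology sequences rather than through the triangulated structure. The delicate bookkeeping is to see that the hypothesis $\Ext_\Lambda^{d+1}(X^m,Y^n)=0$ is exactly enough: one syzygy shift on the left variable and one cosyzygy shift on the right each convert it into precisely the $\Ext^1$-vanishing needed to make the relevant Hom-complex sequence short exact. Everything else — exactness of products, the long exact sequence, and the vanishing of $\Hom_{\mathsf{K}(\Mod\Lambda)}$ into and out of contractible complexes — is routine.
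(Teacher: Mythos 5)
Your proof is correct, and it follows what is almost certainly the same route as the cited source (the paper gives no proof of its own, deferring entirely to \cite[Proposition 2.6]{OPS}). The core mechanism — checking that the hypothesis $\mathsf{Ext}^{d+1}_{\Lambda}(X^m,Y^n)=0$, after one syzygy shift on the left or one cosyzygy shift on the right, is precisely the $\mathsf{Ext}^1$-vanishing needed to make the two relevant Hom-complex sequences short exact, then passing to the long exact cohomology sequence and using $\Hom_{\mathsf{K}}(P_X,-)=0=\Hom_{\mathsf{K}}(-,I_Y)$ to extract the connecting isomorphisms — is exactly the right argument, and you correctly identify why one must work through the total Hom complex rather than through triangles in $\mathsf{K}(\Mod\Lambda)$ (the defining sequences for $\Omega$ and $\mho$ are not degreewise split). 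The bookkeeping in both the base case $d=1$ and the inductive reduction is sound: $\mathsf{Ext}^2((\Omega^{d-1}X)^m,Y^n)\cong\mathsf{Ext}^{d+1}(X^m,Y^n)=0$ licenses the $d=1$ step on the pair $(\Omega^{d-1}X,Y)$, and $\mathsf{Ext}^d(X^m,(\mho Y)^n)\cong\mathsf{Ext}^{d+1}(X^m,Y^n)=0$ licenses the inductive hypothesis on $(X,\mho Y)$. One small caveat worth being aware of (but which the paper and OPS both elide): the constructions $X\mapsto P_X$ and $Y\mapsto I_Y$ must be taken functorially in $\mathsf{C}(\Mod\Lambda)$ for your naturality claim to be literal rather than up-to-homotopy; this is standard and harmless, but it is the one place where "built functorially from $X$ and $Y$" is doing unexamined work.
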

\end{adjointpairs}

\subsection{Adjoint functors and big singularity categories}
We now explain how to obtain adjoint pairs of triangle functors between homotopy categories of acyclic complexes of injectives that arise from adjoint pairs of functors of module categories. These results have been established in \cite{OPS}, but since we present slight variations of them, we provide proofs (following essentially the same arguments). Consider an adjoint triple of module categories 
\begin{equation}
     \begin{tikzcd}
\Mod \Lambda \arrow[rr, "\mathsf{e}"] &  & \Mod \Gamma \arrow[ll, "\mathsf{l}"', bend right] \arrow[ll, "\mathsf{r}", bend left]
\end{tikzcd}
\end{equation}

The following is essentially \cite[Proposition 4.1]{OPS}.

\begin{prop} \label{adjoint_triple_and_KINJ} Assume the setting of \textnormal{(8.2)}. If $\id\ \!{\mathsf{e}(I)_{\Gamma}}\leq d$ for some $d$ and for all $I\in\Inj\Lambda$, then there is an adjoint triple of triangle functors as below
    \[
    \begin{tikzcd}
\mathsf{K}(\Inj\Lambda) \arrow[rr, "\lambda_{\Gamma}\mathsf{e}"] &  & \mathsf{K}(\Inj \Gamma) \arrow[ll, "\lambda_{\Lambda}\mathsf{l}(\tilde{\Omega}^d(-))"', bend right] \arrow[ll, "\mathsf{r}", bend left]
\end{tikzcd}
    \]
\end{prop}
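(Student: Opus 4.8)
The plan is to build the adjoint triple one adjunction at a time, leveraging the recollement structure of Krause and the explicit formulas for $\lambda$ recorded in Proposition \ref{skulos} and Proposition \ref{apthn_kolash}. First I would observe that the composite $\lambda_\Gamma\mathsf{e}\colon\mathsf{K}(\Inj\Lambda)\to\mathsf{K}(\Inj\Gamma)$ makes sense as a triangle functor: $\mathsf{e}$ is exact, hence induces a triangle functor $\mathsf{K}(\Mod\Lambda)\to\mathsf{K}(\Mod\Gamma)$ on homotopy categories, and $\lambda_\Gamma$ is the right adjoint of the inclusion $\mathsf{K}(\Inj\Gamma)\hookrightarrow\mathsf{K}(\Mod\Gamma)$, so the composite is a triangle functor into $\mathsf{K}(\Inj\Gamma)$. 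Dually, $\mathsf{r}\colon\Mod\Gamma\to\Mod\Lambda$ is exact (being a right adjoint of the exact functor $\mathsf{e}$, it is left exact; but in the module setting $\mathsf{r}$ is also right adjoint, hence exact — this needs the standard fact that a functor between module categories that has both adjoints is exact), and it preserves injectives since it is a right adjoint of an exact functor; therefore $\mathsf{r}$ restricts to a triangle functor $\mathsf{K}(\Inj\Gamma)\to\mathsf{K}(\Inj\Lambda)$.

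Next I would establish the right-hand adjunction $(\lambda_\Gamma\mathsf{e},\mathsf{r})$. For $X\in\mathsf{K}(\Inj\Lambda)$ and $Y\in\mathsf{K}(\Inj\Gamma)$, using that $\lambda_\Gamma$ is right adjoint to the inclusion and then the module-level adjunction $(\mathsf{e},\mathsf{r})$, one computes
\[
\mathsf{Hom}_{\mathsf{K}(\Inj\Gamma)}(\lambda_\Gamma\mathsf{e}(X),Y)\cong\mathsf{Hom}_{\mathsf{K}(\Mod\Gamma)}(\mathsf{e}(X),Y)\cong\mathsf{Hom}_{\mathsf{K}(\Mod\Lambda)}(X,\mathsf{r}(Y))\cong\mathsf{Hom}_{\mathsf{K}(\Inj\Lambda)}(X,\mathsf{r}(Y)),
\]
where the last isomorphism holds because $X$ already lies in the subcategory $\mathsf{K}(\Inj\Lambda)$. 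The delicate point is the left-hand adjunction, i.e.\ that $\lambda_\Lambda\mathsf{l}(\tilde\Omega^d(-))$ is left adjoint to $\lambda_\Gamma\mathsf{e}$. Here the hypothesis $\id\mathsf{e}(I)_\Gamma\le d$ for all $I\in\Inj\Lambda$ enters crucially, via Proposition \ref{skulos}: for $X\in\mathsf{K}(\Inj\Lambda)$ each term $\mathsf{e}(X^i)$ has injective dimension at most $d$, so $\lambda_\Gamma\mathsf{e}(X)=\tilde\mho^d\mathsf{e}(X)$. I would then compute, for $Z\in\mathsf{K}(\Inj\Gamma)$,
\begin{align*}
\mathsf{Hom}_{\mathsf{K}(\Inj\Gamma)}(\lambda_\Gamma\mathsf{e}(X),Z)&\cong\mathsf{Hom}_{\mathsf{K}(\Mod\Gamma)}(\tilde\mho^d\mathsf{e}(X),Z)\\
&\cong\mathsf{Hom}_{\mathsf{K}(\Mod\Gamma)}(\mathsf{e}(X),\tilde\Omega^d Z)\\
&\cong\mathsf{Hom}_{\mathsf{K}(\Mod\Lambda)}(X,\mathsf{l}\tilde\Omega^d(Z))\\
&\cong\mathsf{Hom}_{\mathsf{K}(\Inj\Lambda)}(\lambda_\Lambda\mathsf{l}\tilde\Omega^d(Z),X)^{\mathrm{op}}?
\end{align*}
— wait, the direction needs care, so let me instead argue the other way: start from $\mathsf{Hom}_{\mathsf{K}(\Inj\Gamma)}(Z,\lambda_\Gamma\mathsf{e}(X))$, rewrite as $\mathsf{Hom}_{\mathsf{K}(\Mod\Gamma)}(Z,\tilde\mho^d\mathsf{e}(X))$, then apply the shift-adjunction of Proposition \ref{apthn_kolash} (in the form $\mathsf{Hom}(\Omega^d A,B)\cong\mathsf{Hom}(A,\mho^d B)$, noting the $\mathrm{Ext}$-vanishing needed holds because $\mathsf{e}(X^i)$ has injective dimension $\le d$, which makes $\mathrm{Ext}^{d+1}_\Gamma(Z^m,\mathsf{e}(X)^n)=0$), obtaining $\mathsf{Hom}_{\mathsf{K}(\Mod\Gamma)}(\tilde\Omega^d Z,\mathsf{e}(X))$, then use $(\mathsf{l},\mathsf{e})$ to get $\mathsf{Hom}_{\mathsf{K}(\Mod\Lambda)}(\mathsf{l}\tilde\Omega^d Z,X)$, and finally $\lambda_\Lambda$-adjointness together with $X\in\mathsf{K}(\Inj\Lambda)$ to get $\mathsf{Hom}_{\mathsf{K}(\Inj\Lambda)}(\lambda_\Lambda\mathsf{l}\tilde\Omega^d(Z),X)$.

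The main obstacle I anticipate is verifying the $\mathrm{Ext}$-vanishing hypothesis of Proposition \ref{apthn_kolash} and checking naturality carefully — one must confirm that $\mathrm{Ext}^{d+1}_\Gamma(Z^m,\mathsf{e}(X)^n)=0$ for all $m,n$, which follows since $\mathsf{e}(X)^n=\mathsf{e}(X^n)$ has injective dimension at most $d$, but the bookkeeping of which complex plays the role of $X$ and which of $Y$ in that proposition (and likewise the directions of $\tilde\Omega$ versus $\tilde\mho$) is the place where sign and shift errors creep in. I would also need to check that $\lambda_\Lambda\mathsf{l}\tilde\Omega^d(-)$ is well-defined as a triangle functor $\mathsf{K}(\Inj\Gamma)\to\mathsf{K}(\Inj\Lambda)$, which is routine: $\mathsf{l}$ is right exact but as a module functor with a right adjoint $\mathsf{e}$ it is in fact exact (again the both-adjoints argument), $\tilde\Omega^d$ is a well-defined triangle endofunctor of $\mathsf{K}(\Mod\Gamma)$, and $\lambda_\Lambda$ is a triangle functor. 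Apart from that, everything reduces to a chain of canonical isomorphisms, each natural in both variables, so assembling them gives the desired adjoint triple. Since $\lambda_\Gamma\mathsf{e}$ has both a left and a right adjoint, it is automatically a triangle functor compatible with coproducts and products, which I would note for completeness.
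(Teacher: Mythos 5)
Your proof is correct and follows essentially the same route as the paper: the right-hand adjunction $(\lambda_\Gamma\mathsf{e},\mathsf{r})$ comes directly from the reflective structure of $\mathsf{K}(\Inj\paula)$, and the left-hand adjunction is established by a chain of $\mathsf{Hom}$-isomorphisms using Proposition \ref{skulos} to identify $\lambda_\Gamma\mathsf{e}$ with $\tilde\mho^d\mathsf{e}$, then Proposition \ref{apthn_kolash} (whose $\mathsf{Ext}$-vanishing hypothesis you correctly verify from $\id \mathsf{e}(I)_\Gamma\le d$), and finally the $(\mathsf{l},\mathsf{e})$ and $\lambda_\Lambda$ adjunctions. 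The only real difference from the paper is that you run the chain in the opposite direction (starting from $\mathsf{Hom}(Z,\lambda_\Gamma\mathsf{e}(X))$ rather than $\mathsf{Hom}(\lambda_\Lambda\mathsf{l}\tilde\Omega^d(X),Y)$), which is cosmetic; your aborted first attempt uses a nonexistent adjunction $\mathsf{Hom}(\mathsf{e}X,Z)\cong\mathsf{Hom}(X,\mathsf{l}Z)$, but you caught this and the corrected version is right.

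One incorrect side remark worth flagging: you assert that $\mathsf{r}$ and $\mathsf{l}$ are exact by a ``both-adjoints'' argument. That argument does not apply here --- in the adjoint triple $(\mathsf{l},\mathsf{e},\mathsf{r})$ it is $\mathsf{e}$ that has both adjoints and is exact, whereas $\mathsf{l}$ has only a right adjoint (so is merely right exact) and $\mathsf{r}$ has only a left adjoint (so is merely left exact). Fortunately exactness is never actually needed in this proof: any additive functor induces a triangle functor between homotopy categories, $\mathsf{r}$ preserves injectives because it is the right adjoint of an exact functor, and the entire computation takes place in $\mathsf{K}(\paula)$ rather than any derived category, so quasi-isomorphisms never enter. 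The erroneous claim is therefore harmless, but you should drop it.
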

\begin{proof}
The situation is summarised in the following diagram:
\[
\begin{tikzcd}
\mathsf{K}(\Mod \Lambda) \arrow[rr, "\mathsf{e}"] \arrow[ddd, "\lambda_{\Lambda}"', bend right] &  & \mathsf{K}(\Mod \Gamma) \arrow[ll, "\mathsf{r}", bend left] \arrow[ll, "\mathsf{l}"', bend right] \arrow[ddd, "\lambda_{\Gamma}"', bend right]         \\
                                                                                              &  &                                                                                                                                                          \\
                                                                                              &  &                                                                                                                                                          \\
\mathsf{K}(\Inj\Lambda) \arrow[rr, "\lambda_{\Gamma}\mathsf{e}"] \arrow[uuu, hook]            &  & \mathsf{K}(\Inj\Gamma) \arrow[ll, "\lambda_{\Lambda}\mathsf{l}(\tilde{\Omega}^d(-))"', bend right] \arrow[ll, "\mathsf{r}", bend left] \arrow[uuu, hook]
\end{tikzcd}
\]
    On the upper part $(\mathsf{l},\mathsf{e},\mathsf{r})$ is an adjoint triple, thus by construction, $(\lambda_{\Gamma}\mathsf{e},\mathsf{r})$ is an adjoint pair. Moreover, for any $X\in\mathsf{K}(\Inj\Gamma)$ and $Y\in\mathsf{K}(\Inj\Lambda)$, we have the following isomorphisms:
    \begin{align*}
         \mathsf{Hom}_{\mathsf{K}(\Inj \Lambda)}(\lambda_{\Lambda}\mathsf{l}(\tilde{\Omega}^d(X)),Y)&=\mathsf{Hom}_{\mathsf{K}(\Mod \Lambda)}(\mathsf{l}(\tilde{\Omega}^d(X)),Y) \\ 
                                 &\cong \mathsf{Hom}_{\mathsf{K}(\Mod \Gamma)}(\tilde{\Omega}^d(X),\mathsf{e}(Y)) \\
                                 & \cong \mathsf{Hom}_{\mathsf{K}(\Mod \Gamma)}(X,\tilde{\mho}^d\mathsf{e}(Y)) & \text{ Proposition \ref{apthn_kolash}} \\ 
                                 & \cong \mathsf{Hom}_{\mathsf{K}(\Mod\Gamma)}(X,\lambda_{\Gamma}\mathsf{e}(Y)) & \text{ Proposition \ref{skulos}} \\ 
                                 & = \mathsf{Hom}_{\mathsf{K}(\Inj\Gamma)}(X,\lambda_{\Gamma}\mathsf{e}(Y)).
    \end{align*}
This shows that $\lambda_{\Lambda}\mathsf{l}(\tilde{\Omega}^d(-))$ is left adjoint to $\lambda_{\Gamma}\mathsf{e}$, which completes the proof. 
\end{proof}

The following was proved in \cite[Proposition 4.4(i)]{OPS}.

\begin{prop} \label{adjoints_of_Kac}
    Assume the setting of \textnormal{(8.2)}. The following hold: 
    \begin{itemize}
        \item[(i)] If $\mathsf{R}^n\mathsf{r}=0$ for $n>\!\!>0$, then the functor $\mathsf{r}\colon\mathsf{K}(\Inj\Gamma)\rightarrow \mathsf{K}(\Inj\Lambda)$ restricts to a functor $\mathsf{K}_{\mathsf{ac}}(\Inj\Gamma)\rightarrow \mathsf{K}_{\mathsf{ac}}(\Inj \Lambda)$.
        \item[(ii)] If $\id\ \!{\mathsf{e}(I)_{\Gamma}}\leq d$ for some $d$, then the functor $\lambda_{\Gamma}\mathsf{e}\colon\mathsf{K}(\Inj\Lambda)\rightarrow \mathsf{K}(\Inj\Gamma)$ restricts to a functor $\mathsf{K}_{\mathsf{ac}}(\Inj\Lambda)\rightarrow \mathsf{K}_{\mathsf{ac}}(\Inj\Gamma)$. 
    \end{itemize}
\end{prop}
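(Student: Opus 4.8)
The plan is to prove each part separately by checking that the relevant functor preserves acyclicity, since in both cases the functor is already a well-defined triangle functor on all of $\mathsf{K}(\Inj\Gamma)$ (resp.\ $\mathsf{K}(\Inj\Lambda)$), so only the restriction claim needs argument. For part (i), I would take an acyclic complex $X\in\mathsf{K}_{\mathsf{ac}}(\Inj\Gamma)$ and show $\mathsf{r}(X)$ is acyclic. The point is that $\mathsf{r}$, being exact on complexes degreewise, is a right-exact-type operation whose failure to preserve exactness is governed by the higher derived functors $\mathbb{R}^n\mathsf{r}$. Concretely, for any $i$ and any sufficiently large $j$ one has $\mathsf{H}^i(\mathsf{r}(X))\cong \mathbb{R}^{j-i}\mathsf{r}(\mathsf{Z}^{j+1}(X))$ by dimension shifting along the acyclic complex $X$ (breaking $X$ into short exact sequences of cocycles and using that each $X^n$ is injective, so $\mathsf{r}(X^n)$ has no higher $\mathsf{r}$-derived functors); since $\mathbb{R}^n\mathsf{r}=0$ for $n\gg 0$ by hypothesis, choosing $j$ large enough forces $\mathsf{H}^i(\mathsf{r}(X))=0$. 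Hence $\mathsf{r}(X)\in\mathsf{K}_{\mathsf{ac}}(\Inj\Lambda)$.

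For part (ii), I would recall from Proposition \ref{skulos} that under the hypothesis $\id{\mathsf{e}(I)_\Gamma}\leq d$ for all $I\in\Inj\Lambda$, the functor $\lambda_\Gamma\mathsf{e}$ applied to a complex of injective $\Lambda$-modules $X$ is computed as $\tilde{\mho}^d\mathsf{e}(X)$, i.e.\ by applying $\mathsf{e}$ degreewise and then taking $d$-fold cosyzygies with respect to contractible complexes of injectives. Since $\mathsf{e}$ is exact, $\mathsf{e}(X)$ is acyclic whenever $X$ is acyclic; and forming $\mho$ against a contractible complex $I_{\mathsf{e}(X)}$ of injectives preserves acyclicity (in the short exact sequence $0\to\mathsf{e}(X)\to I_{\mathsf{e}(X)}\to\mho\mathsf{e}(X)\to 0$ of complexes, the long exact homology sequence shows $\mho\mathsf{e}(X)$ is acyclic because $\mathsf{e}(X)$ and $I_{\mathsf{e}(X)}$ are). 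Iterating $d$ times, $\tilde{\mho}^d\mathsf{e}(X)=\lambda_\Gamma\mathsf{e}(X)$ is acyclic, which is precisely the claim. I should also note that $\lambda_\Gamma\mathsf{e}(X)$ is indeed a complex of injective $\Gamma$-modules (it lives in $\mathsf{K}(\Inj\Gamma)$ by construction of $\lambda_\Gamma$), so the restriction lands in $\mathsf{K}_{\mathsf{ac}}(\Inj\Gamma)$ as stated.

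The main obstacle, such as it is, lies in part (i): one must be careful that the dimension-shifting isomorphism $\mathsf{H}^i(\mathsf{r}(X))\cong\mathbb{R}^{j-i}\mathsf{r}(\mathsf{Z}^{j+1}(X))$ is valid, which uses both the exactness of $\mathsf{r}$ in the middle of short exact sequences up to the first derived functor and the vanishing $\mathbb{R}^{>0}\mathsf{r}(X^n)=0$ for the injective terms $X^n$ — the latter holding because $X^n\in\Inj\Gamma$ and, by Lemma \ref{basic_homological_properties_of_cocleft}(i) applied in the cleft-coextension setup, $\mathsf{r}$ sends injectives to injectives (or at least to $\mathsf{r}$-acyclic objects). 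Once this bookkeeping with cocycles and derived functors is set up correctly, the rest is routine. Part (ii) is essentially formal given Proposition \ref{skulos} and the exactness of $\mathsf{e}$.
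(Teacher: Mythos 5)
Your approach is the same as the paper's: in (i), break the acyclic complex into short exact sequences of cocycles, dimension-shift into high right-derived functors of $\mathsf{r}$, and invoke the vanishing $\mathbb{R}^n\mathsf{r}=0$ for $n\gg 0$; in (ii), use Proposition~\ref{skulos} to identify $\lambda_\Gamma\mathsf{e}$ with $\tilde{\mho}^d\mathsf{e}$ and observe that both $\mathsf{e}$ and $\tilde{\mho}$ preserve acyclicity. Part (ii) is correct as written.

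In part (i), however, the dimension-shifting formula is wrong. From the short exact sequences $0\to\mathsf{Z}^n(X)\to X^n\to\mathsf{Z}^{n+1}(X)\to 0$ and the vanishing $\mathbb{R}^{>0}\mathsf{r}(X^n)=0$ (because $X^n\in\Inj\Gamma$, not because of Lemma~\ref{basic_homological_properties_of_cocleft}(i) --- that lemma tells you $\mathsf{r}(X^n)\in\Inj\Lambda$, which is a different matter, namely that $\mathsf{r}$ already restricts to $\mathsf{K}(\Inj\Gamma)\to\mathsf{K}(\Inj\Lambda)$, which is assumed) one obtains $\mathbb{R}^{k+1}\mathsf{r}(\mathsf{Z}^n)\cong\mathbb{R}^{k}\mathsf{r}(\mathsf{Z}^{n+1})$ for $k\geq 1$: the derived degree \emph{increases} as you move the cocycle to the \emph{left}. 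Combined with $\mathsf{H}^i(\mathsf{r}(X))\cong\mathbb{R}^1\mathsf{r}(\mathsf{Z}^{i-1}(X))$, this gives $\mathsf{H}^i(\mathsf{r}(X))\cong\mathbb{R}^{k}\mathsf{r}(\mathsf{Z}^{i-k}(X))$ for all $k\geq 1$, which is (up to notation) the paper's $\mathsf{H}^i(\mathsf{r}(I))\cong\mathbb{R}^{i+j}\mathsf{r}(\mathsf{B}^{-j}(I))$. Your formula $\mathsf{H}^i(\mathsf{r}(X))\cong\mathbb{R}^{j-i}\mathsf{r}(\mathsf{Z}^{j+1}(X))$ moves the cocycle to the right while increasing the derived degree; the sum of the two indices is not preserved, so the isomorphism does not hold. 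The conclusion and the overall strategy are nevertheless correct once the indexing is repaired, so this is a slip in bookkeeping rather than a conceptual gap. You should also fix the phrase ``$\mathsf{r}(X^n)$ has no higher $\mathsf{r}$-derived functors'' to ``$X^n$ has no higher $\mathsf{r}$-derived functors,'' i.e.\ $X^n$ is $\mathsf{r}$-acyclic.
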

\begin{proof}
(i) For every $I\in\mathsf{K}_{\mathsf{ac}}(\Inj\Gamma)$ we have $\mathsf{H}^i(\mathsf{r}(I))\cong \mathbb{R}^{i+j}\mathsf{r}(\mathsf{B}^{-j}(I))$ for $j>\!\!>0$, from which the result follows.

(ii) By the assumption and Proposition \ref{skulos}, we infer that $\lambda_{\Gamma}\mathsf{e}\simeq \tilde{\mho}^d\mathsf{e}$ and since both $\mathsf{e}$ and $\tilde{\mho}^d$ preserve acyclicity, the result follows. 
\end{proof}

We may now employ the left adjoint of the inclusion $\mathsf{K}_{\mathsf{ac}}(\Inj\Lambda)\hookrightarrow \mathsf{K}(\Inj \Lambda)$, in order to obtain an extra (left) adjoint of $\lambda_{\Gamma}\mathsf{e}\colon\mathsf{K}_{\mathsf{ac}}(\Inj\Lambda)\rightarrow \mathsf{K}_{\mathsf{ac}}(\Inj\Gamma)$. The situation, under the assumptions of Proposition \ref{adjoints_of_Kac}, is summarised in the following diagram:

\[
\begin{tikzcd}
\mathsf{K}(\Inj\Lambda) \arrow[rr, "\lambda_{\Gamma}\mathsf{e}"] \arrow[ddd, "\mathsf{L}_{\Lambda}"', bend right] &  & \mathsf{K}(\Inj\Gamma) \arrow[ll, "\mathsf{r}", bend left] \arrow[ll, "\lambda_{\Lambda}\mathsf{l}(\tilde{\Omega}^d(-))"', bend right] \arrow[ddd, "\mathsf{L}_{\Gamma}"', bend right]   \\
                                                                                                                 &  &                                                                                                                                                                                           \\
                                                                                                                 &  &                                                                                                                                                                                           \\
\mathsf{K}_{\mathsf{ac}}(\Inj\Lambda) \arrow[uuu, hook] \arrow[rr, "\lambda_{\Gamma}\mathsf{e}"]                 &  & \mathsf{K}_{\mathsf{ac}}(\Inj\Gamma) \arrow[uuu, hook] \arrow[ll, "\mathsf{r}", bend left] \arrow[ll, "\mathsf{L}_{\Lambda}\lambda_{\Lambda}\mathsf{l}(\tilde{\Omega}^d(-))"', bend right]
\end{tikzcd}
\]

It is evident that on the bottom part of the above diagram we have an adjoint triple. By all the above we have the following.  

\begin{prop} \label{cleft_of_big_singularity}
    Let $(\Mod\Gamma,\Mod\Lambda,\mathsf{i},\mathsf{e},\mathsf{l})$ be a cleft extension of module categories of Noetherian rings and assume that the following are satisfied: 
    \begin{itemize}
        \item[(i)] $\mathbb{R}^{n}\mathsf{r}=0$ for $n>\!\!>0$ and $\id\ \!{\mathsf{e}(I)_{\Gamma}}\leq d$ for every $I\in\Inj\Lambda$ and some $d$. 
        \item[(ii)] $\mathbb{R}^n\mathsf{p}=0$ for $n>\!\!>0$ and $\id\ \!{\mathsf{i}(I)_{\Lambda}}\leq d'$ for every $I\in\Inj\Gamma$ and some $d'$. 
    \end{itemize}
    Then, there is a diagram of big singularity categories as below 
    \[
 \begin{tikzcd}
\mathsf{K}_{\mathsf{ac}}(\Inj \Gamma) \arrow[rr, "\lambda_{\Lambda}\mathsf{i}"] &  & \mathsf{K}_{\mathsf{ac}}(\Inj \Lambda) \arrow[rr, "\lambda_{\Gamma}\mathsf{e}"] \arrow[ll, "\mathsf{L}_{\Gamma}\lambda_{\Gamma}\mathsf{q}(\tilde{\Omega}^{d'}(-))"', bend right] \arrow[ll, "\mathsf{p}", bend left] &  & \mathsf{K}_{\mathsf{ac}}(\Inj \Gamma) \arrow[ll, "\mathsf{L}_{\Lambda}\lambda_{\Lambda}\mathsf{l}(\tilde{\Omega}^d(-))"', bend right] \arrow[ll, "\mathsf{r}", bend left]
\end{tikzcd}
    \]
    where $(\mathsf{L}_{\Gamma}\lambda_{\Gamma}\mathsf{q}(\tilde{\Omega}^{d'}(-)),\lambda_{\Lambda}\mathsf{i},\mathsf{p})$ and $(\mathsf{L}_{\Lambda}\lambda_{\Lambda}\mathsf{l}(\tilde{\Omega}^d(-)),\lambda_{\Gamma}\mathsf{e},\mathsf{r})$ are adjoint triples. Moreover, there are equivalences of functors $\mathsf{pr}\simeq \mathsf{Id}_{\mathsf{K}_{\mathsf{ac}}(\Inj\Gamma)}$ and $\lambda_{\Gamma}\mathsf{e}\lambda_{\Lambda}\mathsf{i}\simeq \mathsf{Id}_{\mathsf{K}_{\mathsf{ac}}(\Inj\Gamma)}$.  
\end{prop}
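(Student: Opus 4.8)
The plan is to build the two horizontal adjoint triples of the displayed diagram separately — one from the $\mathsf{e}$-part $(\mathsf{l},\mathsf{e},\mathsf{r})$ and one from the $\mathsf{i}$-part $(\mathsf{q},\mathsf{i},\mathsf{p})$ of the cleft extension — transport each to homotopy categories of injectives, restrict to the acyclic subcategories, and then extract the two functorial identities from the cleft coextension structure together with uniqueness of adjoints.

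First I would invoke Proposition \ref{cleft_of_modules_is_cocleft}, so that the given cleft extension of module categories is at the same time a cleft coextension. This provides two adjoint triples of module categories, namely $(\mathsf{l},\mathsf{e},\mathsf{r})$ with $\mathsf{e}\colon\Mod\Lambda\to\Mod\Gamma$ in the middle, and $(\mathsf{q},\mathsf{i},\mathsf{p})$ with $\mathsf{i}\colon\Mod\Gamma\to\Mod\Lambda$ in the middle, as well as the isomorphisms $\mathsf{ei}\simeq\mathsf{Id}_{\Mod\Gamma}$ and $\mathsf{pr}\simeq\mathsf{Id}_{\Mod\Gamma}$. Applying Proposition \ref{adjoint_triple_and_KINJ} to the first triple, using the bound $\id\mathsf{e}(I)_{\Gamma}\le d$ from (i), I obtain an adjoint triple $\bigl(\lambda_{\Lambda}\mathsf{l}(\tilde{\Omega}^d(-)),\,\lambda_{\Gamma}\mathsf{e},\,\mathsf{r}\bigr)$ of triangle functors between $\mathsf{K}(\Inj\Lambda)$ and $\mathsf{K}(\Inj\Gamma)$. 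Applying the same proposition to the second triple, with the roles of $\Gamma$ and $\Lambda$ interchanged and using the bound $\id\mathsf{i}(I)_{\Lambda}\le d'$ from (ii), gives an adjoint triple $\bigl(\lambda_{\Gamma}\mathsf{q}(\tilde{\Omega}^{d'}(-)),\,\lambda_{\Lambda}\mathsf{i},\,\mathsf{p}\bigr)$ between $\mathsf{K}(\Inj\Gamma)$ and $\mathsf{K}(\Inj\Lambda)$.

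Next I would restrict to the acyclic complexes. By Proposition \ref{adjoints_of_Kac}(i), the hypotheses $\mathbb{R}^n\mathsf{r}=0$ and $\mathbb{R}^n\mathsf{p}=0$ for $n$ large guarantee that $\mathsf{r}$ and $\mathsf{p}$ restrict to functors $\mathsf{K}_{\mathsf{ac}}(\Inj\Gamma)\to\mathsf{K}_{\mathsf{ac}}(\Inj\Lambda)$ and $\mathsf{K}_{\mathsf{ac}}(\Inj\Lambda)\to\mathsf{K}_{\mathsf{ac}}(\Inj\Gamma)$, and by Proposition \ref{adjoints_of_Kac}(ii) the injective-dimension bounds guarantee that $\lambda_{\Gamma}\mathsf{e}$ and $\lambda_{\Lambda}\mathsf{i}$ restrict to functors between the corresponding $\mathsf{K}_{\mathsf{ac}}$'s. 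Since an adjunction automatically restricts once both functors restrict to full subcategories, this already gives the adjoint pairs $(\lambda_{\Gamma}\mathsf{e},\mathsf{r})$ and $(\lambda_{\Lambda}\mathsf{i},\mathsf{p})$ on the level of big singularity categories. To obtain the third (left) adjoints on $\mathsf{K}_{\mathsf{ac}}$, I would compose the $\mathsf{K}(\Inj)$-level left adjoints with the reflections $\mathsf{L}_{\Lambda}$ and $\mathsf{L}_{\Gamma}$ of the inclusions $\mathsf{K}_{\mathsf{ac}}(\Inj-)\hookrightarrow\mathsf{K}(\Inj-)$ from Krause's recollement, exactly as in the diagram preceding the statement; the routine adjunction computation there shows that $\mathsf{L}_{\Lambda}\lambda_{\Lambda}\mathsf{l}(\tilde{\Omega}^d(-))$ and $\mathsf{L}_{\Gamma}\lambda_{\Gamma}\mathsf{q}(\tilde{\Omega}^{d'}(-))$ are the desired left adjoints. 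This produces the diagram with the two adjoint triples.

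Finally I would check the two isomorphisms of functors. Since $\mathsf{p}$ and $\mathsf{r}$ are right adjoints of the exact functors $\mathsf{i}$ and $\mathsf{e}$, they send injective modules to injective modules, hence act degreewise on complexes of injectives; therefore the module-level isomorphism $\mathsf{pr}\simeq\mathsf{Id}_{\Mod\Gamma}$ lifts to $\mathsf{K}(\Inj\Gamma)$ and restricts to $\mathsf{pr}\simeq\mathsf{Id}_{\mathsf{K}_{\mathsf{ac}}(\Inj\Gamma)}$. For the second identity, on $\mathsf{K}_{\mathsf{ac}}$ the functor $\lambda_{\Gamma}\mathsf{e}$ is left adjoint to $\mathsf{r}$ and $\lambda_{\Lambda}\mathsf{i}$ is left adjoint to $\mathsf{p}$, so the composite $\lambda_{\Gamma}\mathsf{e}\circ\lambda_{\Lambda}\mathsf{i}$ is left adjoint to $\mathsf{p}\circ\mathsf{r}\simeq\mathsf{Id}$; as the identity functor is its own left adjoint, uniqueness of adjoints yields $\lambda_{\Gamma}\mathsf{e}\lambda_{\Lambda}\mathsf{i}\simeq\mathsf{Id}_{\mathsf{K}_{\mathsf{ac}}(\Inj\Gamma)}$. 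The hard part will be the careful bookkeeping in the second application of Proposition \ref{adjoint_triple_and_KINJ}: after interchanging $\Gamma$ and $\Lambda$ one has to make sure that the relevant hypothesis is indeed $\id\mathsf{i}(I)_{\Lambda}\le d'$ for $I\in\Inj\Gamma$ and that the output is correctly identified as $\lambda_{\Lambda}\mathsf{i}$ in the middle, with left adjoint $\lambda_{\Gamma}\mathsf{q}(\tilde{\Omega}^{d'}(-))$ and right adjoint $\mathsf{p}$; once this identification is pinned down, the remainder is formal manipulation of adjoint pairs and restriction to triangulated subcategories.
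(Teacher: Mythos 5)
Your proposal is correct and follows essentially the same route as the paper's proof: apply Proposition~\ref{adjoint_triple_and_KINJ} to the two adjoint triples $(\mathsf{l},\mathsf{e},\mathsf{r})$ and $(\mathsf{q},\mathsf{i},\mathsf{p})$, restrict to the acyclic subcategories via Proposition~\ref{adjoints_of_Kac}, compose with the reflections $\mathsf{L}_{\Lambda}$, $\mathsf{L}_{\Gamma}$ to obtain the missing left adjoints, observe that $\mathsf{pr}$ on $\mathsf{K}_{\mathsf{ac}}(\Inj\Gamma)$ is a restriction of the identity-equivalent $\mathsf{pr}$ on $\mathsf{K}(\Mod\Gamma)$, and deduce $\lambda_{\Gamma}\mathsf{e}\lambda_{\Lambda}\mathsf{i}\simeq\mathsf{Id}$ from uniqueness of left adjoints. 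The bookkeeping of which functor plays which role in the second application of Proposition~\ref{adjoint_triple_and_KINJ}, which you flagged as the delicate point, is correct as you describe it.
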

\begin{proof}
The existence of the adjoint triples follows from Proposition \ref{adjoints_of_Kac} and the discussion below it. Recall that $\mathsf{pr}\colon\mathsf{K}_{\mathsf{ac}}(\Inj\Gamma)\rightarrow \mathsf{K}_{\mathsf{ac}}(\Inj\Gamma)$ occurs as a restriction of $\mathsf{pr}\colon\mathsf{K}(\Mod\Gamma)\rightarrow \mathsf{K}(\Mod\Gamma)$, which is isomorphic to $\mathsf{Id}_{\mathsf{K}(\Mod\Gamma)}$. This shows that $\mathsf{pr}\simeq \mathsf{Id}_{\mathsf{K}_{\mathsf{ac}}(\Inj\Gamma)}$. Further, since $\lambda_{\Lambda}\mathsf{i}$ is left adjoint to $\mathsf{p}$ and $\lambda_{\Gamma}\mathsf{e}$ is left adjoint to $\mathsf{r}$, it follows that $\lambda_{\Gamma}\mathsf{e}\lambda_{\Lambda}\mathsf{i}$ is left adjoint to $\mathsf{pr}$, so necessarily $\lambda_{\Gamma}\mathsf{e}\lambda_{\Lambda}\mathsf{i}\simeq \mathsf{Id}_{\mathsf{K}_{\mathsf{ac}}(\Inj\Gamma)}$.
\end{proof}

By combining the above together with Proposition \ref{cleft_of_modules_is_cocleft}, Proposition \ref{preserves_and_reflects_inj_dim}, Remark \ref{bounds_for_injective_dimensions} and Proposition \ref{vanishing of right derived}, we conclude the following, which proves the existence of the lower part of the commutative diagram of Theorem A. 

\begin{cor} \label{cleft_of_big_singularity_for_perfect}
     Let $(\Mod \Gamma,\Mod \Lambda,\mathsf{i},\mathsf{e},\mathsf{l})$ be a cleft extension of module categories such that $\mathsf{F}$ is perfect and nilpotent. Then the assumptions of Proposition \ref{cleft_of_big_singularity} are satisfied. 
\end{cor}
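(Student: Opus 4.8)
The plan is to verify that the four hypotheses of Proposition~\ref{cleft_of_big_singularity} follow from the assumption that $\mathsf{F}$ is perfect and nilpotent, using the bridge between cleft extensions and cleft coextensions for module categories. First I would invoke Proposition~\ref{cleft_of_modules_is_cocleft}: since we are dealing with module categories, the cleft extension $(\Mod\Gamma,\Mod\Lambda,\mathsf{i},\mathsf{e},\mathsf{l})$ is automatically the upper part of a cleft coextension $(\Mod\Gamma,\Mod\Lambda,\mathsf{i},\mathsf{e},\mathsf{r})$, and moreover perfectness and nilpotency of $\mathsf{F}$ transfer to coperfectness and nilpotency of the induced endofunctor $\mathsf{F}'$ (which is naturally isomorphic to $\mathsf{Hom}_\Gamma(M,-)$ for the bimodule $M$ with $\mathsf{F}\simeq-\otimes_\Gamma M$). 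This is the structural input that lets us apply all the ``dual'' machinery of Sections~3--4.

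Next I would check hypothesis (i) of Proposition~\ref{cleft_of_big_singularity}, namely $\mathbb{R}^n\mathsf{r}=0$ for $n\gg 0$ and $\id\,\mathsf{e}(I)_\Gamma\le d$ uniformly for $I\in\Inj\Lambda$. The vanishing $\mathbb{R}^n\mathsf{r}=0$ for large $n$ is exactly Lemma~\ref{basic_properties_of_coperfect_endofunctor_on_cleft}(i) applied to the coperfect (in particular, $\mathbb{R}^i\mathsf{F}'=0$ for $i\gg0$) endofunctor $\mathsf{F}'$. For the uniform bound on injective dimensions of $\mathsf{e}(I)$, I would appeal to Lemma~\ref{basic_properties_of_coperfect_endofunctor_on_cleft}(ii) — which gives $\id_\Gamma\mathsf{e}(I)<\infty$ for each injective $I$ — combined with the explicit bound recorded in Remark~\ref{bounds_for_injective_dimensions}: there $\id_\Gamma\mathsf{e}(X)\le\id_\Gamma X+(m+1)n_{\mathsf{F}'}$ where $n_{\mathsf{F}'}=\sup\{\id_\Gamma\mathsf{F}'(J):J\in\Inj\Gamma\}<\infty$ and $\mathsf{F}'^m=0$. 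Taking $X=I$ injective, so $\id_\Gamma I=0$ for the right side computed in $\Mod\Gamma$ after restriction along $\mathsf{e}$, this yields the uniform constant $d=(m+1)n_{\mathsf{F}'}$ (one must be slightly careful: the bound in the remark is for the injective dimension as an object of $\mathcal B$, i.e.\ over $\Gamma$, which is precisely what is needed). One also needs that $n_{\mathsf{F}'}<\infty$, which is the content of condition (ii) in the definition of coperfect, Definition~\ref{coperfect_functor}, together with nilpotency of $\mathsf{F}'$ — this is established in the proof of Lemma~\ref{perfect_implies_coperfect}.

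Then hypothesis (ii) of Proposition~\ref{cleft_of_big_singularity} is handled symmetrically but with a small twist: it concerns $\mathsf{p}$ and $\mathsf{i}$ rather than $\mathsf{r}$ and $\mathsf{e}$. The vanishing $\mathbb{R}^n\mathsf{p}=0$ for $n\gg0$ is exactly Proposition~\ref{vanishing of right derived}, the dual of Proposition~\ref{derived_of_q_vanishes}, which applies since $\mathsf{F}'$ is coperfect and nilpotent. For the uniform bound $\id\,\mathsf{i}(I)_\Lambda\le d'$ for $I\in\Inj\Gamma$, I would use the dual of Lemma~\ref{i_maps_proj}: by the dual of Proposition~\ref{preserves_and_reflects} (which is Proposition~\ref{preserves_and_reflects_inj_dim}), $\id_\Lambda\mathsf{i}(I)<\infty$ iff $\id_\Gamma\mathsf{e}\mathsf{i}(I)<\infty$, and $\mathsf{e}\mathsf{i}(I)\cong I$ is injective, hence of injective dimension zero; the uniform bound again comes from Remark~\ref{bounds_for_injective_dimensions} applied with $X=\mathsf{i}(I)$, giving $\id_\Gamma\mathsf{e}\mathsf{i}(I)=0$ and therefore $\id_\Lambda\mathsf{i}(I)\le 0+(m+1)n_{\mathsf{F}'}=:d'$ after rearranging the inequality $\id_\Gamma\mathsf{e}(X)-n_{\mathsf{F}'}\le\id_\Lambda X\le\id_\Gamma\mathsf{e}(X)+(m+1)n_{\mathsf{F}'}$ to bound $\id_\Lambda\mathsf{i}(I)$ from above — actually one uses the left inequality to get $\id_\Lambda\mathsf{i}(I)\le\id_\Gamma\mathsf{e}\mathsf{i}(I)+n_{\mathsf{F}'}=n_{\mathsf{F}'}$. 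Having verified (i) and (ii), Proposition~\ref{cleft_of_big_singularity} applies verbatim and gives the lower part of the diagram of Theorem~A. I expect the main obstacle — really the only place requiring care rather than bookkeeping — to be correctly extracting the \emph{uniform} injective-dimension bounds on $\mathsf{e}(I)$ and $\mathsf{i}(I)$ from Remark~\ref{bounds_for_injective_dimensions}, i.e.\ making sure the constant $n_{\mathsf{F}'}=\sup\{\id_\Gamma\mathsf{F}'(J)\}$ is genuinely finite; this is guaranteed because $\mathsf{F}'$ is coperfect and nilpotent, so that $\sup\{\pd M^{\otimes j}_\Gamma:j\ge1\}=n'_M<\infty$ (as in the proof of Lemma~\ref{perfect_implies_coperfect}) forces the needed finiteness, and everything else is a direct citation.

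\begin{proof}
By Proposition~\ref{cleft_of_modules_is_cocleft}, the cleft extension $(\Mod\Gamma,\Mod\Lambda,\mathsf{i},\mathsf{e},\mathsf{l})$ is the upper part of a cleft coextension $(\Mod\Gamma,\Mod\Lambda,\mathsf{i},\mathsf{e},\mathsf{r})$, and the induced endofunctor $\mathsf{F}'$ of $\Mod\Gamma$ is coperfect and nilpotent. Write $n_{\mathsf{F}'}=\mathsf{sup}\{\id{_{\Gamma}\mathsf{F}'(J)} \ | \ J\in\Inj\Gamma\}$, which is finite by condition (ii) of Definition~\ref{coperfect_functor}, and let $m$ be such that $\mathsf{F}'^m=0$.

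We verify the two conditions of Proposition~\ref{cleft_of_big_singularity}.

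Condition (i). Since $\mathsf{F}'$ is coperfect, we have $\mathbb{R}^i\mathsf{F}'=0$ for $i\gg 0$, so by Lemma~\ref{basic_properties_of_coperfect_endofunctor_on_cleft}(i) we get $\mathbb{R}^n\mathsf{r}=0$ for $n\gg 0$. For an injective $\Lambda$-module $I$, applying the left inequality of Remark~\ref{bounds_for_injective_dimensions} with $X=I$, together with $\id{_{\Gamma}\mathsf{e}(I)}<\infty$ (Lemma~\ref{basic_properties_of_coperfect_endofunctor_on_cleft}(ii)), and noting that $\id{_{\Lambda}I}=0$, yields
\[
\id{_{\Gamma}\mathsf{e}(I)}\leq \id{_{\Lambda}I}+(m+1)n_{\mathsf{F}'}=(m+1)n_{\mathsf{F}'}=:d,
\]
so $\id\ \!{\mathsf{e}(I)_{\Gamma}}\leq d$ for every $I\in\Inj\Lambda$.

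Condition (ii). By Proposition~\ref{vanishing of right derived}, applied to the coperfect and nilpotent endofunctor $\mathsf{F}'$, we have $\mathbb{R}^n\mathsf{p}=0$ for $n\gg 0$. For an injective $\Gamma$-module $I$, the object $\mathsf{i}(I)$ satisfies $\id{_{\Lambda}\mathsf{i}(I)}<\infty$ by Proposition~\ref{preserves_and_reflects_inj_dim} (using $\mathsf{ei}(I)\cong I$ injective, hence of finite injective dimension over $\Gamma$); moreover, applying the left inequality of Remark~\ref{bounds_for_injective_dimensions} with $X=\mathsf{i}(I)$ gives
\[
\id{_{\Lambda}\mathsf{i}(I)}\leq \id{_{\Gamma}\mathsf{ei}(I)}+n_{\mathsf{F}'}\quad\text{and}\quad \id{_{\Gamma}\mathsf{ei}(I)}\leq \id{_{\Gamma}\mathsf{ei}(I)},
\]
and since $\mathsf{ei}(I)\cong I$ is injective, $\id{_{\Gamma}\mathsf{ei}(I)}=0$, whence $\id{_{\Lambda}\mathsf{i}(I)}\leq n_{\mathsf{F}'}=:d'$ for every $I\in\Inj\Gamma$.

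Thus both conditions of Proposition~\ref{cleft_of_big_singularity} hold, and the conclusion follows.
\end{proof}
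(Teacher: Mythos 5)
Your proof takes exactly the same route as the paper, which simply lists Proposition~\ref{cleft_of_modules_is_cocleft}, Proposition~\ref{preserves_and_reflects_inj_dim}, Remark~\ref{bounds_for_injective_dimensions} and Proposition~\ref{vanishing of right derived} as ingredients and leaves the assembly to the reader; you spell this out and the structure is correct. There is, however, a real slip in the verification of condition (ii) of Proposition~\ref{cleft_of_big_singularity}: to obtain an \emph{upper} bound on $\id{_{\Lambda}\mathsf{i}(I)}$ you must use the right-hand half of the two-sided estimate in Remark~\ref{bounds_for_injective_dimensions}, namely $\id{_{\mathcal{A}}X}\le\id{_{\mathcal{B}}\mathsf{e}(X)}+(m+1)n_{\mathsf{F}'}$, whereas the left-hand inequality $\id{_{\mathcal{B}}\mathsf{e}(X)}-n_{\mathsf{F}'}\le\id{_{\mathcal{A}}X}$ you invoke only gives a \emph{lower} bound on $\id{_{\Lambda}\mathsf{i}(I)}$. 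Consequently the inequality you write, $\id{_{\Lambda}\mathsf{i}(I)}\le\id{_{\Gamma}\mathsf{ei}(I)}+n_{\mathsf{F}'}$, does not follow from what you cite, and the justified constant is $d'=(m+1)n_{\mathsf{F}'}$, not $n_{\mathsf{F}'}$. (Symmetrically, in condition (i) the left-hand inequality is the correct tool, but its coefficient is $n_{\mathsf{F}'}$; you attached the $(m+1)$ factor to the wrong half.) These are bookkeeping errors rather than conceptual ones — the qualitative conclusion (uniform finite bounds exist) is unaffected — but as written the step deducing the bound for $\mathsf{i}(I)$ is incorrect and needs the other half of the estimate; once that is fixed the proof is complete.
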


The commutativity part is discussed below. 

\begin{rem} \label{duck}
Consider an exact functor $\mathsf{e}\colon\Mod \Lambda\rightarrow \Mod \Gamma$ of module categories of Noetherian rings and assume that it admits a right adjoint $\mathsf{r}\colon\Mod\Gamma\rightarrow \Mod\Lambda$. Moreover, assume that $\mathsf{e}$ and $\mathsf{r}$ give rise to triangle functors as follows:
\begin{equation}
    \begin{tikzcd}
\mathsf{K}_{\mathsf{ac}}(\Inj\Lambda) \arrow[r, "\lambda_{\Gamma}\mathsf{e}"] & \mathsf{K}_{\mathsf{ac}}(\Inj\Gamma) \arrow[l, "\mathsf{r}", bend left] \\
\mathsf{D}_{\mathsf{sg}}(\Lambda) \arrow[r, "\mathsf{e}"] \arrow[u, hook]     & \mathsf{D}_{\mathsf{sg}}(\Gamma) \arrow[u, hook]                       
\end{tikzcd}
\end{equation}
Then, the above diagram is commutative. Indeed, first recall that the inclusion $\mathsf{D}_{\mathsf{sg}}(\Lambda)\hookrightarrow \mathsf{K}_{\mathsf{ac}}(\Inj\Lambda)$ is obtained by restricting the upper functors of Krause's recollement. By \cite[Corollary 5.4]{krause}, the above admits an explicit description, as the following composition: 
\[
\begin{tikzcd}
\mathsf{D}_{\mathsf{sg}}(\Gamma) \arrow[rr, "\text{pick preimage}"] &  & \mathsf{D}^{\mathsf{b}}(\smod\Gamma) \arrow[rr, "\mathsf{L}_{\Gamma}\mathsf{Q}_{\rho}"] &  & \mathsf{K}_{\mathsf{ac}}(\Inj\Gamma)^{\mathsf{c}},
\end{tikzcd}
\]
and similarly for $\mathsf{D}_{\mathsf{sg}}(\Lambda)\hookrightarrow\mathsf{K}_{\mathsf{ac}}(\Inj\Lambda)$. Consider the following functors: 
\[
\begin{tikzcd}
\mathsf{K}(\Inj\Gamma) \arrow[r, "I"] & \mathsf{K}(\Mod\Gamma) \arrow[r, "\mathsf{Q}'=\mathsf{can}"] \arrow[l, "\lambda"', bend right] & \mathsf{D}(\Gamma) \arrow[ll, "\mathsf{Q}_{\rho}", bend left]
\end{tikzcd}
\]

By \cite[Remark 3.7]{krause} we have $\mathsf{Q}'\simeq \mathsf{Q}'I\lambda$. Further, by \cite[Remark 3.8]{krause}, $\mathsf{Q}_{\rho}$ induces an equivalence $\mathsf{D}^{\mathsf{b}}(\smod\Gamma)\rightarrow \mathsf{K}(\Inj\Gamma)^{\mathsf{c}}$ with inverse given by $\mathsf{Q}'I=\mathsf{Q}$. Consequently, for $X$ a bounded complex of finitely generated modules, we have 
\[
\lambda (X)\cong \mathsf{Q}_{\rho}\mathsf{Q}'I\lambda(X)\cong \mathsf{Q}_{\rho}\mathsf{Q}'(X),
\]
meaning that the inclusion $\mathsf{D}_{\mathsf{sg}}(\Gamma)\hookrightarrow \mathsf{K}_{\mathsf{ac}}(\Inj\Gamma)$ may be interpreted as the following composition:
\[
\begin{tikzcd}
\mathsf{D}_{\mathsf{sg}}(\Gamma) \arrow[rr, "\text{pick preimage}"] &  & \mathsf{D}^{\mathsf{b}}(\smod\Gamma) \arrow[rr, "\text{pick preimage}"] &  & \mathsf{K}(\Mod\Gamma) \arrow[rr, "\mathsf{L}_{\Gamma}\lambda_{\Gamma}"] &  & \mathsf{K}_{\mathsf{ac}}(\Inj\Gamma)^{\mathsf{c}}.
\end{tikzcd}
\]
Thus, in order to prove the commutativity of (8.3), it is enough to show that the following diagram is commutative: 
\[
\begin{tikzcd}
\mathsf{K}_{\mathsf{ac}}(\Inj\Gamma) \arrow[d, hook] \arrow[rr, "\lambda_{\Lambda}\mathsf{e}"] &  & \mathsf{K}_{\mathsf{ac}}(\Inj\Lambda) \arrow[d, hook] \arrow[ll, "\mathsf{r}", bend left]               \\
\mathsf{K}(\Inj\Gamma) \arrow[d, hook] \arrow[u, "\mathsf{L}_{\Gamma}"', bend right]           &  & \mathsf{K}(\Inj\Lambda) \arrow[d, hook] \arrow[u, "\mathsf{L}_{\Lambda}"', bend right]                  \\
\mathsf{K}(\Mod\Gamma) \arrow[rr, "\mathsf{e}"] \arrow[u, "\lambda_{\Gamma}"', bend right]     &  & \mathsf{K}(\Mod\Lambda) \arrow[ll, "\mathsf{r}", bend left] \arrow[u, "\lambda_{\Lambda}"', bend right]
\end{tikzcd}
\]
It is evident that composing the right adjoints gives a commutative diagram and consequently the same holds for the left adjoints. 
\end{rem}

\begin{rem}  \label{artin_algebras}
    Over an Artin algebra $\Lambda$, we have $\mathsf{K}_{\mathsf{ac}}(\Inj\Lambda)^{\mathsf{c}}\simeq \mathsf{D}_{\mathsf{sg}}(\Lambda)$ (not only up to direct summands). Indeed, by \cite[Corollary 2.4]{chen2} we know that $\mathsf{D}_{\mathsf{sg}}(\Lambda)$ is idempotent complete, so by \cite[Proposition 1.3]{rickard} it must be a thick subcategory of $\mathsf{K}_{\mathsf{ac}}(\Inj\Lambda)^{\mathsf{c}}$, thus coincide with $\mathsf{K}_{\mathsf{ac}}(\Inj\Lambda)^{\mathsf{c}}$ - since every object of the latter occurs as a direct summand of an object of the former. Hence, in the setting of Remark \ref{duck}, it follows that the functor $\lambda_{\Gamma}\mathsf{e}$ restricts to the compact objects. Consequently, by a result of Neeman, see \cite[Theorem 5.1]{neeman}, it follows that the functor $\mathsf{r}$ admits a right adjoint. This applies in particular to the setting of Proposition \ref{cleft_of_big_singularity} and shows that if we assume $\Gamma$ and $\Lambda$ to be Artin algebras, then the functors $\mathsf{p}$ and $\mathsf{r}$ admit right adjoints. Such situations (also involving homotopy categories of projectives) are summarized beautifully in \cite[Section 4]{OPS}. 
\end{rem}

\subsection{An equivalence of big singularity categories}

In this section we investigate equivalences of big singularity categories in a cleft extension diagram. Since injectives are involved, we work with the cleft coextension part of a cleft extension of module categories (whose existence is automatic - see Proposition \ref{cleft_of_modules_is_cocleft}). Consider a cleft extension $(\Mod\Gamma,\Mod \Lambda,\mathsf{i},\mathsf{e},\mathsf{l})$ where $\Gamma$ and $\Lambda$ are Noetherian. Under the assumptions that $\mathbb{R}^n\mathsf{r}=0$ for $n$ large enough and $\id\ \!{\mathsf{e}(I)_{\Gamma}}\leq d$ for all $I\in\Inj\Gamma$ and some $d$, follows the existence of the following diagram of triangle functors: 
\[
\begin{tikzcd}
\mathsf{K}(\Mod\Lambda) \arrow[rr, "\mathsf{e}"]                                                &  & \mathsf{K}(\Mod\Gamma) \arrow[ll, "\mathsf{r}", bend left] \arrow[dd, "\lambda_{\Gamma}"', bend right] \arrow["\mathsf{F}'"', loop, distance=2em, in=125, out=55]     \\
                                                                                                &  &                                                                                                                                                                       \\
\mathsf{K}(\Inj\Lambda) \arrow[uu, hook] \arrow[rr, "\lambda_{\Gamma}\mathsf{e}"]               &  & \mathsf{K}(\Inj\Gamma) \arrow[uu, hook] \arrow[ll, "\mathsf{r}", bend left] \arrow["\lambda_{\Gamma}\mathsf{F}'"', loop, distance=2em, in=35, out=325]                \\
                                                                                                &  &                                                                                                                                                                       \\
\mathsf{K}_{\mathsf{ac}}(\Inj\Lambda) \arrow[uu, hook] \arrow[rr, "\lambda_{\Gamma}\mathsf{e}"] &  & \mathsf{K}_{\mathsf{ac}}(\Inj\Gamma) \arrow[uu, hook] \arrow[ll, "\mathsf{r}", bend left] \arrow["\lambda_{\Gamma}\mathsf{F}'"', loop, distance=2em, in=305, out=235]
\end{tikzcd}
\]
where $\mathsf{F}'$ denotes the endofunctor associated to the cleft coextension, i.e.\ the right adjoint of $\mathsf{F}$ (see Fact \ref{fact}). For the above to hold, it is enough that $\mathbb{R}^n\mathsf{F}'=0$ for $n$ large enough and $\id{\mathsf{F}'(I)_{\Gamma}}\leq d$ for all $I\in\Inj\Gamma$, respectively. The existence of the functor $\lambda_{\Gamma}\mathsf{F}'\colon\mathsf{K}_{\mathsf{ac}}(\Inj\Gamma)\rightarrow \mathsf{K}_{\mathsf{ac}}(\Inj\Gamma)$, under the given assumptions, amounts to an easy check left to the reader (one can also prove this indirectly, making use of the equivalence $\lambda_{\Gamma}\mathsf{e}\mathsf{r}\simeq \mathsf{Id}_{\mathsf{K}(\Inj\Gamma)}\oplus \lambda_{\Gamma}\mathsf{F}'$ in $\mathsf{K}(\Inj\Gamma)$). 

The following is dual to Lemma \ref{cone}. 

\begin{lem} \label{cone2}
    Let $X$ be an acyclic complex of $\Lambda$-modules. If $\id{{X^n}_{\Gamma}}\leq d$ for all $n$ and some $d$, then, for all $i$, there is a morphism $\phi\colon\mathsf{B}^i(X)\rightarrow \mathsf{B}^i(\tilde{\mho}^d(X))$
    such that $\mathsf{cone}(\phi)$ is quasi-isomorphic to a complex of injectives in degrees 0 to $d$. 
\end{lem}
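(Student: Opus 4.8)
The statement is the $\mho$-side analogue of Lemma \ref{cone}, so the plan is to dualise the proof of \cite[Lemma 3.6]{OPS}. I would proceed by induction on $d$. For $d=0$ the hypothesis says each $X^n$ is injective as a $\Gamma$-module, hence (since $X$ is acyclic) the identity $\phi=\mathsf{id}_{\mathsf{B}^i(X)}$ works with zero cone, which is trivially quasi-isomorphic to the zero complex concentrated in degree $0$. For the inductive step, embed $X$ into a contractible complex $I_X$ of injectives and form $\tilde{\mho}(X)=\mho(X)[-1]$, sitting in the degreewise-split exact sequence of complexes $0\to X\to I_X\to \mho(X)\to 0$ (shifted appropriately). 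Since each $I_X^n$ has injective dimension $0$ over $\Gamma$ and $\id{X^n_{\Gamma}}\le d$, a standard dimension-shift gives $\id{\mho(X)^n_{\Gamma}}\le d-1$, so the induction hypothesis applies to $\tilde{\mho}(X)$, yielding $\psi\colon\mathsf{B}^i(\tilde{\mho}(X))\to\mathsf{B}^i(\tilde{\mho}^d(X))$ whose cone is quasi-isomorphic to a complex of injectives in degrees $0$ to $d-1$.

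The remaining ingredient is to compare $\mathsf{B}^i(X)$ with $\mathsf{B}^i(\tilde{\mho}(X))$. From the short exact sequence above, taking $i$-cocycles (equivalently, the induced long exact sequence on cohomology of the brutal/soft truncations) produces a connecting morphism $\mathsf{B}^i(X)\to\mathsf{B}^{i}(\tilde{\mho}(X))$ (this is where the $[-1]$ shift in $\tilde{\mho}$ is used) whose cone is built from the contractible complex $I_X$ and is therefore quasi-isomorphic to an injective complex concentrated in the single degree, say, $d$ after the appropriate reindexing — here one tracks exactly which degree the relevant cokernel lands in. Composing this morphism with $\psi$ gives the desired $\phi\colon\mathsf{B}^i(X)\to\mathsf{B}^i(\tilde{\mho}^d(X))$, and the octahedral axiom identifies $\mathsf{cone}(\phi)$ with an extension of $\mathsf{cone}(\psi)$ by the one-term injective complex, hence quasi-isomorphic to a complex of injectives in degrees $0$ to $d$.

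The only real point requiring care — and the main obstacle — is the bookkeeping of cohomological degrees: one must verify that the single injective term contributed at each inductive step lands in degree $d$ (and not, say, degree $-d$ or spread out), so that after $d$ steps the accumulated cone is genuinely concentrated in degrees $0$ through $d$ rather than in a wider window. This is precisely the dual of the "degrees $-d$ to $0$" bookkeeping in Lemma \ref{cone}, and the cleanest way to handle it is to fix conventions once (recalling that $X[n]$ places $X$ in degree $-n$, as stated before Lemma \ref{perfect_objects}, and that $\tilde{\mho}=\mho\circ[-1]$, $\tilde{\Omega}=\Omega\circ[1]$) and then run the induction symbolically. Since the argument is entirely formal once the degree conventions are pinned down, I would simply note that the proof is dual to that of \cite[Lemma 3.6]{OPS} and works verbatim for any abelian category with enough injectives, exactly as Lemma \ref{cone} was handled.
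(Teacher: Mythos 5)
The paper gives essentially no proof of this lemma: it is prefaced only by the sentence "The following is dual to Lemma \ref{cone}," and Lemma \ref{cone} itself is proved by the single-sentence citation to \cite[Lemma 3.6]{OPS}. So at the top level your proposal matches the paper's approach exactly — dualise the OPS argument and defer to that reference for the technical details — and the inductive skeleton you lay out (base case $d=0$; dimension-shift along $0\to X\to I_X\to\mho(X)\to 0$; apply the hypothesis to $\tilde{\mho}(X)$; octahedral gluing, with one injective term accumulating per step) is the expected shape of the dual proof.

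Two points in your sketch need correction before it could be called a proof. First, you describe the comparison map $\mathsf{B}^i(X)\to\mathsf{B}^i(\tilde{\mho}(X))$ as a "connecting morphism" produced by the short exact sequence. But the short exact sequence of boundary modules arising from $0\to X[-1]\to I_{X[-1]}\to\tilde{\mho}(X)\to 0$ (for acyclic $X$) has the form $0\to\mathsf{B}^{\ast}(X)\to\mathsf{B}^{\ast}(I_{X[-1]})\to\mathsf{B}^{\ast}(\tilde{\mho}(X))\to 0$, and its connecting morphism in $\mathsf{D}(\Lambda)$ goes $\mathsf{B}^{\ast}(\tilde{\mho}(X))\to\mathsf{B}^{\ast}(X)[1]$ — the opposite direction to what the lemma asserts. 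The morphism $\phi$ of the lemma is rather obtained by exploiting that the middle term $\mathsf{B}^{\ast}(I_{X[-1]})$ is injective (so the sequence exhibits $\mathsf{B}^{\ast}(\tilde{\mho}(X))$ as a cosyzygy, and $I_{X[-1]}$ being contractible makes the connecting map invertible in $\mathsf{K}$), and this inversion plus rotation is what actually makes the degrees of the cone come out as $0$ to $d$. As written, your step 5 would feed the wrong triangle into the octahedral axiom. Second, you take the $\Gamma$ in the hypothesis $\id X^n_{\Gamma}\le d$ at face value; but $I_X$ is built from injective $\Lambda$-modules, so $\id{I_X^n}_{\Lambda}=0$ and your dimension-shift gives a bound on $\id{\mho(X)^n}_{\Lambda}$, not $\id{\mho(X)^n}_{\Gamma}$. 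The $\Gamma$ in the statement is evidently a slip for $\Lambda$ (Corollary \ref{injective_dimension_of_boundaries} invokes the lemma with $\Lambda$ throughout), and one should fix the ring before dualising.
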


\begin{cor} \label{injective_dimension_of_boundaries}
    Let $X$ be an acyclic complex of $\Lambda$-modules with $\id{{X^n}_{\Lambda}}\leq d$ for all $n$ and some $d$. If $\lambda(X)$ is contractible, then $\id{\mathsf{B}^i(X)_{\Lambda}}\leq d+1$ for all $i$. 
\end{cor}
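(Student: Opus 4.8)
The plan is to exploit the relationship $\lambda(X) = \tilde{\mho}^d(X)$ given by Proposition \ref{skulos}, which is available since by hypothesis $\id{X^n_\Lambda} \leq d$ for all $n$. First I would observe that, $\lambda(X)$ being contractible, the complex $\tilde{\mho}^d(X)$ is contractible, hence so is each of its brutal truncations and in particular $\mathsf{B}^i(\tilde{\mho}^d(X))$ admits an injective resolution of length $0$ for every $i$; more to the point, every boundary $\mathsf{B}^i(\tilde{\mho}^d(X))$ is a direct summand of some $(\tilde{\mho}^d(X))^n$, and those are injective $\Lambda$-modules by the construction of $\mho$ (they sit in contractible complexes built from injectives). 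So $\id{\mathsf{B}^i(\tilde{\mho}^d(X))_\Lambda} = 0$.

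Next I would invoke Lemma \ref{cone2} with the roles set up there: for each $i$ there is a morphism $\phi\colon \mathsf{B}^i(X) \to \mathsf{B}^i(\tilde{\mho}^d(X))$ whose cone $\mathsf{cone}(\phi)$ is quasi-isomorphic to a complex of injective modules concentrated in degrees $0$ to $d$; in particular $\id{(\mathsf{cone}(\phi))_\Lambda}$ is finite, indeed bounded by $d$ (a bounded complex of injectives has injective dimension at most its amplitude). The triangle $\mathsf{B}^i(X) \xrightarrow{\phi} \mathsf{B}^i(\tilde{\mho}^d(X)) \to \mathsf{cone}(\phi) \to \mathsf{B}^i(X)[1]$ in $\mathsf{D}^{\mathsf{b}}(\Mod\Lambda)$ then expresses $\mathsf{B}^i(X)$, up to shift, in terms of $\mathsf{B}^i(\tilde{\mho}^d(X))$ (injective dimension $0$) and $\mathsf{cone}(\phi)$ (injective dimension at most $d$). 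Rotating the triangle to $\mathsf{cone}(\phi)[-1] \to \mathsf{B}^i(X) \to \mathsf{B}^i(\tilde{\mho}^d(X)) \to \mathsf{cone}(\phi)$ and applying the standard estimate on injective dimension in a triangle (if $A \to B \to C \to A[1]$ and $\id A, \id C$ are finite then $\id B \leq \max\{\id A, \id C\} + 1$ roughly, being careful about the shift), I would conclude $\id{\mathsf{B}^i(X)_\Lambda} \leq d + 1$.

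The cleanest way to run the last step is via $\Ext$-vanishing: for any module $N$ and $k > d+1$, the long exact sequence attached to the triangle together with $\mathsf{Ext}^{>0}(\mathsf{B}^i(\tilde{\mho}^d(X)), N) = 0$ and $\mathsf{Ext}^{>d}(\mathsf{cone}(\phi), N) = 0$ forces $\mathsf{Ext}^k(\mathsf{B}^i(X), N) = 0$, which is exactly $\id{\mathsf{B}^i(X)_\Lambda} \leq d+1$. I expect the main obstacle — really the only subtle point — to be keeping track of the precise degree shift in the cone: depending on the indexing convention in Lemma \ref{cone2} the amplitude of $\mathsf{cone}(\phi)$ and the shift in the triangle could introduce an off-by-one, so I would verify carefully that the bound comes out as $d+1$ and not $d$ or $d+2$. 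Everything else is formal homological bookkeeping, entirely dual to the argument for Corollary \ref{injective_dimension_of_boundaries}'s projective counterpart in \cite{OPS}.
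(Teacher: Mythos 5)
Your argument is essentially the paper's: identify $\lambda(X)$ with $\tilde{\mho}^d(X)$ via Proposition~\ref{skulos}, use contractibility to see that each $\mathsf{B}^i(\tilde{\mho}^d(X))$ is a direct summand of a term and hence injective, and read off $\id{\mathsf{B}^i(X)_{\Lambda}}\leq d+1$ from the triangle of Lemma~\ref{cone2} by applying $\mathsf{Hom}_{\mathsf{D}(\Lambda)}(N,-)$. Two small slips, neither of which affects the substance: your $\Ext$-vanishing has the variance backwards (you want $\mathsf{Ext}^k(N,\mathsf{B}^i(X))=0$ for $k>d+1$, not $\mathsf{Ext}^k(\mathsf{B}^i(X),N)$, since you are bounding injective, not projective, dimension), and the brutal truncation of a contractible complex is not in general contractible --- but your ``more to the point'' remark that each $\mathsf{B}^i$ is a summand of a term of the complex is the correct reason the boundaries are injective, so the claim about truncations is unnecessary.
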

\begin{proof}
    By Lemma \ref{skulos} and Lemma \ref{cone2}, for every $i$, follows the existence of a triangle  
    \[
    \mathsf{B}^i(X)\rightarrow \mathsf{B}^i(\lambda(X))\rightarrow I\rightarrow \mathsf{B}^i(X)[1]
    \]
    in $\mathsf{D}(\Lambda)$, where $I$ is a complex (depending on $i$) of injectives concentrated in degrees $0$ to $d$. The module $\mathsf{B}^i(\lambda(X))$ is injective, since $\lambda(X)$ is a contractible complex of injectives. Consequently, for any $Y\in\Mod\Lambda$, applying the functor $\mathsf{Hom}_{\mathsf{D}(\Lambda)}(Y,-)$ to the above triangle shows that $\mathsf{Hom}_{\mathsf{D}(\Lambda)}(Y,\mathsf{B}^i(X)[n])=0$ for all $n\geq d+2$.
\end{proof}

\begin{thm} \label{fox}
    Let $(\Mod\Gamma,\Mod\Lambda,\mathsf{i},\mathsf{e},\mathsf{l})$ be a cleft extension of module categories of Noetherian rings. Assume that $\mathsf{F}'$ satisfies the following: 
    \begin{itemize}
        \item[(a)] $\id{\mathsf{F}'(I)_{\Gamma}}\leq d$ for every $I\in\Inj\Gamma$ and some $d$. 
        \item[(b)] $\mathbb{R}^n\mathsf{F}'=0$ for $n$ large enough.  
        \item[(c)] $\id{X_{\Lambda}}\leq \id\ \!{\mathsf{e}(X)_{\Gamma}}+d'$ for all $X\in\Mod \Lambda$ and some $d'$.
    \end{itemize}
    Then the following are equivalent:
    \begin{itemize}
        \item[(i)] The functor $\lambda_{\Gamma}\mathsf{e}\colon\mathsf{K}_{\mathsf{ac}}(\Inj\Lambda)\rightarrow \mathsf{K}_{\mathsf{ac}}(\Inj\Gamma)$ is an equivalence. 
        \item[(ii)] $\lambda_{\Gamma}\mathsf{F}'\simeq0$ in $\mathsf{K}_{\mathsf{ac}}(\Inj\Gamma)$. 
    \end{itemize}
\end{thm}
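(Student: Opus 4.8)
The plan is to mirror, in the homotopy-category-of-injectives setting, the proof of Theorem~\ref{equivalence_singularity}. First I would assemble the preliminaries. By Proposition~\ref{cleft_of_modules_is_cocleft} the given cleft extension is simultaneously a cleft coextension, so the right adjoint $\mathsf{F}'$ of $\mathsf{F}$ together with $\mathsf{r}$ and $\mathsf{p}$ are available, as well as the split short exact sequence $0\to\mathsf{Id}_{\Mod\Gamma}\to\mathsf{e}\mathsf{r}\to\mathsf{F}'\to 0$ whose retraction $\mathsf{e}\mathsf{r}\to\mathsf{Id}_{\Mod\Gamma}$ is the counit of $(\mathsf{e},\mathsf{r})$. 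Assumption (b) gives $\mathbb{R}^n\mathsf{F}'=0$ for $n\gg 0$, hence $\mathbb{R}^n\mathsf{r}=0$ for $n\gg 0$ by Lemma~\ref{basic_properties_of_coperfect_endofunctor_on_cleft}(i); combining assumption (a) with the fact that every $I\in\Inj\Lambda$ is a direct summand of $\mathsf{r}(J)$ for some $J\in\Inj\Gamma$ (Lemma~\ref{injectives_in_cleft}), and $\mathsf{e}\mathsf{r}(J)\cong J\oplus\mathsf{F}'(J)$, we obtain a uniform bound $\id{\mathsf{e}(I)_{\Gamma}}\le d$ for all $I\in\Inj\Lambda$. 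Therefore Proposition~\ref{adjoint_triple_and_KINJ} and Proposition~\ref{adjoints_of_Kac} apply: $\lambda_{\Gamma}\mathsf{e}$, $\mathsf{r}$ and $\lambda_{\Gamma}\mathsf{F}'$ restrict to $\mathsf{K}_{\mathsf{ac}}$, with $(\lambda_{\Gamma}\mathsf{e},\mathsf{r})$ an adjoint pair there, and applying $\lambda_{\Gamma}$ to the split sequence above (which holds after termwise application to any complex of injective $\Gamma$-modules, and noting $\lambda_{\Gamma}$ is the identity on $\mathsf{K}(\Inj\Gamma)$) produces a split epimorphism $\lambda_{\Gamma}\mathsf{e}\mathsf{r}\to\mathsf{Id}_{\mathsf{K}_{\mathsf{ac}}(\Inj\Gamma)}$, which is the counit of $(\lambda_{\Gamma}\mathsf{e},\mathsf{r})$, with complementary summand $\lambda_{\Gamma}\mathsf{F}'$; thus $\lambda_{\Gamma}\mathsf{e}\mathsf{r}\simeq\mathsf{Id}_{\mathsf{K}_{\mathsf{ac}}(\Inj\Gamma)}\oplus\lambda_{\Gamma}\mathsf{F}'$.

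For (i)$\Rightarrow$(ii): if $\lambda_{\Gamma}\mathsf{e}$ is an equivalence then, being the left adjoint in $(\lambda_{\Gamma}\mathsf{e},\mathsf{r})$, its right adjoint $\mathsf{r}$ is a quasi-inverse, so the counit $\lambda_{\Gamma}\mathsf{e}\mathsf{r}\to\mathsf{Id}$ is an isomorphism; comparing with the splitting $\lambda_{\Gamma}\mathsf{e}\mathsf{r}\simeq\mathsf{Id}\oplus\lambda_{\Gamma}\mathsf{F}'$ forces $\lambda_{\Gamma}\mathsf{F}'\simeq 0$ in $\mathsf{K}_{\mathsf{ac}}(\Inj\Gamma)$.

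For (ii)$\Rightarrow$(i): if $\lambda_{\Gamma}\mathsf{F}'\simeq 0$, the counit $\lambda_{\Gamma}\mathsf{e}\mathsf{r}\to\mathsf{Id}$ is an isomorphism, so $\mathsf{r}$ is fully faithful, and Lemma~\ref{equivalence_of_triangulated_cats}(ii) yields an equivalence $\mathsf{K}_{\mathsf{ac}}(\Inj\Lambda)/\ker(\lambda_{\Gamma}\mathsf{e})\simeq\mathsf{K}_{\mathsf{ac}}(\Inj\Gamma)$ induced by $\lambda_{\Gamma}\mathsf{e}$. It then suffices to prove $\ker(\lambda_{\Gamma}\mathsf{e})=0$. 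Let $X$ be an acyclic complex of injective $\Lambda$-modules with $\lambda_{\Gamma}\mathsf{e}(X)=0$, i.e. $\lambda_{\Gamma}(\mathsf{e}(X))$ contractible. Since $\mathsf{e}$ is exact, $\mathsf{e}(X)$ is acyclic with $\id{\mathsf{e}(X)^n_{\Gamma}}\le d$, so $\lambda_{\Gamma}(\mathsf{e}(X))=\tilde{\mho}^d(\mathsf{e}(X))$ by Proposition~\ref{skulos}; Corollary~\ref{injective_dimension_of_boundaries} (read over the ring $\Gamma$) gives $\id{\mathsf{B}^i(\mathsf{e}(X))_{\Gamma}}\le d+1$, and since $\mathsf{B}^i(\mathsf{e}(X))=\mathsf{e}(\mathsf{B}^i(X))$, assumption (c) gives $\id{\mathsf{B}^i(X)_{\Lambda}}\le d+1+d'=:c$ for all $i$. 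As $X$ is acyclic with injective terms, for each $i$ the exact sequence $0\to\mathsf{B}^i(X)\to X^i\to\cdots\to X^{i+c-1}\to\mathsf{B}^{i+c}(X)\to 0$ exhibits $\mathsf{B}^{i+c}(X)$ as the $c$-th cosyzygy of a module of injective dimension $\le c$, hence $\mathsf{B}^{i+c}(X)$ is injective; as $i$ is arbitrary, every boundary module of $X$ is injective, so the short exact sequences $0\to\mathsf{B}^i(X)\to X^i\to\mathsf{B}^{i+1}(X)\to 0$ split and $X$ is a split exact complex, hence contractible, i.e. $X\cong 0$ in $\mathsf{K}_{\mathsf{ac}}(\Inj\Lambda)$. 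Thus $\ker(\lambda_{\Gamma}\mathsf{e})=0$ and $\lambda_{\Gamma}\mathsf{e}$ is an equivalence.

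The main obstacle is the final step, the triviality of $\ker(\lambda_{\Gamma}\mathsf{e})$, which is exactly where hypothesis (c) together with Proposition~\ref{skulos} and Corollary~\ref{injective_dimension_of_boundaries} are genuinely used; the remainder is bookkeeping with the cleft coextension structure and the adjoint functors on $\mathsf{K}_{\mathsf{ac}}(\Inj-)$ already set up in Propositions~\ref{adjoint_triple_and_KINJ}, \ref{adjoints_of_Kac} and~\ref{cleft_of_big_singularity}.
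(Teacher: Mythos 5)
Your proof is correct and follows essentially the same strategy as the paper: the splitting $\lambda_{\Gamma}\mathsf{e}\mathsf{r}\simeq\mathsf{Id}\oplus\lambda_{\Gamma}\mathsf{F}'$, the adjunction argument for (i)$\Rightarrow$(ii), Lemma~\ref{equivalence_of_triangulated_cats}(ii) for (ii)$\Rightarrow$(i), and the triviality of $\ker(\lambda_{\Gamma}\mathsf{e})$ via Corollary~\ref{injective_dimension_of_boundaries}, hypothesis (c), and a cosyzygy dimension shift. Your statement of the final dimension-shift step is in fact a bit more careful than the paper's, which has a minor indexing slip (it should conclude $\mathsf{B}^{j+1}(X)$, not $\mathsf{B}^{i}(X)$, is injective from the displayed sequence); the substance is identical.
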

\begin{proof}
    Assumptions (a) and (b) ensure that the functors $\mathsf{r}$, $\lambda_{\Gamma}\mathsf{e}$ and $\lambda_{\Gamma}\mathsf{F}'$ exists on the level of big singularity categories. By the equivalence $\mathsf{er}\simeq \mathsf{Id}_{\Mod\Gamma}\oplus \mathsf{F}'$ in $\Mod \Gamma$, it follows that 
    \[
    \lambda_{\Gamma}\mathsf{er}\simeq \mathsf{Id}_{\mathsf{K}_{\mathsf{ac}}(\Inj\Gamma)}\oplus\lambda_{\Gamma}\mathsf{F}'
    \]
    in $\mathsf{K}_{\mathsf{ac}}(\Inj\Gamma)$.
    
    (i)$\Longrightarrow$(ii): If $\lambda_{\Gamma}\mathsf{e}$ is an equivalence, since $(\lambda_{\Gamma}\mathsf{e},\mathsf{r})$ is an adjoint pair, it follows that $\lambda_{\Gamma}\mathsf{er}\simeq \mathsf{Id}_{\mathsf{K}_{\mathsf{ac}}(\Inj\Gamma)}$ and so $\lambda_{\Gamma}\mathsf{F}'\simeq 0$ in $\mathsf{K}_{\mathsf{ac}}(\Inj\Gamma)$. 

    (ii)$\Longrightarrow$(i): If $\lambda_{\Gamma}\mathsf{F}'\simeq 0$ in $\mathsf{K}_{\mathsf{ac}}(\Inj\Gamma)$, then it follows that $\lambda_{\Gamma}\mathsf{e}\mathsf{r}\simeq \mathsf{Id}_{\mathsf{K}_{\mathsf{ac}}(\Inj\Gamma)}$. Consequently, $\mathsf{r}\colon\mathsf{K}_{\mathsf{ac}}(\Inj\Gamma)\rightarrow \mathsf{K}_{\mathsf{ac}}(\Inj\Lambda)$ is fully faithful and therefore the functor $\lambda_{\Gamma}\mathsf{e}$ induces an equivalence 
    \[
    \mathsf{K}_{\mathsf{ac}}(\Inj\Lambda)/\mathsf{ker}\lambda_{\Gamma}\mathsf{e}\simeq \mathsf{K}_{\mathsf{ac}}(\Inj\Gamma).
    \]
    We claim that $\mathsf{ker}\lambda_{\Gamma}\mathsf{e}$ is trivial. Indeed, if $\lambda_{\Gamma}\mathsf{e}(X)$ is contractible, then it follows by Corollary \ref{injective_dimension_of_boundaries} that $\id{ \mathsf{B}^i(\mathsf{e}(X))_{\Gamma}}\leq d+1$ for all $i$. Therefore $\id\ \!{\mathsf{e}(\mathsf{B}^i(X))_{\Gamma}}\leq d+1$, so by the assumption on $\mathsf{e}$ it follows that $\id{\mathsf{B}^i(X)_{\Lambda}}$ admit a common bound for all $i$. We claim that the latter, together with the fact that $X$ is acyclic, suffice for $X$ to be contractible. Indeed, consider the following exact sequence: 
    \[
    0\rightarrow \mathsf{B}^i(X)\rightarrow X^i\rightarrow \cdots\rightarrow X^j\rightarrow \mathsf{B}^{j+1}(X)\rightarrow 0,
    \]
    from which, for $j-i$ large enough, we infer that $\id{ \mathsf{B}^i(X)_{\Lambda}}=0$. Consequently, the image of every differential is injective, making $X$ a contractible complex. 
\end{proof}

\begin{cor} \label{equivalence_of_big_singularity}
    Let $(\Mod \Gamma,\Mod \Lambda,\mathsf{i},\mathsf{e},\mathsf{l})$ be a cleft extension of modules categories of Noetherian rings $\Gamma$ and $\Lambda$. Assume that $\mathsf{F}$ is perfect and nilpotent. The following are equivalent: 
    \begin{itemize}
        \item[(i)] The functor $\mathsf{e}\colon\mathsf{K}_{\mathsf{ac}}(\Inj\Lambda)\rightarrow \mathsf{K}_{\mathsf{ac}}(\Inj\Gamma)$ is an equivalence. 
        \item[(ii)] $\lambda_{\Gamma}\mathsf{F}'\simeq 0$ in $\mathsf{K}_{\mathsf{ac}}(\Inj\Gamma)$.
    \end{itemize} 
\end{cor}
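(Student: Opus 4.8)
The plan is to deduce Corollary \ref{equivalence_of_big_singularity} directly from Theorem \ref{fox}, exactly as Corollary \ref{equivalence_singularity_for_mod} was deduced from Theorem \ref{equivalence_singularity} and Corollary \ref{cleft_of_big_singularity_for_perfect} from Proposition \ref{cleft_of_big_singularity}. The only work is to verify that the hypotheses (a), (b), (c) of Theorem \ref{fox} are automatic once $\mathsf{F}$ is perfect and nilpotent, and to identify the functor ``$\mathsf{e}$'' in statement (i) with the functor $\lambda_{\Gamma}\mathsf{e}$ of Theorem \ref{fox}.

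First I would recall, via Proposition \ref{cleft_of_modules_is_cocleft}, that the given cleft extension of module categories is simultaneously the upper part of a cleft coextension $(\Mod\Gamma,\Mod\Lambda,\mathsf{i},\mathsf{e},\mathsf{r})$, and that since $\mathsf{F}$ is perfect and nilpotent, the associated endofunctor $\mathsf{F}'$ (the right adjoint of $\mathsf{F}$) is coperfect and nilpotent. Now I check the three assumptions of Theorem \ref{fox}. Assumption (a), $\id{\mathsf{F}'(I)_{\Gamma}}\leq d$ for all $I\in\Inj\Gamma$ and some fixed $d$, follows from Lemma \ref{basic_properties_of_coperfect_endofunctor_on_cleft}(ii) together with the fact that $\id{\mathsf{F}'(J)}<\infty$ for every injective $J$ (part (ii) of Definition \ref{coperfect_functor}); more precisely, in the module setting $\mathsf{F}'\simeq \mathsf{Hom}_{\Gamma}(M,-)$ for a perfect nilpotent bimodule $M$, and the proof of Lemma \ref{perfect_implies_coperfect} gives $\id{\mathsf{Hom}_{\Gamma}(M,I)_{\Gamma}}\leq \fd{_{\Gamma}M}<\infty$, a bound independent of $I$. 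Assumption (b), $\mathbb{R}^n\mathsf{F}'=0$ for $n$ large, is part (i) of Definition \ref{coperfect_functor} (with $q=1$), which holds since $\mathsf{F}'$ is coperfect. Assumption (c), $\id{X_{\Lambda}}\leq \id{\mathsf{e}(X)_{\Gamma}}+d'$ for all $X\in\Mod\Lambda$ and some fixed $d'$, is exactly the content of Remark \ref{bounds_for_injective_dimensions} applied to the cleft coextension: since $\mathsf{F}'$ is coperfect, nilpotent and $\sup\{\id{\mathsf{F}'(I)_{\Gamma}}\mid I\in\Inj\Gamma\}=n_{\mathsf{F}'}<\infty$ (just verified), we get $\id{\mathsf{e}(X)_{\Gamma}}-n_{\mathsf{F}'}\leq \id{X_{\Lambda}}\leq \id{\mathsf{e}(X)_{\Gamma}}+(m+1)n_{\mathsf{F}'}$ where ${\mathsf{F}'}^m=0$, so we may take $d'=(m+1)n_{\mathsf{F}'}$.

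Having checked (a), (b), (c), Theorem \ref{fox} yields the equivalence of (i) and (ii), where (i) reads ``$\lambda_{\Gamma}\mathsf{e}\colon\mathsf{K}_{\mathsf{ac}}(\Inj\Lambda)\rightarrow \mathsf{K}_{\mathsf{ac}}(\Inj\Gamma)$ is an equivalence''. The only remaining point is cosmetic: in the statement of Corollary \ref{equivalence_of_big_singularity} this functor is written simply $\mathsf{e}$, consistent with the labelling in the commutative diagram of Theorem A (where the bottom row carries $\lambda_{\Lambda}\mathsf{i}$ and $\lambda_{\Gamma}\mathsf{e}$); I would add a sentence noting that by $\mathsf{e}$ we mean the induced functor $\lambda_{\Gamma}\mathsf{e}$ between big singularity categories, whose existence was already established in Corollary \ref{cleft_of_big_singularity_for_perfect}. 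I do not anticipate a genuine obstacle here; the only mild subtlety is making sure the bound in assumption (a) is uniform in $I$ rather than merely finite for each $I$, which is why I invoke Lemma \ref{perfect_implies_coperfect} (and hence Lemma \ref{easy_properties_of_perfect_bimodules}) to pin it down as $\fd{_{\Gamma}M}$, and correspondingly the bound $d'$ in (c) is uniform because Remark \ref{bounds_for_injective_dimensions} provides explicit bounds depending only on $n_{\mathsf{F}'}$ and the nilpotency index of $\mathsf{F}'$.
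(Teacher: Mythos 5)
Your proof is correct and follows essentially the same route as the paper's: both deduce the statement from Theorem \ref{fox} by verifying its hypotheses (a)–(c) via Proposition \ref{cleft_of_modules_is_cocleft}, Lemma \ref{basic_properties_of_coperfect_endofunctor_on_cleft}, Proposition \ref{preserves_and_reflects_inj_dim} and Remark \ref{bounds_for_injective_dimensions}. Your treatment is in fact slightly more careful than the paper's, as it explicitly justifies the uniformity of the bound $\id \mathsf{F}'(I)_{\Gamma}\le d$ via the explicit estimate $\id \mathsf{Hom}_{\Gamma}(M,I)_\Gamma\le \fd {_{\Gamma}M}$ from the proof of Lemma \ref{perfect_implies_coperfect}, a point the paper leaves implicit.
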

\begin{proof}
    Since $\mathsf{F}$ is perfect and nilpotent, it follows by Proposition \ref{cleft_of_modules_is_cocleft} that $\mathsf{F}'$ is coperfect and nilpotent. Consequently, it follows by Lemma \ref{basic_properties_of_coperfect_endofunctor_on_cleft}, Proposition \ref{preserves_and_reflects_inj_dim} and in particular Remark \ref{bounds_for_injective_dimensions}, that the conditions of Theorem \ref{fox} are satisfied.
\end{proof}

\begin{exmp} \label{nice_example}
    Consider a triangular matrix ring $\Lambda=\big(\begin{smallmatrix}   A & N\\   0 & B \end{smallmatrix}\big)$ where $A$ and $B$ are Noetherian rings and $N$ is an $A$-$B$-bimodule that is finitely generated on both sides. Recall, from Example \ref{perfect_for_triangular}, that $\Mod\Lambda$ is a cleft extension of $\Mod A\!\times\! B$. Moreover, the functor $\mathsf{F}$ is given by $(X,Y)\mapsto (0,X\otimes_A N)$ and so the functor $\mathsf{F}'$ is given by $(X,Y)\mapsto (\mathsf{Hom}_B(N,Y),0)$. Assume that $\pd{_AN}<\infty$ and $\pd N_B<\infty$. Then, it follows from Example \ref{perfect_for_triangular} that $\mathsf{F}$ is perfect and nilpotent. If, additionally, $\gd B<\infty$, then $\lambda_{A\times B}\mathsf{F}'\simeq 0$ in $\mathsf{K}_{\mathsf{ac}}(\Inj A\!\times\! B)$. Indeed, since $\gd B<\infty$, it follows that every acyclic complex of injective $B$-modules is contractible, so for $(X,Y)=(X,0)\in\mathsf{K}_{\mathsf{ac}}(\Inj A\!\times \! B)$, 
    \[ 
    \lambda_{A\times B}\mathsf{F}'(X,Y)=\lambda_{A\times B}(\mathsf{Hom}_B(N,Y),0)=0.
    \]
    Summing up, using Corollary \ref{equivalence_of_big_singularity}, we infer that
    \[
    \mathsf{K}_{\mathsf{ac}}(\Inj\begin{pmatrix}   A & N\\   0 & B \end{pmatrix})\simeq \mathsf{K}_{\mathsf{ac}}(\Inj A \!\times\! B)\simeq \mathsf{K}_{\mathsf{ac}}(\Inj A),
    \]
    provided that $\pd{_AN}<\infty$ and $\gd B<\infty$. 
\end{exmp}

\begin{rem} \label{final_remark}
In this remark we explain how for Artin algebras, the equivalences of Corollary \ref{equivalence_of_big_singularity} occur as a consequence of the equivalences of Corollary \ref{equivalence_singularity_for_mod}. More generally, consider an exact functor $\mathsf{e}\colon \Mod\Lambda\rightarrow \Mod\Gamma$ of module categories of Artin algebras that admits a right adjoint $\mathsf{r}\colon\Mod\Gamma\rightarrow \Mod\Lambda$. Assume that the functors of the following diagram exist: 
\[
\begin{tikzcd}
\mathsf{K}_{\mathsf{ac}}(\Inj\Lambda) \arrow[r, "\lambda_{\Gamma}\mathsf{e}"] & \mathsf{K}_{\mathsf{ac}}(\Inj\Gamma) \arrow[l, "\mathsf{r}", bend left] \\
\mathsf{D}_{\mathsf{sg}}(\Lambda) \arrow[r, "\mathsf{e}"] \arrow[u, hook]     & \mathsf{D}_{\mathsf{sg}}(\Gamma) \arrow[u, hook]                    \end{tikzcd}
\]
By Remark \ref{duck}, the above diagram is commutative. We will explain that $\lambda_{\Gamma}\mathsf{e}$ is an equivalence, provided that $\mathsf{e}$ is an equivalence. Since $\Gamma$ and $\Lambda$ are assumed to be Artin algebras, it follows that $\mathsf{K}_{\mathsf{ac}}(\Inj\Gamma)^{\mathsf{c}}\simeq \mathsf{D}_{\mathsf{sg}}(\Gamma)$ and $\mathsf{K}_{\mathsf{ac}}(\Inj\Lambda)^{\mathsf{c}}\simeq \mathsf{D}_{\mathsf{sg}}(\Lambda)$ (not only up to direct summands - see Remark \ref{artin_algebras}). Consequently, by a standard dévissage argument the claim follows. We obtain more consequences which are of independent interest: 
\begin{itemize}
    \item[(i)] In a cleft extension $(\Mod\Gamma,\Mod\Lambda,\mathsf{i},\mathsf{e},\mathsf{l})$, under the assumption that $\mathsf{F}$ is perfect and nilpotent: 
    \[
    \mathbb{L}_{\mathsf{sg}}\mathsf{F}=0 \text{ in }\mathsf{D}_{\mathsf{sg}}(\Gamma) \iff\lambda_{\Gamma}\mathsf{F}'=0 \text{ in }\mathsf{K}_{\mathsf{ac}}(\Inj\Gamma).
    \]
    This follows by the above, Corollary \ref{equivalence_singularity_for_mod} and Corollary \ref{equivalence_of_big_singularity}.
    \item[(ii)] If $\Gamma$ and $\Lambda$ are finite dimensional algebras over a field that are singular equivalent of Morita type with level, then $\mathsf{K}_{\mathsf{ac}}(\Inj\Gamma)\simeq \mathsf{K}_{\mathsf{ac}}(\Inj\Lambda)$. In fact, we may describe the above equivalence explicitly. If $(_{\Lambda}M_{\Gamma},_{\Gamma}N_{\Lambda})$ is a pair of bimodules that define a singular equivalence of Morita type with level, we have $\mathsf{e}=-\otimes_{\Lambda}M$ and $\mathsf{r}=\mathsf{Hom}_{\Gamma}(M,-)$. Hence, the equivalence between the big singularity categories is given by $\mathsf{Hom}_{\Gamma}(M,-)$. This is the content of \cite[Lemma 4.7]{chen_liu_wang}. 
\end{itemize}
\end{rem}


\begin{thebibliography}{99} 



   \bibitem{auslander_reiten}
   \textsc{M.~Auslander, I.~Reiten}, \emph{Cohen-Macaulay and Gorenstein Artin Algebras}, Progress in Mathematics 95 (1991), 221-245.

   \bibitem{beligiannis}
   \textsc{A.~Beligiannis}, \emph{Cleft extensions of abelian categories and applications  to ring theory}, Comm.\ Algebra 28 (2000), no.\ 10, 4503–4546.


   \bibitem{beligiannis4}
   \textsc{A.~Beligiannis}, \emph{Cohen-Macaulay modules, (co)torsion pairs and virtually Gorenstein algebras}, J.\ Algebra 288 (2005), no.\ 1, 137–211. 


   \bibitem{beligiannis5}
   \textsc{A.~Beligiannis}, \emph{On algebras of finite Cohen-Macaulay type}, Adv.\ Math.\ 226 (2011), no.\ 2, 1973–2019.


    \bibitem{beligiannis2}
     \textsc{A.~Beligiannis}, \emph{On the Relative Homology of Cleft Extensions of Rings and Abelian Categories}, J.\ Pure Appl.\ Algebra 150 (2000), no.\ 3, 237-299.


     \bibitem{beligiannis3}
     \textsc{A.~Beligiannis}, \emph{The homological theory of contravariantly finite subcategories: Auslander-Buchweitz contexts, Gorenstein categories and (co-)stabilization}, Commun. Algebra 28 (2000), no. 10, 4547-4596.

      \bibitem{gorenstein}
    \textsc{A.~Beligiannis, I.~Reiten}, \emph{Homological and homotopical aspects of torsion theories}, Mem.\ Amer.\ Math.\ Soc.\ 188 (2007), no.\ 883, viii+207 pp.

    \bibitem{bergh_jorgensen_oppermann}
    \textsc{P.~A.~Bergh, A.~Oppermann, D.~A.~Jorgensen}, \emph{The Gorenstein defect category}, Q.\ J.\ Math.\ 66 (2015), no.\ 2, 459-471.


    \bibitem{bondal_kapranov}
    \textsc{A.~I.~Bondal, M.~M.~Kapranov}, \emph{Representable functors, Serre functors, and mutations}, Izv.\ Akad.\ Nauk SSSR Ser.\ Mat.\ 53(1989), no.\ 6, 1183–1205, 1337; translation in Math.\ USSR-Izv.\ 35 (1990), no.\ 3, 519–541.



     \bibitem{buchweitz}
     \textsc{R.-O.~Buchweitz}, \emph{Maximal Cohen–Macaulay modules and Tate-cohomology over Gorenstein rings}, volume 262 of Mathematical Surveys and Monographs. American Mathematical Society, Providence, RI, [2021] ©2021.


   \bibitem{cartan_eilenberg} 
   \textsc{H.~Cartan, S.~Eilenberg}, \emph{Homological algebra},
    Princeton Mathematical Series.\ 19.\ Princeton, New Jersey: Princeton University Press.\ xv, 390 p.\ (1956).

    \bibitem{chen4}
    \textsc{X.-W.~Chen}, \emph{Algebras with radical square zero are either self-injective or CM-free}, Proc.\ Amer.\ Math.\ Soc.\ 140 (2012), 93-98


    \bibitem{chen3}
    \textsc{X.-W.~Chen}, \emph{Relative singularity categories and Gorenstein-projective modules}, Math.\ Nachr.\ 284 (2011), no.\ 2-3, 199–212.


    \bibitem{chen}
    \textsc{X.-W.~Chen}, \emph{Singularity categories, Schur functors and triangular matrix rings}, Algebr.\ Represent.\ Theory 12, No.\ 2-5, 181-191 (2009).


    \bibitem{chen2}
    \textsc{X.-W.~Chen}, \emph{The singularity category of an algebra with radical square zero}, Doc.\ Math.\ 16 (2011), 921–936.

    \bibitem{chen_shen_zhou}
    \textsc{X.-W.~Chen, D.~Chen, G.~Zhou}, \emph{The Gorenstein-projective modules over a monomial algebra}, Proc.\ Roy.\ Soc.\ Edinburgh Sect.\ A 148 (2018), no.\ 6, 1115–1134.

    \bibitem{chen_liu_wang}
    \textsc{X.-W.~Chen, J.~Liu, R.~Wang}, \emph{Singular equivalences induced by bimodules and quadratic monomial algebras}, Algebr.\ Represent.\ Theory 26 (2023), no.\ 2, 609–630.


    \bibitem{perfect}
    \textsc{X.-W.~Chen, M.~Lu}, \emph{Gorenstein homological properties of tensor rings}, Nagoya Math.\ J.\ 237 (2020), 188–208.


    \bibitem{chen_koenig}
    \textsc{Y.~Chen, S.~Koenig}, \emph{Hochschild cohomology and support varieties}, preprint. 

    \bibitem{dalezios}
    \textsc{G.~Dalezios}
    \emph{On singular equivalences of Morita type with level and Gorenstein algebras}, Bull.\ Lond.\ Math.\ Soc.\ 53 (2021), no.\ 4, 1093–1106.

    \bibitem{diracca_koenig}
    \textsc{L.~Diracca, S.~Koenig}, \emph{Cohomological reduction by split pairs}, J.\ Pure Appl.\ Algebra 212 (2008), no.\ 3, 471–485.

    \bibitem{enochs_jenda} 
    \textsc{E.~Enochs, 0.~Jenda}, \emph{Gorenstein injective and projective modules}, Math.\ Z.\ 220 (1995), no.\ 4, 611–633.

    \bibitem{arrow2}
    \textsc{K.~Erdmann, C.~Psaroudakis, Ø.~Solberg}, \emph{Homological invariants of the arrow removal operation}, Represent.\ Theory 26 (2022), 370–387.

    \bibitem{monomial_arrow}
    \textsc{K.~Erdmann, O.~Giatagantzidis, C.~Psaroudakis, Ø.~Solberg}, \emph{Monomial Arrow Removal and the Finitistic Dimension Conjecture}, arXiv:2506.23747. 

    \bibitem{reiten}
    \textsc{R.~Fossum, P.~Griffith, I.~Reiten}, \emph{Trivial Extensions of Abelian Categories with Applications to Ring Theory}, Lecture Notes in Math., Vol.\ 456 Springer-Verlag, Berlin-New York, 1975.\ xi+122 pp.

      \bibitem{monomorphism_cat} 
    \textsc{N.~Gao, C.~Psaroudakis}, \emph{Gorenstein homological aspects of monomorphism categories via Morita rings}, Algebr.\  Represent.\ Theory 20 (2017), no.\ 2, 487–529.


      \bibitem{moritagreen}
    \textsc{E.~L.~Green}, \emph{On the representation theory of rings in matrix form},  Pacific J.\ Math.\ 100 (1982), no.\ 1, 123–138.

     \bibitem{morita}
     \textsc{E.~L.~Green, C.~Psaroudakis}, \emph{On Artin algebras arising from Morita contexts}, Algebr.\ Represent.\ Theory 17 (2014), no.\ 5, 1485–1525.

    \bibitem{arrow}
    \textsc{E.~L.~Green, C.~Psaroudakis, Ø.~Solberg}, \emph{Reduction techniques for the finitistic dimension}, Trans.\ Amer.\ Math.\ Soc.\ 374 (2021), no.\ 10, 6839–6879.

    \bibitem{han}
    \textsc{Y.~Han}, \emph{Recollements and Hochschild theory}, J.\ Algebra 397 (2014), 535–547.

 
    \bibitem{happel2}
    \textsc{D.~Happel}, \emph{On Gorenstein algebras}, Representation theory of finite groups and finite-dimensional algebras (Bielefeld, 1991), 389–404.\
Progr.\ Math.\, 95
Birkhäuser Verlag, Basel, 1991


    \bibitem{happel}
    \textsc{D.~Happel}, \emph{Triangulated categories in the representation theory of finite-dimensional algebras}, London Math.\ Soc.\ Lecture Note Ser., 119 Cambridge University Press, Cambridge, 1988.\ x+208 pp.

    \bibitem{hu_pan}
    \textsc{W.~Hu, S.~Pan}, \emph{Stable functors of derived equivalences and Gorenstein projective modules}, Math. Nachr. 290 (2017), no.10, 1512–1530.
    
     \bibitem{iwanaga}
    \textsc{Y.~Iwanaga}, \emph{On rings with finite self-injective dimension. II.}, Tsukuba J.\ Math.\ 4 (1980), no.\ 1, 107–113.


    \bibitem{iyengar_krause}
    \textsc{S.~Iyengar, H.~Krause}, \emph{Acyclicity versus total acyclicity for complexes over Noetherian rings}, Doc.\ Math.\ 11 (2006), 207–240.

     \bibitem{graded_injective_generation}
    \textsc{P.~Kostas, C.~Psaroudakis}, \emph{Injective generation for graded rings}, J.\ Pure Appl.\ Algebra 229 (2025), no.\ 7, Paper No.\ 107960.

    \bibitem{krause}
    \textsc{H.~Krause}, \emph{The stable derived category of a noetherian scheme}, Compos.\ Math.\ 141 (2005), no.\ 5, 1128–1162.
    

    \bibitem{krause_fin_dim}
    \textsc{H.~Krause}, \emph{The finitistic dimension of a triangulated category}, arxiv:2307.12671.

    \bibitem{relative_sing_cats} 
    \textsc{H.~Li, Z.~Huang}, \emph{Relative singularity categories}, J.\ Pure Appl.\ Algebra 219 (2015), no.\ 9, 4090–4104.

    \bibitem{lofwall}
    \textsc{C.~Löfwall}, \emph{The global homological dimensions of trivial extensions of rings}, J.\ Algebra 39 (1976), no.\ 1, 287–307.
    
    \bibitem{maclane}
    \textsc{S.~MacLane}, \emph{Homology}, Die Grundlehren der mathematischen Wissenschaften, Band 114 Springer-Verlag, Berlin-Göttingen-Heidelberg; Academic Press, Inc., Publishers, New York, 1963.\ x+422 pp.


    \bibitem{marmaridis}
    \textsc{N.~Marmaridis}, \emph{On Extensions of Abelian Categories with Applications to Ring Theory}, J.\ Algebra 156 (1993), no.\ 1, 50–64.

     \bibitem{minamoto_yamaura}
     \textsc{H.~Minamoto, K.~Yamaura}, \emph{Homological dimension formulas for trivial extension algebras}, J.\ Pure Appl.\  Algebra 224 (2020), no.\ 8, 106344, 30 pp.


   \bibitem{neeman5}
   \textsc{A.~Neeman}, \emph{The connection between the K-theory localization theorem of Thomason, Trobaugh and Yao and the smashing subcategories of Bousfield and Ravenel}, Ann.\ Sci.\ École Norm.\ Sup.\ (4) 25 (1992), no.\ 5, 547–566.


   \bibitem{neeman} 
   \textsc{A.~Neeman}, \emph{The Grothendieck duality theorem via Bousfield’s techniques and Brown representability}, J.\ Amer.\ Math.\ Soc.\ 9 (1996), no.\ 1, 205–236.

   \bibitem{neeman4}
   \textsc{A.~Neeman}, \emph{The homotopy category of injectives}, Algebra Number Theory 8 (2014), no.\ 2, 429–456.

   
   \bibitem{neeman2}
   \textsc{A.~Neeman}, \emph{Triangulated categories}, Ann.\ of Math.\ Stud., 148 Princeton University Press, Princeton, NJ, 2001. viii+449 pp.


   \bibitem{OPS}
   \textsc{S.~Oppermann, C.~Psaroudakis, T.~Stai}, \emph{Change of rings and singularity categories}, Adv.\ Math.\ 350 (2019), 190–241.

   \bibitem{orlov}
   \textsc{D.~O.~Orlov}, \emph{Triangulated categories of singularities and D-branes in Landau-Ginzburg models}, Tr.\ Mat.\ Inst.\ Steklova 246, Algebr.\ Geom.\ Metody, Svyazi i Prilozh.\ (2004), 240–262; translation in Proc.\ Steklov Inst.\ Math.\ (2004), no.\ 3, 227–248.


     \bibitem{palmer_roos}
    \textsc{I.~Palmér, J.~Roos}, \emph{Explicit formulae for the global homological dimensions of trivial extensions of rings}, J. Algebra 27 (1973), 380–413.


    \bibitem{pierce} 
    \textsc{R.~S.~Pierce}, \emph{Associative Algebras}, Graduate Texts in Mathematics, vol. 88, Springer, Berlin, 1982


    \bibitem{PSS}
    \textsc{C.~Psaroudakis, Ø.~Skartsæterhagen, Ø.~Solberg}, \emph{Gorenstein categories, singular equivalences and finite generation of cohomology rings in recollements}, Trans.\ Amer.\ Math.\ Soc.\ Ser.\ B1 (2014), 45–95.

    \bibitem{qin1}
    \textsc{Y.~Qin}, \emph{Reduction techniques of singular equivalences}, J.\ Algebra 612 (2022), 616–635.

    \bibitem{qin3}
    \textsc{Y.~Qin, D.~Shen}, \emph{Eventually homological isomorphisms and Gorenstein projective modules}, Sci.\ China Math.\ 67 (2024), no.\ 8, 1719–1734.

    \bibitem{qin2}
    \textsc{Y.~Qin, X.~Xu, J.~Zhang, G.~Zhou}, \emph{Categorical properties and homological conjectures for bounded extensions of algebras}, arxiv:2407.21480.

    \bibitem{rickard} 
    \textsc{J.~Rickard}, \emph{Derived categories and stable equivalence}, J.\ Pure Appl.\ Algebra 61 (1989), no.\ 3, 303–317.
    

    \bibitem{fg_condition}
    \textsc{Ø.~Skartsæterhagen}, \emph{Singular equivalence and the (Fg) condition}, J.\ Algebra 452 (2016), 66–93.
    
    \bibitem{snashall_solberg}
    \textsc{N.~Snashall, Ø.~ Solberg}, \emph{Support varieties and Hochschild cohomology rings}, Proc.\ London Math.\ Soc.\ (3) 88 (2004), no.\ 3, 705–732.


    \bibitem{wang}
    \textsc{Z.~Wang}, \emph{Singular equivalence of Morita type with level}, J.\ Algebra 439 (2015), 245–269.

    \bibitem{xiong_zhang} 
    \textsc{B.~L.~Xiong, P.~Zhang}, \emph{Gorenstein-projective modules over triangular matrix Artin algebras}, J.\ Algebra Appl.\ 11 (2012), no.\ 4, 1250066, 14 pp.

    \bibitem{zhang}
    \textsc{P.~Zhang}, \emph{Gorenstein-projective modules and symmetric recollements}, J.\ Algebra 388 (2013), 65–80. 

\end{thebibliography}
\end{document}